\newcommand\shorttitle{Toward a topological description of LCH of unit conormal bundles}
\newcommand\authors{Yukihiro Okamoto}
\theoremstyle{definition}
\newtheorem{defi}{Definition}[section]
\newtheorem{rem}[defi]{Remark}
\newtheorem{ex}[defi]{Example}
\newtheorem*{notation}{Notation}
\newtheorem*{acknow}{Acknowledgements}
\theoremstyle{plain}
\newtheorem{thm}[defi]{Theorem}
\newtheorem{prop}[defi]{Proposition}
\newtheorem{lem}[defi]{Lemma}
\newtheorem{cor}[defi]{Corollary}
\newtheorem{conj}[defi]{Conjecture}
\renewcommand{\ker}{\operatorname{Ker}}
\newcommand{\coker}{\operatorname{Coker}}
\renewcommand{\Im}{\operatorname{Im}}
\newcommand{\R}{\mathbb{R}}
\newcommand{\Z}{\mathbb{Z}}
\newcommand{\pr}{\operatorname{pr}}
\newcommand{\id}{\operatorname{id}}
\newcommand{\reg}{\mathrm{reg}}
\renewcommand{\tilde}{\widetilde}
\newcommand{\dr}{\mathrm{dR}}
\newcommand{\sing}{\mathrm{sing}}
\newcommand{\ev}{\operatorname{ev}}
\newcommand{\len}{\mathrm{length}}
\renewcommand{\epsilon}{\varepsilon}
\newcommand{\con}{\operatorname{con}}
\newcommand{\fib}{\mathrm{fiber}}
\newcommand{\la}{\langle}
\newcommand{\ra}{\rangle}
\newcommand{\supp}{\operatorname{supp}}
\newcommand{\str}{\mathrm{string}}
\newcommand{\tl}{\operatorname{tl}}
\newcommand{\interior}{\mathrm{int}}
\newcommand{\Th}{\mathrm{Th}}
\newcommand{\sign}{\operatorname{sign}}
\newcommand{\hopf}{\mathrm{Hopf}}
\newcommand{\codim}{\operatorname{codim}}
\newcommand{\rest}[2]{\left.#1\right|_{#2}}
\newcommand{\ftimes}[2]{\ {}_{#1}\!\times_{#2}}
\newcommand{\sm}{\mathrm{sm}}
\begin{document}

\title{Toward a topological description of Legendrian contact homology of unit conormal bundles}
\author{Yukihiro Okamoto }
\date{}
\maketitle

\begin{abstract}
For a smooth compact submanifold $K$ of a Riemannian manifold $Q$, its unit conormal bundle $\Lambda_K$ is a Legendrian submanifold of the unit cotangent bundle of $Q$ with a canonical contact structure. Using pseudo-holomorphic curve techniques, the Legendrian contact homology of $\Lambda_K$ is defined when, for instance, $Q=\R^n$. In this paper, aiming at giving another description of this homology, we define a graded $\R$-algebra for any pair $(Q,K)$ with orientations from a perspective of string topology and prove its invariance under smooth isotopies of $K$.
The author conjectures that it is isomorphic to the Legendrian contact homology of $\Lambda_K$ with coefficients in $\R$ in all degrees.
This is a reformulation of a homology group, called string homology, introduced by Cieliebak, Ekholm, Latschev and Ng when the codimension of $K$ is $2$, though the coefficient is reduced from original $\Z[\pi_1(\Lambda_K)]$ to $\R$.
We compute our invariant (i) in all degrees for specific examples, and (ii) in the $0$-th degree when the normal bundle of $K$ is a trivial $2$-plane bundle.
\end{abstract}

\tableofcontents

\section{Introduction}

\noindent
\textbf{Convention.}
Throughout this paper, all manifolds are of class $C^{\infty}$ without boundary and second countable, and all submanifolds are of class $C^{\infty}$ without boundary, unless otherwise specified.

\

\noindent
\textbf{Backgrounds.}
Let $Q$ be a manifold with a Riemannian metric, and $K$ be a compact submanifold of $Q$.
For any pair $(Q,K)$, one can associate the unit cotangent bundle $UT^*Q$ of $Q$ and the unit conormal bundle  $\Lambda_K$ of $K$. It is well-known that $UT^*Q$ has a canonical contact structure and $\Lambda_K$ is a Legendrian submanifold of $UT^*Q$. 

As an invariant of Legendrian submanifolds,
the \textit{Legendrian contact homology} has been studied for pairs $(M,\Lambda)$ of a contact manifold $M$ and its compact Legendrain submanifold $\Lambda$. It is the homology of a differential graded algebra generated by Reeb chords of $\Lambda$, and was introduced by Chekanov \cite{chek} and Eliashberg \cite{eli}. % and Eliashberg, Givental, and Hofer in \cite{egh}. 
The differential is defined by using pseudo-holomorphic curves in the symplectization of $M$. A rigorous definition was given by Ekholm, Etnyre, and Sullivan in \cite{eesR, ees} when there is a diffeomorphism from $M$ to the contactization of a Liouville manifold which preserves contact forms. As is mentioned in \cite[Section 5.1]{ees}, this included the case of $M=UT^*\R^n$. (Remark: The definition of \cite{ees} is given by pseudo-holomorphic curves in the Liouville manifold. These curves can be lifted to pseudo-holomorphic curves in the symplectization of $M$. See \cite{D-R}.)

Suppose conceptually that we have an algebraic invariant in symplectic or contact topology defined by using pseudo-holomorphic curves, and apply it to an object related to the cotangent bundle $T^*Q$. For instance, we consider the symplectic homology of $T^*Q$, or the wrapped Floer homology of the conormal bundle $L_K$ of $K$ in $T^*Q$. In such case, it is known by the following results that these invariants have another view from the topology of the loop or path space of $Q$, without using pseudo-holomorphic curves (Here we assume that $Q$ is a closed spin manifold and all homology groups have $\Z$-coefficient.):
\begin{itemize}
\item The symplectic homology $SH_*(T^*Q)$ of $T^*Q$ is isomorphic to the  singular homology of the free loop space of $Q$ \cite{a-s, abouzaid, viterbo-icm}.
\item The wrapped Floer homology $WF_*(L_K,L_K)$ of $L_K$ is isomorphic to the singular homology of the space of paths in $Q$ with end points in $K$ \cite{a-s-conormal}.
\end{itemize}

These results lead us to an expectation that if the Legendrian contact homology of a pair $(UT^*Q,\Lambda_K)$ is defined, it has another description in terms of the topology of the path space of $Q$. This expectation has already been confirmed in particular cases. When the codimension of $K$ is $2$, Cieliebak, Ekholm, Lastchev, and Ng defined in \cite{celn} a graded $\Z[\pi_1(\Lambda_K)]$-algebra, called \textit{string homology}, which is inspired by string topology of the path space of $Q$. They showed that when $Q$ is equal to $\R^3$ with the standard metric and $K$ is a knot, the $0$-th degree part of this algebra is isomorphic to the $0$-th degree part of the \textit{fully non-commutative} Legendrian contact homology of $(UT^*\R^3,\Lambda_K)$ with coefficients in $\Z[\pi_1(\Lambda_K)]$.
However, such topological descriptions have not yet been defined in higher degrees or for $K$ with $\codim K\neq 2$.

\

\noindent
\textbf{Main results.}
Let $Q$ be an oriented manifold and $K$ be its compact oriented submanifold of codimension $d\geq 1$.
The main purpose of this paper is to define a graded $\R$-algebra $H^{\str}_*(Q,K)$ and observe its basic properties.
This graded $\R$-algebra can be regarded as a reformulation of the string homology of \cite{celn}, whose coefficient is reduced from $\Z[\pi_1(\Lambda_K)]$ to $\R$. The feature of our formulation is that $H^{\str}_*(Q,K)$ is defined for $K$ of an arbitrary codimension and in all degrees, compared to the string homology defined for $K$ of codimension $2$ and generated by singular chains of degree less than or equal to $2$.
The two main differences from the string homology in its construction are the reduction of the coefficient and the substitution of singular chains by \textit{de Rham chains} explained below.

The construction of $H^{\str}_*(Q,K)$ can be  briefly summarized as follows: We first choose auxiliary data including a complete Riemannian metric on $Q$.
As a graded $\R$-vector space, it is defined to be
\begin{align}\label{intro-string-homology}
H^{\str}_*(Q,K)\coloneqq  \varinjlim_{a\to \infty}\varprojlim_{(\epsilon,\delta)\in \mathcal{T}_{a}\colon \epsilon\to 0} H^{<a}_*(\epsilon,\delta),
\end{align}
where $H^{<a}_*(\epsilon,\delta)$ for $a\in \R_{> 0}\setminus \mathcal{L}(K)$ and $(\epsilon,\delta) \in \mathcal{T}_a$ is the homology of a chain complex
\begin{align}\label{chain-cpx-intro}
\left( C^{<a}_*(\epsilon)\coloneqq \bigoplus_{m=0}^{\infty} C^{\dr}_{* - m(d-2)}(\Sigma^{a+m\epsilon}_m ,\Sigma^{0}_{m}), D_{\delta}\right).
\end{align}
An explanation for each piece of the above definition is the following:
\begin{enumerate}
\item $C^{\dr}_*(X,A)$ is the $\R$-vector space of de Rham chains defined for a pair of differentiable spaces $(X,A)$. Together with the boundary operator
\[\partial \colon C^{\dr}_*(X,A)\to C^{\dr}_{*-1}(X,A) ,\]
$(C^{\dr}_*(X,A),\partial)$ becomes a chain complex. 
de Rham chains can be used as substitutions of singular chains over $\R$. Their basic properties are summarized in Section \ref{sec-de-Rham}.
The advantage is that the fiber product of de Rham chains can be defined in a natural way.
The main references are \cite{Irie-BV, irie-pseudo}.
\item For $a\in \R_{> 0}$ and $m\in \Z_{\geq 1}$, $\Sigma^a_m$ is a differentiable space of sequences $(\gamma_1,\dots ,\gamma_m)$ of paths $\gamma_k\colon [0,T_k]\to Q$ ($k= 1,\dots ,m$) with end points in $K$. It contains all $(\gamma_k)_{k=1,\dots ,m}$ whose total length is less than $a$. For the precise definition, see Section \ref{subsec-diff-sp-of-seq-of-path}.
Exceptionally, $\Sigma^a_0$ is the one point set for $a>0$ and $\Sigma^0_0$ is the empty set.
\item $D_{\delta}$ is defined by $D_{\delta}(x)\coloneqq \partial x + \sum_{k=1}^m (-1)^{p+kd+1} f_{k,\delta} (x)$ for $x\in C^{\dr}_{p-m(d-2)}(\Sigma^{a+m\epsilon}_m,\Sigma^{0}_{m})$.  Here
\[f_{k,\delta}\colon  C^{\dr}_{*}(\Sigma^{a+m\epsilon}_m,\Sigma^{0}_{m}) \to  C^{\dr}_{*+1-d}(\Sigma^{a+(m+1)\epsilon}_{m+1}, \Sigma^{0}_{m+1}) \ (k=1,\dots ,m)\]
are operators which play the key role in our construction. The idea comes from an operation of string topology explained by three steps:
\begin{enumerate}
\item[(i)] Fix a pair of short paths $(\sigma_i\colon [0,\epsilon_i]\to N_{\epsilon})_{i=1,2}$ in a tubular neighborhood $N_{\epsilon}$ of $K$ such that $\sigma_1(\epsilon_1),\sigma_2(0)\in K$ and $\sigma_1(0)=\sigma_2(\epsilon_2)$.
\item[(ii)] For any sequence of $m$-paths $(\gamma_k)_{k=1,\dots ,m}$, we split the $k$-th path $\gamma_k\colon [0,T_k]\to Q$ at a time, say $\tau$, if the image $\gamma_k(\tau)$ coincides with $\sigma_1(0)$. 
We then concatenate $\rest{\gamma_k}{[0,\tau]}$ (resp. $\rest{\gamma_k}{[\tau,T_k]}$) with $\sigma_1$ (resp. $\sigma_2$) to get a new sequence of $(m+1)$-paths
\[(\gamma_1,\dots,\gamma_{k-1}, ( \rest{\gamma_k}{[0,\tau]} \cdot \sigma_1) , (\sigma_2 \cdot\rest{\gamma_k}{[\tau,T_k]}) ,\gamma_{k+1}, \dots ,\gamma_m).\]
\item[(iii)] We extend the procedures (i), (ii) for families (or chains) of paths parameterized over manifolds.
\end{enumerate}

For the precise definition, we need to take fiber products of chains. See Section \ref{subsec-split-concatenate} and \ref{subsec-operation}. $f_{k,\delta}$ depends on a chain $\delta\in C^{\dr}_{n-d}(S_{\epsilon})$, where $S_{\epsilon}$ for $\epsilon>0$ is a differentiable space of pairs of short paths in $N_{\epsilon}$ introduced in Section \ref{subsec-diff-sp-of-seq-of-path}.
For $a\in \R_{>0}\setminus \mathcal{L}(K)$, where $\mathcal{L}(K)$ is a closed subset of Lebesgue measure $0$, a set $\mathcal{T}_{a}$ consisting of pairs $(\epsilon,\delta)$ is defined in Definition \ref{def-class-of-data}. It is necessary to prove $D_{\delta}\circ D_{\delta}=0$ for any $(\epsilon,\delta)\in \mathcal{T}_{a}$ in Proposition \ref{prop-D-2}  to define the chain complex (\ref{chain-cpx-intro}).
\item The inverse limit in (\ref{intro-string-homology}) is defined from an inverse system
\[(\{H^{<a}_*(\epsilon,\delta)\}_{(\epsilon,\delta)\in \mathcal{T}_{a}}, \{k_{(\epsilon',\delta'),(\epsilon,\delta)} \}_{\epsilon'\leq \epsilon} ).\]
Its construction is given in Section \ref{subsec-limit}. 
To define the linear map
$k_{(\epsilon',\delta'),(\epsilon,\delta)} \colon H^{<a}_*(\epsilon',\delta') \to H^{<a}_*(\epsilon,\delta)$,
we need to factor through another homology group constructed from ``$[-1,1]$-modeled de Rham chains''. Furthermore, to check its well-definedness and a claim about composition, we need one more homology group constructed from ``$[-1,1]^2$-modeled de Rham chains''. These variants of de Rham chains are introduced in Section \ref{subsec-[-1,1]-model}.
\item The inverse limit is denoted by $H^{<a}_*(Q,K)\coloneqq \varprojlim_{\epsilon\to 0} H^{<a}_*(\epsilon,\delta)$. The direct limit in (\ref{intro-string-homology}) is defined from $(\{H^{<a}_*(Q,K)\}_{a\in \R_{>0}\setminus \mathcal{L}(K)},\{I^{a,b}\}_{a\leq b} )$, where $I^{a,b}\colon H^{<a}_*(Q,K)\to H^{<b}_*(Q,K)$ is induced by the inclusion maps $\Sigma^{a+m\epsilon}_m\to \Sigma^{b+m\epsilon}_m$ for all $m\in \Z_{\geq 0}$.
See Section \ref{subsec-string-hmgy}.
\item A graded associative product structure on $H^{\str}_*(Q,K)$ is induced by natural maps $\Sigma^a_m\times \Sigma^{a'}_{m'}\to \Sigma^{a+a'}_{m+m'}$ for all $m,m'\in \Z_{\geq 0}$. The unit comes from $1\in \R=C^{\dr}_0(\Sigma^a_0,\Sigma^0_0)$ for $a>0$. See Subsection \ref{subsubsec-product}.
\end{enumerate}

A fundamental property of $H^{\str}_*(Q,K)$ is the invariance by $C^{\infty}$ isotopies of $K$.
\begin{thm}
The unital graded $\R$-algebra $H^{\str}_*(Q,K)$ is independent up to isomorphism on auxiliary data and invariant by changing the orientation of $K$.
Moreover, it is invariant by $C^{\infty}$ isotopies of $K$. (See Proposition \ref{prop-isotopy-invariance}.)
\end{thm}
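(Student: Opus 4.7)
The plan is to reduce all three claims to a single comparison theorem for chain complexes built from a smooth family of auxiliary data, and then to handle orientation-reversal and isotopy-invariance as consequences of that comparison.

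The heart of the argument is invariance under the choice of auxiliary data (the metric on $Q$, the tubular neighborhood of $K$, and the pair $(\epsilon, \delta) \in \mathcal{T}_a$). Given two such choices $\Theta^0, \Theta^1$, I would connect them by a smooth path $\{\Theta^t\}_{t \in [-1, 1]}$ and, using the $[-1,1]$-modeled de Rham chains of Section \ref{subsec-[-1,1]-model}, assemble a thickened chain complex $\widetilde{C}^{<a}_*(\{\Theta^t\})$ together with endpoint evaluation chain maps $\ev_{\pm 1}$ to $C^{<a}_*(\Theta^{\pm 1})$. Each $\ev_{\pm 1}$ is a quasi-isomorphism via ``constant extension in $t$,'' with the requisite chain homotopy living naturally in the $[-1, 1]^2$-modeled chains. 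Passing to $\varprojlim_\epsilon$ and then $\varinjlim_a$ yields an isomorphism $H^{\str}_*(Q, K; \Theta^0) \cong H^{\str}_*(Q, K; \Theta^1)$, and independence of the interpolating path is verified using a $[-1,1]^2$-family comparing any two such paths. Compatibility with the algebra structure follows because the juxtaposition maps $\Sigma^a_m \times \Sigma^{a'}_{m'} \to \Sigma^{a+a'}_{m+m'}$ are manifestly $t$-independent.

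Orientation-reversal of $K$ can be handled by careful sign-tracking. The operator $f_{k,\delta}$ is defined via fiber products whose orientations depend on that of $K$, and reversing the orientation is seen to replace $\delta$ by an appropriate sign-twist; since the twisted data again lies in $\mathcal{T}_a$, the identity on the underlying vector space gives an algebra isomorphism between the resulting complexes. For $C^\infty$-isotopy invariance, I would appeal to the ambient isotopy extension theorem for compact submanifolds: a smooth isotopy $\{K_t\}_{t\in[0,1]}$ extends to an ambient isotopy $\phi_t \colon Q \to Q$ with $\phi_0 = \id$ and $\phi_t(K_0) = K_t$. The diffeomorphism $\phi_1$ pushes forward any auxiliary data $\Theta$ for $(Q, K_0)$ to data $\phi_{1*}\Theta$ for $(Q, K_1)$ and induces isomorphisms of the differentiable spaces $\Sigma^a_m$ and $S_\epsilon$, giving a chain-level isomorphism. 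Combined with auxiliary-data invariance, this yields $H^{\str}_*(Q, K_0) \cong H^{\str}_*(Q, K_1)$ as graded $\R$-algebras.

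The main obstacle will be the first step. The differential $D_\delta = \partial + \sum_k (-1)^{p+kd+1} f_{k, \delta}$ depends on $t$ through both $\delta$ and the metric (the latter controls the length filtrations defining $\Sigma^a_m$), so the parametrized differential on $\widetilde{C}^{<a}_*(\{\Theta^t\})$ must include additional terms coming from differentiation in $t$. Verifying that this parametrized $D$ squares to zero, that $\ev_{\pm 1}$ are genuine chain maps, and that everything is compatible with the inverse system over $\epsilon$, the direct system over $a$, and the graded-associative product will require careful bookkeeping of signs and of the fiber-product constructions of Sections \ref{subsec-split-concatenate} and \ref{subsec-operation}, which is precisely why the $[-1, 1]$- and $[-1, 1]^2$-modeled variants of de Rham chains are developed in advance.
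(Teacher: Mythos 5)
Your orientation-reversal and isotopy-invariance arguments match the paper: the paper indeed defines a sign-twist chain isomorphism $F^a_\epsilon$ paired with the twisted data $\delta_B$ for orientation reversal, and deduces isotopy invariance from the ambient isotopy extension theorem together with metric independence, exactly as you sketch. The serious divergence is in your uniform ``interpolate all auxiliary data'' plan for the first part, and this is where a genuine gap lives.

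The $[-1,1]$-modeled chain complex $\bar{C}^{\dr}_*(X)$ in Section \ref{subsec-[-1,1]-model} is built over a \emph{fixed} differentiable space $X$; it is designed to carry a path of cycles $\bar{\delta}$ so that one path of differentials $\bar{D}_{\bar{\delta}}$ on $\bar{C}^{<a}_*(\epsilon)$ interpolates $D_{e_+\bar\delta}$ and $D_{e_-\bar\delta}$. This is exactly what the paper uses to remove the dependence on $(\epsilon,\delta)$ (Section \ref{subsec-limit}) and, separately, on the reparameterization function $\mu$ (via the family $\bar\mu$ and the twisted operator $\bar f_{k,\bar\delta,\bar\mu}$). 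Those are the only auxiliary data that leave the underlying spaces $\Sigma^a_m$, $S_\epsilon$ untouched. By contrast the metric $g$, the bound $C_0$, and the tubular-neighborhood parameter $\epsilon_0$ each enter into the \emph{definition} of $\Sigma^a_m$ and $S_\epsilon$ (through the speed bound $|\gamma'|\leq C_0$, the length filtration $\sum\len\gamma_k<a$ or $\min\len\gamma_k<\epsilon_0$, and indeed through $\mathcal{L}(K)$ and hence the admissible $a$'s). When you write ``the parametrized differential must include additional terms coming from differentiation in $t$,'' you are locating the symptom but misdiagnosing the obstruction: the problem is not that $D_\delta$ acquires extra terms, it is that the underlying chain group $C^{<a}_*(\epsilon)$ is a different vector space for each value of the parameter. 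The $[-1,1]$-modeled formalism as developed gives you no thickened complex $\widetilde C^{<a}_*(\{\Theta^t\})$ with endpoint evaluations in this situation, and inventing one would require new foundational work.

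The paper avoids this entirely by a different mechanism for those three parameters. For $C_0$ and $\epsilon_0$ it uses the filtered inclusion maps $\mathfrak{j}_{C_0,C_0'}$, $\mathfrak{j}_{\epsilon_0',\epsilon_0}$ between the corresponding complexes, checks via Lemma \ref{lem-epsilon0-indep} that they induce isomorphisms on the first page of the length-filtration spectral sequence, and concludes by Lemma \ref{lem-spectral}. For $g$ it first passes to an auxiliary algebra $\acute{H}^{\str}_*(Q,K)_g=\varinjlim_{C_0}\varprojlim_{\epsilon_0}H^{<a}_*(Q,K)_{(g,C_0,\epsilon_0)}$ (which no longer depends on $C_0,\epsilon_0$), and then compares metrics $g,g'$ by inclusion maps $\mathfrak{j}_{(C_0,\epsilon_0)}\colon\Sigma^a_{m,(g,C_0,\epsilon_0)}\to\Sigma^{ac_a}_{m,(g',C_0c_a,\epsilon_0c_a)}$ built from a compactness constant $c_a$, showing the two induced maps $\mathfrak{J},\mathfrak{J}'$ compose to the transition maps $\acute{I}^{a,bc_b}$ in the direct system and hence become mutually inverse in the limit $a\to\infty$. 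No interpolation of the metric is ever performed. To repair your proposal you would either have to establish the filtered-inclusion/spectral-sequence comparison (as the paper does), or else develop a genuinely new variant of $[-1,1]$-modeled chains that lives over a family of differentiable spaces; the latter is substantially more work than the sign and limit bookkeeping you anticipate.
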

We also give non-trivial computations when $Q=\R^{2d-1}$ for $d\geq 2$. For two specific submanifolds in $\R^{2d-1}$, both of which are diffeomorphic to $S^{d-1}\sqcup S^{d-1}$, we prove that our invariant is isomorphic to the homology of a finitely generated differential graded algebra. Using this computation, we obtain the next result.
\begin{thm}\label{thm-intro-example}
For every $d\geq 2$, there are two non-isotopic oriented submanifolds $K,K'$ in $\R^{2d-1}$ of codimension $d$ such that $\Lambda_{K}$ is isotopic to $\Lambda_{K'}$ as a $C^{\infty}$ submanifold with a spin structure in $UT^*\R^{2d-1}$, while $H^{\str}_*(\R^{2d-1},K)\not\cong H^{\str}_*(\R^{2d-1},K')$. (See Corollary \ref{cor-distinguish}.)
\end{thm}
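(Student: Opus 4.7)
The plan is to deduce the statement from the finitely generated DGA computation asserted in the sentence immediately preceding it. First I would fix the two submanifolds to be the ones used in that computation: take $K$ to be the standard unlink of two $(d-1)$-spheres placed in disjoint balls of $\R^{2d-1}$, and $K'$ to be a singly-linked pair of $(d-1)$-spheres (the codimension-$d$ analogue of the Hopf link, realized for instance by two standard $S^{d-1}$'s bounding embedded $d$-disks that meet transversely in a single point). Both are diffeomorphic to $S^{d-1}\sqcup S^{d-1}$, and the established computation presents $H^{\str}_*(\R^{2d-1},K)$ and $H^{\str}_*(\R^{2d-1},K')$ as the homologies of explicit finitely generated differential graded $\R$-algebras $\mathcal{A}$ and $\mathcal{A}'$.

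I would then distinguish the two homology algebras. For the unlink $K$ no binormal geodesic chord connects the two components, so $\mathcal{A}$ decomposes as a free product of two copies of the single-sphere DGA. For the linked $K'$ the linking produces extra binormal generators in degrees related to $d-2$ and $2d-3$, and these should survive in $H_*(\mathcal{A}')$. To separate the algebras intrinsically, I would extract a basis-free invariant, for instance the Poincar\'e series of the space of indecomposables $Q(H^{\str}_*) = H^{\str}_+ / (H^{\str}_+ \cdot H^{\str}_+)$ in a critical low degree, where $H_*(\mathcal{A}')$ should carry strictly more indecomposables than $H_*(\mathcal{A})$; this rules out any isomorphism of graded unital $\R$-algebras.

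For the remaining smooth-with-spin isotopy statement, each component of $\Lambda_K$ (and of $\Lambda_{K'}$) is diffeomorphic to $S^{d-1}\times S^{d-1}$, since the conormal bundle of each $(d-1)$-sphere in $\R^{2d-1}$ is trivial. In the ambient $(4d-3)$-manifold $UT^*\R^{2d-1}$ each such component has codimension $2d-1\geq 3$, so general position together with the Haefliger--Zeeman classification of smooth embeddings in high codimension provides a smooth ambient isotopy between the two embeddings once their normal bundles and spin structures agree. For $d\geq 3$ simple connectivity of $S^{d-1}\times S^{d-1}$ forces uniqueness of the spin structure; for $d=2$ the spin structure produced by the Legendrian conormal framing is determined locally by the conormal direction and so agrees for both configurations.

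The main obstacle will be the argument of the second paragraph: showing that the additional generators of $\mathcal{A}'$ genuinely survive in homology and are not cancelled by the differential in a way that would accidentally reproduce the algebra structure of $H_*(\mathcal{A})$. This requires a careful analysis of the explicit differential in the finitely generated DGA model and a judicious choice of an algebraic invariant — for example an indecomposables count or a specific non-vanishing product — that is transparently preserved under every graded $\R$-algebra isomorphism and that separates $H_*(\mathcal{A})$ from $H_*(\mathcal{A}')$.
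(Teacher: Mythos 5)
Your proposal uses the same pair of submanifolds as the paper (the codimension-$d$ unlink and Hopf link in $\R^{2d-1}$), and the overall strategy -- distinguish the $H^{\str}_*$ algebras, then show the conormal links are smoothly isotopic -- matches. But both key steps are handled differently and both have gaps.

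For the algebraic distinguisher, your indecomposables invariant $Q(H^{\str}_*)$ does not actually separate the two algebras in the natural degree. Tracing through the DGA models $(\mathcal{A}^{\hopf}_*,\partial+F)$ and $(\mathcal{A}^{\mathrm{unlink}}_*,0)$: both have exactly two indecomposable generators $a_0,a_1$ of degree $d-2$ surviving to homology, and in degree $2d-4$ \emph{every} class is a product of these, so the indecomposables vanish in degree $2d-4$ for both. What differs is the dimension of the full degree-$(2d-4)$ subspace, because the Hopf-link differential produces the relations $[a_0a_1]=[a_1a_0]=0$ (via $F c^1_{0,0}$ and $F c^1_{1,1}$), cutting the dimension from $4$ to $2$. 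Equivalently, you should compare presence of relations among products of indecomposables, not the indecomposables themselves. The paper's Corollary \ref{cor-distinguish} does exactly this: for $d\geq 3$ it compares $\dim H_{2d-4}$ ($2$ vs.\ $4$), and for $d=2$ it observes $H_0(\mathcal{A}^{\hopf}_*)\cong\R[a_0,a_1]/(a_0a_1)$ is commutative while $\mathcal{A}^{\mathrm{unlink}}_0$ is a free noncommutative algebra on two generators. You would need to carry out a computation of this kind, and the invariant you proposed would have to be revised.

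For the smooth isotopy, the Haefliger--Zeeman appeal is both heavier than necessary and not quite applicable as stated: that classification governs single embeddings of spheres (or, in more elaborate versions, specific manifolds in the metastable range), whereas the statement here concerns an isotopy of a \emph{two-component link}. High codimension of each component does not by itself say that two links with the same components are isotopic. The actual mechanism, which the paper uses, is much more elementary: since $\dim\Lambda_{K_0}+\dim\Lambda_{K_1}=(2d-2)+(2d-2)=4d-4 < 4d-3=\dim UT^*\R^{2d-1}$, a generic one-parameter family moving $\Lambda_{K_1}$ meets $\Lambda_{K_0}$ only in a $0$-dimensional set, which can be perturbed away. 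The paper exhibits an explicit rotation of $K_1$ to $K_1^s$, checks that the resulting family of conormals touches $\Lambda_{K_0}$ only at $s=\tfrac12$ along a lower-dimensional fiber, and perturbs. Your argument would need to be reformulated in these general-position terms (and made explicit), rather than by citing a high-codimension classification theorem that does not directly speak to link isotopy. Finally, for $d=2$ your claim that the spin structure "is determined locally by the conormal direction" is not a proof -- $S^1\times S^1$ admits several spin structures, and you would need the explicit construction of Proposition \ref{prop-spin} to verify that the isotopy respects spin, as the paper does.
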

The spin structure on $\Lambda_K$ for any submanifold $K$ in a spin manifold $Q$ is explained in Proposition \ref{prop-spin}.

Another purpose of this paper is to enlighten the relation to Legendrian contact homology. The following result is non-trivial from the construction.
\begin{thm}
When the codimension of $K$ is $2$ and the normal bundle of $K$ is trivial, $H^{\str}_0(Q,K)$ is isomorphic to the cord algebra of $(Q,K)$ over $\R$. (See Corollary \ref{thm-isom-cord-string}.)
\end{thm}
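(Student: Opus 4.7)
The plan is to present $H^{\str}_0(Q,K)$ by generators and relations and match this presentation with the one defining the cord algebra. Since $C^{\dr}_*$ vanishes in negative degree, $\partial$ acts as zero on $C^{\dr}_0$, and with $d=2$ the grading shift $m(d-2)$ in (\ref{chain-cpx-intro}) disappears; hence
\[ H^{<a}_0(\epsilon,\delta) \;=\; \frac{\bigoplus_{m\geq 0} C^{\dr}_0(\Sigma^{a+m\epsilon}_m, \Sigma^0_m)}{\im\Big(D_\delta\colon \bigoplus_{m\geq 1} C^{\dr}_1(\Sigma^{a+m\epsilon}_m,\Sigma^0_m)\to\bigoplus_m C^{\dr}_0\Big)} .\]
Note also that $f_{k,\delta}$ is zero on $C^{\dr}_0$ (its image would lie in $C^{\dr}_{-1}=0$), so every $0$-chain is a cycle; the relations come precisely from $1$-chains $y$ on the various $\Sigma_m$ via $\partial y + \sum_k \pm f_{k,\delta}(y)=0$. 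A first step is therefore to verify that in degree $0$ the double limit (\ref{intro-string-homology}) stabilizes: for fixed $a$, any relation is witnessed by a compact $1$-chain interacting with $K$ only finitely many times, so $H^{<a}_0(\epsilon,\delta)$ becomes independent of $(\epsilon,\delta)$ once $\epsilon$ is small, and the direct limit over $a$ is attained once $a$ is large enough to contain all cords and homotopies of interest.

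Exploiting the triviality of the normal bundle, I would choose a tubular neighborhood $N_\epsilon\cong K\times D^2(\epsilon)$, take $\sigma_1,\sigma_2$ to be short radial paths in the $D^2$-factor along a fixed direction and meeting at $K\times\{0\}$, and pick $\delta\in C^{\dr}_{n-2}(S_\epsilon)$ to be a chain concentrated on such pairs parameterized by $K$ itself. With this choice, $f_{1,\delta}$ applied to a generic $1$-parameter family $\gamma_t$ of paths with endpoints in $K$ returns a $0$-chain on $\Sigma^{a+2\epsilon}_2$ that enumerates the transverse intersections of $(t,s)\mapsto\gamma_t(s)$ with $K$, outputting at each intersection the $2$-tuple obtained by cutting $\gamma_t$ there and inserting the small paths $\sigma_1,\sigma_2$ (which collapse onto the intersection point as $\epsilon\to 0$).

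With this concrete description, the generators of $H^{\str}_0(Q,K)$ reduce, via the concatenation product, to classes of single paths in $Q$ with endpoints in $K$. Relations from $1$-chains on $\Sigma_1$ split into two types: (i) a family avoiding $K$ gives $[\gamma_0]=[\gamma_1]$, so only the homotopy class of a cord matters, and (ii) a family crossing $K$ transversely at a single interior time $t_0$ yields the skein relation $[\gamma_0]-[\gamma_1]=\pm[\gamma^-_{t_0}]\cdot[\gamma^+_{t_0}]$ that defines the cord algebra. The trivial-cord relation of the cord algebra is produced by a $1$-chain contracting a short path to a point in $K$.

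The isomorphism is then the algebra map from the cord algebra to $H^{\str}_0(Q,K)$ sending each cord to its $0$-chain class; well-definedness follows from the relations identified above, and surjectivity is immediate from the product structure and the $m=1$ generation. Injectivity requires ruling out new relations from $1$-chains on $\Sigma_m$ with $m\geq 2$, which should follow from a Leibniz-type compatibility of $D_\delta$ with the concatenation product together with induction on $m$. The main obstacle is precisely this injectivity step, together with a careful bookkeeping of signs needed to match the cord algebra's skein relation exactly; a secondary difficulty is the rigorous stabilization of the inverse and direct limits in degree $0$, so as to avoid any pathology that might obstruct comparison with the purely combinatorial cord algebra.
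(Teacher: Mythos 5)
Your observations about degree $0$ — that $\partial$ kills $C^{\dr}_0$, that $f_{k,\delta}$ kills $C^{\dr}_0$ for degree reasons when $d=2$, so every $0$-chain is a cycle, and that relations come from $D_\delta$ applied to $1$-chains, splitting into $\partial$-type (homotopy) and $f_{k,\delta}$-type (skein) relations — are all correct and match the paper's intuition. But the gap you flag at the end is a real one, and your proposed fix does not close it. Ruling out extra relations from $1$-chains on $\Sigma_m$ with $m\geq 2$ is \emph{not} a consequence of the Leibniz rule for $D_\delta$ together with induction on $m$: Leibniz only tells you that $D_\delta$ is a derivation for $\star$, it does not tell you that every cycle/boundary on $\Sigma_m$ decomposes as a $\star$-product of lower-$m$ data, nor that $H^{\dr}_1(\Sigma^a_m,\Sigma^0_m)$ contains no ``hidden'' classes producing new degree-$0$ relations. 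In fact what is needed is a computation of $H^{\dr}_0$ and a control on $H^{\dr}_1$ of each $\Sigma^a_m$ relative to $\Sigma^0_m$, which is exactly what your direct-presentation approach has no handle on. A direct presentation of $H^{\str}_0$ by generators and relations is also delicate for a more basic reason: de Rham chains are parametrized by arbitrary plots with compactly supported forms, and one cannot simply read off a ``set of generators'' from the definition without first proving that the de Rham picture agrees with the combinatorial one on $H_0$ and $H_1$.

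The paper resolves this in a structurally different way. It introduces an intermediate \emph{transversal string homology} $H^{\pitchfork,<a}_*$ built from genuine singular chains on $\Sigma^a_m$ satisfying jet-transversality conditions, with a differential $D^\pitchfork$ defined by counting transverse intersections with $K$. The identification of this $H^{\pitchfork,<\infty}_0$ with the cord algebra is a clean combinatorial argument (Proposition~\ref{prop-cord-to-string}), and the difficult analytic content is isolated in a chain map $\Phi^{<a}_{(\epsilon,\delta)}$ from the transversal complex to the de Rham complex. That chain map requires a nontrivial filling term $o_{k,(\epsilon,\delta)}$ (Lemma~\ref{lem-o-epsilon}) because the naive map $\Psi$ does not intertwine $f^\pitchfork_k$ with $f_{k,\delta}$ on the nose — a subtlety your proposal's heuristic description of $f_{k,\delta}$ as ``enumerating transverse intersections'' glosses over. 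Finally, both complexes carry the filtration by the number of paths $m$; on the first page the comparison reduces to the statement that $\Psi_{0,m}$ is an isomorphism and $\Psi_{1,m}$ is a surjection for each $m$ (Lemma~\ref{lem-Psi-isom-surj}), and the degree-$0$ isomorphism is then forced by the spectral sequence comparison lemma (Lemma~\ref{lem-spectral}). This column-by-column argument is precisely what replaces your unworked ``induction on $m$'': it is where the potential extra relations from $m\geq 2$ get controlled. The stabilization of the $\epsilon\to 0$ and $a\to\infty$ limits is also not automatic; it relies on the compatibility of $\Phi^{<a}_{(\epsilon,\delta)}$ with the transition maps $k_{(\epsilon',\delta'),(\epsilon,\delta)}$ (Propositions~\ref{prop-Phi-commute} and~\ref{prop-spectral-commute}), which in turn needs the $[-1,1]$-modeled chain machinery. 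In short, the intuition in your proposal is sound, but the missing ingredient — the intermediate transversal complex plus the filtration/spectral sequence comparison — is essential and is not a routine supplement.
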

If $K$ is connected, the cord algebra over $\R$ we consider in this paper is a reduction of the cord algebra over $\Z[H_1(\Lambda_K)]$ defined by Ng in \cite{ng}.
Combining with the result by Ekholm, Etnyre, Ng, and Sullivan in \cite{eens}, the cord algebra for a knot $K$ in $\R^3$ was proved to be isomorphic to the $0$-th degree part of the Legendrian contact homology of $(UT^*\R^3,\Lambda_K)$. Later, another direct proof was given in \cite{celn}.

The author makes the following more radical conjecture when $Q=\R^n$. %(The author expects that this conjecture holds not only for $Q=\R^n$ but for more general $Q$.)
\begin{conj}\label{conj-intro}
For any compact oriented submanifold $K$ in $\R^n$,
$H^{\str}_*(\R^n,K)$ is isomorphic to the Legendrian contact homology of $(UT^*\R^n,\Lambda_K)$ with coefficients in $\R$. 
\end{conj}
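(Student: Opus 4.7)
The plan is to construct an explicit chain-level comparison map from the Chekanov--Eliashberg differential graded algebra of $\Lambda_K$ over $\R$ to the complex $(C^{<a}_*(\epsilon), D_\delta)$ of (\ref{chain-cpx-intro}), and then check that it induces an isomorphism after passing to the limits in (\ref{intro-string-homology}). The starting observation is that Reeb chords of $\Lambda_K \subset UT^*\R^n$ are in bijection with binormal geodesic chords of $K$ in $\R^n$, and that both sides carry a filtration by length/action matching the filtration by $a$ in the definition of $H^{\str}_*(\R^n,K)$. The model I would follow is the proof of the isomorphism in codimension $2$, degree $0$ given in \cite{eens, celn}; the task here is to lift that correspondence to arbitrary codimension and to all degrees.

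First I would define a linear map $\Phi$ sending a Reeb chord $c$ to the de Rham chain on $\Sigma^{a}_{1}$ represented by the boundary-evaluation map on the moduli space of pseudo-holomorphic disks in $T^*\R^n$ with boundary on $L_K$ and one positive puncture asymptotic to $c$; for a word $c_1\cdots c_m$, the image would be the corresponding fiber product of such chains on $\Sigma^a_m$, making essential use of the de Rham fiber-product formalism imported from \cite{Irie-BV, irie-pseudo} and summarized in Section \ref{sec-de-Rham}. The chain map property for $\Phi$ should then follow from an SFT-style compactness argument: breakings of $1$-dimensional components of these moduli spaces correspond either to (i) ordinary boundary of the evaluation chain, contributing $\partial$ in $D_\delta$, or to (ii) a new negative boundary puncture appearing at a very short Reeb chord, contributing $f_{k,\delta}$ when $\delta$ is a de Rham representative of a fundamental class on the space $S_\epsilon$ of short binormal configurations in a tubular neighborhood of $K$.

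The principal geometric input is a neck-stretching argument along $UT^*\R^n \subset T^*\R^n$: in the stretched limit, moduli spaces involved in both the LCH differential and in $\Phi$ should factor into ``thin'' holomorphic pieces in the symplectization of $UT^*\R^n$ (recording the Reeb chord words) glued along the boundary to trees of short paths and gradient-type segments in $\R^n$ with endpoints on $K$, which are precisely the configurations parameterizing points of $\Sigma^a_m$. The operations $f_{k,\delta}$, as described in steps (i)--(iii) of the introduction, are designed to reproduce exactly this tree combinatorics on the $L_K$-side of the neck. The signs $(-1)^{p+kd+1}$ and the grading shift by $m(d-2)$ in (\ref{chain-cpx-intro}) should then match the coherent orientations and Maslov indices on the SFT side; verifying this sign/grading dictionary uniformly in codimension $d$ is one of the delicate points.

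The hardest step, in my view, will be promoting the usual integer-valued holomorphic curve counts of LCH to genuine de Rham chains on the infinite-dimensional spaces $\Sigma^a_m$. The framework of \cite{eesR, ees} is set up for rigid, $0$-dimensional counts, while here one needs a parametric transversality and compactness theory controlling the boundary-evaluation map as a smooth map of differentiable spaces, so that $\Phi(c_1\cdots c_m)$ is defined as a de Rham chain in the sense of Section \ref{sec-de-Rham} rather than a single number. Once $\Phi$ is available and proven to be a chain map, I would establish that it is a quasi-isomorphism by filtering both sides by the action/length and comparing the associated graded complexes via a Morse-theoretic analysis of the length functional on $\Sigma^a_1$ near each binormal critical chord, reducing the global statement to a local computation around each Reeb chord; the main risk here is hidden cancellation or extension problems in the spectral sequence, which may force a finer, action-window-by-action-window induction compatible with the direct and inverse limits in (\ref{intro-string-homology}).
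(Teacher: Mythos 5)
The statement you are addressing is labeled a \emph{conjecture} in the paper; the author does not prove it, and neither do you. What the paper actually establishes is far more limited: Corollary~\ref{thm-isom-cord-string} shows that $H^{\str}_0(Q,K)\cong \mathrm{Cord}(Q,K;\R)$ when $\codim K=2$ and the normal bundle is trivial, and only via the prior results of \cite{eens,celn} (for knots in $\R^3$) does this degree-$0$ piece get tied to Legendrian contact homology. Nothing in the paper addresses higher degrees, higher codimension, or a direct holomorphic-curve comparison. Your proposal is a plausible and reasonably well-informed roadmap for attacking the conjecture, and it does correctly identify the relevant structural matches (Reeb chords vs.\ binormal chords, the action/length filtrations, the tree combinatorics behind $f_{k,\delta}$, the degree shift $m(d-2)$). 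But it is a strategy, not a proof, and you yourself flag the decisive open issues.

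Concretely, the gaps are real and not merely technical bookkeeping. First, promoting rigid curve counts to de Rham chains on $\Sigma^a_m$ requires a parametric transversality and compactness package establishing that the boundary-evaluation maps are smooth plots in the sense of Section~\ref{sec-de-Rham} and that their compactifications have the right codimension-one boundary; this is a new analytic theory, not an import from \cite{eesR,ees}, which are built for $0$-dimensional counts. Second, the neck-stretching/SFT factorization into ``thin'' pieces plus short binormal configurations in $N_\epsilon$ is asserted without a compactness theorem that actually produces that degeneration with the claimed combinatorics, and it is precisely there that the role of $\delta$ as a representative of the Thom class $\Th_\epsilon$ (condition (2) of Definition~\ref{def-class-of-data}) would need to be explained geometrically. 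Third, the sign dictionary between coherent orientations/Conley--Zehnder indices and the $(-1)^{p+kd+1}$, $m(d-2)$ conventions is listed as ``delicate'' but not even set up; in arbitrary codimension $d$ this is genuinely nontrivial and cannot be taken on faith. Finally, even granting a chain map $\Phi$, the quasi-isomorphism claim reduces to a local Morse/Floer comparison near each binormal chord that you do not carry out, and the compatibility of this with the $\varprojlim_{\epsilon\to 0}$ and $\varinjlim_{a\to\infty}$ in (\ref{intro-string-homology}) needs the same kind of careful zig-zag argument (via $[-1,1]$-modeled chains, Lemma~\ref{lem-commute-I-k}, Proposition~\ref{prop-Phi-commute}) that the paper develops just to make the transition maps well-defined. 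In short: you have correctly sketched what a proof \emph{could} look like, but the statement remains a conjecture, and your write-up should say so rather than be framed as a proof.
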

%We refer \cite{D-R, ees} for the definition of Legendrian contact homology $(UT^*\R^n, \Lambda_K)$.
%if the Legendrian contact homology of $(UT^*Q,\Lambda_K)$ can be defined in a similar way.

The Legendrian contact homology with coefficients in $\R$ is an invariant of Legendrian submanifolds with a spin structure\cite{ees-ori,ees}. If Conjecture \ref{conj-intro} is true, then our invariant can be applied to study the contact topology of $UT^*\R^n$. For instance, assuming this conjecture, Theorem \ref{thm-intro-example} would imply that $\Lambda_{K}$ is not isotopic to $\Lambda_{K'}$  as a Legendrian submanifold with a spin structure.

\

\noindent
\textbf{Organization of paper.}

In Section \ref{sec-de-Rham}, general notions of a differentiable space and its de Rham chain complex are introduced. 
In Section \ref{subsec-diff-sp-of-seq-of-path}, the differentiable spaces $\Sigma^a_m$ and $S_{\epsilon}$ are defined. Their de Rham chain complexes are observed in Section \ref{subsec-hmgy-grp}.
Through Section \ref{subsec-split-concatenate} and \ref{subsec-operation}, the operator $f_{k,\delta}$ is defined. 
In Section \ref{subsec-[-1,1]-model}, $[-1,1]$-modeled and $[-1,1]^2$-modeled de Rham chains for path spaces are introduced.
In Section \ref{subsec-def-of-chain-cpx}, we define the chain complexes (\ref{chain-cpx-intro}) and give a couple of computations.
In Section \ref{subsec-chain-cpx-by-[-1,1]}, we consider their variants using those chains in Section \ref{subsec-[-1,1]-model}. They are necessary to define the map $k_{(\epsilon',\delta'), (\epsilon,\delta)}$ in Section \ref{subsec-limit}. 
The definition of $H^{\str}_*(Q,K)$ is given in Section \ref{subsec-string-hmgy}. The independence on auxiliary data is checked in Section \ref{subsec-invariance}, from which the isotopy invariance follows immediately.
In Section \ref{sec-example}, we examine the algebraic structure of $H^{\str}_*(\R^{2d-1},K)$ when $K$ is a higher-dimensional generalization of the Hopf link or the unlink in $\R^3$.
%
%In the last two section, we consider the relation to Legendrian contact homology.
In Section \ref{subsec-cord-alg-and-string-hmgy},  refering to \cite{celn}, we define the cord algebra and its another description as the $0$-th degree part of the string homology. In Section \ref{subsec-construction-of-chain-map}, we construct a graded map from the string homology to $H^{\str}_*(Q,K)$. In Section \ref{subsec-proof-of-isom}, this map is proved to be an isomorphism on the $0$-th degree part.
%Section \ref{sec-connection-to-LCH} is devoted to give a prospect of proving Conjecture \ref{conj-intro}.

%
\begin{acknow}
The author would like to express his deep gratitude to his supervisor Kei Irie for spending hours of discussion and giving so much valuable suggestions and continuous encouragement.
%I would like to thank my current supervisor Kazushi Ueda for supporting my study at the graduate school.
%To my family, I would like to appreciate their financial and emotional support.
This work was supported by JST, the establishment of university fellowships towards 
the creation of science technology innovation, Grant Number JPMJFS2123. 
Part of this work was supported by the WINGS-FMSP program at the Graduate School of Mathematical Science, the University of Tokyo.
\end{acknow}

\section{Differentiable space and de Rham chains}\label{sec-de-Rham}

In this section, the notions of differentiable spaces and de Rham chains are introduced. We also summarize results applied in the latter sections
\begin{rem}
The notion of differentiable space goes back to \cite{chen} by K.-T. Chen. The notion of de Rham chains was proposed by Fukaya in \cite{fuk}, and later, Irie gave the definition in \cite{Irie-BV, irie-pseudo}.
We mainly refer, especially about sign conventions, to \cite{irie-pseudo}. As is mentioned in \cite[Remark 4.1]{Irie-BV}, the definition of plots (elements of a differentiable structure) in this paper is different from that of \cite{chen}.
\end{rem}

\subsection{Notations and conventions}

For $m,N\in \Z_{\geq 0}$, let $\mathcal{U}_{m,N}$ be the set of oriented $m$-dimensional submanifolds of $\R^N$. We then define $\mathcal{U}\coloneqq \bigcup_{m,N\in \Z_{\geq 0}}\mathcal{U}_{m,N}$.
Let us fix a few conventions about orientations.
If we write $\R^n$ for $n\in \Z_{\geq 1}$, this means the manifold $\R^n\in \mathcal{U}_{n,n}$ whose orientation is given so that $dx_1\wedge \dots \wedge dx_n$ is a positive volume form when $(x_1,\dots ,x_n)$ is the standard coordinate of $\R^n$.
If we write $\{0\}$, this means $\{0\}\in \mathcal{U}_{0,0}$ with a positive sign assigned.

Let us think about the orientation of fiber products of oriented manifolds.
For $U,V,M\in \mathcal{U}$, suppose that there are two $C^{\infty}$ maps $f\colon U\to M$, $g\colon V\to M$. We also assume that $g$ is a submersion. (Here after, all submersions are of class $C^{\infty}$.) Then, the fiber product
\[U \ftimes{f}{g} V\coloneqq \{(u,v)\in U\times V\mid f(u)=g(v)\}\]
is a $C^{\infty}$ submanifold of $U\times V$.
In order to determine the orientation at $(u,v)\in U\ftimes{f}{g} V$, we take a right inverse $s\colon T_{g(v)}M\to T_vV$ of $(dg)_v$ (i.e. $(dg)_v\circ s =\id_{T_{g(v)}M}$). Then, there are two isomorphisms
\begin{align*}
&T_{g(v)}M\times \ker (dg)_v \to T_vV\colon (z,y) \mapsto s(z)+y , \\
&T_uU\times  \ker (dg)_v \to T_{(u,v)} (U \ftimes{f}{g} V ) \colon (x,y) \mapsto (x,y+s\circ (df)_u(x)).
\end{align*}
The orientations of $\ker (dg)_v$ and $T_{(u,v)} (U \ftimes{f}{g} V) $ are determined so that the above isomorphisms preserve orientations. Of course, when $X$ and $Y$ are oriented $\R$-vector spaces, we assign the product orientation on $X\times Y$.
In particular, when $M=\{0\}$, this gives the orientation of the product manifold $U\times V$.

For $U\in \mathcal{U}$, $\Omega_c^p(U)$ is the vector space of compactly supported $C^{\infty}$ differential $p$-forms on $U$. When $p<0$ or $p>\dim U$, we define $\Omega^p_c(U)\coloneqq 0$.
For $U,U'\in \mathcal{U}$ and a submersion $\pi \colon U'\to U$, we have an $\R$-linear map
\[\pi_!\colon \Omega^p_c(U')\to \Omega^{p-(\dim U' -\dim U)}_c(U),\]
called the \textit{integration along fibers}.
When $U'=\R^d\times \R^k$, $U=\R^k$ and $\pi (t,x)=x$ for $(t,x)\in U'$, this map is characterized by the following:
For $f\in \Omega^0_c(U')$, $1\leq i_1<\dots <i_a\leq d$ and $1\leq j_1<\dots <j_b\leq k$, if we take $\omega\coloneqq f (dt_{i_1}\wedge \dots \wedge dt_{i_a} \wedge dx_{j_1}\wedge \dots \wedge dx_{j_b})$, then for every $x\in U$,
\[\left( \pi_!(\omega) \right)_x=
\begin{cases}
0 &  \text{ if }a<d , \\
\left( \int_{\R^d}  f(\cdot,x )dt_1\wedge \dots \wedge dt_d \right) (dx_{i_1}\wedge \dots \wedge dx_{i_a})_x &\text{ if }a=d .
\end{cases}\]
For an arbitrary submersion $\pi\colon U'\to U$, $\pi_!$ is defined by taking local charts and a partition of unity on $U$.
%
%For $B,E\subset \mathcal{U}$, suppose that $B$ is compact and a submersion $\pi\colon E\to B$ has a structure of $\R$-vector bundle. Then, it is oriented as a vector bundle so that the Thom class $[\eta]$ for a closed form $\eta \in \Omega_c^{\dim E-\dim B}(E)$ is uniquely determined by $\pi_!\eta \equiv 1\in \Omega^0(B) $.

%\subsection{Differentiable space and de Rham chains}\label{subsec-diff-sp}

\subsection{de Rham chain complex}

\subsubsection{Differentiable space}
We proceed to the definition of differentiable spaces.

\begin{defi}
Let $X$ be a set and $P_X$ be a set of pairs $(U,\varphi)$ of $U\in \mathcal{U}$ and a map $\varphi \colon U \to X$.
We say $P_X$ is a \textit{differentiable structure} on $X$ if it satisfies the following condition:
\begin{itemize}
\item For any $(U,\varphi)\in P_X$, $U'\in \mathcal{U}$ and a submersion $\pi \colon U'\to U$, the pair $(U',\varphi \circ \pi)$ is also an element of $P_X$.
\end{itemize}
We call such pair $(X,P_X)$ a \textit{differentiable space}. An element of $P_X$ is called a \textit{plot} of $(X,P_X)$.
\end{defi}

\begin{ex}\label{ex-diff}
Let $M$ be a manifold. There are two types of canonical differentiable structures on $M$:
\[\begin{cases}
P_M&\coloneqq \{(U,\varphi)\mid \varphi \colon U\to M \text{ is a $C^{\infty}$ map}\}, \\
P^{\reg}_M&\coloneqq \{(U,\varphi) \mid \varphi\colon U\to M \text{ is a submersion}\}.
\end{cases}\]
Clearly, $(M,P_M)$ and $(M,P^{\reg}_M)$ are differentiable spaces. The latter is denoted by $M^{\reg}$. We consider the differentiable structure $P_M$ for any manifold $M$, unless we declare to use $M^{\reg}$.
\end{ex}

\begin{defi}\label{def-prod-subsp} Let $(X,P_X)$, $(Y,P_Y)$ be differentiable spaces and $Z$ be a subset of $X$. Denote the projection map from $X\times Y$ to $X$ (resp. $Y$) by $\pr_X$ (resp. $\pr_Y$), and the inclusion map from $Z$ to $X$ by $\iota_Z$.
\begin{enumerate}
\item We define differentiable structures on $X\times Y$ and $Z$ by
\begin{align*}
P_{X\times Y}&\coloneqq \{(U,\varphi) \mid \pr_X\circ \varphi\in P_X \text{ and } \pr_Y\circ \varphi \in P_Y\} , \\
P_Z &\coloneqq \{(U,\varphi) \mid \iota_Z \circ \varphi \in P_X\}.
\end{align*}
\item Let $f\colon X\to Y$ be a map. We say $f$ is a \textit{smooth map} if $(U,f\circ \varphi)\in P_Y$ for any $(U,\varphi)\in P_X$.
\end{enumerate}
\end{defi}
In the case of the above definition, we simply call $(Z,P_Z)$ a \textit{subspace} of $(X,P_X)$. Note that
given a set $W$ and two maps $f \colon X\to W$ and $g\colon Y\to W$, the fiber product $X\ftimes{f}{g} Y$ becomes a differentiable space as a subspace of $(X\times Y, P_{X\times Y})$.

\subsubsection{de Rham chains}
Next, we introduce the notion of de Rham chain complex. Here after, if we say that $X$ is a differentiable space, this means that $X$ is equipped with a differentiable structure denoted by $P_X$.

Let $X$ be a differentiable space. We consider a graded $\R$-vector space
\[A_*(X)\coloneqq \bigoplus_{(U,\varphi)\in P_X}\Omega_c^{\dim U -*}(U).\]
%We denote any differential form $\omega$ in the component of $(U,\varphi)\in P_X$ by $(U,\varphi,\omega)$. 
For $(U,\varphi)\in P_X$ and $\omega\in \Omega^{\dim U-*}_c(U)$, let $(U,\varphi,\omega)$ denote the element of $A_*(X)$ such that its component for $(V,\psi)\in P_X$ is
\[(U,\varphi,\omega)_{(V,\psi)}=\begin{cases}
\omega &\text{ if }(V,\psi)=(U,\varphi) , \\
0 & \text{ if } (V,\psi) \neq (U,\varphi).
\end{cases}\]
We take a linear subspace $Z_*(X)$ of $A_*(X)$ generated by
\[\{ (U',\varphi \circ \pi , \omega)- (U, \varphi, \pi_!\omega)\mid (U,\varphi)\in P_X \text{ and }\pi \colon U'\to U \text{ is a submersion} \}.\]
Then we define a quotient vector space
\[C^{\dr}_*(X)\coloneqq A_*(X)/Z_*(X).\]
The equivalence class of $(U,\varphi,\omega)\in A_*(X)$ in $C^{\dr}_*(X)$ is denoted by $[U,\varphi,\omega]$. 
We also define an $\R$-linear map $\partial \colon C^{\dr}_*(X)\to C^{\dr}_{*-1}(X)$ of degree $(-1)$ by
\[\partial [U,\varphi,\omega] \coloneqq (-1)^{|\omega|+1} [U,\varphi,d\omega] . \]
This map is well-defined and $\partial \circ \partial =0$ holds. 
$(C^{\dr}_*(X), \partial)$ is called the \textit{de Rham chain complex} of a differentiable space $X$, and its elements are called \textit{de Rham chains} of $X$.
By taking its homology, we obtain
\[H^{\dr}_*(X) \coloneqq H_*(C^{\dr}_*(X),\partial).\]
%When the differentiable structure on $X$ is clear, we will write $(X,P_X)$ by $X$ and $C^{\dr}_*(X,P_X)$ by $C^{\dr}_*(X)$.
In addition, a functoriality holds. Namely, any smooth map $f\colon X\to Y$ induces a chain map
\[f_*\colon C^{\dr}_*(X)\to C^{\dr}_*(Y)\colon [U,\varphi, \omega]\mapsto [U,f\circ \varphi, \omega].\]
\begin{rem}The following are fundamental techniques to compute de Rham chains:
\begin{enumerate}
\item For $[U,\varphi,\omega]\in C^{\dr}_*(X)$, suppose that $V\subset U$ is an open subset containing $\supp \omega$. Then $[U,\varphi,\omega]=[V,\rest{\varphi}{V}, \rest{\omega}{V}]\in C^{\dr}_*(X)$.
\item If $(\R\times U, \varphi)\in P_X$ satisfies $\varphi(s,\cdot) =\begin{cases} \varphi_0 & \text{ if }s\leq 0 , \\ \varphi_1 & \text{ if }s\geq 1, \end{cases}$ for some $(U,\varphi_0),(U,\varphi_1)\in P_X$, then
\[ \partial [\R\times U,\varphi,(-1)^{ |\omega| } \chi\times \omega] = [U,\varphi_1,\omega] - [U,\varphi_0,\omega] \in C^{\dr}_*(X)\]
for a closed form $\omega\in \Omega^{\dim U-*}_c(U)$ and $\chi \colon \R\to [0,1]$ such that $\supp \chi$ is compact and $\chi(s)=1$ for every $s\in [0,1]$.
\end{enumerate}
\end{rem}

\begin{ex}\label{ex-hmgy-mfd-reg}
Let $M$ be an oriented smooth manifold. The de Rham chain complex of $M^{\reg}$ is naturally isomorphic to $(\Omega_c^{\dim M-*}(M),d)$ through the map
\[C_p^{\dr}(M^{\reg})\to \Omega_c^{\dim M-p}(M) \colon [U,\varphi,\omega] \mapsto (-1)^{s(p)} \varphi_!\omega. \]
Here $s(p)\coloneqq (p-\dim M)(p-\dim M -1)/2$.
Hence $H^{\dr}_*(M^{\reg})$ is isomorphic to the compactly supported de Rham cohomology $H^{\dim M-*}_{c,\dr}(M)$.
%Therefore, via Poincar\'{e} duality, %between the compactly supported cohomology of $M$ and the homology of $M$, 
%$H^{\dr}_*(M^{\reg})$ is isomorphic to the singular homology group $H^{\sing}_*(M; \R)$.
\end{ex}

%Therefore, $H^{\dr}_*(\cdot)$ can be seen as a functor from the category of differentiable spaces to the category of $\R$-vector spaces

Let us define the de Rham chain complex for a pair of differentiable spaces. A smooth map $f\colon X\to \R$ is said to be  \textit{approximately smooth} if there exists a decreasing sequence $(f_j)_{j\in \Z_{\geq 1}}$ of smooth maps from $X$ to $\R$ such that $\lim_{j\to \infty}f_j(x) = f(x)$ for every $x\in X$. The following lemma is proved in \cite[Lemma 4.11]{Irie-BV}.
\begin{lem}\label{lem-approx-smooth-level}
For an approximately smooth function $f\colon X\to \R$, let $X^a\coloneqq f^{-1}( (-\infty,a))$ for every $a\in \R\cup \{\infty\}$. Then for $a,b\in \R\cup \{\infty\}$ with $a\leq b$, the linear map $i_*\colon C^{\dr}_*(X^a )\to C^{\dr}_*( X^b )$,
which is induced by the inclusion map $i\colon X^a\to X^b$, is injective.
\end{lem}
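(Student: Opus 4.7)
The plan is to construct, for each $j\in\Z_{\geq 1}$, a well-defined linear map $r_j\colon C^{\dr}_*(X^b)\to C^{\dr}_*(X^a)$ so that for every chain $c\in C^{\dr}_*(X^a)$ one has $r_j(i_*(c))=c$ for all $j$ sufficiently large. Injectivity of $i_*$ follows immediately: if $i_*(c)=0$, apply $r_j$ with $j$ large to conclude $c=0$.

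To construct $r_j$, fix a smooth cutoff $\chi\colon\R\to[0,1]$ with $\chi\equiv 1$ on $(-\infty,a-\epsilon]$ and $\chi\equiv 0$ on $[a-\delta,\infty)$, for parameters $0<\delta<\epsilon$ chosen small depending on the chain to be tested. For a plot $(V,\psi)$ of $X^b$, set $V^{(j)}:=(f_j\circ\psi)^{-1}((-\infty,a))$; this is open in $V$ and $\psi|_{V^{(j)}}$ is a plot of $X^a$, since $f\le f_j<a$ there. The function $\chi\circ f_j\circ\psi\in C^\infty(V,[0,1])$ is supported inside $\{f_j\circ\psi\le a-\delta\}\subset V^{(j)}$, so for $\eta\in\Omega^*_c(V)$ the product $(\chi\circ f_j\circ\psi)\eta$ has compact support contained in $V^{(j)}$; one defines
$$r_j\bigl([V,\psi,\eta]\bigr):=\bigl[V^{(j)},\,\psi|_{V^{(j)}},\,(\chi\circ f_j\circ\psi)\eta\bigr]\in C^{\dr}_*(X^a).$$
The critical check is that $r_j$ descends from $A_*(X^b)$ to $C^{\dr}_*(X^b)$, i.e.\ respects the submersion relations generating $Z_*(X^b)$. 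For a generator $(U',\psi\circ\pi,\eta)-(V,\psi,\pi_!\eta)$, the restriction of $\pi$ to ${U'}^{(j)}:=\pi^{-1}(V^{(j)})$ is a submersion onto $V^{(j)}$; the identity $\chi\circ f_j\circ\psi\circ\pi=(\chi\circ f_j\circ\psi)\circ\pi$ together with the projection formula yield
$$(\pi|_{{U'}^{(j)}})_!\bigl((\chi\circ f_j\circ\psi\circ\pi)\eta\bigr)=(\chi\circ f_j\circ\psi)\cdot\pi_!\eta\quad\text{on }V^{(j)},$$
so $r_j$ sends the generator to a generator of $Z_*(X^a)$, which vanishes in $C^{\dr}_*(X^a)$.

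To verify $r_j(i_*(c))=c$ for $j$ large, represent $c$ by $[W,\varphi,\omega]$ with $\varphi(W)\subset X^a$ and, by the open-restriction remark, further replace $W$ by a relatively compact open neighbourhood $V$ of $\supp\omega$ in $W$. Then $f\circ\varphi<a$ on the compact set $\overline V$, and $f\circ\varphi\le\alpha$ on $\supp\omega$ for some $\alpha<a$. Choosing $\delta,\epsilon$ with $\alpha<a-\epsilon$ ensures $\chi\equiv 1$ on $(-\infty,\alpha]$; Dini's theorem applied to $f_j\circ\varphi\searrow f\circ\varphi$ on $\overline V$ then gives $f_j\circ\varphi<a$ on $\overline V$ and $f_j\circ\varphi\le\alpha$ on $\supp\omega$ for all $j$ large. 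Hence $V^{(j)}=V$ and $\chi\circ f_j\circ\varphi\equiv 1$ on $\supp\omega$, so $r_j(i_*(c))=[V,\varphi|_V,\omega|_V]=c$. The main obstacle is the well-definedness step in the previous paragraph: it crucially uses that $f_j$ is a \emph{globally} smooth function on $X$, not merely plot-wise smooth, so that $\chi\circ f_j\circ\psi$ is compatible with pullback along submersions of plots, enabling the projection-formula identity above.
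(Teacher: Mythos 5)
Your proof is essentially correct and follows the standard cutoff-retraction strategy; note that the paper does not prove this lemma itself but defers to \cite[Lemma 4.11]{Irie-BV}, and your construction of the partial left inverses $r_j$ (with well-definedness checked via the projection formula $\pi_!(\pi^*\alpha\cdot\eta)=\alpha\cdot\pi_!\eta$ and locality of fiber integration) is the argument given there.

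Two small technical repairs are needed in the final paragraph. First, Dini's theorem requires the limit function to be continuous, but $f\circ\varphi$ --- as a decreasing pointwise limit of continuous functions --- is in general only upper semicontinuous, so the appeal to Dini on $\overline{V}$ is not literally valid. The correct substitute is elementary: if $C$ is compact and $f\circ\varphi<\beta$ on $C$, then $\bigl\{f_j\circ\varphi<\beta\bigr\}_{j\geq 1}$ is an increasing family of open sets covering $C$, so $f_j\circ\varphi<\beta$ on $C$ for all large $j$. Second, you must take $\alpha$ \emph{strictly} above $\sup_{\supp\omega}(f\circ\varphi)$: if $\alpha$ equals this supremum, the bound $f_j\circ\varphi\leq\alpha$ on $\supp\omega$ may fail for every finite $j$, since the decreasing sequence $(f_j)$ need not attain its limit at any finite stage. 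With $\alpha$ chosen strictly between $\sup_{\supp\omega}(f\circ\varphi)$ and $a$, and $\epsilon$ with $\alpha<a-\epsilon$, the compactness argument applied with $C=\overline{V}$, $\beta=a$ gives $V^{(j)}=V$, and applied with $C=\supp\omega$, $\beta=\alpha$ gives $\chi\circ f_j\circ\varphi\equiv 1$ on $\supp\omega$; with these adjustments, and representing a general $c$ as a finite sum of generators and choosing $\alpha$, $\epsilon$, $j$ uniformly over the finitely many terms, the proof is complete.
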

In the case of the above lemma, we define a quotient complex
\[C^{\dr}_*(X^b,X^a) \coloneqq C^{\dr}_*(X^b)/i_*(C^{\dr}_*(X^a)).\]
Its homology is denoted by $H^{\dr}_*(X^b,X^a)$.

%\begin{rem}
%For $Z\subset X$, there exists a natural inclusion map $C^{\dr}_*(Z)\to C^{\dr}_*(X)\colon [U,\varphi, \omega]\to [U,\iota_Z\circ \varphi,\omega]$. We sometimes think of $C^{\dr}_*(Z)$ as a subcomplex of $C^{\dr}_*(X)$ via this map.
%\end{rem}

Next, we define a fiber product of de Rham chains.

\begin{defi}\label{def-fib-prod-ope}
Let $(X,P_X)$ and $(Y,P_Y)$ be differentiable spaces. Suppose that we have an oriented manifold $M$ of dimension $n$ and
two smooth maps
\[f\colon (X,P_X) \to (M,P_M) ,\ g\colon (Y,P_Y)\to (M,P^{\reg}_M)=M^{\reg}\]
Then, we define a linear map
\[C^{\dr}_{p+n}(X)\otimes C^{\dr}_{q+n}(Y) \to C^{\dr}_{p+q+n}(X\ftimes{f}{g} Y)\colon x\otimes y \to x \ftimes{f}{g} y \]
by
\[x\ftimes{f}{g} y\coloneqq (-1)^{p|\eta|}[ W , \rest{(\varphi\times \psi)}{W} , \rest{ (\omega\times \eta) }{W}]\]
for $x=[U,\varphi,\omega]\in C^{\dr}_{p+n}(X)$ and $y=[V,\psi,\eta]\in C^{\dr}_{q+n}(Y)$. Here, $W \coloneqq U \ftimes{f\circ \varphi}{g\circ \psi} V$ is a fiber product over $M$.
% and $\omega\times \eta \coloneqq \pi_U^*\omega \wedge \pi_V^*\eta \in \Omega_c^{|\omega|+|\eta|}(U\times V)$.
\end{defi}

It is straightforward to check the well-definedness of $x\ftimes{f}{g} y$. It can also be checked that
\[\partial(x\ftimes{f}{g} y)=(\partial x)\ftimes{f}{g} y +(-1)^px\ftimes{f}{g} (\partial y)\]
holds for any $x\in C^{\dr}_{p+n}(X)$ and $y\in C^{\dr}_{q+n}(Y)$.
When $M=\{0\}$, we simply write $x\ftimes{f}{g} y$ by $x\times y$.

\subsubsection{Collection of results about de Rham chain complex}

In the rest of this section, let us summarize a couple of basic results about de Rham chain complex.
The first result can be compared with the computation for $M^{\reg}$ in Example \ref{ex-hmgy-mfd-reg}. Hereafter, $H^{\sing}(\cdot)$ denotes the singular homology with coefficients in $\R$.
\begin{prop}\label{prop-hmgy-of-mfd}
For every oriented manifold $M$, there exists a canonical isomorphism
\[\Psi_{M}\colon H^{\sing}_*(M)\to H^{\dr}_*(M)\]
such that for any $C^{\infty}$ map $f\colon M\to N$ between oriented manifolds, $\Psi_N\circ f_*=f_*\circ \Psi_M$ holds.
\end{prop}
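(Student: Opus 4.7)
The proposition is a de Rham-type theorem identifying de Rham chains of a manifold with its singular homology. My plan is to construct $\Psi_M$ at the chain level from smooth singular chains, verify naturality by direct inspection, and show it is an isomorphism via a Mayer--Vietoris and homotopy-invariance reduction to the Euclidean case.

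For the construction, fix once and for all compactly supported smooth functions $\chi_p \in \Omega^0_c(\R^p)$ ($p \ge 0$) with $\chi_p \equiv 1$ on the standard simplex $\Delta^p$ and supported in a small neighborhood of it. Given a smooth singular $p$-simplex $\sigma \colon \Delta^p \to M$, I would extend it (via a Whitney-type extension) to a smooth map $\tilde\sigma \colon \R^p \to M$ and set
\[
\tilde\Psi_M(\sigma) := [\R^p, \tilde\sigma, \chi_p] \in C^{\dr}_p(M),
\]
extending $\R$-linearly to smooth singular chains. Independence from $\tilde\sigma$ follows at the homology level by joining two extensions via a plot on $\R\times\R^p$ and invoking the boundary formula (2) of the remark preceding Example \ref{ex-hmgy-mfd-reg}, which exhibits the difference as a chain boundary.

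The first nontrivial step is that $\tilde\Psi_M$ is a chain map up to homology. One has $\partial[\R^p,\tilde\sigma,\chi_p]=-[\R^p,\tilde\sigma,d\chi_p]$, and $d\chi_p$ is supported in a collar of $\partial\Delta^p$. Choosing $\chi_p$ so that this collar splits naturally over the codimension-one faces, I would decompose $d\chi_p$ via a partition of unity subordinate to the faces; a Stokes-type homotopy collapsing each piece of the collar onto its face shows that the $i$-th summand is homologous to $(-1)^i\tilde\Psi_M(d_i\sigma)$. Combining with the canonical isomorphism from smooth singular homology to singular homology with $\R$-coefficients yields the graded map $\Psi_M$. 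Naturality under a smooth $f\colon M\to N$ is immediate on chains, since $f\circ\tilde\sigma$ extends $f\circ\sigma$, so $f_*\tilde\Psi_M(\sigma)=\tilde\Psi_N(f_*\sigma)$.

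To prove $\Psi_M$ is an isomorphism, I would establish (i) homotopy invariance of $H^{\dr}_*(\cdot)$ via a chain homotopy built from the $\R\times U$ plot boundary formula, (ii) Mayer--Vietoris for $H^{\dr}_*(\cdot)$ associated to an open cover $M=V_1\cup V_2$, and (iii) the direct computation $H^{\dr}_*(\R^n)\cong \R$ concentrated in degree $0$, obtained by contracting every plot $\varphi\colon U\to\R^n$ linearly to a constant plot. A good-cover induction then mirrors the standard proof of de Rham's theorem. The hard part, I expect, will be (ii): splitting $[U,\varphi,\omega]$ using a partition of unity on $U$ subordinate to $\{\varphi^{-1}(V_j)\}$ is easy on forms, but constructing a short exact sequence that descends through the $Z_*$-relations requires a subdivision of plots analogous to barycentric subdivision in the singular theory, together with a proof that this subdivision operator is chain-homotopic to the identity on $C^{\dr}_*(M)$.
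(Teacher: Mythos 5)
The paper does not prove this proposition; it cites Irie \cite[Sections 4.7 and 5]{Irie-BV}, where $\Psi_M$ is defined as the composite $H^{\sing}_*(M)\cong H^{\sm}_*(M)\to H^{\dr}_*(M)$ and its bijectivity is established by the same Eilenberg--Steenrod-style reduction you outline (homotopy invariance, Mayer--Vietoris, good-cover induction). So your overall plan is the right one and matches the cited proof in shape. However, two of your steps have genuine problems.

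First, the boundary compatibility of $\tilde\Psi_M$ is where most of the work lies, and your sketch glosses over the main difficulty. Your plot $[\R^p,\tilde\sigma,\chi_p]$ depends on the Whitney extension $\tilde\sigma$, and the extension chosen for $\sigma$ has no reason to restrict to the extension chosen for $d_i\sigma$; so after splitting $d\chi_p$ over the faces, the piece near the $i$-th face is a plot built from $\tilde\sigma$, not from $\widetilde{d_i\sigma}$, and the two must be reconciled by a homotopy through $M$ (not just a fiberwise collapse in $\R^p$). Moreover, near the codimension-$\ge 2$ faces of $\Delta^p$ the pieces of $d\chi_p$ overlap, and the ``Stokes-type homotopy collapsing each piece of the collar onto its face'' has to be constructed so that these overlaps cancel coherently --- this is exactly the content you would need to prove and cannot simply assert. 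Since you also acknowledge the map is only ``a chain map up to homology,'' you would additionally need to show, not just that cycles go to cycles, but that the discrepancy $\partial\circ\tilde\Psi_M-\tilde\Psi_M\circ\partial^{\sing}$ is chain-null (so that the induced map on homology is well-defined and compatible with long exact sequences in the Mayer--Vietoris step). None of this is automatic from the informal description given.

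Second, your diagnosis of the Mayer--Vietoris difficulty is off target. Barycentric subdivision of plots is unnecessary: given a partition of unity $\{\rho_1,\rho_2\}$ on $M$ subordinate to $\{V_1,V_2\}$, the decomposition $\omega=(\rho_1\circ\varphi)\omega+(\rho_2\circ\varphi)\omega$ is manifestly compatible with the relations generating $Z_*(M)$, because $\rho_j\circ(\varphi\circ\pi)=(\rho_j\circ\varphi)\circ\pi$ commutes with $\pi_!$; then each summand, having support in $\varphi^{-1}(V_j)$, is equal (via the open-restriction identity in Remark 2.3(1)) to a chain of $V_j$. This already yields surjectivity of $C^{\dr}_*(V_1)\oplus C^{\dr}_*(V_2)\to C^{\dr}_*(M)$ with no subdivision. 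What you actually still need to justify, and do not mention, is exactness at the other two spots of the Mayer--Vietoris sequence: that $C^{\dr}_*(V)\to C^{\dr}_*(M)$ is injective for open $V\subset M$ (the open-subset analogue of Lemma \ref{lem-approx-smooth-level}) and that $C^{\dr}_*(V_1)\cap C^{\dr}_*(V_2)=C^{\dr}_*(V_1\cap V_2)$ inside $C^{\dr}_*(M)$ (an excision-type statement in the spirit of Proposition \ref{prop-excision}). Those are the substantive lemmas; the subdivision apparatus you expect to need is a red herring imported from the singular theory, where it is required precisely because partitions of unity are unavailable.
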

For the details of the construction of $\Psi_M$, see \cite[Section 4.7]{Irie-BV}. It is the composition of a natural isomorphism between $H^{\sing}_*(M)$ and $H^{\sm}_*(M)$ (the homology of smooth singular chains in $M$) and a canonical map from $H^{\sm}_*(M)$ to $H^{\dr}_*(M)$.
For the proof that $\Psi_M$ is an isomorphism, see \cite[Section 5]{Irie-BV}.
This result can be extended to relative homology groups for $(M,N)$, where $N$ is an open submanifold of $M$ such that $N=f^{-1}((-\infty,a))$ for some approximately smooth map $f\colon M\to \R$.

Next, let $f,g\colon X\to Y$ be smooth maps between differentiable spaces. We say $f$ is \textit{homotopic} to $g$ if there exists a smooth map $H\colon \R\times X\to Y $ such that $H(t, x)=f(x)$ for $t\leq 0$ and $H(t,x)=g(x)$ for $t\geq 1$.
%Here, we think of the manifold $\R$ as a differentiable space $(\R, P_{\R})$.
Then we have the following result. For the proof, see \cite[Proposition 4.7]{Irie-BV}. 
\begin{prop}\label{prop-chain-htpy}
For two smooth map $f,g \colon X\to Y$, if $f$ is homotopic to $g$, then there exists a chain homotopy $K\colon C^{\dr}_*(X)\to C^{\dr}_{*+1}(Y)$ such that $\partial K + K\partial =f_* -g_*$. In particular, $f_*=g_*\colon H^{\dr}_*(X)\to H^{\dr}_*(Y)$ holds.
\end{prop}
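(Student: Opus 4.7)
The plan is to construct $K$ explicitly as a cylinder-type chain, using the given homotopy $H$ together with a cutoff function, and then verify the chain homotopy identity essentially by Stokes' theorem in the $\R$-direction, which in this framework is encoded by the boundary formula recalled in the remark just before Example \ref{ex-hmgy-mfd-reg}.

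Fix a smooth cutoff $\chi\colon\R\to[0,1]$ with compact support and $\chi\equiv 1$ on a neighborhood of $[0,1]$. For a generator $[U,\varphi,\omega]\in C^{\dr}_p(X)$, I would define
\[K[U,\varphi,\omega]\coloneqq -\bigl[\R\times U,\;H\circ(\id_{\R}\times\varphi),\;(-1)^{|\omega|}\chi\times\omega\bigr]\in C^{\dr}_{p+1}(Y).\]
The pair $(\R\times U,H\circ(\id_{\R}\times\varphi))$ is indeed a plot of $Y$: the map $\id_{\R}\times\varphi\colon\R\times U\to\R\times X$ is smooth into the product differentiable space, and $H$ is smooth by hypothesis. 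To see that $K$ descends from $A_*(X)$ to the quotient $C^{\dr}_*(X)$, for a submersion $\pi\colon U'\to U$ one uses that $\id_{\R}\times\pi$ is again a submersion with $(\id_{\R}\times\pi)_!(\chi\times\omega)=\chi\times\pi_!\omega$, so the defining relation $(U',\varphi\circ\pi,\omega)\sim(U,\varphi,\pi_!\omega)$ of $Z_*(X)$ is mapped by $K$ to the analogous relation on $\R\times U$.

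Next, I would compute $(\partial K+K\partial)[U,\varphi,\omega]$ by applying the definition of $\partial$ together with the Leibniz rule $d(\chi\times\omega)=d\chi\wedge\omega+\chi\wedge d\omega$. Writing $H_\varphi(s,u)\coloneqq H(s,\varphi(u))$, both $\partial K[U,\varphi,\omega]$ and $K\partial[U,\varphi,\omega]$ produce a term of the form $[\R\times U,H_\varphi,\chi\wedge d\omega]$ with opposite signs, which cancel, leaving
\[\partial K[U,\varphi,\omega]+K\partial[U,\varphi,\omega]=\bigl[\R\times U,\,H_\varphi,\,d\chi\wedge\omega\bigr].\]
Since $\chi$ is constant on a neighborhood of $[0,1]$, the $1$-form $d\chi$ splits smoothly as $\alpha_-+\alpha_+$ with $\supp\alpha_-\subset(-\infty,0)$, $\supp\alpha_+\subset(1,\infty)$ and $\int_{\R}\alpha_-=1$, $\int_{\R}\alpha_+=-1$. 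On $\supp\alpha_-$ the map $H_\varphi$ is independent of $s$ and equals $f\circ\varphi$, so it factors through the projection $\R\times U\to U$; applying the submersion equivalence and integration along the fiber $\R$ yields $[\R\times U,H_\varphi,\alpha_-\wedge\omega]=f_*[U,\varphi,\omega]$. The $\alpha_+$-piece is treated identically and contributes $-g_*[U,\varphi,\omega]$, so the two pieces add up to $(f_*-g_*)[U,\varphi,\omega]$, which is the desired identity.

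The main obstacle I expect is sign bookkeeping. The sign in the definition of $\partial$, the factor $(-1)^{|\omega|}$ built into $K$, the Koszul sign in $d(\chi\times\omega)$, and the orientation convention for integration along fibers all interact, and they must be arranged so that the $\chi\wedge d\omega$ contributions cancel exactly and the surviving term produces $f_*-g_*$ rather than $g_*-f_*$. Once the signs are consistent, the corollary $f_*=g_*$ on $H^{\dr}_*$ is immediate from the chain homotopy identity.
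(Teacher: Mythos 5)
Your construction of $K$ by a cylinder-type chain $-\bigl[\R\times U,\,H\circ(\id_{\R}\times\varphi),\,(-1)^{|\omega|}\chi\times\omega\bigr]$ is the right one, and it is presumably the same approach as the cited reference \cite[Proposition 4.7]{Irie-BV}: the boundary formula in the remark before Example \ref{ex-hmgy-mfd-reg} is exactly the special case of your computation when $\omega$ is closed. Your verification that $(\R\times U,H\circ(\id_{\R}\times\varphi))$ is a plot, the cancellation of the $\chi\wedge d\omega$ terms, and the evaluation of the $d\chi\wedge\omega$ term by splitting $d\chi=\alpha_-+\alpha_+$ and integrating along the $\R$-fiber are all correct.

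There is, however, a concrete error in your justification of well-definedness. You assert $(\id_{\R}\times\pi)_!(\chi\times\omega)=\chi\times\pi_!\omega$, but this is off by a sign: with the paper's convention for $\pi_!$ (stated for $\R^d\times\R^k\to\R^k$ with the fiber factor \emph{first}, and extended by orientation-preserving charts), the fiber of $\id_{\R}\times\pi\colon\R\times U'\to\R\times U$ sits \emph{after} the $\R$-factor, so its induced orientation is reversed when $\dim U'-\dim U$ is odd, and one actually has $(\id_{\R}\times\pi)_!(\chi\times\omega)=(-1)^{\dim U'-\dim U}\,\chi\times\pi_!\omega$. If the identity you wrote were true, then your factor $(-1)^{|\omega|}$ (which depends on $\dim U$, not just on the chain degree $p$) would be \emph{incompatible} with the defining relation $(U',\varphi\circ\pi,\omega)\sim(U,\varphi,\pi_!\omega)$, and $K$ would fail to descend whenever $\dim U'-\dim U$ is odd. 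In fact, the correct sign $(-1)^{\dim U'-\dim U}$ in the pushforward is precisely what your $(-1)^{|\omega|}$ is there to absorb (since $|\omega|-|\pi_!\omega|=\dim U'-\dim U$), so the formula for $K$ is right and well-defined after all — but the reasoning you gave would, if taken literally, show the opposite. Since you flagged sign bookkeeping as the expected obstacle, this is exactly the spot you should revisit.
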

%We say $g\colon Y\to X$ is a \textit{homotopy inverse} of $f\colon X\to Y$ if $g\circ f$ is homotopic to $\id_X$ and $f\circ g$ is homotopic to $\id_Y$.
\begin{rem}For three smooth maps $f,g,h\colon X\to Y$ such that $f$ is homotopic to $g$ and $g$ is homotopic to $h$, we can ask whether $f$ is homotopic to $h$. In fact, if the differentiable structure $P_Y$ of $Y$ satisfies the following condition, such transitivity holds (The proof is straightforward.):
\begin{itemize}
\item For any $U\in \mathcal{U}$ and  $(U_1,\varphi_1),(U_2,\varphi_2)\in P_Y$ such that $(U_i)_{i=1,2}$ is an open cover of $U$ and $\rest{\varphi_1}{U_1\cap U_2}= \rest{\varphi_2}{U_1\cap U_2}$, $(U,\varphi)\in P_Y$ holds for $\varphi\colon U\to Y$ which maps $u\in U_i$ to $\varphi_i(u)$ ($i=1,2$).
\end{itemize}
All differentiable spaces appearing after Section \ref{sec-sp-of-paths} satisfy this condition.
However, as mentioned in \cite[Remark 4.4]{Irie-BV}, it seems difficult in general case to prove such transitivity.
\end{rem}

The last one is a result about excisions.
\begin{prop}\label{prop-excision}
Let $X$ be a differentiable space and $Y=f^{-1}((-\infty,a))\subset X$ for some approximately smooth function $f\colon X\to \R$ and $a\in \R$. Suppose there is another  approximately smooth function $g\colon X\to \R$ and $b_0\in \R$ such that $g^{-1}((b_0,\infty))\subset Y$. For every $b>b_0$, let $X^b\coloneqq g^{-1}((-\infty,b))$ and $Y^b\coloneqq (\rest{g}{Y})^{-1}((-\infty,b))$. Then, the inclusion map of pairs $i\colon (X^b,Y^b)\to (X,Y)$ induces an isomorphism 
$i_*\colon C^{\dr}_*(X^b,Y^b) \to C^{\dr}_*(X,Y)$.
\end{prop}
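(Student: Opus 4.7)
The plan is to construct an explicit inverse $p\colon C^{\dr}_*(X,Y)\to C^{\dr}_*(X^b,Y^b)$ of $i_*$ via a smooth cutoff built from $g$. The key observation is that the hypothesis forces $X\setminus X^b\subset g^{-1}((b_0,\infty))\subset Y$, so on every plot $\varphi\colon U\to X$ the smooth function $g\circ\varphi$ cleanly separates the ``$X^b$-side'' $(g\circ\varphi)^{-1}((-\infty,b))$ from a neighborhood of its complement which is automatically contained in $\varphi^{-1}(Y)$.

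First I would fix constants $b_0<b''<b'<b$ and a smooth $\chi\colon\R\to[0,1]$ equal to $1$ on $(-\infty,b'']$ and to $0$ on $[b',\infty)$. For each generator $(U,\varphi,\omega)\in A_*(X)$, set $\rho_\varphi\coloneqq\chi\circ g\circ\varphi$ and $V_\varphi\coloneqq(g\circ\varphi)^{-1}((-\infty,b))\subset U$. Since $\varphi(V_\varphi)\subset X^b$, the restriction $\rest{\varphi}{V_\varphi}$ is a plot of $X^b$, and I would define
\[p(U,\varphi,\omega)\coloneqq [V_\varphi,\rest{\varphi}{V_\varphi},\rest{\rho_\varphi\omega}{V_\varphi}]\in C^{\dr}_*(X^b),\]
followed by projection to $C^{\dr}_*(X^b,Y^b)$.

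The main technical work consists of showing that $p$ descends to a well-defined chain map $\bar p\colon C^{\dr}_*(X,Y)\to C^{\dr}_*(X^b,Y^b)$. Three compatibilities must be checked: (i) independence of the choice of $\chi$, because two such cutoffs differ only on $(b'',b')$, whose preimage under $g\circ\varphi$ lies in $\varphi^{-1}(Y)$; (ii) invariance under the generating relation $(U',\varphi\circ\pi,\omega')\sim (U,\varphi,\pi_!\omega')$ for a submersion $\pi\colon U'\to U$, using $V_{\varphi\circ\pi}=\pi^{-1}(V_\varphi)$, $\rho_{\varphi\circ\pi}=\rho_\varphi\circ\pi$, and the standard identity $\pi_!((\rho\circ\pi)\omega')=\rho\cdot\pi_!\omega'$; and (iii) commutation with $\partial$, since the only discrepancy $\partial p-p\partial$ comes from the Leibniz term $d\rho_\varphi\wedge\omega$, whose support lies in $V_\varphi\cap\varphi^{-1}(Y)$ and hence inside $\varphi^{-1}(Y^b)$. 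Moreover, if $\varphi$ already maps into $Y$, the image of $p$ automatically lies in $C^{\dr}_*(Y^b)$, so $p$ annihilates $i'_*(C^{\dr}_*(Y))$ after projection.

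It then remains to verify that $\bar p$ and $i_*$ are mutually inverse. For $x=[U,\varphi,\omega]\in C^{\dr}_*(X^b)$ one has $V_\varphi=U$, so $x-p(i_*(x))=[U,\varphi,(1-\rho_\varphi)\omega]$ is supported in $(g\circ\varphi)^{-1}([b'',b))\subset\varphi^{-1}(Y)$ and thus vanishes in $C^{\dr}_*(X^b,Y^b)$; the composition $i_*\bar p$ is symmetric. I expect the hardest step to be compatibility (ii): it is precisely this requirement that forces the cutoff to be the pullback of a single function on $\R$ via $g\circ\varphi$ rather than a general partition of unity on $U$, and the hypothesis $g^{-1}((b_0,\infty))\subset Y$ is exactly what allows this constrained choice to still produce a valid splitting of every chain.
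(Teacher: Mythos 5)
Your construction for the case when $g$ is a smooth map is essentially the paper's: you cut off with a reparameterized function $\chi\circ g\circ\varphi$, restrict to the sublevel set $(g\circ\varphi)^{-1}((-\infty,b))$, and observe that the discrepancies $x-i_*\bar p(x)$ and $x-\bar p\, i_*(x)$ are supported where $g\circ\varphi>b_0$ (resp. $\in[b'',b)$) and hence land in $C^{\dr}_*(Y)$ (resp. $C^{\dr}_*(Y^b)$). Your emphasis on compatibility (ii) — that the cutoff must be pulled back through $g\circ\varphi$ so that $\pi_!((\rho\circ\pi)\omega')=\rho\cdot\pi_!\omega'$ applies — is exactly the right point and correctly identifies why a plot-by-plot partition of unity would not work.

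However, there is a genuine gap: the proposition only assumes $g$ is \emph{approximately smooth}, not smooth, and your construction breaks down in that generality. If $g$ is merely a pointwise limit of a decreasing sequence of smooth functions, then $g\circ\varphi$ need not be a $C^\infty$ function on $U$, so $\rho_\varphi=\chi\circ g\circ\varphi$ need not be smooth, and $\rho_\varphi\omega$ is then not a legitimate element of $\Omega_c^{*}(V_\varphi)$. This is not a pathological edge case — it is precisely the situation that arises in the paper's applications, where $g$ is built from length functionals on path spaces and is only approximately smooth. The paper resolves it by first proving the statement for smooth $g$, then approximating a general $g$ by a decreasing sequence of smooth functions $g_j\searrow g$, setting $X^b_j\coloneqq g_j^{-1}((-\infty,b))$ and $Y^b_j\coloneqq(\rest{g_j}{Y})^{-1}((-\infty,b))$, and invoking \cite[Corollary 4.12(i)]{Irie-BV} to identify $\varinjlim_j C^{\dr}_*(X^b_j,Y^b_j)$ with $C^{\dr}_*(X^b,Y^b)$; since each $i_*\colon C^{\dr}_*(X^b_j,Y^b_j)\to C^{\dr}_*(X,Y)$ is an isomorphism by the smooth case, so is the limit. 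You would need to add this reduction step (or an equivalent one) for the proof to cover the stated hypothesis.
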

\begin{proof}
We first prove the assertion when $g\colon X\to \R$ is a smooth map. For $b> b_0$, choose $\delta>0$ and a smooth function $\kappa\colon \R\to [0,1]$ such that $2\delta<b-b_0$ and $\kappa(b')=\begin{cases} 1 & \text{ if } b'\leq b_0+\delta, \\ 0 & \text{ if }b'\geq b-\delta .\end{cases}$ Then we define a linear map
\[ r\colon C^{\dr}_*(X) \to C^{\dr}_*(X^b) \colon [U,\varphi,\omega] \mapsto [U^b ,\rest{\varphi}{U^b}, (\kappa \circ g\circ \varphi)\cdot \rest{\omega}{U^b}] ,\]
where $U^b\coloneqq (g\circ \varphi)^{-1} ((-\infty,b))$.
This reduces to a map  $\bar{r}\colon C^{\dr}_*(X,Y) \to C^{\dr}_*(X^b,Y^b)$. We claim that $\bar{r}$ is the inverse map of $i_*$. Indeed, for any $x=[U,\varphi,\omega]\in C^{\dr}_*(X)$, we have
\begin{align*}
 x-i_*\circ \bar{r}(x) =& [U,\varphi, \omega]- [U,\varphi, (\kappa \circ g\circ \varphi)\cdot\omega] \\
 = & [U_0 ,\rest{\varphi}{U_0}, ((1-\kappa) \circ g\circ \varphi)\cdot \rest{\omega}{U_0} ] 
\in C^{\dr}_*(Y) \text{ for }U_{0}\coloneqq (g\circ \varphi)^{-1}((b_0,\infty)) .  \end{align*}
Similarly, we can show that $x-\bar{r}\circ i_*(x)\in C^{\dr}_*(Y^b) $ for $x\in C^{\dr}_*(X^b)$.

In a general case, there exists a decreasing sequence $(g_j)_{j\geq 1}$ of smooth maps $g_j\colon X\to \R$ such that $g_j(x) \to g(x)$ ($j\to \infty$) for every $x\in X$. For $b>b_0$, let $X^b_j\coloneqq g_j^{-1}((-\infty,b))$ and $Y_j^b\coloneqq (\rest{g_j}{Y})^{-1}((-\infty,b))$. From \cite[Corollary 4.12 (i)]{Irie-BV},
$\varinjlim_{j\to \infty} C^{\dr}_*(X^b_j,Y^b_j) \to C^{\dr}_*(X^b,Y^b)$, induced by inclusion maps, is an isomorphism. We have shown that $\left( \rest{i}{(X^b_j,Y^b_j)}\right)_* \colon C^{\dr}_*(X^b_j,Y^b_j) \to C^{\dr}_*(X,Y)$ is an isomorphism for every $j\geq 1$, so $i_*$ is also an isomorphism .
\end{proof}

\section{Differentiable space of paths and operations from string topology}\label{sec-sp-of-paths}

Throughout this paper, $Q$ is a manifold of dimension $n$, and $K$ is a compact submanifold of $Q$ of codimension $d\geq 1$.
In addition, both $Q$ and $K$ are required to have fixed orientations.
The construction of $H^{\str}_*(Q,K)$ depends on the following auxiliary data:
\begin{enumerate}
\item a complete Riemannian metric $g$ on $Q$. (We write $g(v,w)=\la v,w\ra_g$ and $\sqrt{g(v,v)}=|v|_g$.)
\item a constant $C_0\geq 1$.
\item a positive real number $\epsilon_0 $ for which the map
\begin{align}\label{tubular-neighborhood}
  \{(x,v)\in (TK)^{\perp} \mid |v|_g <\epsilon_0 \} \to Q \colon (x,v) \mapsto \exp_x(v)
\end{align}
is an open embedding.
\item a $C^{\infty}$ function $\mu \colon [0,\frac{3}{2} ] \to [0,1]$ such that $\mu (t)=\begin{cases} t & \text{ near }t=0, \\ 1 & \text{ near }t=\frac{3}{2}, \end{cases}$ and $0\leq \mu'(t)\leq 1$ for every $t\in [0,\frac{3}{2}]$.
\end{enumerate}
%We put an assumption on $g$ that there is an injective radius $\epsilon_g>0$ on $Q$ with respect to $g$.
%
The independence of $H^{\str}_*(Q,K)$ on these data up to isomorphism is proved in Section \ref{subsec-invariance}. Until then, these data are fixed, so $\la v,w\ra_g$ and $|v|_g$ are denoted by $\la v,w\ra$ and $|v|$ respectively.

We define $\mathcal{C}(K)$ to be the set of geodesics $\gamma\colon [0,T]\to Q$ with unit speed such that $\gamma(0),\gamma(T)\in K$ and $\gamma'(0)\in (T_{\gamma(0)}K)^{\perp}$, $\gamma'(T)\in (T_{(\gamma(T)}K)^{\perp}$. Such geodesics are called \textit{binormal chords} of $K$. We also define for $m\in \Z_{\geq 1}$
\begin{align*}
\mathcal{L}_m (K)&\coloneqq \left\{ \textstyle{\sum_{k=1}^m}\len \gamma_m \mid \gamma_1,\dots ,\gamma_m\in \mathcal{C}(K) \right\}, \\
\mathcal{L}(K)&\coloneqq \textstyle{\bigcup_{m=1}^{\infty} }\mathcal{L}_m (K).
\end{align*}
These are closed subsets of $\{a\in \R\mid a\geq  2\epsilon_0\}$, since $K$ is compact.
Moreover, they are null sets with respects to the Lebesgue measure. For the proof, see Lemma \ref{lem-measure0}.

\subsection{Differentiable space of paths}\label{subsec-diff-sp-of-seq-of-path}
In this section, we introduce two differentiable spaces of paths, $\Sigma^a_m$ and $S_{\epsilon}$.
Let $\Omega_K(Q)$ be the set of $C^{\infty}$ paths $\gamma\colon [0,T]\to Q$ with $T>0$ such that $\gamma(0),\gamma(T)\in K$ and $|\gamma'(t)|\leq C_0$ for any $t\in [0,T]$. For any $C^{\infty}$ path $\gamma\colon [0,T]\to Q$, its length is denoted by
\[\len \gamma \coloneqq \int_0^{T}|\gamma' (t)|dt .\]

%
%For $\gamma \colon [0,T]\to Q$ in the above definition, we call $T$ the time-length of $\gamma$.
For $a\in \R_{\geq 0}\cup\{\infty\}$ and $m\in \Z_{\geq 1}$, we define $\Sigma_m^a$ to be a subset of $\Omega_K(Q)^{\times m}$ which consists of $(\gamma_k\colon [0,T_k]\to Q)_{k=1,\dots, m}$ satisfying \textit{either} of the following two conditions:
\begin{itemize}
\item $\sum_{k=1}^m \len \gamma_k <a $.
\item $\min_{1\leq k\leq m} \len \gamma_k <\epsilon_0$.
\end{itemize}
The differentiable structure on $\Sigma^a_m$ is defined by
\[P_{\Sigma^a_m} \coloneqq \{(U,\varphi) \mid U\in \mathcal{U} \text{ and } \varphi \colon U \to \Sigma^a_m \text{ is smooth}\}.\]
Here, we say $\varphi$ is smooth in the following sense: If we write $\varphi(u) =(\gamma^u_k\colon [0,T^u_k] \to Q )_{k=1,\dots ,m}$ for $u\in U$, then for each $k\in \{1,\dots ,m\}$, the function $U \to \R_{>0}\colon u\mapsto T^u_k$ is of class $C^{\infty}$ and
\[\{(u,t)\in U\times \R\mid 0\leq t \leq T^u_k\} \to Q \colon (u,t)\mapsto \gamma^u_k(t)\]
is a $C^{\infty}$ map.
%We also define its subspace $\Sigma^a_{m,\epsilon_0}$ which consists of $(\gamma_k)_{k=1,\dots ,m}\in \Sigma^a_m$ satisfying $\displaystyle{ \min_{1\leq k \leq m} \len \gamma_k <\epsilon_0 } $.
As an exception, let us define $\Sigma^a_0\coloneqq \begin{cases} \{*\}& \text{ if }a>0, \\ \varnothing &\text{ if }a=0,\end{cases}$ together with the differentiable structure $P_{\Sigma^a_0}\coloneqq \{(U,\varphi) \mid U\in \mathcal{U},\ \varphi\colon U\to \Sigma^a_0\}$.

We consider the de Rham chain complex $(C^{\dr}_*(\Sigma^a_m),\partial)$ for $a\in \R_{\geq 0}$ and $m\in \Z_{\geq 0}$. Lemma \ref{lem-approx-smooth-level} implies that we may think of $C^{\dr}_*(\Sigma^{a}_m)$ as a linear subspace of $C^{\dr}_*(\Sigma^b_m)$ when $a\leq b$, since
\[\begin{cases}
\Sigma^b_m \to \R\colon (\gamma_l)_{l=1,\dots ,m} \mapsto \len\gamma_k \ (k=1,\dots ,m), \\
 \R^m \to \R \colon (a_{k})_{k=1,\dots ,m}\mapsto \min_{1\leq k\leq m} a_k ,
 \end{cases}\]
are approximately smooth functions. Thus the quotient complex $(C^{\dr}_*(\Sigma^b_m,\Sigma^a_m),\partial )$ is defined.

\begin{rem}\label{rem-large-m}
When $a\leq m\epsilon_0$, the condition that $\sum_{k=1}^m\len \gamma_k<a$ implies that one of $\gamma_k$ ($k=1,\dots ,m$) has length less than $\epsilon_0$. Thus, $\Sigma^a_m=\Sigma^0_m$ if $a\leq m\epsilon_0$.
When $a=\infty$, $\Sigma^{\infty}_m=\Omega_K(Q)^{\times m}$, which will be used only in Section \ref{sec-cord-alg}.
\end{rem}

Next, we define another differentiable space of paths.
For every $\epsilon \in (0,\epsilon_0]$, the open subset in $Q$
\[N_{\epsilon}\coloneqq \{\exp_x(v) \mid x\in K, v\in (T_xK)^{\perp} \text{ and } |v| <\epsilon/2\}\]
is a tubular neighborhood of $K$ in $Q$.
Then we define a set $S_{\epsilon}$ which consists of pairs of $C^{\infty}$ paths $(\sigma_i\colon [0,\epsilon_i]\to N_{\epsilon})_{i=1,2} $ satisfying:
\begin{itemize}
\item $0< \epsilon_i \leq \epsilon/2$ for $i=1,2$.
\item $\sigma_1(\epsilon_1), \sigma_2(0)\in K $ and $\sigma_1(0)=\sigma_2(\epsilon_2)$.
\item $|\sigma'_i(t)|\leq 1 $ for $i=1,2$ and any $t\in [0,\epsilon_i ]$.
\end{itemize}
On this set, the evaluation map $\ev_0$ is defined by
\[\ev_0\colon S_{\epsilon} \to  N_{\epsilon} \colon (\sigma_1,\sigma_2) \mapsto \sigma_1(0) .\]
The differentiable structure on $S_{\epsilon}$ is defined by
\[P_{S_{\epsilon}} \coloneqq \{(V,\psi) \mid V\in \mathcal{U},\ \psi \text{ is a smooth map such that } \ev_0\circ \psi \colon V\to N_{\epsilon} \text{ is a submersion}\}\]
Here we say $\psi$ is smooth in the following sense: If we write $\psi(v) =(\sigma^v_i\colon [0,\epsilon^v_i] \to N_{\epsilon} )_{i=1,2}$ for $v\in V$, then for $i\in \{1,2\}$, the function $V \to \R_{>0}\colon v\mapsto \epsilon^v_i$ is of class $C^{\infty}$ and
\[\{(v,t)\in V\times \R\mid 0\leq t \leq \epsilon^v_i\} \to N_{\epsilon} \colon (v,t)\mapsto \sigma^v_i(t)\]
is a $C^{\infty}$ map.
Note that $\ev_0$ is a smooth map from $(S_{\epsilon},P_{S_{\epsilon}})$ to $(N_{\epsilon},P^{\reg}_{N_{\epsilon}})=N_{\epsilon}^{\reg}$ defined in Example \ref{ex-diff}.

\subsection{Homology groups}\label{subsec-hmgy-grp}

In this section, we examine the homology groups 
$H^{\dr}_*(\Sigma^b_m,\Sigma^a_m)$  and $H^{\dr}_*(S_{\epsilon})$. The main results are Proposition \ref{prop-hmgy-grp} and Proposition \ref{prop-exactness}. At the end, several additional results are proved.

\subsubsection{Finite dimensional approximation of $\Sigma^a_m$}
Let us fix $b_0\in \R_{> 0}$ and prepare several notations related to the Riemannian metric $g$.
We note that there is a compact subset of $Q$ which contains the images of all paths $\gamma\in \Omega_K(Q)$ with $ \len \gamma\leq b_0$, since $K$ is compact and $g$ is complete.
For any two points $q,q'\in Q$, let $d(q,q')$ be the distance between $q$ and $q'$. Let us also fix
$\epsilon_g>0$ such that if $q$ and $q'$ in this compact set satisfy $d(q,q')<\epsilon_g $, then there exists a unique geodesic path on $[0,1]$ of length $d(q,q')$ from $q$ to $q'$. We write this geodesic by $\overline{qq'}\colon [0,1]\to Q$.

%the main results of this section are Corollary \ref{cor-hmgy-grp} and Corollary \ref{cor-exactness}.

For every $a\in [0,b_0)$ and $m\in \Z_{\geq 1}$, let $\bar{\Sigma}^a_m$ be a subspace of $\Sigma^a_m$ which consists of $(\gamma_k)_{k=1,\dots , m}$ satisfying $\sum_{k=1}^m\len \gamma_k <b_0$. 
From Proposition \ref{prop-excision} about excision, the inclusion map $\iota \colon \bar{\Sigma}^b_m \to \Sigma^b_m$ induces an isomorphism
\begin{align}\label{isom-excision}
\iota_*\colon H^{\dr}_*(\bar{\Sigma}^b_m,\bar{\Sigma}^a_m) \to H^{\dr}_*(\Sigma^b_m,\Sigma^a_m)
\end{align}
for $a,b\in [0,b_0)$ with $a\leq b$. This means that we cut out a subset
\[\{(\gamma_k)_{k=1,\dots ,m} \mid \min_{1\leq k\leq m}\len \gamma_k <\epsilon_0 \text{ and }\sum_{k=1}^m \len \gamma_k \geq b_0\}\]
to compute the homology group.

%In order to relate $H^{\dr}_*(\Sigma^b_m,\Sigma^a_m)$ with singular homology groups, 
First, we approximate $\bar{\Sigma}^a_m$ for $a\in [0,b_0)$ by finite dimensional manifolds.
For every $\nu\in \Z_{\geq 1}$, let us define
\begin{align}\label{time-length-nu}
\bar{\Sigma}^a_m(\nu)\coloneqq \{ (\gamma_k\colon [0,T_k]\to Q)_{k=1,\dots ,m}\in \bar{\Sigma}^a_m \mid \max_{1\leq k\leq m}T_k< C_0^{-1}\epsilon_g \nu \} 
\end{align}
so that $\bigcup_{\nu =1}^{\infty}\bar{\Sigma}^a_m(\nu)= \bar{\Sigma}^a_m$. %The differentiable structure is defined by Definition \ref{def-prod-subsp}.
Let us also define $B_m(\nu)$ to be a submanifold of $(Q^{\times (\nu+1)})^{\times m}$ which consists of $(q^l_k)_{k=1,\dots , m}^{l=0,\dots, \nu}$ satisfying:
\begin{itemize}
\item $q^0_k,q^{\nu}_k\in K$ for every $k=1,\dots ,m$.
\item $\sum_{k=1}^m\sum_{l=0}^{\nu-1} d(q_k^l,q_k^{l+1}) <b_0 $.
\item $d( q_k^l,q_k^{l+1} )<\epsilon_g$ for every $k=1,\dots, m$ and $l=0,\dots ,\nu-1$.
\end{itemize}
We then define $B^a_m(\nu)$ for $a<b_0$ to be an open submanifold of $B_m(\nu)$ which consists of  $(q^l_k)_{k=1,\dots , m}^{l=0,\dots ,\nu}$ satisfying \textit{either} of the following two conditions:
\begin{itemize}
\item $\sum_{k=1}^m\sum_{l=0}^{\nu-1} d(q_k^l,q_k^{l+1}) <a $.
\item $\min_{1\leq k\leq m} \left( \sum_{l=0}^{\nu-1} d(q_k^l,q_k^{l+1})\right) <\epsilon_0 $.
\end{itemize}
The differentiable structures on this manifold is $P_{B^a_m(\nu)}$ defined in Example \ref{ex-diff}.
For every $\nu \in Z_{\geq 1}$, there are two maps:
$\iota_{\Sigma,\nu}\colon \bar{\Sigma}^a_m(\nu)\to \bar{\Sigma}^a_m(2\nu) $
is just the inclusion map, and
$\iota_{B,\nu}\colon B^a_m(\nu)\to B^a_m(2\nu) $
%\iota_{\mathcal{T}}&\colon \mathcal{T}_m(\nu)\to \mathcal{T}_m(2\nu)
%
is an embedding of a manifold which maps $ (q^l_k)_{k=1,\dots , m}^{l=0,\dots ,\nu} \in B^a_m(\nu)$ to $(\bar{q}^{l'}_k)_{k=1,\dots , m}^{l'=0,\dots , 2\nu} \in B^a_m(2\nu)$, where
\[\bar{q}^{l'}_k= \begin{cases}
q^{l}_{k} & \text{ if }l' \text{ is even and } l'=2l, \\
\overline{q^l_kq^{l+1}_k}(\frac{1}{2}) & \text{ if } l' \text{ is odd and } l'=2l+1.
\end{cases}\]
%$\iota_{\mathcal{T}}$ is defined by $\iota_{\mathcal{T}}((T_k)_{1\leq k\leq m}) \coloneqq (2T_k)_{1\leq k \leq m}$.
In addition, we define  two maps
\[
f_{\nu}\colon  \bar{\Sigma}^a_m(\nu) \to B^a_m(\nu), \  g_{\nu}\colon B^a_m(\nu) \to \bar{\Sigma}^a_m(2\nu), \]
as follows: $f_{\nu}$ maps $(\gamma_k\colon [0,T_k]\to Q)_{k=1,\dots , m}\in \bar{\Sigma}^a_m(\nu)$ to $ \left( \gamma_k\left( \frac{l}{\nu}T_k\right) \right)_{k=1,\dots , m}^{l=0,\dots ,\nu} \in B^a_m(\nu)$.
Note that for $l=0,\dots,\nu-1$ and $k=1, \dots , m$,
\[\textstyle{ d \left( \gamma_k\left( \frac{l}{\nu}T_k\right) , \gamma_k\left( \frac{l+1}{\nu}T_k\right) \right)\leq  \len \rest{\gamma_k}{[\frac{l}{\nu}T_k , \frac{l+1}{\nu} T_k]} \leq \frac{T_k}{\nu} \cdot \sup_{t\in [0,T_k]} |\gamma'(t)| <\epsilon_g . }\]
On the other hand, $g_{\nu}$ maps $ (q_k^l)_{k=1,\dots , m}^{l=0,\dots , \nu} \in B^a_m(\nu)$ to
\[(\gamma_k\colon [0,\textstyle{ \frac{3 }{2} }C_0^{-1}\epsilon_g \nu]\to Q)_{k=1,\dots , m} \in \bar{\Sigma}^a_m(2\nu),\]
where, for $l=0,\dots ,\nu-1$,
\[\textstyle{ \gamma_k(t)\coloneqq  \overline{q^l_kq^{l+1}_k} \circ \chi\left( \frac{C_0}{\epsilon_g}\cdot t-\frac{3}{2} l\right) \text{ if } \frac{3}{2}C_0^{-1}\epsilon_g l \leq t\leq \frac{3}{2}C_0^{-1}\epsilon_g (l+1)  } .\]
Here $\chi\colon [0,\frac{3}{2}]\to [0,1]$ is a $C^{\infty}$ function such that
$\chi(t)=\begin{cases} 0 & \text{ near }t=0 , \\
\frac{1}{2} & \text{ near }t=\frac{3}{4}, \\
1 & \text{ near }   t = \frac{3}{2} , \\
\end{cases}$
and $0\leq \chi'(t) \leq 1$ for any $t\in [0,\frac{3}{2} ]$.
Note that $|(\gamma_k)'(t)| \leq  d(q^l_k,q^{l+1}_k)\cdot\sup_{t\in [0,3/2]} |\chi'(t)|\cdot \frac{C_0}{\epsilon_g} \leq C_0$.
The next lemma shows that $B^a_m(\nu)$ approximates $\bar{\Sigma}^a_m(\nu)$  as $\nu \to \infty$. The readers can refer to \cite[Lemma 6.3]{Irie-BV}. 
\begin{lem}\label{lem-diagram-up-to-htpy}
The following diagram commutes up to homotopy:
\[\xymatrix{ \bar{\Sigma}^a_m(\nu)  \ar[r]^{\iota_{\Sigma,\nu}}\ar[d]_{f_{\nu}} & \bar{\Sigma}^a_m(2 \nu)\ar[d]_{f_{2\nu}}  \\
B^a_m(\nu) \ar[r]^{\iota_{B,\nu}}\ar[ru]^{g_{\nu}} & B^a_m(2\nu) }\]
\end{lem}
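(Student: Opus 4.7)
The plan is to treat the two triangles of the diagram separately.

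For the \emph{lower triangle}, I claim that $f_{2\nu}\circ g_\nu$ actually equals $\iota_{B,\nu}$ on the nose, so no homotopy is needed. Indeed, $f_{2\nu}\circ g_\nu((q_k^l)_{k,l})$ samples the piecewise-geodesic path $g_\nu((q_k^l)_{k,l})$ of total time length $T := \tfrac{3}{2}C_0^{-1}\epsilon_g \nu$ at the $2\nu+1$ equally spaced times $t_{l'} = l' T/(2\nu) = \tfrac{3l'}{4}C_0^{-1}\epsilon_g$. For $l'=2l$ the time $t_{l'}$ lies at a vertex and the sample is $q_k^l$; for $l'=2l+1$ it lies at the interior point $\tfrac{3l}{2}C_0^{-1}\epsilon_g + \tfrac{3}{4}C_0^{-1}\epsilon_g$ of the $l$-th piece, where the reparametrization $\chi$ takes value $\chi(3/4)=1/2$, so the sample is $\overline{q_k^l q_k^{l+1}}(1/2)$. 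These are precisely the entries $\bar q_k^{l'}$ defining $\iota_{B,\nu}$.

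For the \emph{upper triangle}, I would construct an explicit smooth homotopy $H\colon \R\times \bar{\Sigma}^a_m(\nu)\to \bar{\Sigma}^a_m(2\nu)$ from $\iota_{\Sigma,\nu}$ (at $s\leq 0$) to $g_\nu\circ f_\nu$ (at $s\geq 1$), interpolating via the fixed cutoff $\mu$ from the auxiliary data. The key geometric input is that on each subinterval $[lT_k/\nu,(l+1)T_k/\nu]$ the restriction of $\gamma_k$ has length at most $T_kC_0/\nu < \epsilon_g$, so its image lies in a geodesic ball centered at $q_k^l$ in which unique minimizing geodesics between nearby points exist and depend smoothly on their endpoints. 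The homotopy does two things at once: (i) it interpolates the time domain from $[0,T_k]$ to $[0,T]$ by sending the $l$-th sample time to $\tau_l(s) = (1-\mu(s))\tfrac{lT_k}{\nu} + \mu(s)\tfrac{3l}{2}C_0^{-1}\epsilon_g$; and (ii) on each subinterval $[\tau_l(s),\tau_{l+1}(s)]$ it interpolates in $\mu(s)$, via the exponential map and the inverse of the minimizing geodesic, between the affinely reparametrized restriction of $\gamma_k$ and the geodesic piece $\overline{q_k^l q_k^{l+1}}\circ \chi$. Both pieces share the endpoints $q_k^l$ and $q_k^{l+1}$ throughout the homotopy, so the pieces concatenate smoothly across subintervals, and the construction is plainly smooth in the initial family $(\gamma_k)_k$.

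The main obstacle will be verifying that $H(s,\cdot)$ actually lands in $\bar{\Sigma}^a_m(2\nu)$ for every $s\in[0,1]$, i.e., that each of the three defining conditions is preserved. The time-length bound $\max_k T_k(s) < C_0^{-1}\epsilon_g\cdot 2\nu$ is immediate from convexity and $\tfrac{3}{2}<2$. The length/minimum-length alternative ($\sum_k\len < a$ or $\min_k\len<\epsilon_0$) is preserved because within a convex ball the minimizing geodesic is length-nonincreasing, so the interpolated segment stays in the same ball and its length is controlled by the original. The delicate point is the pointwise speed bound $|\tilde\gamma_k^{s\prime}(t)|_g \leq C_0$: the shapes of $\chi$ (with $0\leq \chi'\leq 1$) and $\mu$ (with $0\leq \mu'\leq 1$), together with the factor $\tfrac{3}{2}$ built into $T$, are precisely engineered to leave enough slack in the time parametrization to absorb the derivative of the reparametrization together with the derivative of the exp-based geodesic interpolation. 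Once these three pointwise estimates are checked, following the pattern of \cite[Lemma~6.3]{Irie-BV}, $H$ provides the desired homotopy and the lemma follows.
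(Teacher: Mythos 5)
Your treatment of the lower triangle is exactly what the paper does: $f_{2\nu}\circ g_\nu=\iota_{B,\nu}$ on the nose, with the key observation that the odd sample times hit $\chi(3/4)=1/2$.

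For the upper triangle, your construction is genuinely different from the paper's. The paper uses a ``zipping'' homotopy: on the $l$-th subinterval, as $s$ increases the geodesic from $\gamma_k(\tfrac{l}{\nu}T_k)$ to $\gamma_k(\tfrac{l+s}{\nu}T_k)$ grows to replace the corresponding arc of $\gamma_k$, and this only ever requires geodesics between points \emph{both lying on $\gamma_k$} and hence at mutual distance $< \epsilon_g$. You instead propose a pointwise exponential-map interpolation between a point of $\gamma_k$ and the corresponding point of $\overline{q_k^l q_k^{l+1}}\circ\chi$, combined with the time-domain interpolation, in a single homotopy. This has the attractive feature that it directly interpolates from $\iota_{\Sigma,\nu}$ to $g_\nu\circ f_\nu$ and so sidesteps the transitivity-of-homotopy issue that the paper's three-stage argument ($\iota_{\Sigma,\nu}\sim H(0,\cdot)\sim H(1,\cdot)\sim g_\nu\circ f_\nu$) quietly relies on (cf.\ the remark following Proposition~\ref{prop-chain-htpy}).

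However, there are real gaps. First, the exponential-map interpolation needs the two points it joins to lie within $\epsilon_g$ of one another, and $\epsilon_g$ was only chosen so that points at distance $< \epsilon_g$ admit a unique minimizing geodesic, not so that $\epsilon_g$-balls are geodesically convex. If the parametrizations of the arc of $\gamma_k$ and of $\overline{q_k^l q_k^{l+1}}$ are aligned at equal ``fractional parameter'' $u$, one can show $d(p(u),p'(u)) < 2\min(u,1-u)\,\epsilon_g < \epsilon_g$ via the two triangle inequalities at $q_k^l$ and $q_k^{l+1}$; but you use $\chi$ (a non-affine reparametrization) on the geodesic piece and an affine one on $\gamma_k$, so the corresponding points need not be fractionally aligned and the bound does not come for free. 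Second, and more importantly, the speed bound $|\cdot|\le C_0$ is not verified; you flag it as ``the delicate point'' but offer only the heuristic that $\chi$, $\mu$, and the $\tfrac{3}{2}$ factor ``are precisely engineered'' to absorb the derivatives. Since membership in $\Omega_K(Q)$ requires this bound, and your interpolation introduces an extra term from differentiating the exponential-map interpolation that the paper's construction does not have, this estimate is a substantive part of the proof and cannot be waved through. (Minor: the cutoff you use is $\mu$, but $\mu(1)$ need not equal $1$ --- $\mu\equiv 1$ is guaranteed only near $t=\tfrac{3}{2}$ --- so $H(1,\cdot)$ would not literally equal $g_\nu\circ f_\nu$; the paper introduces a separate $\kappa$ with $\kappa(1)=1$ for exactly this reason.)
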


\begin{proof} 
The lower right triangle commutes in the strict sense. For the upper left triangle, we need to show that $\iota_{\Sigma,\nu}$ is homotopic to $g_{\nu} \circ f_{\nu}$.

We abbreviate $ C_0^{-1}\cdot \epsilon_g$ by $c_0$.
For $(\gamma_k\colon [0,T_k]\to Q)_{k=1\dots ,m}\in \bar{\Sigma}^a_m(\nu)$,  let us define a path $\gamma^s_k\colon [0,c_0\nu]\to Q$ for $k=1,\dots ,m$ and $s\in [0,1]$ by
\[\gamma^s_k(t) \coloneqq \begin{cases}
\overline{ \gamma_k(\frac{l}{\nu}T_k) \gamma_k(\frac{l+s}{\nu}T_k) } (t-c_0l) & \text{ if } c_0l\leq t \leq c_0(l+s), \\
\gamma_k(  \frac{T_k}{c_0\nu} (t -c_0l) + \frac{l}{\nu}T_k) & \text{ if }  c_0(l+s)\leq t\leq c_0(l+1),
\end{cases} (l=0,\dots ,l-1)\]
Then, $\gamma^0_k$ is equal to $\gamma_k(\frac{T_k}{c_0\nu}\cdot)$ and $\gamma^1_k$ is a  broken geodesics connecting $(\gamma_k(\frac{l}{\nu}T_k))^{l=0,\dots ,\nu}$. We modify $\gamma^s_k$ to a $C^{\infty}$ path.
For instance, we take a $C^{\infty}$ function $\tilde{\chi}\colon [0,1]\times [0,\frac{3}{2}c_0\nu]\to [0,c_0\nu]$ satisfying 
$0\leq \frac{\partial}{\partial t}\tilde{\chi}(s,t) \leq 1$ and
\[\tilde{\chi}(s,t)=\begin{cases}c_0l  & \text{ on a neighborhood of } \{t=\frac{3}{2}c_0l \} , \\
\frac{2}{3} (t- \frac{1}{4}c_0) & \text{ on a neighborhood of } \{t=s+\frac{3}{2}c_0l+\frac{1}{4}c_0) \} .\end{cases}\]
Then, we define $\tilde{\gamma}^s_k \coloneqq  \gamma^s_k\circ \tilde{\chi}(s,\cdot) \colon [0,\frac{3}{2}c_0\nu]\to Q$.
If we take a $C^{\infty}$ function $\kappa\colon \R\to [0,1] $ such that $\kappa(t)=\begin{cases} 0 &\text{ if }t\leq 0, \\ 1 & \text{ if }t\geq 1, \end{cases}$
then we get a smooth map
\[H\colon \R\times \bar{\Sigma}^a_m(\nu) \to \bar{\Sigma}^a_m(2\nu) \colon (s,(\gamma_k)_{k=1,\dots ,m}) \mapsto (\tilde{\gamma}^{\kappa(s)}_k)_{k=1,\dots ,m} .\]

This gives a homotopy from $H(0,\cdot)$ to $H(1,\cdot)$. Moreover, $\iota_{\Sigma,\nu}$ is homotopic to $H(0,\cdot )$ since the paths of $\iota_{\Sigma,\nu}((\gamma_k)_{k=1,\dots ,m})$ and those of $H(0, (\gamma_k)_{k=1,\dots ,m})$ differ only by parameterizations, so the homotopy can be constructed by interpolating these parameterizations.
For the same reason, $g_{\nu}\circ f_{\nu}$ is homotopic to $H(1,\cdot)$. Therefore, $\iota_{\Sigma,\nu}$ is homotopic to $g_{\nu}\circ f_{\nu}$.
\end{proof}

From Lemma \ref{lem-diagram-up-to-htpy}, it follows that for any $a,b\in \R_{\geq 0}$ with $a\leq b< b_0$,
\begin{align*}
\varinjlim_{j\to \infty} (f_{2^j})_* &\colon H^{\dr}_*(\bar{\Sigma}^b_m,\bar{\Sigma}^a_m)= \varinjlim_{j\to \infty} H^{\dr}_*(\bar{\Sigma}^b_m(2^j),\bar{\Sigma}^a_m(2^j)) 
\to \varinjlim_{j\to \infty} H^{\dr}_*(B^b_m(2^j),B^a_m(2^j))
\end{align*}
is an isomorphism. Combining with (\ref{isom-excision}), we get an isomorphism
\begin{align}\label{cong-dr-sing-path}
\begin{split}
( \varinjlim_{j\to \infty} (f_{2^j})_* ) \circ  (\iota_*)^{-1} \colon H^{\dr}_*(\Sigma^b_m,\Sigma^a_m) \to \varinjlim_{\nu\to \infty} H^{\dr}_*(B^b_m(\nu),B^a_m(\nu)).
\end{split}
\end{align}
Furthermore, from Proposition \ref{prop-hmgy-of-mfd}, $H^{\sing}_*(B^b_m(\nu),B^a_m(\nu))\cong H^{\dr}_*(B^b_m(\nu),B^a_m(\nu))$.

\subsubsection{Computation of homology by Morse theory}

Next, we examine the homology group $H^{\sing}_*( B^b_m(\nu), B^a_m(\nu))$ in terms of Morse theory. Fix $m\in \Z_{\geq 1}$ and $\nu\in \Z_{\geq 1}$. 
For $k\in \{1,\dots ,m\}$ and $l\in \{0,\dots ,\nu-1\}$,
we set
\[h^l_k\colon B_m(\nu)\to \R\colon (q^l_k)^{l=0,\dots , \nu}_{k=1,\dots , m} \mapsto d(q^l_k,q^{l+1}_k)^2 .\]
For every $r>0$, let us introduce the following:
\begin{itemize}
\item a $C^{\infty}$ function $\sigma_r\colon  \R_{\geq 0} \to \R_{>0} \colon z\mapsto \sqrt{z+r}$.
\item a $C^{\infty}$ function
\[L_r\colon B_m(\nu)\to \R\colon \bold{q}\mapsto \sum_{k=1}^m \sum_{l=0}^{\nu-1} \sigma_r\circ h^l_k(\bold{q}) .\]
\item compact subsets of $B_m(\nu)$
\begin{align*}
Z_r&\coloneqq \{\bold{q}\in L_{r}^{-1}([0,b_0])\mid \sigma_r\circ h^l_k(\bold{q}  ) \leq \epsilon_g\text{ for every }k=1,\dots ,m \text{ and }l=0,\dots ,\nu-1 \} ,\\
Z^0_{r} &\coloneqq \{\bold{q} \in Z_r \mid \min_{1\leq k\leq m}\sum_{l=1}^{\nu-1} \sigma_r\circ h^l_k(\bold{q}) \leq \epsilon_0 \}.
\end{align*}
\end{itemize}  
The role of $\{\sigma_r\}_{r>0}$ is to approximate $\sqrt{z}$ by $C^{\infty}$ functions.
We define for every $a\in [0,b_0)$ and $r>0$
\[Z^a_r\coloneqq (\rest{L_r}{Z_r})^{-1}([0,a)) \cup Z^0_r. \]
Then, $Z^a_r\subset Z^a_{r'}$ holds if $0<r'<r$. Furthermore, $\bigcup_{r>0} Z^a_r =B^a_m(\nu)$ holds. Therefore, we have an isomorphism induced by the inclusion maps
\[  \varinjlim_{r\to 0} H^{\sing}_*(Z^b_r, Z^a_r) \to H^{\sing}_*(B^b_m(\nu),B^a_m(\nu)) \]
for $a,b\in \R_{\geq 0}$ with $a\leq b<b_0$.
In order to apply Morse theory to $L_{r}$, we need to determine its critical points.
The next lemma is fundamental. We omit the proof, but a similar result is proved, for instance in \cite{milnor}, when there is no boundary condition. 
\begin{lem}\label{lem-grad-d2}
For $k_0\in \{1,\dots,m\}$ and $l_0\in \{0,\dots ,\nu-1\}$, let $X^{l_0}_{k_0}$ be the gradient vector field of
$h^{l_0}_{k_0} $. We define $\pi^{l_0}$ to be the orthogonal projection form $\rest{TQ}{K} $ to $TK$ if $l_0=0$ or $\nu$, and otherwise $\pi^{l_0}\coloneqq \id_{TQ}$.
Then, each component of $X^{l_0}_{k_0}=(v^{l}_{k})_{k=1,\dots ,m}^{l=0,\dots ,\nu}$ %, where $v^{l}_{k} \in T_{q^l_k}K$ if $l=0,\nu$ and otherwise $v^{l}_{k} \in T_{q^l_k}Q$, 
is determined by the following: If $(k,l)\neq (k_0,l_0),(k_0,l_0+1)$, then $v^l_k=0$. Otherwise,
\begin{align*}
v^{l_0}_{k_0}= \pi^{l_0}\left( -\rest{ \frac{d}{dt}}{t=0}\overline{q^{l_0}_{k_0}q^{l_0+1}_{k_0}} \right)  ,\        v^{l_0+1}_{k_0} =  \pi^{l_0+1} \left( \rest{ \frac{d}{dt}}{t=1}\overline{q^{l_0}_{k_0}q^{l_0+1}_{k_0}}  \right).
\end{align*}
\end{lem}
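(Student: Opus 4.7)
The function $h^{l_0}_{k_0}$ depends only on the two coordinates $q^{l_0}_{k_0}$ and $q^{l_0+1}_{k_0}$, so any variation of another coordinate $q^l_k$ leaves $h^{l_0}_{k_0}$ constant. This forces $v^l_k=0$ for every $(k,l)\notin\{(k_0,l_0),(k_0,l_0+1)\}$, disposing of all but the two nontrivial components in one stroke.

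For the two remaining components, my plan is to invoke the classical first variation of the squared distance function: on any uniformly totally normal neighborhood, if $\phi(p)\coloneqq d(p,q)^2$ and $\overline{pq}\colon[0,1]\to Q$ is the unique minimizing geodesic from $p$ to $q$ parametrized with constant speed $d(p,q)$, then the ambient gradient of $\phi$ at $p$ equals $-2\rest{\tfrac{d}{dt}}{t=0}\overline{pq}$ (cf.\ \cite{milnor}), up to the overall normalization of $h^{l_0}_{k_0}$. Applying this separately to the two slots $q^{l_0}_{k_0}$ and $q^{l_0+1}_{k_0}$, and using the identity $\overline{q'q}(t)=\overline{qq'}(1-t)$ to flip the sign at the second slot, produces the two ambient gradients in precisely the stated form.

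The only remaining step is to account for the projection $\pi^{l_0}$. The space $B_m(\nu)$ sits as a submanifold of $Q^{(\nu+1)m}$ cut out by the constraints $q^0_k,q^\nu_k\in K$, so its tangent space at an endpoint index ($l_0=0$ or $\nu$) is $T_qK\subset T_qQ$, while at an interior index it is the full $T_qQ$. The Riemannian metric on $B_m(\nu)$ is the restriction of the product metric on $Q^{(\nu+1)m}$, and the gradient of a function on an embedded Riemannian submanifold is the orthogonal projection of the ambient gradient onto the tangent subspace; this projection is precisely $\pi^{l_0}$. The bound $d(q^l_k,q^{l+1}_k)<\epsilon_g$ built into the definition of $B_m(\nu)$ keeps us safely in a region where $d^2$ is smooth and the first-variation formula applies, so no analytic subtleties arise. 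The main ``obstacle'' is therefore purely bookkeeping: carefully matching signs and endpoints between the two slots, and checking that the endpoint projection does not affect interior indices.
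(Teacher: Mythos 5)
The paper explicitly omits the proof of this lemma (``We omit the proof, but a similar result is proved, for instance in \cite{milnor}, when there is no boundary condition.''), so there is no in-paper argument to compare against; your proposal supplies the missing argument. Your approach is the standard and correct one: (1) $h^{l_0}_{k_0}$ sees only the slots $q^{l_0}_{k_0}$ and $q^{l_0+1}_{k_0}$, killing all other components; (2) the first-variation formula for $p\mapsto d(p,q)^2$ on a uniformly normal neighborhood gives the ambient gradient $-2\rest{\tfrac{d}{dt}}{t=0}\overline{pq}$, and reversing orientation of the geodesic handles the other slot; (3) $B_m(\nu)$ is an open subset of the totally geodesic-\emph{free} constraint submanifold of $Q^{(\nu+1)m}$ carrying the restricted product metric, so the intrinsic gradient is the orthogonal projection of the ambient one, which is exactly $\pi^{l_0}$ at endpoint indices and the identity at interior indices.

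One small point worth tightening: the true gradient of $d(\cdot,q)^2$ carries a factor of $2$, which the lemma statement silently drops; your ``up to normalization'' remark correctly flags this, but you should make explicit that the overall positive scalar is irrelevant for every downstream use (the criticality conditions (3.3)--(3.4) and the sign inequalities (3.5)--(3.6) are homogeneous, and in the norm comparison both sides acquire the same factor), so the author's convention is harmless. With that caveat spelled out, the proof is complete.
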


\begin{prop}\label{prop-crit-pt-L} Suppose that $a \in [0,b_0) \setminus \mathcal{L}_m(K)$ and $r>0$ is sufficiently small.
Then $\bold{q} \in Z_r \setminus Z^0_r$ is a critical point of $L_{r}$ with its value in $[0,a]$
if and only if there exist $\gamma_1,\dots ,\gamma_m\in \mathcal{C}(K)$ such that
$(\gamma_k)_{k=1,\dots , m}\in \bar{\Sigma}^a_m(\nu)$ and 
$\bold{q}=f_{\nu}((\gamma_k)_{k=1,\dots , m})$.
\end{prop}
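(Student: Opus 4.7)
The plan is to compute the gradient of $L_r$ using Lemma~\ref{lem-grad-d2}, analyze the critical-point equation vertex by vertex, and identify the broken geodesics it picks out with (discretizations of) binormal chords.

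First I would expand
\[ \nabla L_r(\bold{q}) \;=\; \sum_{k=1}^{m}\sum_{l=0}^{\nu-1} \frac{1}{2\sqrt{h^l_k(\bold{q})+r}}\, X^l_k(\bold{q}) \]
and read off its $(k,l_0)$-component using Lemma~\ref{lem-grad-d2}. When $l_0\in\{0,\nu\}$ only a single $X^l_k$ contributes and is post-composed with the projection $\pi^{l_0}$ onto $TK$, so its vanishing is equivalent to the outward tangent of the corresponding short geodesic segment being normal to $K$ at $q_k^{0}$ (resp.\ $q_k^{\nu}$). When $0<l_0<\nu$ the two contributions from $X^{l_0-1}_k$ and $X^{l_0}_k$ balance to give
\[ \frac{\rest{\tfrac{d}{dt}}{t=1}\overline{q_k^{l_0-1}q_k^{l_0}}}{\sqrt{h^{l_0-1}_k(\bold{q})+r}} \;=\; \frac{\rest{\tfrac{d}{dt}}{t=0}\overline{q_k^{l_0}q_k^{l_0+1}}}{\sqrt{h^{l_0}_k(\bold{q})+r}}. \]
Taking norms and using that $z\mapsto z/\sqrt{z^2+r}$ is strictly increasing on $[0,\infty)$ for $r>0$, one deduces $d(q_k^{l_0-1},q_k^{l_0})=d(q_k^{l_0},q_k^{l_0+1})$; comparing the corresponding unit tangents then forces the piecewise geodesic to be $C^1$ at $q_k^{l_0}$, hence smooth.

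Iterating these identities over $l_0$, each sequence $(q_k^l)_{l=0}^\nu$ lies on a single smooth unit-speed geodesic $\gamma_k\colon [0,T_k]\to Q$ with $q_k^l=\gamma_k(\tfrac{l}{\nu}T_k)$, and the boundary identities yield $\gamma_k(0),\gamma_k(T_k)\in K$ with $\gamma_k'(0),\gamma_k'(T_k)$ normal to $K$, so $\gamma_k\in\mathcal{C}(K)$ and $\bold{q}=f_\nu((\gamma_k)_k)$. To verify $(\gamma_k)_k\in\bar{\Sigma}^a_m(\nu)$ I would use three observations: (i) $L_r(\bold{q})=\sum_k \nu\sqrt{(T_k/\nu)^2+r}>\sum_k T_k$, so $L_r(\bold{q})\leq a$ forces $\sum_k T_k<a$; (ii) membership $\bold{q}\in Z_r$ bounds $T_k/\nu$ from above, which together with Remark~\ref{rem-large-m}-style reasoning places $(\gamma_k)_k$ in $\bar{\Sigma}^a_m(\nu)$ for $r$ small; (iii) the condition $\bold{q}\notin Z_r^{0}$ combined with the lower bound $\mathcal{L}(K)\subset[2\epsilon_0,\infty)$ on binormal-chord lengths gives $\min_k T_k>\epsilon_0$. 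The converse direction is a direct verification: for a collection of binormal chords $(\gamma_k)_k\in\bar{\Sigma}^a_m(\nu)$, setting $\bold{q}=f_\nu((\gamma_k)_k)$ makes all the equations derived above hold tautologically, and $L_r(\bold{q})\to\sum_k T_k<a$ as $r\to 0$, so all the open constraints ($L_r(\bold{q})\leq a$, $\bold{q}\in Z_r$, $\bold{q}\notin Z_r^{0}$) are met for $r$ sufficiently small.

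The main obstacle is the interior-vertex analysis: the regularization $\sigma_r(z)=\sqrt{z+r}$ is indispensable because at $r=0$ the map $z\mapsto z/\sqrt{z^2}$ is identically $1$ on $z>0$, and one could not deduce equality of consecutive segment lengths; this is exactly why the argument only identifies critical points of the smoothed functional $L_r$ with binormal-chord discretizations, and only for sufficiently small $r>0$. A secondary point is that the hypothesis $a\notin\mathcal{L}_m(K)$ plays a double role: it guarantees that $\sum_k T_k\neq a$ for any collection of binormal chords possibly realizing such a $\bold{q}$, which is what turns the limit $L_r(\bold{q})\to\sum_k T_k$ into the strict inequality $L_r(\bold{q})<a$ for $r$ small, and what keeps critical values uniformly separated from the level $a$.
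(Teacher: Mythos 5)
Your proposal is correct and follows the paper's argument: compute $\nabla L_r$ from Lemma~\ref{lem-grad-d2}, use the strict monotonicity of the regularized ratio (in the paper's variable, $(\sigma_r)'(z)\sqrt{z}$; in yours, $z/\sqrt{z^2+r}$ with $z$ the distance) to deduce equal segment lengths and matching tangents at interior vertices, conclude the discretization comes from a binormal chord, and then invoke the uniform limit $L_r\to\sum_k\len\gamma_k$ together with $a\notin\mathcal{L}_m(K)$ to control the level set. One minor remark: your observation~(iii) is not what is needed for membership in $\bar{\Sigma}^a_m(\nu)$ — the bound $\sum_k T_k<a$ from (i) already suffices for the $\Sigma^a_m$ condition, and in the paper the role of $\bold{q}\notin Z^0_r$ (for $r$ small) is rather to ensure $T_{k,\bold{q}}>0$ so the reconstructed paths are well-defined.
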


\begin{proof} 
From Lemma \ref{lem-grad-d2}, $\bold{q}=(q^l_k)^{l=0,\dots, \nu}_{k=1,\dots , m}\in B_m(\nu)$ is a critical point of $L_{r}$ if and only if the following conditions hold for every $k\in \{1,\dots ,m\}$:
\begin{align}\label{crit-point}
\begin{split}
&(\sigma_r)'(h^0_k(\bold{q}))\cdot \rest{ \frac{d}{dt}}{t=0}\overline{q^0_kq^1_k} \in (T_{q^0_k}K)^{\perp} \text{ and } (\sigma_r)'(h^{\nu-1}_k(\bold{q})) \cdot \rest{ \frac{d}{dt}}{t=1}\overline{q^{\nu-1}_kq^{\nu}_k} \in (T_{q^{\nu}_k}K)^{\perp} ,\\
&(\sigma_r)' (h^{l-1}_k(\bold{q})) \cdot \rest{ \frac{d}{dt}}{t=1}\overline{q^{l-1}_kq^l_k} = (\sigma_r)' (h^l_k(\bold{q})) \cdot
\rest{\frac{d}{dt}}{t=0}\overline{q^l_kq^{l+1}_k}  \text{ for every } l\in \{1,\dots ,\nu-1\}.
\end{split}
\end{align}
Comparing the norms of both sides in the second line of (\ref{crit-point}), we have for $l\in \{1,\dots ,\nu-1\}$
\begin{align*}
(\sigma_r)' (h^{l-1}_k(\bold{q})) \cdot \sqrt{ h^{l-1}_k(\bold{q}) }= (\sigma_r)' (h^l_k(\bold{q})) \cdot \sqrt{ h^l_k(\bold{q}) } . 
\end{align*}
Since $(\sigma_r)'(z)\sqrt{z}$ is a strictly increasing function of $z$, this equation means that $d(q^{l-1}_k,q^l_k)=d(q^l_k,q^{l+1}_k)$ for $l\in \{1,\dots ,\nu-1\}$.
%If $\bold{q}\notin Z^0_r$, this implies that $d(q^l_k,q^{l+1}_k)>0$ for every $l$, so
%
Since $(\sigma_r)'>0$, the conditions of (\ref{crit-point}) are equivalent to the following:
\begin{align}\label{crit-point-2}
\begin{split}
&\rest{ \frac{d}{dt}}{t=0}\overline{q^0_kq^1_k} \in (T_{q^0_k}K)^{\perp} \text{ and }\rest{ \frac{d}{dt}}{t=1}\overline{q^{\nu-1}_kq^{\nu}_k} \in (T_{q^{\nu}_k}K)^{\perp} , \\
&\rest{ \frac{d}{dt}}{t=1}\overline{q^{l-1}_kq^l_k}  = \rest{\frac{d}{dt}}{t=0}\overline{q^l_kq^{l+1}_k} \text{ for every }l\in \{1,\dots ,\nu-1\}.
\end{split}
\end{align}
For every $\bold{q}\in B_m(\nu)$, let us define $T_{k,\bold{q}}\coloneqq \sum_{l=0}^{\nu-1}d(q^l_k,q^{l+1}_k)$ and a  path $\gamma_{k, \bold{q}}\colon [0,T_{k, \bold{q}}]\to Q$ determined by
\[\gamma_{k,\bold{q}}(T_{k,\bold{q}}t) \coloneqq  \overline{q^l_kq^{l+1}_k} \left( \nu t -l\right) \text{ if }\frac{l}{\nu} \leq t \leq \frac{l+1}{\nu} \text{ for }l\in \{0,\dots ,\nu-1\}. \]
Note that if $(\gamma_{k,\bold{q}})_{k=1,\dots ,m}\in \bar{\Sigma}^a_m(\nu)$, then $\bold{q}= f_{\nu}((\gamma_{k,\bold{q}})_{k=1,\dots ,m})$ holds.
We take $r>0$ so small that $T_{k, \bold{q}}>0$ holds for every $k=1,\dots ,m$ and $\bold{q}\in Z_r\setminus Z^0_r$. Then, for $\bold{q}\in Z_r \setminus Z^0_r$, the condition (\ref{crit-point-2}) is equivalent to that $\gamma_{k,\bold{q}}$
is a binormal chord of $K$ in $Q$, that is, $\gamma_{k,\bold{q}}\in \mathcal{C}(K)$.
In addition,
\[\textstyle{ \lim_{r\to 0} L_r(\bold{q})= \sum_{k=1}^m\len \gamma_{k,\bold{q}} }\]
for every $\bold{q}\in Z_r$ uniformly. Recall that $\mathcal{L}_m(K)$ is a closed subset of $\R_{>0}$.
Assuming that $a\notin \mathcal{L}_m(K)$ and $r>0$ is sufficiently small, it follows that a critical point $\bold{q}\in Z_r\setminus Z^0_r$ of $L_r$ satisfies $L_r(\bold{q})\leq a$ if and only if the binormal chords $(\gamma_{k,\bold{q}})_{k=1,\dots,m}$ satisfy $\sum_{k=1}^m\len \gamma_{k,\bold{q}}<a$. This proves the proposition.
\end{proof}
Let $Y_{r}$ be the gradient vector field of $L_{r}$.
To prove the next lemma, which is rather technical, let us prepare a few computations.
 $X^l_{k,r} \coloneqq ((\sigma_r)'\circ h^l_k) \cdot X^l_k$ is the gradient vector field of $\sigma_r\circ h^l_k$.
For every $k\in \{1,\dots ,m\}$, $l\in\{1,\dots ,\nu-2\}$ and $\bold{q}\in B_m(\nu)$, we have
\begin{align*}
\la Y_{r} ,X^{l}_{k} \ra (\bold{q}) =& \left\la  (\sigma_r)'(h^{l-1}_k(\bold{q}))\cdot \rest{ \frac{d}{dt}}{t=1}\overline{q^{l-1}_kq^l_k}  , -\rest{ \frac{d}{dt}}{t=0}\overline{q^{l}_kq^{l+1}_k}  \right\ra \\
&+ (\sigma_r)'(h^l_k(\bold{q}))\cdot \left| \rest{ \frac{d}{dt}}{t=0}\overline{q^{l}_kq^{l+1}_k} \right|^2 + (\sigma_r)'(h^l_k(\bold{q}))\cdot \left| \rest{ \frac{d}{dt}}{t=1}\overline{q^{l}_kq^{l+1}_k} \right|^2 \\
&+ \left\la - (\sigma_r)'(h^{l+1}_k(\bold{q}))\cdot \rest{ \frac{d}{dt}}{t=0}\overline{q^{l+1}_kq^l_k}  ,  \rest{ \frac{d}{dt}}{t=1}\overline{q^{l}_kq^{l+1}_k}  \right\ra.
\end{align*}
We abbreviate the increasing function $ (\sigma_r)'\sqrt{z}$ by $\tau_r$.
Then, by Cauchy-Schwarz inequality,
\begin{align*}
\la Y_{r} ,X^{l}_{k,r} \ra (\bold{q}) \geq& -\tau_r(h^{l-1}_k(\bold{q})) \cdot \tau(h^l_k(\bold{q})) + 2 (\tau_r(h^l_k(\bold{q})))^2  -\tau_r(h^{l+1}_k(\bold{q})) \cdot \tau_r(h^{l}_k(\bold{q}))  \\
=& (2\tau(h^l_k(\bold{q}))- \tau_r(h^{l-1}_k(\bold{q}))-\tau_r(h^{l+1}_k(\bold{q})) )\cdot \tau_r(h^l_k(\bold{q})) .
\end{align*}
Since $\sigma_r$ and $\tau_r$ are increasing functions, we have for $k_0\in \{1,\dots ,m \}$ and $l_0\in \{1,\dots ,\nu-2\}$,
\begin{align}\label{max-grad}
 \sigma_r\circ h^{l_0}_{k_0}(\bold{q}) = \max_{k,l} \sigma_r\circ h^l_k(\bold{q}) \Rightarrow \la -Y_r, X^{l_0}_{k_0,r} \ra(\bold{q}) \leq 0 .
 \end{align}
The same result holds when $l_0=0$ or $\nu-1$. We also note that for every $k_0\in \{1,\dots ,m\}$,
\begin{align}\label{sum-grad}
\textstyle{\la -Y_r, \sum_{l=1}^{\nu-1}X^{l}_{k_0} \ra (\bold{q}) \leq 0 }.
\end{align}

\begin{lem}\label{lem-gradient}
The trajectory of any point in $ Z_r$ (resp. $ Z^0_r$) along $-Y_{r}$ never goes outside $Z_r$ (resp. $Z^0_r$) at positive time.
\end{lem}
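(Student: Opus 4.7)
The plan is to verify that each of the scalar inequalities cutting out $Z_r$ (respectively $Z_r^0$) from the ambient manifold is preserved along any integral curve of $-Y_r$. Once this is established, compactness of $Z_r$ will prevent the trajectory from escaping. Concretely, I will show that along $-Y_r$ the following functions are all non-increasing: $L_r$, the pointwise maximum $M(\bold{q}) \coloneqq \max_{k,l} \sigma_r\circ h^l_k(\bold{q})$, and, for each $k$, the partial sum $g_k(\bold{q}) \coloneqq \sum_l \sigma_r\circ h^l_k(\bold{q})$. This will immediately give the positive invariance of the levels $L_r\leq b_0$, $M\leq \epsilon_g$, and $\min_k g_k \leq \epsilon_0$.

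The first and third claims reduce to a sign of a smooth derivative. For $L_r$ I will use the defining property of the gradient flow, $\frac{d}{dt}L_r(\bold{q}(t)) = -|Y_r(\bold{q}(t))|^2\leq 0$. For $g_k$ I will appeal to inequality (\ref{sum-grad}): after multiplying each summand by the positive factor $(\sigma_r)'(h^l_k)$, which is harmless for the sign, it says precisely that the derivative of $g_k$ along $-Y_r$ is non-positive; hence each $g_k$ is non-increasing, and so is $\min_k g_k$. Combined with the invariance for $L_r$ and $M$, this handles $Z_r^0$ as soon as $Z_r$ is handled.

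The substantive step will be monotonicity of the merely continuous function $M$ along $-Y_r$. I will invoke the standard principle that a finite pointwise maximum $F=\max_i f_i$ of $C^1$ functions is non-increasing along a vector field $X$ provided $X\cdot f_i(\bold{q}) \leq 0$ at every point where $f_i(\bold{q}) = F(\bold{q})$; inequality (\ref{max-grad}) supplies exactly this hypothesis for $f_{(k,l)} = \sigma_r\circ h^l_k$ and $X=-Y_r$. The principle itself is a short Dini-derivative argument: at any flow time $t_\ast$, continuity forces the indices that realise $M(\bold{q}(t))$ for $t$ slightly larger than $t_\ast$ with $M(\bold{q}(t))>M(\bold{q}(t_\ast))$ to already realise $M(\bold{q}(t_\ast))$; for each such index a first-order Taylor expansion combined with (\ref{max-grad}) yields $\sigma_r\circ h^l_k(\bold{q}(t)) \leq M(\bold{q}(t_\ast)) + o(t-t_\ast)$, so the upper right Dini derivative $D^+(M\circ \bold{q})(t_\ast)$ is non-positive, and a continuous function with non-positive Dini upper derivative is non-increasing.

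The main obstacle will be precisely this max-of-functions step, since the quantity to be controlled is only Lipschitz, not smooth; every other ingredient is either tautological from the gradient flow (for $L_r$) or an immediate consequence of the already-established inequality (\ref{sum-grad}) (for $g_k$).
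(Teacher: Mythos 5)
Your plan mirrors the paper's: show that $L_r$, the maximum $\max_{k,l}\sigma_r\circ h^l_k$, and $\min_k\sum_{l}\sigma_r\circ h^l_k$ are non-increasing along the flow of $-Y_r$, and the Dini-derivative argument you outline for the pointwise maximum is a clean way to handle the Lipschitz-but-not-smooth issue. That part, together with the tautological statement for $L_r$, is sound.

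The gap is in the step for $g_k\coloneqq \sum_{l=1}^{\nu-1}\sigma_r\circ h^l_k$. You claim that starting from (\ref{sum-grad}), namely $\sum_l \la -Y_r, X^l_{k_0}\ra(\mathbf{q})\leq 0$, and "multiplying each summand by the positive factor $(\sigma_r)'(h^l_k)$, which is harmless for the sign," yields $\sum_l (\sigma_r)'(h^l_{k_0})\la -Y_r, X^l_{k_0}\ra = \la -Y_r,\sum_l X^l_{k_0,r}\ra \leq 0$, which is indeed the derivative of $g_{k_0}$ along $-Y_r$. But this manipulation is not valid: (\ref{sum-grad}) controls only the \emph{unweighted} sum, and the individual summands $\la -Y_r, X^l_{k_0}\ra$ may have either sign. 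Multiplying each by a different positive weight can change the sign of the total (e.g.\ $(-10)+9\leq 0$ yet $1\cdot(-10)+2\cdot 9>0$). So the inequality $g_{k_0}'\leq 0$ does not follow from (\ref{sum-grad}) by this route. You also singled out the max step as "the main obstacle" and called the $g_k$ step an "immediate consequence," which compounds the oversight. What is actually needed for your step is $\la -Y_r, \sum_l X^l_{k_0,r}\ra\leq 0$ (note the $r$); if (\ref{sum-grad}) is read with that interpretation the step is immediate and no reweighting is needed, but that is not the manipulation you wrote, and as stated your argument does not close the gap.
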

\begin{proof}
Suppose that $\Gamma\colon [0,T]\to B_m(\nu)$ is a trajectory along $-Y_r$. Let us consider two continuous functions $f,g\colon [0,T]\to \R$ defined by
\[ f(t)\coloneqq \max_{k,l}\sigma_r\circ h^l_k(\Gamma(t)) ,\ g(t)\coloneqq \min_{k} \sum_{l=1}^{\nu-1}\sigma_r\circ h^l_k(\Gamma(t)).\]
To prove this lemma, it suffices to show that they are decreasing functions.
Indeed, there exists a discrete subset $A\subset [0,T]$ such that $f$ and $g$ are differentiable at every $t\in [0,T]\setminus A$. (\ref{max-grad}) and (\ref{sum-grad}) imply that $f'(t)\leq 0$ and $g'(t)\leq 0$ for every $t\in [0,T]\setminus A$. Hence, $f$ and $g$ are decreasing on $[0,T]$.
\end{proof}

We apply a general result from Morse theory.
\begin{lem}\label{lem-morse-general}
Let $B$ be a manifold and $L\colon B \to \R$ be a $C^{\infty}$ function. For $a,b\in \R$ with $a\leq b$ and two compact subsets $Z,Z^0 \subset B$, suppose that there is no critical point of $L$ in $(\rest{L}{Z})^{-1}([a,b]) \setminus Z^0$ and that the trajectory of any point in $Z$ (resp. $Z^0$) along the negative gradient vector field of $L$ never goes outside $Z$ (resp. $Z^0$) at positive times. Let us define $Z^{a'}\coloneqq (\rest{L}{Z})^{-1}((-\infty,a')) \cup Z^0$ for $a'\in \{a,b\}$. Then
$H^{\sing}_*(Z^b,Z^a) =0$ holds.
\end{lem}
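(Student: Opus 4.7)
The lemma is a relative Morse-theoretic deformation statement, so my plan is to exhibit $Z^a$ as a strong deformation retract of $Z^b$ by following the negative gradient flow of $L$; the conclusion $H^{\sing}_*(Z^b, Z^a)=0$ then follows immediately from the long exact sequence of the pair.

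Fix an auxiliary Riemannian metric on $B$, let $Y\coloneqq \nabla L$, and let $\phi_t$ denote the flow of $-Y$. The two forward invariance hypotheses, together with the compactness of $Z$, imply that $\phi_t$ is defined for all $t\geq 0$ on $Z$ and preserves both $Z$ and $Z^0$. Next, for any open neighborhood $U$ of $Z^0$ in $Z$, the compact set
\[
K_U\coloneqq \{x\in Z : a\leq L(x)\leq b\}\setminus U
\]
is contained in $(\rest{L}{Z})^{-1}([a,b])\setminus Z^0$ and, by hypothesis, contains no critical points of $L$. Hence $|Y|^2$ admits a positive minimum $c_U^2>0$ on $K_U$, so $L$ strictly decreases at rate at least $c_U^2$ along any trajectory while that trajectory remains in $K_U$.

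With these ingredients in place, I would construct a continuous homotopy $H\colon [0,1]\times Z^b\to Z^b$ such that $H(0,\cdot)=\mathrm{id}$, $H(1,Z^b)\subset Z^a$, and $H(s,x)=x$ for all $s$ and $x\in Z^a$, via $H(s,x)\coloneqq \phi_{sT(x)}(x)$ with $T(x)$ the first time $\phi_t(x)\in Z^a$. The main obstacle is to guarantee $T(x)<\infty$: a trajectory starting in $Z^b\setminus Z^a$ may approach $Z^0$ only asymptotically, and $Z^a=(\rest{L}{Z})^{-1}(-\infty,a)\cup Z^0$ need not be a neighborhood of $Z^0$. A standard LaSalle-type argument shows only that $\mathrm{dist}(\phi_t(x),Z^0)\to 0$: the $\omega$-limit set $\omega(x)$ is $\phi_t$-invariant and $L$ is constant on it, forcing $\omega(x)$ to consist of critical points with $L$-value in $[a,b]$, hence by hypothesis to lie in $Z^0$. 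To force finite-time entrance, I would replace $-Y$ by a continuous vector field equal to a reparametrization of $-Y$ (for instance $-Y/|Y|^2$) outside a small collar of $Z^0$ and modified inside the collar to drive trajectories into $Z^0$ in uniformly bounded time, while preserving forward invariance of both $Z$ and $Z^0$. With this modified flow, $T$ is finite and continuous on $Z^b$, and the formula above produces the required deformation retract.
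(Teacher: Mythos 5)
Your diagnosis of the gap is sharp, and it is precisely the step that the paper's two-sentence proof elides: forward invariance of $Z$ and $Z^0$ together with the absence of critical points in $(\rest{L}{Z})^{-1}([a,b])\setminus Z^0$ do not by themselves give finite-time entrance of forward orbits from $Z^b$ into $Z^a$; the LaSalle argument yields only $\mathrm{dist}(\phi_t(x),Z^0)\to 0$, which need not produce entrance. The proposed repair, however, cannot be made to work. Keeping $Z^0$ forward invariant forces the modified vector field to vanish on $Z^0$, and a (locally Lipschitz) field vanishing on $Z^0$ cannot carry a point of $Z\setminus Z^0$ into $Z^0$ in finite time; abandoning $Z^0$-invariance destroys the requirement $H(s,x)=x$ on $Z^a$. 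Indeed, the obstruction is genuine and the lemma as literally stated is false. Take $B=\R^2$, $L(x,y)=\bigl((x^2+y^2)-1\bigr)^2$, $Z=\overline{B(0,2)}$, $Z^0=\{0\}$, $a=\tfrac12$, $b=\tfrac32$. The critical set is $\{0\}\cup S^1$ with $L(0)=1\in[a,b]$ and $L(S^1)=0\notin[a,b]$, so no critical points lie in $(\rest{L}{Z})^{-1}([a,b])\setminus Z^0$, and both $Z$ and $Z^0$ are forward invariant for $-\nabla L$. Yet $Z^a$ is the disjoint union of an open annulus and the isolated point $\{0\}$, while $Z^b$ is an open disk containing both, so $H_1(Z^b,Z^a)\cong H_2(Z^b,Z^a)\cong\R\neq 0$.

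The hypothesis that is missing --- and that is tacitly available in the paper's only application, the proof of Proposition~\ref{prop-hmgy-grp}, where $Z^0_r$ is a codimension-$0$ sublevel set --- is that every critical point of $L$ in $(\rest{L}{Z})^{-1}([a,b])$ lies in the interior of $Z^0$ relative to $Z$. Under that strengthening your argument closes without any modification of the vector field: set $V\coloneqq (\rest{L}{Z})^{-1}((-\infty,a))\cup\mathrm{int}_Z Z^0$, an open subset of $Z$ contained in $Z^a$. Every forward orbit from $Z^b$ enters $V$ at some finite time (otherwise its $\omega$-limit set would be a set of critical points with $L$-value in $[a,b]$ lying in the compact set $Z\cap L^{-1}([a,b])\setminus \mathrm{int}_Z Z^0$, a contradiction), and by compactness the entrance time is uniformly bounded: a sequence $x_n\in Z^b$ with entrance times tending to $\infty$ would have a subsequential limit $x_\infty$ whose entire forward orbit stays in the compact critical-point-free set $Z\cap L^{-1}([a,b])\setminus V$. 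With a uniform bound $T$, the time-$T$ flow map $(Z^b,Z^a)\to(Z^a,Z^a)$ together with the flow homotopy is the inverse-up-to-homotopy to the inclusion that the paper asserts, and the lemma follows. So the correct fix is to strengthen the hypothesis (or to record the interiority property in the application), not to modify the gradient vector field.
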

\begin{proof}The conditions on $a,b,Z$ and $Z^0$ show that
$Z^b$ can be deformed into $Z^a$ along the negative gradient flow of $L$. Therefore, we get a map from $(Z^b,Z^a)$ to $(Z^a,Z^a)$ which gives the inverse map of the inclusion map up to homotopy. 
\end{proof}
%\text{ and } \gamma'_k= \gamma_{\sigma(k)} \text{ or }\gamma^{-1}_{\sigma(k)}\ (k=1,\dots ,m) \text{ for some } \sigma\in \mathfrak{S}_m.\]
%In particular, if $(\gamma_k)_{1\leq k\leq m}, (\gamma'_k)_{1\leq k\leq m'}$ satisfy $c=\sum_{k=1}^m \len \gamma_k =\sum_{k=1}^{m'} \len \gamma'_k$, $m=m'$
%and $\ind_{L_r} ( (\gamma_k)_{1\leq k\leq m} )= \ind_{L_r} ((\gamma'_k)_{1\leq k\leq m'})$. Let us write this index by $i_r(c)$.
%
%
%if $\mathcal{L}_m(K) \cap [a,b]=\varnothing$ and $r>0$ is sufficiently small, then $H^{\sing}_*(Z^b_r,Z^a_r )=0$ holds, and thus $H^{\sing}_*(B^b_m(\nu),B^a_m(\nu)) =0$ holds.
%
%Recall that we have assumed Condition \ref{assump-length} on $K$. For every $c\in \mathcal{L}(Q,K) =\cup_m \mathcal{L}_m(Q,K)$, the unique integer $m_c$ is assigned so that $c\in \mathcal{L}_{m(c)}(Q,K)$. The next proposition, which is the conclusion of the second observation, follows immediately.
Combining the above results, we prove the first main proposition in this section.
\begin{prop}\label{prop-hmgy-grp}%Under Condition \ref{assump-length} on $K$, the following hold for $a,b\in \R_{\geq 0}\setminus \mathcal{L}(K)$ ($a<b$):
If $\mathcal{L}_m(K) \cap [a,b]=\varnothing$, then $H^{\dr}_*( \Sigma^b_m ,\Sigma^a_m )=0$.
%\item If $\mathcal{L}(Q,K) \cap [a,b]=\{c\}$, then $H^{\dr}_*( \Sigma^b_m ,\Sigma^a_m )=0$ for every $m\neq m_c$.
\end{prop}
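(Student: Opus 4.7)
The plan is to reduce the statement via the finite-dimensional approximation developed above to a Morse-theoretic computation. First, fix $b_0 > b$ so that $a, b \in [0, b_0)$ (none of the preceding constructions requires $b_0$ to be small, so this is allowed). Combining the isomorphism (\ref{cong-dr-sing-path}) with Proposition \ref{prop-hmgy-of-mfd}, we obtain
\[ H^{\dr}_*(\Sigma^b_m, \Sigma^a_m) \;\cong\; \varinjlim_{\nu\to \infty} H^{\sing}_*\bigl(B^b_m(\nu),\, B^a_m(\nu)\bigr). \]
Hence it suffices to prove $H^{\sing}_*(B^b_m(\nu), B^a_m(\nu)) = 0$ for every $\nu\geq 1$.

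For a fixed $\nu$, the identification $\varinjlim_{r\to 0} H^{\sing}_*(Z^b_r, Z^a_r) \cong H^{\sing}_*(B^b_m(\nu), B^a_m(\nu))$ noted just before Lemma \ref{lem-grad-d2} reduces the task further to showing $H^{\sing}_*(Z^b_r, Z^a_r) = 0$ for all sufficiently small $r > 0$. I would apply Lemma \ref{lem-morse-general} with $B = B_m(\nu)$, $L = L_r$, $Z = Z_r$, $Z^0 = Z_r^0$: then $Z^{a'} = Z_r^{a'}$ for $a' \in \{a,b\}$ because $L_r \geq 0$ on $Z_r$. The invariance of $Z_r$ and $Z_r^0$ under the negative gradient flow is supplied by Lemma \ref{lem-gradient}, so the only thing to verify is the absence of critical points of $L_r$ in $(L_r|_{Z_r})^{-1}([a,b]) \setminus Z_r^0$.

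This is the main step, and it is a direct strengthening of Proposition \ref{prop-crit-pt-L}. Since $\mathcal{L}_m(K)$ is a closed subset of $\R$ disjoint from the compact interval $[a,b]$, there is $\eta > 0$ with $\mathcal{L}_m(K) \cap [a-\eta, b+\eta] = \varnothing$. By the same argument as in the proof of Proposition \ref{prop-crit-pt-L}, for $r$ small enough every critical point $\bold{q} \in Z_r \setminus Z_r^0$ of $L_r$ is of the form $\bold{q} = f_\nu((\gamma_{k,\bold{q}})_{k=1,\dots,m})$ for some binormal chords $\gamma_{k,\bold{q}} \in \mathcal{C}(K)$, and
\[ L_r(\bold{q}) \;\xrightarrow{r\to 0}\; \sum_{k=1}^m \len \gamma_{k,\bold{q}} \;\in\; \mathcal{L}_m(K) \]
uniformly in $\bold{q} \in Z_r$. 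Consequently, for $r$ small, $L_r(\bold{q}) \notin [a,b]$ whenever $\bold{q}$ is such a critical point, verifying the hypothesis. Lemma \ref{lem-morse-general} then yields $H^{\sing}_*(Z^b_r, Z^a_r) = 0$, and taking the direct limit in $r$ and then in $\nu$ finishes the proof.

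The main obstacle is precisely this uniform extension of Proposition \ref{prop-crit-pt-L} from the interval $[0,a]$ to the whole interval $[a,b]$: one needs both the uniform convergence $L_r \to \sum_k \len \gamma_{k,\cdot}$ on the (compact) set $Z_r \setminus Z_r^0$ and the closedness of $\mathcal{L}_m(K)$ to choose a single $r$ that simultaneously rules out every critical value in $[a,b]$. Everything else is bookkeeping with the tools already assembled.
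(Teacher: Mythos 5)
Your proof is correct and follows essentially the same route as the paper: reduce to the finite-dimensional models via the isomorphism (\ref{cong-dr-sing-path}), pass to the sets $Z^b_r, Z^a_r$ for small $r$, and apply the Morse-theoretic Lemma \ref{lem-morse-general} with the gradient-flow invariance from Lemma \ref{lem-gradient}. You are a bit more careful than the paper in one place: the paper simply cites Proposition \ref{prop-crit-pt-L} to supply the ``no critical points in $(L_r|_{Z_r})^{-1}([a,b])\setminus Z^0_r$'' hypothesis, even though that proposition is stated only for the sublevel $[0,a]$; you correctly note that what is really being used is the content of its proof — every critical point in $Z_r\setminus Z_r^0$ corresponds to binormal chords, $L_r \to \sum_k\len\gamma_{k,\cdot}$ uniformly, and closedness of $\mathcal{L}_m(K)$ gives a margin $\eta>0$ around $[a,b]$ — and you spell out why this rules out critical values in the entire interval $[a,b]$ for $r$ small, not just at the single endpoint $a$. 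That clarification is correct and worth making explicit.
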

\begin{proof}
Form Proposition \ref{prop-crit-pt-L} and Lemma \ref{lem-gradient}, we can apply Lemma \ref{lem-morse-general} to show that if $\mathcal{L}_m(K) \cap [a,b]=\varnothing$ and $r>0$ is sufficiently small, then $H^{\sing}_*(Z^b_r,Z^a_r )=0$ holds, and thus \[H_*^{\sing}(B^b_m(\nu),B^a_m(\nu))\cong \varinjlim_{r\to 0} H^{\sing}_*(Z^b_r,Z^a_r )=0.\]
From (\ref{cong-dr-sing-path}), it follows that $H^{\dr}_*( \Sigma^b_m ,\Sigma^a_m ) \cong \varinjlim_{j\to \infty } H_*^{\sing}(B^b_m(2^j),B^a_m(2^j))= 0$.
\end{proof}

\subsubsection{  $H^{\dr}_*(S_{\epsilon})$ and the evaluation map}

Next, we examine $H^{\dr}_*(S_{\epsilon})$ for $\epsilon\in (0, \epsilon_0]$.
Choose a Riemannian metric  $g'$ on $N_{\epsilon_0}$ for which $K$ is a totally geodesic submanifold of $N_{\epsilon_0}$.
Then, we can take a constant $C_1\geq 1$ such that
for any $q,q'\in N_{\epsilon_0/C_1}$ with $d(q,q')<\epsilon_0/C_1$, there exists a unique shortest geodesic in $N_{\epsilon_0}$ with respect to $g'$ from $q$ to $q'$. Let us write this geodesic by $\tilde{qq'}\colon [0,1]\to N_{\epsilon_0}$.
In this subsection, $\exp$ denotes the exponential map with respect to $g$.

There is a smooth map between differentiable spaces $s_{\epsilon}\colon N^{\reg}_{\epsilon} \to S_{\epsilon}$ which maps $\exp_xv\in N_{\epsilon}$ ($x\in K$, $v\in (T_xK)^{\perp}$ with $|v|<\epsilon/2$) to
\[s_{\epsilon}( \exp_xv) \coloneqq (\sigma^v_i\colon [0,\epsilon/2] \to N_{\epsilon})_{i=1,2},\] 
where $\sigma^v_1(t)=\exp_x( \frac{\epsilon-2t}{\epsilon}v)$ and $\sigma^v_2(t)=\exp_x(\frac{2t}{\epsilon}v)$ for $t\in [0,\epsilon/2]$. This satisfies $\ev_0\circ s_{\epsilon}=\id_{N_{\epsilon}}$.
For $\epsilon, \bar{\epsilon}\in (0,\epsilon_0]$ with $\epsilon\leq \bar{\epsilon}$, let $i_{\epsilon,\bar{\epsilon}} \colon S_{\epsilon}\to S_{\bar{\epsilon}}$ denote the inclusion map.
\begin{lem}\label{lem-htpy-sev}
There exists a constant $C\geq C_1$ such that for any $\epsilon \in (0,\epsilon_0/C]$, the inclusion map $i_{\epsilon,C\epsilon} \colon S_{\epsilon}\to S_{C\epsilon}$ is homotopic to $i_{\epsilon,C\epsilon}\circ s_{\epsilon}\circ \ev_0 $.

\end{lem}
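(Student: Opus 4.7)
The plan is to construct an explicit smooth homotopy $H \colon \R \times S_\epsilon \to S_{C\epsilon}$ satisfying $H(s,\cdot) = i_{\epsilon,C\epsilon}$ for $s \leq 0$ and $H(s,\cdot) = i_{\epsilon,C\epsilon} \circ s_\epsilon \circ \ev_0$ for $s \geq 1$, for a suitable constant $C \geq C_1$ independent of $\epsilon$. The guiding observation is that any pair $(\sigma_1,\sigma_2) \in S_\epsilon$ and its standardization $(\hat\sigma_1,\hat\sigma_2) := s_\epsilon(q)$, with $q = \sigma_1(0) = \exp_{x_0}(v_0)$, share the basepoint $q$ and both have their remaining endpoints lying on $K$. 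They can therefore be linked by pairs of short paths using a combination of reparameterization and pointwise $g'$-geodesic interpolation, provided one allows the slightly enlarged neighborhood $N_{C\epsilon}$.

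Concretely, I would build $H$ in two stages. In the first, reparameterize $\sigma_i$ and $\hat\sigma_i$ from their respective domains $[0,\epsilon_i]$ and $[0,\epsilon/2]$ to a common long domain $[0, L]$ with $L = C\epsilon/2$, by smoothly interpolating the domain length; since these reparameterizations only stretch the parameter, the unit speed bound is preserved throughout. Denote the resulting paths by $\bar\sigma_i$ and $\tilde{\hat\sigma}_i$. In the second stage, apply pointwise $g'$-geodesic interpolation: set $\sigma_i^s(t)$ to be the $g'$-geodesic from $\bar\sigma_i(t)$ to $\tilde{\hat\sigma}_i(t)$ evaluated at $s \in [0,1]$, which in the paper's notation reads $\tilde{\bar\sigma_i(t)\,\tilde{\hat\sigma}_i(t)}(s)$. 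The boundary conditions are preserved automatically: one gets $\sigma_1^s(0) = \tilde{q\,q}(s) = q$ and $\sigma_1^s(L) = \tilde{p_1\,x_0}(s) \in K$, since $K$ is $g'$-totally geodesic; the statements for $\sigma_2^s$ are analogous. Concatenating the two stages with a smooth cutoff on the time parameter then yields $H$.

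The main obstacle is choosing $C$ uniformly large enough that the homotopy actually lands in $S_{C\epsilon}$; this amounts to verifying that (i) the paths $\sigma_i^s$ stay inside $N_{C\epsilon}$, (ii) their $g$-speeds do not exceed $1$, and (iii) their domains do not exceed $C\epsilon/2$. Since $g$ and $g'$ are uniformly bilipschitz on the compact closure of $N_{\epsilon_0/C_1}$, one has universal constants controlling (a) the $g$-distance between $\bar\sigma_i(t)$ and $\tilde{\hat\sigma}_i(t)$, which is bounded by a multiple of $\epsilon$, ensuring that the connecting $g'$-geodesic exists once $\epsilon$ is small relative to $\epsilon_0$, (b) the $g$-length of that geodesic, which controls how far $\sigma_i^s$ strays from $K$, and (c) the $g$-norm of $\partial_t\sigma_i^s$, via a Jacobi field estimate in terms of the speeds of the endpoint paths (which are of order $1/C$ on domain $[0,L]$). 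Taking $C$ to absorb all these multiplicative constants together with $C_1$ satisfies (i)--(iii). Smoothness of $H$ as a map of differentiable spaces then follows from the smooth dependence of the $g'$-exponential, the reparameterizations, and the decomposition $q = \exp_{x_0}(v_0)$ on the input data; moreover the required submersion condition for plots of $S_{C\epsilon}$ is automatic, since $\ev_0 \circ H^s = \ev_0$ on every plot.
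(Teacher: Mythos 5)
Your approach is essentially the same as the paper's: stretch the domains via reparameterization to gain speed headroom, interpolate pointwise along $g'$-geodesics (using that $K$ is $g'$-totally geodesic so that the $K$-endpoints stay on $K$), and pick $C$ large enough that the interpolated paths stay in $N_{C\epsilon}$ with speed $\leq 1$. The estimates you sketch are also the right ones.

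There is, however, one genuine omission. Your homotopy stops at stage two, so $H(1,\cdot)$ equals the reparameterization $\tilde{\hat\sigma}_i$ of $s_\epsilon(q)$ to the long domain $[0,L]$, not $s_\epsilon(q) = (\hat\sigma_1,\hat\sigma_2)$ on $[0,\epsilon/2]$ itself. Since elements of $S_{C\epsilon}$ are pairs of paths \emph{together with} their actual domains, these are different points of $S_{C\epsilon}$, so your $H$ does not satisfy the endpoint condition $H(s,\cdot)=i_{\epsilon,C\epsilon}\circ s_\epsilon\circ\ev_0$ for $s\geq 1$ that the definition of ``homotopic'' requires. The paper fixes this with a third stage, symmetric to your first, that shrinks the domain of $\tilde{\hat\sigma}_i$ back from $[0,L]$ down to $[0,\epsilon/2]$; you should append the same. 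This does not affect any of your estimates (shrinking only increases the speed up to the original unit bound), so the rest of your argument goes through unchanged.
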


\begin{proof}
We define a $C^{\infty}$ map
\[G\colon \{(q,q')\in N_{\epsilon_0/C_1}\times N_{\epsilon_0/C_1} \mid d(q,q')<\epsilon_0/C_1 \}\times [0,1]\to N_{\epsilon_0}\colon ( (q,q'),s) \mapsto \tilde{qq'}(s).\]
Then there is a constant $C\geq C_1$ so that
\[ | d \left( G( \cdot ,s)\right)_{(q,q')} (v,v')|_g\leq \frac{C}{2}\cdot  ( |v|_g + |v'|_g ) \]
for every $s\in [0,1]$ and $(v,v')\in T_qN_{\epsilon_0/C}\times T_{q'}N_{\epsilon_0/C}$ with $d(q,q')<\epsilon_0/C_1$.
For any $\epsilon\in (0,\epsilon_0/C]$ and $(\sigma_i\colon [0,\epsilon_i]\to N_{\epsilon})_{i=1,2} \in S_{\epsilon}$, we set $\bar{\epsilon}\coloneqq C\epsilon$ and define $(\sigma^{(s)}_i\colon [0,\epsilon^s_i]\to N_{\bar{\epsilon}} )_{i=1,2}\in S_{\bar{\epsilon}}$ for $s\in \R$ as follows: Take $x\in K$ and $v\in T_xK$ such that $\sigma_1(0)=\exp_xv$ and $|v|<\epsilon/2$. Then we define
\begin{align*}
\epsilon^s_i &\coloneqq \begin{cases}
(1-\kappa(s))\epsilon_i +\kappa(s)\bar{\epsilon} & \text{ if }s\leq \frac{1}{3}, \\
\bar{\epsilon} & \text{ if }\frac{1}{3}\leq s \leq \frac{2}{3}, \\
(1-\kappa(s-\frac{2}{3}))\bar{\epsilon} + \kappa(s-\frac{2}{3}) \epsilon & \text{ if }s\leq \frac{1}{3},
\end{cases} \\
\sigma^{(s)}_i (t)&\coloneqq \begin{cases}
\sigma_i(\epsilon_it/\epsilon^s_i ) & \text{ if }s\leq \frac{1}{3},\\
G ( \sigma_i ( \epsilon_it/\bar{\epsilon} ) ,\sigma^v_i (t/C) , \kappa( s-\frac{1}{3} ) ) & \text{ if }\frac{1}{3}\leq s\leq \frac{2}{3} ,\\
\sigma^v_i(\epsilon t/\epsilon_i^s  ) & \text{ if }\frac{2}{3}\leq s.
\end{cases}\end{align*}
Here, $\kappa\colon \R \to [0,1] $ is a $C^{\infty}$ function such that $\kappa(s)= \begin{cases} 0 & \text{ if } s\leq 0, \\ 1& \text{ if }s\geq \frac{1}{3} .\end{cases}$ 
Note that when $\frac{1}{3}\leq s\leq \frac{2}{3}$,
\[|(\sigma^{(s)}_i)'(t) |_g \leq \frac{C}{2} \cdot \left( \frac{\epsilon_i}{\bar{\epsilon}}\sup|\sigma'_i|_g +  \frac{1}{C} \sup |(\sigma^v_i)'|_g \right) \leq \frac{C}{2} \cdot\left( \frac{\epsilon_i}{\bar{\epsilon}} +\frac{1}{C} \right) \leq 1.\]
Now the homotopy from $i_{\epsilon,\bar{\epsilon}} $ to $i_{\epsilon,\bar{\epsilon}} \circ s_{\epsilon}\circ \ev_0$ is given by the map
\[\R\times S_{\epsilon}\to S_{\bar{\epsilon}}\colon (s,(\sigma_i)_{i=1,2}) \to (\sigma^{(s)}_i\colon [0,\epsilon^s_i] \to N_{\bar{\epsilon}})_{i=1,2}. \]
\end{proof}

\begin{prop}\label{prop-exactness}
Let $C$ be the constant of Lemma \ref{lem-htpy-sev}.
For any $\epsilon \in (0,\epsilon_0/C]$ and $x \in H^{\dr}_*(S_{\epsilon})$,
\[(\ev_0)_*(x)=0 \in H^{\dr}_*(N_{\epsilon})\Rightarrow (i_{\epsilon,C\epsilon})_*( x)= 0 \in H^{\dr}_*(S_{C\epsilon}) . \]
\end{prop}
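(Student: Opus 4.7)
The plan is to combine the homotopy of Lemma~\ref{lem-htpy-sev} with the vanishing hypothesis, deducing the conclusion via functoriality and chain homotopy invariance. Concretely, Lemma~\ref{lem-htpy-sev} asserts that the smooth maps $i_{\epsilon,C\epsilon}$ and $i_{\epsilon,C\epsilon}\circ s_{\epsilon}\circ \ev_0$ from $S_\epsilon$ to $S_{C\epsilon}$ are homotopic; Proposition~\ref{prop-chain-htpy} then yields a chain homotopy between the induced chain maps, so they agree on $H^{\dr}_*$. Hence, for the cycle $x\in H^{\dr}_*(S_\epsilon)$,
\[
(i_{\epsilon,C\epsilon})_*(x) \;=\; (i_{\epsilon,C\epsilon})_* \circ (s_\epsilon)_* \circ (\ev_0)_*(x) \quad\text{in}\ H^{\dr}_*(S_{C\epsilon}),
\]
where the intermediate $(\ev_0)_*$ is taken as a map to $H^{\dr}_*(N_\epsilon^{\reg})$, the target where $s_\epsilon$ is defined. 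Once the vanishing of $(\ev_0)_*(x)$ is established on this side, the right-hand side is $0$ and the proof concludes.

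The main issue is thus to translate the hypothesis ``$(\ev_0)_*(x)=0\in H^{\dr}_*(N_\epsilon)$'' (with the standard structure $P_{N_\epsilon}$) into vanishing in $H^{\dr}_*(N_\epsilon^{\reg})$. The set-theoretic identity $\iota\colon N_\epsilon^{\reg}\to N_\epsilon$ is smooth since every submersion is a smooth map, so it induces a comparison map $\iota_*\colon H^{\dr}_*(N_\epsilon^{\reg})\to H^{\dr}_*(N_\epsilon)$, and the hypothesis reads $\iota_*(\ev_0)_*(x)=0$. I would argue that $\iota_*$ is in fact an isomorphism. By Example~\ref{ex-hmgy-mfd-reg}, $H^{\dr}_*(N_\epsilon^{\reg})\cong H^{\dim N_\epsilon-*}_c(N_\epsilon)$, which via the Thom isomorphism for the oriented normal disk bundle and Poincar\'e duality on the compact oriented base $K$ is identified with $H_*(K)$. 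On the other side, Proposition~\ref{prop-hmgy-of-mfd} combined with the deformation retraction $N_\epsilon\to K$ identifies $H^{\dr}_*(N_\epsilon)\cong H^{\sing}_*(N_\epsilon)\cong H_*(K)$. A naturality check shows that $\iota_*$ becomes the identity on $H_*(K)$ (up to sign) under these identifications, so $(\ev_0)_*(x)$ already vanishes in $H^{\dr}_*(N_\epsilon^{\reg})$ and the first paragraph finishes the argument.

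The hard part is precisely this identification of $\iota_*$ with the identity, which is not immediate from the general theory and requires tracing the Thom and Poincar\'e duality isomorphisms compatibly at the chain level. An alternative route bypassing the homological comparison would work directly with chains: starting from a bounding chain $\tilde{y}\in C^{\dr}_{*+1}(N_\epsilon)$ of $(\ev_0)_*(x)$ provided by the hypothesis, one would try to construct a bounding chain of $(i_{\epsilon,C\epsilon})_*(x)$ in $C^{\dr}_{*+1}(S_{C\epsilon})$ by pairing $\tilde{y}$ with the explicit homotopy chain of Lemma~\ref{lem-htpy-sev}. The technical hurdle is that representatives of $\tilde{y}$ involve arbitrary smooth plots of $N_\epsilon$, which need not be submersions; a fiber-product thickening against $\ev_0$, available thanks to the regularity condition in Definition~\ref{def-fib-prod-ope}, would be needed to turn these into valid plots of $P_{S_{C\epsilon}}$.
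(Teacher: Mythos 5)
Your first paragraph reproduces the paper's proof exactly: Lemma \ref{lem-htpy-sev} together with Proposition \ref{prop-chain-htpy} gives $(i_{\epsilon,C\epsilon})_* = (i_{\epsilon,C\epsilon}\circ s_\epsilon)_* \circ (\ev_0)_*$ on de Rham homology, so the vanishing of $(\ev_0)_*(x)$ finishes. Your concern about $N_\epsilon$ versus $N_\epsilon^{\reg}$ is well placed, since $s_\epsilon$ is only defined on $N_\epsilon^{\reg}$ and hence $(\ev_0)_*(x)$ must be read there, while the hypothesis is written for $N_\epsilon$; however, you need not retrace the Thom and Poincar\'e duality identifications at the chain level. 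The fact that $\id_M\colon M^{\reg}\to M$ induces an isomorphism $H^{\dr}_*(M^{\reg})\to H^{\dr}_*(M)$ is \cite[Proposition 5.2]{Irie-BV}, which the paper itself invokes (in the proof of Lemma \ref{lem-hmgy-class}); and in any case only injectivity of this comparison map is needed for your argument, not that it ``becomes the identity on $H_*(K)$'' under the abstract identifications. With that reference in hand, your first paragraph is a complete proof, and the chain-level alternative you sketch is unnecessary.
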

\begin{proof}
By Lemma \ref{lem-htpy-sev}, $(i_{\epsilon,C\epsilon })_* (x) = (i_{\epsilon,C\epsilon} \circ s_{\epsilon})_* ( (\ev_0)_*(x))=0$ if $(\ev_0)_*(x)=0 \in H^{\dr}_*(N_{\epsilon})$.
\end{proof}

\subsubsection{Additional results}\label{subsubsec-additional}
Using the computations obtained in the former subsections, we prove several additional results. $b_0$ is a fixed real number as before and $a\in [0,b_0)$ which may belong to $\mathcal{L}(K)$.
\begin{lem}\label{lem-measure0}
There exists a manifold $Z$ and a $C^{\infty}$ function $f\colon Z\to \R$ such that
$\mathcal{L}_1(K)\cap [0,a)$ is contained in the set of critical values of $f$. 
\end{lem}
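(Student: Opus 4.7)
The plan is to use the finite-dimensional approximation developed in Section \ref{subsec-hmgy-grp}. Fix any integer $\nu > a/\epsilon_g$ and define the genuinely smooth ``energy'' function
\[
E\colon B_1(\nu)\to \R,\qquad E(\mathbf{q}) := \nu \sum_{l=0}^{\nu-1} h^l(\mathbf{q}) = \nu \sum_{l=0}^{\nu-1} d(q^l,q^{l+1})^2.
\]
Unlike the function $L_r$ in the proof of Proposition \ref{prop-crit-pt-L}, this $E$ requires no square-root regularization. I then take $Z := B_1^a(\nu)\cap \{E>0\}$, which is an open submanifold of the manifold $B_1(\nu)$, and set $f := \sqrt{E}|_Z$, which is smooth since $E$ is smooth and strictly positive on $Z$.

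Next, I associate to each binormal chord $\gamma\colon [0,L]\to Q$ (of unit speed) with $L\in \mathcal{L}_1(K)\cap [0,a)$ the evenly-spaced polygon $\mathbf{q}^\gamma := (\gamma(lL/\nu))_{l=0,\ldots,\nu}$. Since $L/\nu < a/\nu < \epsilon_g$, each shortest geodesic $\overline{q^l q^{l+1}}$ is well-defined, and by the geodesic property of $\gamma$ it coincides with the reparametrization of $\gamma|_{[lL/\nu,(l+1)L/\nu]}$. One then checks $\mathbf{q}^\gamma \in B_1^a(\nu)$ from $\sum_l d(q^l,q^{l+1}) = L < a$, and $E(\mathbf{q}^\gamma) = \nu\cdot \nu\cdot (L/\nu)^2 = L^2 > 0$, so $\mathbf{q}^\gamma \in Z$ with $f(\mathbf{q}^\gamma) = L$.

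Finally, I would verify that $\mathbf{q}^\gamma$ is a critical point of $E$, equivalently of $f$ on $Z$, by reading off Lemma \ref{lem-grad-d2}. At an interior vertex $q^l$, vanishing of $\nabla E$ reduces to $\tfrac{d}{dt}|_{t=1}\overline{q^{l-1}q^l} = \tfrac{d}{dt}|_{t=0}\overline{q^l q^{l+1}}$, which holds since both equal $(L/\nu)\gamma'(lL/\nu)$. At the endpoints $q^0=\gamma(0)$ and $q^\nu=\gamma(L)$, the projection of $\nabla E$ to $TK$ vanishes precisely because $\gamma'(0)\in (T_{\gamma(0)}K)^\perp$ and $\gamma'(L)\in (T_{\gamma(L)}K)^\perp$ by the binormal condition. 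Hence every $L\in \mathcal{L}_1(K)\cap [0,a)$ is a critical value of $f$. The only mild subtlety is choosing the target function to extract \emph{lengths} rather than squared lengths as critical values, which is handled by the passage to $\sqrt{E}$; after that, the argument is essentially a recapitulation of the critical-point calculation already carried out in the proof of Proposition \ref{prop-crit-pt-L}, so no serious obstacle is anticipated.
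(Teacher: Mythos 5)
Your proof is correct and takes a genuinely different, cleaner route than the paper. The paper reuses the regularized function $L_r = \sum_l \sigma_r\circ h^l$ from Proposition \ref{prop-crit-pt-L}, with $\sigma_r(z)=\sqrt{z+r}$, and post-composes with the inverse relation $f_r$ (determined by $f_r(\nu\sigma_r(\lambda^2)) = \nu\lambda$) to convert critical values of $L_r$ into lengths; it cites the correspondence between critical points and binormal chords from Proposition \ref{prop-crit-pt-L} rather than re-deriving it. You instead avoid the $\sigma_r$-regularization entirely: the energy $E = \nu\sum_l h^l$ is already smooth on all of $B_1(\nu)$, and $f=\sqrt{E}$ is smooth after restricting to the open set $\{E>0\}$, which is the only place where non-smoothness could occur. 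You then verify directly, reading off Lemma \ref{lem-grad-d2}, that the evenly-spaced polygon $\mathbf{q}^\gamma$ of a binormal chord $\gamma$ of length $L<a$ is a critical point of $E$ (hence of $f$) with $f(\mathbf{q}^\gamma)=L$. This buys you a slightly shorter and more self-contained argument, since you only need the ``easy'' direction (binormal chord $\Rightarrow$ critical point) rather than the full two-way correspondence of Proposition \ref{prop-crit-pt-L}, and you bypass the bookkeeping that comes with $\sigma_r$ and $f_r$. Both proofs ultimately rest on the same gradient computation from Lemma \ref{lem-grad-d2}.

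One point worth making explicit in your write-up: the identity $\overline{q^l q^{l+1}}(t) = \gamma((l+t)L/\nu)$, and hence $d(q^l,q^{l+1}) = L/\nu$, requires that the geodesic segment $\gamma|_{[lL/\nu,(l+1)L/\nu]}$ is the unique minimizing geodesic between its endpoints. This needs $\epsilon_g$ to be chosen not merely so that there is a unique minimizing geodesic between nearby points, but so that any geodesic segment of length $<\epsilon_g$ starting in the relevant compact set is minimizing (i.e., $\epsilon_g$ is at most the convexity radius over that set). The paper's definition of $\epsilon_g$ does not state this extra property, but it is implicitly used in the proof of Proposition \ref{prop-crit-pt-L} as well (in the claim $\mathbf{q}=f_\nu(\gamma_{k,\mathbf{q}})$), so you are entitled to assume it; still, it deserves a sentence. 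With that caveat made explicit, the argument is complete.
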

\begin{proof}
We use the notations in the proof of Proposition \ref{prop-crit-pt-L} for $m=1$.
For every critical point $\bold{q}\in Z_r\setminus Z^0_r$ of $L_r$, $\len \gamma_{1,\bold{q}}= f_r (L_r (\bold{q}))\in (0, f_r (b_0))$, where $f_r$ is determined by $f_r(\nu \sigma_r(l))=\nu\sqrt{l}$ for every $l\in \R_{> 0}$. We choose $r>0$ so that $a< f_r(b_0)$. Then, from the correspondence between binormal chords and critical points of $L_r$, $\mathcal{L}_m(K)\cap [0,a)$ is contained in the critical value of $f\coloneqq f_r \circ L_r$ on $Z\coloneqq Z_r\setminus Z^0_r$.
\end{proof}
This proves that  $\mathcal{L}(K)=\bigcup_{m=1}^{\infty} \mathcal{L}_m(K)$ is a null set.
Indeed, for any $m\in \Z$, $\mathcal{L}_m(K)\cap [0,a)$ is contained in the set of critical values of the $C^{\infty}$ function
$Z^{\times m} \to \R \colon (x_1,\dots ,x_m)\mapsto f(x_1)+\dots +f(x_m)$.
By Sard's theorem, it is a null set in $\R$. 
Since $b_0$ was chosen arbitrarily, it follows that $\mathcal{L}_m(K)$ is a null set.
%Since $\mathcal{L}(K)$ is the set of finite sums of elements of $\mathcal{L}_1(K)$, it is also a null set.

Next, recall that the definition of $\Sigma^a_m$ depends on auxiliary data $C_0$ and $\epsilon_0$.
\begin{lem}\label{lem-epsilon0-indep} $H^{\dr}_*(\Sigma^a_m)$ does not depend on the choice of $C_0$ and $\epsilon_0$. More precisely, the following hold:
\begin{itemize}
\item 
If we write $\Sigma^a_m$ by $\Sigma^a_{m,C_0}$ to clarify the dependence on $C_0$, the inclusion map
$\Sigma^a_{m,C_0}\to \Sigma^a_{m,C'_0}$ for $C_0\leq C'_0$  induces an isomorphism on homology.
\item
If we write $\Sigma^a_m$ by $\Sigma^a_{m,\epsilon_0}$  to clarify the dependence on $\epsilon_0$,
the inclusion map $\Sigma^a_{m,\epsilon'_0}\to \Sigma^a_{m,\epsilon_0}$ for $\epsilon'_0<\epsilon_0$ induces an isomorphism on homology.
\end{itemize}
\end{lem}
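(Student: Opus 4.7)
I plan to exhibit an explicit homotopy inverse to the inclusion $i\colon \Sigma^a_{m,C_0}\hookrightarrow \Sigma^a_{m,C_0'}$ by time reparametrization. For $C_0\leq C_0'$, define $r\colon \Sigma^a_{m,C_0'}\to \Sigma^a_{m,C_0}$ by sending $(\gamma_k\colon [0,T_k]\to Q)_{k=1,\dots,m}$ to $(\tilde\gamma_k\colon [0,(C_0'/C_0)T_k]\to Q)_{k=1,\dots,m}$ with $\tilde\gamma_k(s):=\gamma_k((C_0/C_0')s)$. Reparametrization preserves individual lengths, so both conditions defining $\Sigma^a_m$ are automatically preserved, while the new speed $|\tilde\gamma_k'|_g=(C_0/C_0')|\gamma_k'|_g$ is bounded by $C_0$. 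The map $r$ is smooth between differentiable spaces, $r\circ i=\id$, and $i\circ r$ is smoothly homotopic to $\id$ through the one-parameter family of rescalings by factor $(1-u)+u(C_0'/C_0)$, $u\in[0,1]$. Proposition \ref{prop-chain-htpy} then yields that $i_*$ is an isomorphism on $H^{\dr}_*$.

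\textbf{$\epsilon_0$-independence.} I plan to reduce to the finite-dimensional approximation $B^a_m(\nu)$ and argue via Morse theory with the function $L_r$. The spaces $B^a_m(\nu)$ and the maps $f_\nu$, $g_\nu$, $\iota_{\Sigma,\nu}$, $\iota_{B,\nu}$ from Lemma \ref{lem-diagram-up-to-htpy} depend on $\epsilon_0$, but naturally: the inclusion $B^a_{m,\epsilon_0'}(\nu)\hookrightarrow B^a_{m,\epsilon_0}(\nu)$ intertwines with its $\Sigma$-counterpart under all four maps. By the isomorphism (\ref{cong-dr-sing-path}), it therefore suffices to show that for every $\nu$ the inclusion $B^a_{m,\epsilon_0'}(\nu)\hookrightarrow B^a_{m,\epsilon_0}(\nu)$ induces an isomorphism on singular homology. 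The key observation is that the characterization in Proposition \ref{prop-crit-pt-L} of critical points of $L_r$ in $Z_r\setminus Z^0_{r,\epsilon_0}$ as binormal chord configurations is independent of $a$; since each binormal chord has length at least $2\epsilon_0$, every such critical point satisfies $\min_k \sum_l \sigma_r\circ h^l_k(\mathbf{q})\geq 2\epsilon_0>\epsilon_0$ for $r$ sufficiently small. Consequently the ``extra region''
\[ Z^a_{r,\epsilon_0}\setminus Z^a_{r,\epsilon_0'}=\{\mathbf{q}\in Z_r\mid L_r(\mathbf{q})\geq a,\ \epsilon_0'<\min_k \sum_l\sigma_r\circ h^l_k(\mathbf{q})\leq \epsilon_0\} \]
contains no critical point of $L_r$. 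Both $L_r$ and $\min_k\sum_l\sigma_r\circ h^l_k$ are weakly decreasing along the negative gradient flow of $L_r$ (by the proof of Lemma \ref{lem-gradient}), so every trajectory starting in the extra region exits in finite time into $Z^a_{r,\epsilon_0'}$. Packaging this into a deformation retraction of $Z^a_{r,\epsilon_0}$ onto $Z^a_{r,\epsilon_0'}$ and then taking $r\to 0$ and $\nu\to\infty$ concludes.

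\textbf{Main obstacle.} The essential technical difficulty is turning the trajectory-wise exit property into a genuine deformation retraction. The function $\min_k\sum_l\sigma_r\circ h^l_k$ is only continuous, the exit time need not be smooth, and one must interpolate smoothly with the identity on $Z^a_{r,\epsilon_0'}$. My plan for handling this is to replace $-\nabla L_r$ by a smoothly-weighted combination $-\sum_k\rho_k(\mathbf{q})\nabla(\sum_l\sigma_r\circ h^l_k)$, where $\rho_k$ is a smooth partition of unity supported on the indices attaining (approximately) the minimum, and to cut the resulting flow off by a bump function in $L_r$ that kills it once $L_r$ has dropped below $a$. With such a smoothed vector field the Morse-theoretic argument proceeds in the spirit of Lemma \ref{lem-morse-general}, yielding the vanishing of the relative singular homology and hence the desired isomorphism.
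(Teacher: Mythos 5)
Your $C_0$ argument is exactly the paper's: the same time-reparametrization map and the same linear family of rescalings give a homotopy inverse, and Proposition~\ref{prop-chain-htpy} closes the argument. For $\epsilon_0$ you also follow the paper's route --- pass to the finite-dimensional approximations $B^a_m(\nu)$ via~(\ref{cong-dr-sing-path}), observe that the extra region $Z^a_{r,\epsilon_0}\setminus Z^a_{r,\epsilon'_0}$ contains no critical configurations because binormal chords have length $\geq 2\epsilon_0 > \epsilon_0$, and deformation-retract to conclude vanishing of the relative singular homology. The one place you diverge is the implementation of the retraction: the paper flows along the negative gradient of each per-piece length $\sum_l \sigma_r\circ h^l_k$ ``inductively on $k=1,\dots,m$'', whereas you propose a single flow along a partition-of-unity-weighted combination of the $-\nabla(\sum_l\sigma_r\circ h^l_k)$, cut off by $L_r$. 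Both constructions are aimed at the same obstacle you correctly identify, namely that the index $k$ attaining $\min_k\sum_l\sigma_r\circ h^l_k$ does not vary smoothly; neither the paper's terse phrasing nor your sketch fully writes out the cut-offs, but yours makes the issue explicit. One small caution about your version: the weighted field is not the gradient of any function, so finite exit time from the extra region is not automatic; it is rescued by the fact that the gradients for distinct $k$ act on disjoint blocks of coordinates, so each active per-piece length still strictly decreases, which is worth stating if you flesh this out.
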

\begin{proof}
We define a smooth map $\Sigma^a_{m,C'_0} \to \Sigma^a_{m,C_0} $ which maps $ (\gamma_k\colon [0,T_k]\to Q)_{k=1,\dots ,m}$ to
\[ \left( [0, C'_0T_k/C_0]\to Q\colon t\mapsto \gamma_k(C_0t/C'_0) \right)_{k=1,\dots , m} . \]
This gives the inverse map of the inclusion map up to homotopy. This proves the assertion for $C_0$.

To prove the assertion for $\epsilon_0$, let us write $Z^a_r$ by $Z^a_{r,\epsilon_0}$.
Then, there is no critical point of $\sum_{l=0}^{\nu-1} \sigma_r\circ h^l_k$ in $Z^a_{r,\epsilon_0}\setminus  Z^a_{r,\epsilon'_0}$ for every $k\in \{1,\dots ,m\}$. By deforming along the negative gradient vector field $-\sum_{l=0}^{\nu-1}X^l_{k,r}$ inductively on $k=1,2,\dots ,m$, we can see that $Z^a_{r,\epsilon'_0}$ is a deformation retract of $Z^a_{r,\epsilon_0}$. This implies that $H^{\sing}_*(Z^a_{r,\epsilon_0}, Z^a_{r,\epsilon'_0})=0$ and thus $H^{\dr}_*(\Sigma^a_{m,\epsilon_0},\Sigma^a_{m,\epsilon'_0}) =0$.
\end{proof}

%\begin{rem}\label{rem-c-infty-topology}

For the sake of  discussions in Section \ref{sec-cord-alg}, let us fix a topology on the set $\Sigma^a_m$ for $a\in \R_{\geq 0}\cup\{\infty\}$ as follows: $\Omega_K(Q)$ becomes a topological space so that the injection
\[\Omega_K(Q) \to C^{\infty}([0,1], Q)\times \R_{>0}\colon (\gamma\colon [0,T]\to Q) \mapsto (\gamma(T^{-1} \cdot ),T)\]
is a homeomorphism onto its image when $C^{\infty}([0,1],Q)$ is equipped with $C^{\infty}$-topology. We give $\Sigma^a_m$ the restricted topology from $\Omega_K(Q)^{\times m}$. Then, we can consider singular homology groups, such as $H^{\sing}_*(\Sigma^b_m,\Sigma^a_m)$ for $a\leq b$.
%\end{rem}

%\begin{rem}
Suppose that $a,b\in \R_{\geq 0}$ and $a\leq b<b_0$.
By excision theorem,  $\iota_*\colon H^{\sing}_*(\bar{\Sigma}^b_m,\bar{\Sigma}^a_m) \to H^{\sing}_*(\Sigma^b_m,\Sigma^a_m) $ is an isomorphism.
%Regarding to this topology %induced in Remark \ref{rem-c-infty-topology}, 
All maps in Lemma \ref{lem-diagram-up-to-htpy} are continuous and the diagram commutes up to continuous homotopy. Therefore, we have an isomorphism
\begin{align}\label{cong-sing-path}
(\varinjlim_{j\to \infty} (f_{2^j})_*) \circ (\iota_*)^{-1} \colon H^{\sing}_*(\Sigma^b_m, \Sigma^a_m) \to \varinjlim_{j\to \infty} H^{\sing}_*(B^b_m(2^j),B^a_m(2^j)).
\end{align}
%\end{rem}

\subsection{Splitting and concatenating paths}\label{subsec-split-concatenate}

For $\epsilon \in (0,\epsilon_0/(5C_0)]$, we define an open subset of $\R^2$ 
\[A_{\epsilon}\coloneqq \{(T,\tau) \mid T> 4\epsilon\text{ and } 2\epsilon<\tau < T-2\epsilon\}.\]
This becomes a differentiable space as a subspace of $(\R^2)^{\reg}$.
For $a\in \R_{\geq 0}$, $m\in \Z_{\geq 1}$ and $k\in\{1,\dots ,m\}$, there are smooth maps
\begin{align*}
\tl_k&\colon \Sigma^a_m\to \R\colon (\gamma_l\colon [0,T_l]\to Q)_{l=1,\dots , m} \mapsto T_k, \\
\pr_T&\colon A_{\epsilon}\to \R^{\reg}\colon (T,\tau) \mapsto T.
\end{align*}
%$\tl_k$ maps the time-length of the $k$-th path. $\pr_T$ is just a projection map.
%
($\tl$ stands for the time length.)
These maps define a fiber product $\Sigma^a_m \ftimes{\tl_k}{\pr_T}A_{\epsilon}$ over $\R$, and the $k$-th evaluation map $\ev_k$ is defined on it by
\[\ev_k\colon \Sigma^a_m \ftimes{\tl_k}{\pr_T}A_{\epsilon} \to Q\colon ((\gamma_l)_{l=1,\dots , m},(T,\tau)) \mapsto \gamma_k(\tau).\]
From $\ev_k$ and $\ev_0\colon S_{\epsilon}\to Q^{\reg}$, we obtain a fiber product over $Q$.
We define a map on this fiber product
\[\con_k\colon \left( \Sigma^a_m \ftimes{\tl_k}{\pr_T}A_{\epsilon}\right) \ftimes{\ev_k}{\ev_0}S_{\epsilon} \to \Sigma^{a+\epsilon}_{m+1},\]
which maps $\left( (\gamma_l)_{l=1,\dots ,m}, (T,\tau), (\sigma_i\colon [0,\epsilon_i]\to N_{\epsilon})_{i=1,2}\right)$ to $(\gamma_1,\dots,\gamma_{k-1},\tilde{\gamma_k}^1,\tilde{\gamma_k}^2,\gamma_{k+1},\dots ,\gamma_m)$, where $\tilde{\gamma_k}^i$ ($i=1,2$) are the following paths:
\begin{align}\label{concatenation}
\begin{split}
\tilde{\gamma_k}^1 &\colon [0,\tau+2\epsilon_1]\to Q\colon t\mapsto
\begin{cases}
\gamma_k(t) & \text{ if }0\leq t\leq \tau -\epsilon_1, \\
\gamma_k(\tau-\epsilon_1 +\epsilon_1\mu(\frac{t-\tau+\epsilon_1}{\epsilon_1})) & \text{ if }\tau-\epsilon_1\leq t\leq \tau+\frac{\epsilon_1}{2} , \\
\sigma_1(\epsilon_1-\epsilon_1\mu(\frac{\tau+2\epsilon_1-t}{\epsilon_1})) & \text{ if }\tau+\frac{\epsilon_1}{2} \leq t \leq  \tau+2\epsilon_1,
\end{cases} \\
\tilde{\gamma_k}^2 & \colon [0,T-\tau+2\epsilon_2]\to Q\colon t\mapsto
\begin{cases}
\sigma_2(\epsilon_2\mu(\frac{t}{\epsilon_2})) & \text{ if }0\leq t\leq \frac{3}{2}\epsilon_2 , \\
\gamma_k( \tau+\epsilon_2-\epsilon_2\mu(\frac{3\epsilon_2-t}{\epsilon_2})) & \text { if }\frac{3}{2}\epsilon_2\leq t \leq 3\epsilon_2 ,\\
\gamma_k(t+\tau-2\epsilon_2) & \text{ if }3\epsilon_2\leq t\leq T-\tau +2\epsilon_2.
\end{cases}
\end{split}
\end{align}
Here, 
$\mu\colon [0,\frac{3}{2}] \to [0,1]$ is one of fixed data we have chosen in the beginning of Section \ref{sec-sp-of-paths}.

This definition can be explained as follows (See Figure \ref{figure-concatenation}.):
We split the $k$-th path $\gamma_k \colon [0,T]\to Q$ at $\tau\in (2\epsilon, T-2\epsilon)$ where $\gamma_k(\tau)=\sigma_1(0) (=\sigma_2(\epsilon_2)) \in N_{\epsilon}$, and then concatenate $\rest{\gamma_k}{[0,\tau] }$ (resp. $\rest{\gamma_k}{[\tau,T]}$) with $\sigma_1$ (resp. $\sigma_2$). The reparameterizations via $\mu$ is necessary in order to modify them to $C^{\infty}$ paths.
Note that
\[\len \tilde{\gamma}^1_k +\len \tilde{\gamma}^2_k =\len \gamma_k +\len \sigma_1 +\len \sigma_2<\len \gamma_k +\epsilon.\]

\begin{figure}
\centering
\begin{overpic}[width=15cm]{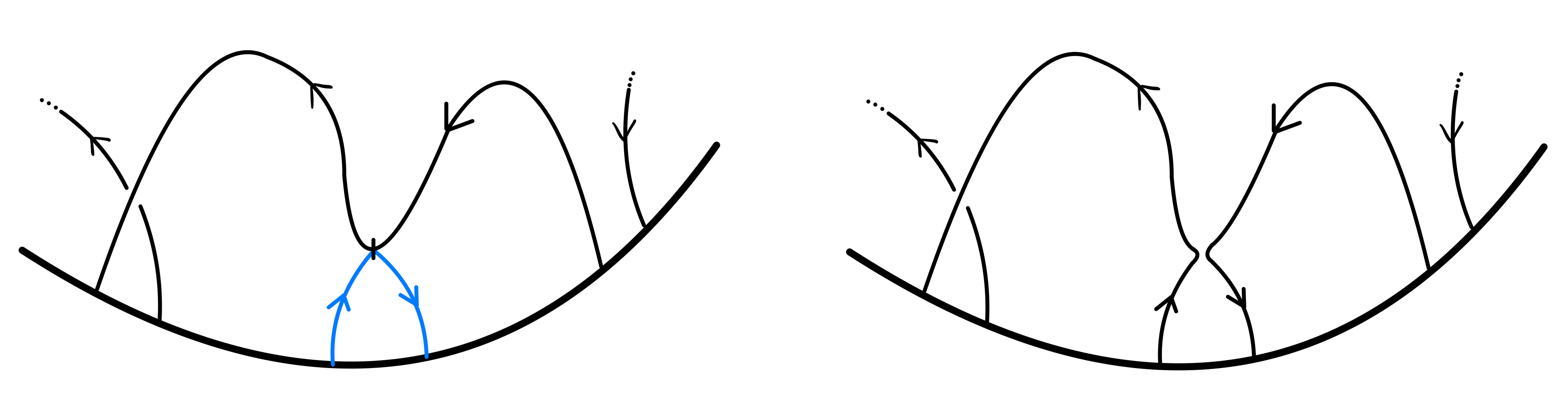}
\put(18.5,16){$\gamma_k$}
\put(1,15){$\gamma_{k+1}$}
\put(41,18){$\gamma_{k-1}$}
\put(18.5,6){$\sigma_2$}
\put(27.5,6){$\sigma_1$}
\put(12,1){$K$}
\put(65,1){$K$}
\put(69.5,18){$\tilde{\gamma}^2_k$}
\put(81.5,14){$\tilde{\gamma}^1_k$}
\put(54,14.5){$\gamma_{k+1}$}
\put(94,18){$\gamma_{k-1}$}
\put(25,9.5){$\gamma_k(\tau)$}
\end{overpic}
\caption{The process to define $\tilde{\gamma}^1_k$ and $\tilde{\gamma}^2_k$.}\label{figure-concatenation}
\end{figure}

The following lemma shows the cases where an element in the fiber product is mapped by $\con_k$ into $\Sigma^0_{m+1}$.
\begin{lem}\label{lem-short-length}
For $((\gamma_l)_{l=1,\dots , m},(T,\tau),(\sigma_i)_{i=1,2})\in \left( \Sigma^a_m \ftimes{\tl_k}{\pr_T}A_{\epsilon}\right) \ftimes{\ev_k}{\ev_0}S_{\epsilon}$, we have
\[\con_k((\gamma_l)_{l=1,\dots ,m},(T,\tau),(\sigma_i)_{i=1,2})\in \Sigma^0_{m+1}, \]
if either of the following three  condition holds:
\begin{enumerate}
\item[(i)] $(\gamma_l)_{l=1,\dots , m} \in \Sigma^0_m$.
\item[(ii)] $\tau< 4\epsilon_0/(5C_0) $ or $T- 4\epsilon_0/(5C_0)  <\tau$.
\item[(iii)] $\gamma_k$ satisfies for every $\tau'\in (\gamma_k)^{-1}(N_{\epsilon})$ that either $\rest{\gamma_k}{[0,\tau']}$ or $\rest{\gamma_k}{[\tau',T]}$ has length less than $4\epsilon_0/5$.
\end{enumerate}
\end{lem}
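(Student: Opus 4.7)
Since $\Sigma^0_{m+1}$ is exactly the set of $(m+1)$-tuples with $\min_l\len\gamma_l<\epsilon_0$ (the alternative $\sum_l\len\gamma_l<0$ being vacuous), the plan is to exhibit, in each of the three cases, one path among $\gamma_1,\dots,\gamma_{k-1},\tilde{\gamma}_k^{1},\tilde{\gamma}_k^{2},\gamma_{k+1},\dots,\gamma_m$ whose length is strictly less than $\epsilon_0$. The first step I would carry out is to read off from (\ref{concatenation}) the exact lengths
\begin{align*}
\len \tilde{\gamma}_k^{1}=\len\rest{\gamma_k}{[0,\tau]}+\len\sigma_1,\qquad \len \tilde{\gamma}_k^{2}=\len\sigma_2+\len\rest{\gamma_k}{[\tau,T]}.
\end{align*}
This holds because $\mu(0)=0$, $\mu(3/2)=1$, and $0\leq \mu'\leq 1$, so each subinterval on which $\tilde{\gamma}_k^{i}$ reparameterizes either $\gamma_k$ or $\sigma_i$ traces it monotonically and exactly once (a change of variables collapses the reparameterization factor). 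Together with $\len\sigma_i\leq\epsilon_i\leq \epsilon/2$ (from $|\sigma_i'|\leq 1$) and $\epsilon\leq\epsilon_0/(5C_0)\leq\epsilon_0/5$, I obtain the uniform bound $\len\sigma_1+\len\sigma_2\leq\epsilon\leq\epsilon_0/5$.

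Next I would dispatch the three cases. For (i): if $\min_l\len\gamma_l<\epsilon_0$ is realized at some $l\neq k$, that $\gamma_l$ appears unchanged in the output; if it is realized at $l=k$, then
\[\len\tilde{\gamma}_k^{1}+\len\tilde{\gamma}_k^{2}=\len\gamma_k+\len\sigma_1+\len\sigma_2<\epsilon_0+\epsilon\leq 6\epsilon_0/5,\]
so $\min(\len\tilde{\gamma}_k^{1},\len\tilde{\gamma}_k^{2})<3\epsilon_0/5<\epsilon_0$. For (ii): if $\tau<4\epsilon_0/(5C_0)$, the speed bound $|\gamma_k'|\leq C_0$ gives $\len\rest{\gamma_k}{[0,\tau]}\leq C_0\tau<4\epsilon_0/5$, and hence $\len\tilde{\gamma}_k^{1}<4\epsilon_0/5+\epsilon/2<\epsilon_0$; the symmetric inequality $\tau>T-4\epsilon_0/(5C_0)$ bounds $\len\tilde{\gamma}_k^{2}$. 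For (iii): the identity $\gamma_k(\tau)=\sigma_1(0)\in N_\epsilon$ shows that $\tau\in\gamma_k^{-1}(N_\epsilon)$, and applying the hypothesis with $\tau'=\tau$ forces either $\len\rest{\gamma_k}{[0,\tau]}<4\epsilon_0/5$ or $\len\rest{\gamma_k}{[\tau,T]}<4\epsilon_0/5$; the corresponding $\tilde{\gamma}_k^{i}$ then has length at most $4\epsilon_0/5+\epsilon/2<\epsilon_0$.

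I do not anticipate any substantive obstacle: the argument is pure length-bookkeeping and the numerical constants $\epsilon\leq\epsilon_0/(5C_0)$, $\tau<4\epsilon_0/(5C_0)$, $4\epsilon_0/5$ have been chosen with exactly these three inequalities in mind. The only step requiring any care is the derivation of the two length identities for $\tilde{\gamma}_k^{1}$ and $\tilde{\gamma}_k^{2}$ from the three-piece definition in (\ref{concatenation}), which ultimately reduces to the boundary values $\mu(0)=0$, $\mu(3/2)=1$ and the monotonicity of $\mu$.
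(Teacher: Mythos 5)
Your proof is correct and follows essentially the same approach as the paper's: in each case you exhibit a path of length $<\epsilon_0$ among the output paths, using the bound $\len\sigma_i\leq\epsilon/2$ and the chosen constants. The only stylistic difference is in case (i), where the paper observes that one of $\gamma_k|_{[0,\tau]}$, $\gamma_k|_{[\tau,T]}$ has length $<\epsilon_0/2$ while you bound $\len\tilde\gamma_k^1+\len\tilde\gamma_k^2$ and halve; both are the same bookkeeping.
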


\begin{proof}
As in the definition of $\con_k$, let us write
\[\con_k((\gamma_l)_{l=1,\dots , m},(T,\tau),(\sigma_i)_{i=1,2})= (\gamma_1,\dots ,\tilde{\gamma_k}^1,\tilde{\gamma_k}^2,\dots ,\gamma_m).\]
Under the condition (i), $\len \gamma_{l}<\epsilon_0$ for some $l\in \{ 1,\dots ,m\}$. If $l\neq k$, the assertion is trivial. If $l=k$, either $\rest{\gamma_k}{[0,\tau]}$ or $\rest{\gamma_k}{[\tau,T]}$ has length less than $\epsilon_0/2$, and thus either $\tilde{\gamma_k}^1$ or $\tilde{\gamma_k}^2$ has length less than
\[\epsilon_0/2 + \max \{\len \sigma_i\mid i=1,2 \}, \]
and this value is smaller than $\epsilon_0$.
Under the condition (ii), if  $\tau< 4\epsilon_0/(5C_0) $ (resp. $T- 4\epsilon_0/(5C_0)  <\tau$) holds, then $\len \rest{\gamma_k}{[0,\tau]} <4\epsilon_0/5$ (resp. $\len \rest{\gamma_k}{[\tau,T]}<4\epsilon_0/5$). 
Therefore, either $\tilde{\gamma_k}^1$ or $\tilde{\gamma_k}^2$ has length smaller than $\epsilon_0$. 
The same result also holds under the condition (iii). 
\end{proof}

\subsection{Operation on de Rham chains}\label{subsec-operation}

For $\epsilon\in (0,\epsilon_0/(5C_0)]$, let us choose a $C^{\infty}$ cutoff function
$\rho_{\epsilon} \colon A_{\epsilon}\to [0,1]$ such that
\[\rho_{\epsilon}(T,\tau)=\begin{cases}
0 & \text{ if }t\leq \frac{10}{3}\epsilon \text{ or } T- \frac{10}{3}\epsilon \leq \tau , \\
1 & \text{ if } \frac{11}{3}\epsilon \leq \tau \leq T- \frac{11}{3}\epsilon.
\end{cases}\]
In particular, $\rho_{\epsilon}(T,\tau)=0$ if $T\leq  5\epsilon$.
We also choose a $C^{\infty}$ function $\chi_{\nu}\colon A_{\epsilon} \to [0,1]$ for every $\nu \in \Z_{\geq 1}$ such that
$\chi_{\nu}(T,\tau)=\begin{cases}
1 & \text{ if } T\leq \nu, \\
0 & \text{ if } T\geq \nu +1.
\end{cases}$
The support of $\chi_{\nu}\rho_{\epsilon} $ is compact, so we obtain a de Rham chain
\[\alpha_{\epsilon,\nu}\coloneqq [A_{\epsilon},\id_{A_{\epsilon}}, \chi_{\nu}\rho_{\epsilon} ]\in C^{\dr}_2(A_{\epsilon}) .\]
In addition, we define $\Sigma^a_m(\nu)\coloneqq \bigcap_{k=1}^m (\tl_k)^{-1}([0,\nu))$ which is a subspace of $\Sigma^a_m$.

%\[h_k\colon C^{\dr}_*(\Sigma^a_m)\to C^{\dr}_{*+1}(\Sigma^a_m\ftimes{\tl_k}{\pr_T}A_{\epsilon_0})\]

%\[h_k(x)\coloneqq [U \ftimes{\tl_k\circ \varphi}{\pr_T} A_{\epsilon_0}, \varphi\times \id_{A_{\epsilon_0}}, \omega\times \rho_{\epsilon_0} ] \]

For $m\in \Z_{\geq 1}$, $k\in \{1,\dots ,m\}$ and $\xi\in C^{\dr}_{q}(S_{\epsilon})$, we define a linear map
\[f_{k,\xi}\colon C^{\dr}_*(\Sigma^a_m) \to C^{\dr}_{*+1+q-n}(\Sigma^{a+\epsilon}_{m+1})\]
so that for $x\in C^{\dr}_*(\Sigma^a_m(\nu)) \subset C^{\dr}_*(\Sigma^a_m)$ ($\nu\in \Z_{\geq 1}$),
\[f_{k,\xi}(x) = (\con_k)_*\left( (x \ftimes{\tl_k}{\pr_T} \alpha_{\epsilon,\nu}) \ftimes{\ev_k}{\ev_0}\xi \right). \]
%when $x\in C^{\dr}_*(\Sigma^a_m(\nu)) $. Here we think of $ C^{\dr}_*(\Sigma^a_m(\nu))$ as a linear subspace of $ C^{\dr}_*(\Sigma^a_m)$. % and $\cup_{\nu\geq 1} C^{\dr}_*(\Sigma^a_m(\nu))=C^{\dr}_*(\Sigma^a_m)$.
This map is well-defined since
$x\ftimes{\tl_k}{\pr_T}(\alpha_{\epsilon,\nu}-\alpha_{\epsilon,\nu'})=0$
when $x\in C^{\dr}_*(\Sigma^a_m(\nu))$ and $\nu' \geq \nu$.

%\begin{rem}\label{rem-explicit}
Returning to Definition \ref{def-fib-prod-ope}, we can describe the de Rham chain $f_{k,\xi}(x)$ explicitly.
For $x=[U,\varphi,\omega]\in C^{\dr}_p(\Sigma^a_m)$, 
we write $\varphi (u)=(\gamma_l^u\colon [0,T^u_l]\to Q)_{l=1,\dots m}$ for $u\in U$. If we take $\nu >\sup_{u\in \supp\omega} T^u_k$, then $x\in C^{\dr}_p(\Sigma^a_m(\nu) )$. First, we have $ x \ftimes{\tl_k}{\pr_T} \alpha_{\epsilon,\nu}=[\tilde{U}_k,\tilde{\varphi}_k , \tilde{\omega}_k]$, where
\begin{align*}
\begin{split}
\tilde{U}_k &\coloneqq \{(u,\tau) \in U\times \R \mid 2\epsilon<\tau <T^u_k-2\epsilon\} , \\
\tilde{\varphi}_k &\colon \tilde{U}_k\to \Sigma^a_m \ftimes{\tl_k}{\pr_T}A_{\epsilon} \colon (u,\tau) \mapsto (\varphi(u), (T^u_k,\tau) ) , \\
\tilde{\omega}_k&\in \Omega^{*}_c(\tilde{U}) \colon \  (\tilde{\omega}_k)_{(u,\tau)}\coloneqq \rho_{\epsilon}(T^u_k,\tau)\cdot \chi_{\nu} (T^u_k,\tau) \cdot \omega_u = \rho_{\epsilon}(T^u_k,\tau)\cdot \omega_u.
\end{split}
\end{align*}
Here, $\tilde{U}_k$ is oriented as an open submanifold of $U\times \R$.
The last equality holds since $\chi_{\nu} (T^u_k,\tau)=1$ for $u\in \supp \omega$. This shows the independence of $f_{k,\xi}(x)$ on the choice of $\chi_{\nu}$.
For $\xi=[V,\psi,\eta]\in C^{\dr}_q(S_{\epsilon})$, we write $\psi(v)=(\sigma^v_i)_{i=1,2}$ for $v\in V$. Then we have $f_{k,\xi} (x)= (-1)^{r} [W_k,\Phi_k, \zeta_k]$, where
\begin{align}\label{explicit-rep}
\begin{split}
W_k &\coloneqq \{(u,\tau,v)\in U\times \R\times V \mid 2\epsilon <\tau < T^u_k-2\epsilon,\ \gamma^u_k(\tau)=\sigma^v_1(0)\} , \\
\Phi_k&\colon W_k\to \Sigma^{a+\epsilon}_{m+1}\colon (u,\tau,v)\mapsto \con_k(\varphi(u),(T^u_k,\tau), \psi(v)) , \\
\zeta_k&\in \Omega^{*}_c(W_k)\colon (\zeta_k)_{(u,\tau,v)}\coloneqq \rho_{\epsilon}(T^u_k,\tau)\cdot (\omega_u \times \eta_v ) , \\
r&\coloneqq (p+1-n)|\eta| .
\end{split}
\end{align}
Here $W_k$ is oriented as a fiber product over $Q$ of $\tilde{U}_k\to Q\colon (u,\tau) \mapsto \gamma^u_k(\tau)$
and $\ev_0\circ \psi \colon V\to Q$.
%\end{rem}

\begin{lem}\label{lem-delta-small}
For $x\in C^{\dr}_p(\Sigma^a_m)$ and $\xi\in C^{\dr}_{q}(S_{\epsilon})$,
\[\partial\circ f_{k,\xi}(x)-f_{k,\xi}\circ \partial(x) - (-1)^{p+1-n} f_{k,\partial \xi}(x) \in C^{\dr}_{p+q-n}(\Sigma^0_m). \]
\end{lem}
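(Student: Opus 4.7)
The plan is to apply the Leibniz rule for $\partial$ to the defining formula $f_{k,\xi}(x) = (\con_k)_*\big((x \ftimes{\tl_k}{\pr_T} \alpha_{\epsilon,\nu}) \ftimes{\ev_k}{\ev_0} \xi\big)$ twice, once for each fiber product. Since $(\con_k)_*$ is a chain map, this produces exactly three terms, corresponding to $\partial x$, $\partial \alpha_{\epsilon,\nu}$, and $\partial \xi$. Two of them should reassemble into $f_{k,\xi}(\partial x)$ and $(-1)^{p+1-n} f_{k,\partial\xi}(x)$, leaving the $\partial\alpha_{\epsilon,\nu}$--term as the only possible obstruction to a strict identity; the goal is then to show that this leftover term already lies in $C^{\dr}_*(\Sigma^{0}_{m+1})$ after pushforward by $\con_k$.

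For the bookkeeping, first fix $\nu \in \Z_{\geq 1}$ large enough that $x$ is represented by a chain supported in $\Sigma^a_m(\nu)$, so that the $\chi_\nu$ factor is identically $1$ on the relevant region. Noting that $x\in C^{\dr}_p(\Sigma^a_m)$ has shifted degree $p-1$ over $\R$ (dimension of the base of the first fiber product) and $x\ftimes{\tl_k}{\pr_T}\alpha_{\epsilon,\nu}$ has shifted degree $p+1-n$ over $Q$, the two applications of the Leibniz rule from Definition~\ref{def-fib-prod-ope} give
\[
\partial f_{k,\xi}(x) = f_{k,\xi}(\partial x) + (-1)^{p-1}(\con_k)_*\big((x \ftimes{\tl_k}{\pr_T} \partial\alpha_{\epsilon,\nu}) \ftimes{\ev_k}{\ev_0} \xi\big) + (-1)^{p+1-n} f_{k,\partial\xi}(x),
\]
so the left-hand side of the claim equals $(-1)^{p-1}(\con_k)_*\big((x \ftimes{\tl_k}{\pr_T} \partial\alpha_{\epsilon,\nu}) \ftimes{\ev_k}{\ev_0} \xi\big)$.

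To conclude, examine $\partial\alpha_{\epsilon,\nu} = -[A_{\epsilon}, \id_{A_\epsilon}, d(\chi_\nu \rho_\epsilon)]$. By choice of $\nu$, the factor $d\chi_\nu$ is supported where $\tl_k>\nu$ and so contributes nothing after the fiber product with $x$; thus effectively $\partial\alpha_{\epsilon,\nu}$ reduces to a chain whose form is supported in $\{(T,\tau)\in A_\epsilon : \tau \in [\tfrac{10}{3}\epsilon,\tfrac{11}{3}\epsilon]\cup[T-\tfrac{11}{3}\epsilon,T-\tfrac{10}{3}\epsilon]\}$. Since $\epsilon\le \epsilon_0/(5C_0)$ gives $\tfrac{11}{3}\epsilon \le \tfrac{11}{15}\cdot\epsilon_0/C_0 < \tfrac{12}{15}\cdot\epsilon_0/C_0 = \tfrac{4\epsilon_0}{5C_0}$, every point in that support satisfies the hypothesis of Lemma~\ref{lem-short-length}(ii). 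Hence $\con_k$ sends every element of the underlying fiber product into $\Sigma^0_{m+1}$, and the error chain lies in $C^{\dr}_{p+q-n}(\Sigma^{0}_{m+1})$.

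The main obstacle is not conceptual but arithmetic: pinning down the three signs from the nested Leibniz rule through two fiber products (one over $\R$, one over $Q$) and verifying the numerical inequality $\tfrac{11}{3}\epsilon < \tfrac{4\epsilon_0}{5C_0}$ so that the short-length lemma genuinely applies. Apart from those verifications, the argument reduces to pushing a support-on-boundary chain into the region of Lemma~\ref{lem-short-length}.
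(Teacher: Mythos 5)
Your proposal matches the paper's proof essentially line for line: both apply the Leibniz rule to the iterated fiber product (over $\R$ and then over $Q$), isolate the $(-1)^{p-1}(\con_k)_*\big((x\ftimes{\tl_k}{\pr_T}\partial\alpha_{\epsilon,\nu})\ftimes{\ev_k}{\ev_0}\xi\big)$ remainder, observe that the relevant form is supported where $\tau$ is within $O(\epsilon)$ of $0$ or $T$, and then invoke Lemma~\ref{lem-short-length}(ii). You correctly handle the $\chi_\nu$ factor (it drops out by the choice of $\nu$, just as the paper notes around (\ref{explicit-rep})) and verify the arithmetic $\tfrac{11}{3}\epsilon<\tfrac{4\epsilon_0}{5C_0}$. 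One small point in your favor: the error term does indeed land in $C^{\dr}_{p+q-n}(\Sigma^0_{m+1})$ as you write (since $\con_k$ maps into $\Sigma^{a+\epsilon}_{m+1}$); the lemma as printed says $\Sigma^0_m$, which is a typo consistent with the quotient target $\Sigma^0_{m+1}$ in (\ref{delta-small-quotient}).
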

\begin{proof}
Using the notation of (\ref{explicit-rep}) for $x=[U,\varphi,\omega]\in C^{\dr}_p(\Sigma^a_m)$ and $\xi=[V,\psi,\eta]\in C^{\dr}_q(S_{\epsilon})$, we have
\begin{align*}
\partial\circ f_{k,\xi}(x)-f_{k,\xi}\circ \partial(x) - (-1)^{p+1-n} f_{k,\partial \xi}(x)&=(-1)^{p-1}(\con_k)_* ( (x \ftimes{\tl_k}{\pr_T} (\partial\alpha_{\epsilon,\nu}))\ftimes{\ev_k}{\ev_0} \xi ) \\
&=(-1)^{(p-n)|\eta| +1}[W_k,\Phi_k, \theta_k],
\end{align*}
where $\theta_k\in \Omega_{c}^{*}(W_k)$ is defined by $(\theta_k)_{(u,\tau,v)}=\frac{\partial \rho_{\epsilon}}{\partial \tau}(T^u_k,\tau) \cdot (\omega_u\times d\tau \times \eta_v )$. From the condition on $\rho_{\epsilon}$, the support of $\theta$ lies in an open subset 
\[\bar{W}_k\coloneqq \{(u,\tau,v)\in W_k \mid \tau <4\epsilon \text{ or } T^u_k- 4\epsilon<\tau \}.\]
Since $(\varphi(u),(T^u_k,\tau),\psi(v))\in \left( \Sigma^a_m \ftimes{\tl_k}{\pr_T}A_{\epsilon}\right) \ftimes{\ev_k}{\ev_0}S_{\epsilon}$ for $(u,\tau,v)\in \bar{W}_k$ satisfies the condition (ii) of Lemma \ref{lem-short-length}, it follows that $\Phi_k(\bar{W}_k)\subset \Sigma^0_m$.
Therefore, 
\[[W_k,\Phi_k,\theta]= [\bar{W}_k,\rest{\Phi_k}{\bar{W}_k},\rest{\theta}{\bar{W}_k}] \in C^{\dr}_{p+q-n}(\Sigma^0_m) .\]
This proves the lemma.
\end{proof}

\begin{figure}
\centering
\begin{overpic}[width=10cm]{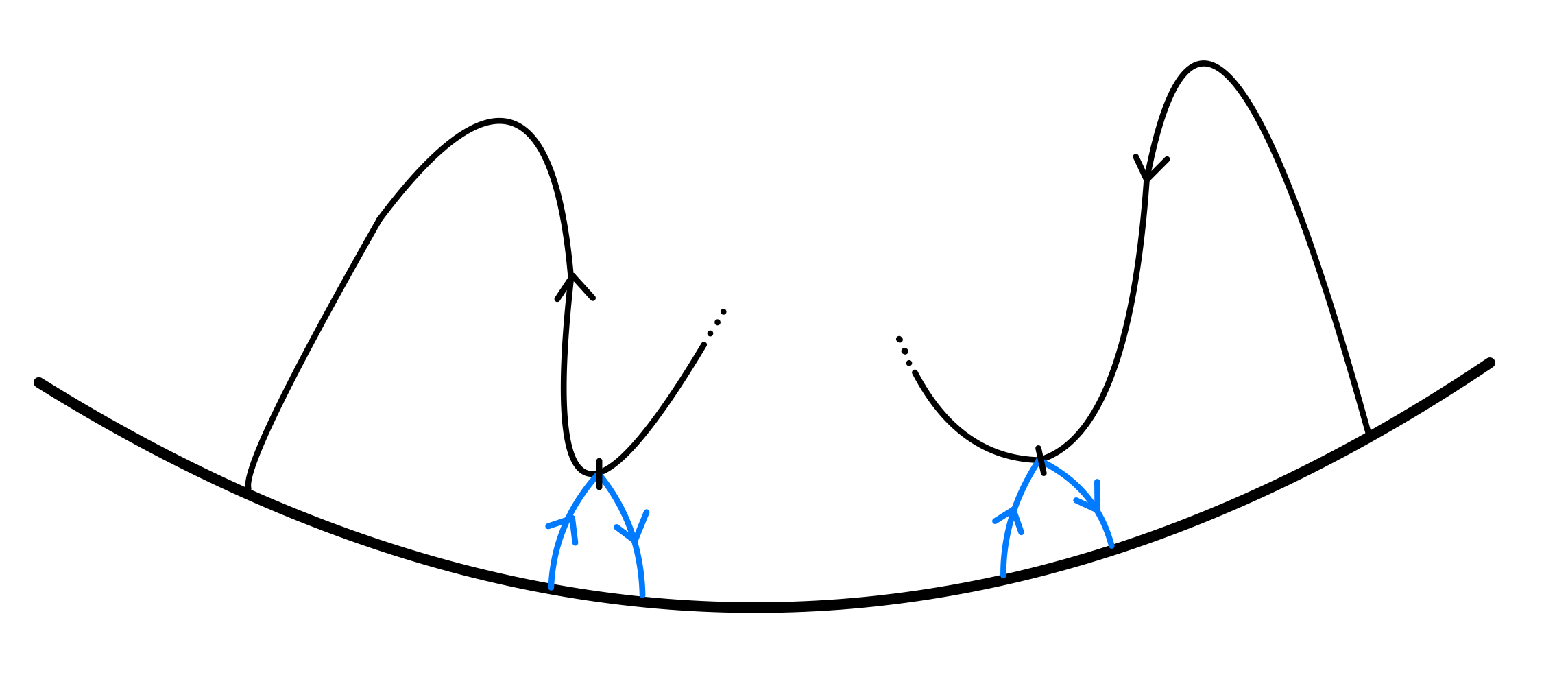}
\put(7,10){$K$}
\put(26.5,30){$\gamma_{k'}$}
\put(74.5,30){$\gamma_k$}
\put(31,9){$\sigma'_2$}
\put(41.5,8){$\sigma'_1$}
\put(40,12.5){$\gamma_{k'}(\tau')$}
\put(59.5,11){$\sigma_2$}
\put(70.5,12){$\sigma_1$}
\put(60.5,17.5){$\gamma_k(\tau)$}
\end{overpic}
\caption{The case where $(\gamma_l)_{l=1,\dots ,m}$ intersects both $(\sigma_i)_{i=1,2}$ and $(\sigma'_i)_{i=1,2}$.}\label{figure-two-path}
\end{figure}

The next lemma is crucial to define chain complexes in Section \ref{subsec-def-of-chain-cpx}. Before stating it, let us give an observation. Suppose that we have
\[\begin{cases}
(\gamma_l)_{l=1,\dots , m} \in \Sigma^a_m, \\
(\sigma_i\colon [0,\epsilon_i] \to N_{\epsilon})_{i=1,2},(\sigma'_i \colon [0,\epsilon'_i] \to N_{\epsilon})_{i=1,2} \in S_{\epsilon}, \\
k,k'\in \{1,\dots ,m\} \text{ with }k\leq k',
\end{cases}\]
and there exist two points $\tau\in (2\epsilon ,T_k-2\epsilon )$ and $\tau'\in (2\epsilon ,T_{k'}-2\epsilon)$ such that
\[\gamma_k(\tau)= \sigma_1(0),\ \gamma_{k'}(\tau')=\sigma'_1(0). \]
When $k=k'$, we additionally assume that $\tau + 2\epsilon < \tau'$. (Figure \ref{figure-two-path} describes the situation we consider.) Then, we can split $\gamma_k$ at $t=\tau$ and $\gamma_{k'}$ at $t=\tau'$, and concatenate them with $(\sigma_i)_{i=1,2}$ and $(\sigma'_i)_{i=1,2}$ respectively. Depending on which point we use first, there are two elements
\begin{align*}
\Phi &\coloneqq \con_k ((\gamma_l)_{l=1,\dots , m} , (T_k,\tau), (\sigma_i)_{i=1,2}) \in \Sigma^{a+\epsilon}_{m+1} , \\
\Phi' & \coloneqq \con_{k'} ((\gamma_l)_{l=1,\dots , m} , (T_{k'},\tau'), (\sigma'_i)_{i=1,2}) \in \Sigma^{a+\epsilon}_{m+1}.
\end{align*}
In either case, there remains another point which we have not yet used.
When $k<k'$, we can split the $(k'+1)$-th path of $\Phi$ at $t=\tau'$ and the $k$-th path of $\Phi'$ at $t=\tau$, and concatenate them with $(\sigma'_i)_{i=1,2}$ and $(\sigma_i)_{i=1,2}$ respectively.
When $k=k'$, we can split the $(k+1)$-th path of $\Phi$ at $t=\tau'-\tau+2\epsilon_2$ and the $k$-th path of $\Phi'$ at $t=\tau$, and concatenate them with  $(\sigma'_i)_{i=1,2}$ and $(\sigma_i)_{i=1,2}$ respectively.
After these two steps, we get the following equations:
\begin{align}\label{change-of-order}
\begin{cases}
\con_{k'+1} (\Phi, (T_{k'},\tau'), (\sigma'_i)_{i=1,2}) = \con_k (\Phi', (T_k,\tau), (\sigma_i)_{i=1,2})& \text{if }k<k' , \\
\con_{k+1} (\Phi, (\tilde{T}^2_k , \tau'-\tau +2\epsilon_2), (\sigma'_i)_{i=1,2}) = \con_k(\Phi', (\tilde{T}^1_k ,\tau),(\sigma_i)_{i=1,2} ) &\text{if }k=k'.
\end{cases}
\end{align}
Here, $\tilde{T}^2_k\coloneqq T_k-\tau +2\epsilon_2$ and $\tilde{T}^1_k\coloneqq \tau'+2\epsilon'_1$.
This observation leads us to the following lemma about de Rham chains.

\begin{lem}\label{lem-delta-commute}
For $x\in C^{\dr}_p(\Sigma^a_m)$, $\xi\in C^{\dr}_{q}(S_{\epsilon})$ and $k,k'\in \{1,\dots ,m\}$ with $ k\leq k'$, the following hold:
\[
\begin{cases}
f_{k'+1,\xi} \circ f_{k,\xi}(x)+ (-1)^{q-n} f_{k,\xi}\circ f_{k',\xi}(x)=0 &\text{ if }k<k', \\
f_{k'+1,\xi} \circ f_{k,\xi}(x)+ (-1)^{q-n} f_{k,\xi}\circ f_{k',\xi}(x)\in C^{\dr}_{p+2+2q-2n }(\Sigma^{0}_{m+2}) & \text{ if }k=k'.
\end{cases}\]
\end{lem}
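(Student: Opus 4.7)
The plan is to unfold both compositions via the explicit formula (\ref{explicit-rep}) and compare them using the identifications established in (\ref{change-of-order}) just before the lemma. Writing $x = [U, \varphi, \omega]$ and $\xi = [V, \psi, \eta]$, the iterated composition $f_{k'+1,\xi} \circ f_{k,\xi}(x)$ unfolds (after choosing $\nu$ large enough) to a de Rham chain represented on a parameter space with coordinates $(u, \tau, v, \tau', v')$ subject to the splitting constraint $\gamma_k^u(\tau) = \sigma_1^v(0)$ together with an analogous condition for $\tau'$ on the daughter path produced by the first split, weighted by a product of two $\rho_\epsilon$ cutoffs. A parallel unfolding holds for $f_{k,\xi} \circ f_{k',\xi}(x)$, where the roles of $(\tau, v)$ and $(\tau', v')$ are exchanged.

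For the case $k < k'$, the second application of $f$ acts on a path disjoint from the one split in the first application (indexed $k'+1$ after the insertion of a new path at position $k$, which coincides with the original $\gamma_{k'}$). The cutoff $\rho_\epsilon$ in the second step therefore pulls back to $\rho_\epsilon(T^u_{k'}, \tau')$ regardless of the order. The first line of (\ref{change-of-order}) shows that the two concatenation maps agree under the natural diffeomorphism of parameter spaces that exchanges $(\tau, v) \leftrightarrow (\tau', v')$. The differential forms differ by the Koszul sign from exchanging two copies of $\eta$ and from reordering the factors $\R \times V$, while the two applications of Definition \ref{def-fib-prod-ope} contribute two prefactors of the form $(-1)^{r}$. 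A careful but routine calculation shows these sign contributions combine to give precisely $f_{k'+1,\xi} \circ f_{k,\xi}(x) + (-1)^{q-n} f_{k,\xi}\circ f_{k',\xi}(x) = 0$.

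For the case $k = k'$, both splits act on the same original path $\gamma_k$, and the second line of (\ref{change-of-order}) identifies the two concatenation maps via the substitution $\tau'' = \tau' - \tau + 2\epsilon_2$ in one order, together with the analogous substitution in the other. The constraint imposed by $A_\epsilon$ on the second split gives slightly different domain inequalities in the two orderings, namely $\tau + 2\epsilon - 2\epsilon_2 < \tau' $ versus $\tau < \tau' + 2\epsilon'_1 - 2\epsilon$. On the common region $\tau' > \tau + 2\epsilon$ the sign analysis from the $k<k'$ case produces exact cancellation. In the residual region where $\tau' - \tau$ is comparable to $\epsilon$, the second split point lies within $O(\epsilon)$ of the endpoint of a daughter path; since $\epsilon \le \epsilon_0/(5C_0)$, we have $\tau'' \le 3\epsilon < 4\epsilon_0/(5C_0)$, so the hypothesis of Lemma \ref{lem-short-length}(ii) is fulfilled and one of the four resulting paths has length less than $\epsilon_0$. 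Hence the non-cancelling part of the sum is contained in $C^{\dr}_*(\Sigma^0_{m+2})$.

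The main obstacle is the sign bookkeeping: one must combine the two prefactors $(-1)^r$ from (\ref{explicit-rep}) applied with distinct values of $r$, the Koszul sign from exchanging the two copies of $\eta$, and the orientation sign from permuting factors in the ambient product and in the iterated fiber product, and verify that they assemble into exactly $(-1)^{q-n+1}$. The orientation conventions fixed in Section \ref{sec-de-Rham} and the sign convention of Definition \ref{def-fib-prod-ope} must be tracked with care. Once the signs are verified, the set-theoretic identifications of parameter spaces and the containment in $\Sigma^0_{m+2}$ follow directly from (\ref{change-of-order}) and Lemma \ref{lem-short-length}.
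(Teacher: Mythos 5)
Your proposal takes essentially the same route as the paper: unfold both compositions with the explicit representation (\ref{explicit-rep}), identify the parameter spaces via the change-of-order identities (\ref{change-of-order}) with the sign assembled from the Koszul and fiber-product orientation conventions, and for $k=k'$ localize the cutoff mismatch to a short-path region killed by Lemma \ref{lem-short-length}. One imprecision to fix if you write out the details: the region where the two $\rho_\epsilon$-cutoffs disagree is not only where $\tau''\leq 3\epsilon$, but also where $\tau$ or $\tau'$ lies near the upper end of its allowed interval (the paper's $W^1_{k,k}\cup W^2_{k,k}$), so covering the residual needs both case (i) and case (ii) of Lemma \ref{lem-short-length}, not only case (ii).
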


\begin{proof}
We use the notations of (\ref{explicit-rep}) for  $x=[U,\varphi,\omega]$ and $\xi=[V,\psi,\eta]$. For short, let us abbreviate for $(T,\tau,T',\tau')\in A_{\epsilon}\times A_{\epsilon}$
\[\rho(T,\tau,T',\tau)\coloneqq \rho_{\epsilon}(T,\tau)\cdot \rho_{\epsilon}(T',\tau') . \]

\textit{Case 1.} We consider the case $k<k'$. We have
$f_{k'+1,\xi}\circ f_{k,\xi} (x)=(-1)^{s} [ W_{k,k'},\Phi_{k,k'},\zeta_{k,k'} ]$ for
\begin{align*}
W_{k,k'}& \coloneqq \{(u,\tau,v, \tau',v')\mid (u,\tau,v)\in W_k,\ 2\epsilon <\tau' <T^u_{k'}- 2\epsilon ,\ \gamma^u_{k'}(\tau')=\sigma^{v'}_1(0)\} ,\\
\Phi_{k,k'}&\colon W_{k,k'}\to \Sigma^{a+2\epsilon}_{m+2} \colon (u,\tau,v, \tau',v') \mapsto \con_{k'+1}( \Phi_k(u,\tau,v), (T^u_{k'},\tau'),\psi(v')) ,\\
\zeta_{k,k'}&\in \Omega^{*}_c(W_{k,k'}) \colon (\zeta_{k,k'})_{(u,\tau,v, \tau',v')}\coloneqq \rho (T^u_k,\tau,T^u_{k'},\tau')\cdot (\omega_u\times \eta_v\times \eta_{v'}) ,\\
s&\coloneqq (q+1-n)|\eta|,
\end{align*}
by substituting $k$ and $[U,\varphi,\omega]$ in (\ref{explicit-rep}) with $k'$ and $[W_k,\Phi_k,(-1)^r\zeta_k]$.
Similarly, $f_{k}\circ f_{k'}(x)= (-1)^s[W'_{k,k'},\Phi'_{k,k'}, \zeta'_{k,k'}]$ for
\begin{align*}
W'_{k,k'}&\coloneqq \{(u,\tau',v',\tau,v) \mid (u,\tau',v')\in W_{k'},\ 2\epsilon <\tau <T^u_k- 2\epsilon ,\ \gamma^u_k(\tau)=\sigma^{v}_1(0) \} ,\\
\Phi'_{k,k'} &\colon W'_{k,k'}\to \Sigma^{a+2\epsilon}_{m+2}\colon (u,\tau',v',\tau,v) \mapsto \con_k (\Phi_{k'}(u,\tau',v'),(T^u_k,\tau),\psi(v)) ,\\
\zeta'_{k,k'}&\in \Omega^{*}_c(W'_{k,k'}) \colon (\zeta'_{k,k'})_{(u,\tau',v'\tau,v)}=\rho(T^u_{k'},\tau',T^u_k,\tau)\cdot (\omega_u\times \eta_{v'}\times \eta_v).
\end{align*}
We define a diffeomorphism
\[h\colon W_{k,k'}\to W'_{k,k'}\colon (u,\tau,v,\tau',v') \mapsto (u,\tau',v',\tau,v) , \]
which changes the sign of orientation by $(-1)^{(1+\dim V-n)^2}$. From (\ref{change-of-order}), it follows that $\Phi'_{k,k'}\circ h=\Phi_{k,k'} $.
Moreover, $h_*( \zeta_{k,k'})= (-1)^{|\eta|^2} \zeta'_{k,k'}$ holds.
Combining these computations,
\[f_{k'+1,\xi}\circ f_{k,\xi}(x)= (-1)^{s+(1+\dim V-n+|\eta|) } [W'_{k,k'},\Phi'_{k,k'},\zeta'_{k,k'}]=(-1)^{q-n+1}f_{k,\xi}\circ f_{k',\xi} (x).\]

\textit{Case 2.} We consider the case $k=k'$. We have $f_{k+1}\circ f_k(x)= (-1)^s [ W_{k,k},\Phi_{k,k},\zeta_{k,k} ]$ for
\begin{align*}
W_{k,k}&\coloneqq \{(u,\tau,v,\tau',v') \mid (u,\tau,v)\in W_k,\ 2\epsilon <\tau' < \tilde{T}^2_k(u,\tau,v) -2\epsilon,\  \gamma^u_k(\tau'+ \tau-2\epsilon^v_2)=\sigma^{v'}_2(0)\} , \\
\Phi_{k,k}&\colon W_{k,k}\to \Sigma^{a+2\epsilon}_{m+2}\colon (u,\tau,v,\tau',v') \mapsto \con_{k+1}(\Phi_k(u,\tau,v),(\tilde{T}^2_k(u,\tau,v),\tau'),\psi(v')) , \\
\zeta_{k,k}&\in \Omega^*_c(W_{k,k}) \colon (\zeta_{k,k})_{(u,\tau,v,\tau',v')}= \rho(T^u_k,\tau,\tilde{T}^2_k(u,\tau,v),\tau')\cdot (\omega_u\times \eta_v\times \eta_{v'}) ,
\end{align*}
where $\tilde{T}^2_k(u,\tau,v)\coloneqq T^u_k-\tau+2\epsilon^v_2$.
Similarly, $f_{k,\xi}\circ f_{k,\xi}(x)=(-1)^s[W'_{k,k},\Phi'_{k,k},\zeta'_{k,k}]$ for
\begin{align*}
W'_{k,k}&\coloneqq \{(u,\tau',v',\tau,v) \mid (u,\tau',v')\in W_k,\ 2\epsilon<\tau <\tilde{T}^1_k(\tau',v')-2\epsilon ,\ \gamma^u_k(\tau)=\sigma^{v}_1(0)\} , \\
\Phi'_{k,k} &\colon W'_{k,k}\to \Sigma^{a+2\epsilon}_{m+2}\colon (u,\tau',v',\tau,v) \mapsto \con_k(\Phi_k(u,\tau',v'), (\tilde{T}^1_k(\tau',v'),\tau),\psi(v)) , \\
\zeta'_{k,k}&\in \Omega^*_c(W_{k,k}) \colon (\zeta'_{k,k})_{(u,\tau',v',\tau,v)}= \rho(T^u_k,\tau',\tilde{T}^1_k(\tau',v'),\tau)\cdot (\omega_u\times \eta_{v'}\times \eta_v) ,
\end{align*}
where $\tilde{T}^1_k(\tau',v')\coloneqq \tau'+2\epsilon^{v'}_1$.
Since $\rho(T_k,\tau,\tilde{T}^2_k(u,\tau,v),\tau')=0$ for $\tau'\leq \frac{10}{3}\epsilon$, we have $f_{k+1}\circ f_k(x)= (-1)^s [ \bar{W}_{k,k},\Phi_{k,k},\zeta_{k,k} ]$ for $\bar{W}_{k,k}\coloneqq W_{k,k}\cap \{\tau'>3\epsilon \}$.

This time, we define a map
\[h\colon \bar{W}_{k,k}\to W'_{k,k}\colon  (u,\tau,v,\tau',v')\mapsto (u,\tau'+\tau-2\epsilon^{v}_2,v', \tau,v), \]
which is an open embedding and changes the sign of orientation by $(-1)^{(1+\dim V-n)^2}$. From (\ref{change-of-order}), it follows that $\Phi'_{k,k}\circ h=\Phi_{k,k}$. Indeed, if we set $\tau'_*\coloneqq \tau'+\tau -2\epsilon^v_2$, then
\begin{align*}
\Phi'_{k,k}\circ h (u,\tau,v,\tau',v')&=\con_k(\Phi_k(u,\tau'_*,v'), (\tilde{T}^1_k(\tau'_*,v'),\tau),\psi(v)) \\
&=\con_{k+1}(\Phi_k(u,\tau,v),(\tilde{T}^1_k(u,\tau,v), \tau'_*-\tau+2\epsilon^v_2),\psi(v')) \\
&=\Phi_{k,k} (u,\tau,v,\tau',v')
\end{align*}
holds for every $(u,\tau,v,\tau',v')\in \bar{W}_{k,k}$.
Therefore, we have
\[ (-1)^{q-n+1} f_{k+1,\xi}\circ f_{k,\xi}(x) - f_{k,\xi}\circ f_{k,\xi}(x) = (-1)^s [W'_{k,k},\Phi'_{k,k},(-1)^{|\eta|^2} h_*(\zeta_{k,k})-\zeta'_{k,k} ] .\]
For $(u,\tau',v',\tau,v) \in W'_{k,k}$,
\begin{align*}
& (-1)^{|\eta|^2} ( h_*(\zeta_{k,k}) )_{(u,\tau',v',\tau,v)} - (\zeta'_{k,k})_{(u,\tau',v',\tau,v)} \\
=& \left( \rho(T^u_k,\tau,\tilde{T}^2_k(u,\tau,v), \tau'-\tau+2\epsilon^v_2 ) -\rho(T^u_k,\tau',\tilde{T}^1_k(\tau',v'),\tau)\right) \cdot (\omega_u\times \eta_{v'}\times \eta_v).
\end{align*}
If $\frac{11}{3}\epsilon \leq \tau' \leq T^u_k- \frac{11}{3}\epsilon $ and $ \frac{11}{3}\epsilon \leq \tau \leq  \tilde{T}^1_k(\tau',v')-\frac{11}{3}\epsilon $, it can be checked that $\frac{11}{3}\epsilon\leq \tau'-\tau+2\epsilon^v_2\leq \tilde{T}^2_k(u,\tau,v)-\frac{11}{3}\epsilon$ and $\frac{11}{3}\leq \tau\leq T^u_k-\frac{11}{3}\epsilon$ hold, and thus,
\[\rho(T^u_k,\tau,\tilde{T}^2_k(u,\tau,v), \tau'-\tau+2\epsilon^v_2 )  =1 = \rho(T^u_k,\tau',\tilde{T}^1_k(\tau',v'),\tau). \]
Therefore, $\supp ( (-1)^{|\eta|^2}h_*(\zeta_{k,k})-\zeta'_{k,k})$ lies in $W^1_{k,k}\cup W^2_{k,k}$, where
\begin{align*}
W^1_{k,k}&\coloneqq \{(u,\tau',v',\tau,v)\in W'_{k,k} \mid \tau' <4\epsilon \text{ or } T^u_k-4\epsilon<\tau' \}, \\
W^2_{k,k}&\coloneqq \{(u,\tau',v',\tau,v)\in W'_{k,k} \mid \tau<4\epsilon \text{ or } \tilde{T}^1_k(\tau',v') -4\epsilon <\tau \}.
\end{align*}
From  Lemma \ref{lem-short-length}, we have $\Phi_k(u,\tau',v') = \con_k( \varphi(u),(T^u_k,\tau'),\psi(v'))\in \Sigma^0_{m+1}$ for all $(u,\tau',v',\tau,v)\in W^1_{k,k}$. Then, Lemma \ref{lem-short-length} is applied again to show that
\[\Phi'_{k,k}(u,\tau',v',\tau,v)=\con_k (\Phi_k(u,\tau',v'),( \tilde{T}^1_k(\tau',v') ,\tau),\psi(v))\]
is an element of $\Sigma^0_{m+2}$ for every $(u,\tau',v',\tau,v)\in W^1_{k,k}\cup W^2_{k,k}$. Indeed, we can apply the case (i) of Lemma \ref{lem-short-length} for $(u,\tau',v',\tau,v)\in W^1_{k,k}$, and the case (ii) for $(u,\tau',v',\tau,v)\in W^2_{k,k}$.
As a consequence,
\begin{align*}
&(-1)^{q-n+1}f_{k+1,\xi}\circ f_{k,\xi}(x)-f_{k,\xi}\circ f_{k,\xi}(x) \\
=&\left[ W^1_{k,k} \cup W^2_{k,k},\rest{ \Phi'_{k,k} }{ W^1_{k,k} \cup W^2_{k,k} }, \rest{ \left(  (-1)^{|\eta|^2}h_*(\zeta_{k,k})-\zeta'_{k,k}\right) }{ W^1_{k,k} \cup W^2_{k,k} } \right] \in C^{\dr}_{p+2+2q-2n}(\Sigma^0_{m+2}) .
\end{align*}
\end{proof}

In the definition of $f_{k,\xi}$ for $\xi \in C^{\dr}_{q}(S_{\epsilon})$, there is an ambiguity about the choice of $\rho_{\epsilon} \colon A_{\epsilon} \to [0,1]$. Suppose that we choose another $\rho'_{\epsilon}$, and define $\alpha'_{\epsilon,\nu}$ $f'_{k,\xi}$ in the same way as $\alpha_{\epsilon,\nu}$ and $f_{k,\xi}$. Take an arbitrary chain $x\in C^{\dr}_p(\Sigma^a_m)$. Since $\supp (\rho'_{\epsilon}-\rho_{\epsilon})$ lies in $\{(T,\tau) \in A_{\epsilon}\mid \tau<4\epsilon \text{ or }T-4\epsilon<\tau \}$, $x\ftimes{\tl_k}{\pr_T}(\alpha_{\epsilon,\nu}-\alpha'_{\epsilon,\nu})$ is a chain in the subspace
\[\{( (\gamma_k)_{k=1,\dots , m},(T,\tau)) \in \Sigma^a_m \ftimes{\tl_k}{\pr_T}A_{\epsilon} \mid \tau<4\epsilon \text{ or }T-4\epsilon<\tau\}.\]
From Lemma \ref{lem-short-length}, we have
\[f_{k,\xi}(x)-f'_{k,\xi}(x)=(\con_k)_*((x\ftimes{\tl_k}{\pr_T}(\alpha_{\epsilon,\nu}-\alpha'_{\epsilon,\nu}) ) \ftimes{\ev_k}{\ev_0} \xi)\in C^{\dr}_{p+1+q-n}(\Sigma^0_{m+1}).\]
Therefore, for $a\in \R_{\geq 0}$, the induced map on the quotient spaces, which is denoted by the same symbol,
\[f_{k,\xi} \colon C^{\dr}_*(\Sigma^a_m,\Sigma^0_m ) \to C^{\dr}_{*+1+q-n}(\Sigma^{a+\epsilon}_{m+1},\Sigma^{0}_{m+1} ) \]
is independent on the choice of $\rho_{\epsilon}$.
For this map, the equation
\begin{align}\label{delta-small-quotient}
\partial \circ f_{k,\xi} -f_{k,\xi}\circ \partial = (-1)^{p+1-n} f_{k,\partial\xi} &\colon C^{\dr}_p(\Sigma^a_m,\Sigma^0_m)\to C^{\dr}_{p+1+q-n}(\Sigma^{a+\epsilon}_{m+1},\Sigma^{0}_{m+1}) \end{align}
follows from Lemma \ref{lem-delta-small}, and the equation for $k'\geq k$
\begin{align}\label{delta-commute-quotient}
f_{k'+1,\xi}\circ f_{k,\xi} + (-1)^{q-n} f_{k,\xi}\circ f_{k',\xi}=0 &\colon C^{\dr}_p(\Sigma^a_m,\Sigma^0_m)\to C^{\dr}_{p+2+2q-2n}(\Sigma^{a+2\epsilon}_{m+2}, \Sigma^{0}_{m+2})
\end{align}
follows from  Lemma \ref{lem-delta-commute}. When $\partial \xi=0$, (\ref{delta-small-quotient}) implies that $f_{k,\xi}$ is a chain map shifting the degree by $(1+q-n)$.

\subsection{$[-1,1]$ and $[-1,1]^2$-modeled de Rham chains}\label{subsec-[-1,1]-model}

In this section, we introduce two types of variants of de Rham chains. In this paper,
they are necessary for only four kinds of differentiable spaces: $\Sigma^a_m$, $\Sigma^a_m\times \Sigma^{a'}_{m'}$, $S_{\epsilon}$ and $(M,P_M)$ for a manifold $M$.
%Rather than giving a general definition, we restrict to these differentiable spaces
Throughout this section, $X$ 
represents these differentiable space.
%We discuss these two differentiable spaces in parallel ways.
%
\subsubsection{$[-1,1]$-modeled de Rham chains}
We introduce chains in $\R\times X$.
We define $\bar{P}_X$ as the set of tuples $(U,\varphi,(\tau_+,\tau_-))$ such that:
\begin{itemize}
\item $(U,\varphi)\in P_{\R^{\reg}\times X}$.
If $X=S_{\epsilon}$, we additionally require that $(\id_{\R}\times \ev_0)\circ \varphi \colon U\to \R\times Q$ is a submersion.
Let us write $\varphi=(\varphi_{\R},\varphi_X)\colon U\to \R\times X$ and $U_I \coloneqq \varphi_{\R}^{-1}(I)$ for any subset $I\subset \R$.
\item $\tau_+\colon U_{ \R_{\geq 1} }\to \R_{\geq 1}\times U_{\{1\}} $ and $\tau_-\colon U_{\R_{\leq -1}}\to \R_{\leq -1}\times U_{\{-1\}}$ are diffeomorphisms such that
\[\varphi\circ \tau_+^{-1}=i_{\R_{\geq 1}} \times \rest{ \varphi_X}{U_{ \{1\}}},\ \varphi\circ \tau_-^{-1}=i_{\R_{\leq -1}} \times \rest{ \varphi_X}{U_{\{ -1\}}}.\]
Here, $i_{\R_{\geq 1}}$ (resp. $i_{\R_{\leq{-1}}}$) is the inclusion map from $\R_{\geq 1}$ (resp. $\R_{\leq {-1}}$) to $\R$.
\end{itemize}

\begin{rem}When $X=S_{\epsilon}$,
the condition that $(U,\varphi)\in P_{\R^{\reg}\times S_{\epsilon}}$ implies only that the composition of $(\id_{\R}\times \ev_0)\circ \varphi \colon U\to \R\times Q$ with $\pr_{\R}$ (resp. $\pr_Q$) is a submersion to $\R$ (resp. $Q$).
The condition that $(\id_{\R}\times \ev_0)\circ \varphi$ itself is a submersion is necessary to define a fiber product of $[-1,1]$-modeled de Rham chains in the latter subsection.
\end{rem}

For $(U,\varphi,(\tau_+,\tau_-))\in \bar{P}_X$, we define a linear subspace $\Omega^p_c(U,\varphi,(\tau_+,\tau_-))$ of $\Omega^p(U)$ which consists of $p$-forms $\omega$ on $U$ such that $\supp \omega\cap U_{[-1,1]}$ is compact, $(\tau^{-1}_+)^*\omega = 1\times \rest{ \omega}{U_{\{1\}}}$ and $(\tau^{-1}_-)^*\omega =1\times \rest{\omega}{U_{\{-1\}}}$.
We consider a graded $\R$-vector space
\[\bar{A}_*(X)\coloneqq \bigoplus_{(U,\varphi,(\tau_+,\tau_-))\in \bar{P}_X} \Omega_c^{\dim U- 1-*}(U,\varphi,(\tau_+,\tau_-)).\]
For $\bold{U}=(U,\varphi,(\tau_+,\tau_-))\in \bar{P}_X$ and $\omega \in \Omega_c^p(U,\varphi,(\tau_+,\tau_-))$, let $(U,\varphi,(\tau_+,\tau_-),\omega)$ denote the element of $\bar{A}_*(X)$ such that its component for $\bold{V}\in \bar{P}_X$ is
\[(U,\varphi, (\tau_+,\tau_-),\omega)_{\bold{V} }=
\begin{cases}\omega & \text{ if } \bold{V}=\bold{U}, \\
 0 & \text{ if }\bold{V}\neq \bold{U} .
\end{cases}\]
We take a linear subspace $\bar{Z}_*(X)$ of $\bar{A}_*(X)$ generated by vectors
\[(V,\varphi\circ\pi,(\sigma_+,\sigma_-),\omega )- (U,\varphi,(\tau_+,\tau_-),\pi_!\omega)\] 
for any submersion $\pi\colon V\to U$ such that
\[(\id_{\R_{\geq 1}}\times \rest{\pi}{V_{\{1\}}} ) \circ \sigma_+=\tau_+\circ \pi ,\ (\id_{\R_{\leq -1}}\times \rest{\pi}{V_{\{-1\}}} ) \circ\sigma_-=\tau_-\circ \pi .\]
We define a quotient vector space
\[\bar{C}^{\dr}_*(X)\coloneqq \bar{A}_*(X)/\bar{Z}_*(X),\]
whose elements we call $\mathit{[-1,1]}$\textit{-modeled de Rham chains}.
$[U,\varphi,(\tau_+,\tau_-),\omega]$ denotes the equivalence class of $(U,\varphi,(\tau_+,\tau_-),\omega)$.
We define a degree $(-1)$ linear map
$\partial \colon \bar{C}^{\dr}_*(X) \to \bar{C}^{\dr}_{*-1}(X)$ by
\[\partial [U,\varphi,(\tau_+,\tau_-),\omega]\coloneqq  (-1)^{|\omega|+1}[U,\varphi,(\tau_+,\tau_-),d\omega].\]
Obviously $\partial \circ \partial =0$ holds and we obtain a chain complex $( \bar{C}^{\dr}_*(X),\partial)$. Its homology is denoted by $\bar{H}^{\dr}_*(X)$.
%When the differentiable structure is clear, we abbreviate $\bar{C}^{\dr}_*(X)$ by $\bar{C}_*(X)$ and $\bar{H}^{\dr}_*(X,P_X)$ by $\bar{H}_*(X)$. As we have seen for de Rham chains, a smooth map induces a chain map on $[-1,1]$-modeled de Rham chain complex. 

Naturally, there are three chain maps:
\begin{align}\label{bar-i}
\bar{i}\colon C^{\dr}_*(X) \to \bar{C}^{\dr}_*(X)\colon  [V,\psi,\omega] \mapsto (-1)^{\dim V}[\R\times V, \id_{\R}\times \psi , (\id_{\R_{\geq 1}\times V}, \id_{\R_{\leq -1}\times V} ), 1\times \omega] 
\end{align}
and
\begin{align}\label{e-pm}
\begin{split}
e_+&\colon \bar{C}^{\dr}_*(X)\to C^{\dr}_*(X) \colon[U,\varphi,(\tau_+,\tau_-),\omega] \mapsto  (-1)^{\dim U -1} [U_{\{1\}},\rest{\varphi_X}{U_{\{1\}}}, \rest{\omega}{U_{\{1\}}}] , \\
e_-&\colon \bar{C}^{\dr}_*(X)\to C^{\dr}_*(X) \colon [U,\varphi,(\tau_+,\tau_-),\omega] \mapsto  (-1)^{\dim U -1} [U_{\{-1\}},\rest{\varphi_X}{U_{\{-1\}}}, \rest{\omega}{U_{\{-1\}}}] .
\end{split}
\end{align}
Here, $U_{\{1\}}$ and $U_{\{-1\}}$ are oriented so that $\tau_+$ and $\tau_-$ preserve orientations.
Clearly, $e_+\circ \bar{i}=e_-\circ \bar{i}=\id_{C^{\dr}_*(X)}$. For $\bar{i}\circ e_+$ and $\bar{i}\circ e_-$, the next result holds.
\begin{lem}\label{lem-rest-isom-bar}
$\bar{i}\circ e_+$ and $\bar{i}\circ e_-$ are chain homotopic to the identity map $\id_{\bar{C}^{\dr}_*(X) }.$
\end{lem}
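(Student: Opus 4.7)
The plan is to construct an explicit chain homotopy $K\colon\bar{C}^{\dr}_*(X)\to\bar{C}^{\dr}_{*+1}(X)$ satisfying $\partial K+K\partial=\id-\bar{i}\circ e_+$; the statement for $\bar{i}\circ e_-$ then follows by a symmetric construction swapping the roles of $\tau_+$ and $\tau_-$. Intuitively, $\bar{i}\circ e_+(x)$ is a ``cylinder extension'' of the $+1$-restriction of $x$, obtained by extending the form $\omega|_{U_{\{1\}}}$ trivially in the $\R$-direction, and the goal is to connect $x$ to this extension by a one-parameter family of plots indexed by a new variable $s\in\R$.

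First I would rewrite $\bar{i}\circ e_+(x)$ as a chain on $\R\times U_{\geq 1}$ using the submersion $(r,u)\mapsto(r,\pr_2\circ\tau_+(u))$ onto $\R\times U_{\{1\}}$, with a concrete representative form obtained by pulling back $1_r\wedge\beta(t)\,dt\wedge\omega|_{U_{\{1\}}}$ via $\tau_+\colon U_{\geq 1}\cong\R_{\geq 1}\times U_{\{1\}}$ (where $\beta$ is a compactly supported bump of integral $1$). Next I would build an enlarged manifold $\widetilde{U}$ that simultaneously contains $U$ and $\R\times U_{\{1\}}$ (for instance, the pushout of $U\xleftarrow{\tau_+^{-1}}\R_{\geq 1}\times U_{\{1\}}\hookrightarrow\R\times U_{\{1\}}$), together with a plot $(\R_s\times\widetilde{U},\Phi,(T_+,T_-))$ and a form $\theta$ on $\R\times\widetilde{U}$ which, in neighborhoods of $s=0$ and $s=1$, restrict to the chain data representing $x$ and representing $\bar{i}\circ e_+(x)$ respectively. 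I would then set
\[
K(x)\coloneqq(-1)^{\diamond}[\R\times\widetilde{U},\,\Phi,\,(T_+,T_-),\,\alpha(s)\,\theta]
\]
for a compactly supported smooth $\alpha$ equal to $1$ on a neighborhood of $[0,1]$. Computing $d(\alpha(s)\theta)=\alpha'(s)\,ds\wedge\theta+\alpha(s)\,d\theta$, the first term localizes via a partition of unity to bumps concentrated near $s=0$ and $s=1$, and integration of the $s$-direction via the submersion equivalence in $\bar{Z}_*(X)$ yields exactly $x-\bar{i}\circ e_+(x)$, while the second term matches $-K(\partial x)$ up to sign.

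The main obstacle I expect is the combined analytic and topological construction of $\widetilde{U}$ and the interpolating plot. One must ensure that $\Phi\colon\R\times\widetilde{U}\to\R\times X$ is genuinely a submersion (with, when $X=S_\epsilon$, the additional requirement that $(\id_\R\times\ev_0)\circ\Phi$ also be a submersion), that the form $\theta$ satisfies the strict product conditions at $\Phi_\R=\pm 1$ prescribed by $(T_+,T_-)$, and that the endpoint identifications of the $\alpha'$-pieces with $x$ and $\bar{i}\circ e_+(x)$ go through via explicit submersion equivalences. One must also check that the assignment $K$ descends to the quotient $\bar{C}^{\dr}_*(X)$, i.e.\ respects the equivalence relation generating $\bar{Z}_*(X)$; this should follow from the functoriality of the construction in $(U,\varphi,(\tau_+,\tau_-))$, but it requires careful bookkeeping in the pushout and in the compatibility of $T_\pm$ with the corresponding $\sigma_\pm$ on any refining submersion.
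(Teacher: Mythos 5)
Your overall strategy — build an explicit chain homotopy $K$ by connecting $x$ to $\bar{i}\circ e_+(x)$ through a one-parameter family in an auxiliary $\R$-direction, then use the submersion relation in $\bar{Z}_*(X)$ to produce the difference as a boundary — is the same one the paper takes, following Irie's Lemma~4.8. But the paper keeps the domain $\R\times U$ and sets
\[
K\bigl([U,\varphi,(\tau_+,\tau_-),\omega]\bigr)=(-1)^{|\omega|+1}\,\bigl[\R\times U,\ \bar\varphi,\ (\bar\tau_+,\bar\tau_-),\ \bar\omega\bigr],
\]
whereas you propose to replace $U$ by an enlarged domain $\widetilde{U}$, and that step is where the argument breaks down.

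The pushout of $U\xleftarrow{\tau_+^{-1}}\R_{\geq 1}\times U_{\{1\}}\hookrightarrow\R\times U_{\{1\}}$ glues two manifolds along a \emph{closed} codimension-zero submanifold-with-boundary, and such a pushout is not a smooth manifold. A local model transverse to $U_{\{1\}}$ is the pushout of $(-\delta,\delta)\xleftarrow{\mathrm{id}}[0,\delta)\hookrightarrow(-\delta,\delta)$: an interval with its negative half doubled, which has a branch point at $0$ and is not a one-manifold. Hence $\widetilde{U}\notin\mathcal{U}$, and the proposed plot $(\R\times\widetilde{U},\Phi,(T_+,T_-))$ is not an element of $\bar{P}_X$. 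Switching to the open gluing locus $\R_{>1}\times U_{\{1\}}$ trades the branch point for non-Hausdorffness (the doubled-hypersurface pathology), and $\tau_+$ only identifies $U_{\R_{\geq 1}}$ with $\R_{\geq 1}\times U_{\{1\}}$, so there is no given diffeomorphism allowing one to glue along an open collar that straddles $r=1$. You flag ``the topological construction of $\widetilde{U}$'' as a potential obstacle, but as stated it is fatal: there is no natural element of $\mathcal{U}$ simultaneously containing $U$ and $\R\times U_{\{1\}}$ with the required overlap.

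The correct resolution — which is what the paper does, citing Irie — is not to enlarge the domain at all. One keeps the domain $\R\times U$ and encodes the interpolation entirely in the deformed plot data $\bar\varphi,(\bar\tau_+,\bar\tau_-),\bar\omega$, using the cylindrical identity $\varphi\circ\tau_+^{-1}=i_{\R_{\geq 1}}\times\rest{\varphi_X}{U_{\{1\}}}$ on $U_{\R_{\geq 1}}$ to reparameterize the $\R$-coordinate rather than physically gluing on a cylinder. Once the homotopy lives on $\R\times U$, your boundary computation via $d(\alpha(s)\theta)=\alpha'(s)\,ds\wedge\theta+\alpha(s)\,d\theta$, the extraction of the two endpoints by the submersion relation, and the well-definedness on the quotient by $\bar{Z}_*(X)$ all proceed along the lines you sketch, and the extra submersion requirement when $X=S_{\epsilon}$ is easy to check on $\R\times U$.
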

\begin{proof}
This assertion is essentially proved in \cite[Lemma 4.8]{irie-pseudo}. We should notice that the result in the reference is proved for a specific differentiable space $\mathscr{L}_{k+1}(a)$ (a differentiable space of marked Moore loops in a manifold). However, even for a differentiable space $X$ considered in this section, we can extend the definition of a chain
\[K ( [U,\varphi,(\tau_+,\tau_-),\omega] )\coloneqq (-1)^{|\omega|+1}[\R \times U,\bar{\varphi},(\bar{\tau}_+,\bar{\tau}_-),\bar{\omega}]  \in \bar{C}^{\dr}_{*+1}(X)\]
for any $ [U,\varphi,(\tau_+,\tau_-),\omega] \in \bar{C}^{\dr}_*(X)$, which appears in the proof of \cite[Lemma 4.8]{irie-pseudo}. Then $K\colon \bar{C}^{\dr}_*(X) \to \bar{C}^{\dr}_{*+1}(X)$ gives a chain homotopy from $\id_{\bar{C}^{\dr}_*(X)}$ to $\bar{i} \circ e_+$. The proof for $\bar{i} \circ e_-$ is completely parallel.
\end{proof}

\subsubsection{$[-1,1]^2$-modeled de Rham chains}
Let us define the smooth map
\[\iota\colon (\R^2)^{\reg} \times X \to (\R^2)^{\reg}\times X \colon ((r_1,r_2),x)\mapsto ((r_2,r_1),x).\]
We often use the coordinate $(r_1,r_2)$ of $\R^2$ to denote its subsets, for instance $\R_{\geq 1}\times \R=\{r_1\geq 1\}$.

We introduce chains in $\R^2\times X$. 
We define $\hat{P}_X$ as the set of tuples $(U,\varphi,(\tau^1_+,\tau^1_-),(\tau^2_+,\tau^2_-))$ such that:
\begin{itemize}
\item $(U,\varphi)\in P_{(\R^2)^{\reg}\times X }$. If $X=S_{\epsilon}$, we additionally require that $(\id_{\R^2}\times \ev_0)\circ \varphi \colon U\to \R^2\times Q$ is a submersion. Let us write $\varphi= ((\varphi^1_{\R}, \varphi^2_{\R}),\varphi_X)\colon U\to \R^2 \times X$ and $U_D\coloneqq \{u\in U \mid (\varphi^1_{\R}(u),\varphi^2_{\R}(u)) \in D\}$ for any subset $D\subset \R^2$.
\item $\tau^j_+,\tau^j_-$ ($j=1,2$) are diffeomorphisms such that 
\begin{align*}
&\varphi\circ (\tau^1_+)^{-1}=i_{\R_{\geq 1}} \times \rest{ (\varphi^2_{\R} \times \varphi_X)}{U_{ \{r_1=1\}}},  \ \varphi\circ (\tau^1_-)^{-1}=i_{\R_{\leq -1}} \times \rest{ (\varphi^2_{\R} \times \varphi_X)}{U_{ \{r_1=-1\}}} , \\
&\iota\circ \varphi\circ (\tau^2_+)^{-1}=i_{\R_{\geq 1}} \times \rest{ (\varphi^1_{\R} \times \varphi_X)}{U_{ \{r_2=1\}}}, \ \iota \circ\varphi\circ (\tau^2_-)^{-1}=i_{\R_{\leq -1}} \times \rest{ (\varphi^1_{\R} \times \varphi_X)}{U_{ \{r_2=-1\}}}.
\end{align*}
\end{itemize}

For $(U,\varphi,(\tau^1_+,\tau^1_-),(\tau^2_+,\tau^2_-))\in \hat{P}_X$, we define a linear subspace
$\Omega_c^p(U,\varphi,(\tau^1_+,\tau^1_-),(\tau^2_+,\tau^2_-)) $ of $\Omega^p(U)$ which consists of $p$-forms $\omega$ on $U$ such that $\supp \omega \cap U_{[-1,1]\times[-1,1]}$ is compact and
\begin{align*}
((\tau^j_+)^{-1})^*\omega =1\times \rest{\omega}{U_{\{r_j=1\}} },\ ((\tau^j_-)^{-1})^*\omega =1\times \rest{\omega}{U_{r_j=-1}}\ (j=1,2).
%((\tau^2_+)^{-1})^*\omega =1\times \rest{\omega}{U_{\R\times \{1\}}},\ ((\tau^2_-)^{-1})^*\omega =1\times \rest{\omega}{U_{\R\times \{-1\}}} .
\end{align*}
We consider the graded $\R$-vector space
\[\hat{A}_*(X)\coloneqq \bigoplus_{ (U,\varphi,(\tau^1_+,\tau^1_-),(\tau^2_+,\tau^2_-))\in \hat{P}_X } \Omega_c^{\dim U-2-*}(U,\varphi,(\tau^1_+,\tau^1_-),(\tau^2_+,\tau^2_-)) . \]
For $\bold{U}=(U,\varphi,(\tau^1_+,\tau^1_-),(\tau^2_+,\tau^2_-)) \in \hat{P}_X$ and $\omega \in \Omega^p_c(U,\varphi,(\tau^1_+,\tau^1_-),(\tau^2_+,\tau^2_-))$, let
\[(U,\varphi,(\tau^1_+,\tau^1_-),(\tau^2_+,\tau^2_-),\omega)\]
denote the element of $\hat{A}_*(X)$ such that its component for $\bold{V}\in \hat{P}_X$ is
\[(U,\varphi,(\tau^1_+,\tau^1_-),(\tau^2_+,\tau^2_-),\omega)_{\bold{V}} =\begin{cases}
\omega & \text{ if }\bold{V}=\bold{U}, \\
0 & \text{ if }\bold{V}\neq \bold{U}.
\end{cases} \]
We take a linear subspace $\hat{Z}_*(X)$ of $\hat{A}_*(X)$ generated by vectors
\[ (V,\varphi\circ \pi ,(\sigma^1_+,\sigma^1_-),(\sigma^2_+,\sigma^2_-),\omega) - (U,\varphi,(\tau^1_+,\tau^1_-),(\tau^2_+,\tau^2_-),\pi_!\omega)\]
for any  submersion $\pi \colon V\to U$
such that for $j=1,2$
\begin{align*}
(\id_{\R_{\geq 1}} \times \rest{\pi}{V_{\{r_j=1\} }}) \circ \sigma^j_+ = \tau^j_+\circ \pi,\ (\id_{\R_{\leq -1}} \times \rest{\pi}{V_{\{r_j=-1\} }}) \circ \sigma^j_- = \tau^j_-\circ \pi .
%(\id_{\R_{\geq 1}} \times \rest{\pi}{V_{\R\times \{1\}}}) \circ \sigma^2_+ = \tau^2_+\circ \pi,\ (\id_{\R_{\leq -1}} \times \rest{\pi}{V_{\R\times \{-1\}}}) \circ \sigma^2_- = \tau^2_-\circ \pi .
\end{align*}
Now we define a quotient vector space
\[\hat{C}^{\dr}_*(X)\coloneqq \hat{A}_*(X)/\hat{Z}_*(X),\]
whose elements we call $\mathit{[-1,1]^2}$\textit{- modeled de Rham chains}.
$[U,\varphi,(\tau^1_+,\tau^1_-),(\tau^2_+,\tau^2_-),\omega]$ denotes the equivalence class of $(U,\varphi,(\tau^1_+,\tau^1_-),(\tau^2_+,\tau^2_-),\omega)$. We define a degree $(-1)$ linear map
$\partial \colon \hat{C}^{\dr}_*(X) \to \hat{C}^{\dr}_{*-1}(X)$ by
\[\partial [U,\varphi,(\tau^1_+,\tau^1_-),(\tau^2_+,\tau^2_-),\omega] \coloneqq (-1)^{|\omega|+1} [U,\varphi,(\tau^1_+,\tau^1_-),(\tau^2_+,\tau^2_-),d\omega] .\]
Obviously $\partial \circ \partial =0$ holds and we obtain a chain complex $(\hat{C}^{\dr}_*(X),\partial )$. Its homology is denoted by $\hat{H}^{\dr}_*(X)$.
%When the differentiable structure is clear, we abbreviate $\hat{C}^{\dr}_*(X,P_X)$ by $\hat{C}_*(X)$ and $\hat{H}^{\dr}_*(X,P_X)$ by $\hat{H}_*(X)$.
%As we have seen for de Rham chains, a smooth map induces a chain map on $[-1,1]^2$-modeled de Rham chain complexes. A chain complex for a pair of differentiable spaces is defined under the same situation 

Naturally, there are six chain maps
\begin{align}\label{e-j-pm}
\begin{split}
\hat{i}^1,\hat{i}^2&\colon \bar{C}^{\dr}_*(X) \to \hat{C}^{\dr}_*(X) , \\
e^1_+,e^2_+, e^1_-,e^2_-&\colon \hat{C}^{\dr}_*(X)\to \bar{C}^{\dr}_*(X) ,
%e^1_-,,e^2_-&\colon \hat{C}^{\dr}_*(X)\to \bar{C}^{\dr}_*(X) ,
\end{split}
\end{align}
define as follows: $\hat{i}^1$ and $\hat{i}^2$ map $x= [V,\psi,(\tau_+,\tau_-),\omega]\in \bar{C}^{\dr}_*(X) $ to
\begin{align*}
\hat{i}^1 x &\coloneqq (-1)^{\dim V -1} [\R\times V, \id_{\R}\times \psi ,(\id_{\R_{\geq 1}\times V}, \id_{\R_{\leq -1}\times V}) , (\hat{\tau}_+,\hat{\tau}_- ), 1\times \omega] ,\\
\hat{i}^2 x &\coloneqq (-1)^{\dim V} [\R\times V,  \iota\circ (\id_{\R}\times \psi) , (\hat{\tau}_+,\hat{\tau}_- ), (\id_{\R_{\geq 1}\times V}, \id_{\R_{\leq -1}\times V} ), 1\times \omega] ,
\end{align*}
where $\hat{\tau}_+\colon \R\times V_{\R_{\geq 1}}\to \R_{\geq 1}\times (\R\times V_{\{1\}})$ and $\hat{\tau}_-\colon \R\times V_{\R_{\leq -1}}\to \R_{\leq -1}\times (\R\times V_{\{ -1\}})$ are determined by
\begin{align*}
&\hat{\tau}_+(r' ,\tau_+^{-1}(r,u_+)) =(r, (r',u_+)) \text{ for } r'\in \R \text{ and } (r,u_+)\in \R_{\geq 1}\times V_{\{1\}}, \\
&\hat{\tau}_-(r' , \tau_-^{-1}(r,u_-)) =(r, (r',u_-)) \text{ for } r'\in \R \text{ and } (r,u_-)\in \R_{\leq -1}\times V_{\{-1\}}.
\end{align*}
In addition, $e^j_+$ and $e^j_-$ ($j=1,2$) map $y =  [U,\varphi,(\tau^1_+,\tau^1_-),(\tau^2_+,\tau^2_-),\omega] \in \hat{C}^{\dr}_*(X)$ to
\begin{align*}
\begin{split}
e^1_+ y &\coloneqq  (-1)^{\dim U} [U_{\{1\}\times \R}, \rest{(\varphi^2_{\R} ,\varphi_X)}{U_{\{1\}\times \R}}, ( \rest{ \tau^2_+}{U_{\{1\}\times \R_{\geq 1}}}, \rest{\tau^2_-}{U_{\{1\}\times \R_{\leq -1}}} ), \rest{\omega}{U_{\{1\}\times \R}} ] ,\\
e^2_+y &\coloneqq (-1)^{\dim U -1} [U_{\R\times\{1\} }, \rest{(\varphi^1_{\R} ,\varphi_X)}{ U_{\R\times\{1\} } },( \rest{ \tau^1_+}{U_{\R_{\geq 1}\times\{1\} }}, \rest{\tau^1_-}{U_{\R_{\leq -1}\times\{1\} } } ), \rest{\omega}{ U_{\R\times\{1\} }} ] ,\\
e^1_- y &\coloneqq  (-1)^{\dim U} [U_{\{-1\}\times \R}, \rest{(\varphi^2_{\R} ,\varphi_X)}{U_{\{-1\}\times \R}}, ( \rest{ \tau^2_+}{U_{\{-1\}\times \R_{\geq 1}}}, \rest{\tau^2_-}{U_{\{-1\}\times \R_{\leq -1}}} ), \rest{\omega}{U_{\{-1\}\times \R}} ] ,\\
e^2_-y &\coloneqq (-1)^{\dim U-1 } [U_{\R\times\{-1\} }, \rest{(\varphi^1_{\R} ,\varphi_X)}{ U_{\R\times\{-1\} } },( \rest{ \tau^1_+}{U_{\R_{\geq 1}\times\{-1\} }}, \rest{\tau^1_-}{U_{\R_{\leq -1}\times\{-1\} } } ), \rest{\omega}{ U_{\R\times\{-1\} }} ].
\end{split}
\end{align*}
Here, the orientations of $U_{D}$ for $D=\{r_j=1\}, \{r_j=-1\}$ ($j=1,2$) are determined so that $\tau^j_+$ and $\tau^j_-$ preserve orientations.
The signs are chosen so that
\begin{align}\label{e-and-e-j}
e_+\circ e^1_+=e_+\circ e^2_+,\ e_-\circ e^1_+=e_+\circ e^2_-,\ e_+\circ e^1_-=e_-\circ e^2_+,\ e_-\circ e^1_-=e_-\circ e^2_-.
\end{align}
hold.
Clearly, $e^j_+\circ \hat{i}^j=e^j_-\circ \hat{i}^j=\id_{\bar{C}^{\dr}_*(X)}$ for $j=1,2$. For $\hat{i}^j\circ e^j_+$ and $\hat{i}^j\circ e^j_-$ ($j=1,2$), the next result holds
\begin{lem}\label{lem-rest-isom-hat}
$\hat{i}^j\circ e^j_+$ and $\hat{i}^j\circ e^j_-$ ($j=1,2$) are chain homotopic to $\id_{\hat{C}^{\dr}_*(X) }.$
\end{lem}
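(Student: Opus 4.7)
The plan is to imitate the construction of the chain homotopy $K$ that appears in the proof of Lemma \ref{lem-rest-isom-bar} (following \cite[Lemma 4.8]{irie-pseudo}), adapting it to the $[-1,1]^2$-modeled setting. By the symmetry induced by $\iota\colon (\R^2)^{\reg}\times X\to (\R^2)^{\reg}\times X$, it suffices to construct chain homotopies $\hat{K}^1_{\pm}\colon \hat{C}^{\dr}_*(X)\to \hat{C}^{\dr}_{*+1}(X)$ for $j=1$ satisfying $\partial\hat{K}^1_{\pm}+\hat{K}^1_{\pm}\partial=\id-\hat{i}^1\circ e^1_{\pm}$, and then deduce the statement for $j=2$ by conjugating with the maps induced by $\iota$. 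Throughout the construction, the structure maps $(\tau^2_+,\tau^2_-)$ in the second coordinate must be preserved (appropriately extended to a product), while the deformation takes place only in the first coordinate.

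Concretely, given $y=[U,\varphi,(\tau^1_+,\tau^1_-),(\tau^2_+,\tau^2_-),\omega]\in \hat{C}^{\dr}_*(X)$, I would define
\[
\hat{K}^1_+(y)\coloneqq (-1)^{|\omega|+1}[\R\times U,\hat\varphi,(\hat\tau^1_+,\hat\tau^1_-),(\hat\tau^2_+,\hat\tau^2_-),\hat\omega],
\]
where the new domain $\R\times U$ carries a smooth map $\hat\varphi$ that coincides with $\varphi$ outside a compact set in the $\R$-factor and which, on a neighborhood of the relevant boundary slice, reproduces $\hat{i}^1\circ e^1_+(y)$; the form $\hat\omega$ is constructed from $\omega$ via multiplication by a suitable cut-off $1$-form on $\R$ (precisely the one used for $K$ in \cite{irie-pseudo}). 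The $r_1$-direction structure maps $(\hat\tau^1_+,\hat\tau^1_-)$ are obtained from the collar structure of the cut-off, exactly as in Lemma \ref{lem-rest-isom-bar}. For the $r_2$-direction, I would take $(\hat\tau^2_+,\hat\tau^2_-)$ to be the product extension $\id_{\R}\times(\tau^2_+,\tau^2_-)$, using that the deformation in $r_1$ does not interact with the collars $\{r_2\geq 1\}$ and $\{r_2\leq -1\}$; the analogous construction with cut-off placed near $\{r_1=-1\}$ yields $\hat{K}^1_-$.

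With $\hat{K}^1_{\pm}$ in place, the chain homotopy identity will reduce to the one proved in \cite[Lemma 4.8]{irie-pseudo}, applied slicewise in the $r_2$ direction. The point is that since $\hat\omega$ has the required product form in the $r_2$-collars, the verification is the same computation as for $\bar{C}^{\dr}_*(X)$ but parameterized over the $r_2$-coordinate. Finally, from the commutation $\iota\circ\hat{i}^1=\hat{i}^2$ (modulo a sign) and $\iota\circ e^2_{\pm}=e^1_{\pm}\circ\iota$ at the level of representatives, the homotopies $\hat{K}^2_{\pm}\coloneqq \iota_*\circ\hat{K}^1_{\pm}\circ\iota_*$ do the job for $j=2$.

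The main obstacle is verifying, with careful sign bookkeeping, that the product extension $(\hat\tau^2_+,\hat\tau^2_-)$ is well-defined and compatible with the equivalence relation defining $\hat{C}^{\dr}_*(X)$ (in particular, that $\hat{K}^1_{\pm}$ descends from $\hat{A}_*(X)$ to $\hat{C}^{\dr}_*(X)$ because the construction is natural under submersions $\pi$ satisfying the compatibility conditions of $\hat{Z}_*(X)$). Once naturality is checked, the signs $(-1)^{|\omega|+1}$ and the ones appearing in the definitions \eqref{e-j-pm} propagate through the computation exactly as in Lemma \ref{lem-rest-isom-bar}, yielding the desired identity.
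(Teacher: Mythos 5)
Your construction of $\hat{K}^1_{\pm}$ — the irie-pseudo deformation in the $r_1$-coordinate, with the $r_2$-structure maps carried along by the product extension $\id_\R\times(\tau^2_\pm)$ — is exactly the paper's $K^1$ (with the product extensions denoted $\tilde\tau^2_\pm$ there), so the approach is the same. The only cosmetic difference is that the paper builds $K^2$ directly by swapping the roles of $r_1$ and $r_2$ in the recipe, whereas you obtain it by conjugating $\hat{K}^1_{\pm}$ with the involution $\iota_*$; the sign bookkeeping in the definitions of $\hat{i}^1,\hat{i}^2,e^j_\pm$ does work out (the two $(-1)$'s from $\iota_*\circ\hat{i}^1=-\hat{i}^2$ and $e^1_\pm\circ\iota_*=-e^2_\pm$ cancel), so both routes yield the same homotopies.
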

\begin{proof}We omit the detailed proof as Lemma \ref{lem-rest-isom-bar}.
For any $x= [U,\varphi, (\tau^1_+,\tau^1_-), (\tau^2_+,\tau^2_-),\omega] \in \hat{C}^{\dr}_*(X)$, let us define diffeomorphisms for $j=1,2$
\[\tilde{\tau}^j_+\colon \R\times U_{\{r_j\geq 1\}}\to \R_{\geq 1}\times (\R\times U_{\{r_j=1\}}),\ 
\tilde{\tau}^j_-\colon \R\times U_{\{r_j\leq-1\}}\to \R_{\leq -1}\times (\R\times U_{\{r_j=-1\}}),
\]
so that $(\tilde{\tau}^j_+)^{-1}(r_j, (r,u))= (r, (\tau^j_+)^{-1}(r_j,u))$ and $(\tilde{\tau}^j_-)^{-1}(r_j, (r,u))= (r, (\tau^j_-)^{-1}(r_j,u))$.
Referring to the proof of \cite[Lemma 4.8]{irie-pseudo}, we can find $\bar{\varphi}^j$, $\bar{\tau}^j_{\pm}$, $\bar{\omega}^j$ ($j=1,2$) to define %$[-1,1]^2$-modeled de Rham chains
\begin{align*}
K^1 (x) &\coloneqq (-1)^{|\omega|+1}[\R\times U, \bar{\varphi}^1, (\bar{\tau}^1_+, \bar{\tau}^1_-), (\tilde{\tau}^2_+, \tilde{\tau}^2_-), \bar{\omega}^1 ] , \\
K^2 (x) &\coloneqq (-1)^{|\omega|+1} [\R\times U, \bar{\varphi}^2, (\tilde{\tau}^1_+, \tilde{\tau}^1_-),  (\bar{\tau}^2_+, \bar{\tau}^2_-),\bar{\omega}^2] ,
\end{align*}
so that $K^j\colon \hat{C}^{\dr}_*(X) \to \hat{C}^{\dr}_{*+1}(X)$ is a chain homotopy from $\id_{ \hat{C}^{\dr}_*(X)}$ to $\hat{i}^j\circ e^j_+$ for $j=1,2$. The proof for $\hat{i}^j\circ e^j_-$ is completely parallel. 
\end{proof}

\subsubsection{Collection of analogies with ordinary de Rham chains}
As above, $X$ and $Y$ are chosen from one of the following differentiable space: $\Sigma^a_m$, $\Sigma^a_m\times \Sigma^{a'}_{m'}$, $S_{\epsilon}$ and $(M,P_M)$ for a manifold $M$.
Let $f\colon X \to Y$ be a smooth map. If $Y=S_{\epsilon}$, we require that $X=S_{\epsilon'}$ and $\ev_0\circ f=\ev_0$. Then, $f$ induces chain maps
\begin{align*}
f_*&\colon \bar{C}^{\dr}_*(X)\to \bar{C}^{\dr}_*(Y)\colon [U,\varphi,(\tau_+,\tau_-),\omega]\mapsto  [U,f\circ \varphi,(\tau_+,\tau_-),\omega], \\
f_*&\colon \hat{C}^{\dr}_*(X)\to \hat{C}^{\dr}_*(Y)\colon [U,\varphi,(\tau^1_+,\tau^1_-),(\tau^2_+,\tau^2_-),\omega]\mapsto  [U,f\circ \varphi,(\tau^1_+,\tau^1_-),(\tau^2_+,\tau^2_-),\omega].
\end{align*}
If $X=\Sigma^a_m$ and $Y=\Sigma^b_m$ for $a\leq b$ and $f$ is the inclusion map,
then we claim that the above maps are injective. This can be proved as Lemma \ref{lem-approx-smooth-level}, so we omit the proof. As a consequence, we can define $\bar{C}^{\dr}_*(\Sigma^b_m,\Sigma^a_m)$ and  $\hat{C}^{\dr}_*(\Sigma^b_m,\Sigma^a_m)$ as quotient complexes.

Next, let $(X,Y)=(\Sigma^a_m,\Sigma^{a'}_{m'})$ or $(\{0\},S_{\epsilon})$. We identify $\{0\}\times S_{\epsilon}$ with $S_{\epsilon}$. Then, a cross product
%\[\bar{C}^{\dr}_p(X)\otimes \bar{C}^{\dr}_q(Y)\to \bar{C}^{\dr}_{p+q}(X\times Y)\colon x\otimes y\mapsto x\times y\]
$x\times y \in \bar{C}^{\dr}_{p+q}(X\times Y)$
is defined for $x=[U,\varphi,(\tau_+,\tau_-), \omega]\in \bar{C}^{\dr}_p(X)$ and $y=[V,\psi,(\sigma_+,\sigma_-),\eta]\in \bar{C}^{\dr}_q(Y)$ by
\[x\times y\coloneqq (-1)^{p|\eta|} [W, \tilde{\varphi}, (\tilde{\tau}_+,\tilde{\tau}_-) , \omega\times \eta ].\]
Here, $W\coloneqq U\ftimes{\varphi_{\R} }{\psi_{\R}} V$ is a fiber product over $\R$ and $\tilde{\varphi},\tilde{\tau}_+,\tilde{\tau}_-$ are determined by
\begin{align*}
&\tilde{\varphi}\colon  W\to \R\times (X\times Y)\colon (u,v)\mapsto (\varphi_{\R}(u),\varphi_X(u),\psi_Y(v)),\\
&\tilde{\tau}_+(u,v)= (r,(u_+,v_+))\ \text{ for }(u,v)=((\tau_+)^{-1}(r,u_+),(\sigma_+)^{-1}(r,v_+))\in W_{\R_{\geq 1}}, \\
&\tilde{\tau}_-(u,v)= (r,(u_-,v_-))\ \text{ for }(u,v)=((\tau_-)^{-1}(r,u_-),(\sigma_-)^{-1}(r,v_-))\in W_{\R_{\leq -1}}.
\end{align*}

Similarly, a cross product $x\times y\in \hat{C}^{\dr}_{p+q}(X\times Y)$ 
%\[\hat{C}^{\dr}_p(X)\otimes \hat{C}^{\dr}_q(Y)\to \hat{C}^{\dr}_{p+q}(X\times Y)\colon x\otimes y\mapsto x\times y\]
is defined for $x=[U,\varphi,(\tau^1_+,\tau^1_-), (\tau^2_+,\tau^2_-), \omega]\in \hat{C}^{\dr}_p(X)$ and $y=[V,\psi,(\sigma^1_+,\sigma^1_-),(\sigma^2_+,\sigma^2_-), \eta]\in \hat{C}^{\dr}_q(Y)$ by
\[x\times y\coloneqq (-1)^{p|\eta|} [W, \tilde{\varphi}, (\tilde{\tau}^1_+,\tilde{\tau}^1_-) , (\tilde{\tau}^2_+,\tilde{\tau}^2_-) , \omega\times \eta ].\]
Here, $W\coloneqq U\ftimes{ (\varphi^1_{\R},\varphi^2_{\R}) }{(\psi^1_{\R}, \psi^2_{\R})} V$ is a fiber product over $\R^2$ and $\tilde{\varphi},\tilde{\tau}^j_+,\tilde{\tau}^j_-$ ($j=1,2$) are determined by
\begin{align*}
&\tilde{\varphi}\colon  W\to \R\times (X\times Y)\colon (u,v)\mapsto (\varphi_{\R}(u),\varphi_X(u),\psi_Y(v)),\\
&\tilde{\tau}^j_+(u,v)= (r,(u_+,v_+))\ \text{ for }(u,v)=((\tau^j_+)^{-1}(r,u_+),(\sigma^j_+)^{-1}(r,v_+))\in W_{\{r_j=1\}}, \\
&\tilde{\tau}^j_-(u,v)= (r,(u_-,v_-))\ \text{ for }(u,v)=((\tau^j_-)^{-1}(r,u_-),(\sigma^j_-)^{-1}(r,v_-))\in W_{\{r_j=-1\}}.
\end{align*}

The next result is analogous to Proposition \ref{prop-hmgy-grp} and Proposition \ref{prop-exactness}. It follows immediately from the fact that $(e_+)_*\colon \bar{H}^{\dr}_*(X)\to H^{\dr}_*(X)$ is an isomorphism (see Lemma \ref{lem-rest-isom-bar}.).
\begin{prop}\label{prop-analogy}
Let $a,b\in \R_{>0}$ with $a\leq b$ and $\epsilon \in (0,\epsilon_0/C]$ for the constant $C$ of Lemma \ref{lem-htpy-sev}. Then, the following hold:
\begin{itemize}
\item If $\mathcal{L}_m(K)\cap [a,b]=\varnothing$, then $\bar{H}^{\dr}_*(\Sigma^b_m,\Sigma^a_m)=0$.
\item For any $x\in \bar{H}^{\dr}_*(S_{\epsilon})$,
\[(\ev_0)_*\circ (e_+)_*(x)=0\in H^{\dr}_*(N_{\epsilon}) \Rightarrow (i_{\epsilon,C\epsilon})_*(x)=0\in \bar{H}^{\dr}_*(S_{C \epsilon})\]
\end{itemize}
\end{prop}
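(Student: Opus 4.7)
The strategy is to reduce both assertions to the corresponding statements for ordinary de Rham chains (Proposition \ref{prop-hmgy-grp} and Proposition \ref{prop-exactness}) by showing that $(e_+)_*\colon \bar{H}^{\dr}_*(X)\to H^{\dr}_*(X)$ is an isomorphism for each differentiable space $X$ considered here, with inverse $\bar{i}_*$. The identity $e_+\circ \bar{i}=\id_{C^{\dr}_*(X)}$ gives one half directly, and Lemma \ref{lem-rest-isom-bar} gives $\bar{i}\circ e_+\simeq \id_{\bar{C}^{\dr}_*(X)}$, so $(e_+)_*$ is an isomorphism. A key point to check (straightforwardly from the definitions) is that $e_+$ and $\bar{i}$ are natural with respect to smooth maps $f\colon X\to Y$ of the kind allowed in the functoriality statement, and in particular with respect to inclusions $\Sigma^a_m\hookrightarrow \Sigma^b_m$, $S_\epsilon\hookrightarrow S_{C\epsilon}$, and the evaluation $\ev_0\colon S_\epsilon\to N_\epsilon^{\reg}$.

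For the first assertion, the naturality of $e_+$ and $\bar{i}$ with respect to the inclusion $\Sigma^a_m\hookrightarrow \Sigma^b_m$ means that these chain maps descend to the quotient complexes defining the pair, and the chain homotopy $K$ of Lemma \ref{lem-rest-isom-bar} likewise descends (since $K$ only modifies the parameter space, not the image in $X$). Hence $(e_+)_*\colon \bar{H}^{\dr}_*(\Sigma^b_m,\Sigma^a_m)\to H^{\dr}_*(\Sigma^b_m,\Sigma^a_m)$ is an isomorphism, and when $\mathcal{L}_m(K)\cap [a,b]=\varnothing$ the target vanishes by Proposition \ref{prop-hmgy-grp}.

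For the second assertion, consider the commutative diagram
\[
\begin{tikzcd}
\bar{H}^{\dr}_*(S_\epsilon) \ar[r,"(e_+)_*"] \ar[d,"(i_{\epsilon,C\epsilon})_*"'] & H^{\dr}_*(S_\epsilon) \ar[r,"(\ev_0)_*"] \ar[d,"(i_{\epsilon,C\epsilon})_*"'] & H^{\dr}_*(N_\epsilon) \\
\bar{H}^{\dr}_*(S_{C\epsilon}) \ar[r,"(e_+)_*"'] & H^{\dr}_*(S_{C\epsilon}) &
\end{tikzcd}
\]
whose squares commute by naturality of $e_+$. Given $x\in \bar{H}^{\dr}_*(S_\epsilon)$ with $(\ev_0)_*(e_+)_*(x)=0$, Proposition \ref{prop-exactness} applied to $(e_+)_*(x)\in H^{\dr}_*(S_\epsilon)$ yields $(i_{\epsilon,C\epsilon})_*(e_+)_*(x)=0$, hence $(e_+)_*(i_{\epsilon,C\epsilon})_*(x)=0$, and finally $(i_{\epsilon,C\epsilon})_*(x)=0$ since the bottom $(e_+)_*$ is an isomorphism.

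The only nontrivial verification is the naturality of $e_+$ and the chain homotopy $K$ with respect to the relevant smooth maps; these are unwindings of the explicit formulas \eqref{bar-i} and \eqref{e-pm}, and the chain homotopy $K$ constructed in the proof of Lemma \ref{lem-rest-isom-bar} depends only on the parameter space $U$ and not on the target, so this naturality is automatic. There is no real obstacle beyond bookkeeping.
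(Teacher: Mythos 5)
Your proof is correct and follows essentially the same route the paper indicates: the paper dispatches both claims in one line by noting that $(e_+)_*$ is an isomorphism (Lemma \ref{lem-rest-isom-bar}) and citing Propositions \ref{prop-hmgy-grp} and \ref{prop-exactness}, and you have simply unwound that remark into the naturality checks and the commutative diagram that make the reduction rigorous.
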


\subsubsection{Operations on $[-1,1]$ and $[-1,1]^2$-modeled de Rham chains}

In the rest of this section, let us define operators corresponding to $f_{k,\xi}$. We rather refer to the explicit description (\ref{explicit-rep}) of $f_{k,\xi}(x)$ than its original definition using fiber products of chains.
Let $\epsilon\in (0,\epsilon_0/(5C_0)]$ and $\rho_{\epsilon}\colon A_{\epsilon}\to [0,1]$ be the $C^{\infty}$ function we have chosen in the beginning of Section \ref{subsec-operation}

For $k=1,\dots ,m$ and $\bar{\xi} \in \bar{C}^{\dr}_q(S_{\epsilon})$, we define a linear map
\begin{align*}
\bar{f}_{k,\bar{\xi}}\colon \bar{C}^{\dr}_*(\Sigma^a_m) \to \bar{C}^{\dr}_{*+1+q-n}(\Sigma^{a+\epsilon}_{m+1})
\end{align*}
as follows: Let
\begin{align*}
x=[U,\varphi,(\tau_+,\tau_-),\omega] \in \bar{C}^{\dr}_p(\Sigma^a_m),\ 
\bar{\xi}=[V,\psi,(\sigma_+,\sigma_-),\eta] \in \bar{C}^{\dr}_q(S_{\epsilon}),
\end{align*}
and denote
\begin{align*}
\varphi(u) &=(\varphi_{\R} (u),\varphi_{\Sigma}(u))= (\varphi_{\R}(u), (\gamma^u_l\colon [0,T^u_l]\to Q)_{l=1,\dots ,m}) \in \R\times \Sigma^a_m, \\
\psi(v)&=(\psi_{\R}(v),\psi_S(v))= (\psi_{\R} (v), (\sigma^v_i)_{i=1,2}) \in \R\times S_{\epsilon},
\end{align*}
for every $u\in U$ and $v\in V$.
Then we define $\bar{f}_{k,\bar{\xi}}(x) \coloneqq (-1)^s[W_k,\Phi_k,(\tilde{\tau}_+,\tilde{\tau}_-), \zeta_k]$, where
\begin{align}\label{explicit-rep-bar}
\begin{split}
W_k &\coloneqq \{(u,\tau,v)\in U\times \R\times V \mid 2\epsilon <\tau < T^u_k-2\epsilon,\ (\varphi_{\R} (u),\gamma^u_k(\tau)) = (\psi_{\R}(u),\sigma^v_1(0)) \} , \\
\Phi_k&\colon W_k\to \R\times \Sigma^{a+\epsilon}_{m+1}\colon (u,\tau,v)\mapsto (\varphi_{\R}(u), \con_k(\varphi_{\Sigma}(u),(T^u_k,\tau), \psi_S(v)) ) , \\
\zeta_k&\in \Omega^{*}_c(W_k)\colon (\zeta_k)_{(u,\tau,v)}\coloneqq \rho_{\epsilon}(T^u_k,\tau)\cdot (\omega_u \times \eta_v ) , \\
s&\coloneqq (p+1-n)|\eta| +n +1 ,
\end{split}
\end{align}
and
\begin{align*}
\tilde{\tau}_+ (u,\tau,v) \coloneqq (r,(u_+,\tau,v_+)) \in  \R\times (W_k)_{\{r=1\}}
\end{align*}
for $(u,\tau,v)= (\tau_+^{-1}(r,u_+),\tau, \sigma^{-1}_+(r,v_+)) \in (W_k)_{\{r\geq 1\}}$, and 
\[\tilde{\tau}_- (u,\tau,v) \coloneqq (r,(u_-,\tau,v_-)) \in  \R\times (W_k)_{\{r=-1\}} \]
for $(u,\tau,v)= (\tau_-^{-1}(r,u_-),\tau, \sigma^{-1}_-(r,v_-)) \in (W_k)_{\{r\leq-1\}}$.
Here, $W_k$ is oriented as a fiber product over $\R\times Q$ of the map
\[ \{(u,\tau) \in U\times \R\mid 2\epsilon<\tau<T^u_k-2\epsilon\} \to \R\times Q\colon (u,\tau)\to (\varphi_{\R}(u), \gamma^u_k(\tau)) \]
and the submersion $(\id_{\R}\times \ev_0)\circ \psi\colon V\to \R\times Q$.
It can be checked that
$\bar{i}$ and $e_+,e_-$ intertwine this operator and $f_{k,\xi}$ for $\xi\in C^{\dr}_q(S_{\epsilon})$. Namely,
\[ \bar{i}\circ f_{k,\xi} =f_{k,\bar{i}\xi}\circ \bar{i},\  e_+ \circ \bar{f}_{k,\bar{\xi}} = f_{k,e_+\bar{\xi}} \circ e_+ ,\ e_- \circ \bar{f}_{k,\bar{\xi}} = f_{k,e_-\bar{\xi}} \circ e_-. \]

Similar results as $f_{k,\xi}$ are the following:  $\bar{f}_{k,\bar{\xi}}$ induces a linear map
\[ \bar{f}_{k,\bar{\xi}} \colon \bar{C}^{\dr}_*(\Sigma^a_m,\Sigma^0_m) \to \bar{C}^{\dr}_{*+1+q-n}(\Sigma^{a+\epsilon}_{m+1},\Sigma^0_{m+1}) ,\]
which is independent on $\rho_{\epsilon}$. The next equations are variants of  (\ref{delta-small-quotient}) and (\ref{delta-commute-quotient}), and they follow from similar computations as Lemma \ref{lem-delta-small} and Lemma \ref{lem-delta-commute}, so we omit the proof.
\begin{prop}\label{prop-f-bar-equations}
For $k'\geq k$, the following equations hold:
\begin{align*}
&\partial \circ  \bar{f}_{k,\bar{\xi}} -  \bar{f}_{k,\bar{\xi}}\circ \partial =(-1)^{p+1-n} \bar{f}_{k,\partial \bar{\xi}} \colon \bar{C}^{\dr}_p(\Sigma^a_m,\Sigma^0_m) \to \bar{C}^{\dr}_{p+1+q-n}(\Sigma^{a+\epsilon}_{m+1},\Sigma^0_{m+1}), \\
&\bar{f}_{k'+1,\bar{\xi}} \circ \bar{f}_{k,\bar{\xi}}  + (-1)^{q-n} \bar{f}_{k,\bar{\xi}} \circ \bar{f}_{k',\bar{\xi}} =0\colon  \bar{C}^{\dr}_p(\Sigma^a_m,\Sigma^0_m) \to \bar{C}^{\dr}_{p+2+2q-2n}(\Sigma^{a+2\epsilon}_{m+2},\Sigma^0_{m+2}).
\end{align*}
\end{prop}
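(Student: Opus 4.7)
My plan is to imitate the proofs of Lemma \ref{lem-delta-small} and Lemma \ref{lem-delta-commute} verbatim, with the only new ingredient being the verification that the additional $\R$-parameter (and the attached cylindrical data $(\tau_+,\tau_-)$) behaves naturally under all the constructions. The key point is that none of the ingredients in the definition of $\bar{f}_{k,\bar{\xi}}$ --- the cutoff $\rho_\epsilon$, the reparameterization $\mu$, and the concatenation map $\con_k$ --- depend on the $\R$-coordinate, so everything is $\R$-equivariant and the cylindrical data $(\tilde\tau_+,\tilde\tau_-)$ on the resulting fiber product is determined tautologically from $(\tau_+,\tau_-)$ and $(\sigma_+,\sigma_-)$.

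For the first equation, I would start from the explicit representative (\ref{explicit-rep-bar}) and apply $\partial$. Using $d(\rho_\epsilon\cdot(\omega\times\eta)) = \frac{\partial\rho_\epsilon}{\partial\tau}\, d\tau\wedge(\omega\times\eta) + \rho_\epsilon\cdot d(\omega\times\eta)$ and unpacking via the Leibniz rule, one obtains three terms: the term with $d\omega$ gives $\bar{f}_{k,\bar\xi}\circ\partial(x)$, the term with $d\eta$ gives $(-1)^{p+1-n}\bar{f}_{k,\partial\bar\xi}(x)$ (the sign recording that $d\tau$ has to be moved past $\omega$), and the term with $d\rho_\epsilon$ is supported in $\{\tau<\tfrac{11}{3}\epsilon\}\cup\{T^u_k-\tfrac{11}{3}\epsilon<\tau\}$. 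By Lemma \ref{lem-short-length}(ii), the image of this last piece under $\Phi_k$ lies in $\R\times\Sigma^0_{m+1}$, and so it vanishes in $\bar{C}^{\dr}_*(\Sigma^{a+\epsilon}_{m+1},\Sigma^0_{m+1})$.

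For the second equation, I would separate into the cases $k<k'$ and $k=k'$, again as in Lemma \ref{lem-delta-commute}. Writing out the double iteration $\bar{f}_{k'+1,\bar\xi}\circ\bar{f}_{k,\bar\xi}(x)$ and $\bar{f}_{k,\bar\xi}\circ\bar{f}_{k',\bar\xi}(x)$ as explicit triples, one gets fiber products $W_{k,k'}$ and $W'_{k,k'}$ over $\R\times Q$ (rather than just over $Q$); the cylindrical data are built in the obvious iterated way from $(\tau_+,\tau_-),(\sigma_+,\sigma_-)$. The same swap diffeomorphism $h\colon(u,\tau,v,\tau',v')\mapsto(u,\tau',v',\tau,v)$ when $k<k'$, or its shifted version $(u,\tau,v,\tau',v')\mapsto(u,\tau'+\tau-2\epsilon^v_2,v',\tau,v)$ when $k=k'$, intertwines $\Phi'_{k,k'}\circ h=\Phi_{k,k'}$ (using (\ref{change-of-order})) and intertwines the cylindrical data because it acts as the identity on the $\R$-factor. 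The orientation sign $(-1)^{(1+\dim V-n)^2}$ from $h$ combined with the sign $(-1)^{|\eta|^2}$ from $h_*(\omega\times\eta\times\eta)=\omega\times\eta\times\eta$ up to permutation gives total $(-1)^{q-n+1}$. In the equal case, the discrepancy $h_*\zeta_{k,k}-\zeta'_{k,k}$ is supported near $\{\tau'<4\epsilon\}\cup\{T^u_k-4\epsilon<\tau'\}\cup\{\tau<4\epsilon\}\cup\{\tilde{T}^1_k-4\epsilon<\tau\}$, and another application of Lemma \ref{lem-short-length} (cases (i) and (ii)) shows this region maps into $\R\times\Sigma^0_{m+2}$, hence vanishes modulo $\Sigma^0_{m+2}$.

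The main obstacle is not conceptual but rather clerical: one must verify that the cylindrical diffeomorphisms $(\tilde\tau_+,\tilde\tau_-)$ on $W_k$, and then on the iterated $W_{k,k'}$, are legitimately of the form required in the definition of $\bar{P}$, that $\zeta_k$ and $\zeta_{k,k'}$ satisfy the cylindrical asymptotic condition in $\Omega^*_c(W_k,\Phi_k,(\tilde\tau_+,\tilde\tau_-))$, and that the swap $h$ preserves both pieces of this data. Each of these facts is immediate from $\R$-equivariance of all the maps involved (they are built from constructions that only see the path and cutoff data), so these verifications, together with the sign bookkeeping already present in the proofs of Lemmas \ref{lem-delta-small} and \ref{lem-delta-commute}, complete the proof without any new analytic input.
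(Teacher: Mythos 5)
Your overall strategy is exactly what the paper intends: replay the proofs of Lemmas \ref{lem-delta-small} and \ref{lem-delta-commute}, checking at each step that the new $\R$-coordinate and the cylindrical data $(\tau_\pm,\sigma_\pm)$ propagate tautologically, and that the various cutoff/deletion arguments via Lemma \ref{lem-short-length} still apply because their geometric content is untouched by the $\R$-factor. The paper simply omits the proof after asserting this, so there is no conflict of approach.

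One bookkeeping slip worth flagging in the second equation: you quote the swap-diffeomorphism orientation sign $(-1)^{(1+\dim V-n)^2}$ directly from Lemma \ref{lem-delta-commute}, but that computation takes place over the base $Q$ of dimension $n$. In the $[-1,1]$-modeled setting the fiber products defining $W_k$ and $W_{k,k'}$ are taken over $\R\times Q$ (dimension $n+1$), so each ``fiber'' block being permuted has dimension $1+\dim V-(n+1)=\dim V-n$, and the correct orientation sign is $(-1)^{(\dim V-n)^2}$, which differs from your quoted sign by a parity. At the same time, for $\bar\xi=[V,\psi,(\sigma_+,\sigma_-),\eta]\in\bar C^{\dr}_q(S_\epsilon)$ one has $q=\dim V-1-|\eta|$ rather than $\dim V-|\eta|$, also a shift by one parity. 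These two shifts cancel, which is why the total you assert, $(-1)^{q-n+1}$, is still correct; but if one tried to reproduce your intermediate step literally (using both $(-1)^{(1+\dim V-n)^2}$ and the ordinary degree formula), the arithmetic would come out off by a sign. If you write this up in detail, replace the quoted orientation sign by $(-1)^{(\dim V-n)^2}$ and use the $[-1,1]$-modeled degree convention consistently. Apart from that, the verification that $\rho_\epsilon(T^u_k,\tau)\cdot(\omega_u\times\eta_v)$ lies in $\Omega^*_c(W_k,\Phi_k,(\tilde\tau_+,\tilde\tau_-))$, and that the swap $h$ intertwines the cylindrical data because it acts trivially on the $\R$-coordinate, is exactly the ``clerical'' content you correctly identify as the only new ingredient.
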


Next, for $k=1,\dots ,m$ and $\hat{\xi}\in \hat{C}^{\dr}_q(S_{\epsilon})$, we define a linear map
\begin{align*}
\hat{f}_{k,\hat{\xi}}\colon \bar{C}^{\dr}_*(\Sigma^a_m) \to \bar{C}^{\dr}_{*+1+q-n}(\Sigma^{a+\epsilon}_{m+1})
\end{align*}
as follows: Let
\begin{align*}
x=[U,\varphi,(\tau^1_+,\tau^1_-), (\tau^2_+.\tau^2_-),\omega] \in \hat{C}^{\dr}_p(\Sigma^a_m),\\
\hat{\xi}=[V,\psi,(\sigma^1_+,\sigma^1_-),(\sigma^2_+,\sigma^2_-),\eta] \in \hat{C}^{\dr}_q(S_{\epsilon}),
\end{align*}
and denote
\begin{align*}
\varphi(u) &=(\varphi_{\R^2} (u),\varphi_{\Sigma}(u))= ((\varphi_{\R}^1(u),\varphi_{\R}^2(u)), (\gamma^u_l\colon [0,T^u_l]\to Q)_{l=1,\dots ,m}) \in \R^2 \times \Sigma^a_m, \\
\psi(v)&=(\psi_{\R^2}(v),\psi_S(v))= ((\psi_{\R}^1(v),\psi_{\R}^2(v)), (\sigma^v_i)_{i=1,2}) \in \R^2 \times S_{\epsilon},
\end{align*}
for every $u\in U$ and $v\in V$.
Then we define $\hat{f}_{k,\hat{\xi}}(x) \coloneqq (-1)^s[W_k,\Phi_k,(\tilde{\tau}^1_+,\tilde{\tau}^1_-), (\tilde{\tau}^2_+,\tilde{\tau}^2_-),  \zeta_k]$, where
\begin{align*}
W_k &\coloneqq \{(u,\tau,v)\in U\times \R\times V \mid 2\epsilon <\tau < T^u_k-2\epsilon,\ (\varphi_{\R^2} (u),\gamma^u_k(\tau)) = (\psi_{\R^2}(u),\sigma^v_1(0)) \} , \\
\Phi_k&\colon W_k\to \R^2\times \Sigma^{a+\epsilon}_{m+1}\colon (u,\tau,v)\mapsto (\varphi_{\R^2}(u), \con_k(\varphi_{\Sigma}(u),(T^u_k,\tau), \psi_S(v)) ) , \\
\zeta_k&\in \Omega^{*}_c(W_k)\colon (\zeta_k)_{(u,\tau,v)}\coloneqq \rho_{\epsilon}(T^u_k,\tau)\cdot (\omega_u \times \eta_v ) , \\
s&\coloneqq (p+1-n)|\eta| ,
\end{align*}
and for $j=1,2$,
\begin{align*}
\tilde{\tau}^j_+ (u,\tau,v) \coloneqq (r_j,(u^j_+,\tau,v^j_+)) \in  \R_{\geq 1}\times (W_k)_{\{r_j=1\}}
\end{align*}
for $(u,\tau,v)= ((\tau^j_+)^{-1}(r_j,u^j_+),\tau, (\sigma^j)^{-1}_+(r_j,v^j_+)) \in (W_k)_{\{r_j\geq 1\}}$ and 
\begin{align*}
\tilde{\tau}^j_- (u,\tau,v) \coloneqq (r_j,(u^j_-,\tau,v^j_-)) \in  \R_{\leq -1}\times (W_k)_{\{r_j=-1\}}
\end{align*}
for $(u,\tau,v)= ((\tau^j_-)^{-1}(r_j,u^j_-),\tau, (\sigma^j)^{-1}_-(r_j,v^j_-)) \in (W_k)_{\{r_j\leq -1\}}$.
Here, $W_k$ is oriented as a fiber product over $\R^2\times Q$ of the map
\[ \{(u,\tau) \in U\times \R\mid 2\epsilon<\tau<T^u_k-2\epsilon\} \to \R^2\times Q\colon (u,\tau)\to (\varphi_{\R^2}(u), \gamma^u_k(\tau)) \]
and the submersion $(\id_{\R^2}\times \ev_0)\circ \psi\colon V\to \R^2\times Q$.
It can be checked that
$\hat{i}^j$ and $e^j_+,e^j_-$ ($j=1,2$) intertwine this operator and $\bar{f}_{k,\bar{\xi}}$ for $\bar{\xi}\in \bar{C}^{\dr}_q(S_{\epsilon})$.

Similar results as $f_{k,\xi}$ are the following:  $\hat{f}_{k,\hat{\xi}}$ induces a linear map
\[ \hat{f}_{k,\hat{\xi}} \colon \hat{C}^{\dr}_*(\Sigma^a_m,\Sigma^0_m) \to \hat{C}^{\dr}_{*+1+q-n}(\Sigma^{a+\epsilon}_{m+1},\Sigma^0_{m+1}) ,\]
which is independent on $\rho_{\epsilon}$. The next equations are variants of  (\ref{delta-small-quotient}) and (\ref{delta-commute-quotient}), and they follow from similar computations as Lemma \ref{lem-delta-small} and Lemma \ref{lem-delta-commute}, so we omit the proof.
\begin{prop}\label{prop-f-hat-equations}
For $k'\geq k$, the following equations hold:
\begin{align*}
&\partial \circ  \hat{f}_{k,\hat{\xi}} -  \hat{f}_{k,\hat{\xi}}\circ \partial =(-1)^{p+1-n} \hat{f}_{k,\partial \hat{\xi}} \colon \hat{C}^{\dr}_p(\Sigma^a_m,\Sigma^0_m) \to \hat{C}^{\dr}_{p+1+q-n}(\Sigma^{a+\epsilon}_{m+1},\Sigma^0_{m+1}), \\
&\hat{f}_{k'+1,\hat{\xi}} \circ \hat{f}_{k,\hat{\xi}}  + (-1)^{q-n} \hat{f}_{k,\hat{\xi}} \circ \hat{f}_{k',\hat{\xi}} =0\colon  \hat{C}^{\dr}_p(\Sigma^a_m,\Sigma^0_m) \to \hat{C}^{\dr}_{p+2+2q-2n}(\Sigma^{a+2\epsilon}_{m+2},\Sigma^0_{m+2}).
\end{align*}
\end{prop}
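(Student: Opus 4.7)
The strategy is to transplant the proofs of Lemma \ref{lem-delta-small} and Lemma \ref{lem-delta-commute} into the $[-1,1]^2$-modeled setting. The key observation that makes this possible is that the cutoff $\rho_\epsilon(T^u_k,\tau)$ and the concatenation map $\con_k$ both act trivially on the $\R^2$-factor of the modeling, and the diffeomorphisms $(\tilde{\tau}^1_\pm,\tilde{\tau}^2_\pm)$ attached to $\hat{f}_{k,\hat{\xi}}(x)$ are defined by restricting the fiber-product modeling to the collar regions $\{r_j\geq 1\}$ and $\{r_j\leq -1\}$. Consequently the entire computation in Lemmas \ref{lem-delta-small} and \ref{lem-delta-commute} propagates through the extra structure formally, and the only new task is a bookkeeping check that the modeling data is preserved by the operations involved.

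For the first equation, I would take $x=[U,\varphi,(\tau^1_+,\tau^1_-),(\tau^2_+,\tau^2_-),\omega]$ and $\hat{\xi}=[V,\psi,(\sigma^1_+,\sigma^1_-),(\sigma^2_+,\sigma^2_-),\eta]$ and compute $\partial \circ \hat{f}_{k,\hat{\xi}}(x) - \hat{f}_{k,\hat{\xi}}\circ \partial(x) - (-1)^{p+1-n}\hat{f}_{k,\partial\hat{\xi}}(x)$ directly from the explicit representative $[W_k,\Phi_k,(\tilde{\tau}^1_\pm),(\tilde{\tau}^2_\pm),\zeta_k]$. Applying Leibniz to $d\zeta_k$ and cancelling the three pieces coming from $d\omega$, $d\eta$, and the $\omega\times d\tau\times \eta$ factor respectively, leaves only the term with $\partial\rho_\epsilon/\partial\tau$, supported on $\{\tau<4\epsilon\}\cup\{T^u_k-4\epsilon<\tau\}$. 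By Lemma \ref{lem-short-length}(ii) the image of $\Phi_k$ on this region lies in $\R^2\times \Sigma^0_{m+1}$, so the resulting chain lies in $\hat{C}^{\dr}_{p+q-n}(\Sigma^0_{m+1})$ and vanishes modulo $\Sigma^0_{m+1}$.

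For the second equation, I would separately handle $k<k'$ and $k=k'$ as in Lemma \ref{lem-delta-commute}. When $k<k'$, both $\hat{f}_{k'+1,\hat{\xi}}\circ\hat{f}_{k,\hat{\xi}}(x)$ and $\hat{f}_{k,\hat{\xi}}\circ\hat{f}_{k',\hat{\xi}}(x)$ admit explicit representatives on fiber products $W_{k,k'}$ and $W'_{k,k'}$ over $\R^2\times Q\times Q$, and the diffeomorphism $(u,\tau,v,\tau',v')\mapsto (u,\tau',v',\tau,v)$ intertwines them by (\ref{change-of-order}); its effect on the orientation is $(-1)^{(1+\dim V-n)^2}$ and its effect on the product form is $(-1)^{|\eta|^2}$, yielding the graded commutation relation with sign $(-1)^{q-n+1}$. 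When $k=k'$, the analogous diffeomorphism $h\colon\bar{W}_{k,k}\to W'_{k,k}$ intertwines the concatenations modulo the error supported on the region where $\rho(T^u_k,\tau,\tilde{T}^2_k,\tau'-\tau+2\epsilon^v_2)\neq \rho(T^u_k,\tau',\tilde{T}^1_k,\tau)$; by the same argument as in Lemma \ref{lem-delta-commute}, this region is contained in $W^1_{k,k}\cup W^2_{k,k}$, on which two applications of Lemma \ref{lem-short-length} show that $\Phi'_{k,k}$ lands in $\R^2\times\Sigma^0_{m+2}$, so the error vanishes in the quotient complex.

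The main non-routine point, and hence the step that deserves care, is verifying that the modeling diffeomorphisms are transported consistently by $h$: the data $(\tilde{\tau}^1_\pm,\tilde{\tau}^2_\pm)$ on $W_{k,k'}$ and $W'_{k,k'}$ (resp.\ $\bar{W}_{k,k}$ and $W'_{k,k}$) are inherited from those on $U$ and $V$ via the fiber product over $\R^2$, and since $h$ only permutes the $(\tau,v)$ and $(\tau',v')$ pairs while leaving $u$ and the modeling coordinate in $\R^2$ fixed, it pulls back the modeling on the target to the one on the source. Once this identification of modeling data is recorded, all sign and index computations are identical to those in Lemmas \ref{lem-delta-small} and \ref{lem-delta-commute}, so the proposition follows.
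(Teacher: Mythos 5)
Your proposal is correct and follows exactly the route the paper itself indicates: the paper explicitly omits this proof, stating that the equations "follow from similar computations as Lemma \ref{lem-delta-small} and Lemma \ref{lem-delta-commute}," and your argument supplies precisely that adaptation, including the one genuinely new bookkeeping point — that the diffeomorphism $h$ fixes $u$ and hence the $\R^2$-modeling coordinate, so it transports the data $(\tilde{\tau}^1_{\pm},\tilde{\tau}^2_{\pm})$ correctly. Note also that the sign convention for $\hat{f}_{k,\hat{\xi}}$ (namely $s=(p+1-n)|\eta|$) agrees with that of the unmodeled $f_{k,\xi}$, which is why, as you observe, the sign computations carry over verbatim rather than acquiring the extra $(-1)^{n+1}$ that appears in the $[-1,1]$-modeled operator $\bar{f}_{k,\bar{\xi}}$.
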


\section{Construction of $H^{\str}_*(Q,K)$}\label{sec-string-hmgy}

\subsection{Definition of chain complex}\label{subsec-def-of-chain-cpx}

For $a\in \R_{> 0}$ and $\epsilon\in (0, \epsilon_0/(5C_0)]$, we define a graded $\R$-vector space
\[C^{<a}_*(\epsilon) \coloneqq \bigoplus_{m=0}^{\infty}C^{\dr}_{*-m(d-2)} (\Sigma^{a+m\epsilon}_m,\Sigma^{0}_m). \]
If $m\geq \frac{2a}{\epsilon_0}$, then $a+m\epsilon\leq m(\frac{1}{2}\epsilon_0+\epsilon)\leq m\epsilon_0$. %In such case, as noted in Remark \ref{rem-large-m}, 
In such case, $\Sigma^{a+m\epsilon}_m=\Sigma^0_m$ by Remark \ref{rem-large-m}. Therefore, the components for $m\in \Z_{\geq 0}$ vanishes if $m\geq \frac{2a}{\epsilon_0}$.

For each $m\in \Z_{\geq 0}$, we think of $C^{\dr}_{*-m(d-2)} (\Sigma^{a+m\epsilon}_m,\Sigma^{0}_m)$ as a linear subspace of $C^{<a}_*(\epsilon)$ in a natural way.
For $\delta\in C^{\dr}_{n-d}(S_{\epsilon})$,
we define a degree ($-1$) linear map
\[D_{\delta}\colon C^{<a}_*(\epsilon)\to C^{<a}_{*-1}(\epsilon)\]
so that
for $x\in C^{\dr}_{p-m(d-2)} (\Sigma^{a+m\epsilon}_m,\Sigma^{0}_m)$, 
\[D_{\delta} (x)= \partial x + \sum_{k=1}^m (-1)^{p+1+kd}  f_{k,\delta}(x) \in C_{p-1}^{<a}(\epsilon) .\]
When $m=0$, the RHS is just equal to $\partial x$.
\begin{prop}\label{prop-D-2}
If $\delta\in C^{\dr}_{n-d}(S_{\epsilon})$ satisfies $\partial \delta=0$, then
$D_{\delta}\circ D_{\delta}=0$ holds.
\end{prop}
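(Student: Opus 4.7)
The plan is to expand $D_\delta \circ D_\delta(x)$ for a homogeneous element $x\in C^{\dr}_{p-m(d-2)}(\Sigma^{a+m\epsilon}_m, \Sigma^0_m)$ and collect terms by type. Three kinds of contributions appear: $\partial\circ\partial(x)$, mixed terms of the form $\partial\circ f_{k,\delta}(x)$ together with $f_{k,\delta}\circ\partial(x)$, and double composition terms $f_{l,\delta}\circ f_{k,\delta}(x)$. The $\partial^2$ term is zero for free. For the mixed terms I would apply the relation (\ref{delta-small-quotient}) to the first composition; since $\partial\delta = 0$, that relation reduces to $\partial\circ f_{k,\delta} = f_{k,\delta}\circ \partial$ in the quotient complex, and a direct sign check shows that the two copies $(-1)^{p+kd}$ and $(-1)^{p+1+kd}$ cancel. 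This leaves only the double-composition sum to analyze.

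For the remaining sum, I would write
\[R \;=\; \sum_{k=1}^{m}\sum_{l=1}^{m+1}(-1)^{1+(k+l)d}\,f_{l,\delta}\circ f_{k,\delta}(x),\]
where the overall $-1$ absorbs $(-1)^{2p+1}$. I would split the inner sum according to $l\leq k$ versus $l>k$. In the second range, substitute $l = k'+1$ with $k'\geq k$, and apply the commutation relation (\ref{delta-commute-quotient}) with $q=n-d$, namely
\[f_{k'+1,\delta}\circ f_{k,\delta} + (-1)^{d}\,f_{k,\delta}\circ f_{k',\delta} = 0,\]
to rewrite $f_{k'+1,\delta}\circ f_{k,\delta}$ in terms of $f_{k,\delta}\circ f_{k',\delta}$. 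After relabeling the dummy variables $(k,k')\mapsto(l,k)$, this transformed half of $R$ becomes exactly the negative of the half with $l\leq k$, so $R=0$.

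The only real care needed is bookkeeping of signs, specifically checking that: (a) the sign factor arising from $(-1)^{d}\cdot(-1)^{(k+k'+1)d}$ after applying the commutation relation matches $-(-1)^{(l+k)d}$ after the relabeling; and (b) the degree $p$ appearing in $D_\delta$ is updated consistently (it becomes $p-1$ on $\partial x$ and $p-1$ on $f_{k,\delta}(x)$, with $m$ becoming $m+1$ in the latter case), so that the summation bounds for $l$ in $D_\delta\circ f_{k,\delta}(x)$ run over $\{1,\dots,m+1\}$ and correctly produce both the $l\leq k$ and $l\geq k+1$ ranges.

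The main obstacle is thus purely combinatorial: verifying the sign cancellation carefully. Once the convention is chosen so that $(-1)^{p+1+kd}$ is the coefficient of $f_{k,\delta}(x)$ in $D_\delta$, both the $\partial/f_{k,\delta}$ cancellation and the $(k,l)\leftrightarrow(l,k{+}1)$ pairing fall out. All analytical content is packaged inside (\ref{delta-small-quotient}), (\ref{delta-commute-quotient}), and the hypothesis $\partial\delta=0$; no further properties of $S_\epsilon$ or $\Sigma^a_m$ are needed.
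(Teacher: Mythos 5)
Your proposal matches the paper's proof essentially line for line: the same three-part expansion of $D_\delta\circ D_\delta$, the same use of (\ref{delta-small-quotient}) with $\partial\delta=0$ to kill the mixed $\partial$/$f_{k,\delta}$ terms, and the same reindexing $l=k'+1$ together with (\ref{delta-commute-quotient}) (with $(-1)^{q-n}=(-1)^d$) to pair off the double-composition terms. The sign bookkeeping you outline is exactly the paper's computation, so the argument is complete and correct.
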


\begin{proof}Take an arbitrary $x\in C^{\dr}_{p-m(d-2)} (\Sigma^{a+m\epsilon}_m,\Sigma^{0}_m)$.
Since $\partial\circ \partial =0$,
\begin{align*}
D_{\delta}\circ D_{\delta}(x)=&\sum_{k=1}^m \left( (-1)^{p+1+kd}\partial \circ f_{k,\delta}(x) + (-1)^{p+kd}f_{k,\delta}\circ \partial (x) \right) \\
& +\sum_{k'=1}^{m+1}\sum_{k=1}^m (-1)^{(k+k')d-1}f_{k',\delta}\circ f_{k,\delta}(x) .
\end{align*}
Applying (\ref{delta-small-quotient}) for $q=n-d$ and $\xi=\delta$ for which $\partial \delta=0$ holds, we can see that the first summand is equal to 0.
For the second summand, we apply (\ref{delta-commute-quotient}) for $q=n-d$ and $\xi=\delta$. Then
\begin{align*}
&\sum_{k'=1}^{m+1}\sum_{k=1}^m  (-1)^{(k+k')d-1} f_{k',\delta}\circ f_{k,\delta}(x) \\
=&\sum_{1\leq k\leq k' \leq m} \left( (-1)^{(k+k'+1)d-1}f_{k'+1,\delta}\circ f_{k,\delta}(x) +(-1)^{(k+k')d-1} f_{k,\delta}\circ f_{k',\delta}(x) \right) \\
=&0.
\end{align*}
%Note that $f_{k+1,\delta}\circ f_{k,\delta}(x) + (-1)^d f_{k,\delta}\circ f_{k,\delta}(x)\in {C^{\dr}_{p+2-d} (\Sigma^0_m)}\subset {C^{\dr}_{p+2-d} (\Sigma^{a+m\epsilon}_m)}$.
This shows that $D_{\delta}\circ D_{\delta}(x)=0$.
\end{proof}

In summary, for $a\in \R_{>0}$, $\epsilon\in (0, \epsilon_0/(5C_0)]$ and $\delta \in C^{\dr}_{n-d}(S_{\epsilon})$ with $\partial \delta=0$, a chain complex $(C^{<a}_*(\epsilon),D_{\delta})$ is defined. Let $H_*^{<a}(\epsilon_,\delta)$
denote its homology.

%\subsection{Some computations}\label{subsec-computation}

The chain complex $(C^{<a}_*(\epsilon),D_{\delta})$ is filtered by subcomplexes $\{ \mathcal{F}^{<a}_{\epsilon,p} \}_{p\in \Z }$ defined by
\begin{align}\label{filtration}
\mathcal{F}^{<a}_{\epsilon,p}\coloneqq \bigoplus_{m\geq -p}^{\infty} C^{\dr}_{*-m(d-2)} (\Sigma^{a+m\epsilon}_{m},\Sigma^{0}_{m}) . 
\end{align}
%for $m\geq 0$ and $(\mathcal{F}^{<a}_{\epsilon})_m\coloneqq (\mathcal{F}^{<a}_{\epsilon})_0$ for $m<0$. 
Let $E^{<a}_{(\epsilon,\delta)} \coloneqq (\{(E^{<a}_{(\epsilon,\delta)})^r_{p,q}\}, \{(d^{<a}_{(\epsilon,\delta)})^r_{p,q}\})$ be the spectral sequence determined by $ \{ \mathcal{F}^{<a}_{\epsilon,p}\}_{p\in \Z}$.
Note that $\mathcal{F}^{<a}_{\epsilon, p}=0$ for $p\leq -\frac{2a}{\epsilon_0}$ and $\mathcal{F}^{<a}_{\epsilon,p}=\mathcal{F}^{<a}_{\epsilon,0}$ for $p\geq 0$, and thus this spectral sequence converges to $H^{<a}_*(\epsilon,\delta)$ in the sense of \cite[Bounded Convergence 5.2.5]{weibel}.
The first page is given by
\begin{align*}
 (E^{<a}_{(\epsilon,\delta)})^1_{-m,q}= \begin{cases} H^{\dr}_{(q-m)-m(d-2)}(\Sigma^{a+m\epsilon}_{m}, \Sigma^{0}_{m})= H^{\dr}_{q-m(d-1)}(\Sigma^{a+m\epsilon}_{m}, \Sigma^{0}_{m}) & \text{ if }m \geq 0,\\
 0 &\text{ if }m<0.\end{cases}
%(d^{<a}_{(\epsilon,\delta)})^1_{p,q} &=\sum_{k=1}^m (-1)^{q+m+1+kd} (f_{k,\delta})_* \colon E^1_{m,q}\to E^1_{m+1,q}.
\end{align*}

Let us state an abstract lemma about morphisms  in the category of spectral sequences. This result, which is a refinement of \cite[Comparison Theorem 5.2.12]{weibel}, will be repeatedly used in the rest of this paper.
\begin{lem}\label{lem-spectral}
Let $E=(\{ E^r_{p,q}\} , \{d^r_{p,q}\})$ and $E'=(\{E'^r_{p,q}\}, \{d'^r_{p,q}\})$ be bounded spectral sequences which converge to $H_*$ and $H'_*$ respectively in the sense of \cite[Bounded Convergence 5.2.5]{weibel}.
Let $f= \{f^r_{p,q}\} $ be a morphism from $E$ to $E'$ which is compatible with $\{h_n \colon H_n\to H'_n\}_{n\in \Z}$.
Then, the following assertion hold:
\begin{itemize}
\item Suppose that for some $r_0\geq 1$ and $n_0\in \Z$, $f^{r_0}_{p,q}$ is an isomorphism if $p+q< n_0$, and a surjection if $p+q=n_0$. Then,
$h_n$ is an isomorphism if $n<n_0$ and a surjection if $n=n_0$. In particular, if  $f^{r_0}_{p,q}$ is an isomorphism for every $p,q\in \Z$ for some $r_0\geq 1$, then $h_n$ is an isomorphism for every $n\in \Z$.
\end{itemize}
\end{lem}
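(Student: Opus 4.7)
The plan is to run the standard comparison argument for spectral sequences (cf. \cite[Theorem 5.2.12]{weibel}), but carefully tracking at which total degrees $f^r$ is an isomorphism versus merely a surjection. The proof has three stages: a page-by-page induction on $r$, a passage to the $E^\infty$-page using bounded convergence, and finally a passage from $E^\infty$ to the abutment via the filtration.

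For the inductive step, recall that $d^r$ has bidegree $(-r,r-1)$ and therefore decreases total degree $p+q$ by one. Consequently $E^{r+1}_{p,q}$ is the middle homology of a three-term complex
\[
E^r_{p+r,\,q-r+1} \xrightarrow{d^r} E^r_{p,q} \xrightarrow{d^r} E^r_{p-r,\,q+r-1},
\]
whose terms sit in total degrees $n+1$, $n$, and $n-1$, where $n=p+q$. An elementary diagram chase on a map between two such three-term complexes shows: (a) if the map is surjective on the left term, an isomorphism in the middle and injective on the right, then the induced map on homology is an isomorphism; (b) if it is surjective in the middle and injective on the right, then the induced map on homology is surjective. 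Under the inductive hypothesis that $f^r_{p,q}$ is iso for $p+q<n_0$ and surj for $p+q=n_0$, these two implications together yield the same statement for $f^{r+1}_{p,q}$: iso for $p+q<n_0$ (apply (a), using iso at degrees $n-1, n$ and at least surj at $n+1$) and surj for $p+q=n_0$ (apply (b), using surj at $n_0$ and iso at $n_0-1$).

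Since both spectral sequences are bounded in the sense of \cite[5.2.5]{weibel}, for each fixed $(p,q)$ we have $E^\infty_{p,q}=E^r_{p,q}$ for all sufficiently large $r$, so the same degree condition passes to $f^\infty$. To descend from $E^\infty$ to $H_n$, I would use the short exact sequences
\[
0 \to F_{p-1}H_n \to F_p H_n \to E^\infty_{p,\,n-p} \to 0
\]
and their primed counterparts, together with the compatibility of $h_n$ with the filtrations. Because the filtrations are bounded below, an induction on $p$ (base case $F_{p_{\min}-1}=0$) combined with the five-lemma shows that $F_p H_n \to F_p H'_n$ is an isomorphism for every $p$ when $n<n_0$ and a surjection for every $p$ when $n=n_0$; the case $p=p_{\max}$ gives the conclusion for $h_n$. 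The final statement of the lemma is the degenerate case in which the hypothesis holds in all bidegrees with no restriction on $p+q$, recovering the classical comparison theorem.

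The only subtle point, and the main bookkeeping obstacle, is to phrase the inductive hypothesis in exactly the form ``iso for $p+q<n_0$, surj for $p+q=n_0$'': the iso at total degree $n_0-1$ is needed not only to deduce iso at $n_0-1$ on the next page but also to supply the injectivity on the right-hand term of the three-term complex that is required to propagate the surjectivity at total degree $n_0$. Everything else is entirely routine.
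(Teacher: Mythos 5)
Your proposal is correct and follows essentially the same route as the paper: a page-by-page induction on $r$ propagating the ``iso below $n_0$, surjection at $n_0$'' pattern, passage to $E^\infty$ via boundedness, and then the standard filtration/five-lemma descent to the abutment as in Weibel's Comparison Theorem 5.2.12. You supply more of the diagram-chase details (the two three-term-complex implications) than the paper does, but the structure and the key observations are identical.
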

\begin{proof}
Suppose that $\{f^{r}_{p,q}\}$ satisfy the condition of the assertion for $r_0\geq 1$ and $n_0\in \Z$. Note that for any $r\geq 1$ and $p,q\in \Z$,
\[ f^{r+1}_{p,q} \colon  E^{r+1}_{p,q}\cong \ker d^r_{p,q}/\Im d^{r}_{p+r, q-r+1}  \to E'^{r+1}_{p,q}\cong \ker d'^r_{p,q}/\Im d'^{r}_{p+r, q-r+1} \]
is induced by $\{ f^r_{p,q}\}_{p,q}$. Therefore, by inductive arguments about $\{f^{r}_{p,q}\}$ on $r=r_0,r_0+1,\dots $, we can prove that $f^{\infty}_{p,q} \colon E^{\infty}_{p,q}\to E'^{\infty}_{p,q}$ is an isomorphism if $p+q< n_0$, and a surjection if $p+q=n_0$. We omit the concluding  argument about $h_n$, since it is parallel to \cite[Comparison Theorem 5.2.12]{weibel}.
\end{proof}
%(This can be directly proved by considering the spectral sequence of the mapping cone of $h$ derived from  $\{\mathcal{F}^m_{*+1} \oplus \mathcal{G}^m_*\}_{m\geq 0}$.)

For $\epsilon,\bar{\epsilon}\in (0, \epsilon_0 /(5C_0)]$ with $\epsilon\leq \bar{\epsilon}$, let $j_{\epsilon,\bar{\epsilon}}\colon \Sigma^{a+m\epsilon}_m \to \Sigma^{a+m\bar{\epsilon}}_m $ ($m\in \Z_{\geq 0}$) be the inclusion maps. These maps induce a linear map
\[(j_{\epsilon,\bar{\epsilon}})_* \colon C^{<a}_*(\epsilon) \to C^{<a}_*(\bar{\epsilon}).\]
Moreover, %recall that $i_{\epsilon,\bar{\epsilon}}$ is the inclusion map from $S_{\epsilon}$ to $S_{\bar{\epsilon}}$.
for any $\delta\in C^{\dr}_{n-d}(S_{\epsilon})$ with $\partial \delta=0$, $(j_{\epsilon,\bar{\epsilon}})_*$ is a chain map form $(C^{<a}_*(\epsilon),D_{\delta})$ to $(C^{<a}_*(\bar{\epsilon}), D_{ ( i_{\epsilon,\bar{\epsilon}})_*\delta})$ and preserves the filtrations $\{\mathcal{F}^{<a}_{\epsilon,p}\}_{p\in \Z}$ and $\{\mathcal{F}^{<a}_{\bar{\epsilon},p}\}_{p\in \Z}$.

\begin{lem}\label{lem-isom-j}
Suppose that $a\in \R_{> 0}\setminus \mathcal{L}(K)$. Then the induced map on homology
\[(j_{\epsilon,\bar{\epsilon}})_*\colon H^{<a}_*(\epsilon,\delta) \to H^{<a}_*(\bar{\epsilon},( i_{\epsilon,\bar{\epsilon}})_*\delta)\]
is an isomorphism if  $\bar{\epsilon} $ satisfies $[a,a+ \frac{2a}{\epsilon_0}\bar{\epsilon}]\cap \mathcal{L}(K) =\varnothing$.
\end{lem}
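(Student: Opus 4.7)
The plan is to apply the comparison Lemma \ref{lem-spectral} to the morphism of spectral sequences induced by $(j_{\epsilon,\bar{\epsilon}})_*$. Since $j_{\epsilon,\bar{\epsilon}}$ sends $\Sigma^{a+m\epsilon}_m$ into $\Sigma^{a+m\bar{\epsilon}}_m$ for every $m\geq 0$, it carries $\mathcal{F}^{<a}_{\epsilon,p}$ into $\mathcal{F}^{<a}_{\bar{\epsilon},p}$ for every $p$, and thus defines a morphism from $E^{<a}_{(\epsilon,\delta)}$ to $E^{<a}_{(\bar{\epsilon},(i_{\epsilon,\bar{\epsilon}})_*\delta)}$ compatible with $(j_{\epsilon,\bar{\epsilon}})_*$ on total homology. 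Both spectral sequences are bounded by the observation (see (\ref{filtration})) that $\mathcal{F}^{<a}_{\epsilon,p}$ vanishes for $p\leq -2a/\epsilon_0$ and stabilizes for $p\geq 0$, so Lemma \ref{lem-spectral} applies provided the induced map is an isomorphism on each $E^1$ term.

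On the $E^1$ page, the map in bidegree $(-m,q)$ is the one induced by the inclusion
\[
H^{\dr}_{q-m(d-1)}(\Sigma^{a+m\epsilon}_m,\Sigma^0_m)\longrightarrow H^{\dr}_{q-m(d-1)}(\Sigma^{a+m\bar{\epsilon}}_m,\Sigma^0_m).
\]
From the long exact sequence of the triple $(\Sigma^{a+m\bar{\epsilon}}_m,\Sigma^{a+m\epsilon}_m,\Sigma^0_m)$, it suffices to prove the vanishing
\[
H^{\dr}_*(\Sigma^{a+m\bar{\epsilon}}_m,\Sigma^{a+m\epsilon}_m)=0
\]
for every $m\geq 0$. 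For $m=0$ this is immediate since both spaces are $\{*\}$. For $m\geq 2a/\epsilon_0$, Remark \ref{rem-large-m} yields $\Sigma^{a+m\epsilon}_m=\Sigma^{a+m\bar{\epsilon}}_m=\Sigma^0_m$, so the relative complex is zero. For $1\leq m<2a/\epsilon_0$, we invoke Proposition \ref{prop-hmgy-grp}: it gives the desired vanishing whenever $\mathcal{L}_m(K)\cap[a+m\epsilon,a+m\bar{\epsilon}]=\varnothing$.

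The remaining point is just to check this length condition from the hypothesis on $\bar{\epsilon}$. Since $\epsilon\leq \bar{\epsilon}$ and $m<2a/\epsilon_0$, we have
\[
[a+m\epsilon,a+m\bar{\epsilon}]\subset \bigl[a,\,a+\tfrac{2a}{\epsilon_0}\bar{\epsilon}\bigr],
\]
and $\mathcal{L}_m(K)\subset \mathcal{L}(K)$, so the hypothesis $[a,a+\tfrac{2a}{\epsilon_0}\bar{\epsilon}]\cap \mathcal{L}(K)=\varnothing$ implies the required disjointness. Hence the map on $E^1$ is an isomorphism in every bidegree, and Lemma \ref{lem-spectral} completes the proof.

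The only mildly delicate step is the bookkeeping of the filtration: one needs to confirm that $(j_{\epsilon,\bar{\epsilon}})_*$ really commutes with $D_{\delta}$ on each filtration piece (which follows from the naturality of $f_{k,\delta}$ under the inclusions $\Sigma^{a+m\epsilon}_m\hookrightarrow \Sigma^{a+m\bar{\epsilon}}_m$) and that the spectral sequences of both complexes are bounded in the same strip, so that Lemma \ref{lem-spectral} can be applied term by term.
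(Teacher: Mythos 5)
Your proof is correct and follows essentially the same route as the paper: filter by the number of paths $m$, reduce to the $E^1$ page via Lemma \ref{lem-spectral}, and use Proposition \ref{prop-hmgy-grp} together with Remark \ref{rem-large-m} to show the inclusion induces isomorphisms there. The only differences are expository (you spell out the triple exact sequence, the $m=0$ case, and the boundedness check, which the paper leaves implicit).
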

\begin{proof}
If $m\geq \frac{2a}{\epsilon_0}$, $\Sigma^{a+m\bar{\epsilon}}_m=\Sigma^{a+m\epsilon}_m=\Sigma^0_m$.
If $0\leq m\leq \frac{2a}{\epsilon_0}$, $[a+m\epsilon,a+m\bar{\epsilon}]\cap\mathcal{L}(K)=\varnothing$ from the condition on $\bar{\epsilon}$.
Thus, Proposition \ref{prop-hmgy-grp} is applied to show that $H^{\dr}_*(\Sigma^{a+m\bar{\epsilon}}_m, \Sigma^{a+m\epsilon}_m)=0$ for all $m\in \Z_{\geq 0}$.
Therefore, the induced map on the $(-m,q)$-term ($m\geq 0$) of the first page
\[(j_{\epsilon,\bar{\epsilon}})_* \colon (E^{<a}_{(\epsilon,\delta)})^1_{-m,q}= H^{\dr}_{q-m(d-1)}(\Sigma^{a+m\epsilon}_{m}, \Sigma^{0}_{m}) \to  (E^{<a}_{(\bar{\epsilon},(i_{\epsilon,\bar{\epsilon}})_*\delta)})^1_{-m,q} =H^{\dr}_{q-m(d-1)}(\Sigma^{a+m\bar{\epsilon}}_{m}, \Sigma^{0}_{m})  \]
is an isomorphism. Now the assertion follows from Lemma \ref{lem-spectral}.
\end{proof}

As we have seen in Example \ref{ex-hmgy-mfd-reg}, $H^{\dr}_*(N_{\epsilon}^{\reg})\cong H^d_c(N_{\epsilon};\R)$. Therefore we can determine a unique homology class $\Th_{\epsilon}\in H^{\dr}_{n-d}(N_{\epsilon}^{\reg})$ which corresponds to the Thom class of $(TK)^{\perp}$ through
the diffeomorphism $\{(x,v)\in (TK)^{\perp}\mid {v}<\epsilon \}\to N_{\epsilon}\colon (x,v)\mapsto \exp_xv$.

The above lemma leads us to define a set of data $(\epsilon,\delta)$ as follows.
\begin{defi}\label{def-class-of-data}

Let $C\geq 1$ be the constant of Lemma \ref{lem-htpy-sev}.
We define $\mathcal{T}_{a}$ for every $a\in \R_{\geq 0}\setminus \mathcal{L}(K)$ to be the set of pairs $(\epsilon,\delta)$ of $\epsilon\in (0, \epsilon_0/(5C^4)]$ and $\delta\in C^{\dr}_{n-d}(S_{\epsilon})$ such that:
\begin{enumerate}
\item $[a,a+\frac{2a}{\epsilon_0}\hat{\epsilon}]\cap \mathcal{L}(K) =\varnothing$ for $\hat{\epsilon}\coloneqq C^3\epsilon$.
\item $\partial \delta=0$ and $(\ev_0)_* [\delta] =\Th_{\epsilon}\in H^{\dr}_{n-d}(N_{\epsilon}^{\reg})$.
\end{enumerate}
\end{defi}

Let $a,b\in \R_{> 0}$ with $a<b$. 
For $\epsilon\in (0,\epsilon_0/(5C_0)]$ and $\delta\in C^{\dr}_{n-d}(S_{\epsilon})$ with $\partial \delta=0$, there exists a chain map $(I^{a,b}_{\epsilon})_*$ from $(C^{<a}_*(\epsilon),D_{\delta})$ to $(C^{<b}_*(\epsilon),D_{\delta})$ induced by inclusion maps $I^{a,b}_{\epsilon} \colon \Sigma^{a+m\epsilon}_m\to \Sigma^{b+m\epsilon}_m$ for all $m\in \Z_{\geq 0}$.
We define a quotient complex
\[(C^{[a,b)}_*(\epsilon) \coloneqq C^{<b}_*(\epsilon) / C^{<a}_*(\epsilon)  , D_{\delta}). \]
Let $H^{[a,b)}_*(\epsilon,\delta)$ denote its homology.
Obviously, there exists a long exact sequence
\begin{align}\label{ex-seq-epsilon-delta}
\xymatrix{ \cdots \ar[r] &H^{<a}_*(\epsilon,\delta) \ar[r]^-{(I^{a,b}_{\epsilon})_*} &  H^{<b}_*(\epsilon,\delta) \ar[r] &  H^{[a,b)}_*(\epsilon,\delta) \ar[r] & H^{<a}_{*-1}(\epsilon,\delta) \ar[r]^-{(I^{a,b}_{\epsilon})_*} & \cdots .
}\end{align}

The next result is a trivial computation from the spectral sequence.

\begin{prop}\label{prop-isom-short-interval}
For $a,b\in \R\setminus \mathcal{L}(K)$ with $a<b$ and $(\epsilon,\delta)\in \mathcal{T}_{a} \cap\mathcal{T}_b$, the following hold:
\begin{itemize}
\item If $[a,b]\cap \mathcal{L}(K) =\varnothing$, then $H^{[a,b)}_*(\epsilon,\delta)=0$.
\item If there exist $c\in \mathcal{L}(K)$ and $m_0\in \Z_{\geq 1}$ such that
$[a,b]\cap \mathcal{L}_m(K)=\begin{cases} \{c\} &\text{ if }m=m_0, \\ \varnothing &\text{ else,}\end{cases}$ then
\[H^{[a,b)}_*(\epsilon,\delta) \cong  H^{\dr}_{*-m_0(d-2)}(\Sigma^{b}_{m_0}, \Sigma^{a}_{m_0}). \]
\end{itemize}
\end{prop}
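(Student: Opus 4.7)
The plan is to analyze the spectral sequence induced on $(C^{[a,b)}_*(\epsilon),D_\delta)$ by the image of the filtration \eqref{filtration}. Using the canonical identification $C^{\dr}_*(\Sigma^{b+m\epsilon}_m,\Sigma^0_m)/C^{\dr}_*(\Sigma^{a+m\epsilon}_m,\Sigma^0_m)\cong C^{\dr}_*(\Sigma^{b+m\epsilon}_m,\Sigma^{a+m\epsilon}_m)$, the associated graded piece at filtration level $-m$ is $C^{\dr}_{*-m(d-2)}(\Sigma^{b+m\epsilon}_m,\Sigma^{a+m\epsilon}_m)$. Since each $f_{k,\delta}$ raises $m$ by one and so strictly lowers the filtration degree, the $E^1$-differential is simply $\partial$, yielding
\[(E^{[a,b)}_{(\epsilon,\delta)})^1_{-m,q}\cong H^{\dr}_{q-m(d-1)}(\Sigma^{b+m\epsilon}_m,\Sigma^{a+m\epsilon}_m)\qquad (m\geq 0).\]
The filtration is bounded, so by \cite[Bounded Convergence 5.2.5]{weibel} the spectral sequence converges to $H^{[a,b)}_*(\epsilon,\delta)$.

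The main preliminary step is to control the shifted intervals. For $m\geq 2b/\epsilon_0$, Remark \ref{rem-large-m} gives $\Sigma^{b+m\epsilon}_m=\Sigma^0_m$ and the $E^1$-entry is trivially zero. For the remaining $m\leq 2b/\epsilon_0$, the bound $m\epsilon\leq \tfrac{2b}{\epsilon_0}\hat{\epsilon}$ (using $\hat{\epsilon}=C^3\epsilon\geq \epsilon$) yields $[a+m\epsilon,b+m\epsilon]\subset [a,b+\tfrac{2b}{\epsilon_0}\hat{\epsilon}]$, and the $\mathcal{T}_b$-condition $[b,b+\tfrac{2b}{\epsilon_0}\hat{\epsilon}]\cap \mathcal{L}(K)=\varnothing$ reduces $[a+m\epsilon,b+m\epsilon]\cap \mathcal{L}(K)$ to a subset of $[a,b]\cap \mathcal{L}(K)$.

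In Case 1, the hypothesis $[a,b]\cap \mathcal{L}(K)=\varnothing$ combined with Proposition \ref{prop-hmgy-grp} kills every $E^1$-entry, so $H^{[a,b)}_*(\epsilon,\delta)=0$. In Case 2, the same reasoning applied to $\mathcal{L}_m(K)\subset \mathcal{L}(K)$ for $m\neq m_0$ kills the $-m$-th column. For $m=m_0$, the length bound $c\geq 2m_0\epsilon_0$ (each binormal chord having length $\geq 2\epsilon_0$) together with the $\mathcal{T}_a$-gap $c>a+\tfrac{2a}{\epsilon_0}\hat{\epsilon}$ and the defining constraint $\epsilon\leq \epsilon_0/(5C^4)$ yield the key inequality $a+m_0\epsilon<c$, while $m_0\leq 2b/\epsilon_0$ combined with $\mathcal{T}_b$ gives $[b,b+m_0\epsilon]\cap \mathcal{L}(K)=\varnothing$. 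Consequently
\[[a,a+m_0\epsilon]\cap \mathcal{L}_{m_0}(K)=\varnothing=[b,b+m_0\epsilon]\cap \mathcal{L}_{m_0}(K),\]
and Proposition \ref{prop-hmgy-grp} gives $H^{\dr}_*(\Sigma^{a+m_0\epsilon}_{m_0},\Sigma^a_{m_0})=0=H^{\dr}_*(\Sigma^{b+m_0\epsilon}_{m_0},\Sigma^b_{m_0})$. Two long exact sequences of triples then identify $H^{\dr}_*(\Sigma^{b+m_0\epsilon}_{m_0},\Sigma^{a+m_0\epsilon}_{m_0})\cong H^{\dr}_*(\Sigma^b_{m_0},\Sigma^a_{m_0})$. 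Since $E^1$ is concentrated in the single column $-m_0$, the spectral sequence collapses at the first page and the stated isomorphism follows.

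The main obstacle is the numerical inequality $a+m_0\epsilon<c$ in Case 2, which reduces (via $m_0\leq c/(2\epsilon_0)$) to the elementary algebraic check that $(1+\tfrac{2C^3\epsilon}{\epsilon_0})(1-\tfrac{\epsilon}{2\epsilon_0})>1$ for $\epsilon\leq \epsilon_0/(5C^4)$; once this is in hand, the collapse of the spectral sequence and the long-exact-sequence chase are routine.
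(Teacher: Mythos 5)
Your proof is correct and takes the same spectral-sequence route as the paper's: filter the quotient complex $C^{[a,b)}_*(\epsilon)$ by the number of loops $m$, observe that the $E^1$-differential is just $\partial$, and kill the columns via Proposition~\ref{prop-hmgy-grp}. The only difference is that you spell out the arithmetic--using the $\mathcal{T}_a$- and $\mathcal{T}_b$-gap conditions together with $c\geq 2m_0\epsilon_0$ and $\epsilon\leq\epsilon_0/(5C^4)$--needed to verify that the shifted intervals $[a+m\epsilon,b+m\epsilon]$ still miss the relevant pieces of $\mathcal{L}(K)$, as well as the two long-exact-sequence steps reducing $H^{\dr}_*(\Sigma^{b+m_0\epsilon}_{m_0},\Sigma^{a+m_0\epsilon}_{m_0})$ to $H^{\dr}_*(\Sigma^{b}_{m_0},\Sigma^{a}_{m_0})$, which the paper's proof leaves implicit in the phrase ``the last isomorphism comes from Proposition~\ref{prop-hmgy-grp}.''
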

\begin{proof} Let $E^{[a,b)}_{(\epsilon,\delta)}$ be the spectral sequence determined by a filtration $\{\mathcal{F}^{<b}_{\epsilon,m}/\mathcal{F}^{<a}_{\epsilon,m}\}_{m\in \Z}$.
%\[ \left\{ \bigoplus_{m'\geq m} C^{\dr}_{*-m'(d-2)} (\Sigma^{b+m'\epsilon}_{m'},\Sigma^{a+m'\epsilon}_{m'})\right\}_{m\geq 0} \]
We apply Proposition \ref{prop-hmgy-grp} to the first page.
For the first case, $(E^{[a,b)}_{(\epsilon,\delta)})^1_{p,q}=0$ for every $p,q\in \Z$, so the assertion is trivial. For the second case, $(E^{[a,b)}_{(\epsilon,\delta)})^1_{p,q}=0$ for every $p\neq -m_0$, so all differentials are the zero map. Therefore, $H^{[a,b)}_q(\epsilon,\delta) \cong (E^{[a,b)}_{(\epsilon,\delta)})^1_{-m_0,q+m_0}$ and the assertion follows from
\[(E^{[a,b)}_{(\epsilon,\delta)})^1_{-m_0,q+m_0}= H^{\dr}_{q-m_0(d-2)} (\Sigma^{b+m_0\epsilon}_{m_0},\Sigma^{a+m_0\epsilon}_{m_0}) \cong H^{\dr}_{q-m_0(d-2)}(\Sigma^{b}_{m_0}, \Sigma^{a}_{m_0}) . \]
Here, the last isomorphism comes from Proposition \ref{prop-hmgy-grp}.
\end{proof}

\subsection{Variants from $[-1,1]$ and $[-1,1]^2$-modeled de Rham chains}\label{subsec-chain-cpx-by-[-1,1]}

In Section \ref{subsec-[-1,1]-model}, we introduced  $[-1,1]$-modeled and $[-1,1]^2$-modeled de Rham chains. In this section, we define chain complexes as Section \ref{subsec-def-of-chain-cpx} by using these types of chains. Their constructions and some of computations are parallel to the former section, so we often omit proofs.

First, we deal with $[-1,1]$-modeled chains.
For $a\in \R_{>0}$ and $\epsilon \in (0, \epsilon_0/(5C_0)]$,
we consider a graded $\R$-vector space
\[\bar{C}^{<a}_*(\epsilon) \coloneqq \bigoplus_{m=0}^{\infty} \bar{C}^{\dr}_{*-m(d-2)} (\Sigma^{a+m\epsilon}_m, \Sigma^{0}_m).\]
For $\bar{\delta}\in \bar{C}^{\dr}_{n-d}(S_{\epsilon})$, we define a degree ($-1$) map $\bar{D}_{\bar{\delta}}\colon \bar{C}^{<a}_*(\epsilon) \to \bar{C}^{<a}_{*-1}(\epsilon)$ by
\[\bar{D}_{\bar{\delta}} (x) \coloneqq \partial x +\sum_{k=1}^m (-1)^{p+1+kd} \bar{f}_{k,\bar{\delta}}(x) \]
for $x\in \bar{C}^{\dr}_{p-m(d-2)} (\Sigma^{a+m\epsilon}_m, \Sigma^{0}_m)$.

\begin{prop}\label{prop-barD-2}
If $\bar{\delta}\in \bar{C}^{\dr}_{n-d}(S_{\epsilon})$ satisfies $\partial \bar{\delta}=0$, then $\bar{D}_{\bar{\delta}} \circ \bar{D}_{\bar{\delta}} =0$ holds.
\end{prop}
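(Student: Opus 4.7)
The plan is to mirror the proof of Proposition \ref{prop-D-2} line by line, replacing $f_{k,\delta}$ with $\bar{f}_{k,\bar{\delta}}$ and invoking Proposition \ref{prop-f-bar-equations} in place of the equations (\ref{delta-small-quotient}) and (\ref{delta-commute-quotient}). In fact, this is exactly the point of having isolated those two identities as a general proposition: the algebraic manipulation that forced $D_{\delta}\circ D_{\delta}=0$ depended only on those two identities together with $\partial\circ\partial=0$, none of which involved specifics of ordinary de Rham chains as opposed to their $[-1,1]$-modeled variants.

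Concretely, I would fix $x\in \bar{C}^{\dr}_{p-m(d-2)}(\Sigma^{a+m\epsilon}_{m},\Sigma^{0}_{m})$ and expand
\[
\bar{D}_{\bar\delta}\circ \bar{D}_{\bar\delta}(x) = \partial\partial x + \sum_{k=1}^m\bigl((-1)^{p+1+kd}\partial\circ \bar f_{k,\bar\delta}(x) + (-1)^{p+kd}\bar f_{k,\bar\delta}\circ \partial x\bigr) + \sum_{k'=1}^{m+1}\sum_{k=1}^m (-1)^{(k+k')d-1}\bar f_{k',\bar\delta}\circ \bar f_{k,\bar\delta}(x).
\]
The first sum vanishes by the first identity of Proposition \ref{prop-f-bar-equations}, using the hypothesis $\partial\bar\delta=0$ to kill the $\bar f_{k,\partial\bar\delta}$ term. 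For the double sum, I would re-index the $k'\geq k$ part by setting $k''=k'-1\geq k$ when $k'>k$, matching it against the $k'<k+1$ part (which has $k'\leq k$). The sign bookkeeping with $q=n-d$ in the second identity of Proposition \ref{prop-f-bar-equations} yields the cancellation
\[
(-1)^{(k+k'+1)d-1}\bar f_{k'+1,\bar\delta}\circ \bar f_{k,\bar\delta}(x) + (-1)^{(k+k')d-1}\bar f_{k,\bar\delta}\circ \bar f_{k',\bar\delta}(x) = 0
\]
for each pair $1\leq k\leq k'\leq m$, exactly as in Proposition \ref{prop-D-2}.

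There is no real obstacle here, because all the work has already been done in Section \ref{subsec-operation} (for the ordinary operators) and in Proposition \ref{prop-f-bar-equations} (which packages the two structural identities for $\bar f_{k,\bar\delta}$). The only thing to verify is that the sign conventions carry over unchanged — which is automatic since the integers $p$, $m$, $k$, $d$, $n$, and $q=n-d$ appearing in the signs have the same meaning in both settings — and that the chain $\bar{D}_{\bar\delta}$ really does preserve the direct sum decomposition of $\bar C^{<a}_*(\epsilon)$, which is clear from the formula. Hence the identity $\bar D_{\bar\delta}\circ \bar D_{\bar\delta}=0$ follows from a formal manipulation that is a verbatim transcription of the proof of Proposition \ref{prop-D-2}, and I would present it as such rather than re-deriving everything.
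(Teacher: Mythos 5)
Your proposal is correct and coincides with the paper's intended argument: the paper does not write out a proof but states that the claim "is analogous to Proposition \ref{prop-D-2} and can be deduced from the two equations of Proposition \ref{prop-f-bar-equations}," which is exactly the line-by-line transcription you describe. The expansion of $\bar{D}_{\bar\delta}\circ\bar{D}_{\bar\delta}(x)$, the vanishing of the first sum via the first identity of Proposition \ref{prop-f-bar-equations} together with $\partial\bar\delta=0$, and the pairwise cancellation in the re-indexed double sum via the second identity all carry over with the same signs.
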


This is analogous to Proposition \ref{prop-D-2} and can be deduced from the two equations of Proposition \ref{prop-f-bar-equations}.
From this proposition, for $\bar{\delta}\in \bar{C}^{\dr}_{n-d}(S_{\epsilon})$ with $\partial \delta=0$,
we obtain a chain complex $( \bar{C}^{<a}_*(\epsilon), \bar{D}_{\bar{\delta}}  )$. Let $\bar{H}^{<a}_*(\epsilon,\bar{\delta})$ denote its homology.

Let us consider a relation to the chain complex defined in Section \ref{subsec-def-of-chain-cpx}. The linear maps (\ref{e-pm}) for $X=\Sigma^{a+m\epsilon}_m$ 
\[e_+,e_-\colon \bar{C}^{\dr}_*(\Sigma^{a+m\epsilon}_m)\to C^{\dr}_*(\Sigma^{a+m\epsilon}_m)\ (m\in \Z_{\geq 0})\]
naturally induce linear maps
\[e_{\epsilon,+},e_{\epsilon,-} \colon \bar{C}^{<a}_*(\epsilon)  \to  C^{<a}_*(\epsilon),  \]
and these are chain maps from $ (\bar{C}^{<a}_*(\epsilon),\bar{D}_{\bar{\delta}} )$ to $( C^{<a}_*(\epsilon) ,D_{e_+\bar{\delta}} )$ and $( C^{<a}_*(\epsilon) ,D_{e_-\bar{\delta}} )$ respectively.

We define a filtration $\{ \bar{\mathcal{F} }^{<a}_{\epsilon, p}\}_{p\in \Z}$ by
\[ \bar{\mathcal{F} }^{<a}_{\epsilon, p} \coloneqq \bigoplus_{m\geq -p} \bar{C}^{\dr}_{*-m(d-2)} (\Sigma^{a+m\epsilon}_{m}, \Sigma^{0}_{m}).\]
%for $m\geq 0$ and $(\bar{\mathcal{F} }^{<a}_{\epsilon})_m\coloneqq (\bar{\mathcal{F} }^{<a}_{\epsilon})_0$ for $m<0$.
Let $\bar{E}^{<a}_{(\epsilon,\bar{\delta})}$ be the spectral sequence determined by this filtration.
\begin{lem}\label{lem-e-quasi-isom}
$e_{\epsilon,+}$ and $e_{\epsilon,-}$ are quasi-isomorphisms.
\end{lem}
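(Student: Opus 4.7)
The plan is a standard spectral-sequence comparison via Lemma \ref{lem-spectral}. Both filtrations $\{\bar{\mathcal{F}}^{<a}_{\epsilon,p}\}_{p\in\Z}$ and $\{\mathcal{F}^{<a}_{\epsilon,p}\}_{p\in\Z}$ are bounded: as observed after (\ref{filtration}), the $p$-th piece vanishes for $p<-2a/\epsilon_0$ and stabilizes for $p\geq 0$, since $\Sigma^{a+m\epsilon}_m=\Sigma^0_m$ whenever $m\geq 2a/\epsilon_0$. The chain map $e_{\epsilon,\pm}$ acts summand-wise in $m$, so it preserves filtrations and induces a morphism of bounded spectral sequences
\[
(e_{\epsilon,\pm})^r \colon \bar{E}^{<a}_{(\epsilon,\bar\delta)} \longrightarrow E^{<a}_{(\epsilon,e_\pm\bar\delta)}.
\]

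Next I would identify the $E^1$-pages. Since $\bar{f}_{k,\bar\delta}$ and $f_{k,\delta}$ raise $m$ by one, they strictly decrease filtration level; hence the associated graded differential reduces to $\partial$, and the $E^1$-pages become
\[
(\bar{E}^{<a}_{(\epsilon,\bar\delta)})^1_{-m,q}=\bar{H}^{\dr}_{q-m(d-1)}(\Sigma^{a+m\epsilon}_m,\Sigma^0_m),\qquad (E^{<a}_{(\epsilon,e_\pm\bar\delta)})^1_{-m,q}=H^{\dr}_{q-m(d-1)}(\Sigma^{a+m\epsilon}_m,\Sigma^0_m)
\]
for $m\geq 0$, and on each summand $(e_{\epsilon,\pm})^1$ is exactly the map $(e_\pm)_*$ induced on relative de Rham homologies.

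The core task is then to verify that $(e_\pm)_*\colon \bar{H}^{\dr}_*(\Sigma^{a+m\epsilon}_m,\Sigma^0_m)\to H^{\dr}_*(\Sigma^{a+m\epsilon}_m,\Sigma^0_m)$ is an isomorphism for every $m\geq 0$. By Lemma \ref{lem-rest-isom-bar} we have $e_\pm\circ\bar{i}=\id$ together with a chain homotopy $K$ from $\bar{i}\circ e_\pm$ to the identity. I would check that $\bar{i}$, $e_\pm$, and $K$ all respect the inclusion $\Sigma^0_m\hookrightarrow\Sigma^{a+m\epsilon}_m$: for $\bar{i}$ and $e_\pm$ this is immediate from their plot-wise definitions, and for $K$ it follows from its explicit construction in the proof of Lemma \ref{lem-rest-isom-bar}, which only alters the $\R$-component of plots and therefore keeps the image of the $\Sigma$-component inside $\Sigma^0_m$ whenever it starts there. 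The induced maps on the quotient complexes will then satisfy the same chain-homotopy relations, producing the desired isomorphism on relative homology.

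Finally, applying Lemma \ref{lem-spectral} at $r_0=1$ upgrades these termwise isomorphisms to the conclusion that $(e_{\epsilon,\pm})_*\colon \bar{H}^{<a}_*(\epsilon,\bar\delta)\to H^{<a}_*(\epsilon,e_\pm\bar\delta)$ is an isomorphism. The only delicate point will be the descent of the chain homotopy $K$ to the relative setting, which amounts to a direct inspection of its formula; the remainder is formal spectral-sequence bookkeeping.
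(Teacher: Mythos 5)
Your proposal is correct and follows essentially the same route as the paper's proof: identify the filtration-preserving chain map, note the induced map on the first page reduces to $(e_\pm)_*$ on each relative de Rham homology group, invoke Lemma \ref{lem-rest-isom-bar} to see it is an isomorphism there, and close via Lemma \ref{lem-spectral}. Your extra attention to whether the chain homotopy $K$ from Lemma \ref{lem-rest-isom-bar} descends to the quotient complexes $\bar{C}^{\dr}_*(\Sigma^{a+m\epsilon}_m,\Sigma^0_m)$ is a worthwhile detail that the paper leaves implicit, but it does not change the overall strategy.
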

\begin{proof}
We prove this assertion for only $e_{\epsilon,+}$. The proof for $e_{\epsilon,-}$ is parallel.
Since $e_{\epsilon,+}$ preserves filtrations $\{ \bar{\mathcal{F}}^{<a}_{\epsilon, p}\}_{p\in \Z}$ and  $\{ \mathcal{F}^{<a}_{\epsilon, p}\}_{p\in \Z}$, this induces a map on the first page
$(e_{\epsilon,+})_* \colon (\bar{E}^{<a}_{(\epsilon,\bar{\delta})})^1_{p,q}\to (E^{<a}_{(\epsilon,e_+\bar{\delta})})^1_{p,q} $.
For $p=-m\leq 0$, this coincides with
\[(e_{+})_*  \colon \bar{H}^{\dr}_{q-m(d-1)} ( \Sigma^{a+m\epsilon}_{m}, \Sigma^{0}_{m}) \to H^{\dr}_{q-m(d-1)} ( \Sigma^{a+m\epsilon}_{m}, \Sigma^{0}_{m}) . \]
From Lemma \ref{lem-rest-isom-bar}, this  map is an isomorphism. Now the assertion follows form Lemma \ref{lem-spectral}.
\end{proof}

For $\bar{\epsilon},\hat{\epsilon}\in (0, \epsilon_0/(5C_0)]$ with $\bar{\epsilon}\leq \hat{\epsilon}$, 
the linear map $(j_{\bar{\epsilon},\hat{\epsilon}})_* \colon \bar{C}^{<a}_*(\bar{\epsilon})\to \bar{C}^{<a}_*(\hat{\epsilon})$,
induced by the inclusion maps $j_{\bar{\epsilon},\hat{\epsilon}}\colon \Sigma^{a+m\bar{\epsilon}}_m \to \Sigma^{a+m\hat{\epsilon}}_m$ for all $m\in \Z_{\geq 0}$, is a chain map from $( \bar{C}^{<a}_*(\bar{\epsilon}), \bar{D}_{\bar{\delta}})$ to $(\bar{C}^{<a}_*( \hat{\epsilon}), \bar{D}_{ (i_{\bar{\epsilon},\hat{\epsilon}})_* \bar{\delta} } )$.
\begin{lem}\label{lem-isom-j-bar}
Suppose that $a\in \R_{> 0}\setminus \mathcal{L}(K)$. Then the induced map on homology
\[(j_{\bar{\epsilon},\hat{\epsilon}})_*  \colon \bar{H}^{<a}_*(\bar{\epsilon},\bar{\delta}) \to \bar{H}^{<a}_*(\hat{\epsilon}, (i_{\bar{\epsilon},\hat{\epsilon}})_* \bar{\delta})\]
is an isomorphism if $\hat{\epsilon}$ satisfies 
$[a,a+\frac{2a}{\epsilon_0}\hat{\epsilon}]\cap \mathcal{L}(K) =\varnothing$.
\end{lem}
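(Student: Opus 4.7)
The plan is to mimic the proof of Lemma \ref{lem-isom-j} verbatim, but replacing ordinary de Rham chains and Proposition \ref{prop-hmgy-grp} with their $[-1,1]$-modeled analogues, namely $\bar{C}^{\dr}_*$ and the first bullet of Proposition \ref{prop-analogy}.

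First, I would observe that the chain map $(j_{\bar{\epsilon},\hat{\epsilon}})_*\colon \bar{C}^{<a}_*(\bar{\epsilon}) \to \bar{C}^{<a}_*(\hat{\epsilon})$ preserves the filtrations $\{\bar{\mathcal{F}}^{<a}_{\bar{\epsilon},p}\}_{p\in \Z}$ and $\{\bar{\mathcal{F}}^{<a}_{\hat{\epsilon},p}\}_{p\in \Z}$, because $j_{\bar{\epsilon},\hat{\epsilon}}$ is defined summand-by-summand on each $m$. Both filtrations are bounded (vanishing for $p \leq -\tfrac{2a}{\epsilon_0}$ and stabilizing for $p \geq 0$, exactly as for the unbarred version), so the corresponding spectral sequences converge to the homologies in the sense required for Lemma \ref{lem-spectral}.

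Next I would examine the induced map on the first page. For $p = -m$ with $m \geq 0$, it is the map
\[(j_{\bar{\epsilon},\hat{\epsilon}})_*\colon \bar{H}^{\dr}_{q-m(d-1)}(\Sigma^{a+m\bar{\epsilon}}_m,\Sigma^0_m) \longrightarrow \bar{H}^{\dr}_{q-m(d-1)}(\Sigma^{a+m\hat{\epsilon}}_m,\Sigma^0_m),\]
and for $p > 0$ both sides vanish. To see this is an isomorphism, I would split into two cases. If $m \geq \tfrac{2a}{\epsilon_0}$, then by Remark \ref{rem-large-m} both $\Sigma^{a+m\bar{\epsilon}}_m$ and $\Sigma^{a+m\hat{\epsilon}}_m$ coincide with $\Sigma^0_m$, so both groups vanish. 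If $0 \leq m < \tfrac{2a}{\epsilon_0}$, the hypothesis $[a, a+\tfrac{2a}{\epsilon_0}\hat{\epsilon}] \cap \mathcal{L}(K) = \varnothing$ forces $[a+m\bar{\epsilon}, a+m\hat{\epsilon}] \cap \mathcal{L}_m(K) = \varnothing$, so by the first bullet of Proposition \ref{prop-analogy}, $\bar{H}^{\dr}_*(\Sigma^{a+m\hat{\epsilon}}_m, \Sigma^{a+m\bar{\epsilon}}_m) = 0$, and the long exact sequence of the triple $(\Sigma^{a+m\hat{\epsilon}}_m, \Sigma^{a+m\bar{\epsilon}}_m, \Sigma^0_m)$ shows that the inclusion induces an isomorphism on the level of pairs with $\Sigma^0_m$.

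Finally, applying Lemma \ref{lem-spectral} to the morphism of bounded spectral sequences whose first page map is an isomorphism in every bidegree, I would conclude that $(j_{\bar{\epsilon},\hat{\epsilon}})_*\colon \bar{H}^{<a}_*(\bar{\epsilon},\bar{\delta}) \to \bar{H}^{<a}_*(\hat{\epsilon},(i_{\bar{\epsilon},\hat{\epsilon}})_*\bar{\delta})$ is an isomorphism. I do not anticipate any genuine obstacle here; the only minor care needed is verifying that the filtration on $\bar{C}^{<a}_*(\epsilon)$ is bounded in the same range as the unbarred filtration, which follows at once from Remark \ref{rem-large-m} since the definition of $\Sigma^{a+m\epsilon}_m$ does not depend on which chain complex is used.
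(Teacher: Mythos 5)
Your proof follows the paper's argument essentially verbatim: preserve the filtrations $\{\bar{\mathcal{F}}^{<a}_{\cdot,p}\}_{p\in\Z}$, invoke the first bullet of Proposition~\ref{prop-analogy} to get an isomorphism on the first page of the spectral sequences, and finish with Lemma~\ref{lem-spectral}. The only difference is that you spell out the two cases $m\geq \tfrac{2a}{\epsilon_0}$ (trivial by Remark~\ref{rem-large-m}) and $m<\tfrac{2a}{\epsilon_0}$ (where the hypothesis on $\hat{\epsilon}$ applies), and the long-exact-sequence step, all of which the paper leaves implicit; this is sound and if anything slightly more careful than the original.
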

\begin{proof}
The proof is parallel to that of Lemma \ref{lem-isom-j}.
The chain map $(j_{\bar{\epsilon},\hat{\epsilon}})_*$ preserves the filtrations $\{ \bar{\mathcal{F}}^{<a}_{\bar{\epsilon},p}\}_{p\in \Z}$ and $\{ \bar{\mathcal{F}}^{<a}_{\hat{\epsilon},p}\}_{p\in \Z}$. This induces an isomorphism on the first page since for every $m\in \Z_{\geq 0}$, 
\[\bar{H}_*(\Sigma^{a+m\hat{\epsilon} }_m,\Sigma^{a+m\bar{\epsilon}}_m) =0\]
holds by Proposition \ref{prop-analogy}.
Now the assertion follows from Lemma \ref{lem-spectral}.
\end{proof}
The above lemma leads us to the following definition.
\begin{defi}
Let $C\geq 1$ be the constant of Lemma \ref{lem-htpy-sev}. We define $\bar{\mathcal{T}}_{a}$ for $a\in \R_{> 0}\setminus \mathcal{L}(K)$ to be the set of pairs $(\bar{\epsilon},\bar{\delta})$ of $\bar{\epsilon}\in (0, \epsilon_0/ (5C^3) ]$ and $\bar{\delta}\in \bar{C}^{\dr}_{n-d}(S_{\bar{\epsilon}})$ such that:
\begin{enumerate}
\item $[a,a+\frac{2a}{\epsilon_0}\hat{\epsilon}]\cap \mathcal{L}(K) =\varnothing$ for $\hat{\epsilon}\coloneqq C^2\bar{\epsilon}$.
\item $\partial \bar{\delta}=0$ and $(\ev_0)_*[e_+\bar{\delta}]= \Th_{\bar{\epsilon}} \in H^{\dr}_{n-d}(N_{\bar{\epsilon}}^{\reg})$.
\end{enumerate}
\end{defi}

Next, we deal with $[-1,1]^2$-modeled chains.
For $a\in \R_{>0}$ and $\epsilon\in (0, \epsilon_0 /(5C_0)]$, we consider a graded $\R$-vector space
\[\hat{C}^{<a}_*(\epsilon) \coloneqq \bigoplus_{m=0}^{\infty} \hat{C}^{\dr}_{*-m(d-2)} (\Sigma^{a+m\epsilon}_m, \Sigma^{0}_m). \]
For $\hat{\delta}\in \hat{C}^{\dr}_{n-d}(S_{\epsilon})$, we define a degree ($-1$) map 
$\hat{D}_{\hat{\delta}}\colon \hat{C}^{<a}_*(\epsilon) \to \hat{C}^{<a}_{*-1}(\epsilon)$ by
\[ \hat{D}_{\hat{\delta}}(x) \coloneqq \partial x+\sum_{k=1}^m(-1)^{p+1+kd} \hat{f}_{k,\hat{\delta}}(x) \]
for $x\in \hat{C}^{\dr}_{p-m(d-2)} (\Sigma^{a+m\epsilon}_m, \Sigma^{0}_m)$.
\begin{prop}
If $\hat{\delta}\in \hat{C}^{\dr}_{n-d}(S_{\epsilon})$ satisfies $\partial \hat{\delta}=0$, then
$\hat{D}_{\hat{\delta}} \circ \hat{D}_{\hat{\delta}} =0$ holds.
\end{prop}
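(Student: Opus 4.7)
The proof will be entirely parallel to Propositions \ref{prop-D-2} and \ref{prop-barD-2}, with the role of $f_{k,\delta}$ (resp.\ $\bar{f}_{k,\bar{\delta}}$) played by $\hat{f}_{k,\hat{\delta}}$, and with the equations (\ref{delta-small-quotient}) and (\ref{delta-commute-quotient}) replaced by the two equations of Proposition \ref{prop-f-hat-equations}. The plan is as follows.

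First, I would fix an arbitrary $x \in \hat{C}^{\dr}_{p-m(d-2)}(\Sigma^{a+m\epsilon}_m, \Sigma^{0}_m)$ and expand $\hat{D}_{\hat{\delta}}\circ \hat{D}_{\hat{\delta}}(x)$ directly from the definition. Using $\partial\circ\partial=0$, the result decomposes into two sums: mixed terms of the form $\partial\circ \hat{f}_{k,\hat{\delta}}(x)$ and $\hat{f}_{k,\hat{\delta}}\circ\partial(x)$ (for $k=1,\ldots,m$), and pure composition terms $\hat{f}_{k',\hat{\delta}}\circ \hat{f}_{k,\hat{\delta}}(x)$ (for $k\in\{1,\ldots,m\}$ and $k'\in\{1,\ldots,m+1\}$). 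The same sign bookkeeping that appears in the proof of Proposition \ref{prop-D-2} applies verbatim here, because the signs in the definition of $\hat{D}_{\hat{\delta}}$ are identical to those in $D_{\delta}$.

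Next, the mixed terms vanish by applying the first equation of Proposition \ref{prop-f-hat-equations} with $q=n-d$ and $\hat{\xi}=\hat{\delta}$: the $\hat{f}_{k,\partial\hat{\delta}}$ contribution disappears since $\partial\hat{\delta}=0$, leaving exactly the combination $\partial\circ\hat{f}_{k,\hat{\delta}} - \hat{f}_{k,\hat{\delta}}\circ\partial$ that the signs in $\hat{D}_{\hat{\delta}}$ are designed to cancel.

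For the pure composition terms, I would split the double sum $\sum_{k'=1}^{m+1}\sum_{k=1}^{m}(-1)^{(k+k')d-1}\hat{f}_{k',\hat{\delta}}\circ\hat{f}_{k,\hat{\delta}}(x)$ into pairs indexed by $1\le k\le k'\le m$, consisting of $\hat{f}_{k'+1,\hat{\delta}}\circ\hat{f}_{k,\hat{\delta}}$ and $\hat{f}_{k,\hat{\delta}}\circ\hat{f}_{k',\hat{\delta}}$. Each such pair vanishes by the second equation of Proposition \ref{prop-f-hat-equations} (applied with $q=n-d$), provided the signs match. I expect the only nontrivial bookkeeping step to be verifying that $(-1)^{(k+k'+1)d-1}$ and $(-1)^{(k+k')d-1}$ combine with the factor $(-1)^{q-n}=(-1)^{-d}$ from that equation to give a vanishing sum; this is exactly the sign check already carried out in the proof of Proposition \ref{prop-D-2}, so it transfers without modification.

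\begin{proof}
Take an arbitrary $x\in \hat{C}^{\dr}_{p-m(d-2)}(\Sigma^{a+m\epsilon}_m,\Sigma^{0}_m)$. Since $\partial\circ\partial=0$, we have
\begin{align*}
\hat{D}_{\hat{\delta}}\circ\hat{D}_{\hat{\delta}}(x) =& \sum_{k=1}^m\left((-1)^{p+1+kd}\partial\circ\hat{f}_{k,\hat{\delta}}(x) + (-1)^{p+kd}\hat{f}_{k,\hat{\delta}}\circ\partial(x)\right) \\
& + \sum_{k'=1}^{m+1}\sum_{k=1}^{m}(-1)^{(k+k')d-1}\hat{f}_{k',\hat{\delta}}\circ\hat{f}_{k,\hat{\delta}}(x).
\end{align*}
The first equation of Proposition \ref{prop-f-hat-equations}, applied with $\hat{\xi}=\hat{\delta}$ (so that $\partial\hat{\delta}=0$), shows that the first summand vanishes. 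For the second summand, rewriting the index range $\{(k,k')\mid 1\le k\le m,\,1\le k'\le m+1\}$ as $\{(k,k'+1)\mid 1\le k\le k'\le m\}\sqcup\{(k,k')\mid 1\le k'\le k\le m\}$ and relabeling gives
\[
\sum_{1\le k\le k'\le m}\!\!\left((-1)^{(k+k'+1)d-1}\hat{f}_{k'+1,\hat{\delta}}\circ\hat{f}_{k,\hat{\delta}}(x) + (-1)^{(k+k')d-1}\hat{f}_{k,\hat{\delta}}\circ\hat{f}_{k',\hat{\delta}}(x)\right),
\]
which equals zero by the second equation of Proposition \ref{prop-f-hat-equations} (with $q-n=-d$). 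Thus $\hat{D}_{\hat{\delta}}\circ\hat{D}_{\hat{\delta}}(x)=0$.
\end{proof}
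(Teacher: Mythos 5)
Your proof is correct and follows exactly the route the paper intends: the paper simply states that the proposition "is analogous to Proposition \ref{prop-D-2} and can be deduced from the two equations of Proposition \ref{prop-f-hat-equations}," leaving the verification to the reader, and you have supplied precisely that verification by transcribing the proof of Proposition \ref{prop-D-2} with $f_{k,\delta}$ replaced by $\hat{f}_{k,\hat{\delta}}$. The sign bookkeeping and the re-indexing of the double sum are carried out correctly.
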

This is analogous to Proposition \ref{prop-D-2} and can be deduced from the two equations of Proposition \ref{prop-f-hat-equations}.
From this proposition, for $\hat{\delta}\in \hat{C}^{\dr}_{n-d}(S_{\epsilon})$ with $\partial \hat{\delta}=0$,
we obtain a chain complex $( \hat{C}^{<a}_*(\epsilon), \hat{D}_{\hat{\delta}}  )$. Let $\hat{H}^{<a}_*(\epsilon,\hat{\delta})$ denote its homology.

Let us consider a relation to the chain complex defined by $[-1,1]$-modeled de Rham chains. For $j=1,2$, the linear maps of (\ref{e-j-pm}) for $X=\Sigma^{a+m\epsilon}_m$
\[e^j_+, e^j_- \colon \hat{C}^{\dr}_*(\Sigma^{a+m\epsilon}_m) \to \bar{C}^{\dr}_*(\Sigma^{a+m\epsilon}_m) \ (m\in \Z_{\geq 0}) \]
naturally induce linear maps
\[e^j_{\epsilon,+}, e^j_{\epsilon,-} \colon  \hat{C}^{<a}_*(\epsilon) \to \bar{C}^{<a}_*(\epsilon) , \]
and these are chain maps from $ (\hat{C}^{<a}_*(\epsilon),\hat{D}_{\hat{\delta}} )$ to $(\bar{C}^{<a}_*(\epsilon) ,\bar{D}_{e^j_+\hat{\delta} } )$ and $(\bar{C}^{<a}_*(\epsilon) ,\bar{D}_{e^j_-\hat{\delta} } )$ respectively.

\begin{lem}\label{lem-isom-e-j}
$e^j_{\epsilon,+}$ and $e^j_{\epsilon,-}$ ($j=1,2$) are quasi-isomorphisms.
\end{lem}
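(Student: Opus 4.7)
The plan is to copy verbatim the proof of Lemma \ref{lem-e-quasi-isom}, substituting $\hat{C}^{\dr}$-chains for $\bar{C}^{\dr}$-chains, $\bar{C}^{\dr}$-chains for $C^{\dr}$-chains, and invoking Lemma \ref{lem-rest-isom-hat} in place of Lemma \ref{lem-rest-isom-bar}. Concretely, I will treat only $e^1_{\epsilon,+}$; the three remaining cases are identical.

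First I will introduce a filtration $\{\hat{\mathcal{F}}^{<a}_{\epsilon,p}\}_{p\in \Z}$ on $\hat{C}^{<a}_*(\epsilon)$, defined by
\[ \hat{\mathcal{F}}^{<a}_{\epsilon,p} \coloneqq \bigoplus_{m\geq -p}^{\infty} \hat{C}^{\dr}_{*-m(d-2)}(\Sigma^{a+m\epsilon}_m,\Sigma^0_m), \]
which is preserved by $\hat{D}_{\hat{\delta}}$ for exactly the same reason as $\mathcal{F}^{<a}_{\epsilon,p}$ is preserved by $D_\delta$: the boundary $\partial$ stays within the $m$-summand, and $\hat{f}_{k,\hat{\delta}}$ raises $m$ by one. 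Because $\Sigma^{a+m\epsilon}_m = \Sigma^0_m$ once $m\geq 2a/\epsilon_0$, this filtration is bounded, so the associated spectral sequence $\hat{E}^{<a}_{(\epsilon,\hat{\delta})}$ converges to $\hat{H}^{<a}_*(\epsilon,\hat{\delta})$ in the sense of \cite[Bounded Convergence 5.2.5]{weibel}.

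Next I will observe that $e^1_{\epsilon,+}$ sends $\hat{\mathcal{F}}^{<a}_{\epsilon,p}$ into $\bar{\mathcal{F}}^{<a}_{\epsilon,p}$ summand by summand in $m$, and hence induces a morphism of spectral sequences from $\hat{E}^{<a}_{(\epsilon,\hat{\delta})}$ to $\bar{E}^{<a}_{(\epsilon,e^1_+\hat{\delta})}$. On the first page, at position $(-m,q)$ with $m\geq 0$, this morphism is precisely
\[ (e^1_+)_* \colon \hat{H}^{\dr}_{q-m(d-1)}(\Sigma^{a+m\epsilon}_m,\Sigma^0_m) \longrightarrow \bar{H}^{\dr}_{q-m(d-1)}(\Sigma^{a+m\epsilon}_m,\Sigma^0_m), \]
and for $m<0$ both sides vanish trivially. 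The identity $e^1_+\circ \hat{i}^1=\id_{\bar{C}^{\dr}_*}$ combined with Lemma \ref{lem-rest-isom-hat} shows that $(e^1_+)_*$ is an isomorphism on absolute homology, and the relative case follows via the long exact sequence of the pair $(\Sigma^{a+m\epsilon}_m,\Sigma^0_m)$ and the five lemma, since Lemma \ref{lem-rest-isom-hat} applies equally to $\Sigma^0_m$. Finally I will invoke Lemma \ref{lem-spectral} to upgrade the $E^1$-isomorphism to the desired quasi-isomorphism of total complexes.

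The only point requiring any care is the descent of the chain homotopy $K^1$ of Lemma \ref{lem-rest-isom-hat} to the relative quotient. This is routine: $K^1$ is constructed naturally with respect to smooth maps, so it preserves the subcomplex arising from $\Sigma^0_m \subset \Sigma^{a+m\epsilon}_m$. I do not expect any genuine obstacle, the entire argument being a bookkeeping exercise parallel to Lemma \ref{lem-e-quasi-isom}.
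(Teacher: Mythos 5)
Your proposal reproduces the paper's proof: both introduce the filtration $\hat{\mathcal{F}}^{<a}_{\epsilon,p}$, observe that $e^j_{\epsilon,\pm}$ are filtered maps, invoke Lemma \ref{lem-rest-isom-hat} to obtain an isomorphism on the first page, and conclude via Lemma \ref{lem-spectral}. The extra remarks on boundedness and on descending the chain homotopy $K^j$ to the relative quotient are a slightly more careful articulation of points the paper leaves implicit, not a different method.
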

\begin{proof}
The proof is parallel to that of Lemma \ref{lem-e-quasi-isom}. This time, we use the spectral sequence determined by a filtration $\{ \hat{\mathcal{F}}^a_{\epsilon, p}\}_{p\in \Z}$, which is defined by
\[ \hat{\mathcal{F}}^a_{\epsilon, p} \coloneqq \bigoplus_{m\geq -p} \hat{C}^{\dr}_{*-m(d-2)} (\Sigma^{a+m\epsilon}_{m}, \Sigma^{0}_{m}).\]
%for $m\geq 0$ and $ (\hat{\mathcal{F}}^a_{\epsilon})_m\coloneqq  (\hat{\mathcal{F}}^a_{\epsilon})_0$ for $m<0$.
$e^j_{\epsilon,+}, e^j_{\epsilon,-}$ ($j=1,2$) preserve filtrations $\{ \hat{\mathcal{F}}^a_{\epsilon, p}\}_{p\in \Z}$ and $\{ \bar{\mathcal{F}}^a_{\epsilon, p}\}_{p\in \Z}$. By Lemma \ref{lem-rest-isom-hat}, they induce an isomorphism on the first page. Now the assertion follows form Lemma \ref{lem-spectral}.
\end{proof}

\subsection{The limit of $\epsilon \to 0$}\label{subsec-limit}

In this section, we define a transition map
\[k_{(\epsilon',\delta'),(\epsilon,\delta)} \colon H^{<a}_*(\epsilon',\delta') \to H^{<a}_*(\epsilon ,\delta)\]
for every $a\in \R_{> 0}\setminus \mathcal{L}(K)$ and $(\epsilon,\delta),(\epsilon',\delta')\in \mathcal{T}_a$ with $\epsilon'\leq \epsilon$, 
by using $(\bar{\epsilon},\bar{\delta}) \in \bar{\mathcal{T}}_{a}$ satisfying
\begin{align}\label{condition-delta-bar}
\epsilon\leq \bar{\epsilon}, \ e_+\bar{\delta}=(i_{\epsilon,\bar{\epsilon}})_*\delta ,\ e_-\bar{\delta}=(i_{\epsilon',\bar{\epsilon}})_*\delta'.
\end{align}
In fact, $k_{(\epsilon',\delta'),(\epsilon,\delta)}$ is an isomorphism.
We also prove that $( \{H^{<a}_*(\epsilon,\delta)\}_{(\epsilon,\delta)\in \mathcal{T}_{a}}, \{k_{(\epsilon',\delta'),(\epsilon,\delta)} \}_{\epsilon'\leq \epsilon} )$ forms an inverse system.

\subsubsection{Construction of transition maps}

Let us first prove the existence of the above $(\bar{\epsilon},\bar{\delta})$.

\begin{lem}\label{lem-existence-delta-bar}
For $(\epsilon,\delta),(\epsilon',\delta)\in \mathcal{T}_{a}$ with $\epsilon'\leq \epsilon$, there exists $(\bar{\epsilon},\bar{\delta})\in \bar{\mathcal{T} }_{a}$ satisfying (\ref{condition-delta-bar}).
\end{lem}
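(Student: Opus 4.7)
The natural choice is to set $\bar{\epsilon}\coloneqq C\epsilon$, where $C\geq 1$ is the constant of Lemma \ref{lem-htpy-sev}. The membership conditions for $\bar{\mathcal{T}}_a$ are then immediate: from $\epsilon \le \epsilon_0/(5C^4)$ we get $\bar{\epsilon}=C\epsilon \le \epsilon_0/(5C^3)$; and since $\hat{\epsilon}=C^2\bar{\epsilon}=C^3\epsilon$ coincides with the $\hat{\epsilon}$ in the definition of $\mathcal{T}_a$ for the pair $(\epsilon,\delta)$, condition (1) of $\bar{\mathcal{T}}_a$ is inherited from $(\epsilon,\delta)\in\mathcal{T}_a$.

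The key analytical step is to show that the two cycles $(i_{\epsilon,\bar{\epsilon}})_*\delta$ and $(i_{\epsilon',\bar{\epsilon}})_*\delta'$ are homologous in $H^{\dr}_{n-d}(S_{\bar{\epsilon}})$. Consider the difference $z\coloneqq \delta - (i_{\epsilon',\epsilon})_*\delta' \in C^{\dr}_{n-d}(S_\epsilon)$, which is closed. Under $(\ev_0)_*$, we get $(\ev_0)_*[\delta] - (\ev_0)_*[(i_{\epsilon',\epsilon})_*\delta'] = \Th_\epsilon - \Th_\epsilon = 0$ in $H^{\dr}_{n-d}(N_\epsilon^{\reg})$, since the Thom class is natural under the inclusion $N_{\epsilon'}\hookrightarrow N_\epsilon$ (extension by zero in compactly supported cohomology preserves integration along fibers). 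Because $\epsilon \le \epsilon_0/(5C^4) \le \epsilon_0/C$, Proposition \ref{prop-exactness} applies and gives $(i_{\epsilon,\bar{\epsilon}})_*[z] = [(i_{\epsilon,\bar{\epsilon}})_*\delta] - [(i_{\epsilon',\bar{\epsilon}})_*\delta'] = 0$ in $H^{\dr}_{n-d}(S_{\bar{\epsilon}})$. Choose $\eta \in C^{\dr}_{n-d+1}(S_{\bar{\epsilon}})$ with $\partial \eta = (i_{\epsilon,\bar{\epsilon}})_*\delta - (i_{\epsilon',\bar{\epsilon}})_*\delta'$.

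The remaining step is to package this homology into a $[-1,1]$-modeled chain with the prescribed restrictions. Let $\chi\colon \R\to[0,1]$ be smooth with $\chi(r)=1$ for $r\ge 1/2$ and $\chi(r)=0$ for $r\le -1/2$, so that $\chi'$ is supported in $(-1/2,1/2)$. Writing $\delta_+\coloneqq (i_{\epsilon,\bar\epsilon})_*\delta=[V_+,\psi_+,\omega_+]$, $\delta_-\coloneqq (i_{\epsilon',\bar\epsilon})_*\delta'=[V_-,\psi_-,\omega_-]$, and $\eta=[W,\xi,\theta]$, define
\[
\bar{\delta}\coloneqq
[\R\times V_+,\,\id_\R\times\psi_+,\,(\tau^+_+,\tau^+_-),\,\chi(r)\,\omega_+]
+[\R\times V_-,\,\id_\R\times\psi_-,\,(\tau^-_+,\tau^-_-),\,(1-\chi(r))\,\omega_-]
\pm [\R\times W,\,\id_\R\times\xi,\,(\tau^W_+,\tau^W_-),\,\chi'(r)\,dr\wedge\theta],
\]
where the diffeomorphisms $\tau^\bullet_\pm$ are the standard product identifications (automatic since all three forms vanish on $|r|\geq 1$, where $\chi$ is constant and $\chi'$ vanishes). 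Because $\chi(r)=1$ near $r=1$ and $\chi(r)=0$ near $r=-1$, and the third summand vanishes at $r=\pm 1$, one reads off $e_+\bar{\delta}=\delta_+=(i_{\epsilon,\bar{\epsilon}})_*\delta$ and $e_-\bar{\delta}=\delta_-=(i_{\epsilon',\bar{\epsilon}})_*\delta'$. The boundary calculation is straightforward: $\partial$ of the first two summands produces $\pm\chi'(r)\,dr\wedge(\omega_+-\omega_-)$, which the third summand cancels by virtue of $\partial\eta=\delta_+-\delta_-$ (the sign $\pm$ in the third term being chosen to match). Finally, condition (2) for $\bar{\mathcal{T}}_a$ is $(\ev_0)_*[e_+\bar{\delta}]=\Th_{\bar{\epsilon}}$, which again follows from the naturality of the Thom class applied to $(\ev_0)_*[\delta]=\Th_\epsilon$.

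The main obstacle is the third paragraph: one must keep track of the sign conventions in de Rham chain boundaries and in the fiber-product construction of the cross product with $dr$, and verify that the three pieces glue to a \emph{single} element of $\bar{C}^{\dr}_{n-d}(S_{\bar{\epsilon}})$ whose structure maps $(\tau_+,\tau_-)$ at $|r|\geq 1$ are the obvious ones. This is essentially the same bookkeeping as in the proof of Lemma \ref{lem-rest-isom-bar} and in \cite[Lemma~4.8]{irie-pseudo}, so no new difficulties should arise beyond careful sign checking.
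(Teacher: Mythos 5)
Your proof is correct and follows essentially the same approach as the paper: set $\bar{\epsilon}=C\epsilon$, observe that membership in $\bar{\mathcal{T}}_a$ is inherited from $(\epsilon,\delta)\in\mathcal{T}_a$, apply Proposition \ref{prop-exactness} to find a primitive $\theta$ of $\delta_+-\delta_-$, and then interpolate over the $r$-coordinate. The only difference is that the paper packages the final gluing step via the already-defined cross product with the $[-1,1]$-modeled chains $\beta_\pm\in\bar{C}^{\dr}_0(\{0\})$, writing $\bar{\delta}=\beta_+\times\bar{i}\delta_++\beta_-\times\bar{i}\delta_-+(\partial\beta_+)\times\bar{i}\theta$, which offloads the sign bookkeeping you rightly flag as the delicate point onto the machinery established in Section \ref{subsec-[-1,1]-model}.
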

\begin{proof}
Let us take $\bar{\epsilon} \coloneqq C\epsilon $ for the constant $C$ in Lemma \ref{lem-htpy-sev}, and rewrite $\delta_+\coloneqq (i_{\epsilon,\bar{\epsilon}})_* \delta$ and $\delta_-\coloneqq ( i_{ \epsilon',\bar{\epsilon}})_* \delta'$ for short. Since
\[ (\ev_0)_*[\delta - (i_{\epsilon',\epsilon})_* \delta']= \Th_{\epsilon}-(i_{\epsilon', \epsilon})_*\Th_{\epsilon'}=0\in H^{\dr}_{n-d}(N_{\epsilon}),\]
Proposition \ref{prop-exactness} shows that there exists $\theta \in C^{\dr}_{n-d+1}(S_{\bar{\epsilon}})$ such that
\[\partial \theta= (i_{\epsilon,\bar{\epsilon}} )_*( \delta - (i_{\epsilon',\epsilon})_* \delta' ) = \delta_+ - \delta_-.\]
Let $\kappa\colon \R\to [0,1]$ be a $C^{\infty}$ function such that $\kappa(r)=\begin{cases} 1 & \text{ if }1\leq r, \\ 0 & \text{ if }r\leq -1. \end{cases}$ We take chains $\beta_+,\beta_-\in \bar{C}^{\dr}_0(\{0\})$ defined by
\[\begin{cases}
\beta_+\coloneqq  [\R,\id_{\R},(\id_{\R_{\geq 1}},\id_{\R_{\leq -1}}),\kappa], \\
\beta_-\coloneqq  [\R,\id_{\R},(\id_{\R_{\geq 1}},\id_{\R_{\leq -1}}),1-\kappa].
\end{cases} \]
Now we define $\bar{\delta}$ by
\[\bar{\delta}\coloneqq \beta_+ \times (\bar{i} \delta_+) + \beta_-\times (\bar{i} \delta_-) +( \partial \beta_+)\times (\bar{i} \theta)\in \bar{C}^{\dr}_{n-d}(S_{\bar{\epsilon}}).\]
This satisfies the condition (\ref{condition-delta-bar}). Moreover, $\partial \bar{\delta}=0$ and $(\ev_0)_*[ e_+ \bar{\delta}] =(\ev_0)_*[\delta_+]=\Th_{\bar{\epsilon}}$ hold. %$\partial \beta_+= -\partial \beta_-$) and
%From the definition of $\mathcal{T}_a$. $(\bar{\epsilon},\bar{\delta})$ satisfies the first condition
%The second condition since $(\ev_0)_*[ e_+ \bar{\delta}] =(\ev_0)_*[\delta_+]=\Th_{\bar{\epsilon}}$, we have
%\[(\ev_0)_*[\bar{\delta}]= (\bar{i})_* ( (e_+)_* \circ (\ev_0)_*[\bar{\delta}] ) =  (\bar{i})_* ((\ev_0)_*[ e_+ \bar{\delta}]) =  (\bar{i})_* \Th_{\bar{\epsilon}}.\]
%For the first equality, we apply Lemma \ref{lem-rest-isom-bar}. 
Now, it is clear that $(\bar{\epsilon},\bar{\delta})=(C\epsilon,\bar{\delta})$ satisfies the two conditions to be an element of $ \bar{\mathcal{T}}_{a}$.
\end{proof}

%Let us choose $(\bar{\epsilon},\bar{\delta})\in \bar{\mathcal{T}}_{[a,b)}$ for $(\epsilon,\delta),(\epsilon',\delta')\in \mathcal{T}_{[a,b)}$ satisfying (\ref{condition-delta-bar}).
From Lemma \ref{lem-isom-j} and Lemma \ref{lem-e-quasi-isom}, we can define isomorphisms
\begin{align*}
f_{(\bar{\epsilon},\bar{\delta}),+}&\coloneqq (j_{\epsilon,\bar{\epsilon}})_*^{-1}\circ (e_{\bar{\epsilon},+})_* \colon \bar{H}^{<a}_*(\bar{\epsilon},\bar{\delta}) \to H^{<a}_*(\epsilon,\delta) , \\
f_{(\bar{\epsilon}, \bar{\delta}),-}&\coloneqq (j_{\epsilon',\bar{\epsilon}})_*^{-1} \circ (e_{\bar{\epsilon},-})_* \colon \bar{H}^{<a}_*(\bar{\epsilon},\bar{\delta}) \to H^{<a}_*(\epsilon',\delta') ,
\end{align*}
so that the following diagrams commute:
\[\xymatrix{ 
\bar{H}^{<a}_*(\bar{\epsilon},\bar{\delta}) \ar[d]_{(e_{\bar{\epsilon},+})_*} \ar[r]^{f_{(\bar{\epsilon},\bar{\delta}),+}}& H^{<a}_*(\epsilon,\delta) \ar[d]^{(j_{\epsilon,\bar{\epsilon}})_*} &
\bar{H}^{<a}_*(\bar{\epsilon},\bar{\delta}) \ar[d]_{(e_{\bar{\epsilon},-})_*} \ar[r]^{f_{(\bar{\epsilon},\bar{\delta}),-}} & H^{<a}_*(\epsilon',\delta') \ar[d]^{(j_{\epsilon',\bar{\epsilon}})_*} \\
H^{<a}_*(\bar{\epsilon},e_+\bar{\delta})\ar@{=}[r] &H^{<a}_*(\bar{\epsilon}, (i_{\epsilon,\bar{\epsilon}})_*\delta) , &
H^{<a}_*(\bar{\epsilon},e_-\bar{\delta})\ar@{=}[r] &H^{<a}_*(\bar{\epsilon} ,(i_{\epsilon',\bar{\epsilon}})_*\delta') .
}\]
We define an isomorphism
\begin{align*}
k_{(\bar{\epsilon},\bar{\delta})} \coloneqq f_{(\bar{\epsilon},\bar{\delta}),+} \circ ( f_{(\bar{\epsilon},\bar{\delta}),-} )^{-1} \colon 
H^{<a}_*(\epsilon',\delta') \to H^{<a}_*(\epsilon,\delta).
\end{align*}
Later, we will prove the independence on $(\bar{\epsilon},\bar{\delta})$ (Corollary \ref{cor-indep-k}), and this is the transition map $k_{(\epsilon',\delta), (\epsilon,\delta)}$ we need.

%\begin{rem}\label{rem-trivial-delta-bar}
\begin{lem}\label{lem-trivial-delta-bar}
When $(\epsilon',\delta')=(\epsilon,\delta) \in \mathcal{T}_{a}$, we may take $(\epsilon,\bar{i}\delta)\in \bar{\mathcal{T}}_{a}$ as an element satisfying (\ref{condition-delta-bar}). In this case, we have
\[k_{(\epsilon,\bar{i}\delta)} = \id_{H^{<a}_*(\epsilon,\delta)} \]

\end{lem}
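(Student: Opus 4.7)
The plan is to proceed in three short steps: verify that $(\epsilon,\bar{i}\delta)$ meets the conditions to be an element of $\bar{\mathcal{T}}_a$ satisfying (\ref{condition-delta-bar}); then simplify the maps $f_{(\epsilon,\bar{i}\delta),\pm}$ in the case $\bar{\epsilon}=\epsilon$; finally invoke $e_\pm\circ\bar{i}=\id$ to conclude.

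First I would check that $(\epsilon,\bar{i}\delta)\in \bar{\mathcal{T}}_a$ and that (\ref{condition-delta-bar}) is satisfied. The ``first'' condition on $\hat{\epsilon}$-gaps is inherited from the fact that $(\epsilon,\delta)\in \mathcal{T}_a$ (with even more room, since the factor in $\bar{\mathcal{T}}_a$ is $C^2$ rather than $C^3$). Since $\bar{i}$ is a chain map (immediate from its definition and $d(1\times \omega)=1\times d\omega$), $\partial\bar{i}\delta=\bar{i}\partial\delta=0$. The identity $e_+\circ\bar{i}=\id_{C^{\dr}_*(S_\epsilon)}$ (noted just before Lemma \ref{lem-rest-isom-bar}) gives $(\ev_0)_*[e_+\bar{i}\delta]=(\ev_0)_*[\delta]=\Th_\epsilon$, and the same identity together with $e_-\circ\bar{i}=\id$ verifies the two equalities in (\ref{condition-delta-bar}) (with $\bar{\epsilon}=\epsilon$, so $i_{\epsilon,\bar{\epsilon}}=\id$).

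Next I would unpack the maps with $\bar{\epsilon}=\epsilon$. Since $j_{\epsilon,\bar{\epsilon}}=\id_{\Sigma^{a+m\epsilon}_m}$ for every $m$, the induced chain map $(j_{\epsilon,\bar{\epsilon}})_*$ on $C^{<a}_*$ is the identity, so
\[
f_{(\epsilon,\bar{i}\delta),+}=(e_{\epsilon,+})_*,\qquad f_{(\epsilon,\bar{i}\delta),-}=(e_{\epsilon,-})_*,
\]
and hence $k_{(\epsilon,\bar{i}\delta)}=(e_{\epsilon,+})_*\circ\bigl((e_{\epsilon,-})_*\bigr)^{-1}$ as maps on $H^{<a}_*(\epsilon,\delta)$.

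Finally I would exhibit an explicit right inverse of both $e_{\epsilon,\pm}$ at the chain level. The operation $\bar{i}$ acts componentwise on each summand $C^{\dr}_{*-m(d-2)}(\Sigma^{a+m\epsilon}_m,\Sigma^0_m)$ to give a degree-preserving linear map
\[
\bar{i}_\epsilon\colon C^{<a}_*(\epsilon)\to \bar{C}^{<a}_*(\epsilon).
\]
The intertwining identity $\bar{i}\circ f_{k,\xi}=\bar{f}_{k,\bar{i}\xi}\circ\bar{i}$ (recorded after (\ref{explicit-rep-bar})) applied with $\xi=\delta$ shows that $\bar{i}_\epsilon$ is a chain map from $(C^{<a}_*(\epsilon),D_\delta)$ to $(\bar{C}^{<a}_*(\epsilon),\bar{D}_{\bar{i}\delta})$. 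Since $e_\pm\circ\bar{i}=\id$ at the level of single differentiable spaces, one has $e_{\epsilon,\pm}\circ\bar{i}_\epsilon=\id$ on $C^{<a}_*(\epsilon)$. Passing to homology gives $(\bar{i}_\epsilon)_*=\bigl((e_{\epsilon,+})_*\bigr)^{-1}=\bigl((e_{\epsilon,-})_*\bigr)^{-1}$, whence
\[
k_{(\epsilon,\bar{i}\delta)}=(e_{\epsilon,+})_*\circ(\bar{i}_\epsilon)_*=\id_{H^{<a}_*(\epsilon,\delta)}.
\]

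There is no real obstacle here: the statement is essentially a compatibility check, and the only thing to watch is that the componentwise extension $\bar{i}_\epsilon$ is indeed a chain map for the \emph{twisted} differentials $D_\delta$ and $\bar{D}_{\bar{i}\delta}$, which is exactly what the identity $\bar{i}\circ f_{k,\delta}=\bar{f}_{k,\bar{i}\delta}\circ\bar{i}$ provides.
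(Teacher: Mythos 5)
Your proof is correct and follows essentially the same route as the paper: introduce the chain map $\bar{i}_\epsilon$ (noting it is compatible with the twisted differentials via $\bar{i}\circ f_{k,\delta}=\bar{f}_{k,\bar{i}\delta}\circ\bar{i}$), use $e_{\epsilon,\pm}\circ\bar{i}_\epsilon=\id$ to get $(e_{\epsilon,+})_*=(\bar{i}_\epsilon)_*^{-1}=(e_{\epsilon,-})_*$, and conclude. The only difference is that you also spell out the verification that $(\epsilon,\bar{i}\delta)\in\bar{\mathcal{T}}_a$, which the paper leaves implicit.
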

\begin{proof}
To prove this assertion, let us introduce a chain map
$\bar{i}_{\epsilon} $ from $ (C^{<a}_*(\epsilon),D_{\delta})$ to $ (\bar{C}^{<a}_*(\epsilon), \bar{D}_{\bar{i}\delta} )$
induced by $\bar{i}\colon C^{\dr}_{*-m(d-2)}(\Sigma^{a+m\epsilon}_m)\to \bar{C}^{\dr}_{*-m(d-2)}(\Sigma^{a+m\epsilon}_m)$ (see (\ref{bar-i})) for all $m\in \Z_{\geq 0}$. This satisfies $e_{\epsilon,+}\circ \bar{i}_{\epsilon}=\id_{C^{<a}_*(\epsilon)} = e_{\epsilon,-}\circ \bar{i}_{\epsilon}$, and thus
\begin{align}\label{e-pm-and-i-bar}
(e_{\epsilon,+})_*=(\bar{i}_{\epsilon})_*^{-1}=(e_{\epsilon,-})_* \colon \bar{H}^{<a}_*(\epsilon,\bar{i}\delta) \to H^{<a}_*(\epsilon,\delta).
\end{align}
Therefore, $k_{(\epsilon,\bar{i}\delta)}= (e_{\epsilon,+})_* \circ (e_{\epsilon,-})_*^{-1}=\id_{H^{<a}_*(\epsilon,\delta)}$.
\end{proof}
%\end{rem}

\subsubsection{Compositions}

Next, we think about compositions of maps of $\{ k_{(\bar{\epsilon},\bar{\delta})}\}_{(\bar{\epsilon},\bar{\delta})\in \bar{\mathcal{T}}_{a}}$.
For $(\epsilon,\delta),(\epsilon',\delta'),(\epsilon'',\delta'')\in \mathcal{T}_{a}$ with $\epsilon''\leq \epsilon'\leq \epsilon$, suppose that we have chosen $(\bar{\epsilon},\bar{\delta}),(\bar{\epsilon}',\bar{\delta}') ,(\tilde{\epsilon},\tilde{\delta})\in \bar{ \mathcal{T} }_{a}$ satisfying:
\[\begin{array}{rlrl}
e_+\bar{\delta}&=(i_{\epsilon,\bar{\epsilon}})_*\delta, & e_-\bar{\delta}&=(i_{\epsilon',\bar{\epsilon}})_*\delta' , \\
e_+\bar{\delta}'&=(i_{\epsilon',\bar{\epsilon}'})_* \delta', & e_-\bar{\delta}'&=(i_{\epsilon'',\bar{\epsilon}'})_*\delta'' , \\
e_+\tilde{\delta}&=(i_{\epsilon,\tilde{\epsilon}})_*\delta, & e_-\tilde{\delta}&= (i_{\epsilon'',\tilde{\epsilon}} )_*\delta'' .
\end{array}\]
Under this situation, let us first prove the following lemma.

\begin{lem}\label{lem-delta-hat}
There exists $\hat{\epsilon}\in (0,\epsilon_0/ (5C_0)]$ and $\hat{\delta}\in \hat{C}^{\dr}_{n-d}(S_{\hat{\epsilon}})$ such that $\partial \hat{\delta}=0$ and
\[e^1_+\hat{\delta} = (i_{\tilde{\epsilon},\hat{\epsilon}})_* \tilde{\delta},\ e^1_-\hat{\delta}=(i_{\bar{\epsilon}',\hat{\epsilon}})_* \bar{\delta}',\ e^2_+\hat{\delta}=(i_{\bar{\epsilon},\hat{\epsilon}})_* \bar{\delta},\ e^2_-\hat{\delta}=(i_{\epsilon'',\hat{\epsilon}})_* (\bar{i} \delta'' ) .\]
\end{lem}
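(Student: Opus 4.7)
The plan is to imitate the partition-of-unity construction in the proof of Lemma~\ref{lem-existence-delta-bar}, extended to two dimensions.

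First, take $\hat{\epsilon}:=C\cdot\max\{\bar{\epsilon},\bar{\epsilon}',\tilde{\epsilon}\}$ with $C$ the constant of Lemma~\ref{lem-htpy-sev}, so that all four prescribed boundary chains can be pushed forward to $\hat{C}^{\dr}_{n-d}(S_{\hat{\epsilon}})$; write $\tilde{\delta}_*,\bar{\delta}_*,\bar{\delta}'_*,\delta''_*$ for the pushforwards. Using the compatibility relations~\eqref{e-and-e-j} and the known restrictions of $\bar{\delta},\bar{\delta}',\tilde{\delta}$, verify that the four face data assign a consistent $C^{\dr}$-chain at each of the four corners of $[-1,1]^2$: $(i_{\epsilon,\hat{\epsilon}})_*\delta$ at $(1,1)$, $(i_{\epsilon',\hat{\epsilon}})_*\delta'$ at $(-1,1)$, and $(i_{\epsilon'',\hat{\epsilon}})_*\delta''$ at both $(1,-1)$ and $(-1,-1)$.

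Next, pick a smooth cutoff $\kappa\colon\R\to[0,1]$ with $\kappa(r)=1$ for $r\geq 1$ and $\kappa(r)=0$ for $r\leq -1$, and set $\kappa_j:=\kappa(r_j)$. Build cutoff chains $\hat{\beta}^j_\pm,\hat{\beta}^{s_1s_2}\in\hat{C}^{\dr}_0(\{0\})$ whose underlying forms on $\R^2$ are respectively $\kappa_j,1-\kappa_j$, and the four products $\kappa^{s_1}_1\kappa^{s_2}_2$ (here $s_i\in\{+,-\}$ and $\kappa^+=\kappa,\kappa^-=1-\kappa$); these are admissible $[-1,1]^2$-modeled chains because $\kappa$ is constant on the semi-infinite tails. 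Form
\[
\hat{\delta}_0 := \sum_{j=1,2}\bigl(\hat{\beta}^j_+\times\hat{i}^j(x^j_+) + \hat{\beta}^j_-\times\hat{i}^j(x^j_-)\bigr) - \sum_{s_1,s_2}\hat{\beta}^{s_1s_2}\times\hat{i}^1\bar{i}(\delta^{(s_1,s_2)}_*),
\]
with $(x^1_+,x^1_-,x^2_+,x^2_-):=(\tilde{\delta}_*,\bar{\delta}'_*,\bar{\delta}_*,\bar{i}\delta''_*)$ and $\delta^{(s_1,s_2)}_*$ the pushforward of the appropriate corner value. A direct check of each face restriction shows that the main sum contributes the prescribed face chain plus a cutoff interpolation between the two adjacent corner values, and the corner-correction sum cancels that interpolation exactly; hence $e^j_\pm\hat{\delta}_0$ equals the prescribed chain on all four faces.

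It remains to kill $\partial\hat{\delta}_0$. Since every $\hat{i}^j(x^j_\pm)$ and $\hat{i}^1\bar{i}(\delta^{(s_1,s_2)}_*)$ is a cycle, only $d\kappa_1$ and $d\kappa_2$ contribute. Collecting the $d\kappa_1$-terms yields the cross product of $\partial\hat{\beta}^1_+$ with a bar cycle of the form
\[
y_1 := \hat{i}^1(\bar{\delta}'_*-\tilde{\delta}_*) - \kappa_2\,\hat{i}^1\bar{i}\bigl((i_{\epsilon',\hat{\epsilon}})_*\delta'-(i_{\epsilon,\hat{\epsilon}})_*\delta\bigr),
\]
whose $e_+$ and $e_-$ both vanish. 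By Lemma~\ref{lem-rest-isom-bar}, $(e_+)_*\colon\bar{H}^{\dr}\to H^{\dr}$ is an isomorphism, so $y_1$ is exact; a primitive can moreover be chosen with vanishing $e_\pm$ by first taking any primitive and subtracting a suitable $\bar{i}$-lift of a primitive of its $e_+$-endpoint, which exists via Proposition~\ref{prop-exactness} after a further mild enlargement of $\hat{\epsilon}$. Subtracting the cross product of $\partial\hat{\beta}^1_+$ with this refined primitive kills the $d\kappa_1$ residue without disturbing the $r_2=\pm1$ faces; the $d\kappa_2$ residue is handled symmetrically, yielding the desired $\hat{\delta}$.

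The main obstacle is this two-step correction: first finding a primitive for $y_1$ via Lemma~\ref{lem-rest-isom-bar}, then refining it so that its own $e_\pm$-restrictions vanish, so that the correction does not corrupt the face identities already achieved by $\hat{\delta}_0$. Signs, corner matchings, and verification that the chosen cutoff forms qualify as $[-1,1]^2$-modeled reduce to routine bookkeeping.
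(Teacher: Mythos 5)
The residue $y_1$ you extract is not a cycle, and this is a genuine gap. Because the corner-correction terms $\hat{\beta}^{s_1s_2}$ carry the product form $\kappa_1^{s_1}\kappa_2^{s_2}$, the $d\kappa_1$-part of $\partial\hat{\delta}_0$ necessarily acquires a $\kappa_2$-weight from the corners with $s_2=+$; that is how you arrive at the $\kappa_2\,\hat{i}^1\bar{i}\bigl((i_{\epsilon',\hat{\epsilon}})_*\delta'-(i_{\epsilon,\hat{\epsilon}})_*\delta\bigr)$ term in $y_1$. Differentiating that term produces a nonzero $d\kappa_2$-contribution, so $\partial y_1\neq 0$ whenever $(i_{\epsilon',\hat{\epsilon}})_*\delta'\neq (i_{\epsilon,\hat{\epsilon}})_*\delta$ at the chain level. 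Consequently the appeal to Lemma~\ref{lem-rest-isom-bar} (or, more appropriately, Lemma~\ref{lem-rest-isom-hat}) to conclude that $y_1$ is exact is unjustified: that argument presupposes $[y_1]$ is a homology class. Equivalently, writing $\partial\hat{\delta}_0=P_1+P_2$ with $P_1=\pm\partial\hat{\beta}^1_+\times y_1$, one sees $\partial P_1=-\partial P_2\neq 0$, so $P_1$ is itself not a boundary and cannot be killed by subtracting a chain of the form $\partial\hat{\beta}^1_+\times z_1$. The two residues are entangled through $d\kappa_1\wedge d\kappa_2$ terms and cannot be eliminated one at a time as proposed.

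The paper avoids this entirely by not trying to match all four faces at once. Its first chain $\xi$ uses a cutoff in $r_1$ only, so $\xi$ is manifestly a cycle and already matches the two $r_1$-faces and the $r_2=-1$ face; the remaining $r_2=+1$ face is then adjusted by subtracting the \emph{exact} chain $\partial(\hat{\beta}^2_+\times\hat{i}^2\bar{\theta}_2)$, which preserves $\partial\hat{\delta}=0$ automatically and, because $e_\pm\bar{\theta}_2=0$, leaves the other three faces untouched. If you want a symmetric $[-1,1]^2$-construction to work, you would need to first establish that $y_1$ can be replaced by an honest cycle (for instance, by absorbing its $d\kappa_2$-part into a modified corner correction), which is essentially the content of the paper's construction of $\bar{\theta}_2$ with vanishing $e_\pm$; simply applying the restriction-isomorphism lemmas does not accomplish this.
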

\begin{proof}
Let us take $\rho \coloneqq C\cdot \max\{\bar{\epsilon}, \bar{\epsilon}',\tilde{\epsilon}\}$, $\hat{\epsilon}\coloneqq C\rho$ and rewrite $\bar{\delta}_+\coloneqq  (i_{\tilde{\epsilon},\rho })_*\tilde{\delta}$ and $\bar{\delta}_-\coloneqq  (i_{\bar{\epsilon}',\rho})_* \bar{\delta}'$ for short. Since
\begin{align*}
(\ev_0)_* \circ (e_+)_* [ ( i_{\tilde{\epsilon}, C^{-1}\rho })_* \tilde{\delta}- ( i_{\bar{\epsilon}', C^{-1}\rho})_* \bar{\delta}'] =& ( i_{\tilde{\epsilon},C^{-1}\rho})_* ( \Th_{\tilde{\epsilon}} ) - ( i_{\bar{\epsilon}',C^{-1}\rho})_* ( \Th_{\bar{\epsilon}'})  \\
=&0 \in H^{\dr}_*(N_{C^{-1}\rho}), 
\end{align*}
Proposition \ref{prop-analogy} shows that there exists $\bar{\theta}_1\in \bar{C}^{\dr}_{n-d+1}(S_{\rho})$ such that $\partial \bar{\theta}_1=\bar{\delta}_+-\bar{\delta}_-$. We define
$\hat{\kappa}^1\colon \R\times \R \to [0,1]\colon (r_1,r_2) \mapsto \kappa(r_1)$,
where $\kappa$ is the function appeared in the proof of Lemma \ref{lem-existence-delta-bar}. Then, we take chains  $\hat{\beta}_+,\hat{\beta}_-\in \hat{C}^{\dr}_0(\{0\})$ define by
\[\begin{cases}
\hat{\beta}^1_+\coloneqq [\R^2,\id_{\R^2}, (\tau^1_+,\tau^1_-),(\tau^2_+,\tau^2_-),\hat{\kappa} ], \\
\hat{\beta}^1_-\coloneqq [\R^2,\id_{\R^2}, (\tau^1_+,\tau^1_-),(\tau^2_+,\tau^2_-),1-\hat{\kappa} ], \end{cases}\]
where $\tau^j_+=\id_{\{r_j\geq 1\}}$ and $\tau^j_-=\id_{\{r_j\leq -1\}}$ for $j=1,2$.
We define
\[\xi \coloneqq \hat{\beta}^1_+\times (\hat{i}^1\bar{\delta}_+) + \hat{\beta}^1_-\times (\hat{i}^1\bar{\delta}_-) +(\partial \hat{\beta}^1_+)\times (\hat{i}^1(\bar{\theta}_1 - \bar{i}e_- \bar{\theta}_1 )) \in \hat{C}^{\dr}_{n-d}(S_{\rho}) . \]
This chain satisfies $\partial \xi=0$ (note that and $e_-(\partial \bar{\theta}_1) =e_-\bar{\delta}_+-e_-\bar{\delta}_- =0$). Moreover,
\[e^1_+\xi = (i_{\tilde{\epsilon},\rho})_*\tilde{\delta},\ e^1_-\xi =(i_{\bar{\epsilon}',\rho})_*\bar{\delta}',\ e^2_-\xi=(i_{\epsilon'',\rho})_* (\bar{i} \delta'' ) \]
hold. The former two equations are easy to check.
The third equation can be checked as follows: Since $e^2_-\circ \hat{i}^1=\bar{i}\circ e_-$ holds, we have
$e^2_- (\hat{i}^1(\bar{\theta}_1 - \bar{i}e_- \bar{\theta}_1 )) =0$ and thus
\[e^2_-\xi = (e^2_-\hat{\beta}^1_+) \times (\bar{i} ( (i_{\epsilon'',\rho })_* \delta'' )) + (e^2_-\hat{\beta}^1_-) \times (\bar{i} ( (i_{\epsilon'',\rho })_*  \delta'') ) =(i_{\epsilon'',\rho})_* (\bar{i} \delta'' ) . \]

Let us write $\underline{\delta}\coloneqq e^2_+ \xi \in \bar{C}^{\dr}_{n-d}(S_{\rho})$ and consider the difference of chains $\underline{\delta} - (i_{\bar{\epsilon},\rho} )_* \bar{\delta} $. We claim that there exists $\bar{\theta}_2\in \bar{C}^{\dr}_{n-d+1} (S_{\hat{\epsilon}})$ such that
\begin{align}\label{condition-theta-bar-2}
\partial \bar{\theta}_2=(i_{\rho ,\hat{\epsilon}})_* \underline{\delta} - (i_{\bar{\epsilon},\hat{\epsilon}} )_*\bar{\delta} ,\  e_{+} \bar{\theta}_2=e_{-} \bar{\theta}_2=0 . 
\end{align}
%This can be proved by applying Proposition \ref{prop-exactness} twice.
%We first take $\hat{\epsilon}'\coloneqq C\rho $. 

We prove this claim.
Since $e_+\circ e^2_+=e_+\circ e^1_+$ and $e_-\circ e^2_+= e_+\circ e^1_-$,
we have
\begin{align*}
&(e_+)_* [\underline{\delta} - (i_{\bar{\epsilon},\rho})_* \bar{\delta} ] 
= (e_+\circ e^1_+)_* [\xi]  -(i_{\epsilon,\rho})_*\delta  =0\in H^{\dr}_{n-d}(S_{\rho}), \\
&(e_-)_* [\underline{\delta} - (i_{\bar{\epsilon},\rho})_* \bar{\delta} ] 
= (e_+\circ e^1_-)_* [\xi]  -(i_{\epsilon',\rho})_*\delta'  =0\in H^{\dr}_{n-d}(S_{\rho}).
\end{align*}
From Lemma \ref{lem-rest-isom-bar}, there exists $\bar{\theta}'_2\in \bar{C}^{\dr}_{n-d+1}(S_{\rho})$ such that $\partial \bar{\theta}'_2 = \underline{\delta} - (i_{\bar{\epsilon},\rho})_* \bar{\delta} $ and  $\partial (e_{+}\bar{\theta}'_2)=\partial (e_{-}\bar{\theta}'_2)=0$.
%
%For this chain $\bar{\theta}'_2$, $\partial (e_{+}\bar{\theta}'_2)=0$ holds since $e_{+} \underline{\delta}= e_{+}( (i_{\bar{\epsilon},\rho} )_*\bar{\delta})$.
Since $(\ev_0)_*[e_{+}\bar{\theta}'_2]$ and $(\ev_0)_*[e_{-}\bar{\theta}'_2]$ are contained in $H^{\dr}_{n-d+1}(N_{\rho}) =\{0\}$, Proposition \ref{prop-analogy} shows that there exist $\varphi_{+},\varphi_- \in C^{\dr}_{n-d+2}(S_{\hat{\epsilon}})$ such that $\partial \varphi_{+}= (i_{\rho,\hat{\epsilon}})_* (e_{+}\bar{\theta}'_2 )$ and $\partial \varphi_{-}=( i_{\rho,\hat{\epsilon}})_* (e_{-}\bar{\theta}'_2 )$. Then, by using $\beta_{+},\beta_-$ in the proof of Lemma \ref{lem-existence-delta-bar}, we define
\[\bar{\theta}_2\coloneqq  (i_{\rho,\hat{\epsilon}})_* \bar{\theta}'_2 - \partial( \beta_+ \times \bar{i} \varphi_+ ) -\partial (\beta_-\times \bar{i} \varphi_-) \in \bar{C}^{\dr}_{n-d+1}(S_{\hat{\epsilon}}), \]
and this chain satisfies (\ref{condition-theta-bar-2}).

We take $\hat{\kappa}^2\colon \R^2\to [0,1]\colon (r_1,r_2)\mapsto \kappa(r_2)$ and
\[\hat{\beta}^2_+\coloneqq [\R^2,\id_{\R^2}, (\tau^1_+,\tau^1_-),(\tau^2_+,\tau^2_-),\hat{\kappa}^2] \in \hat{C}^{\dr}_0(\{0\}).\]
Finally, we define a chain
\[\hat{\delta}\coloneqq (i_{\rho,\hat{\epsilon}})_* \xi - \partial ( \hat{\beta}^2_+\times \hat{i}^2\bar{\theta}_2 ) \in \hat{C}^{\dr}_{n-d}(S_{\hat{\epsilon}}).\]
This satisfies $\partial \hat{\delta}=0$ and the required four equations.
\end{proof}

Lemma \ref{lem-delta-hat} is applied to prove the next proposition.
\begin{prop}\label{prop-comm-k}
$k_{(\bar{\epsilon},\bar{\delta})} \circ k_{(\bar{\epsilon}',\bar{\delta}')} =k_{(\tilde{\epsilon},\tilde{\delta})} \colon H^{<a}_*(\epsilon'',\delta'') \to H^{<a}_*(\epsilon,\delta)$.
\end{prop}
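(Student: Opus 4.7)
The idea is to use the $[-1,1]^2$-modeled chain $\hat\delta$ from Lemma~\ref{lem-delta-hat} as a two-dimensional homotopy interpolating the three one-dimensional bridges $\bar\delta,\bar\delta',\tilde\delta$. First I would verify the size conditions: since $\hat\epsilon = C^2 \max\{\bar\epsilon, \bar\epsilon', \tilde\epsilon\}$ and each of $(\bar\epsilon_*,\bar\delta_*)$ lies in $\bar{\mathcal T}_a$, we have $\hat\epsilon \leq \epsilon_0/(5C_0)$ and $[a,a+\tfrac{2a}{\epsilon_0}\hat\epsilon] \cap \mathcal L(K) = \varnothing$. Consequently, $(\hat C^{<a}_*(\hat\epsilon),\hat D_{\hat\delta})$ is a well-defined chain complex whose homology is isomorphic to each of the four relevant bar homologies $\bar H^{<a}_*(\hat\epsilon,(i_{*,\hat\epsilon})_*\cdot)$ via $(e^j_{\hat\epsilon,\pm})_*$ (Lemma~\ref{lem-isom-e-j}), and to each of the corresponding ordinary homologies via subsequent $(e_{\hat\epsilon,\pm})_*$ (Lemma~\ref{lem-e-quasi-isom}) and $(j_{*,\hat\epsilon})_*^{-1}$ (Lemma~\ref{lem-isom-j}).

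Next, at each of the four corners $(\varepsilon_1,\varepsilon_2)\in\{+,-\}^2$ of the square I would construct an isomorphism
\[
g_{\varepsilon_1\varepsilon_2}\colon \hat H^{<a}_*(\hat\epsilon,\hat\delta) \to H^{<a}_*(\text{corner data})
\]
by composing $(e_{\hat\epsilon,\varepsilon_2})_* \circ (e^1_{\hat\epsilon,\varepsilon_1})_*$ (equivalently $(e_{\hat\epsilon,\varepsilon_1})_*\circ (e^2_{\hat\epsilon,\varepsilon_2})_*$, the two agreeing at the chain level by \eqref{e-and-e-j}) with $(j_{*,\hat\epsilon})_*^{-1}$. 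Each edge of the square then gives an identity relating a one-dimensional bridge transition map to the corresponding two corner maps; explicitly,
\[
k_{(\bar\epsilon,\bar\delta)} = g_{++}\circ g_{-+}^{-1}, \quad k_{(\bar\epsilon',\bar\delta')} = g_{-+}\circ g_{--}^{-1}, \quad k_{(\tilde\epsilon,\tilde\delta)} = g_{++}\circ g_{+-}^{-1}.
\]
After cancelling the common middle factor $(e^j_{\hat\epsilon,\varepsilon_j})_*$, each identity reduces, by the naturality of $(e_{\hat\epsilon,\pm})_*$ under inclusion together with the commutativity of $(j_{\bar\epsilon_*,\hat\epsilon})_*$ (itself an isomorphism by Lemma~\ref{lem-isom-j-bar}) with the $(e_{\bullet,\pm})_*$, to the defining formula $f_{(\bar\epsilon_*,\bar\delta_*),\pm} = (j_{*,\bar\epsilon_*})_*^{-1}\circ (e_{\bar\epsilon_*,\pm})_*$. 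For the bottom edge the bridging chain $e^2_-\hat\delta = (i_{\epsilon'',\hat\epsilon})_*\bar i\delta'' = \bar i\bigl((i_{\epsilon'',\hat\epsilon})_*\delta''\bigr)$ is trivial, so Lemma~\ref{lem-trivial-delta-bar} applied at size $\hat\epsilon$ yields $g_{+-}\circ g_{--}^{-1} = \id$, i.e.\ $g_{+-} = g_{--}$. Chaining the three edge identities then gives
\[
k_{(\bar\epsilon,\bar\delta)}\circ k_{(\bar\epsilon',\bar\delta')} = g_{++}\circ g_{-+}^{-1}\circ g_{-+}\circ g_{--}^{-1} = g_{++}\circ g_{+-}^{-1} = k_{(\tilde\epsilon,\tilde\delta)}.
\]

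The main difficulty will be the bookkeeping in the verification of the three edge identities: each requires a commutative-diagram chase whose vertices include ordinary, $[-1,1]$-modeled, and $[-1,1]^2$-modeled complexes at the two sizes $\bar\epsilon_*$ and $\hat\epsilon$, with bridging chains that are restrictions or $\bar i$-images of $\hat\delta$, and whose edges are compositions of $(e_\pm)_*$, $(e^j_\pm)_*$, and inclusion maps. Every square commutes at the chain level essentially by inspection from the definitions in Section~\ref{subsec-[-1,1]-model}, but keeping track of which bridging chain is attached to which complex in a diagram with this many faces is the chief technical burden. No new chain-level construction beyond Section~\ref{subsec-[-1,1]-model} should be needed.
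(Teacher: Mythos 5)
Your proof is correct and follows essentially the same route as the paper: you use the $[-1,1]^2$-modeled chain $\hat\delta$ from Lemma~\ref{lem-delta-hat}, the corner relations \eqref{e-and-e-j} to define the four maps $g_{\varepsilon_1\varepsilon_2}$ unambiguously, and the $\bar i$-triviality of $e^2_-\hat\delta$ (i.e.\ equation \eqref{e-pm-and-i-bar}, the content of Lemma~\ref{lem-trivial-delta-bar}) to collapse the bottom edge. Your ``four-corners-of-a-square'' packaging is a somewhat cleaner reorganization of the paper's three equations \eqref{composition-equation} and the accompanying diagram chase, but the lemmas invoked and the key observation about the bottom edge are identical.
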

\begin{proof}

Let $(\hat{\epsilon},\hat{\delta})$ be the pair of Lemma \ref{lem-delta-hat}. From Lemma \ref{lem-isom-j-bar} and Lemma \ref{lem-isom-e-j}, we can define isomorphisms
\begin{align*}
f^1_{(\hat{\epsilon},\hat{\delta}),+}&\coloneqq (j_{\bar{\epsilon},\hat{\epsilon}})_*^{-1}\circ (e^1_{\hat{\epsilon},+})_* \colon \hat{H}^{<a}_*(\hat{\epsilon},\hat{\delta}) \to \bar{H}^{<a}_*(\tilde{\epsilon},\tilde{\delta}), \\
f^1_{(\hat{\epsilon}, \hat{\delta}),-}&\coloneqq (j_{\bar{\epsilon}',\hat{\epsilon}})_*^{-1} \circ (e^1_{\hat{\epsilon},-})_* \colon \hat{H}^{<a}_*(\hat{\epsilon},\hat{\delta}) \to \bar{H}^{<a}_*(\bar{\epsilon}',\bar{\delta}'), \\
f^2_{(\hat{\epsilon},\hat{\delta}),+}&\coloneqq (j_{\bar{\epsilon},\hat{\epsilon}})_*^{-1}\circ (e^2_{\hat{\epsilon},+})_* \colon 
\hat{H}^{<a}_*(\hat{\epsilon},\hat{\delta}) \to \bar{H}^{<a}_*(\bar{\epsilon},\bar{\delta}).
\end{align*}
%Until the end of this proof, we ommit writing ``$[a,b)$'' in the homology groups.

From the definitions of $k_{(\bar{\epsilon},\bar{\delta})}$, $ k_{(\bar{\epsilon}',\bar{\delta}')}$ and $k_{(\tilde{\epsilon},\tilde{\delta})}$, it suffices to show that the following diagram commutes:
\[\xymatrix{
& & \bar{H}^{<a}_*(\tilde{\epsilon},\tilde{\delta})\ar[ddll]_{f_{(\tilde{\epsilon},\tilde{\delta}),-} } \ar[ddrr]^{f_{(\tilde{\epsilon},\tilde{\delta}),+} } & & \\
& & \hat{H}^{<a}_*(\hat{\epsilon},\hat{\delta})\ar[u]_{f^1_{(\hat{\epsilon},\hat{\delta}),+} } \ar[ddl]_{f^1_{(\hat{\epsilon},\hat{\delta}),-} } \ar[ddr]^{f^2_{(\hat{\epsilon},\hat{\delta}),+} } & & \\
H^{<a}_*(\epsilon'',\delta'') & &H^{<a}_*(\epsilon',\delta') & &H^{<a}_*(\epsilon,\delta) \\
& \bar{H}^{<a}_{*}(\bar{\epsilon}',\bar{\delta}') \ar[ul]^{f_{(\bar{\epsilon}',\bar{\delta}'),-} } \ar[ur]_{f_{(\bar{\epsilon}',\bar{\delta}'),+} } & & \bar{H}^{<a}_*(\bar{\epsilon},\bar{\delta}). \ar[ul]^{f_{(\bar{\epsilon},\bar{\delta}),-}}\ar[ur]_{f_{(\bar{\epsilon},\bar{\delta}),+} }& 
}\]
Note that all maps appearing in the diagram are isomorphisms.
We need to prove the following three equations:
\begin{align}\label{composition-equation}
\begin{cases}
f_{(\tilde{\epsilon},\tilde{\delta}),+} \circ f^1_{(\hat{\epsilon},\hat{\delta}),+} = f_{(\bar{\epsilon},\bar{\delta}),+} \circ f^2_{(\hat{\epsilon},\hat{\delta}),+} , \\
f_{(\bar{\epsilon},\bar{\delta}),-} \circ f^2_{(\hat{\epsilon},\hat{\delta}),+} = f_{(\bar{\epsilon}',\bar{\delta}'),+} \circ f^1_{(\hat{\epsilon},\hat{\delta}),-} , \\
f_{(\tilde{\epsilon},\tilde{\delta}),-} \circ f^1_{(\hat{\epsilon},\hat{\delta}),+} = f_{(\bar{\epsilon}',\bar{\delta}'),-} \circ f^1_{(\hat{\epsilon},\hat{\delta}),-}.
\end{cases}
\end{align}
Let us prove the first equation. Returning to the definition of $f_{(\tilde{\epsilon},\tilde{\delta}),+}$ and $f^1_{(\hat{\epsilon},\hat{\delta}),+}$,
\begin{align*}
f_{(\tilde{\epsilon},\tilde{\delta}),+} \circ f^1_{(\hat{\epsilon},\hat{\delta}),+} &= (j_{\epsilon,\tilde{\epsilon}})_*^{-1}\circ (e_{\tilde{\epsilon},+})_* \circ  (j_{\tilde{\epsilon},\hat{\epsilon}})_*^{-1}\circ (e^1_{\hat{\epsilon},+})_* \\
&=(j_{\epsilon,\hat{\epsilon}})_*^{-1}\circ ( e_{\hat{\epsilon},+}\circ  e^1_{\hat{\epsilon},+})_* \\
&=(j_{\epsilon,\hat{\epsilon}})_*^{-1}\circ ( e_{\hat{\epsilon},+}\circ  e^2_{\hat{\epsilon},+})_* \\
&= (j_{\epsilon,\bar{\epsilon}})_*^{-1}\circ (e_{\bar{\epsilon},+})_* \circ  (j_{\bar{\epsilon},\hat{\epsilon}})_*^{-1}\circ (e^2_{\hat{\epsilon},+})_* \\
&=f_{(\bar{\epsilon},\bar{\delta}),+} \circ f^2_{(\hat{\epsilon},\hat{\delta}),+} .
\end{align*}
Here, the second and fourth equality follow from obvious equations
\[(j_{\tilde{\epsilon},\hat{\epsilon}})_* \circ (e_{\tilde{\epsilon},+})_* = (e_{\hat{\epsilon},+})_* \circ (j_{\tilde{\epsilon},\hat{\epsilon}})_*,\ (j_{\bar{\epsilon},\hat{\epsilon}})_* \circ (e_{\bar{\epsilon},+})_*= (e_{\hat{\epsilon},+})_*\circ (j_{\bar{\epsilon},\hat{\epsilon}})_* . \] The third equality follows from
\[e_{\hat{\epsilon},+}\circ  e^1_{\hat{\epsilon},+}= e_{\hat{\epsilon},+}\circ  e^2_{\hat{\epsilon},+},\]
which comes from the relation $e_+\circ e^1_+=e_+\circ e^2_+$ of (\ref{e-and-e-j}).
The second equation of (\ref{composition-equation}) can be proved similarly by applying
\[e_{\hat{\epsilon},-}\circ e^2_{\hat{\epsilon},+}= e_{\hat{\epsilon},+}\circ e^1_{\hat{\epsilon},-},\]
which comes from the relation $e_-\circ e^2_+=e_+\circ e^1_-$ of  (\ref{e-and-e-j}).
To prove the third equation of (\ref{composition-equation}), there is one non-trivial matter: We need to apply
\[(e_{\hat{\epsilon},-}\circ e^1_{\hat{\epsilon},+})_* = (e_{\hat{\epsilon},-}\circ e^1_{\hat{\epsilon},-})_* \colon \hat{H}^{<a}_*(\hat{\epsilon},\hat{\delta}) \to H^{<a}_* (\hat{\epsilon}, (i_{\epsilon'',\hat{\epsilon}})_* \delta''),\]
which does not follow from (\ref{e-and-e-j}) directly.
To check this equation, we consider the following diagram including $\bar{H}^{<a}_* (\hat{\epsilon},( i_{\epsilon'',\hat{\epsilon}})_* (\bar{i}\delta''))$:
\[\xymatrix{
\hat{H}^{<a}_*(\hat{\epsilon},\hat{\delta}) \ar[rr]^{(e^1_{\hat{\epsilon}, +})_*} \ar[dd]_{(e^1_{\hat{\epsilon},-})_*} \ar[rd]^{(e^2_{\hat{\epsilon},-})_*}& & \bar{H}^{<a}_*(\hat{\epsilon}, (i_{\tilde{\epsilon},\hat{\epsilon}} )_*\tilde{\delta}) \ar[dd]^{(e_{\hat{\epsilon},-})_*}\\
& \bar{H}^{<a}_* (\hat{\epsilon}, (i_{\epsilon'',\hat{\epsilon}})_* (\bar{i}\delta''))\ar@<-0ex>  [rd]_{(e_{\hat{\epsilon},-})_*} \ar@<1ex>[rd]^{(e_{\hat{\epsilon},+})_*} & \\
\bar{H}^{<a}_* (\hat{\epsilon}, (i_{\bar{\epsilon}',\hat{\epsilon}})_* \bar{\delta}')\ar[rr]^{(e_{\hat{\epsilon},-})_*} & & H^{<a}_* (\hat{\epsilon}, (i_{\epsilon'',\hat{\epsilon}})_* \delta'') .
}\]
Then
\[ e_{\hat{\epsilon},-} \circ e^1_{\hat{\epsilon},+}=e_{\hat{\epsilon},+}\circ e^2_{\hat{\epsilon},-},\ e_{\hat{\epsilon},-} \circ e^1_{\hat{\epsilon},-} =e_{\hat{\epsilon},-} \circ e^2_{\hat{\epsilon},-}\]
follow directly from the relations $e_-\circ e^1_+=e_+\circ e^2_-$ and $e_-\circ e^1_-=e_-\circ e^2_-$ of (\ref{e-and-e-j}). If we rewrite $ (i_{\epsilon'',\hat{\epsilon}})_* \delta''$ by $\delta^\#$,
the equation (\ref{e-pm-and-i-bar}) shows that
\[(e_{\hat{\epsilon},+})_* = (e_{\hat{\epsilon},-})_* \colon \bar{H}^{<a}_* (\hat{\epsilon}, \bar{i}\delta^\#) \to H^{<a}_* (\hat{\epsilon}, \delta^\#).\]
From the above diagram, we get $(e_{\hat{\epsilon},-})_* \circ (e^1_{\hat{\epsilon},+})_* = (e_{\hat{\epsilon},-} )_*\circ ( e^1_{\hat{\epsilon},-})_*$. This finishes the proof.
\end{proof}

\begin{cor}\label{cor-indep-k}
For $(\epsilon,\delta),(\epsilon',\delta')\in \mathcal{T}_{a}$ with $\epsilon'\leq \epsilon$,
the isomorphism
\[k_{(\bar{\epsilon},\bar{\delta})} \colon H^{<a}_*(\epsilon',\delta') \to H^{<a}_*(\epsilon,\delta)\]
does not depend on the choice of $(\bar{\epsilon},\bar{\delta})\in \bar{\mathcal{T}}_{a}$ satisfying (\ref{condition-delta-bar}).
\end{cor}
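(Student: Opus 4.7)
The plan is to deduce this corollary directly from Proposition \ref{prop-comm-k}, specializing to a configuration in which one of the three transition maps becomes the identity by virtue of Lemma \ref{lem-trivial-delta-bar}.

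Given two choices $(\bar{\epsilon}_1,\bar{\delta}_1), (\bar{\epsilon}_2,\bar{\delta}_2) \in \bar{\mathcal{T}}_a$ both satisfying (\ref{condition-delta-bar}) for the same pair $(\epsilon,\delta),(\epsilon',\delta')$, I would apply Proposition \ref{prop-comm-k} with the triple $(\epsilon,\delta), (\epsilon',\delta'), (\epsilon',\delta') \in \mathcal{T}_a$; this is permissible because the hypothesis $\epsilon'' \leq \epsilon' \leq \epsilon$ there is non-strict, so collapsing $(\epsilon'',\delta'')$ to $(\epsilon',\delta')$ is allowed. For the three auxiliary bar-data, I would take $(\bar{\epsilon},\bar{\delta}) = (\bar{\epsilon}_1,\bar{\delta}_1)$ and $(\tilde{\epsilon},\tilde{\delta}) = (\bar{\epsilon}_2,\bar{\delta}_2)$ to cover the two outer edges $(\epsilon,\delta) \leftrightarrow (\epsilon',\delta')$, and for the trivial middle edge $(\epsilon',\delta') \leftrightarrow (\epsilon',\delta')$, the canonical choice $(\bar{\epsilon}',\bar{\delta}') = (\epsilon',\bar{i}\delta')$ furnished by Lemma \ref{lem-trivial-delta-bar}.

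The compatibility conditions demanded by Proposition \ref{prop-comm-k} are immediate: $e_+\bar{\delta}_j = (i_{\epsilon,\bar{\epsilon}_j})_*\delta$ and $e_-\bar{\delta}_j = (i_{\epsilon',\bar{\epsilon}_j})_*\delta'$ for $j=1,2$ are precisely (\ref{condition-delta-bar}), while $e_+(\bar{i}\delta') = e_-(\bar{i}\delta') = \delta'$ follows at once from the definition (\ref{bar-i}) of $\bar{i}$. The conclusion of Proposition \ref{prop-comm-k} then reads
\[
k_{(\bar{\epsilon}_1,\bar{\delta}_1)} \circ k_{(\epsilon', \bar{i}\delta')} = k_{(\bar{\epsilon}_2,\bar{\delta}_2)} \colon H^{<a}_*(\epsilon',\delta') \to H^{<a}_*(\epsilon,\delta),
\]
and Lemma \ref{lem-trivial-delta-bar} identifies $k_{(\epsilon', \bar{i}\delta')}$ with $\id_{H^{<a}_*(\epsilon',\delta')}$, so $k_{(\bar{\epsilon}_1,\bar{\delta}_1)} = k_{(\bar{\epsilon}_2,\bar{\delta}_2)}$, which is the statement of the corollary.

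I do not foresee a genuine obstacle here: the substantive work—constructing a $[-1,1]^2$-modeled chain $\hat{\delta}$ that simultaneously refines several $[-1,1]$-modeled choices and then tracing through the resulting diagram of quasi-isomorphisms—has already been carried out in Proposition \ref{prop-comm-k}, and the corollary amounts to nothing more than reading that proposition in the degenerate case where the middle pair collapses. The only point worth double-checking is that $(\epsilon',\bar{i}\delta')$ indeed lies in $\bar{\mathcal{T}}_a$, which is the content of Lemma \ref{lem-trivial-delta-bar} and requires no new argument.
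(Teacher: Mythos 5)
Your proof is correct and follows exactly the paper's own argument: specialize Proposition \ref{prop-comm-k} to the degenerate triple $(\epsilon'',\delta'')=(\epsilon',\delta')$, take the middle bar-datum to be $(\epsilon',\bar{i}\delta')$, and invoke Lemma \ref{lem-trivial-delta-bar} to identify the middle factor with the identity. Nothing to add.
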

\begin{proof}
Let $(\bar{\epsilon}, \bar{\delta})$ and $(\tilde{\epsilon}, \tilde{\delta} )$ be two arbitrary choices from $\bar{\mathcal{T}}_{a}$ satisfying (\ref{condition-delta-bar}).
We apply Proposition \ref{prop-comm-k} to the case where $(\epsilon'',\delta'')=(\epsilon',\delta')$ and $(\bar{\epsilon}', \bar{\delta}')=(\epsilon', \bar{i} \delta' )$.
By Lemma \ref{lem-trivial-delta-bar}, $k_{ (\epsilon', \bar{i} \delta') }$ is equal to the identity map on $H^{<a}_*(\epsilon',\delta')$, so we get an equation
\[k_{(\bar{\epsilon}, \bar{\delta})} \circ \id_{H^{<a}_*(\epsilon',\delta')} = k_{(\tilde{\epsilon}, \tilde{\delta})}.\]
\end{proof}
From this result, 
we may rewrite $k_{(\bar{\epsilon},\bar{\delta})}\colon  H^{<a}_*(\epsilon',\delta') \to H^{<a}_*(\epsilon,\delta)$ by $k_{(\epsilon',\delta'),(\epsilon,\delta)}$. The equations of Lemma \ref{lem-trivial-delta-bar} and Proposition \ref{prop-comm-k} can be rewritten as
\begin{align*}
\begin{cases}
k_{(\epsilon,\delta),(\epsilon,\delta)}=\id_{H^{<a}_*(\epsilon,\delta)}, \\
k_{(\epsilon',\delta'),(\epsilon,\delta)} \circ k_{(\epsilon'',\delta''),(\epsilon',\delta')} = k_{(\epsilon'',\delta''),(\epsilon,\delta)}  .
\end{cases}\end{align*}
Now, $ \mathcal{T}_{a}$ becomes a directed set by the relation $(\epsilon',\delta')\leq (\epsilon,\delta)$ if and only if $\epsilon'\leq \epsilon$.
Then we obtain an inverse system
\[\left( \{ H^{<a}_*(\epsilon,\delta)\}_{(\epsilon,\delta)\in \mathcal{T}_{a}} , \{ k_{(\epsilon',\delta'),(\epsilon,\delta)} \}_{\epsilon' \leq \epsilon } \right) \]
and its inverse limit
\[H^{<a}_*(Q,K )\coloneqq \varprojlim_{\epsilon\to 0} H^{<a}_*(\epsilon,\delta )\]
is defined.

\subsubsection{Spectral sequence}

Lastly, we extend the above discussions to spectral sequences.
For $(\epsilon,\delta),(\epsilon',\delta')\in \mathcal{T}_a$ with $\epsilon'\leq \epsilon$, we take $(\bar{\epsilon},\bar{\delta})\in \bar{\mathcal{T}}_a$ satisfying (\ref{condition-delta-bar}).
Chain maps $(j_{\epsilon,\bar{\epsilon}})_*$, $e_{\bar{\epsilon}_+}$, $e_{\bar{\epsilon},-}$ and $(j_{\epsilon',\bar{\epsilon}})_*$ induce a zig-zag of morphisms of spectral sequences
\begin{align}\label{zig-zag}
\xymatrix{
E^{<a}_{(\epsilon',\delta')} \ar[r]^-{(j_{\epsilon',\bar{\epsilon}})_*} & E^{<a}_{(\bar{\epsilon},(i_{\epsilon',\bar{\epsilon}})_*\delta')} & \bar{E}^{<a}_{(\bar{\epsilon},\bar{\delta})} \ar[l]_-{(e_{\bar{\epsilon}_-})_* }  \ar[r]^-{(e_{\bar{\epsilon},+})_*} & E^{<a}_{(\bar{\epsilon}, (i_{\epsilon,\bar{\epsilon}})_*\delta)}  &  E^{<a}_{(\epsilon,\delta)} \ar[l]_-{(j_{\epsilon,\bar{\epsilon}})_*}.
}\end{align}
All of them are isomorphism. Let $k_{(\epsilon',\delta'),(\epsilon,\delta)}\colon  E^{<a}_{(\epsilon',\delta')}\to E^{<a}_{(\epsilon,\delta)} $ denote the composition of these maps. (The independence on $(\epsilon,\delta)$ can be proved as Corollary \ref{cor-indep-k}.)
\begin{prop}\label{prop-spectral-seq}
There exists a spectral sequence $E^{<a}= ( \{(E^{<a})^r_{p,q}\}$, $\{(d^{<a})^r_{p,q}\} )$ which converges to $H^{<a}_*(Q,K)$ in the sense of \cite[Bounded Convergence 5.2.5]{weibel} such that
\[(E^{<a})^1_{-m,q} = \begin{cases} H^{\dr}_{q-m(d-1)}(\Sigma^a_{m},\Sigma^0_{m}) &\text{ if }m\geq 0, \\
0 &\text{ if }m<0 \end{cases}\]
\end{prop}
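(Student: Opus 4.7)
The plan is to construct $E^{<a}$ as the common structure underlying the compatible family of spectral sequences $\{E^{<a}_{(\epsilon,\delta)}\}_{(\epsilon,\delta)\in \mathcal{T}_a}$ already in hand, by promoting the transition isomorphism $k_{(\epsilon',\delta'),(\epsilon,\delta)}$ from an isomorphism on homology to one of spectral sequences. First, I would observe that each chain map in the zig-zag (\ref{zig-zag})---namely $(j_{\epsilon',\bar{\epsilon}})_*$, $(e_{\bar{\epsilon},-})_*$, $(e_{\bar{\epsilon},+})_*$ and $(j_{\epsilon,\bar{\epsilon}})_*$---preserves the filtrations by $\{\mathcal{F}^{<a}_{\cdot,p}\}$ (or its $[-1,1]$-modeled analogue $\{\bar{\mathcal{F}}^{<a}_{\cdot,p}\}$), simply because each respects the direct sum decomposition in $m$ and intertwines the evident inclusion of terms. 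Thus each induces a morphism of bounded spectral sequences.

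Second, I would verify that each such morphism is already an isomorphism on the first page, hence on every subsequent page by (the proof of) Lemma \ref{lem-spectral}. For $(j_{\epsilon,\bar{\epsilon}})_*$ and $(j_{\epsilon',\bar{\epsilon}})_*$, this is exactly the computation of Lemma \ref{lem-isom-j}: the $(-m,q)$-entry is an inclusion-induced map between relative $H^{\dr}$-groups, which vanishes on both sides once $m \geq 2a/\epsilon_0$ and is an isomorphism otherwise by Proposition \ref{prop-hmgy-grp} and the defining condition of $\mathcal{T}_a$ (resp.\ $\bar{\mathcal{T}}_a$). For $(e_{\bar{\epsilon},\pm})_*$, it is the argument of Lemma \ref{lem-e-quasi-isom}, using Lemma \ref{lem-rest-isom-bar}. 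Composing yields that $k_{(\epsilon',\delta'),(\epsilon,\delta)}$ promotes to an isomorphism of bounded spectral sequences, and the cocycle relation of Proposition \ref{prop-comm-k} and Corollary \ref{cor-indep-k} extends to every page by the same inductive argument. I can therefore define $(E^{<a})^r_{p,q}$ either as the inverse limit $\varprojlim_{(\epsilon,\delta)\in\mathcal{T}_a}(E^{<a}_{(\epsilon,\delta)})^r_{p,q}$ or, equivalently, as $(E^{<a}_{(\epsilon_0,\delta_0)})^r_{p,q}$ for any fixed base point, and transport the differentials and the abutment.

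Third, I would identify the first page. By construction, $(E^{<a})^1_{-m,q} \cong H^{\dr}_{q-m(d-1)}(\Sigma^{a+m\epsilon}_m,\Sigma^0_m)$ for any $(\epsilon,\delta)\in \mathcal{T}_a$. Condition (1) of Definition \ref{def-class-of-data} gives $[a,a+m\epsilon]\cap \mathcal{L}(K)=\varnothing$ for every $0\leq m\leq 2a/\epsilon_0$, so Proposition \ref{prop-hmgy-grp} applied to the pair $(\Sigma^{a+m\epsilon}_m,\Sigma^a_m)$ implies that the inclusion $\Sigma^a_m\hookrightarrow \Sigma^{a+m\epsilon}_m$ induces an isomorphism $H^{\dr}_{q-m(d-1)}(\Sigma^a_m,\Sigma^0_m)\xrightarrow{\cong}H^{\dr}_{q-m(d-1)}(\Sigma^{a+m\epsilon}_m,\Sigma^0_m)$; for larger $m$ both sides vanish by Remark \ref{rem-large-m}. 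The $m<0$ case is trivial since $\mathcal{F}^{<a}_{\epsilon,p}=\mathcal{F}^{<a}_{\epsilon,0}$ for $p\geq 0$.

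Finally, for convergence, each $E^{<a}_{(\epsilon,\delta)}$ is bounded (supported in $-2a/\epsilon_0 \leq p \leq 0$) and converges to $H^{<a}_*(\epsilon,\delta)$, and the zig-zag identifies these abutments compatibly with $k_{(\epsilon',\delta'),(\epsilon,\delta)}$. Since all transition maps on the abutments are isomorphisms with $\varprojlim H^{<a}_*(\epsilon,\delta)=H^{<a}_*(Q,K)$, the common spectral sequence $E^{<a}$ converges to $H^{<a}_*(Q,K)$ in the bounded sense of \cite[5.2.5]{weibel}. The main obstacle I anticipate is purely notational: checking that the filtration-preserving property of each map in (\ref{zig-zag}) holds after passing to $[-1,1]$-modeled chains (which I would dispatch by directly inspecting that $(e_{\bar{\epsilon},\pm})_*$ sends $\bar{\mathcal{F}}^{<a}_{\bar{\epsilon},p}$ into $\mathcal{F}^{<a}_{\bar{\epsilon},p}$ term-by-term in $m$), and that the independence of $(\bar{\epsilon},\bar{\delta})$ argument of Corollary \ref{cor-indep-k} genuinely lifts to each page—both are mechanical once the bookkeeping is set up.
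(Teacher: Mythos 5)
Your proposal takes essentially the same route as the paper's proof: promote the zig-zag (\ref{zig-zag}) to morphisms of spectral sequences, verify they are isomorphisms on the first page (via Lemma \ref{lem-isom-j}, Lemma \ref{lem-e-quasi-isom}, Lemma \ref{lem-spectral}), define $E^{<a}$ as the inverse limit, and identify the first page using Proposition \ref{prop-hmgy-grp}. The paper makes one extra explicit observation that you leave implicit---namely that $(e_{\bar\epsilon,+})_* = (\bar i)_*^{-1} = (e_{\bar\epsilon,-})_*$ on the first page, so $k^1_{(\epsilon',\delta'),(\epsilon,\delta)}$ reduces to the inclusion-induced map $(j_{\epsilon',\epsilon})_*$---which is what makes the inclusion-induced isomorphisms $H^{\dr}_*(\Sigma^a_m,\Sigma^0_m)\to H^{\dr}_*(\Sigma^{a+m\epsilon}_m,\Sigma^0_m)$ compatible with the transition maps and hence canonically identifies the inverse limit in your third paragraph; your appeal to Lemma \ref{lem-rest-isom-bar} carries the same content, so the argument is sound.
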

\begin{proof} 
 On the first page, the middle two maps of (\ref{zig-zag}) have the form
\[\xymatrix{
H_{q+p(d-1)}(\Sigma^{a-p\bar{\epsilon}}_{-p},\Sigma^0_{-p}) & \bar{H}_{q+p(d-1)}(\Sigma^{a-p\bar{\epsilon}}_{-p},\Sigma^0_{-p}) \ar[l]_-{(e_+)_*} \ar[r]^-{(e_-)_*}& H_{q+p(d-1)}(\Sigma^{a-p\bar{\epsilon}}_{-p},\Sigma^0_{-p}) .
}\]
Since $(e_-)_*=(\bar{i})^{-1}_*=(e_+)_*$, this composition is equal to the identity map. Therefore, $k_{(\epsilon',\delta'),(\epsilon,\delta)}$ is equal to  $(j_{\epsilon',\epsilon})_*\colon H_{q+p(d-1)}(\Sigma^{a-p\epsilon'}_{-p},\Sigma^0_{-p})  \to H_{q+p(d-1)}(\Sigma^{a-p\epsilon}_{-p},\Sigma^0_{-p}) $ on the $(p,q)$-term ($p\leq 0$) of the first page.

By $\{k_{(\epsilon',\delta'),(\epsilon,\delta)}\}_{\epsilon'\leq \epsilon}$,
we define $(E^{<a})^r_{p,q}\coloneqq \varprojlim_{\epsilon\to 0}(E^{<a}_{(\epsilon,\delta)})^r_{p,q}$. Moreover,  \[(d^{<a})^r_{p,q}\colon (E^{<a})^r_{p,q} \to (E^{<a})^r_{p-r,q+r-1}\] is defined to be the map induced by $\{(d^{<a}_{(\epsilon,\delta)})^r_{p,q}\}_{(\epsilon,\delta)\in \mathcal{T}_a}$. Then the $(p,q)$-term of the first page is given by
\[(E^{<a})^1_{p,q} =\varprojlim_{\epsilon\to 0} H_{q+p(d-1)}(\Sigma^{a-p\epsilon}_{-p},\Sigma^0_{-p}) = H_{q+p(d-1)}(\Sigma^a_{-p},\Sigma^0_{-p}),\]
for $p\leq 0$ and $(E^{<a})^1_{p.q}=0$ for $p>0$.
Since $\{k_{(\epsilon',\delta'),(\epsilon,\delta)}\}_{\epsilon'\leq \epsilon}$ consists of isomorphisms,
it is clear that $E^{<a}$ is a spectral sequence which converges to $H^{<a}_*(Q,K)$.
\end{proof}

\subsection{Definition of $H^{\str}_*(Q,K)$}\label{subsec-string-hmgy}

In this section, we define $I^{a,b} \colon H^{<a}_*(Q,K) \to H^{<b}_*(Q,K)$ for $a,b\in \R_{> 0}\setminus \mathcal{L}(K)$ with $a\leq b$ to get a direst system $(\{H^{<a}_*(Q,K)\}_{a\in \R_{> 0}\setminus \mathcal{L}(K)}, \{I^{a,b}\}_{a\leq b} )$. After defining $H^{\str}_*(Q,K)$ as its direct limit, we will give a structure of a unital graded $\R$-algebra.

\subsubsection{The limit of $a\to \infty$}
Let $a,b$ be the above real numbers. For any $(\epsilon,\delta)\in \mathcal{T}_{a} \cap \mathcal{T}_{b}$,
we have considered in Section \ref{subsec-def-of-chain-cpx} a linear map
\[(I^{a,b}_{\epsilon})_* \colon H^{<a}_*(\epsilon,\delta) \to H^{<b}_*(\epsilon,\delta) ,\]
which is induced by the inclusion maps $I^{a,b}_{\epsilon}\colon \Sigma^{a+m\epsilon}_m \to \Sigma^{b+m\epsilon}_m$ for all $m\in Z_{\geq 0}$.

\begin{lem}\label{lem-commute-I-k}
Suppose that $(\bar{\epsilon},\bar{\delta})\in \bar{\mathcal{T}}_{a} \cap  \bar{\mathcal{T}}_{b}$ satisfies (\ref{condition-delta-bar}) for $(\epsilon,\delta),(\epsilon',\delta)\in \mathcal{T}_a\cap \mathcal{T}_b$ with $\epsilon'\leq \epsilon$. Then, the following diagram commutes:
\[\xymatrix{
H^{<a}_*(\epsilon,\delta) \ar[r]^-{ (I^{a,b}_{\epsilon})_* } & H^{<b}_*(\epsilon,\delta) \\
H^{<a}_*(\epsilon',\delta') \ar[u]^-{k_{(\epsilon',\delta'),(\epsilon,\delta)}} \ar[r]^-{ (I^{a,b}_{\epsilon'})_* }& H^{<b}_*(\epsilon',\delta') \ar[u]_-{k_{(\epsilon',\delta'),(\epsilon,\delta)}}.
}\]
\end{lem}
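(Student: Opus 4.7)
The plan is to reduce the claimed commutativity to the fact that each elementary piece used in the construction of $k_{(\epsilon',\delta'),(\epsilon,\delta)}$ commutes, already on the chain level, with the inclusion-induced chain maps $(I^{a,b}_\bullet)_*$. Unpacking the definition,
\[
k_{(\epsilon',\delta'),(\epsilon,\delta)} = f_{(\bar\epsilon,\bar\delta),+}\circ f_{(\bar\epsilon,\bar\delta),-}^{-1} = (j_{\epsilon,\bar\epsilon})_*^{-1}\circ (e_{\bar\epsilon,+})_*\circ (e_{\bar\epsilon,-})_*^{-1}\circ (j_{\epsilon',\bar\epsilon})_*,
\]
so it suffices to show that the four constituents $(j_{\epsilon,\bar\epsilon})_*$, $(j_{\epsilon',\bar\epsilon})_*$, $(e_{\bar\epsilon,+})_*$, $(e_{\bar\epsilon,-})_*$ each fit into a commutative square with $(I^{a,b}_\bullet)_*$, and then to invert the $j$'s and $e_-$ on homology.

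First I would note the chain-level commutations. The inclusion $I^{a,b}_\bullet$ acts by enlarging the length bound from $a+m\bullet$ to $b+m\bullet$ on each summand $C^{\dr}_{*-m(d-2)}(\Sigma^{a+m\bullet}_m,\Sigma^0_m)$, while $j_{\epsilon,\bar\epsilon}$, $e_{\bar\epsilon,+}$, $e_{\bar\epsilon,-}$ act, summand by summand, by operations on paths orthogonal to the length bound (widening the tubular-neighborhood parameter, or restricting a $[-1,1]$-modeled chain to a slice). Concretely, for every $m\in\Z_{\geq 0}$, the maps $j_{\epsilon,\bar\epsilon}\colon \Sigma^{a+m\epsilon}_m\to \Sigma^{a+m\bar\epsilon}_m$ and $I^{a,b}_\bullet$ commute as smooth maps of differentiable spaces; the same is true for the restriction-to-$\{\pm 1\}$ maps defining $e_{\bar\epsilon,\pm}$. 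Hence one has commutative squares of chain complexes
\[
\xymatrix@C=3em{
C^{<a}_*(\epsilon)\ar[r]^{(I^{a,b}_{\epsilon})_*}\ar[d]_{(j_{\epsilon,\bar\epsilon})_*} & C^{<b}_*(\epsilon)\ar[d]^{(j_{\epsilon,\bar\epsilon})_*}\\
C^{<a}_*(\bar\epsilon)\ar[r]^{(I^{a,b}_{\bar\epsilon})_*} & C^{<b}_*(\bar\epsilon)
}\qquad
\xymatrix@C=3em{
\bar C^{<a}_*(\bar\epsilon)\ar[r]^{(I^{a,b}_{\bar\epsilon})_*}\ar[d]_{e_{\bar\epsilon,\pm}} & \bar C^{<b}_*(\bar\epsilon)\ar[d]^{e_{\bar\epsilon,\pm}}\\
C^{<a}_*(\bar\epsilon)\ar[r]^{(I^{a,b}_{\bar\epsilon})_*} & C^{<b}_*(\bar\epsilon)
}
\]
and an analogous square for $j_{\epsilon',\bar\epsilon}$. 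Passing to homology yields the corresponding commutative squares for $(j_{\epsilon,\bar\epsilon})_*$, $(j_{\epsilon',\bar\epsilon})_*$, $(e_{\bar\epsilon,+})_*$, $(e_{\bar\epsilon,-})_*$.

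Next, since $(j_{\epsilon,\bar\epsilon})_*$, $(j_{\epsilon',\bar\epsilon})_*$ are isomorphisms by Lemma~\ref{lem-isom-j} (using $(\bar\epsilon,\bar\delta)\in\bar{\mathcal{T}}_a\cap\bar{\mathcal{T}}_b$, so the needed interval conditions hold at both $a$ and $b$), and $(e_{\bar\epsilon,\pm})_*$ are isomorphisms by Lemma~\ref{lem-e-quasi-isom}, the commutative squares above can be inverted vertically to obtain commutative squares with $(j_{\epsilon,\bar\epsilon})_*^{-1}$ and $(e_{\bar\epsilon,-})_*^{-1}$ in place of the corresponding non-inverted maps. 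Composing the four resulting squares in the order dictated by the formula for $k_{(\epsilon',\delta'),(\epsilon,\delta)}$ gives exactly the diagram of the lemma.

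There is essentially no obstacle beyond bookkeeping: all four constituents are defined summand by summand, and the operations ``enlarge length bound'' and ``widen tubular radius / restrict to a slice'' are manifestly independent. The only point requiring attention is that one must verify $(\bar\epsilon,\bar\delta)$ remains admissible at the level $b$ as well (i.e.\ lies in $\bar{\mathcal{T}}_b$), which is precisely the hypothesis of the lemma and ensures that the inversions of $(j_{\epsilon,\bar\epsilon})_*$ and $(j_{\epsilon',\bar\epsilon})_*$ are legitimate on $H^{<b}$ as well as on $H^{<a}$.
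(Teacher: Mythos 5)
Your proof is correct and follows essentially the same route as the paper: decompose $k_{(\epsilon',\delta'),(\epsilon,\delta)}$ into the four constituent maps $(j_{\epsilon,\bar\epsilon})_*$, $(j_{\epsilon',\bar\epsilon})_*$, $(e_{\bar\epsilon,\pm})_*$, observe that each commutes with the inclusion-induced maps $(I^{a,b}_\bullet)_*$ at the chain level (since both act summand-by-summand on independent parameters), and then invert the appropriate isomorphisms on homology. The paper packages this by first reducing to the two equations $(I^{a,b}_{\epsilon})_* \circ f_{(\bar\epsilon,\bar\delta),\pm} = f_{(\bar\epsilon,\bar\delta),\pm}\circ (I^{a,b}_{\bar\epsilon})_*$, but the underlying computation is the same.
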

\begin{proof}
$I^{a,b}_{\bar{\epsilon}}$ induces a linear map
$(I^{a,b}_{\bar{\epsilon}})_* \colon  \bar{H}^{<a}_*(\bar{\epsilon},\bar{\delta}) \to \bar{H}^{<b}_*(\bar{\epsilon},\bar{\delta})$. 
It suffices to show that
\begin{align*}
(I^{a,b}_{\epsilon})_* \circ f_{(\bar{\epsilon},\bar{\delta}),+} = f_{(\bar{\epsilon},\bar{\delta}),+} \circ (I^{a,b}_{\bar{\epsilon}})_* &\colon \bar{H}^{<a}_*(\bar{\epsilon},\bar{\delta}) \to H^{<b}_*(\epsilon,\delta) ,\\
(I^{a,b}_{\epsilon'})_* \circ f_{(\bar{\epsilon},\bar{\delta}),-} = f_{(\bar{\epsilon},\bar{\delta}),-} \circ (I^{a,b}_{\bar{\epsilon}})_*
&\colon \bar{H}^{<a}_*(\bar{\epsilon},\bar{\delta}) \to H^{<b}_*(\epsilon',\delta') .
\end{align*}
Let us check the first equation. Since $j_{\epsilon,\bar{\epsilon}}\circ I^{a,b}_{\epsilon}=I^{a,b}_{\bar{\epsilon}}\circ j_{\epsilon,\bar{\epsilon}}\colon \Sigma^{a+m\epsilon}_m\to \Sigma^{b+m\bar{\epsilon}}_m$,
\begin{align*}
(I^{a,b}_{\epsilon})_* \circ f_{(\bar{\epsilon},\bar{\delta}),+} &= (I^{a,b}_{\epsilon})_* \circ (j_{\epsilon,\bar{\epsilon}})_*^{-1}\circ (e_{\bar{\epsilon},+})_* \\
&= (j_{\epsilon,\bar{\epsilon}})_*^{-1} \circ  (I^{a,b}_{\bar{\epsilon}})_* \circ (e_{\bar{\epsilon},+})_*  \\
&=(j_{\epsilon,\bar{\epsilon}})_*^{-1}\circ  (e_{\bar{\epsilon},+})_* \circ   (I^{a,b}_{\bar{\epsilon}})_* \\
&=f_{(\bar{\epsilon},\bar{\delta}),+} \circ (I^{a,b}_{\bar{\epsilon}})_* .
\end{align*}
The second equation can be proved by replacing $\epsilon$ and ``$+$'' in the above computation by $\epsilon'$ and ``$-$''.
\end{proof}

This lemma implies that, after taking the limits of $(\epsilon,\delta) \in \mathcal{T}_{a} \cap \mathcal{T}_{b}$ with $\epsilon\to 0$, we get a linear map
\[I^{a,b} \coloneqq \varprojlim_{\epsilon\to 0}(I_{\epsilon}^{a,b})_* \colon H^{<a}_*(Q,K) \to H^{<b}_*(Q,K).\]
Furthermore, for $b\geq a$ and $(\epsilon,\delta),(\epsilon',\delta')\in \mathcal{T}_a\cap\mathcal{T}_b$ with $\epsilon'\leq \epsilon$, $k_{(\epsilon',\delta'),(\epsilon,\delta)}$ induces an isomorphism from $H^{[a,b)}_*(\epsilon',\delta')$ to $ H^{[a,b)}_*(\epsilon,\delta)$. Thus, we can also define
\[H^{[a,b)}_*(Q,K)\coloneqq \varprojlim_{\epsilon\to 0} H^{[a,b)}_*(\epsilon,\delta ).\]
Note that a long exact sequence
\begin{align}\label{long-ex-seq}
\xymatrix{
\cdots \ar[r] & H^{<a}_*(Q,K) \ar[r]^{I^{a,b}} & H^{<b}_*(Q,K) \ar[r] & H^{[a,b)}_*(Q,K) \ar[r] & H^{<a}_{*-1}(Q,K) \ar[r]^-{I^{a,b}} & \cdots
}
\end{align}is induced by (\ref{ex-seq-epsilon-delta}).

From the direct system $(\{H^{<a}_*(Q,K))\}_{a\in \R_{> 0} \setminus \mathcal{L}(K)}, \{ I^{a,b} \}_{a\leq b})$, we finally define a graded $\R$-vector space
\[H^{\mathrm{string}}_*(Q,K)\coloneqq  \varinjlim_{a\to \infty} H^{<a}_*(Q,K ). \]

\subsubsection{Product structure}\label{subsubsec-product}

Let us see that $H^{\mathrm{string}}_*(Q,K)$  has a  structure of  a unital associative graded $\R$-algebra.
For any $a,a'\in \R_{> 0}\cup \{\infty\}$, $m,m'\in \Z_{\geq 0}$ and $\epsilon\in (0, \epsilon_0/5]$, there is a map
\[\Pi\colon \Sigma^{a+m\epsilon}_m \times \Sigma^{a'+m'\epsilon}_{m'} \to \Sigma^{(a+a')+(m+m')\epsilon}_{m+m'}\colon ( (\gamma_k)_{k=1,\dots ,m}, (\gamma'_l)_{l=1,\dots ,m'} ) \mapsto (\gamma_1,\dots ,\gamma_m,\gamma'_1,\dots ,\gamma'_{m'}).\]
When $m=0$ or $m'=0$, $\Pi$ is identified with the identity map.
We define a linear map 
\begin{align}\label{prod-of-chain}
C^{<a}_p(\epsilon) \otimes C^{<a'}_q(\epsilon) \to C^{<a+a'}_{p+q}(\epsilon)\colon x\otimes y \mapsto x\star y
\end{align}
so that for $x\in C^{\dr}_{p-m(d-2)}(\Sigma^{a+m\epsilon}_m,\Sigma^{0}_m)$ and $y\in C^{\dr}_{q-m'(d-2)}(\Sigma^{a'+m'\epsilon}_{m'},\Sigma^{0}_{m'}) $,
\[x\star y=  (-1)^{mqd} \Pi_*(x\times y)\]
We note that the associative relation $(x\star y)\star z =x\star (y\star z)$ holds.
Suppose that $a,a',a+a'\notin \mathcal{L}(K)$. For any $(\epsilon,\delta)\in \mathcal{T}_{a} \cap \mathcal{T}_{a'}\cap \mathcal{T}_{a+a'}$, the above map is compatible with the differential $D_{\delta}$. Indeed, for $x\in C^{\dr}_{p-m(d-2)}(\Sigma^{a+m\epsilon}_m,\Sigma^{0}_m)$ and $y\in C^{\dr}_{q-m'(d-2)}(\Sigma^{a'+m'\epsilon}_{m'},\Sigma^{0}_{m'}) $,
\begin{align*}
(-1)^{mqd} D_{\delta} (x\star y ) =&  \partial (\Pi_*(x\times y)) + \sum_{k=1}^{m+m'} (-1)^{p+q+1+kd} f_{k,\delta}(\Pi_*(x\times y) )\\
=&\Pi_* (\partial x\times y)  + \sum_{k=1}^m (-1)^{p+q+1+kd+s_0}  \Pi_* (f_{k,\delta}(x)\times y) \\
&+ (-1)^{p-m(d-2)} \Pi_*(x\times \partial y)+ \sum_{l=1}^{m'} (-1)^{p+q+1+(l+m)d} \Pi_* (x\times f_{l,\delta}(y)) \\
=& (-1)^{mqd} ( D_{\delta} (x)) \star y + (-1)^{p+mqd} x \star (D_{\delta} (y)).
\end{align*}
Here, $s_0\coloneqq  ( q-m'(d-2) ) (d+1)$. This computation shows that
\[D_{\delta}(x\star y)= (D_{\delta}(x))\star y + (-1)^p x\star (D_{\delta}(y))\]
holds for $x\in C^{<a}_p(\epsilon)$ and $y\in C^{<a'}_q(\epsilon)$.
Therefore, (\ref{prod-of-chain}) induces a linear map on homology
\[H^{<a}_p(\epsilon,\delta)\otimes H^{<a'}_q(\epsilon,\delta) \to H^{<a+a'}_{p+q}(\epsilon,\delta) \]
for every $(\epsilon,\delta) \in \mathcal{T}_{a} \cap \mathcal{T}_{a'}\cap \mathcal{T}_{a+a'}$.

Likewise, let us define $ x\ \bar{\star} \ y\coloneqq (-1)^{mqd}\Pi_*(x\times y)\in \bar{C}^{<a+a'}_{p+q}(\bar{\epsilon})$ for $x\in \bar{C}^{\dr}_{p-m(d-2)}(\Sigma^{a+m\bar{\epsilon}}_m,\Sigma^{0}_m)$ and $y\in \bar{C}^{\dr}_{q-m'(d-2)}(\Sigma^{a'+m'\bar{\epsilon}}_{m'},\Sigma^{0}_{m'}) $. Then, $e_{\bar{\epsilon},+},e_{\bar{\epsilon},-}$ intertwine the $\star$-operation and the $\bar{\star}$-operation.
This shows that the $\star$-operation is compatible with $\{ k_{(\epsilon',\delta'),(\epsilon,\delta)} \}_{\epsilon'\leq \epsilon}$. Therefore, on the limit of $\epsilon\to 0$, a linear map
\[H^{<a}_p(Q,K)\otimes H^{<a'}_q(Q,K)\to H^{<a+a'}_{p+q}(Q,K)\]
is induced. The commutativity with $\{ I^{a,b} \}_{a\leq b}$ naturally holds. As a result, we get an associative product structure on $H^{\str}_*(Q,K)$. The element $1\in H^{\str}_0(Q,K)$, which comes from
$1\in \R= C^{\dr}_0(\Sigma^a_0,\Sigma^0_0) \subset C^{<a}_0(\epsilon,\delta)$, is the unit of this graded algebra.

\subsubsection{Explicit choice of $(\epsilon,\delta)$}\label{subsub-explicit}

Lastly, let us introduce a condition on $(\epsilon,\delta)\in \mathcal{T}_a$ so that $\delta$ can be written explicitly. The concrete computations in Section \ref{sec-example} and \ref{sec-cord-alg} become easier by choosing $(\epsilon,\delta)$ satisfying this condition.

Suppose that there exists a fixed trivialization $\R^d\times K\to (TK)^{\perp}$ of the normal bundle of $K$ which preserves orientations and fiber metrics. For every $\epsilon\leq \epsilon_0$, let us write $ \mathcal{O}_{\epsilon}\coloneqq  \{w\in \R^{d} \mid |w|<\epsilon/2\}$.
Composing with the exponential map (\ref{tubular-neighborhood}), we obtain a diffeomorphism 
\[h\colon\mathcal{O}_{\epsilon} \times K\to N_{\epsilon}  ,\]
which preserves orientations.
In this case, we say $(\epsilon,\delta)\in \mathcal{T}_a$ is \textit{standard with respect to $h$}, if $\delta\in C^{\dr}_{n-d}(S_{\epsilon})$ has the form
\begin{align}\label{delta-standard}
\delta = [N_{\epsilon}, \psi_{\epsilon},h_*(\nu_{\epsilon}\times 1)] 
\end{align}
satisfying:
\begin{itemize}
\item $\psi_{\epsilon}\colon N_{\epsilon}\to S_{\epsilon}\colon v\to (\sigma^v_j)_{j=1,2}$ is defined by
\[\sigma^v_1\colon [0,\epsilon/2]\to N_{\epsilon}\colon t\mapsto h \left( \frac{\epsilon-2t}{\epsilon}w ,x\right), \ \sigma^v_2\colon [0,\epsilon/2]\to N_{\epsilon}\colon t\mapsto h \left( \frac{2t}{\epsilon}w ,x\right) , \]
for $v= h (w,x)\in N_{\epsilon}$.
\item $h_*(\nu_{\epsilon} \times 1)\in \Omega^{n-d}_c(N_{\epsilon})$ for some $\nu_{\epsilon}\in \Omega^{d}_c( \mathcal{O}_{\epsilon})$ with $\int_{\mathcal{O}_{\epsilon}} \nu_{\epsilon}=1$.
\end{itemize}

Suppose that $(\bar{\epsilon},\bar{\delta})\in \bar{\mathcal{T}}_a$ satisfies (\ref{condition-delta-bar}) for $(\epsilon,\delta),(\epsilon',\delta')\in \mathcal{T}_a$ ($\epsilon'\leq \epsilon$) which are given as above.
Then, we say $(\bar{\epsilon},\bar{\delta})$ is \textit{standard with respect to $h$}, if $\bar{\epsilon}=\epsilon$ and $\bar{\delta}\in \bar{C}^{\dr}_{n-d}(S_{\epsilon})$ has the form
\begin{align}\label{standard-delta-bar}
\bar{\delta}= (-1)^{n} [\R\times N_{\epsilon}, \bar{\psi}_{\epsilon',\epsilon},(\id_{\R_{\geq 1}\times N_{\epsilon}} , \id_{\R_{\leq -1}\times N_{\epsilon}} ), \bar{\eta}_{\epsilon',\epsilon} ] 
\end{align}
such that for some $C^{\infty}$ function $\kappa\colon \R\to [0,1]$ with $\kappa(r)=\begin{cases} 0& \text{ if }r\leq -1, \\1 & \text{ if }r\geq 1, \end{cases}$ the following hold:
\begin{itemize}
\item $\bar{\psi}_{\epsilon',\epsilon}(r,v)=(r,(\sigma^{(r,v)}_j)_{j=1,2})\in \R\times S_{\epsilon}$ is defined by
\[\sigma^{(r,v)}_1 \colon [0,\epsilon_r/2] \to N_{\epsilon} \colon t\mapsto h \left(\frac{\epsilon_r-2t}{\epsilon_r}w, x \right),\ \sigma^{(r,v)}_2 \colon [0,\epsilon_r/2] \to N_{\epsilon} \colon t\mapsto h \left( \frac{2t}{\epsilon_r}w, x \right)\]
for $r\in \R$, $ v=h(w,x)\in N_{\epsilon}$ and $\epsilon_r\coloneqq \kappa(r)\epsilon+(1-\kappa(r))\epsilon'$.
\item 
$(\id_{\R}\times h)^*\bar{\eta}_{\epsilon',\epsilon}= \kappa\times (\nu_{\epsilon}\times 1) + (1-\kappa)\times (\nu_{\epsilon'}\times 1)+(d\kappa)\times ( \theta \times 1)$
for some $\theta\in \Omega^{d-1}_c(\mathcal{O}_{\epsilon})$ satisfying $d\theta= \nu_{\epsilon}-\nu_{\epsilon'}$.
\end{itemize}
In summary, in order to compute $H^{\str}_*(Q,K)$ when a trivialization $h$ is given, we only need to deal with $(\epsilon,\delta)\in \mathcal{T}_a$ and $(\epsilon,\bar{\delta})\in \bar{\mathcal{T}}_a$ which are standard with respect to $h$.

\subsection{Invariance}\label{subsec-invariance}

In this section, we prove the invariance of $H^{\mathrm{string}}_*(Q,K)$ up to isomorphism by changing auxiliary data. More precisely, we consider the dependence of the construction on the following data (See the beginning of Section \ref{sec-sp-of-paths}):
\begin{enumerate}
\item a complete Riemannian metric $g$ on $Q$.
\item a constant $C_0\geq 1$ which bounds the speed of all $\gamma\in \Omega_K(Q)$.
\item a real number $\epsilon_0 >0$ which is the diameter (in the direction of fibers) of a tubular neighborhood $N_{\epsilon_0}$ of $K$.
\item a $C^{\infty}$ function $\mu\colon [0,\frac{3}{2}]\to [0,1]$ which is used to define  $\con_k$.
\end{enumerate}

\begin{notation}
Let $X$ be an arbitrary notation which we have defined in the former sections. As a rule in this section, if its definition depends on some auxiliary data $S$, we rewrite $X$ by $X_S$ when discussing the dependence on $S$.
\end{notation}

\textbf{Independence on} $\boldsymbol{\mu .}$
%To emphasize the dependence on $\mu$, let us write $\con_k$, $f_{k,\delta}$, $D_{\delta}$ by $\con_{k,\mu}$, $f_{k,\delta,\mu}$, and $D_{\delta,\mu}$ respectively. In addition, $H^{<a}_*((\epsilon,\delta)$ and $H^{\str}_*(Q,K)$ are redenoted by $H^{<a}_*(\epsilon,\delta)_{\mu}$ and $H^{\str}_*(Q,K)_{\mu}$.
We choose a $C^{\infty}$ family $\bar{\mu}\coloneqq (\mu_r)_{r\in \R}$ such that each $\mu_r$ satisfy the same condition as $\mu$, and  $\mu_{r}=\begin{cases} \mu_{-1} & \text{ if }r\leq -1, \\ \mu_1 &\text{ if }r\geq 1.\end{cases}$
Then, a map $\con_{k,\mu_r}$ is defined for each $r\in \R$.
For $(\bar{\epsilon},\bar{\delta})\in \mathcal{T}_a$, let us define $\bar{f}_{k,\bar{\delta},\bar{\mu}} \colon \bar{C}^{\dr}_*(\Sigma^{a+m\bar{\epsilon}}_m) \to \bar{C}^{\dr}_*(\Sigma^{a+(m+1)\bar{\epsilon}}_{m+1})$ by
replacing $\con_k$ in the definition of $\Phi_k$ of (\ref{explicit-rep-bar}) by $\con_{k,\mu_{r(u)}}$. We also replace $\bar{f}_{k,\bar{\delta}}$ in the definition of $\bar{D}_{\bar{\delta}}$ by $\bar{f}_{k,\bar{\delta},\bar{\mu}}$ to define a linear map
\[\bar{D}_{\bar{\delta},\bar{\mu}} \colon \bar{C}^{<a}_*(\bar{\epsilon}) \to \bar{C}^{<a}_{*-1}(\bar{\epsilon}) .  \] 
This satisfies $\bar{D}_{\bar{\delta},\bar{\mu}} \circ \bar{D}_{\bar{\delta},\bar{\mu}} =0$, so we get a chain complex $( \bar{C}^{<a}_*(\bar{\epsilon}), \bar{D}_{\bar{\delta},\bar{\mu}} )$.  Let $\bar{H}^{<a}_*(\epsilon, \bar{\delta}, \bar{\mu})$ denote its homology group.

We rewrite $e_{\epsilon,+},e_{\epsilon,-} \colon \bar{C}^{<a}_*(\epsilon) \to C^{<a}_*(\epsilon)$ by $e_{\epsilon,\bar{\mu},+},e_{\epsilon,\bar{\mu},-} $ respectively. They induce
\begin{align*}
 (e_{\epsilon,\bar{\mu},+})_* &\colon \bar{H}^{<a}_*(\epsilon, \bar{\delta}, \bar{\mu}) \to H^{<a}_*(\epsilon,e_{+}\delta)_{\mu_{1}}, \\
 (e_{\epsilon,\bar{\mu},-})_*  &\colon \bar{H}^{<a}_*(\epsilon, \bar{\delta}, \bar{\mu}) \to H^{<a}_*(\epsilon,e_{-}\delta)_{\mu_{-1}}.
 \end{align*}
We can prove as Lemma \ref{lem-e-quasi-isom} that they are isomorphisms. When $(\bar{\epsilon},\bar{\delta}) \in \bar{\mathcal{T}}_{[a,b)}$ satisfies (\ref{condition-delta-bar}), we define
an isomorphism $k_{(\bar{\epsilon},\bar{\delta},\bar{\mu})} \colon H^{<a}_*(\epsilon',\delta')_{\mu_{-1}} \to H^{<a}_*(\epsilon,\delta)_{\mu_1}$ by
\[k_{(\bar{\epsilon},\bar{\delta},\bar{\mu})} \coloneqq ((j_{\epsilon,\bar{\epsilon}})_*^{-1}\circ (e_{\bar{\epsilon},\bar{\mu},+})_*)\circ ((j_{\epsilon',\bar{\epsilon}})_*^{-1}\circ (e_{\bar{\epsilon},\bar{\mu},-})_*)^{-1} . \]
It is proved as Proposition \ref{prop-comm-k} that the two triangles in the following diagram commute:
\[ \xymatrix@C=36pt{    H^{<a}_*(\epsilon,\delta)_{\mu_{-1}} \ar[r]^-{k_{(\epsilon,\bar{i}\delta, \bar{\mu})} } &   H^{<a}_*(\epsilon,\delta)_{\mu_{1}} \\
H^{<a}_*(\epsilon' ,\delta')_{\mu_{-1}} \ar[r]^-{k_{(\epsilon',\bar{i}\delta',\bar{\mu})} } \ar[u]^{k_{ (\epsilon',\delta'),(\epsilon,\delta) }}  \ar[ru]^-{ k_{(\bar{\epsilon},\bar{\delta},\bar{\mu})} }&   H^{<a}_*(\epsilon',\delta')_{\mu_{1}}\ar[u]_{k_{ (\epsilon',\delta'),(\epsilon,\delta) }}   . } \]
Therefore, $\{ k_{(\epsilon,\bar{i}\delta,\bar{\mu})} \}_{(\epsilon,\delta)\in \mathcal{T}_a}$ induces an isomorphism on the limits of $\epsilon \to 0$ 
\[k^a_{\bar{\mu}}\colon H^{<a}_*(Q,K)_{\mu_{-1}} \to H^{<a}_*(Q,K)_{\mu_1} .\]
It is easy to see as Lemma \ref{lem-commute-I-k} that $\{k^a_{\bar{\mu}}\}_{a\in \R_{>0}\setminus \mathcal{L}(K)}$ commute with $\{I^{a,b}\}_{a\leq b}$, so we get an isomorphism from $H^{\str}_*(Q,K)_{\mu_{-1}}$ to $H^{\str}_*(Q,K)_{\mu_1}$. It is also possible to prove as Corollary \ref{cor-indep-k} that this isomorphism does not depend on the choice of $\bar{\mu}$.

%First, we think about $\epsilon_0$ and $g$.
%To emphasize on the dependence on these data, let us write $\Sigma^a_m$ and $S_{\epsilon}$ by $\Sigma^a_{m, (g ,\epsilon_0)}$ and $S_{\epsilon,g}$ respectively. We also redefine a notation
%\[C^{<a}_*(\epsilon)_{ (g,\epsilon_0) } = \bigoplus_{m=0}^{\infty}  C^{\dr}_{*-m(d-2)}(\Sigma^{a+m\epsilon}_{m,(g,\epsilon_0)} , \Sigma^0_{m, (g,\epsilon_0)} ) .\]
%Similarly, notations $\mathcal{T}_{a,(g,\epsilon_0)}$, $H^{<a}_*(\epsilon,\delta)_{(g,\epsilon_0)}$ and $H^{\str}_*(Q,K)_{(g,\epsilon_0)}$ are redefined.
%In addition, the function $\len$ and $\mathcal{L}(K)$ are redenoted by $\len_g$ and $\mathcal{L}(K)_g$. 

\textbf{Independence on} $\boldsymbol{\epsilon_0.}$ For $\epsilon'_0\leq \epsilon_0$, we consider the inclusion maps $ \mathfrak{j}_{\epsilon'_0,\epsilon_0}\colon \Sigma^{a+m\epsilon}_{m,\epsilon'_0} \to \Sigma^{a+m\epsilon}_{m,\epsilon_0}$ for all $m\in \Z_{\geq 0}$. They induce a chain map $(\mathfrak{j}_{\epsilon'_0,\epsilon_0})_*$
from $(C^{<a}_*(\epsilon)_{\epsilon'_0},D_{\delta} )$ to $(C^{<a}_*(\epsilon)_{\epsilon_0} , D_{\delta} )$ for every $a\in \R_{>0}\setminus \mathcal{L}(K)$ and $(\epsilon,\delta)\in \mathcal{T}_{a,\epsilon'_0} (\subset \mathcal{T}_{a,\epsilon_0} )$.
This preserves filtrations $\{\mathcal{F}^{<a}_{\epsilon, p,\epsilon_0}\}_{p\in \Z}$ and $\{\mathcal{F}^{<a}_{\epsilon, p,\epsilon'_0}\}_{p\in \Z}$, so it induces an morphism of the spectral sequences. On the $(-m,q)$-term ($m\geq 0$) of the first page, it is equal to the map
\[ (\mathfrak{j}_{\epsilon'_0,\epsilon_0})_* \colon H^{\dr}_{q-m(d-1)}(\Sigma^{a+m\epsilon}_{m,\epsilon'_0} ,\Sigma^{0}_{m,\epsilon'_0}) \to H^{\dr}_{q-m(d-1)}(\Sigma^{a+m\epsilon}_{m,\epsilon_0},\Sigma^{0}_{m,\epsilon_0}) , \]
which is an isomorphism by Lemma \ref{lem-epsilon0-indep}. Therefore, by Lemma \ref{lem-spectral},
$ (\mathfrak{j}_{\epsilon'_0,\epsilon_0})_* \colon H^{<a}_*(\epsilon,\delta)_{\epsilon'_0}\to H^{<a}_*(\epsilon,\delta)_{\epsilon_0} $
is also an isomorphism.

We can prove  its commutativity with $\{k_{(\epsilon',\delta'), (\epsilon,\delta)}\}_{\epsilon'\leq \epsilon}$ as Lemma \ref{lem-commute-I-k}, so we get an isomorphism on the limit of $\epsilon \to 0$
\[\mathfrak{J}_{\epsilon'_0,\epsilon_0}^a \colon H^{<a}_*(Q,K)_{\epsilon'_0} \to H^{<a}_*(Q,K)_{\epsilon_0} .\]
It holds naturally that $\{\mathfrak{J}_{\epsilon'_0,\epsilon_0}^a\}_{a\in \R_{>0}\setminus \mathcal{L}(K)}$ commutes with $\{I^{a,b}\}_{a\leq b}$. Therefore, on the limit of  $a\to \infty$, we get an isomorphism from $H^{\str}_*(Q,K)_{\epsilon'_0}$ to $H^{\str}_*(Q,K)_{\epsilon_0}$.
%As a corollary, a definition independent on $\epsilon_0$ is given by $\varprojlim_{\epsilon_0\to 0} H^{<a}_*(Q,K)_{g,\epsilon_0}$

\textbf{Independence on} $\boldsymbol{C_0.}$ For $C'_0\geq C_0\geq 1$, we consider the inclusion maps $\mathfrak{j}_{C_0,C'_0} \colon \Sigma^a_{m,C_0}\to \Sigma^a_{m,C'_0}$ for all $m\in \Z_{\geq 0}$.  Parallel to the proof of the independence on $\epsilon_0$,
we apply Lemma \ref{lem-epsilon0-indep} to show that an isomorphism
\[\mathfrak{J}^a_{C_0,C'_0} \colon H^{<a}_*(Q,K)_{C_0} \to H^{<a}_*(Q,K)_{C'_0}\]
is induced. It hold naturally that $\{\mathfrak{J}^a_{C_0,C'_0}  \}_{a\in \R_{>0}\setminus \mathcal{L}(K)}$ commutes with $\{I^{a,b}\}_{a\leq b}$. Therefore, on the limit of  $a\to \infty$, we get an isomorphism from $H^{\str}_*(Q,K)_{C_0}$ to $H^{\str}_*(Q,K)_{C'_0}$.

\textbf{Independence on} $\boldsymbol{g.}$
First, let us introduce an graded algebra $\acute{H}^{\str}_*(Q,K)_g$ which is isomorphic to $H^{\str}_*(Q,K)_{(g,C_0,\epsilon_0)}$, but whose definition does not depend on $\epsilon_0$ and $C_0$. For every $a\in \R_{>0}\setminus \mathcal{L}(K)_g$,
we define the limit of $\epsilon_0\to 0$ and $C_0\to\infty$
\[\acute{H}^{<a}_*(Q,K)_g \coloneqq \varinjlim_{C_0\to \infty} \varprojlim_{\epsilon_0\to 0} H^{<a}_*(Q,K)_{(g,C_0,\epsilon_0)}\]
by $\{\mathfrak{J}^a_{\epsilon'_0,\epsilon_0}\}_{\epsilon'_0\leq \epsilon_0}$ and $\{\mathfrak{J}^a_{C_0,C'_0}\}_{C_0\leq C'_0}$ defined above. Then, $\{I^{a,b}\}_{a\leq b}$ induces a family of maps
\[\{ \acute{I}^{a,b}_g\colon \acute{H}^{<a}_*(Q,K)_g \to \acute{H}^{<b}_*(Q,K)_g \}_{a\leq b},\]
and we can take the limit of $a\to \infty$ to define $\acute{H}^{\str}_*(Q,K)_g \coloneqq \varinjlim_{a\to \infty } \acute{H}^{<a}_*(Q,K)_g$.

Suppose that $g$ and $g'$ are complete Riemannian metrics on $Q$. For $a>0$, there exists a compact subset $Z_a$ which contains the images of all $\gamma\in \bigcup_{C_0\geq 1}\Omega_{K}(Q)_{(g,C_0)}$ with $\len_g \gamma <a$ and the images of all $\gamma \in \bigcup_{C_0\geq 1}\Omega_{K}(Q)_{(g',C_0)}$ with $\len_{g'} \gamma <a$. For any $a\in \R_{>0} \setminus (\mathcal{L}(K)_g\cup \mathcal{L}(K)_{g'})$, there exists a constant $c_a\geq 1$ such that
$ |\cdot |_{g'} \leq c_a |\cdot |_g$ and $|\cdot |_{g}\leq c_a |\cdot|_{g'}$ on $Z_a$. We may additionally assume that $ac_a\notin \mathcal{L}(K)_{g}\cup \mathcal{L}(K)_{g'}$.

Let $a\in \R_{>0} \setminus (\mathcal{L}(K)_g\cup \mathcal{L}(K)_{g'})$. For any $C_0\geq 1$ and $\epsilon_0>0$, we have the inclusion map
\[\mathfrak{j}_{(C_0,\epsilon_0)} \colon \Sigma^a_{m,(g,C_0,\epsilon_0)} \to \Sigma^{ac_a}_{m,(g',C_0c_a,\epsilon_0c_a)} . \] %\mathfrak{j}^{g',g}_{\epsilon_0}\colon \Sigma^{a}_{m,(g',\epsilon'_0)} \to \Sigma^{C'_aa}_{m,(g,C'_a\epsilon'_0)}, \]
%since $\len_{g'} \gamma\leq C_a\cdot \len_g \gamma$ for any path $\gamma$ in $Z_a$.
%
In addition, let $S^a_{\epsilon, g}$ be a subspace of $S_{\epsilon,g}$ which consists of $(\sigma_i)_{i=1,2}$ satisfying $ |(\sigma_i)'(t)|_g \leq c_a^{-1}$ for $i=1,2$. If $\epsilon$ is sufficiently small, we have the inclusion map
\[\mathfrak{i}_{\epsilon} \colon S^a_{\epsilon,g} \to S_{\epsilon c_a ,g'} .\]
These maps induces a linear map on the homology groups
\begin{align*}
(\mathfrak{j}_{(C_0,\epsilon_0)})_* &\colon H^{<a}_*(\epsilon, \delta)_{(g,C_0, \epsilon_0)} \to H^{<ac_a}_*(\epsilon c_a,(\mathfrak{i}_{\epsilon})_* \delta )_{(g',C_0c_a,\epsilon_0c_a)}
\end{align*}
for $(\epsilon,\delta)\in \mathcal{T}_{a, (g,C_0,\epsilon_0)}$ such that $\delta\in C^{\dr}_{n-d}(S^a_{\epsilon, g})$.
Its commutativity with $\{k_{(\epsilon',\delta'), (\epsilon,\delta)}\}_{\epsilon'\leq \epsilon}$ can be proved as Lemma \ref{lem-commute-I-k}. Let us write the induced map on the limits of $\epsilon \to 0$ by
\begin{align*}
\mathfrak{J}_{(C_0,\epsilon_0)}^a \colon H^{<a}_*(Q,K)_{(g,C_0, \epsilon_0)} \to H^{<ac_a}_*(Q,K)_{(g',C_0c_a, \epsilon_0c_a)} .
\end{align*}
Moreover, it holds naturally that the maps of $\{\mathfrak{J}^a_{(C,\epsilon_0)}\}_{C_0\geq 1,\epsilon_0>0}$ commute with those of $\{\mathfrak{J}^a_{\epsilon'_0,\epsilon_0}\}_{\epsilon'_0\leq \epsilon_0}$ and $\{\mathfrak{J}^a_{C_0,C'_0}\}_{C_0\leq C'_0}$, so we get a map on the limit of $\epsilon_0\to 0$ and $C_0\to \infty$
\begin{align*}
\mathfrak{J}^{a}&\coloneqq \varinjlim_{C_0\to \infty} \varprojlim_{\epsilon_0\to 0}\mathfrak{J}^a_{(C_0,\epsilon_0)}\colon \acute{H}^{<a}_*(Q,K)_{g} \to \acute{H}^{<c_aa}_*(Q,K)_{g'} .
\end{align*}
Lastly, $\{\mathfrak{J}^a\}_{a\in \R_{>0}\setminus (\mathcal{L}(K)_g \cup \mathcal{L}(K)_{g'})}$ is compatible with $\{\acute{I}^{a,b}_g\}_{a\leq b}$, so it induces a map on the limit of $a\to \infty$
\[\mathfrak{J}\colon \acute{H}^{\str}_*(Q,K)_g \to \acute{H}^{\str}_*(Q,K)_{g'}.\]

If we exchange $g$ and $g'$, we can also define $(\mathfrak{J}')^a \colon \acute{H}^{<a}_*(Q,K)_{g'} \to \acute{H}^{<ac_a}_*(Q,K)_{g}$ for $a\in \R_{>0} \setminus (\mathcal{L}(K)_g\cup \mathcal{L}(K)_{g'})$.
For $b\coloneqq ac_a$, we have
\[(\mathfrak{J}')^{b} \circ \mathfrak{J}^a =\acute{I}^{a,bc_b}_g,\ \mathfrak{J}^{b}\circ (\mathfrak{J}')^{a} =\acute{I}^{a,bc_b}_{g'}.\]
Therefore, $\varinjlim_{a\to \infty} (\mathfrak{J}')^a$ is the inverse map of $\mathfrak{J}$. This proves the independence on $g$.

We finish the proof of the independence on auxiliary data.

Finally, let us prove the invariance by changing the orientation of $K$.
Suppose that $K=\sqcup_{\alpha\in A} K_{\alpha}$ for connected components $\{K_{\alpha}\}_{\alpha\in A}$. Then, $N_{\epsilon}=\sqcup_{\alpha\in A} N_{\epsilon,\alpha}$ and $S_{\epsilon}= \sqcup_{\alpha\in A} S_{\epsilon,\alpha}$, where $N_{\epsilon,\alpha}$ is a tubular neighborhood of $K_{\alpha}$ and $S_{\epsilon,\alpha}$ consists of pairs of paths in $N_{\epsilon,\alpha}$. In addition,  for every $\alpha_1,\dots ,\alpha_{2m}\in A$, let $\Sigma^a_{m,(\alpha_1,\dots ,\alpha_{2m})}$ be the subspace of $\Sigma^a_m$ consisting  of $(\gamma_k\colon [0,T_k]\to Q)_{k=1,\dots, m}$ such that $\gamma_k(0)\in K_{\alpha_{2k-1}}$ and $\gamma_k(T_k)\in K_{\alpha_{2k}}$ for $k=1,\dots, m$. Then $C^{\dr}_{n-d}(S_{\epsilon})=\bigoplus_{\alpha\in A} C^{\dr}_{n-d}(S_{\epsilon,\alpha})$ and $C^{\dr}_*(\Sigma^a_m)= \bigoplus_{\alpha_1,\dots ,\alpha_{2m}\in A} C^{\dr}_*(\Sigma^a_{m,(\alpha_1,\dots ,\alpha_{2m})})$.

For any subset $B\subset A$, let $K_B$ be an oriented submanifold obtained from $K$ by reversing the orientations of $\{K_{\alpha}\}_{\alpha\in B}$. For every $\delta =\sum_{\alpha\in A} \delta_{\alpha}\in C^{\dr}_{n-d}(S_{\epsilon})$ ($\delta_{\alpha}$ is a chain in $S_{\epsilon,\alpha}$), let us write $\delta_B\coloneqq \sum_{\alpha\in A\setminus B} \delta_{\alpha} -\sum_{\alpha\in B} \delta_{\alpha}$.
By using a notation
\[s(\alpha_1,\dots, \alpha_{2m})\coloneqq \#\{k\in \{1,\dots ,m\} \mid \alpha_{2k}\in B \},\]
we define a linear map $F^a_{\epsilon}\colon C^{<a}_*(\epsilon)\to C^{<a}_*(\epsilon)$ so that $F^a_{\epsilon}(x)= (-1)^{s(\alpha_1,\dots, \alpha_{2m})} x$ for every $x\in C^{\dr}_{*-m(d-2)}(\Sigma^a_{m,(\alpha_1,\dots ,\alpha_{2m})}, \Sigma^0_{m,(\alpha_1,\dots ,\alpha_{2m})})$ ($m\geq 1$), and $F^a_{\epsilon}(x)=x$ for $x\in C^{\dr}_*(\Sigma^a_0,\Sigma^0_0)$. For every $(\epsilon,\delta)\in \mathcal{T}_a$, $(\epsilon,\delta_B)$ satisfies the conditions of Definition \ref{def-class-of-data} for $(Q,K_B)$, and $F^a_{\epsilon}$ is a chain map from $(C^{<a}_*(\epsilon),D_{\delta})$ to $(C^{<a}_*(\epsilon),D_{\delta_B})$. Moreover, $F^a_{\epsilon}$ is compatible with the $\star$-operation. By taking the limit of $\epsilon\to 0$ and $a\to \infty$, we obtain an isomorphism of graded $\R$-algebras
\[F\colon H^{\str}_*(Q,K)\to H^{\str}_*(Q,K_B).\]
\begin{rem}
Similarly, one can prove the invariance of $H^{\str}_*(Q,K)$ by changing the orientation of $Q$.
\end{rem}

\begin{prop}\label{prop-isotopy-invariance}
$H^{\str}_*(Q,K)$ is invariant by a $C^{\infty}$ isotopy of $K$.
\end{prop}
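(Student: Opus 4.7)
The plan is to reduce the claim to the functoriality of $H^{\str}_*$ with respect to diffeomorphisms of the ambient manifold, combined with the invariance under auxiliary data established earlier in this section. Given a $C^{\infty}$ isotopy $\{K_s\}_{s\in[0,1]}$ of $K$ inside $Q$, I first invoke the isotopy extension theorem for compact submanifolds: since $K$ is compact and $\partial Q=\varnothing$, the isotopy extends to an ambient isotopy, that is, a smooth family $\{\Phi_s\colon Q\to Q\}_{s\in [0,1]}$ of orientation-preserving diffeomorphisms with $\Phi_0=\id_Q$, $\Phi_s(K_0)=K_s$, and $\Phi_s = \id_Q$ outside a compact set. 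Thus it suffices to prove that for any orientation-preserving diffeomorphism $\Phi\colon Q\to Q$ with $\Phi(K_0)=K_1$, there is an induced isomorphism of unital graded $\R$-algebras $\Phi_*\colon H^{\str}_*(Q,K_0)\xrightarrow{\cong} H^{\str}_*(Q,K_1)$.

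To construct $\Phi_*$, I fix auxiliary data $(g_0,C_0,\epsilon_0,\mu)$ on $(Q,K_0)$ and transport it to $(Q,K_1)$ via the pushforward data $(g_1\coloneqq \Phi_* g_0,\,C_0,\,\epsilon_0,\,\mu)$. Since $\Phi$ is then an isometry $(Q,g_0)\to (Q,g_1)$ sending $K_0$ to $K_1$, post-composition with $\Phi$ induces diffeomorphisms of differentiable spaces
\[
\Phi_*\colon \Sigma^a_{m,K_0,g_0}\xrightarrow{\cong} \Sigma^a_{m,K_1,g_1},\qquad
\Phi_*\colon S_{\epsilon,K_0,g_0}\xrightarrow{\cong} S_{\epsilon,K_1,g_1},
\]
commuting with the length function, the evaluation maps $\ev_0,\ev_k$, the time-length $\tl_k$, and the concatenation maps $\con_k$ of Section \ref{subsec-split-concatenate}; the last point uses that $\Phi$ carries the chosen tubular neighborhood $N_{\epsilon,K_0}$ isometrically onto $N_{\epsilon,K_1}$, while the reparameterization function $\mu$ is the same on both sides. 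Consequently $\Phi_*$ intertwines the fiber-product operators $f_{k,\xi}$ (and their $[-1,1]$- and $[-1,1]^2$-modeled versions $\bar{f}_{k,\bar\xi}$, $\hat{f}_{k,\hat\xi}$), hence the differentials $D_\delta$, $\bar{D}_{\bar\delta}$, $\hat{D}_{\hat\delta}$, and also the product $\star$ of Section \ref{subsubsec-product}. Moreover $\Phi_*$ sends $\mathcal{T}_{a,K_0}$ to $\mathcal{T}_{a,K_1}$ and intertwines the transition maps $k_{(\epsilon',\delta'),(\epsilon,\delta)}$ of Section \ref{subsec-limit}. Passing to the inverse limit over $(\epsilon,\delta)\in \mathcal{T}_a$ and then the direct limit over $a\in \R_{>0}\setminus \mathcal{L}(K_0)=\R_{>0}\setminus \mathcal{L}(K_1)$, we obtain an isomorphism of unital graded $\R$-algebras
\[
\Phi_*\colon H^{\str}_*(Q,K_0)_{(g_0,C_0,\epsilon_0,\mu)}\xrightarrow{\cong} H^{\str}_*(Q,K_1)_{(g_1,C_0,\epsilon_0,\mu)}.
\]

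To finish, I compose with the canonical isomorphism $H^{\str}_*(Q,K_1)_{(g_1,C_0,\epsilon_0,\mu)}\cong H^{\str}_*(Q,K_1)$ supplied by the independence on auxiliary data proved in the preceding part of Section \ref{subsec-invariance}. The main technical point is the careful bookkeeping: one must verify that each of the four stages of the construction (the chain complex $(C^{<a}_*(\epsilon),D_\delta)$ together with its $[-1,1]$ and $[-1,1]^2$ variants, the inverse limit defining $H^{<a}_*(Q,K)$, the direct limit defining $H^{\str}_*(Q,K)$, and the product) is strictly natural under isometric diffeomorphisms of the pair $(Q,K)$. Since all ingredients in the construction (lengths, fiber products, integration along fibers, the cutoffs $\rho_\epsilon$ on $A_\epsilon$, and the reparameterization $\mu$) are metrically natural and $\mu$ is ambient-independent, this check is routine. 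I therefore expect no serious obstacle beyond this bookkeeping once the isotopy extension has delivered the ambient diffeomorphism $\Phi$.
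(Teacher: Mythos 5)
Your argument is correct and is essentially the paper's own proof: both extend the isotopy to an ambient one and then use that the resulting diffeomorphism is an isometry between $(Q,K_0)$ with a pulled-back (or pushed-forward) metric and $(Q,K_1)$ with the original metric, reducing to the independence on auxiliary data established earlier in Section \ref{subsec-invariance}. The only cosmetic difference is that you push forward the metric ($g_1=\Phi_*g_0$) whereas the paper pulls it back ($(F_1)^*g$), which is the same reduction.
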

\begin{proof}
For two oriented compact submanifold $K_0,K_1$ of $Q$,
suppose that there exists a $C^{\infty}$ family of embedding maps $\{f_t\colon K_0\to Q\}_{t\in [0,1]}$ such that $f_0$ is the inclusion map of $K_0$ and $f_1(K_0) = K_1$. %Here, $f_1\colon K_0\to K_1$ is required to preserve orientations.
Then, this isotopy can be extended to an ambient isotopy $\{F_t\}_{t\in [0,1]}$ such that $F_0=\id_Q$ and $F_1(K_0)=K_1$.
%, and fixes every point outside $Z$.
Since $F_1$ is an isometry form $(Q,(F_1)^*g)$ to $(Q,g)$, %, and $(F_1)^*g$ is equal to $g$ outside a compact subset $\bar{O}\subset Q$,
it naturally induces an isomorphism
\[ H^{\str}_*(Q,K_0)_{(F_1)^*g}\to H^{\str}_*(Q,K_1)_g .\]
The assertion follows from the independence on the Riemannian metric on $Q$.
\end{proof}

\section{Examples}\label{sec-example}

In this section we determine the algebraic structure of $H^{\str}_*(Q,K)$ for two examples when $Q=\R^{2d-1}$ ($d\geq 2$).
These examples are higher dimensional generalizations of the Hopf link and the unlink in $\R^3$.

The manifold $Q=\R^{2d-1}$ has the standard orientation. Let us use the coordinate $(z_0,z_1,z_2)\in \R^{d-1}\times \R\times \R^{d-1} =  \R^{2d-1}$. The unit sphere $S^{d-1}\subset \R^{d}$ is oriented as the boundary of the unit ball. 
We consider three ways of embedding of unit sphere $S^{d-1}\subset \R^d$ into $\R^{2d-1}$:
\begin{align*}
&S^{d-1} \subset \R^{d}= \R^{d-1}\times \R \to \R^{2d-1}\colon (z_0,z_1)\mapsto (z_0,z_1,0), \\
&S^{d-1} \subset \R^{d}= \R\times \R^{d-1} \to \R^{2d-1}\colon (z_1,z_2)\mapsto (0,z_1+1,z_2), \\
&S^{d-1} \subset \R^{d}= \R^{d-1}\times \R \to \R^{2d-1}\colon (z_0,z_1)\mapsto (z_0,z_1,z_2^*),
\end{align*}
for a fixed vector $z^*_2\in \R^{d-1}\setminus \{0\}$.
Their images are written by $K_0, K_1, K_2$ in order. These submanifolds are oriented so that the diffeomorphisms $S^{d-1}\to K_i$ from the above maps change the sign of orientation by $(-1)^{d-1}$.

As a notation, given a set $\mathcal{S}$ and a map $\mathcal{S}\to \Z\colon s\mapsto |s|$,
let $\mathcal{A}_*(\mathcal{S})$ denote the unital non-commutative graded $\R$-algebra freely generated by $\mathcal{S}$ such that $s\in \mathcal{A}_{|s|}(\mathcal{S})$ for every $s\in \mathcal{S}$.

\subsection{Computation of $H^{\str}_*(\R^{2d-1},K_0\cup K_1)$}\label{subsec-hopf}

Let us define $\mathcal{A}^{\hopf}_*\coloneqq \mathcal{A}_*(\mathcal{C}\cup \mathcal{D}\cup\mathcal{E})$ by the following three sets:
\begin{align*}
\mathcal{C} &\coloneqq \{c^0_{i,j}\}_{i\neq j} \cup \{c^1_{i,i}\}_{i}\cup \{c^1_{i,j},\bar{c}^1_{i,j}\}_{i\neq j}\cup \{c^2_{i,j}\}_{i,j}, \\
\mathcal{D} &\coloneqq \{d^1_{i,i}\}_{i}\cup  \{d^2_{i,j}\}_{i,j}, \\
\mathcal{E} &\coloneqq \{e^1_{i,i}\}_{i}\cup \{e^2_{i,j}\}_{i,j},
\end{align*}
where $i$ and $j$ run over $\{0,1\}$.
The degree of each element is given by
\begin{align*}
 & |c^0_{i,j}|=d-2,\ |c^1_{i,i}|=|c^1_{i,j}|=|\bar{c}^1_{i,j}|=2d-3  \text{ for }i\neq j,\\
 & |c^1_{i,i}|=2d-3,\ |c^2_{i,j}|=3d-4, \\
 & |d^1_{i,i}|=2d-3,\ |d^2_{i,j}|=3d-4, \\
 & |e^1_{i,i}|=2d-4,\ |e^2_{i,j}|=3d-5.
\end{align*}
We define a graded derivation $\partial \colon \mathcal{A}^{\hopf}_* \to \mathcal{A}_{*-1}^{\hopf}$ so that
\begin{align*}
& \partial c^0_{i,j}=0,\ \partial c^1_{i,i}=\partial c^1_{i,j}=\partial \bar{c}^1_{i,j}=0,\ \partial c^2_{i,j}=0, \\ 
& \partial d^1_{i,i}=e^1_{i,i},\ \partial d^2_{i,j}=e^2_{i,j}, \\
& \partial e^1_{i,i} =0,\ \partial e^2_{i,j}=0,
\end{align*}
and they are extended by the Leibniz rule. %$\partial (\xi \xi')=(\partial \xi)\xi' +(-1)^{|\xi|}\xi(\partial \xi')$.
We also define anther graded derivation $F\colon \mathcal{A}^{\hopf}_* \to \mathcal{A}_{*-1}^{\hopf}$ so that
\begin{align*}
& F c^0_{i,j}=F c^1_{i,j}=F \bar{c}^1_{i,j} =0 \text{ for }i\neq j, \\
& F c^1_{0,0}=(-1)^d e^1_{0,0} + (-1)^d c^0_{0,1}c^0_{1,0} ,\ F c^1_{1,1}=(-1)^d e^1_{1,1} +c^0_{1,0}c^0_{0,1}  ,\\
& F c^2_{0,0}= -e^2_{0,0}- (\bar{c}^1_{0,1}c^0_{1,0} + (-1)^{d}c^1_{0,1}c^0_{1,0}) ,\ F c^2_{1,1}= - e^2_{1,1} - ((-1)^d \bar{c}^1_{1,0}c^0_{0,1} + c^1_{1,0}c^0_{0,1}) , \\
& F c^2_{0,1}= -e^2_{0,1},\ F c^2_{1,0}= -e^2_{1,0}, \\
& F d^1_{i,i}=0,\ F d^2_{i,j}=0, \\
& F e^1_{i,i}=0,\ F e^2_{i,j}=0,
\end{align*}
and they are extended by the Leibniz rule. 
%and extend it by the Leibniz rule $F (\xi \xi')=(F \xi)\xi' +(-1)^{|\xi|}\xi(F \xi')$.
It is easy to see that $\partial\circ \partial =0$, $\partial\circ F + F \circ \partial =0$ and $F \circ F =0$ hold. Therefore, we obtain a differential graded $\R$-algebra $(\mathcal{A}^{\hopf}_*,\partial + F)$.
Note that the differential graded $\R$-algebra $(\mathcal{A}^{\hopf}_*,\partial)$ is obtained from $(\mathcal{A}_*(\mathcal{C}), 0)$ by \textit{stabilizations} (See \cite[Definition 3.9]{ens}). Thus, 
\begin{align}\label{stabilization}
\xymatrix{
(\mathcal{A}_*(\mathcal{C}),0) \ar[r]^{i} & (\mathcal{A}^{\hopf}_{*},\partial ) & (\mathcal{A}_*(\mathcal{C}),0) \ar[l]_{\tau} ,
}\end{align}
where $i$ is the inclusion map and $\tau$ is the projection map, are quasi-isomorphisms. For the proof, see \cite[Corollary 3.11]{ens}.

The goal of this section is to prove the next theorem.
\begin{thm}\label{thm-string-hopf}  There exists an isomorphism of graded $\R$-algebras
\[  H_*(\mathcal{A}_*^{\hopf},\partial + F) \cong H^{\str}_*(\R^{2d-1},K_0\cup K_1). \]
%In particular, $H^{\str}_{q-1}(\R^{2q+1},K_0\cup K_1) \cong \R [ c^0_{0,1},c^0_{1,0} ]$.
\end{thm}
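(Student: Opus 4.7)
The plan is to apply the spectral sequence of Proposition \ref{prop-spectral-seq} to compute $H^{<a}_*(\R^{2d-1}, K_0 \cup K_1)$ for each $a$ and then pass to the direct limit. The first step is an explicit classification of the binormal chords of $K_0 \cup K_1$ via the perpendicularity conditions at both endpoints. A direct computation shows that, for each $i \in \{0,1\}$, the self-binormal chords of $K_i$ form an $S^{d-1}$-family of diameters of length $2$ (which should correspond to $c^1_{i,i}$ in degree $2d-3$); for $i \neq j$, there is a unique short binormal chord of length $1$ between the closest points of $K_i$ and $K_j$ (corresponding to $c^0_{i,j}$ in degree $d-2$), together with two $S^{d-1}$-families of longer binormal chords emanating from the distinguished points $(0,\pm 1,0) \in K_0$ and $(0,1\pm 1, 0) \in K_1$ (corresponding to $c^1_{i,j}, \bar{c}^1_{i,j}$ and $c^2_{i,j}$). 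A Morse--Bott analysis of the length functional on $\Sigma^a_m$, parallel to the proof of Proposition \ref{prop-hmgy-grp}, would identify $H^{\dr}_*(\Sigma^a_m, \Sigma^0_m)$ with the direct sum of the shifted homologies of the critical sets given by sequences of these chords.

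Since $K_i \cong S^{d-1}$ sits in $\R^d \subset \R^{2d-1}$ with trivial normal bundle $(TK_i)^{\perp} \cong \R \oplus \R^{d-1}$, I would fix the standard trivialization $h$ and adopt $(\epsilon, \delta)$ and $(\bar{\epsilon}, \bar{\delta})$ which are standard with respect to $h$ as in Subsection \ref{subsub-explicit}. This reduces the chain complex $(C^{<a}_*(\epsilon), D_\delta)$ to a computable model in which each chord family is represented by a concrete de Rham chain supported on a known finite-dimensional submanifold of $\Sigma^a_m$. The generators of $\mathcal{C}$ then match the ``main'' critical families above, whereas the acyclic pairs $(\mathcal{D},\mathcal{E})$ with $\partial d = e$ correspond to the auxiliary chain-level witnesses needed to represent the $\partial$-closed critical classes — these are precisely the stabilization generators that are killed by the quasi-isomorphism $\tau$ of (\ref{stabilization}). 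Hence the graded algebra with internal differential $(\mathcal{A}^{\hopf}_*, \partial)$ models the $E^1$-page of the spectral sequence, equipped with its $\star$-product of Subsection \ref{subsubsec-product}.

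The remaining step is to compute $D_\delta - \partial = \sum_k (-1)^{p+1+kd} f_{k, \delta}$ on these chain representatives and match the result with $F$. Geometrically, $f_{k,\delta}$ splits a chord family at each transverse intersection with the tubular neighborhood $N_\epsilon$ and inserts a capping pair from $\delta$. For example, a diameter of $K_0$ intersects $N_\epsilon$ once near each endpoint, so $f_{1,\delta}(c^1_{0,0})$ produces a two-chord sequence of the form $c^0_{0,1}\, c^0_{1,0}$ (modulo the acyclic corrections), yielding the summand $(-1)^d c^0_{0,1} c^0_{1,0}$ in $F(c^1_{0,0})$; the remaining summand $(-1)^d e^1_{0,0}$ comes from the internal $\partial$ of the auxiliary chain needed to make the family $\partial$-closed. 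Analogous computations for the families $c^1_{i,j}, \bar{c}^1_{i,j}, c^2_{i,j}$ produce all of $F$. Passing to $\varprojlim_{\epsilon \to 0}$ via Lemma \ref{lem-isom-j} and then $\varinjlim_{a \to \infty}$ yields the claimed isomorphism of graded $\R$-algebras.

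The main obstacle will be twofold: first, the sign bookkeeping through the fiber-product convention of Definition \ref{def-fib-prod-ope} to produce the exact coefficients $(-1)^d$ in $F$; second, correctly distinguishing the two families $c^1_{i,j}, \bar{c}^1_{i,j}$, which geometrically arise from the two ``sides'' along which a chord from $(0,1,0) \in K_0$ can swing into $K_1$ (equivalently, the two components of the complement of a generic hyperplane in the $S^{d-1}$-family). Establishing the orientations of these two half-families in a way consistent with the splitting-concatenation operation is the delicate part, since it controls the precise combination $\bar{c}^1_{0,1} c^0_{1,0} + (-1)^d c^1_{0,1} c^0_{1,0}$ appearing in $F(c^2_{0,0})$. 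The unlink computation for $K_0 \cup K_2$ in the next subsection should provide a consistency check, as the short chord $c^0_{0,1}$ disappears there and the analogous relations simplify accordingly.
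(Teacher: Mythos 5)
Your overall strategy is the same as the paper's: identify an explicit basis of $H^{\dr}_*(\Sigma^a_m,\Sigma^0_m)$ given by de Rham chains supported on finite-dimensional families of segments, build a chain map from $(\mathcal{A}^{\hopf}_*,\partial+F)$ into $(C^{<a}_*(\epsilon),D_\delta)$ sending $\mathcal{C}$ to those chains and $(\mathcal{D},\mathcal{E})$ to auxiliary primitives, show it induces an isomorphism on the first page of the word-length filtration, invoke the comparison lemma, and pass to the limits $\epsilon\to 0$, $a\to\infty$. That matches the paper's three-step proof (Lemmas \ref{lem-f-x-0}--\ref{lem-f-x-ij}, Propositions \ref{prop-chain-map-hopf}--\ref{prop-Phi-hopf-isom}, and the $\epsilon\to 0$ compatibility lemma).

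There are, however, two concrete issues in the way you set up Step~1. First, the Morse--Bott analysis you propose does not apply as stated: for the $K_0$-to-$K_1$ chords, the squared-distance function on $K_0\times K_1$ has its bottom critical level equal to the \emph{bouquet} $(K_0\times\{p_1\})\cup(\{p_0\}\times K_1)$, which is not a smooth submanifold (it is singular at $(p_0,p_1)$), so the hypotheses of Morse--Bott theory fail. The paper circumvents this entirely: it deformation-retracts $\Sigma^a_m$ onto the finite-dimensional space $B^a_m\cong (K_0\cup K_1)^{2m}$ of segments via the maps $\pi_m,i_m$, and then computes singular homology of the sublevel sets of $|p-p'|^2$ directly, explicitly identifying the bouquet as a deformation retract of the $a\leq 3$ sublevel set. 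You would need to replace your Morse--Bott step with an argument of this kind. Second, and relatedly, the geometric description of $c^1_{i,j}$ versus $\bar{c}^1_{i,j}$ as the ``two sides of a generic hyperplane in the $S^{d-1}$-family'' is wrong: in the paper they are represented by $\{p_i\}\times K_j$ and $K_i\times\{p_j\}$, i.e.\ the two lobes of the bouquet obtained by pinning one endpoint or the other at a pole, not by a hyperplane cut. Getting this right is essential, since these two generators are treated asymmetrically in $F(c^2_{0,0})$ and $F(c^2_{1,1})$, and the paper's Lemma \ref{lem-f-x-ii} produces precisely the combination $\bar{x}^1_{0,1}\star x^0_{1,0}+(-1)^d x^1_{0,1}\star x^0_{1,0}$ by tracking how the diagonal cycle $\varphi_0(K_0)\subset K_0\times K_0$ decomposes homologically.

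One further point worth sharpening: in the paper, the filtration on $\mathcal{A}^{\hopf}_*$ used in Proposition \ref{prop-Phi-hopf-isom} assigns weight $2$ to the generators in $\mathcal{D}\cup\mathcal{E}$ (matching the number of paths in the chains $y^k_{i,j}\in C^{\dr}_*(\Sigma^{\cdot}_2,\Sigma^0_2)$) and weight $1$ to those in $\mathcal{C}$; it is this weighted filtration, not plain word count, that makes the associated graded of $\Phi^{<a}_{(\epsilon,\delta)}$ an isomorphism on the first page. You should also note that Lemma \ref{lem-isom-j} alone is not quite enough for the $\epsilon\to 0$ step; one needs the separate compatibility of $(\Phi^{<a}_{(\epsilon,\delta)})_*$ with the transition maps $k_{(\epsilon',\delta'),(\epsilon,\delta)}$, proved via $[-1,1]$-modeled chains as in the unnamed lemma at the end of Step~3.
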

To compute $H^{\str}_*(Q,K_0\cup K_1)$, we fix auxiliary data so that $g$ is the standard Rimmanian metric on $\R^{2d-1}$.
The constant $C_0$ is required to be $C_0 >3$. The other data, $\epsilon_0$ and $\mu$, are not specified.
The proof is divided into three steps.

\textit{Step 1.} 
We first observe de Rham chains in $C^{\dr}_*(\Sigma^{a}_m,\Sigma^0_m)$.
%
%For any $i,j\in \{0,1\}$, we set $a_{i,j}\coloneqq \begin{cases} 2 &\text{ if }i=j, \\ 2\sqrt{2} &\text{ if }i\neq j,\end{cases}$ and
We define a map
\[\varphi \colon (K_0\cup K_1)^2 \to \Omega_{K_0\cup K_1}(\R^{2d-1})\]
so that each $(p,p')\in (K_0\cup K_1)^2$ is mapped to a path of a segment
\[\varphi (p,p')\colon [0,1]\to \R^{2d-1}\colon t\mapsto (1-t) p+t p' .\]
We fix two points $p_0\coloneqq (0,1,0) \in K_0$ and $p_1\coloneqq (0,0,0)\in K_1$. Then, we define de Rham chains
\begin{align}\label{def-of-x}
\begin{split}
x^0_{i,j} &\coloneqq [\{(p_i,p_j)\}, \rest{\varphi}{\{(p_i,p_j)\}}, 1]\in C^{\dr}_{0}(\Sigma^{a}_1,\Sigma^0_1)\ (i\neq j), \\
x^1_{i,i} &\coloneqq [\{p_i\}\times K_i , \rest{\varphi }{\{p_i\}\times K_i },1] \in C^{\dr}_{d-1}(\Sigma^{a}_1,\Sigma^0_1), \\
x^1_{i,j} &\coloneqq [\{p_i\}\times K_j , \rest{\varphi }{\{p_i\}\times K_j },1] \in C^{\dr}_{d-1}(\Sigma^{a}_1,\Sigma^0_1)\ (i\neq j), \\
\bar{x}^1_{i,j} &\coloneqq [K_i\times \{p_j\} , \rest{\varphi}{ K_i\times \{p_j\} },1] \in C^{\dr}_{d-1}(\Sigma^{a}_1,\Sigma^0_1)\ (i\neq j), \\
x^2_{i,j}&\coloneqq [K_i\times K_j, \rest{\varphi}{K_i\times K_j}, 1] \in C^{\dr}_{2d-2}(\Sigma^{a}_1,\Sigma^0_1).
\end{split}
\end{align}
Here, $a>3$ and $i,j$ runs over $\{0,1\}$.
Obviously, they are cycle chains for $\partial$. We write the set of these chains by
\[\mathcal{X}\coloneqq  \{x^0_{i,j}\}_{i\neq j} \cup \{x^1_{i,i}\}_{i}\cup \{x^1_{i,j},\bar{x}^1_{i,j}\}_{i\neq j}\cup \{x^2_{i,j}\}_{i,j} . \]
If we define a function $\mathfrak{l}\colon \mathcal{X} \to\R_{>0}$ by
\begin{align*}
&\mathfrak{l}(x^0_{i,j})= \mathfrak{l}(x^1_{i,j}) =\mathfrak{l}(\bar{x}^1_{i,j})=1 \text{ for } i\neq j, \\
&\mathfrak{l}(x^1_{i,i})=\mathfrak{l}(x^2_{i,i})=2, \\
& \mathfrak{l} (x^2_{i,j}) =3 \text{ for } i\neq j , 
\end{align*}
then, each $x\in \mathcal{X}$ satisfies $x\in C^{\dr}_*(\Sigma^{\mathfrak{l}(x)+\epsilon}_1,\Sigma^0_1)$ for any $\epsilon>0$.

For every $a\in \R_{>0}$ and $m\in \Z_{\geq 1}$, let us consider a manifold
\begin{align*}
B^a_m \coloneqq \{(q^0_1,q^1_1,\dots , q^0_m,q^1_m)\in (K_0\cup K_1)^{2m} \mid \sum_{k=1}^m |q^0_k-q^1_k| <a \text{ or }\min_{1\leq k\leq m}  |q^0_k-q^1_k| <\epsilon_0\}. \end{align*}
This is homotopy equivalent to $\Sigma^a_m$ by two  smooth maps
\begin{align*} \pi_m &\colon \Sigma^{a}_m \to B^a_m \colon (\gamma_k\colon [0,T_k]\to \R^{2d-1})_{k=1,\dots ,m}\to (\gamma_1(0),\gamma_1(T_1),\dots ,\gamma_m(0),\gamma_m(T_m)), \\
 i_m &\colon B^a_m \to \Sigma^{a}_m \colon (q^0_1,q^1_1,\dots , q^0_m,q^1_m) \mapsto ( \varphi (q^0_k,q^1_k))_{k=1,\dots ,m} ,\end{align*}
for which $\pi_m\circ i_m=\id_{B^a_m}$ holds and $i_m\circ \pi_m $ is homotopic to $\id_{\Sigma^a_m}$ (c.f. Lemma \ref{lem-diagram-up-to-htpy}).

\begin{notation}In this section,
if $N$ is a submanifold of a manifold $M$, then the inclusion map $N \to M$ is denoted by $\iota_N$.
\end{notation}

\begin{lem}\label{lem-hmgy-class}
Let $M$ be an oriented manifold and $N$ be its open submanifold. Suppose that there exists an approximately smooth function $f\colon M \to \R$ such that $N=f^{-1}((-\infty,0))$. In addition, we assume that $H^{\sing}_*(M,N)$ has a finite dimension. Then there exists an isomorphism between $H^{\sing}_*(M,N)$ and $ H^{\dr}_*(M,N)$ such that for every closed oriented $k$-dimensional submanifold $K$ of $M$, the fundamental class $[K]\in H^{\sing}_k(M,N)$ corresponds to $(-1)^{s(k)}[K,\iota_K ,1]\in  H^{\dr}_k(M,N)$, where $s(k)\coloneqq (k-\dim M)(k-\dim M-1)/2$. %Here, $\iota_K$ is for the inclusion map.
\end{lem}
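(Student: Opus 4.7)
The plan is to identify $\Psi_{(M,N)}([K])$ via Poincaré–Lefschetz duality and Example \ref{ex-hmgy-mfd-reg}, then simplify the resulting de Rham chain using the homotopy invariance Proposition \ref{prop-chain-htpy} and the defining equivalence relation of $C^{\dr}_*$. The existence of the relative isomorphism $\Psi_{(M,N)} \colon H^{\sing}_*(M,N) \to H^{\dr}_*(M,N)$ is provided by Proposition \ref{prop-hmgy-of-mfd} (extended to relative homology under our hypotheses on $N$).

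First, since $K$ is closed, compact, and represents a relative cycle, I may assume $K \subset M \setminus \overline{N}$. Choose a tubular neighborhood $\pi \colon V \to K$ inside $M \setminus \overline{N}$ and a Thom form $\eta_K \in \Omega^{\dim M - k}_c(V)$ with $\pi_! \eta_K = 1$. Classical Poincaré–Lefschetz duality identifies $[K] \in H^{\sing}_k(M,N)$ with the compactly supported cohomology class $[\eta_K] \in H^{\dim M - k}_c(M \setminus \overline{N})$; via the inverse of the isomorphism in Example \ref{ex-hmgy-mfd-reg} together with the natural map $H^{\dr}_*(M^{\reg}) \to H^{\dr}_*(M)$, this corresponds to $(-1)^{s(k)} [M, \id_M, \eta_K] \in H^{\dr}_k(M,N)$. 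Hence $\Psi_{(M,N)}([K]) = (-1)^{s(k)} [M, \id_M, \eta_K]$.

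Next, I would simplify this de Rham chain. Since $\supp \eta_K \subset V$, the restriction rule for de Rham chains gives $[M, \id_M, \eta_K] = [V, \iota_V, \eta_K]$ in $C^{\dr}_k(M)$. The two smooth maps $\iota_V$ and $\iota_K \circ \pi$ from $V$ to $M$ are homotopic through the straight-line retraction along the fibers of $V$, so Proposition \ref{prop-chain-htpy} implies that $[V, \iota_V, \eta_K]$ and $[V, \iota_K \circ \pi, \eta_K]$ represent the same class in $H^{\dr}_k(M,N)$. Since $\pi \colon V \to K$ is a submersion, the defining equivalence relation of $C^{\dr}_*(M)$ then yields
\[ [V, \iota_K \circ \pi, \eta_K] = [K, \iota_K, \pi_! \eta_K] = [K, \iota_K, 1]. \]
Combining the three identifications, $\Psi_{(M,N)}([K]) = (-1)^{s(k)} [K, \iota_K, 1]$ as claimed.

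The main obstacle is the bookkeeping needed to confirm that the canonical $\Psi$ of Proposition \ref{prop-hmgy-of-mfd} (constructed in \cite{Irie-BV} via smooth singular chains and the natural map $H^{\sm}_* \to H^{\dr}_*$) really agrees with the Poincaré–Lefschetz/Example~\ref{ex-hmgy-mfd-reg} composite used above, including the precise sign. This can be handled either by checking compatibility on a model case such as a tubular neighborhood $\R^{\dim M - k}\times K \subset M$ and extending by naturality, or by taking the composite above as the definition of the isomorphism in the lemma and invoking the finite-dimensionality hypothesis together with the relative long exact sequences (and the five lemma) to verify that it is an isomorphism onto $H^{\dr}_*(M,N)$.
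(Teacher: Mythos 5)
Your approach is essentially the same as the paper's: compose Poincaré--Lefschetz duality with the isomorphism of Example \ref{ex-hmgy-mfd-reg} and the natural map $H^{\dr}_*(M^{\reg},N^{\reg}) \to H^{\dr}_*(M,N)$, then simplify the resulting de Rham chain via the support-restriction rule, the homotopy $\iota_V \simeq \iota_K\circ\pi$, and the relation $[V,\iota_K\circ\pi,\eta_K]=[K,\iota_K,\pi_!\eta_K]$. The paper follows exactly this route, and your closing observation that one may take the composite as the \emph{definition} of the isomorphism is precisely what the paper does, so the bookkeeping against Proposition \ref{prop-hmgy-of-mfd} that you flag as a concern is never needed.

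There is, however, one step that is both unjustified and unnecessary: the reduction ``I may assume $K \subset M\setminus\overline{N}$.'' A closed submanifold $K$ of $M$ may well meet $\overline{N}$, and its fundamental relative class need not be representable by a submanifold disjoint from $\overline{N}$; in fact, in the paper's own application (the proof of Theorem \ref{thm-string-hopf}) one applies the lemma with $M=K\times K$, $N=B^0_1$, and $K$ the diagonal image $\{p_i\}\times K_i$, which contains $(p_i,p_i)\in B^0_1$, so your hypothesis fails there. Fortunately you do not actually use this assumption: the tubular neighborhood $V$ of $K$ in $M$ exists regardless of $N$, the Thom form $\eta_K$ only needs to be compactly supported near $K$, and Poincaré--Lefschetz duality produces the correspondence $[K] \leftrightarrow [\eta_K]$ in the quotient $\Omega^{\dim M - k}_c(M)/\Omega^{\dim M -k}_c(N)$ without requiring $\supp\eta_K\cap N=\varnothing$. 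The homotopy argument then runs in $H^{\dr}_*(M)$ for the cycle $[V,\id_V,\eta_K]$ and passes to $H^{\dr}_*(M,N)$ by functoriality. Dropping the extraneous assumption gives you the paper's argument verbatim.
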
 
\begin{proof}
We consider the correspondence through the following isomorphisms:
\[  H^{\sing}_*(M,N)\cong H^{\dim M-*}_{c,\dr}(M,N) \cong  H^{\dr}_*(M^{\reg},N^{\reg}) \to  H^{\dr}_*(M,N) .\]
The first isomorphism is defined by the Poincar\'{e} duality. The second isomorphism was given in Example \ref{ex-hmgy-mfd-reg}.
The last isomorphism is induced by $\id_M\colon M^{\reg} \to M$ \cite[Proposition 5.2]{Irie-BV}.
Let us identify the tubular neighborhood $N_K$ of $K$ with the normal bundle of $K$.
Then, $[K]\in  H^{\sing}_k(M,N)$ corresponds to $[\eta]\in H^{\dim M-k}_{c,\dr}(M,N)$, where $\eta\in \Omega_c^{\dim M-k}(M)$ has its support in $N_K$ and represents the Thom class of the normal bundle. Recalling Example \ref{ex-hmgy-mfd-reg}, we can see that this cohomology class corresponds to $(-1)^{s(k)}[M,\id_M,\eta]=(-1)^{s(k)}[N_K,\id_{N_K},\eta] \in H^{\dr}_k(M^{\reg},N^{\reg})$. Let $\pi_{N_K} \colon N_K\to K$ be the bundle projection. Then, as a de Rham chain in $C^{\dr}_k(M,N)$,  $[N_K,\iota_{N_K},\eta]$ is homologous to $[N_K,\iota_K\circ \pi_{N_K},\eta]$ since $\iota_{N_K}\colon N_K\to M$ is homotopic to $\iota_K \circ \pi_{N_K}\colon N_K\to  K\subset M$.
Now the assertion follows since
\[[N_K ,\iota_K\circ \pi_{N_K},\eta]= [K,\iota_K, (\pi_{N_K})_! \eta]=[K,\iota_K,1] \in H^{\dr}_k(M,N). \]
\end{proof}

Let us see that a basis of $H^{\dr}_*(\Sigma^a_m,\Sigma^0_m)$ is given by $\mathcal{X}$ and the $\star$-operation.
First, suppose that $m=1$ and $a>3$. Then $B^a_1= K\times K$. Through the isomorphism $H^{\dr}_*(B^a_1,B^0_1) \cong H^{\sing}_*(B^a_1,B^0_1)$ of Lemma \ref{lem-hmgy-class}, 
$\{ (\pi_1)_* [x]\mid x\in \mathcal{X}\} $ corresponds to the set of singular homology classes
\[ \{ [\{(p_i,p_j)\}]\}_{i\neq j} \cup \{[\{p_i\}\times K_i]\}_{i} \cup \{[\{p_i\}\times K_j]\}_{i\neq j} \cup \{[K_i\times \{p_j\}]\}_{i\neq j} \cup \{[K_i\times K_j]\}_{i,j}, \]
which is a basis of $H^{\sing}_*(K\times K, B^0_1)$. Therefore, $\{[x]\mid x\in \mathcal{X}\}$ is a basis of $H^{\dr}_*(\Sigma^a_1,\Sigma^0_1)$ for $a>3$.

For $a\in (0,3]$, we consider the deformation along the negative gradient vector field of a $C^{\infty}$ function
$E_{i,j}\colon (K_i\cup K_j)^2\to \R\colon (p,p')\to |p-p'|^2$.
If $i=j$, then $\max E_{i,j}=2$, $\min E_{i,j}=0$, and the subset $\{(p,p')\in K_i\times K_i\mid |p-p'|<a\}$ for $a\leq 2$ has $E_{i,i}^{-1}(0)=\{(p,p')\in K_i\times K_i\mid p=p'\}$ (the diagonal) as a deformation retract. If $i\neq j$, then $\max E_{i,j}=3$, $\min E_{i,j}=1$, and the subset $\{(p,p')\in K_i\times K_j\mid |p-p'|<a\}$ for $a\leq 3$ has $E_{i,j}^{-1}(0)=(K_i\times \{p_j\})\cup (\{p_i\}\times K_j)$ (a bouquet) as a deformation retract. From these observations, we can see that
$\{[x] \mid x\in \mathcal{X},\ \mathfrak{l} (x)<a\}$
is a basis of $H^{\dr}_*(\Sigma^a_1,\Sigma^0_1)$.
In general, for any $m\in \Z_{\geq 1}$ and $a\in \R_{>0}$,
\begin{align}\label{basis-Sigma}
 \{ [ x_1\star \dots \star x_m] \mid x_1,\dots,x_m \in \mathcal{X},\ \mathfrak{l}(x_1)+\dots +\mathfrak{l}(x_m) <a\} 
 \end{align}
is a basis of $H^{\dr}_*(\Sigma^a_m,\Sigma^0_m)$.

We fix a trivialization
$h\colon \mathcal{O}_{\epsilon_0} \times (K_0\cup K_1)\to N_{\epsilon_0}$ so that $h(w,p_0) = p_0+(0,w)$ and $h(w,p_1)=p_1 + (-w,0)$ for every $w\in \mathcal{O}_{\epsilon_0}\subset \R^d$. (The orientations of $K_0,K_1$ are chosen so that this map preserves orientations.)

Suppose that $a\in \R_{>0}\setminus \mathcal{L}(K_0\cup K_1)$ ($= \R_{>0}\setminus \Z_{\geq 1}$).
In the following series of three lemmata, we will observe the chains $f_{1,\delta}(x)$ for each $x\in \mathcal{X}$. Hereafter, we assume that $(\epsilon,\delta)\in\mathcal{T}_a$ is standard with respect to $h$ (See subsection \ref{subsub-explicit}).

\begin{lem}\label{lem-f-x-0}For $i\neq j$, we have equations
\[\begin{array}{ll}
f_{1,\delta}(x^0_{i,j})=0, & f_{1,\delta}(x^1_{i,j})=f_{1,\delta}(\bar{x}^1_{i,j})=0 . 
\end{array}
\]
\end{lem}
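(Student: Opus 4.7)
The plan is to show that the fiber product $W_1$ underlying $f_{1,\delta}(x)$ in the explicit description (\ref{explicit-rep}) is empty in each case, so that the chain vanishes identically. The underlying idea is purely geometric: for $i\neq j$, the paths in the supports of $x^0_{i,j}$, $x^1_{i,j}$, $\bar{x}^1_{i,j}$ are straight segments from $K_i$ to $K_j$, and such segments meet $N_\epsilon$ only very close to their two endpoints, too close to be compatible with the domain condition $\tau\in(2\epsilon,T-2\epsilon)$ built into $A_\epsilon$.

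First I would identify the relevant paths: in each case they have the form $\gamma(\tau)=(1-\tau)p+\tau p'$ on $[0,1]$, with $p\in K_i$ and $p'\in K_j$. Using that $K_0$ is the unit sphere in $\{z_2=0\}$ centered at $p_1$ and $K_1$ is the unit sphere in $\{z_0=0\}$ centered at $p_0$, I would note that $|p_0-p_1|=1$, that $|p_0-q|=1$ for every $q\in K_1$, and that $|p-p_1|=1$ for every $p\in K_0$. Consequently all three families consist of unit-speed segments with time length $T=1$.

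The key geometric input is the distance estimate
\[
d(\gamma(\tau),K_0)=\tau,\qquad d(\gamma(\tau),K_1)=1-\tau\qquad(\tau\in[0,1]),
\]
which I would establish by a short Lagrange-multiplier computation on the defining sphere equations of $K_0$ and $K_1$. For instance, for $x^1_{0,1}$, writing $q=(0,z'_1+1,z_2)\in K_1$ with $(z'_1)^2+|z_2|^2=1$ gives $\gamma^q(\tau)=(0,1+\tau z'_1,\tau z_2)$; minimizing the squared distance to a point $(w_0,w_1,0)$ of $K_0$ subject to $|w_0|^2+w_1^2=1$ yields $\tau^2$, and an analogous computation gives $(1-\tau)^2$ for the distance to $K_1$. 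The remaining cases (including $\bar{x}^1_{i,j}$ and the swap $i\leftrightarrow j$) reduce to the same computation by the linking symmetry between $K_0$ and $K_1$.

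Once this is in hand, $\gamma(\tau)\in N_\epsilon$ is equivalent to $\min(\tau,1-\tau)<\epsilon/2$, i.e.\ $\tau\in[0,\epsilon/2)\cup(1-\epsilon/2,1]$. But $W_1$ demands both $\tau\in(2\epsilon,1-2\epsilon)$ (from the defining condition of $A_\epsilon$) and $\gamma(\tau)=\sigma^v_1(0)\in N_\epsilon$, and since $\epsilon/2<2\epsilon$ these two constraints are disjoint. Hence $W_1=\varnothing$ and $f_{1,\delta}(x)=0$. The main obstacle is the uniform distance computation: it is elementary but must be carried out cleanly across the three families, and in each case one must verify that no closer point of the opposite sphere $K_\ell$ sneaks in near the interior of the segment, which is precisely the substance of the Lagrange analysis.
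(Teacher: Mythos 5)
Your proof is correct, and it takes a genuinely different route from the paper's. The paper's proof is a single line: each path $\varphi(p,p')$ in the support of $x^0_{i,j}$, $x^1_{i,j}$, $\bar{x}^1_{i,j}$ (with one endpoint pinned at $p_i$ or $p_j$) satisfies condition (iii) of Lemma \ref{lem-short-length} with $m=k=1$, so $\con_1$ maps the fiber product into $\Sigma^0_2$ and $f_{1,\delta}(x)$ vanishes in the relative complex $C^{\dr}_*(\cdot,\Sigma^0_2)$. You instead prove the sharper statement that the fiber product $W_1$ in (\ref{explicit-rep}) is empty, so that $f_{1,\delta}(x)$ is the zero chain already before passing to the quotient. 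Both arguments rest on the same geometric observation — the unit-length segment from a pole of one sphere to the other meets $N_\epsilon$ only within parameter distance $\epsilon/2$ of its endpoints, precisely because $p_i$ is the center of $K_j$ for $i\neq j$ — but you extract the exact distance $d(\gamma(\tau),K_0\cup K_1)=\min(\tau,1-\tau)$, which is stronger than the paper needs (it only uses that this minimum is less than $4\epsilon_0/5$). Your route is more explicit and self-contained, at the price of carrying out the distance minimization over both spheres; the paper's route is shorter because Lemma \ref{lem-short-length} is part of the general machinery reused throughout Section 5 (for instance to discard portions of the chains appearing in the proofs of Lemmas \ref{lem-f-x-ii} and \ref{lem-f-x-ij}).
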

\begin{proof}
For $(p,p')\in K_i\times K_j$ ($i\neq j$) such that either $p=p_i$ or $p'=p_j$, $\varphi(p,p')$ satisfies the condition (iii) of Lemma \ref{lem-short-length} for $m=k=1$. The equations follow from Lemma \ref{lem-short-length}.
\end{proof}

\begin{lem}\label{lem-f-x-ii}
For $i\in \{0,1\}$, there exist $y^1_{i,i}\in C^{\dr}_{1}(\Sigma^{2+2\epsilon}_2,\Sigma^0_2)$ and $y^2_{i,i}\in C^{\dr}_{d}(\Sigma^{2+2\epsilon}_2,\Sigma^0_2)$ such that
\begin{align}\label{f-x-ii}
\begin{split}
\partial y^1_{0,0} &= f_{1,\delta}(x^1_{0,0}) - x^0_{0,1}\star x^0_{1,0},\\
\partial y^1_{1,1} &= f_{1,\delta} (x^1_{1,1}) - (-1)^d x^0_{1,0}\star x^0_{0,1}, \\
\partial y^2_{0,0} & = f_{1,\delta} (x^2_{0,0} ) - (\bar{x}^1_{0,1}\star x^0_{1,0} +(-1)^{d} x^1_{0,1}\star x^0_{1,0} ) , \\
\partial y^2_{1,1} & = f_{1,\delta} (x^2_{1,1} )- ((-1)^d\bar{x}^1_{1,0}\star x^0_{0,1} +  x^1_{1,0}\star x^0_{0,1} ) , 
\end{split}
\end{align}
and 
\begin{align}\label{f-y-ii}
( f_{1,\delta} +(-1)^d f_{2,\delta} ) (y^1_{i,i})=0,\ ( f_{1,\delta} +(-1)^d f_{2,\delta} ) (y^2_{i,i})=0 .
\end{align}
\end{lem}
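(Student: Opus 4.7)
The plan is to choose $(\epsilon,\delta)\in \mathcal{T}_a$ standard with respect to the trivialization $h$, then identify explicitly the geometric locus of interior intersections of the segment families with $K$, and finally construct each $y^\ell_{i,i}$ as a one- or $d$-parameter isotopy whose boundary produces the stated equations.

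For $x^1_{i,i}$, the segment $\varphi(p_i, q)$ is a chord of the sphere $K_i$, so it meets $K_i$ only at its endpoints. Hence $f_{1,\delta}(x^1_{i,i})$ only sees crossings with $K_{1-i}$. Using the elementary fact that $p_i$ is the center of the sphere $K_{1-i}$ and vice versa, one checks that $\varphi(p_i,q)$ meets $K_{1-i}$ in its interior if and only if $q$ is the antipode $\bar p_i$ of $p_i$ on $K_i$; in that case it meets $K_{1-i}$ transversely at $p_{1-i}$ at time $\tau=1/2$. After Thom integration against $\delta$, $f_{1,\delta}(x^1_{i,i})$ reduces to a $0$-chain representing the pair of paths $(\varphi(p_i,p_{1-i}),\varphi(p_{1-i},\bar p_i))$ with a sign $+1$ for $i=0$ and $(-1)^d$ for $i=1$, the discrepancy being traceable to the opposite sign conventions in the formulas $h(w,p_0)=p_0+(0,w)$ and $h(w,p_1)=p_1+(-w,0)$. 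The chain $y^1_{i,i}$ is then the one-parameter family obtained by smoothly isotoping the endpoint $\bar p_i$ back to $p_i$ along a fixed smooth arc in $K_i$ while keeping the first path equal to $\varphi(p_i,p_{1-i})$; its boundary produces (\ref{f-x-ii}).

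For $x^2_{i,i}$, a direct solution of the crossing constraint $(1-\tau)p+\tau q\in K_{1-i}$ with $p,q\in K_i$ and $\tau\in(0,1)$ again uses that $p_{1-i}$ is the center of $K_i$: it forces $q=\bar p$ and $\tau=1/2$, giving a $(d-1)$-dimensional locus parameterized by $p\in K_i$. After Thom integration the corresponding chain is the diagonal family $p\mapsto(\varphi(p,p_{1-i}),\varphi(p_{1-i},\bar p))$. The chain $y^2_{i,i}$ will be a two-step geometric homotopy: first an isotopy supported near the first path that slides its terminal endpoint from $p_{1-i}$ along $K_{1-i}$, then an isotopy supported near the second path that slides its terminal endpoint from $\bar p$ to $p_i$ along $K_i$. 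Composing these so that they share a common fixed middle splits the diagonal into the two chains $\bar x^1_{i,1-i}\star x^0_{1-i,i}$ and $x^1_{i,1-i}\star x^0_{1-i,i}$ with signs $+1$ and $(-1)^d$, arising from the same sign convention of $h$.

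The identities (\ref{f-y-ii}) will follow by showing that every pair of paths appearing in $y^1_{i,i}$ and $y^2_{i,i}$ admits no further interior transverse crossing with $K$: by construction each such pair is built from chords of $K_i$ and chords from $K_i$ to $K_{1-i}$ whose only potential interior meeting with $K$ is already at the center $p_{1-i}$, which has normal weight zero in the Thom integration after the first application of $f$. Hence the contributions of $f_{1,\delta}$ and $(-1)^d f_{2,\delta}$ produce boundaries that cancel, the sign $(-1)^d$ coming directly from the factor $(-1)^{p+1+kd}$ in the definition of $D_\delta$ when $k$ increases by one. The main obstacle will be the careful verification of orientations, particularly the $(-1)^d$ discrepancies, through the fiber product convention of Definition \ref{def-fib-prod-ope} combined with the orientation of $h$ at $p_0$ versus $p_1$.
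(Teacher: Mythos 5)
Your geometric identification of $f_{1,\delta}(x^1_{i,i})$ and $f_{1,\delta}(x^2_{i,i})$ is essentially right: the only interior crossings of the segment families with $K$ occur at the center $p_{1-i}$ of $K_i$, at time $\tau = 1/2$, with the other endpoint being the antipode. Your plan for $y^1_{i,i}$ (an arc in $K_i$ from $\bar p_i$ back to $p_i$ while freezing the first path) is also the right idea and matches the paper in spirit (the paper passes through $(\tilde i_2)_*$ of a chain $z^1_{i,i}$ in $K_i\times K_i$ realizing $[\{(p_i,-p_i)\}]=[\{(p_i,p_i)\}]$ in $H_0$). However, there are three genuine gaps.

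First, the construction of $y^2_{i,i}$ is not a valid argument as stated. The key step is to realize the anti-diagonal $\varphi_0(K_i)=\{(p,-p)\}$ in $K_i\times K_i$ as homologous to $[K_i\times\{p_i\}]+(-1)^d[\{p_i\}\times K_i]$; the sign $(-1)^d$ is the degree of the antipodal map on $S^{d-1}$, which you never invoke. Your ``two-step geometric homotopy'' (slide one endpoint, then the other) does not visibly produce this K\"unneth decomposition: an isotopy $(p,-p)\rightsquigarrow(p,p_i)$ is singular at $p=\pm p_i$ and its boundary is not the required cycle. The paper sidesteps this by quoting the singular homology identity for the graph of the antipodal map and then using Lemma \ref{lem-hmgy-class} to realize it on the de Rham chain level. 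You also misattribute this $(-1)^d$ to the sign convention of $h$; the $h$-convention is responsible for the global sign difference between the $i=0$ and $i=1$ equations, but not for the $(-1)^d$ multiplying $x^1_{0,1}\star x^0_{1,0}$ inside the $i=0$ equation.

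Second, your argument for (\ref{f-y-ii}) is wrong in mechanism. You claim ``the contributions of $f_{1,\delta}$ and $(-1)^d f_{2,\delta}$ produce boundaries that cancel,'' but in fact each of $f_{1,\delta}(y^\ell_{i,i})$ and $f_{2,\delta}(y^\ell_{i,i})$ vanishes \emph{individually} in the quotient $C^{\dr}_*(\Sigma^{2+2\epsilon}_3,\Sigma^0_3)$. The correct reason is Lemma \ref{lem-short-length}(iii): every pair of paths appearing in the families defining $y^\ell_{i,i}$ is built from chords running between $K_i$ and its center $p_{1-i}$, so at any time when a path is inside $N_\epsilon$, one of the two split pieces has length $<4\epsilon_0/5$. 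The phrase ``normal weight zero in the Thom integration'' is not a statement about vanishing in the quotient and does not substitute for this. The factor $(-1)^{p+1+kd}$ you cite is a sign in the definition of $D_\delta$ and plays no role in (\ref{f-y-ii}); the $(-1)^d$ in the statement is inserted precisely to make the two (independently vanishing) terms package into the $m=2$ piece of $D_\delta\circ\Phi$.

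Third, even for $y^1_{i,i}$ and the description of $f_{1,\delta}(x^\ell_{i,i})$, your argument works ``at $K$'' rather than ``at $N_\epsilon$.'' Since $\delta$ is a Thom chain supported on $N_\epsilon$ (rather than a current on $K$), the correct picture involves a diffeomorphism $\mathcal O_\epsilon\times K_0 \cong W'_1$ parameterizing the crossing locus, and the de Rham chain equivalence collapsing the $\mathcal O_\epsilon$-direction by $\int_{\mathcal O_\epsilon}\nu_\epsilon=1$. Your ``after Thom integration'' shorthand is consistent with this but you should verify, as the paper does, that the $\mathcal O_\epsilon$-thickened family still satisfies Lemma \ref{lem-short-length}(iii), since that is what actually proves (\ref{f-y-ii}).
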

\begin{proof}
We only show the existence of $y^2_{0,0}$ and $y^1_{0,0}$. Replacing $(K_0,p_0)$ by $(K_1,p_1)$, $y^2_{1,1}$ and $y^1_{1,1}$ are constructed in a parallel way, except the difference of signs.

Since $(\epsilon,\delta)$ is standard, $\delta$ has the form (\ref{delta-standard}). Using the notations of (\ref{explicit-rep}), we can write $f_{1,\delta}(x^2_{0,0})=  [W_1,\Phi_1,\zeta_1] $, where
\begin{align*}
W_1 &=  \{ ((p,p'),\tau,v)\in (K_0\times K_0) \times \R\times N_{\epsilon} \mid 2\epsilon<\tau<1-2\epsilon,\ (1-\tau)p+\tau p' =v\}, \\
\Phi_1 &\colon W_1\to \Sigma^{2+2\epsilon}_{2} \colon ((p,p'),\tau,v)\mapsto \con_1(\varphi(p,p'), (1,\tau), \psi_{\epsilon}(v)), \\
\zeta_1&\in \Omega^{d}_c(W_1)\colon (\zeta_1)_{((p,p'),\tau,v)}=\rho_{\epsilon}(1,\tau)\cdot (h_*(\nu_{\epsilon}\times 1 ))_v .
\end{align*}
$N_{\epsilon}$ is a disjoint union of $h(\mathcal{O}_{\epsilon}\times K_0)$ and $h(\mathcal{O}_{\epsilon}\times K_1)$. Corresponding to this division, we define $W'_i\coloneqq \{((p,p'),\tau,v)\in W_1\mid v\in h(\mathcal{O}_{\epsilon}\times K_i)\}$ for $i=0,1$. 
If $ ((p,p'),\tau,v) \in W'_0$, $\tau$ satisfies the condition (ii) of Lemma \ref{lem-short-length}, so $[W'_0,\Phi_1,\zeta_1]=0\in C^{\dr}_{d-1}(\Sigma^{2+2\epsilon}_2,\Sigma^0_2)$.
For $((p,p'),\tau,v ) \in W'_1$, $\rho_{\epsilon}(1,\tau)=1$ holds. Moreover, there is an diffeomorphism
\[ I \colon \mathcal{O}_{\epsilon}\times K_0 \to W'_1 \colon (w,p)\mapsto ( (p,p'_{(w,p)}) ,|p-h(w,p_1)|, h(w,p_1)). \]
Here $p'_{(w,p)}\in K_0\setminus \{p\}$ is determined by $h(w,p_1) \in  \Im (\varphi(p,p'_{(w,p)}))$, as described in Figure \ref{figure-lemma1}. The diffeomorphism $I$ preserves orientations and $I^*\zeta_1= \nu_{\epsilon} \times 1$ holds.

\begin{figure}
\centering
\begin{overpic}[width=7cm]{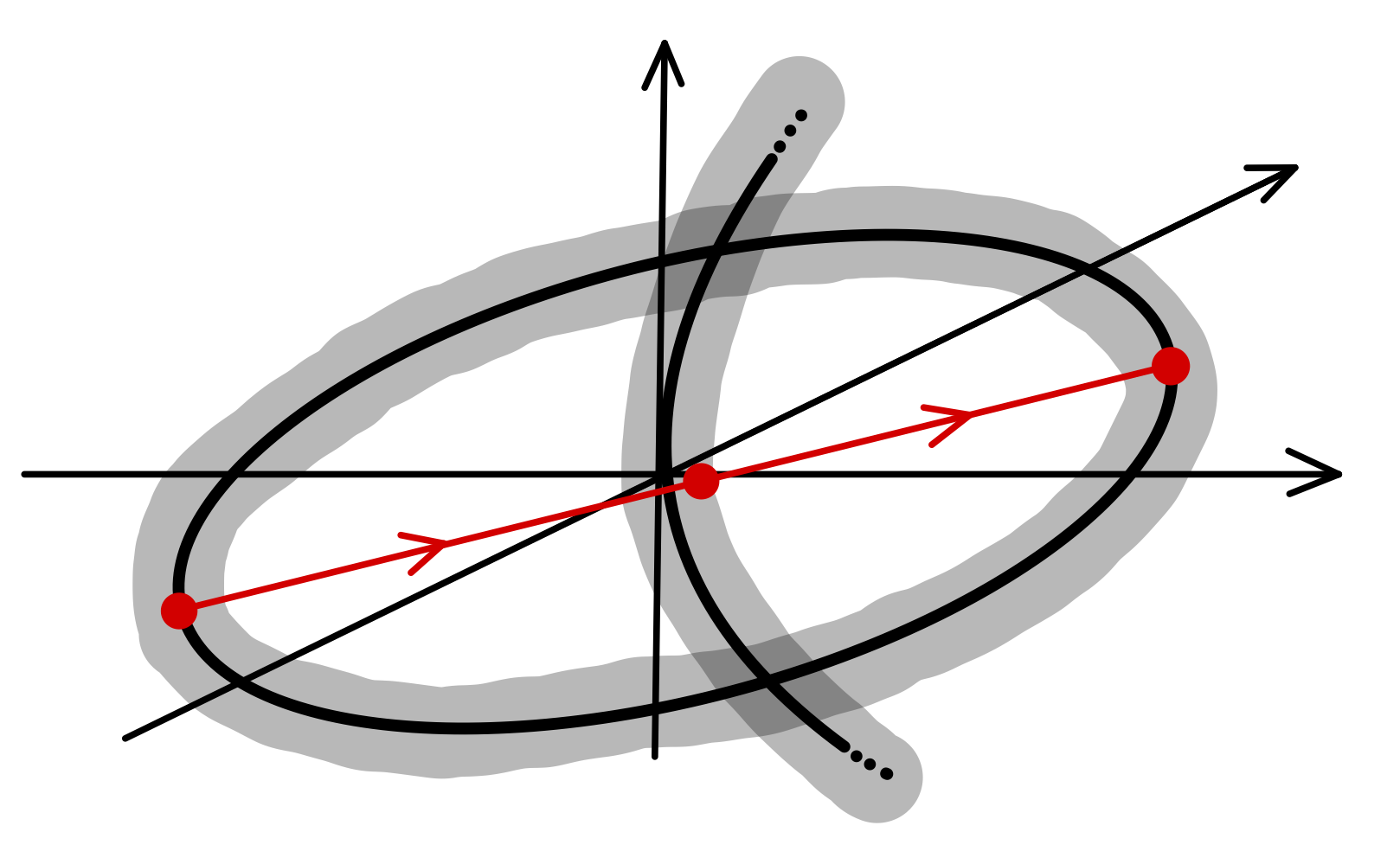}
\put(96,26){$z_0$}
\put(90,51){$z_1$}
\put(46,60){$z_2$}
\put(8,17){$p$}
\put(51,23){$h(w,p_1)$}
\put(86,35){$p'_{(w,p)}$}
\put(13,32){$K_0$}
\put(58,53){$K_1$}
\end{overpic}
\caption{The red path is $\varphi(p,p'_{(w,p)} )$. The gray region is the tubular neighborhood  $N_{\epsilon}$ of $K_0\cup K_1$.}\label{figure-lemma1}
\end{figure}

Likewise, we consider an explicit description of $f_{1,\delta}(x^1_{0,0})$. Then, we have 
\begin{align*}
f_{1,\delta}(x^2_{0,0})&=[\mathcal{O}_{\epsilon}\times K_0, \Phi_1\circ I,  \nu_{\epsilon} \times 1] \in C^{\dr}_{d-1}(\Sigma^{2+2\epsilon}_2,\Sigma^0_2) ,\\
f_{1,\delta}(x^1_{0,0})&=[ \mathcal{O}_{\epsilon}\times \{p_0\},\rest{\Phi_1\circ I}{ \mathcal{O}_{\epsilon}\times \{p_0\}},  \nu_{\epsilon} \times 1 ]\in C^{\dr}_{0}(\Sigma^{2+2\epsilon}_2,\Sigma^0_2) .
\end{align*}
Let us define
$\tilde{ \Phi}_1\colon \R\times (\mathcal{O}_{\epsilon}\times K_0) \to \Sigma^{2+2\epsilon}_2$ as follows: Choose a $C^{\infty}$ function $\kappa\colon \R\to [0,1]$ such that $\kappa(s)=\begin{cases} 0 & \text{ if }s\leq \frac{1}{2}, \\ 1 & \text{ if }s\geq 1. \end{cases}$ 
For $s\geq \frac{1}{2}$, we define $\tilde{\Phi}_1(s,(w,p)\coloneqq \Phi_1\circ I(\kappa(s)\cdot w, p)$. Then the first path (resp. the second path) of $\tilde{\Phi}_1(\frac{1}{2},(w,p))$ is equal to $\varphi(p,p_1)$ (resp. $\varphi(p_1,-p)$) up to   an reparameterization, so we define $\tilde{\Phi}(s,(w,p))$ for $s\leq \frac{1}{2}$ by interpolating the parameterizations so that $\tilde{\Phi}(s,(w,p))=(\varphi(p,p_1), \varphi(p_1,-p))$ for $s\leq 0$. 
%Lastly, we take a path $(-\infty,-1]\to K_0\colon s\mapsto p_s$ so that $p_{-1}=-p_0$ and $p_s=p_0$ for $s\leq -2$. Then we define $\tilde{\Phi}_1(s,w) \coloneqq (\varphi(p_0,p_1), \varphi(p_1,p_s))$ for $s\leq -1$. 
Now we define
\begin{align*}
\tilde{y}^2_{0,0} &\coloneqq  (-1)^d [\R\times ( \mathcal{O}_{\epsilon}\times K_0 ), \tilde{\Phi}_1,\chi\times ( \nu_{\epsilon} \times 1) ] \in C^{\dr}_d(\Sigma^{2+2\epsilon}_2,\Sigma^0_2), \\
\tilde{y}^2_{0,0} &\coloneqq (-1)^d [\R\times (\mathcal{O}_{\epsilon}\times \{p_0\} ), \rest{ \tilde{\Phi}_1}{\R\times (\mathcal{O}_{\epsilon}\times \{ p_0\} )},\chi\times ( \nu_{\epsilon} \times 1 ) ] \in C^{\dr}_1(\Sigma^{2+2\epsilon}_2,\Sigma^0_2) , 
\end{align*}
where $\chi \colon \R\to [0,1]$ is a $C^{\infty}$ function with a compact support such that $\chi \equiv 1$ on $[0,1]$.

From the constructions of $\tilde{y}^2_{0,0}$ and $\tilde{y}^1_{0,0}$, we can compute their boundary chains as follows:
Let us introduce two maps $\varphi_0\colon K_0\to K_0\times K_0 \colon p\mapsto (p,-p)$ and $\tilde{i}_2\colon K_0\times K_0 \to \Sigma^{2+2\epsilon}_2\colon (p,p')\mapsto (\varphi(p,p_1), \varphi(p_1,p'))$. Then, we have
\begin{align*}
\partial \tilde{y}^2_{0,0} &=f_{1,\delta}(x^2_{0,0})- (\tilde{i}_2)_*[ \mathcal{O}_{\epsilon}\times K_0, \varphi_0\circ \pr_{K_0}, \nu_{\epsilon}] \\ 
&=f_{1,\delta}(x^2_{0,0})- (\tilde{i}_2)_*[\varphi_0(K_0) , \iota_{ \varphi_0(K_0)} ,1 ], \\
\partial \tilde{y}^1_{0,0} &=f_{1,\delta}(x^1_{0,0})-  (\tilde{i}_2)_*[ \mathcal{O}_{\epsilon}\times \{p_0\}, \varphi_0\circ \pr_{\{p_0\}}, \nu_{\epsilon}\times 1] \\
&=f_{1,\delta}(x^1_{0,0})-  (\tilde{i}_2)_*[\{\varphi_0(p_0)\} ,\iota_{\{\varphi_0(p_0)\}} , 1] .
\end{align*}
Here we used the condition that $\int_{\mathcal{O}_{\epsilon}}\nu_{\epsilon}=1$. Moreover, we can check from the definition that $\tilde{\Phi}_1(s,(w,p))$ satisfies the condition (iii) of Lemma \ref{lem-short-length} for $m=2$ and $k=1,2$. Therefore, $f_{k,\delta}(\tilde{y}^2_{0,0})=0$ and $f_{k,\delta}(\tilde{y}^1_{0,0})=0$ hold for $k=1,2$.

As homology classes in $H^{\sing}_*(K_0\times K_0)$,
\begin{align*}
[\varphi_0(K_0)] &=[K_0\times \{p_0\}] + (-1)^{d}[\{p_0\}\times K_0]\in H^{\sing}_{d-1}(K_0\times K_0), \\
[\{\varphi_0(p_0)\}] &=[\{(p_0,p_0)\}]\in H^{\sing}_0(K_0\times K_0).
\end{align*} Therefore, by Lemma \ref{lem-hmgy-class}, there exist $z^2_{0,0}\in C^{\dr}_{d}(K_0\times K_0)$ and $z^1_{0,0}\in C^{\dr}_{1}(K_0\times K_0)$ such that
\begin{align*}
\partial z^2_{0,0}&=[\varphi_0(K_0), \iota_{ \varphi_0(K_0) } ,1] -\left( [K_0\times \{p_0\}, \iota_{K_0\times \{p_0\}} ,1] + (-1)^d [\{p_0\}\times K_0, \iota_{\{p_0\}\times K_0} ,1] \right) , \\
\partial z^1_{0,0}&= [\{\varphi_0(p_0)\} , \iota_{ \{\varphi_0(p_0)\} } , 1] -[\{(p_0,p_0)\} , \iota_{ \{(p_0,p_0)\} },1] .
\end{align*}
It is clear from the definition of each $x\in \mathcal{X}$ that
\begin{align*}
&(\tilde{i}_2)_*[K_0\times \{p_0\}, \iota_{K_0\times \{p_0\}} ,1] = \bar{x}^1_{0,1}\star x^0_{1,0}  , \ (\tilde{i}_2)_* [\{p_0\}\times K_0, \iota_{\{p_0\}\times K_0} ,1]  = x^1_{0,1}\star x^0_{1,0} ,\\
& (\tilde{i}_2)_* [\{(p_0,p_0)\} , \iota_{ \{(p_0,p_0)\} },1]  = x^0_{0,1}\star x^0_{1,0} .
\end{align*}
Therefore, $y^2_{0,0}\coloneqq \tilde{y}^2_{0,0}+ (\tilde{i}_2)_*z^2_{0,0}\in C^{\dr}_{d}(\Sigma^{2+2\epsilon}_2,\Sigma^0_2)$ and $y^1_{0,0}\coloneqq \tilde{y}^1_{0,0}+  (\tilde{i}_2)_*z^1_{0,0} \in C^{\dr}_{0}(\Sigma^{2+2\epsilon}_2,\Sigma^0_2)$ satisfy the first and the second equations of (\ref{f-x-ii}). Moreover, any paths in the image of $\tilde{i}_2$ satisfies the condition (iii) of Lemma \ref{lem-short-length} for $m=2$ and $k=1,2$. Therefore, $f_{k,\delta}( (\tilde{i}_2)_*z^2_{0,0})=0$ and $f_{k,\delta}( (\tilde{i}_2)_*z^1_{0,0})=0$ hold for $k=1,2$, and thus $y^2_{0,0}$ and $y^1_{0,0}$ satisfy (\ref{f-y-ii}).
\end{proof}

\begin{lem}\label{lem-f-x-ij}
There exist $y^2_{0,1}, y^2_{1,0}\in C^{\dr}_{2d-1}(\Sigma^{3+2\epsilon}_{2},\Sigma^0_2)$ such that 
\begin{align}\label{f-x-ij}
\partial y^2_{0,1} = f_{1,\delta} (x^2_{0,1})  , \ 
\partial y^2_{1,0} =  f_{1,\delta} (x^2_{1,0})  ,
\end{align}
and
\begin{align}\label{f-y-ij}
 \ ( f_{1,\delta} + (-1)^d f_{2,\delta} ) (y^2_{0,1}) =0,\  ( f_{1,\delta} + (-1)^d f_{2,\delta} ) (y^2_{1,0}) =0.
 \end{align}
\end{lem}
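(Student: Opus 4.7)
The plan is to mirror the construction in Lemma \ref{lem-f-x-ii} in the ``off-diagonal'' case where the endpoints of $\varphi(p,p')$ lie on different components $K_0$ and $K_1$; the chain $y^2_{1,0}$ will be obtained symmetrically by swapping the roles of $K_0$ and $K_1$. The salient structural difference is that no quadratic term of the form $\bar{x}^1 \star x^0 + (-1)^d x^1 \star x^0$ appears in (\ref{f-x-ij}), which matches the simpler formula $Fc^2_{0,1} = -e^2_{0,1}$ in the algebra $\mathcal{A}^{\hopf}_*$.

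First I would write $f_{1,\delta}(x^2_{0,1}) = (-1)^r[W_1,\Phi_1,\zeta_1]$ via the explicit representation (\ref{explicit-rep}), with
\[ W_1 = \bigl\{((p,p'),\tau,v) \in (K_0 \times K_1)\times \R \times N_\epsilon \mid 2\epsilon<\tau<1-2\epsilon,\ (1-\tau)p+\tau p'=v\bigr\},\]
and decompose $W_1 = W'_0 \sqcup W'_1$ according to whether $v \in h(\mathcal{O}_\epsilon \times K_0)$ or $v \in h(\mathcal{O}_\epsilon \times K_1)$. For each $i$, by solving the line incidence equation I would parameterize $W'_i$ by a diffeomorphism from $\mathcal{O}_\epsilon \times K_i$, exactly in the spirit of the map $I$ of Lemma \ref{lem-f-x-ii} (the coordinate $w\in \mathcal{O}_\epsilon$ is the normal fiber direction, and $q\in K_i$ is the point near which the straight segment passes). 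Next I would build a homotopy $\tilde\Phi_1\colon \R\times (\mathcal{O}_\epsilon\times K_i)\to \Sigma^{3+2\epsilon}_2$ connecting $\rest{\Phi_1}{W'_i}$ to a family of ``bouquet-type'' path pairs that factor through the base point $p_i$, produced by interpolating the fiber coordinate $w\mapsto \kappa(s)w$ and reparameterizing the concatenation as in Lemma \ref{lem-f-x-ii}. This yields a preliminary chain $\tilde y^2_{0,1}$, and its boundary defect $\partial \tilde y^2_{0,1} - f_{1,\delta}(x^2_{0,1})$ has the form $(\tilde i_2)_*[X,\iota_X,1]$ for some singular cycle supported in $K_0 \times K_1$; the key observation is that, because $K_0\cap K_1=\varnothing$ and the bouquets lie in different components than the diagonals that arose in Lemma \ref{lem-f-x-ii}, this defect can be compensated via a correction $(\tilde i_2)_*z$ furnished by Lemma \ref{lem-hmgy-class}. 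Setting $y^2_{0,1}\coloneqq \tilde y^2_{0,1}+(\tilde i_2)_*z$ gives (\ref{f-x-ij}).

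The principal obstacle is establishing (\ref{f-y-ij}): the chains $f_{k,\delta}(y^2_{0,1})$ for $k=1,2$ must lie in $C^{\dr}_*(\Sigma^0_3)$. As in the proof of Lemma \ref{lem-f-x-ii}, the strategy is to arrange the homotopy $\tilde\Phi_1$ so that every intermediate path pair satisfies condition (iii) of Lemma \ref{lem-short-length}: at every time $\tau'$ at which a path meets $N_\epsilon$, one of $\rest{\gamma}{[0,\tau']}$ or $\rest{\gamma}{[\tau',T]}$ has length $<4\epsilon_0/5$. Because the intermediate paths are concatenations of a very short normal-fiber arc with a straight chord terminating at the fixed base point $p_0$ or $p_1$, intersections with $N_\epsilon$ near the base point are automatically one-sided-short. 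The delicate piece is to rule out the possibility that, away from the base point, an intermediate path develops a second long excursion into $N_\epsilon$ on the opposite side; here the off-diagonal geometry (that $K_0$ and $K_1$ sit in transverse coordinate hyperplanes of $\R^{2d-1}$) is what prevents two such excursions from coexisting on the same path, so uniform shortness of at least one side can be maintained across the full parameter space $\R\times \mathcal{O}_\epsilon\times K_i$. Verifying this uniformity is the step I expect to require the most care.
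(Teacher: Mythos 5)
Your outline diverges from the paper's argument and leaves a real gap exactly where you flag it — the verification of (\ref{f-y-ij}).

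First, the structural modeling on Lemma \ref{lem-f-x-ii} does not transfer cleanly. There, the decomposition $W_1 = W'_0 \sqcup W'_1$ works because one piece ($W'_0$) contributes zero via condition (ii) of Lemma \ref{lem-short-length}, and only the other piece ($W'_1$) is nontrivial. In the off-diagonal situation $\varphi\colon K_0\times K_1\to Q$, the segments hit $N_\epsilon$ near both $K_0$ and $K_1$, so both pieces of the fiber product contribute; you cannot simply discard one and parameterize the other by $\mathcal O_\epsilon\times K_i$ as in the diagonal case. You would also find that the boundary-defect analysis produces something structurally different from a single cycle in $K_0\times K_0$, so the invocation of Lemma \ref{lem-hmgy-class} is not the right correction mechanism here.

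Second and more importantly, your plan to establish (\ref{f-y-ij}) by arranging the homotopy so that every intermediate path pair satisfies condition (iii) of Lemma \ref{lem-short-length} is precisely the difficulty the paper's proof is designed to \emph{avoid}, and there is no indication that the ``off-diagonal geometry'' argument you sketch can be made to work uniformly over the whole parameter space. The paper sidesteps this entirely by a different construction: choose a bump function $b$ on $K_0\times K_1$ equal to $1$ near $(K_0\times\{p_1\})\cup(\{p_0\}\times K_1)$ so that $f_{1,\delta}(x^2_{0,1}) = f_{1,\delta}([K'_0\times K'_1, \varphi, 1-b])$ (the part near the base points dies by Lemma \ref{lem-short-length}(iii)); use the contractibility of $K'_0\times K'_1$ to build a chain $\tilde x^2_{0,1}$ with $\partial\tilde x^2_{0,1}$ equal to $[K'_0\times K'_1,\varphi,1-b]$ plus a piece killed by $f_{1,\delta}$; then set $y^2_{0,1}\coloneqq f_{1,\delta}(\tilde x^2_{0,1})$. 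The point is that because $y^2_{0,1}$ lies in the image of $f_{1,\delta}$, equation (\ref{f-y-ij}) is an \emph{automatic} consequence of the algebraic identity $(f_{1,\delta}+(-1)^df_{2,\delta})\circ f_{1,\delta}=0$ used in Proposition \ref{prop-D-2}, with no geometric uniformity to check. This is the essential idea your proposal is missing; supplying it would both simplify the construction and remove the unresolved step.
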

\begin{proof}
We only show the existence of $y^2_{0,1}$. Exchanging $K_0$ and $K_1$, $y^2_{1,0}$ is constructed in a parallel way.

Since $(\epsilon,\delta)$ is standard, $\delta$ has the form (\ref{delta-standard}).
Using the notations of (\ref{explicit-rep}), we can write $f_{1,\delta}(x^2_{0,1})=[W_1,\Phi_1,\zeta_1]$, where
\begin{align*}
W_1 &= \{ ( (p,p'), \tau,v) \in (K_0\times K_1)\times \R\times N_{\epsilon}\mid 2\epsilon<\tau <1-2\epsilon, (1-\tau)p+\tau p' =v\}, \\
\Phi_1 &\colon W_1\to \Sigma^{3+2\epsilon}_{2} \colon ((p,p'),\tau,v)\mapsto \con_1(\varphi(p,p'), (1,\tau), \psi_{\epsilon}(v)), \\
\zeta_1&\in \Omega^{d}_c(W_1)\colon (\zeta_1)_{((p,p'),\tau,v)}=\rho_{\epsilon}(1,\tau)\cdot (h_*(\nu_{\epsilon}\times 1))_v .
\end{align*}
If $p\in K_0$ is sufficiently close to $p_0$, then $\varphi(p,p')$ satisfy the condition (iii) of Lemma \ref{lem-short-length} for any $p'\in K_1$. Symmetrically, if $p'\in K_1$ is sufficiently close to $p_1$, then $\varphi(p,p')$ satisfy the same condition for any $p\in K_0$. See Figure \ref{figure-lemma2}. Therefore, For any bump function $b\colon K_0\times K_1 \to \R$ whose support is localized near $(K_0\times \{p_1\})\cup (\{p_0\}\times K_1) $, $[W_1,\Phi_1,\zeta'_1]=0$ holds for
\[\zeta'_1\in \Omega^{d}_c(W_1) \colon (\zeta'_1)_{((p,p'),\tau,v)}=b(p,p')\cdot \rho_{\epsilon}(1,\tau)\cdot (h_*(\nu_{\epsilon}\times 1))_v .\]
We remark that $[W_1,\Phi_1,\zeta'_1]$ is an explicit description (\ref{explicit-rep}) of $f_{1,\delta} ([K_0\times K_1,\rest{\varphi}{K_0\times K_1}, b])$.

\begin{figure}
\centering
\begin{overpic}[width=7cm]{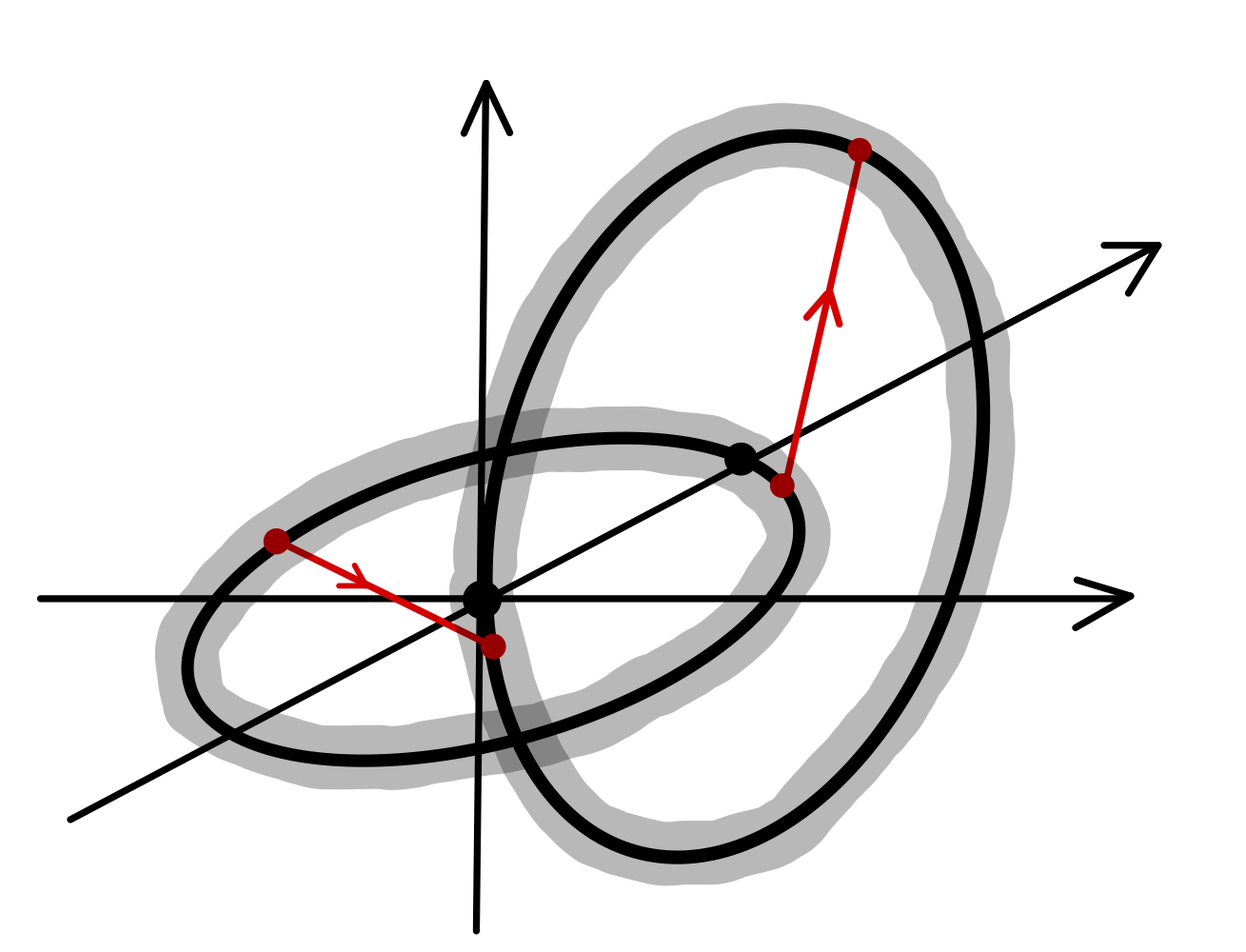}
\put(92,28){$z_0$}
\put(88,61){$z_1$}
\put(36,72){$z_2$}
\put(41,23){$p'$}
\put(19,35){$p$}
\put(65,38){$p$}
\put(70,70){$p'$}
\put(20,11){$K_0$}
\put(69,11){$K_1$}
\end{overpic}
\caption{The red path is $\varphi(p,p')$ when $p$ is close to $p_0$ or $p'$ is close to $p_1$. The gray region is the tubular neighborhood $N_{\epsilon}$ of $K_0\cup K_1$}\label{figure-lemma2}
\end{figure}

Now we choose $b$ so that it is constant to $1$ on a neighborhood of $(K_0\times \{p_1\})\cup (\{p_0\}\times K_1) $. Then, the above computation shows that
\begin{align*}
f_{1,\delta}(x^2_{0,1}) &= f_{1,\delta} ([K_0\times K_1,\rest{\varphi}{K_0\times K_1}, 1-b]) +f_{1,\delta} ([K_0\times K_1,\rest{\varphi}{K_0\times K_1}, b]) \\
&=  f_{1,\delta} ([ K'_0\times K'_1 ,\rest{\varphi}{K'_0\times K'_1}, 1-\rest{b}{K'_0\times K'_1}]),
\end{align*}
where $K'_i$ ($i=0,1$) is the complement of a small closed ball containing $p_i$. Since $K'_0\times K'_1$ is contractible, there exists a map $R\colon \R\times K'_0\times K'_1 \to K'_0\times K'_1$ such that $R(s,\cdot)=\id_{K'_0\times K'_1}$ for $s\geq 1$ and $R(s,\cdot)$ is constant to some point in $K'_0\times K'_1$ for $s\leq 0$.
Using the function $\chi$ in the proof of Lemma \ref{lem-f-x-ii}, let us define a chain
\[\tilde{x}^2_{0,1} \coloneqq [ \R\times (K'_0\times K'_1), \varphi\circ R, \chi\times  (1-\rest{b}{K'_0\times K'_1} ) ] \in C^{\dr}_{2d-1}(\Sigma^{3+2\epsilon}_{2},\Sigma^0_2) . \]
This chain satisfies
\[\partial \tilde{x}^2_{0,1} = [ K'_0\times K'_1 ,\rest{\varphi}{K'_0\times K'_1}, 1-\rest{b}{K'_0\times K'_1}] +  [ \R\times (K'_0\times K'_1), \varphi\circ R, \chi\times  \rest{db}{K'_0\times K'_1} ) ]  .\] 
Note that $[ K'_0\times K'_1 ,\varphi\circ R(0,\cdot), 1-\rest{b}{K'_0\times K'_1}] =0$ since $\varphi\circ R(0,\cdot)$ is constant.
The second chain of the RHS is mapped by $f_{1,\delta}$ to $0$ since the support of $db$ is localized near $(K_0\times \{p_1\})\cup (\{p_0\}\times K_1) $.

Now we define $y^2_{0,1} \coloneqq f_{1,\delta}(\tilde{x}^2_{0,1})\in C^{\dr}_{2d-1}(\Sigma^{3+2\epsilon}_{2},\Sigma^0_2) $. Then, the first equation of (\ref{f-x-ij}) holds since $\partial y^2_{0,1}= f_{1,\delta}(\partial \tilde{x}^2_{0,1})= f_{1,\delta}(x^2_{0,1})$. The first equation (\ref{f-y-ij}) follows from $(f_{1,\delta}+ (-1)^d f_{2,\delta})\circ f_{1,\delta}=0$ (See the proof of Proposition \ref{prop-D-2}).
\end{proof}

\textit{Step 2.}  
We define a function $\mathfrak{l}\colon \mathcal{C}\cup \mathcal{D}\cup \mathcal{E} \to\R_{> 0}$ by
\begin{align*}
& \mathfrak{l}(c^0_{i,j})= \mathfrak{l}(c^1_{i,j}) =\mathfrak{l}(\bar{c}^1_{i,j})=1 \text{ for } i\neq j ,\ \mathfrak{l}( c^1_{i,i} ) =2, \\
& \mathfrak{l} (c^2_{i,j})=\mathfrak{l}(d^2_{i,j})=\mathfrak{l} (e^2_{i,j}) = \begin{cases} 2 & \text{ if }i=j, \\ 3 & \text{ if }i \neq j . \end{cases}
\end{align*}
For every $a\in \R_{>0}\setminus \mathcal{L}(K_0\cup K_1)$, let $\mathcal{A}_*^{<a}$ be an $\R$-subspace of $\mathcal{A}^{\hopf}_*$ spanned by words of elements in $\mathcal{C}\cup \mathcal{D}\cup \mathcal{E} $ such that the sum of the values of $\mathfrak{l}$ is less than $a$. Then, $(\partial + F) (\mathcal{A}^{<a}_*) \subset \mathcal{A}^{<a}_*$, so we get a subcomplex $(\mathcal{A}^{<a}_*,\partial+F)$.

We continue to use $(\epsilon,\delta)\in \mathcal{T}_a$ which is standard with respect to $h$.
The second step is to construct a chain map from $(\mathcal{A}^{<a}_*,\partial+F)$ to $(C^{<a}_*(\epsilon),D_{\delta})$, and prove that it is a quasi-isomorphism.

Let $y^1_{i,i}$ and $y^2_{i,j}$ be the chains of Lemma \ref{lem-f-x-ii} and \ref{lem-f-x-ij}, which depends on $(\epsilon,\delta)$. We define a linear map $\Phi^{<a}_{(\epsilon,\delta)}\colon \mathcal{A}_*^{<a}\to C^{<a}_*(\epsilon)$ so that
\begin{align}\label{Phi-hopf}
\begin{split}
& \Phi^{<a}_{(\epsilon,\delta)} (c^0_{i,j})\coloneqq x^0_{i,j},\ \Phi^{<a}_{(\epsilon,\delta)} (c^1_{i,j})\coloneqq x^0_{i,j},\ \Phi^{<a}_{(\epsilon,\delta)} (\bar{c}^1_{i,j})\coloneqq \bar{x}^1_{i,j}\ \text{ for }i\neq j, \\
&  \Phi^{<a}_{(\epsilon,\delta)} (c^1_{i,i})\coloneqq x^1_{i,i},\ \ \Phi^{<a}_{(\epsilon,\delta)} (c^2_{i,j})\coloneqq x^2_{i,j}, \\
&\Phi^{<a}_{(\epsilon,\delta)} (d^1_{i,i}) \coloneqq   y^1_{i,i},\ \Phi^{<a}_{(\epsilon,\delta)} (d^2_{i,j}) \coloneqq  y^2_{i,j}, \\
& \Phi^{<a}_{(\epsilon,\delta)} (e^1_{i,i}) \coloneqq \partial y^1_{i,i},\ \Phi^{<a}_{(\epsilon,\delta)} (e^2_{i,j}) \coloneqq \partial y^1_{i,j},
\end{split}
\end{align}
and extend them naturally by the product map on $\mathcal{A}^{\hopf}_*$ and the $\star$-operation.
\begin{prop}\label{prop-chain-map-hopf}
 $\Phi^{<a}_{(\epsilon,\delta)}$ is a chain map from $(\mathcal{A}^{<a}_*,\partial+F)$ to $(C^{<a}_*(\epsilon),D_{\delta})$. 
 \end{prop}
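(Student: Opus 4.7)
The map $\Phi^{<a}_{(\epsilon,\delta)}$ has been defined multiplicatively from its values on the generating set $\mathcal{C}\cup\mathcal{D}\cup\mathcal{E}$. Since $(\mathcal{A}^{<a}_*,\partial+F)$ is obtained by (sub)restriction from a free graded $\R$-algebra on these generators, and since the $\star$-operation together with $D_\delta$ satisfies the graded Leibniz rule
\[
D_\delta(x\star y)=D_\delta(x)\star y+(-1)^p x\star D_\delta(y)\qquad (x\in C^{<a}_p(\epsilon),\ y\in C^{<a'}_q(\epsilon))
\]
established in Subsection~\ref{subsubsec-product}, while $\partial+F$ satisfies the analogous Leibniz rule on $\mathcal{A}^{<a}_*$ by construction, the plan is to reduce the statement to the verification
\[
\Phi^{<a}_{(\epsilon,\delta)}\bigl((\partial+F)(g)\bigr)=D_\delta\bigl(\Phi^{<a}_{(\epsilon,\delta)}(g)\bigr)
\]
on each generator $g\in\mathcal{C}\cup\mathcal{D}\cup\mathcal{E}$ and then extend inductively over the length of a word using Leibniz. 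A small additional check is needed to confirm that if $g_1\cdots g_k\in\mathcal{A}^{<a}_*$, then every term produced on the two sides has total length parameter $<a$, so everything lives in the correct subcomplex; this is immediate from the additivity of $\mathfrak{l}$ under $\star$ and the definition of $\mathcal{A}^{<a}_*$.

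For the generator-wise verification, the three length-one generators $c^0_{i,j},c^1_{i,j},\bar c^1_{i,j}$ with $i\neq j$ are all cycles for $\partial+F$, and the corresponding de Rham chains $x^0_{i,j},x^1_{i,j},\bar x^1_{i,j}$ are cycles for $\partial$ with $f_{1,\delta}=0$ by Lemma~\ref{lem-f-x-0}; so $D_\delta$ of their image vanishes, matching $\Phi^{<a}_{(\epsilon,\delta)}\circ(\partial+F)=0$. For $c^1_{i,i}$ and $c^2_{i,j}$, a direct sign count shows that $D_\delta(x^1_{i,i})$ is $\pm f_{1,\delta}(x^1_{i,i})$ and $D_\delta(x^2_{i,j})$ is $\pm f_{1,\delta}(x^2_{i,j})$, which Lemmas~\ref{lem-f-x-ii} and~\ref{lem-f-x-ij} rewrite exactly as $\pm(\partial y^{1/2}_{\cdot}+\text{boundary correction terms})$, and these correction terms are precisely the outputs of $F$ on $c^1_{i,i}$ and $c^2_{i,j}$ under $\Phi^{<a}_{(\epsilon,\delta)}$. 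The generators $d^1_{i,i}$ and $d^2_{i,j}$ map to $y^1_{i,i}$ and $y^2_{i,j}$ which live in $m=2$-components; here the identities
\[
(f_{1,\delta}+(-1)^d f_{2,\delta})(y^1_{i,i})=0,\qquad (f_{1,\delta}+(-1)^d f_{2,\delta})(y^2_{i,j})=0
\]
of~\eqref{f-y-ii} and~\eqref{f-y-ij} guarantee that $D_\delta(y^{1/2}_\cdot)=\partial y^{1/2}_\cdot$, which equals $\Phi^{<a}_{(\epsilon,\delta)}(e^{1/2}_\cdot)=\Phi^{<a}_{(\epsilon,\delta)}(\partial d^{1/2}_\cdot)$. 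Finally, for $e^1_{i,i}$ and $e^2_{i,j}$ one has $(\partial+F)e^{\cdot}_\cdot=0$, while $\Phi^{<a}_{(\epsilon,\delta)}(e^{\cdot}_\cdot)=\partial y^{\cdot}_\cdot$ is manifestly a cycle for $\partial$, and again~\eqref{f-y-ii} and~\eqref{f-y-ij} together with $f_{k,\delta}\partial=\partial f_{k,\delta}$ (Lemma~\ref{lem-delta-small} modulo $\Sigma^0_m$) make the full $D_\delta$-image vanish.

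The main obstacle, and really the only substantive point beyond invoking the three lemmata, is the sign bookkeeping: the overall sign $(-1)^{p+1+kd}$ in the definition of $D_\delta$ together with the degree shift $p-m(d-2)$ produces different signs on each generator, and one must check that the signs appearing in the definition of $F$ (the precise $\pm$, $(-1)^d$ factors in $Fc^1_{i,i}$, $Fc^2_{i,j}$) match them. This is precisely what the definition of $F$ was engineered to achieve, so the verification is routine but requires patience; once it is carried out on generators, Leibniz promotes it to all of $\mathcal{A}^{<a}_*$, completing the proof.
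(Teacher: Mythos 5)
Your proposal is correct and follows essentially the same strategy as the paper's own proof: verify the chain-map identity on each generator of $\mathcal{C}\cup\mathcal{D}\cup\mathcal{E}$ using Lemmas \ref{lem-f-x-0}, \ref{lem-f-x-ii} and \ref{lem-f-x-ij}, then extend to all of $\mathcal{A}^{<a}_*$ via the Leibniz rule for $D_\delta$ with respect to the $\star$-product. The paper's proof is even terser (a bulleted list of which lemma or displayed equation settles each generator), but its logical content is exactly what you spell out, including the sign bookkeeping that makes the outputs of $F$ on $c^1_{i,i}$ and $c^2_{i,j}$ match the boundary-correction terms in \eqref{f-x-ii} and \eqref{f-x-ij}.
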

 \begin{proof} This follows immediately from the series of three lemmata in \textit{Step 1}. For each $\xi \in \mathcal{C}\cup \mathcal{D}\cup \mathcal{E}$, $D_{\delta}\circ \Phi^{<a}_{(\epsilon,\delta)}(\xi) = (\partial +F)\circ \Phi^{<a}_{(\epsilon,\delta)}(\xi)$ is proved by:
\begin{itemize}
\item   Lemma \ref{lem-f-x-0} if $\xi=c^0_{i,j}, c^1_{i,j},\bar{c}^1_{i,j}$ ($i\neq j$).
\item  the equations  (\ref{f-x-ii}) if  $\xi=c^1_{i,i},c^2_{i,i}$. %of Lemma \ref{lem-f-x-ii}.
\item  the equations (\ref{f-x-ij}) if $\xi=c^2_{i,j}$ ($i\neq j$). %of Lemma \ref{lem-f-x-ij},
\item the equations (\ref{f-y-ii}) if $\xi =d^1_{i,i},d^2_{i,i},e^1_{i,i},e^2_{i,i}$. %of Lemma \ref{lem-f-x-ii}.
\item  the equations  (\ref{f-y-ij}) if $\xi=d^2_{i,j},e^2_{i,j}$ ($i\neq j$). %of Lemma \ref{lem-f-x-ij},
\end{itemize}
\end{proof}
 Therefore, we have a linear map on the homology groups
\[ (\Phi^{<a}_{(\epsilon,\delta)})_* \colon H_*(\mathcal{A}^{<a}_*,\partial +F)\to H^{<a}_*(\epsilon,\delta).\]

\begin{prop}\label{prop-Phi-hopf-isom}
$(\Phi^{<a}_{(\epsilon,\delta)})_*$ is an isomorphism.
\end{prop}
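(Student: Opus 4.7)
The plan is to compare $(\mathcal{A}^{<a}_*,\partial+F)$ and $(C^{<a}_*(\epsilon),D_\delta)$ via compatible path-count filtrations, show that $\Phi^{<a}_{(\epsilon,\delta)}$ induces an isomorphism already on the first pages of the induced spectral sequences, and conclude using Lemma \ref{lem-spectral}.

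On the chain side I will use the filtration $\mathcal{F}^{<a}_{\epsilon,p}$ from (\ref{filtration}); its spectral sequence has first page $(E^{<a}_{(\epsilon,\delta)})^1_{-m,q}=H^{\dr}_{q-m(d-1)}(\Sigma^{a+m\epsilon}_m,\Sigma^0_m)$, and by (\ref{basis-Sigma}) this has basis $\{[x_{i_1}\star\cdots\star x_{i_m}]:x_{i_j}\in\mathcal{X},\ \sum_j\mathfrak{l}(x_{i_j})<a+m\epsilon\}$. Because $\mathcal{L}(K_0\cup K_1)\subset\Z_{\geq 1}$, $\mathfrak{l}$ takes only integer values, and the definition of $\mathcal{T}_a$ forces $[a,a+\tfrac{2a}{\epsilon_0}\hat\epsilon]\cap\Z=\varnothing$, the bound collapses to $\sum_j\mathfrak{l}(x_{i_j})<a$. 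On the algebra side I will introduce the path-count $m(c):=1$ for $c\in\mathcal{C}$ and $m(\xi):=2$ for $\xi\in\mathcal{D}\cup\mathcal{E}$, extended additively to monomials, and filter $\mathcal{A}^{<a}_*$ by letting $\mathcal{G}^{<a}_p$ be the span of monomials $w$ with $m(w)\geq -p$. A term-by-term inspection of the defining formulas for $F$ shows that every non-zero value of $F$ takes a single $c$-letter of path count $1$ to a sum of path-count-$2$ terms, so $F$ strictly increases $m$, while $\partial$ preserves both word length and path count. Both filtrations are bounded ($\mathcal{F}^{<a}_{\epsilon,p}=0$ for $p<-2a/\epsilon_0$ and $\mathcal{G}^{<a}_p=0$ for $p<-a$), so the comparison theorem of Lemma \ref{lem-spectral} will apply.

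The key computation will be the first page of the algebra spectral sequence. Since $F$ strictly raises filtration, the associated graded differential is $\partial$ alone; since $\partial$ preserves word length and acts as a graded derivation with $\partial d^\bullet=e^\bullet$ and $\partial c = 0 = \partial e^\bullet$, the length-$k$ summand of the associated graded at level $-m$ is the tensor product of $k$ letter-complexes. Each $c$-letter complex is $\R\cdot c$ with trivial differential, while each $(d^\bullet,e^\bullet)$-pair complex is acyclic, so a K\"unneth-type argument identifies the $(-m,q)$-term of the first page with the span of monomials $c_{i_1}\cdots c_{i_m}$ in $\mathcal{C}$ of total length $\sum_j\mathfrak{l}(c_{i_j})<a$ and appropriate internal degree.

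Finally, from (\ref{Phi-hopf}), Lemma \ref{lem-f-x-ii}, and Lemma \ref{lem-f-x-ij}, each generator $\xi$ is sent into $C^{\dr}_*(\Sigma^{\mathfrak{l}(\xi)+m(\xi)\epsilon}_{m(\xi)},\Sigma^0_{m(\xi)})$, so $\Phi^{<a}_{(\epsilon,\delta)}$ respects the two filtrations. The induced map on first pages sends the basis element $c_{i_1}\cdots c_{i_m}$ to $[x_{i_1}\star\cdots\star x_{i_m}]$, which is exactly the matching basis element identified above, so the induced map is an isomorphism on every $(-m,q)$-term. Lemma \ref{lem-spectral} with $r_0=1$ then yields that $(\Phi^{<a}_{(\epsilon,\delta)})_*$ is an isomorphism. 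The main obstacle I foresee is the K\"unneth-type identification of the algebra first page; everything else is bookkeeping about filtration degrees and the reconciliation of $\sum\mathfrak{l}<a$ with $\sum\mathfrak{l}<a+m\epsilon$, which is forced by the integrality of $\mathfrak{l}$ together with the gap condition built into $\mathcal{T}_a$.
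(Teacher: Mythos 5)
Your proposal is correct and follows the same overall strategy as the paper: filter both sides by path count (your $m(\cdot)$ is the paper's $\mathfrak{m}$, your $\mathcal{G}^{<a}_p$ is exactly the paper's filtration), match first pages, and conclude by Lemma \ref{lem-spectral}. The one point of divergence is in computing $H_*(\mathcal{A}^{<a}_*(m),\partial)\cong\mathcal{A}^{<a}_*(m,\mathcal{C})$: the paper restricts the stabilization quasi-isomorphisms (\ref{stabilization}) citing \cite{ens}, whereas you give a more elementary, self-contained K\"unneth-style argument. Your phrasing is slightly imprecise at that point — because of the global constraint $\sum\mathfrak{l}<a$, the length-$k$ piece of $\mathcal{A}^{<a}_*(m)$ is not literally a tensor product of $k$ letter-complexes but a direct sum of such tensor products, indexed by the sequences of $(\mathfrak{m}_i,\mathfrak{l}_i)$-values (each preserved by $\partial$ since $\mathfrak{l}(d^\bullet)=\mathfrak{l}(e^\bullet)$); once one decomposes that far, each $\mathfrak{m}=2$ position contributes an acyclic $\R d\to\R e$ factor and the K\"unneth theorem over $\R$ gives the claim. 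Either route works. Your remark that the integrality of $\mathfrak{l}$ plus the gap condition in Definition \ref{def-class-of-data} collapses $\sum\mathfrak{l}<a+m\epsilon$ to $\sum\mathfrak{l}<a$ is correct and usefully makes explicit a step the paper leaves implicit when matching (\ref{basis-Sigma}) (stated for $\Sigma^a_m$) against $H^{\dr}_*(\Sigma^{a+m\epsilon}_m,\Sigma^0_m)$; the paper is tacitly using Proposition \ref{prop-hmgy-grp} to pass between the two.
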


\begin{proof}
We introduce a function $\mathfrak{m} \colon \mathcal{C}\cup \mathcal{D}\cup \mathcal{E} \to \Z_{\geq 1}$ so that $\mathfrak{m} (\mathcal{C})=\{1\}$ and $\mathfrak{m} (\mathcal{D})=\mathfrak{m}(\mathcal{E})=\{2\}$.
For every $m\in \Z_{\geq 0}$, let $\mathcal{A}_*^{<a}(m)$ be an $\R$-subspace of $\mathcal{A}^{<a}_* $ generated by words of elements of $\mathcal{C}\cup \mathcal{D}\cup \mathcal{E}$ such that the sum of the values of $\mathfrak{m}$ is equal to $m$. (When $m=0$, $\mathcal{A}^{<a}_*(0) \coloneqq \R\cdot 1$.)
Then, the chain complex $(\mathcal{A}^{<a}_*,\partial +F )$ is filtered by subcomplexes $\{\mathcal{G}^{<a}_p\}_{p\in \Z}$ defined by
$\mathcal{G}^{<a}_p\coloneqq \bigoplus_{m\geq -p}\mathcal{A}^{<a}_*(m)$. Let us consider the spectral sequence determined by this filtration. The $(-m,q)$-term ($m\geq 0$) of its first page is given by
\[ H_{q-m} ( \mathcal{A}^{<a}_*(m), \partial ) \cong  H_{q-m}(\mathcal{A}^{<a}_*(m,\mathcal{C}),0)=\mathcal{A}^{<a}_{q-m}(m,\mathcal{C}) .\]
Here, $\mathcal{A}^{<a}_*(m,\mathcal{C}) \coloneqq \mathcal{A}^{<a}_*(m) \cap \mathcal{A}_*(\mathcal{C})$ and the first isomorphism is induced by restricting the quasi-isomorphisms (\ref{stabilization}).
$\Phi^{<a}_{(\epsilon,\delta)}$ preserves the filtrations $\{\mathcal{G}^{<a}_p\}_{p\in \Z}$ and $\{\mathcal{F}^{<a}_{\epsilon,p}\}_{p\in \Z}$. The induced map on the ($-m$)-th column ($m\geq 0$) of the first page has the form
\[(\Phi^{<a}_{(\epsilon,\delta)})_*\colon \mathcal{A}^{<a}_{*-m}(m, \mathcal{C}) \to H^{\dr}_{*-m(d-1)}(\Sigma^{a+m\epsilon}_{m},\Sigma^0_{m}).\]
This map is an isomorphism since the basis $\{c_1\cdots c_{m}\mid c_1,\dots ,c_{m}\in \mathcal{C} ,\ \mathfrak{l}(c_1)+\dots + \mathfrak{l}(c_{m}) <a \}$ of $\mathcal{A}^{<a}_{*-m}(m, \mathcal{C})$ is mapped to the basis (\ref{basis-Sigma}). In the ($-m$)-th column for $m<0$, $(\Phi^{<a}_{(\epsilon,\delta)})_*$ is a map between the zero vector space. The proposition now follows from Lemma \ref{lem-spectral}.
\end{proof}

\textit{Step 3.} The last step is to show that the family of maps
\[ \{ (\Phi^{<a}_{(\epsilon,\delta)})_*\mid a\in \R_{>0}\setminus \mathcal{L}(K),\ (\epsilon,\delta)\in \mathcal{T}_a\text{ is standard with respect to }h\}  \]
induces an isomorphism on the limit of $\epsilon \to 0$ and $a\to \infty$.

\begin{lem} For $(\epsilon,\delta),(\epsilon',\delta')\in \mathcal{T}_a$ ($\epsilon'\leq \epsilon$) which are standard with respect to $h$,
\[k_{(\epsilon',\delta'),(\epsilon,\delta)} \circ (\Phi^{<a}_{(\epsilon',\delta')})_* = (\Phi^{<a}_{(\epsilon,\delta)})_*.\]
\end{lem}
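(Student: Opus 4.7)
The plan is to factor the equation through the $[-1,1]$-modeled formalism, by building a chain-level lift of $\Phi^{<a}_{(\epsilon',\delta')}$ and $\Phi^{<a}_{(\epsilon,\delta)}$ to a single chain map into $(\bar{C}^{<a}_*(\epsilon),\bar{D}_{\bar{\delta}})$ for a well-chosen $(\bar{\epsilon},\bar{\delta})\in\bar{\mathcal{T}}_a$. Concretely, I would first invoke Corollary \ref{cor-indep-k} to reduce to any particular choice of $(\bar{\epsilon},\bar{\delta})$ satisfying (\ref{condition-delta-bar}); taking $\bar{\epsilon}=\epsilon$ and $\bar{\delta}$ of the standard form (\ref{standard-delta-bar}) is natural, since then $e_+\bar{\delta}=\delta$ and $e_-\bar{\delta}=(i_{\epsilon',\epsilon})_*\delta'$ already hold, and the ``$r$-dependent'' $d$-form $\bar\eta_{\epsilon',\epsilon}$ interpolates $\nu_\epsilon$ and $\nu_{\epsilon'}$ in a single explicit family.

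Next I would construct a linear map $\bar{\Phi}^{<a}_{(\epsilon,\bar{\delta})}\colon \mathcal{A}^{<a}_*\to \bar{C}^{<a}_*(\epsilon)$ that refines the assignment (\ref{Phi-hopf}). On the generators in $\mathcal{C}$ I set $\bar{\Phi}^{<a}_{(\epsilon,\bar{\delta})}(c)\coloneqq \bar{i}\,\Phi^{<a}_{(\epsilon,\delta)}(c)$, using the map $\bar{i}$ of (\ref{bar-i}); this is well-defined because the chains $x$ in (\ref{def-of-x}) do not depend on $\epsilon$ or $\delta$. On the generators $d^1_{i,i},d^2_{i,j}$ I would construct $[-1,1]$-modeled chains $\bar{y}^1_{i,i},\bar{y}^2_{i,j}$ by running the proofs of Lemma \ref{lem-f-x-ii} and Lemma \ref{lem-f-x-ij} verbatim but over $\R\times(\mathcal{O}_\epsilon\times K_i)$ (and $\R\times(K_0'\times K_1')$ respectively), using the $r$-parametrized form $\bar\eta_{\epsilon',\epsilon}$ in place of $h_*(\nu_\epsilon\times 1)$, and absorbing the $d\kappa$-term by the same interpolation-by-reparametrization trick already used there. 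The cylindrical structure at $|r|\geq 1$ is automatic because $\bar\eta_{\epsilon',\epsilon}$ is cylindrical there. The images on $e^1_{i,i},e^2_{i,j}$ are defined as the boundaries $\partial\bar{y}^1_{i,i},\partial\bar{y}^2_{i,j}$, and the whole map is extended by the $\bar\star$-operation introduced in Section \ref{subsec-chain-cpx-by-[-1,1]}.

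The third step is to verify that $\bar{\Phi}^{<a}_{(\epsilon,\bar{\delta})}$ is a chain map from $(\mathcal{A}^{<a}_*,\partial+F)$ to $(\bar{C}^{<a}_*(\epsilon),\bar{D}_{\bar{\delta}})$, together with the two restriction identities
\[
e_{\epsilon,+}\circ \bar{\Phi}^{<a}_{(\epsilon,\bar{\delta})}=\Phi^{<a}_{(\epsilon,\delta)},\qquad e_{\epsilon,-}\circ \bar{\Phi}^{<a}_{(\epsilon,\bar{\delta})}=(j_{\epsilon',\epsilon})_*\circ \Phi^{<a}_{(\epsilon',\delta')}.
\]
The chain-map property is an exact $[-1,1]$-modeled analogue of Proposition \ref{prop-chain-map-hopf}, relying on Proposition \ref{prop-f-bar-equations} and on the fact that the constructions of $\bar{y}^1_{i,i}$ and $\bar{y}^2_{i,j}$ were made so that the relations (\ref{f-x-ii}), (\ref{f-x-ij}), (\ref{f-y-ii}), (\ref{f-y-ij}) hold with $f_{k,\delta}$ replaced by $\bar{f}_{k,\bar{\delta}}$. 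The restriction identities follow because, at $r=\pm1$, the interpolating data collapse to the $\delta$- and $\delta'$-data respectively and because $\bar{i},e_\pm$ intertwine $f_{k,\xi}$ with $\bar{f}_{k,\bar{\xi}}$.

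Once this is done, the conclusion is formal. By the definition of the isomorphisms $f_{(\epsilon,\bar{\delta}),\pm}$ in Section \ref{subsec-limit}, the restriction identities give
\[
f_{(\epsilon,\bar{\delta}),+}\circ(\bar{\Phi}^{<a}_{(\epsilon,\bar{\delta})})_*=(\Phi^{<a}_{(\epsilon,\delta)})_*,\qquad f_{(\epsilon,\bar{\delta}),-}\circ(\bar{\Phi}^{<a}_{(\epsilon,\bar{\delta})})_*=(\Phi^{<a}_{(\epsilon',\delta')})_*,
\]
whence $k_{(\epsilon',\delta'),(\epsilon,\delta)}\circ(\Phi^{<a}_{(\epsilon',\delta')})_*=f_{(\epsilon,\bar{\delta}),+}\circ f_{(\epsilon,\bar{\delta}),-}^{-1}\circ(\Phi^{<a}_{(\epsilon',\delta')})_*=(\Phi^{<a}_{(\epsilon,\delta)})_*$. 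The main obstacle is the bookkeeping in the chain-level construction of $\bar{y}^1_{i,i}$ and $\bar{y}^2_{i,j}$: one must keep the cylindrical structure at $|r|\geq 1$, keep the support small enough that Lemma \ref{lem-short-length} still kills the unwanted pieces uniformly in $r$, and match the restrictions at $r=\pm1$ exactly with the $\delta$- and $\delta'$-chains built in Lemmas \ref{lem-f-x-ii} and \ref{lem-f-x-ij}; everything else is a formal repetition of the arguments already used in Step 1 and Step 2.
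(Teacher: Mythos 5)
Your proposal is correct and takes essentially the same approach as the paper: both factor the two chain maps through a single $[-1,1]$-modeled lift $\bar{\Phi}^{<a}_{\epsilon}$, built by replacing each $x\in\mathcal{X}$ by $\bar{i}(x)$ and each correction chain $y^k_{i,j}$ by a $[-1,1]$-modeled $\overline{y^k_{i,j}}$ obtained by running the constructions of Lemmas \ref{lem-f-x-ii} and \ref{lem-f-x-ij} in one extra $\R$-parameter, and both then read the claim off the restriction identities $e_{\epsilon,\pm}\circ\bar{\Phi}^{<a}_{\epsilon}=\Phi^{<a}_{(\epsilon,\delta)}$, $(j_{\epsilon',\epsilon})_*\Phi^{<a}_{(\epsilon',\delta')}$ together with the definition of $f_{(\bar\epsilon,\bar\delta),\pm}$.
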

\begin{proof}
We have defined $\Phi^{<a}_{(\epsilon,\delta)}$ by the chains $\{y^1_{i,i},y^2_{i,j}\}$ which depends on $(\epsilon,\delta)$. As a notation, let $\{(y^1_{i,i})',(y^2_{i,j})'\}$ be the corresponding chains constructed from $(\epsilon',\delta')$, by which $\Phi^{<a}_{(\epsilon',\delta')}$ is defined.

We take $(\bar{\epsilon},\bar{\delta})\in \bar{\mathcal{T}}_a$ satisfying \ref{condition-delta-bar} for $(\epsilon,\delta),(\epsilon',\delta')$. We may assume that it is standard with respect to $h$, and thus $\bar{\epsilon}=\epsilon$.
As in Lemma \ref{lem-f-x-0}, $\bar{f}_{1,\bar{\delta}} (\bar{i}(x))=0$ holds for $x=x^0_{i,j}, x^1_{i,j},\bar{x}^1_{i,j}$ ($i\neq j$), where $\bar{i}$ is the map (\ref{bar-i}).
We claim that there exist $[-1,1]$-modeled chains $\overline{y^1_{i,i}}\in \bar{C}^{\dr}_{1}(\Sigma^{2+2\epsilon}_2,\Sigma^0_2)$, $\overline{y^2_{i,i}}\in \bar{C}^{\dr}_{d}(\Sigma^{2+2\epsilon}_2,\Sigma^0_2)$ and $\overline{y^2_{i,j}}\in \bar{C}^{\dr}_{d}(\Sigma^{3+2\epsilon}_2,\Sigma^0_2)$ ($i\neq j$) which satisfy the following equations:
\begin{itemize}
\item the variants of the equations (\ref{f-x-ii}), (\ref{f-y-ii}), (\ref{f-x-ij}) and (\ref{f-y-ij}) determined by replacing $\{ y^1_{i,i} , y^2_{i,j}, f_{k,\delta}, \star\}$  by $\{\overline{y^1_{i,i}},  \overline{y^2_{i,j}}, \bar{f}_{k,\bar{\delta}}, \bar{\star}\}$, and $x\in \mathcal{X}$ by $\bar{i}(x)$.
\item $e_+ \overline{y^1_{i,i}} =y^1_{i,i} $, $e_+\overline{y^2_{i,j}}= y^2_{i,j}$, $e_-\overline{y^1_{i,i}} = (j_{\epsilon',\epsilon})_* (y^1_{i,i})'$, and $e_-\overline{y^2_{i,j}} =  (j_{\epsilon',\epsilon})_* ( y^2_{i,j})'$.
\end{itemize}
This claim is proved by rewriting the proof of Lemma \ref{lem-f-x-ii} and \ref{lem-f-x-ij} for $[-1,1]$-modeled chains. We omit the proof.

We define a linear map $\bar{\Phi}^{<a}_{\epsilon}\colon \mathcal{A}^{<a}_* \to \bar{C}^{<a}_*(\epsilon)$ as in (\ref{Phi-hopf}) by replacing $x\in \mathcal{X}$ by $\bar{i}(x)$ and $\{y^1_{i,i},y^2_{i,j}\}$ by $\{\overline{y^1_{i,i}}, \overline{y^2_{i,j}}\}$, and extend naturally by the product on $\mathcal{A}^{\hopf}_*$ and the $\bar{\star}$-operation. The former equations about $\overline{y^1_{i,i}} ,\overline{y^2_{i,j}}$ ensures that $\bar{\Phi}^{<a}_{\epsilon}$ is a chain map from $( \mathcal{A}^{<a}_*,\partial +F)$ to $(\bar{C}^{<a}_*(\epsilon),\bar{D}_{\bar{\delta}})$, as in Proposition \ref{prop-chain-map-hopf}. The latter equations about $\overline{y^1_{i,i}} ,\overline{y^2_{i,j}}$ ensures the commutativity of the following diagram:
\[\xymatrix@C=36pt{
 & & H^{<a}_*(\epsilon,\delta) \\
H_*(\mathcal{A}^{<a}_*,\partial +F ) \ar[r]^-{ (\bar{\Phi}^{a}_{\epsilon})_* } \ar@/^15pt/[rru]^{(\Phi^a_{(\epsilon,\delta)})_*} \ar@/_15pt/[rrd]^{(\Phi^{a}_{(\epsilon',\delta')})_*}& \bar{H}^{<a}_*(\epsilon,\bar{\delta})\ar[ru]^{f_{(\epsilon,\bar{\delta}),+}} \ar[rd]^{f_{(\epsilon,\bar{\delta}),-}}& \\
& & H^{<a}_*(\epsilon',\delta')\ar[uu]_{k_{(\epsilon',\delta'), (\epsilon,\delta)}}.
}\]
This proves the lemma. \end{proof}
Therefore, the family of maps $\{ (\Phi^{<a}_{(\epsilon,\delta)})_*\mid  (\epsilon,\delta)\in \mathcal{T}_a\text{ is standard with respect to }h\}$ induces an isomorphism on the limit of $\epsilon \to 0$
\[\Phi^{<a}_*\colon H_*(\mathcal{A}^{<a}_*,\partial +F) \to H^{<a}_*(\R^{2d-1},K_0\cup K_1)\]
for every $a\in \R_{>0}\setminus \mathcal{L}(K_0\cup K_1)$. Moreover, $\{\Phi^{<a}_*\}_{a\in \R_{>0}\setminus \mathcal{L}(K_0\cup K_1)}$ is naturally compatible with $\{I^{a,b}\}_{a\leq b}$ and the family of linear maps
\[\{ H_*(\mathcal{A}^{<a}_*,\partial +F) \to  H_*(\mathcal{A}^{<b}_*,\partial +F) \}_{a\leq b}\]
which is induced by the inclusion map $\mathcal{A}^{<a}_*\to \mathcal{A}^{<b}_*$. Therefore, on the limit of $a\to \infty$, we obtain an isomorphism
\[H_*(\mathcal{A}^{\hopf}_*,\partial +F) = \varinjlim_{a\to \infty} H_*(\mathcal{A}^{<a}_*,\partial +F) \to H^{\str}_*(\R^{2d-1},K_0\cup K_1) .\]
This finishes the proof of Theorem \ref{thm-string-hopf}.

\subsection{Computation of $H^{\str}_*(\R^{2d-1},K_0\cup K_2)$}

We define $\mathcal{A}^{\mathrm{unlink}}_*\coloneqq \mathcal{A}_*(\mathcal{C'})$ by the set
\[ \mathcal{C}'\coloneqq \{c^0_{i,j}\}_{i\neq j} \cup \{c^1_{i,i}\}_{i}\cup \{c^1_{i,j},\bar{c}^1_{i,j}\}_{i\neq j}\cup \{c^2_{i,j}\}_{i,j}, \]
where $i$ and $j$ run over $\{0,2\}$. The degree of each element is given by
\begin{align*}
 & |c^0_{i,j}|=d-2,\ |c^1_{i,j}|=|c^1_{i,j}|=|\bar{c}^1_{i,j}|=2d-3 \text{ for } i\neq j ,\\
 & |c^1_{i,i}|=2d-3,\ |c^2_{i,j}|=3d-4.
 \end{align*}
(Obviously, there exists an isomorphism $\mathcal{A}_*(\mathcal{C}) \cong \mathcal{A}^{\mathrm{unlink}}_*$ as graded $\R$-algebras.)
We define a graded derivation $\partial\coloneqq  0\colon \mathcal{A}^{\mathrm{unlink}}_* \to \mathcal{A}^{\mathrm{unlink}}_*$. For a differential graded algebra $(\mathcal{A}^{\mathrm{unlink}}_*,\partial)$, we have the following result.
\begin{thm}\label{thm-string-unlink}
There exists an isomorphism of graded $\R$-algebras
\[H_*(\mathcal{A}^{\mathrm{unlink}}_*,\partial) \cong H^{\str}_*(\R^{2d-1},K_0\cup K_2).\]
\end{thm}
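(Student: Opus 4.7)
The argument mirrors that of Theorem \ref{thm-string-hopf} but is significantly simpler, since $K_0$ and $K_2$ are geometrically separable. By the isotopy invariance of $H^{\str}_*$ (Proposition \ref{prop-isotopy-invariance}), I may replace $z_2^*$ with any other non-zero vector, and in particular assume $|z_2^*|>\epsilon_0$; the spheres then lie in disjoint parallel affine hyperplanes of $\R^{2d-1}$. I fix basepoints $p_0=(0,1,0)\in K_0$ and $p_2=(0,1,z_2^*)\in K_2$, together with a product trivialization $h\colon \mathcal{O}_{\epsilon_0}\times(K_0\cup K_2)\to N_{\epsilon_0}$ of the normal bundle, and define chains $x^0_{i,j}, x^1_{i,j}, \bar{x}^1_{i,j}, x^1_{i,i}, x^2_{i,j}\in C^{\dr}_*(\Sigma^a_1,\Sigma^0_1)$ for $i,j\in\{0,2\}$ exactly as in (\ref{def-of-x}); let $\mathcal{X}$ be the set of these cycles. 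The Morse-theoretic computation on $(K_i\cup K_j)^2$ using $E_{i,j}(p,p')=|p-p'|^2$ together with Lemma \ref{lem-hmgy-class}, carried out in Step 1 of Section \ref{subsec-hopf}, shows that $\star$-products of classes $[x]$ with $x\in\mathcal{X}$ form a basis of $H^{\dr}_*(\Sigma^a_m,\Sigma^0_m)$ whose total length (via the function $\mathfrak{l}$) is less than $a$.

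The crux of the argument is the following geometric lemma, replacing Lemmata \ref{lem-f-x-0}--\ref{lem-f-x-ij}: for every $(\epsilon,\delta)\in\mathcal{T}_a$ standard with respect to $h$ and every $x\in\mathcal{X}$,
\[ f_{1,\delta}(x)\in C^{\dr}_*(\Sigma^0_2), \]
so that $f_{1,\delta}(x)$ vanishes in the quotient $C^{\dr}_*(\Sigma^{a+2\epsilon}_2,\Sigma^0_2)$. To prove this I apply condition (iii) of Lemma \ref{lem-short-length} to each straight segment $\varphi(p,p')$ in the support of $x$. If $p$ and $p'$ lie on the same sphere then $\varphi(p,p')$ is a chord of the $d$-ball bounded by that sphere and lies in a single affine hyperplane of $\R^{2d-1}$; a direct computation using the formula $\mathrm{dist}((\xi,z),K_0)=\sqrt{(|\xi|-1)^2+|z|^2}$ shows that this chord enters $N_\epsilon$ only in an $O(\epsilon)$-neighborhood of each endpoint. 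If $p,p'$ lie on different spheres, the $z_2$-coordinate along $\varphi(p,p')$ varies linearly from $0$ to $z_2^*$, and because $|z_2^*|>\epsilon_0$ the segment meets $N_\epsilon(K_0)$ only near its $K_0$-endpoint and $N_\epsilon(K_2)$ only near its $K_2$-endpoint. In both cases, for every $\tau'\in\varphi(p,p')^{-1}(N_\epsilon)$, either $\rest{\varphi(p,p')}{[0,\tau']}$ or $\rest{\varphi(p,p')}{[\tau',1]}$ has length less than $4\epsilon_0/5$.

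Granted the geometric lemma, I define $\Phi^{<a}_{(\epsilon,\delta)}\colon \mathcal{A}^{<a}_*\to C^{<a}_*(\epsilon)$ by sending each generator $c\in\mathcal{C}'$ to its counterpart in $\mathcal{X}$ and extending by the $\star$-product. Since each $x\in\mathcal{X}$ satisfies $\partial x=0$ and $f_{1,\delta}(x)\equiv 0$ modulo $\Sigma^0_2$, the Leibniz rule for $D_\delta$ established in Section \ref{subsubsec-product} forces $D_\delta\circ\Phi^{<a}_{(\epsilon,\delta)}=0=\Phi^{<a}_{(\epsilon,\delta)}\circ\partial$, so $\Phi^{<a}_{(\epsilon,\delta)}$ is a chain map from $(\mathcal{A}^{<a}_*,0)$ to $(C^{<a}_*(\epsilon),D_\delta)$. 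Filtering the source by word length in $\mathcal{C}'$ and the target by $\{\mathcal{F}^{<a}_{\epsilon,p}\}_{p\in\Z}$ gives a morphism of spectral sequences. On the first page this identifies the span of $m$-letter words in $\mathcal{C}'$ of total $\mathfrak{l}$-length less than $a$ and degree $q-m$ with $H^{\dr}_{q-m(d-1)}(\Sigma^{a+m\epsilon}_m,\Sigma^0_m)$ via the basis of $\star$-products of $\mathcal{X}$, so Lemma \ref{lem-spectral} shows that $(\Phi^{<a}_{(\epsilon,\delta)})_*$ is an isomorphism, exactly as in Proposition \ref{prop-Phi-hopf-isom}.

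The last step is passage to the limit. I upgrade $\Phi^{<a}_{(\epsilon,\delta)}$ to a $[-1,1]$-modeled analogue $\bar{\Phi}^{<a}_\epsilon\colon \mathcal{A}^{<a}_*\to\bar{C}^{<a}_*(\epsilon)$ sending each $c\in\mathcal{C}'$ to $\bar{i}(x_c)$; this is again a chain map by exactly the same geometric argument applied to $\bar{f}_{1,\bar{\delta}}$, and it interpolates $(\Phi^{<a}_{(\epsilon,\delta)})_*$ and $(\Phi^{<a}_{(\epsilon',\delta')})_*$ through the transition map $k_{(\epsilon',\delta'),(\epsilon,\delta)}$, as in Step 3 of Section \ref{subsec-hopf}. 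Taking the inverse limit $\epsilon\to 0$ and using compatibility with $\{I^{a,b}\}$ and with the natural inclusions $\mathcal{A}^{<a}_*\hookrightarrow\mathcal{A}^{<b}_*$, I take the direct limit $a\to\infty$ to obtain the claimed isomorphism of graded $\R$-algebras. The main obstacle is the geometric lemma above: once it is in place, the remaining steps are notational simplifications of Section \ref{subsec-hopf} in which no analogue of the stabilizing generators $\mathcal{D}$, $\mathcal{E}$ or the secondary chains $y^1_{i,i}, y^2_{i,j}$ is needed.
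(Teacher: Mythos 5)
Your overall strategy matches the paper's: fix auxiliary data, produce explicit de Rham cycles $\mathcal{X}'$ in $C^{\dr}_*(\Sigma^a_1,\Sigma^0_1)$ indexed by the generating set $\mathcal{C}'$, show that $f_{1,\delta}$ annihilates all of them modulo $\Sigma^0_2$ via Lemma~\ref{lem-short-length}(iii), build a chain map $\Phi^{<a}_{(\epsilon,\delta)}\colon\mathcal{A}^{<a}_*\to C^{<a}_*(\epsilon)$ by the $\star$-product, compare spectral sequences via Lemma~\ref{lem-spectral}, and pass to the limits $\epsilon\to 0$, $a\to\infty$ using the $[-1,1]$-modeled chains. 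The simplification relative to the Hopf case --- no stabilizing generators $\mathcal{D},\mathcal{E}$, no secondary chains $y^k_{i,j}$ --- is exactly the paper's point.

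However, there is a genuine gap in your definition of the cycles. You define $x^1_{i,j}$ for $i\neq j$ \emph{exactly as in} (\ref{def-of-x}), i.e.\ by $[\{p_i\}\times K_j,\rest{\varphi}{\{p_i\}\times K_j},1]$. That cycle is \emph{not} the one the unlinked geometry demands. The paper instead uses the "diagonal"
\[
K_{0,2}=\{(p,p')\in K_0\times K_2\mid p'=p+(0,0,z^*_2)\},\quad K_{2,0}=\{(p,p')\in K_2\times K_0\mid p'=p-(0,0,z^*_2)\},
\]
and sets $x^1_{i,j}=[K_{i,j},\rest{\varphi}{K_{i,j}},1]$. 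The difference is essential: all segments $\varphi(p,p')$ with $(p,p')\in K_{i,j}$ have length exactly $|z^*_2|$, whereas those parameterized by $\{p_i\}\times K_j$ have lengths ranging over $[|z^*_2|,\sqrt{|z^*_2|^2+4}]$. Consequently the length function $\mathfrak{l}$ must assign $\mathfrak{l}(x^1_{i,j})=|z^*_2|$ to the diagonal cycle, but with your choice the chain does not even live in $C^{\dr}_*(\Sigma^{|z^*_2|+\epsilon}_1,\Sigma^0_1)$. The Morse analysis of $E_{i,j}(p,p')=|p-p'|^2$ on $K_i\times K_j$ is also different here: for $i\neq j$, $E_{i,j}$ ranges over $[|z^*_2|^2,|z^*_2|^2+4]$ and the sublevel set $\{|p-p'|<a\}$ retracts onto the \emph{diagonal} $K_{i,j}\cong S^{d-1}$, not onto the bouquet $(K_i\times\{p_j\})\cup(\{p_i\}\times K_j)$ as in the linked case. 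Thus for $a\in(|z^*_2|,\sqrt{|z^*_2|^2+4})$, $H^{\dr}_{d-1}(\Sigma^a_1,\Sigma^0_1)$ already contains the diagonal class, and your proposed basis $\{x\in\mathcal{X}':\mathfrak{l}(x)<a\}$ misses it. The first-page comparison in your spectral-sequence argument then fails for this range of $a$, so $\Phi^{<a}_{(\epsilon,\delta)}$ cannot be shown to be a quasi-isomorphism.

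A second, more minor point: the normalization $|z^*_2|>\epsilon_0$ is not strong enough for the geometric lemma. Lemma~\ref{lem-short-length}(iii) requires that whenever $\varphi(p,p')(\tau')\in N_\epsilon$, one of the two half-segments has length $<4\epsilon_0/5$. For $p\in K_0$, $p'\in K_2$, the segment lies in $N_\epsilon(K_0)$ precisely when $\tau'|z^*_2|<\epsilon/2$, so the half-segment length is bounded by $\epsilon|p-p'|/(2|z^*_2|)\leq \epsilon\sqrt{|z^*_2|^2+4}/(2|z^*_2|)$. For this to be $<4\epsilon_0/5$ uniformly one needs $|z^*_2|$ bounded below by a constant, not merely $|z^*_2|>\epsilon_0$: if $\epsilon_0$ is small and $|z^*_2|$ is comparably small, the factor $\sqrt{|z^*_2|^2+4}/(2|z^*_2|)$ blows up. The paper's normalization $|z^*_2|>2$ is the safe choice. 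Your appeal to isotopy invariance to renormalize $z^*_2$ is fine in spirit --- one should just enlarge $|z^*_2|$ past such a fixed threshold rather than past $\epsilon_0$.
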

To compute $H^{\str}_*(\R^{2d-1},K_0\cup K_2)$, we fix auxiliary data so that $g$ is the standard Riemannian metric on $\R^{2d-1}$. The constant $C_0$ is required to be $C_0> \sqrt{|z_2^*|^2+4}$. The other data, $\epsilon_0$ and $\mu$, are not specified.
The strategy of the proof is the same as Theorem \ref{thm-string-hopf}, but it is much more simple. We only see the outline of each step.

\textit{Step 1.}
We may assume that $|z^*_2|>2$.
Let us fix points $p_0\coloneqq (0,1,0)\in K_0$ and $p_2\coloneqq (0,1,z^*_2)\in K_2$, and define submanifolds of $(K_0\cup K_2)^2$
\begin{align*}
K_{0,2}&\coloneqq \{(p,p')\in K_0\times K_2\mid p'= p+ (0,0,z^*_2)\}, \\
K_{2,0} &\coloneqq \{(p,p')\in K_2\times K_0\mid p'= p - (0,0,z^*_2)\} .
\end{align*}
Let $\varphi \colon (K_0\cup K_2)^2\to \Omega_{K_0\cup K_2}(\R^{2d-1})$ be the map defined as in Section \ref{subsec-hopf} by replacing $K_1$ by $K_2$.
Then, we define the set of chains
\[\mathcal{X}'\coloneqq  \{x^0_{i,j}\}_{i\neq j} \cup \{x^1_{i,i}\}_{i}\cup \{x^1_{i,j},\bar{x}^1_{i,j}\}_{i\neq j}\cup \{x^2_{i,j}\}_{i,j} , \]
where $i$ and $j$ run over $\{0,2\}$, as follows: 
\begin{align*}%\label{def-of-x}
\begin{split}
x^0_{i,j} &\coloneqq [\{(p_i,p_j)\}, \rest{\varphi}{\{(p_i,p_j)\}}, 1]\in C^{\dr}_{0}(\Sigma^{a}_1,\Sigma^0_1)\ (i\neq j), \\
x^1_{i,i} &\coloneqq [\{p_i\}\times K_i , \rest{\varphi }{\{p_i\}\times K_i },1] \in C^{\dr}_{d-1}(\Sigma^{a}_1,\Sigma^0_1), \\
x^1_{i,j} &\coloneqq [K_{i,j} , \rest{\varphi }{K_{i,j} },1] \in C^{\dr}_{d-1}(\Sigma^{a}_1,\Sigma^0_1)\ (i\neq j), \\
\bar{x}^1_{i,j} &\coloneqq [K_i\times \{p_j\} , \rest{\varphi}{ K_i\times \{p_j\} },1] \in C^{\dr}_{d-1}(\Sigma^{a}_1,\Sigma^0_1) \ (i\neq j), \\
x^2_{i,j}&\coloneqq [K_i\times K_j, \rest{\varphi}{K_i\times K_j}, 1] \in C^{\dr}_{2d-2}(\Sigma^{a}_1,\Sigma^0_1).
\end{split}
\end{align*}
Here, $a> \sqrt{ |z^*_2|^2+4 }$. If we define $\mathfrak{l}\colon \mathcal{X}' \to \R_{>0}$ by
\begin{align*}
&\mathfrak{l}(x^0_{i,j}) = \mathfrak{l}(x^1_{i,j}) = |z^*_2| \text{ and }  \mathfrak{l}(\bar{x}^1_{i,j}) =\mathfrak{l}(x^2_{i,j})= \sqrt{ |z^*_2|^2+4 } \text{ for }i\neq j, \\
&\mathfrak{l}(x^1_{i,i})=\mathfrak{l}(x^2_{i,i})=2,
\end{align*}
then, each $x\in \mathcal{X}'$ satisfies $x\in C^{\dr}_*(\Sigma^{\mathrm{l}(x)+\epsilon}_1,\Sigma^0_1)$ for any $\epsilon>0$.
Furthermore, a basis of $H^{\dr}_*(\Sigma^a_m,\Sigma^0_m)$ for $a\in \R_{>0}$ and $m\in \Z_{\geq 1}$ is given by the set homology classes
\[ \{ [x_1\star \dots \star x_m] \mid x_1,\dots,x_m\in \mathcal{X}' ,\ \mathfrak{l}(x_1)+\dots +\mathfrak{l}(x_m) <a\} . \]

The reason of simplicity in this case is the following: For any $x\in \mathcal{X}'$ and $(\epsilon,\delta)\in \mathcal{T}_{\mathfrak{l}(x)}$, the equation
\[f_{1,\delta} (x)=0\in C^{\dr}_*(\Sigma^{\mathfrak{l}(x)+2\epsilon}_2,\Sigma^0_2)\]
holds since the path $\varphi(p,p')$ satisfies the condition (iii) of Lemma \ref{lem-short-length} for any $(p,p')\in (K_0\cup K_2)^2$ (c.f. Lemma \ref{lem-f-x-0}).

\textit{Step 2.} There exists a bijection $\mathcal{C}'\to \mathcal{X}'$ which maps $c^{k}_{i,j}$ to  $x^{k}_{i,j}$ and $\bar{x}^1_{i,j}$ to $\bar{c}^1_{i,j}$ for $k\in \{0,1,2\}$ and $i,j\in \{0,2\}$. Composing $\mathfrak{l}\colon \mathcal{X}'\to \R_{>0}$ with this bijection, a function $\mathfrak{l}\colon \mathcal{C}'\to \R_{>0}$ is defined.
Similar to $\mathcal{A}^{\hopf}_*$,  $\mathcal{A}_*^{\mathrm{unlink}}$ is filtered by subcomplexes $(\mathcal{A}^{<a}_*,\partial )$  for all $a\in \R_{>0}\setminus \mathcal{L}(K_0\cup K_2)$ which is defined by using $\mathfrak{l} \colon \mathcal{C}'\to \R_{>0}$.

Now $\Phi^{<a}_{\epsilon}\colon \mathcal{A}^{<a}_* \to C^{<a}_*(\epsilon)$ is defined so that $c\in \mathcal{C}'$ is mapped to  $x\in \mathcal{X}'$ which corresponds to $c$ via the above bijection, and extend it naturally via the product map on $\mathcal{A}^{\mathrm{unlink}}_*$ and the $\star$-operation. It is clear in this case that $\Phi^{<a}_{\epsilon}$ is a chain map from $(\mathcal{A}^{<a}_*,\partial=0)$ to $(C^{<a}_*(\epsilon), D_{\delta})$. The fact that this map is a quasi-isomorphism is proved by a similar argument as Proposition \ref{prop-Phi-hopf-isom} about spectral sequences. 

\textit{Step 3.} We check that the family of maps $(\Phi^{<a}_{\epsilon})_*$ induces an isomorphism on the limit of $\epsilon \to 0$ and $a\to \infty$. This finishes the proof of Theorem \ref{thm-string-unlink}.

\subsection{A corollary and its potential application}

The next result is a corollary from the above computations
\begin{cor}\label{cor-distinguish} 
As  graded $\R$-algebras,
\[H^{\str}_*(\R^{2d-1},K_0\cup K_1)\ncong H^{\str}_*(\R^{2d-1},K_0\cup K_2).\]
\end{cor}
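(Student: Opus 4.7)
The plan is to use Theorem \ref{thm-string-hopf} and Theorem \ref{thm-string-unlink} to identify the two sides with $H_*(\mathcal{A}^{\hopf}_*, \partial + F)$ and $H_*(\mathcal{A}^{\mathrm{unlink}}_*, 0)$ respectively, and then to exhibit an invariant of unital graded $\R$-algebras that separates these two homologies. The underlying algebraic phenomenon is that in $\mathcal{A}^{\hopf}_*$ the identities
\[(\partial+F)(c^1_{0,0}) = (-1)^d e^1_{0,0} + (-1)^d c^0_{0,1} c^0_{1,0}, \qquad (\partial+F)(d^1_{0,0}) = e^1_{0,0}\]
together realize $c^0_{0,1} c^0_{1,0}$ as a boundary, so $[c^0_{0,1}] \cdot [c^0_{1,0}] = 0$; on the Unlink side the zero differential keeps $[c^0_{0,2}] \cdot [c^0_{2,0}]$ as a nonzero basis element of a free algebra.

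For $d \geq 3$ the degrees of the generators are pairwise distinct and ordered $d-2 < 2d-4 < 2d-3 < 3d-5 < 3d-4$, so I would simply compare graded dimensions in degree $2d-4$. On the Unlink side only the four length-two words $c^0_{i,j} c^0_{k,l}$ contribute, giving $\dim H_{2d-4} = 4$. On the Hopf side those four products together with $e^1_{0,0}, e^1_{1,1}$ span the cycles (dimension $6$), while the boundary images of $c^1_{0,0}, c^1_{1,1}, d^1_{0,0}, d^1_{1,1}$ span a four-dimensional subspace of $\{e^1_{0,0}, e^1_{1,1}, c^0_{0,1} c^0_{1,0}, c^0_{1,0} c^0_{0,1}\}$ (a straightforward $4 \times 4$ determinant check), so $\dim H_{2d-4} = 2$. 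The two graded vector spaces already disagree in this degree.

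For $d = 2$ the degree zero piece is infinite-dimensional on both sides, so I would instead use a zero-divisor argument. Since every generator of $\mathcal{A}^{\mathrm{unlink}}_*$ has non-negative degree, its degree zero part equals the subalgebra generated by $c^0_{0,2}$ and $c^0_{2,0}$, which is the free associative $\R$-algebra on two letters and hence a domain. On the Hopf side I would define a linear functional $\pi \colon \mathcal{A}^{\hopf}_0 \to \R$ sending the length-one word $c^0_{0,1}$ to $1$ and every other word to $0$; a direct check using the Leibniz rule and the explicit formulas for $\partial$ and $F$ shows that $\pi$ vanishes on $(\partial + F)(\mathcal{A}^{\hopf}_1)$, because the boundary of any degree-one generator is a sum of single $e$'s and length-two words of the types $c^0 c^0, \bar{c}^1 c^0, c^1 c^0$, none of which is an isolated $c^0_{0,1}$. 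Thus $\pi$ descends to $H_0$ and detects $[c^0_{0,1}] \neq 0$; symmetrically $[c^0_{1,0}] \neq 0$, while their product vanishes. So $H_0$ has zero divisors on the Hopf side but not on the Unlink side, ruling out any graded $\R$-algebra isomorphism.

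The main obstacle will be the $d = 2$ verification that $\pi$ descends to homology: one needs to rule out any cancellation among multi-letter words of total degree one, which reduces by the Leibniz rule to knowing that no single generator has $c^0_{0,1}$ as a summand of its $(\partial + F)$-image. Once this is confirmed, the remaining steps are mechanical dimension counts and the standard fact that a free non-commutative $\R$-algebra on two letters is a domain.
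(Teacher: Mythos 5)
Your proof is correct and follows essentially the same line as the paper: for $d\geq 3$ you compare graded dimensions in degree $2d-4$ (the paper gets $\dim = 2$ vs.\ $\dim = 4$ by exactly the same boundary computation), and for $d=2$ you distinguish $H_0$ on the two sides by algebraic structure. For $d=2$ the paper identifies $H_0(\mathcal{A}^{\hopf}_*,\partial+F)\cong \R[a_0,a_1]/(a_0a_1)$ explicitly and contrasts it with the free non-commutative algebra; your zero-divisor argument via the functional $\pi$ is a slightly more hands-on packaging of the same observation (once one notes that all degree-$0$ generators are $(\partial+F)$-cycles, the Leibniz rule forces boundaries of words of length $\geq 2$ to consist of words of length $\geq 2$, so $\pi$ indeed kills the boundary image), but it avoids writing down $H_0$ in full and is a fine alternative.
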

\begin{proof}
From Theorem \ref{thm-string-hopf} and \ref{thm-string-unlink}, it suffices to show that $H_*(\mathcal{A}^{\hopf}_*,\partial)$ is not isomorphic to $\mathcal{A}^{\mathrm{unlink}}_*$ as a graded $\R$-algebra.
Let us rewrite $(c^0_{0,1}, c^0_{1,0},e^1_{0,0}, e^1_{1,1})$ by $(a_0,a_1,b_0,b_1)$ and $(c^0_{0,2},c^0_{2,0})$ by $(a'_0,a'_1)$. In addition, we define $C_0\coloneqq b_0+ a_0a_1$ and $C_1\coloneqq b_1+ (-1)^d a_1a_0$.

If $d=2$,
$H_0(\mathcal{A}^{\hopf}_*,\partial +F)$ is the (a priori) non-commutative $\R$-algebra generated by $\{a_0,a_1,b_0,b_1\}$ modulo the ideal generated by $\{b_0,b_1,C_0,C_1\}$. This is isomorphic to the commutative algebra $\R[a_0,a_1]/(a_0a_1)$.
On the other hand, $\mathcal{A}^{\mathrm{unlink}}_0$ is the non-commutative algebra freely generated by $\{a'_0,a'_1\}$.
Therefore, $H_0(\mathcal{A}^{\hopf}_*,\partial)\ncong \mathcal{A}^{\mathrm{unlink}}_0$ as $\R$-algebras.

If $d\geq 3$, the lower degree parts are isomorphic as vector spaces. Indeed, for $p\leq 2d-5$,
\[H_p(\mathcal{A}^{\hopf}_*,\partial +F) \cong \mathcal{A}^{\mathrm{unlink}}_p \cong \begin{cases}
\R & \text{ if }p=0, \\
\R a_0\oplus \R a_1 & \text{ if }p=d-2 , \\
0 & \text{ else }.
\end{cases}\]
However, $H_{2d-4}(\mathcal{A}^{\hopf}_*,\partial +F)$ is the $\R$-vector space spanned by $\{a_ia_j\mid i,j\in \{0,1\}\}\cup \{b_0,b_1\}$ modulo the subspace generated by $\{b_0,b_1,C_0,C_1\}$, so its dimension is equal to $2$. On the other hand, $\mathcal{A}^{\mathrm{unlink}}_{2d-4}$ is the $\R$-vector space spanned by $\{a'_ia'_j\mid  i,j\in \{0,1\} \}$, so its dimension is equal to $4$. Therefore,  $H_{2d-4}(\mathcal{A}^{\hopf}_*,\partial)\ncong \mathcal{A}^{\mathrm{unlink}}_{2d-4}$ as $\R$-vector spaces.
\end{proof}

Let us see a potential application of this result. First we determine spin structures on unit conormal bundles
\begin{prop}\label{prop-spin}
Let $Q$ be an $n$-dimensional Riemannian manifold with a fixed spin structure. Then, for every submanifold $K$ in $Q$, we can assign a spin structure on its unit conormal bundle $\Lambda_K$ so that if $K$ is isotopic to $K'$ as a submanifold in $Q$, then $\Lambda_{K}$ is isotopic to $\Lambda_{K'}$  as a Legendrian subamanifold with a spin structure.
\end{prop}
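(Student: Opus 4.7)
The plan is to define the spin structure on $\Lambda_K$ via a canonical stable isomorphism with $\pi^*(TQ|_K)$, where $\pi\colon \Lambda_K \to K$ is the bundle projection, and then to derive isotopy invariance from the naturality of this construction in the family of inclusions.

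For the construction, I would identify $\Lambda_K \subset UT^*Q$, via the fixed Riemannian metric, with the unit sphere bundle of the normal bundle $(TK)^\perp \to K$. At a point $(x,v) \in \Lambda_K$ with $x \in K$, $v \in (T_xK)^\perp$ and $|v|=1$, the Levi-Civita connection supplies a canonical horizontal--vertical splitting
\[T_{(x,v)}\Lambda_K \;\cong\; T_xK \,\oplus\, \{w \in (T_xK)^\perp \mid \la v,w\ra = 0\}.\]
Adjoining the tautologically trivialized radial line $\R\cdot v \subset (T_xK)^\perp$ yields a canonical orientation-preserving isomorphism of real vector bundles
\[T\Lambda_K \oplus \underline{\R} \;\cong\; \pi^*(TK \oplus (TK)^\perp) \;=\; \pi^*(TQ|_K),\]
with $\underline{\R}$ the trivial line bundle over $\Lambda_K$. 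Pulling back the fixed spin structure on $Q$ along $\pi$ and transferring it under this stable isomorphism (via the standard correspondence between spin structures on stably equivalent bundles) determines a canonical spin structure on $T\Lambda_K$; this is the assigned spin structure on $\Lambda_K$.

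For the isotopy invariance, suppose $\{\iota_t \colon K \hookrightarrow Q\}_{t\in[0,1]}$ is a smooth isotopy of embeddings with $\iota_0$ the inclusion of $K$ and $\iota_1(K) = K'$. Setting $K_t\coloneqq \iota_t(K)$ with the induced orientation, $\{\Lambda_{K_t}\}_{t\in[0,1]}$ is a smooth family of Legendrian submanifolds in $UT^*Q$. Since the stable isomorphism constructed above is built solely from the Riemannian metric, the Levi-Civita connection, and the tautological radial trivialization, all of which depend smoothly on the inclusion $\iota_t$, the resulting family of canonical spin structures on $\Lambda_{K_t}$ also varies smoothly in $t$. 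This gives the desired Legendrian isotopy with spin structure from $\Lambda_K$ to $\Lambda_{K'}$.

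The main obstacle to be addressed carefully is the ``standard correspondence between spin structures on stably equivalent bundles'' used in the construction: one must verify that stabilizing by the tautologically trivialized line bundle $\R\cdot v$ produces a spin structure on $T\Lambda_K$ canonically determined by the one on $\pi^*(TQ|_K)$, and that this canonical assignment is manifestly smooth in one-parameter families. Once this standard but slightly delicate piece of spin-theoretic bookkeeping is in place, the isotopy invariance becomes a direct naturality statement.
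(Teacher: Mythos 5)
Your proposal is correct and arrives at the same canonical spin structure as the paper, but by a slightly different route. You work directly on $\Lambda_K$: you use the Levi--Civita connection to split $T\Lambda_K$ into horizontal and vertical parts, then adjoin the tautological radial line to obtain the stable isomorphism $T\Lambda_K \oplus \underline{\R} \cong \pi^*(TQ|_K)$, and transfer the spin structure across it. The paper instead passes through the conormal Lagrangian $L_K \subset T^*Q$: at a point of the zero section $K \subset L_K$ the tangent space of $L_K$ canonically decomposes as $T_qK \oplus (T_qK)^\perp \cong T_qQ$ \emph{without} invoking a connection, giving a spin structure on $T(L_K)|_K$; one then extends it to all of $L_K$ because $K$ is a deformation retract, and finally reads off the spin structure on $T\Lambda_K$ via the radial diffeomorphism $\R_{>0}\times \Lambda_K \to L_K\setminus K$ and the inclusion $\mathrm{Spin}(2n-1)\hookrightarrow \mathrm{Spin}(2n)$. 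The paper's detour through $L_K$ buys a splitting that is canonical at the zero section (no connection needed there), with the connection-dependence absorbed painlessly by the homotopy-invariance of spin structures under the retraction; your version is more direct but needs the horizontal distribution explicitly. Both deal with the same piece of bookkeeping you flag — passing a spin structure across a rank-one stabilization — and both obtain isotopy invariance from the manifest naturality of the construction in the embedding. The two arguments are equivalent; the choice is one of exposition.
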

\begin{proof}Let us identify $T^*Q$ with $TQ$ via Riemannian metric. We also identify $Q$ with the zero-section of $TQ$.
Let $L_K$ be the conormal bundle of $K$. Note that the tangent space of $TQ$ at $(q,0)\in Q$ is equal to $T_qQ\oplus  T_qQ$, where the first component is the tangent space of the base space $Q$, and the second component is the tangent space of the fiber $T_qQ$. For every $q\in K$, $T_{(q,0)}(L_K) =T_qK\oplus (T_qK)^{\perp}$. Thus the vector bundle $\rest{ T(L_K)}{K}$ has a spin structure induced by $\rest{TQ}{K}$. Since $K$ is a deformation retract of $L_K$, this spin structure is extended to $T(L_K)$. By using a diffeomorphism $\R_{>0} \times \Lambda_K\to L_K\setminus K \colon (r, (q,p))\mapsto (q,r\cdot p)$, we can determine a spin structure on $T(\Lambda_K)$ so that the spin structure on $\underline{\R}\oplus T\Lambda_K$, induced by the inclusion map $\mathrm{Spin}(2n-1)\to \mathrm{Spin}(2n)$, is equal to the spin structure on  $\rest{ T(L_K)}{\Lambda_K} \cong \underline{\R}\oplus T\Lambda_K$. This spin structure on $\Lambda_K$ for every submanifold $K$ clearly satisfies the condition of this proposition.
\end{proof}

Let us consider the unit conormal bundles of $K_0\cup K_1$ and $K_0\cup K_2$.

\begin{prop}As a $(2d-2)$-dimensional submanifold of $UT^*\R^{2d-1}$ with the spin structure determined by Proposition \ref{prop-spin}, $\Lambda_{K_0\cup K_1}$ is isotopic to $\Lambda_{K_0\cup K_2}$.
\end{prop}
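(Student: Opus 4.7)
The plan is to construct an explicit smooth isotopy from $\Lambda_{K_0}\sqcup \Lambda_{K_1}$ to $\Lambda_{K_0}\sqcup \Lambda_{K_2}$ in $UT^*\R^{2d-1}\cong \R^{2d-1}\times S^{2d-2}$ by lifting a generic smooth deformation of $K_1$ into $K_2$ in $\R^{2d-1}$ and then repairing the resulting transverse crossings with $\Lambda_{K_0}$. First I would observe that each $\Lambda_{K_i}$ is diffeomorphic to $S^{d-1}\times S^{d-1}$ because the $(d-1)$-sphere $K_i$ has trivial normal bundle in $\R^{2d-1}$, so the statement concerns two disjoint embeddings of $(S^{d-1}\times S^{d-1})^{\sqcup 2}$ sharing their first component. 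Since $K_1$ and $K_2$ are individually unknotted $(d-1)$-spheres, I would choose a smooth family of embeddings $\iota_t\colon S^{d-1}\hookrightarrow \R^{2d-1}$ with $\iota_0(S^{d-1})=K_1$, $\iota_1(S^{d-1})=K_2$, and with the trace $\bigsqcup_t\{t\}\times\iota_t(S^{d-1})\subset[0,1]\times\R^{2d-1}$ transverse to $[0,1]\times K_0$. By the dimension count $(d-1)+(d-1)+1=2d-1$, the intersection is a finite set $\{(t_k,p_k)\}_{k=1}^N$ of isolated points.

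On the unit conormal side, for $t\notin\{t_1,\dots,t_N\}$ the bases $\iota_t(S^{d-1})$ and $K_0$ are disjoint in $\R^{2d-1}$, so $\Lambda_t\coloneqq \Lambda_{\iota_t(S^{d-1})}$ and $\Lambda_{K_0}$ are disjoint in $UT^*\R^{2d-1}$. At each crossing $t=t_k$, a direct linear-algebra computation in $T_{p_k}\R^{2d-1}$ shows that $(T_{p_k}K_0)^\perp\cap(T_{p_k}\iota_{t_k}(S^{d-1}))^\perp$ is one-dimensional, so $\Lambda_{t_k}\cap\Lambda_{K_0}$ consists of exactly the antipodal pair $\{(p_k,\pm v_k)\}$ of unit vectors spanning this line. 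The $(2d-1)$-dimensional trace $\bigsqcup_t\{t\}\times\Lambda_t$ therefore meets the $(2d-1)$-dimensional cylinder $[0,1]\times\Lambda_{K_0}$ transversely inside the $(4d-2)$-dimensional ambient $[0,1]\times UT^*\R^{2d-1}$ in $2N$ isolated points, grouped in $N$ antipodal pairs.

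The heart of the proof is to verify that the two points $(t_k,p_k,\pm v_k)$ of each antipodal pair carry opposite local intersection signs with $[0,1]\times\Lambda_{K_0}$, and then to cancel them by an embedded Whitney disk. The involution $v\mapsto -v$ on the normal sphere reverses the orientation of the conormal fiber by $(-1)^d$ and induces a compensating sign on the base tangent direction, so a careful orientation comparison of the trace, the cylinder, and the ambient should yield opposite signs uniformly in $d\geq 2$. Granted this, each pair of crossings is cancelled by an embedded Whitney disk, which can be realized because $\dim UT^*\R^{2d-1}=4d-3\geq 5$ and $\codim \Lambda_{K_0}=2d-1\geq 3$ for $d\geq 2$ leave ample room for both the disk and its regular neighborhood. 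The resulting $C^\infty$-small perturbation, supported in neighborhoods of the singular times, produces a smooth isotopy $\{\widetilde\Lambda_t\}_{t\in[0,1]}$ with $\widetilde\Lambda_0=\Lambda_{K_1}$, $\widetilde\Lambda_1=\Lambda_{K_2}$, and $\widetilde\Lambda_t\cap\Lambda_{K_0}=\varnothing$ throughout, so that $\Lambda_{K_0}\sqcup\widetilde\Lambda_t$ is the required isotopy. Spin-structure compatibility is automatic for $d\geq 3$ since $H^1(S^{d-1}\times S^{d-1};\Z/2)=0$ forces uniqueness, while for $d=2$ it follows from the observation that the canonical framing of the normal bundle used in Proposition \ref{prop-spin} is transported without twisting along the chosen family.

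The main obstacle I anticipate is precisely the sign computation: if the two antipodal intersection points happened to agree in sign, Whitney cancellation would fail and in fact $\Lambda_{K_1}$ would carry a genuine linking-type obstruction to being isotoped off $\Lambda_{K_0}$, which would contradict the desired conclusion. The whole proof therefore hinges on a parity-sensitive orientation calculation at a single model crossing, performed in the coordinates $(z_0,z_1,z_2)\in \R^{d-1}\times\R\times\R^{d-1}$ near the crossing point $p_k$. Everything else (the transversality of $\iota_t$, the Whitney trick in the open range $\dim\geq 5$, and spin-structure bookkeeping) is standard.
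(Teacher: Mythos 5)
Your approach is genuinely different from the paper's, and in outline it is sound, but it does not fill in the step you yourself identify as the crux. The paper does not appeal to generic transversality or Whitney cancellation at all: it writes down an explicit one-parameter family of ``twisted'' conormal spheres
\[
\Lambda_s=\{(q,p)\in UT^*\R^{2d-1}: q\in K^s_1,\ (p\circ R_s)|_{T_qK^s_1}=0\},
\]
obtained by translating $K_1$ to a distant sphere $K^1_1$ while simultaneously rotating the covector by an $\mathrm{SO}(2d-1)$-family $R_s$ in the $z_0$--$z_2$ plane. The rotation is tuned so that at the single time $s=1/2$ when the bases touch at $p_0$, the twisted conormal fiber over $p_0$ \emph{coincides} with the $\Lambda_{K_0}$-fiber, yielding a single degenerate intersection which a small perturbation removes; a base isotopy $K_0\cup K^1_1\simeq K_0\cup K_2$ finishes. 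Your route instead takes a generic base isotopy and cancels the resulting transverse crossings of the conormal trace with $[0,1]\times\Lambda_{K_0}$. Your central sign claim is correct and should be written out rather than asserted: the fiberwise antipodal map $v\mapsto -v$ preserves both the trace and the cylinder, reverses the orientation of the fiber $S^{2d-2}$ of $UT^*\R^{2d-1}$ (degree $(-1)^{2d-1}=-1$), and changes the orientation of each conormal $S^{d-1}$-fiber by $(-1)^d$, so the local intersection signs at $\pm v_k$ differ by $\tfrac{(-1)^d\cdot(-1)^d}{-1}=-1$, uniformly in $d$, as you predicted. Two further points need to be addressed to make your argument complete. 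First, the Whitney-cancelled trace a priori gives only a concordance from $\Lambda_{K_1}$ to $\Lambda_{K_2}$ inside $UT^*\R^{2d-1}\setminus\Lambda_{K_0}$, not an isotopy; since $\codim\Lambda_{K_1}=2d-1\geq 3$, Hudson's concordance-implies-isotopy theorem closes this gap, but it must be invoked. Second, the $d=2$ spin-structure compatibility is only gestured at; the paper instead handles spin for all $d$ at once by homotoping its twisted family back to the family of genuine conormals $\Lambda_{K^s_1}$, where the canonical spin structure of Proposition \ref{prop-spin} is available for free. Your approach is more conceptual---it exhibits the vanishing of the linking obstruction as a parity phenomenon---while the paper's explicit construction avoids the transversality and concordance machinery entirely, which is why it is shorter.
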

\begin{proof}
For $s\in [0,1]$, we define $K^s_1\coloneqq \{q +(0,2s,0)\in \R^{2d-1} \mid q\in K_1 \}$. We also choose a $C^{\infty}$ function $ [0,1]\to [0,\pi]\colon s\mapsto \theta_s$ so that $\theta_0=\theta_1=0$ and $\theta_{1/2}=\pi/2$, and define $R_s\in \mathrm{SO}(2d-1)$ for $s\in [0,1]$ by
$R_s (v_0,v_1,v_2) \coloneqq ((\cos \theta_s) v_0 - (\sin \theta_s) v_2, v_1, (\sin \theta_s) v_0 + (\cos \theta_s) v_2)$
for every $(v_0,v_1,v_2)\in \R^{d-1}\times \R\times \R^{d-1}$.
We then define an isotopy $(\Lambda_s)_{s\in [0,1]}$ from $\Lambda_{K^0_1}= \Lambda_{K_1}$ to $\Lambda_{K^1_1}$ by%of $C^{\infty}$ submanifolds  of $UT^*\R^{2d-1} $ defined by
\[\Lambda_s \coloneqq \{(q,p)\in UT^*\R^{2d-1} \mid q\in K_s,\ \rest{p\circ R_s}{T_q K^1_s}=0 \}. \]
$\Lambda_s$ intersects $\Lambda_{K_0}$ if and only if $s=1/2$, and $\Lambda_{1/2}\cap \Lambda_{K_0} = \Lambda_{K_0} \cap UT^*_{p_0}\R^{2d-1}$, where $p_0=(0,1,0)\in \R^{2d-1}$.
We can slightly perturb $(\Lambda_s)_{s\in [0,1]}$ around $s=1/2$ to an isotopy $(\Lambda'_s)_{s\in [0,1]}$ so that $\Lambda'_s$ does not intersect $\Lambda_{K_0}$ for every $s\in [0,1]$. This isotopy is homotopic to an isotopy $(\Lambda_{K_1^s})_{s\in [0,1]}$, which preserves the spin structure of Proposition \ref{prop-spin}. In addition, $K_0\cup K^1_1$ is isotopic to $K_0\cup K_2$ in $\R^{2d-1}$.
Therefore, as a $C^{\infty}$ submanifold with a spin structure, $\Lambda_{K_0}\cup \Lambda_{K_1}$ is  isotopic to  $\Lambda_{K_0}\cup \Lambda_{K_2}$.
\end{proof}

If  Conjecture \ref{conj-intro} in the introduction
%, which will be discussed in Section \ref{sec-connection-to-LCH} in more detail, 
is ture, then Corollary \ref{cor-distinguish} can be applied to show that the unit conormal bundle $\Lambda_{K_0\cup K_1}$ is not isotopic to $\Lambda_{K_0\cup K_2}$ as a Legendrian submanifold with a spin structure in $UT^*\R^{2d-1}$, though they are isotopic as $C^{\infty}$ submanifolds with spin structures by the above proposition.

\section{Cord algebra and $H^{\str}_0(Q,K)$}\label{sec-cord-alg}

Throughout this section, we consider the case where the codimension of $K$ is $2$ (i.e. $d=2$) and the normal bundle $(TK)^{\perp}$ is trivial. The purpose  is to show that $H^{\str}_0(Q,K)$ is isomorphic to an isotopy invariant of $K$, called \textit{cord algebra}.

\subsection{Cord algebra and string homology}\label{subsec-cord-alg-and-string-hmgy}

In this section, we refer to \cite{celn, ng} and give a definition of cord algebra and string homology. Note that, in this paper, their coefficients are reduced from original $\Z[\pi_1(\partial N_{\epsilon_0})]$ to $\R$.

We fix a frame of $(TK)^{\perp}$ to give an isomorphism $ \R^2\times K\cong  (TK)^{\perp}$ of vector bundles over $K$ which preserves their fiber metrics and orientations. Combining with the map (\ref{tubular-neighborhood}), we obtain a diffeomorphism
\[h \colon \mathcal{O}_{\epsilon_0}\times K \to N_{\epsilon_0} ,\]
which preserves orientations.
Here, $\mathcal{O}_{\epsilon} =\{w\in \R^2\mid |w|<\epsilon/2 \}$ for every $\epsilon\leq \epsilon_0$ as defined in Subsection \ref{subsub-explicit}.

First, we define an $\R$-algebra $\mathrm{Cord}(Q,K;\R)$. Its relation to the cord algebra defined in \cite{celn, ng} is discussed later in Remark \ref{rem-previous-cord}.
Let us prepare several notations.
We fix $w_0\in \mathcal{O}_{\epsilon_0}\setminus \{0\}$ and define a submanifold disjoint from $K$
\[K'\coloneqq \{ h(w_0,x)\mid x\in K\}\subset N_{\epsilon_0}.\]
For every $x\in K$, we define $c_x\colon[0,1]\to Q\setminus K$ to be the constant path at $h(w_0,x)\in K'$. We also define $m_x\colon [0,1]\to Q\setminus K$ to be a loop in a punctured disk $h ((\mathcal{O}_{\epsilon_0}\setminus\{0\}) \times\{x\})\subset N_{\epsilon_0}\setminus K$ based at $h(w_0,x)\in K'$, whose winding number around $h(0,x)$ is equal to $1$.
In addition, let $\pi_1(Q\setminus K ,K')$ be the set of homotopy classes of continuous paths $\gamma\colon [0,1]\to Q\setminus K$ such that $\gamma(\{0,1\}) \subset K'$. 
%\begin{notation} Suppose that we have two paths $\gamma,\gamma'\colon [0,1]\to Q\setminus K'$ with end points in $K$. In this section, we define $\gamma\cdot \gamma'\colon [0,1]\to Q$ as a concatenation if $\gamma(1)=\gamma'(0)$ holds. $\gamma^{-1}$ denotes the inverse path of $\gamma$. If $\gamma(0)=\gamma(1)$, we define $\gamma^0\colon [0,1]\to \{\gamma(0)\}$ and $\gamma^m$ (resp. $\gamma^{-m}$) by concatenating $\gamma$ (resp. $\gamma^{-1}$) $m$ times.\end{notation}

\begin{defi}
Let $\mathcal{A}$ be the unital non-commutative $\R$-algebra freely generated by the set $\pi_1(Q\setminus K ,K')$.
We define the two-sided ideal $\mathcal{I}$ generated by the following elements:
\[\begin{cases}
[c_x] , \\
[\gamma_1\cdot \gamma_2]-[\gamma_1\cdot m_x \cdot \gamma_2] -[\gamma_1][\gamma_2], 
\end{cases}\]
for all $x\in K$ and $\gamma_j\colon [0,1] \to Q\setminus K $ ($j=1,2$) such that $\gamma_j(\{0,1\})\subset K'$ and $\gamma_1(1)=h(w_0,x)=\gamma_2(0)$.
Then, we define an $\R$-algebra $\mathrm{Cord}(Q,K;\R)\coloneqq \mathcal{A}/\mathcal{I}$, and call it the \textit{cord algebra} of $(Q,K)$.
\end{defi}

\begin{rem}\label{rem-previous-cord}
When $K$ is $1$-dimensional and connected (i.e. $K$ is an oriented knot in a $3$-manifold $Q$), we fix a base point $* \in K'$. A \textit{cord} is a path $\gamma\colon [0,1]\to Q$ such that $\gamma([0,1])\cap K=\varnothing$ and $\gamma(0),\gamma(1)\in K'\setminus\{*\}$. The notion of cord algebra (or \textit{cord ring}) of knots was defined in, for instance, \cite{celn, ng2, ng}. The most refined one is \cite[Definition 2.6]{celn}, which is defined as a non-commutative algebra over $\Z$ generated by the set of homotopy classes of cords and $\{\lambda^{\pm},\mu^{\pm}\}$, modulo the relations about $\{\lambda^{\pm},\mu^{\pm}\}$ and the ``skein relations''.
If we substitute both $\lambda$ and $\mu$ by $1\in \Z$ and tensor this $\Z$-algebra with $\R$, we obtain an $\R$-algebra isomorphic to $\mathrm{Cord}(Q,K;\R)$.
(The isomorphism is induced by a natural map from the set of homotopy classes of cords to $\pi_1(Q\setminus K,K')$.)
%This definition is a reduction of Definition in \cite{celn} by substituting $\lambda=-1$, $\mu=1$ and tensoring $\R$. The coefficient ring is reduced from $\Z[\pi_1(\partial N_{\epsilon_0})]$ to $\R$.

We should also note that in \cite[Definition 2.1]{ng}, the cord algebra over $\Z[H_1(\partial N_{\epsilon_0})]$ was defined when $K$ is a connected codimension $2$ submanifold of an arbitrary manifold and its normal bundle is oriented.
In our setting, we have an isomorphism $(h^{-1})_*\colon H_1(\partial N_{\epsilon_0})\to H_1(S^1\times K)\cong H_1(S^1) \oplus H_1(K)$. There exists a ring homomorphism $\varphi\colon \Z[H_1(\partial N_{\epsilon_0})]\to \R$ determined by $\varphi (h_*([S^1]))=-1$ and  $\varphi(h_*(c))=1$ for every $c\in H_1(K)$. If  the base change of the cord algebra of \cite[Definition 2.1]{ng} is done by $\varphi$, we obtain an $\R$-algebra isomorphic to $\mathrm{Cord}(Q,K;\R)$.
\end{rem}

Next, we refer to \cite[Section 2.1]{celn} and define the string homology which is simplified for our purpose. 
For $m\in \Z_{\geq 1}$, $a\in \R_{\geq 0} \cup \{\infty\}$ and $p=0,1$, we define an $\R$-subspace $C^{\pitchfork}_p(m,a) \subset C^{\sing}_p(\Sigma^a_m)$ consisting of generic singular $p$-chains satisfying  jet transversality conditions.
(Recall that we have fixed a topology of $\Sigma^a_m$ in Subsection \ref{subsubsec-additional})

In the case of $p=0$, $C^{\pitchfork}_0(m,a)$ is generated by $(\gamma_k\colon[0,T_k]\to Q)_{k=1,\dots ,  m}\in \Sigma^a_m$ satisfying the following conditions:
\begin{itemize}
\item[(0a)] $(\gamma_k)'(0),(\gamma_k)'(T_k)\notin TK $ for every $k\in \{1,\dots ,m\}$.
\item[(0b)] $\gamma_k(t)\notin K$ for every $k\in \{1,\dots ,m\}$ and $t\neq 0, T_k$.
\end{itemize}
In the case of $p=1$, $C^{\pitchfork}_1(m,a)$ is generated by
1-parameter families of paths
\[[0,1]\to \Sigma^a_m \colon u\mapsto (\gamma^u_k\colon [0,T^u_k]\to Q)_{k=1,\dots ,m}\]
such that $[0,1]\to \R_{>0}\colon u\mapsto T^u_k$ is a $C^{\infty}$ function and
\[\Gamma_k\colon \{(u,t) \mid 0\leq u\leq 1,\ 0\leq t\leq T^u_k\} \to Q\]
is a $C^{\infty}$ map for every $k\in \{1,\dots ,m\}$, and satisfies the following conditions:
\begin{itemize}
\item[(1a)] $(\gamma^0_k)_{k=1,\dots , m}$ and $(\gamma^1_k)_{k=1,\dots , m}$ satisfy (0a), (0b).
\item[(1b)] $(\gamma^u_k)'(0),(\gamma^u_k)'(T^u_k)\notin TK$ for every $u\in [0,1]$ and $\Gamma^{\interior}_k\coloneqq \rest{\Gamma_k}{ \{(u,t)\mid t \neq 0,T^u_k \} }$ is transverse to $K$ for every $k\in \{ 1,\dots, m\}$.
\item[(1c)] If $(u_*,t_*), (u'_*,t'_*)\in \coprod_{k=1}^m(\Gamma^{\interior}_k)^{-1}(K)$ are distinct points, then $u_* \neq u'_*$ holds.
\end{itemize}
Note that the condition (1b) implies that $(\Gamma^{\interior}_k)^{-1}(K)$ is a finite set.
In addition, we define $C^{\pitchfork}_p(0,a)\coloneqq C^{\sing}_p(\Sigma^a_0)$ for $a\in \R_{\geq 0}\cup \{\infty\}$ and $p=0,1$.
%\[C^{\pitchfork}_p(0,a)\coloneqq C^{\sing}_p(\Sigma^a_0) \ (a\in \R_{\geq 0},). \begin{cases}  C^{\sing}_p(\Sigma^a_0) =\R c_* & \text{ if }a>0 \text{ and }p=0, \\ 0 &\text{ else.} \end{cases}\ (c_*\colon \{0\}\to \{*\}=\Sigma^a_m)\]

By (1a), the boundary operator of singular chain complex $\partial^{\sing}\colon C^{\sing}_1(\Sigma^a_m) \to C^{\sing}_0(\Sigma^a_m)$ is restricted to the map
\[\partial^{\sing} \colon C^{\pitchfork}_1(m,a) \to C^{\pitchfork}_0(m,a)\colon (\gamma^u_k)^{u\in [0,1]}_{k=1,\dots ,m} \mapsto (\gamma^1_k)_{k=1,\dots , m} -  (\gamma^0_k)_{k=1,\dots , m} . \]
We also define a linear map $f_{k}^{\pitchfork}\colon C^{\pitchfork}_1(m,a)\to  C^{\pitchfork}_0(m+1,a)$ for $m\in \Z_{\geq 1}$ so that for $x=(\gamma^u_k \colon [0,T^u_k]\to Q)^{u\in [0,1]}_{k=1,\dots , m} \in C^{\pitchfork}_1(m,a)$,
\begin{align*}
f_{k}^{\pitchfork}(x) \coloneqq  \sum_{(u_*,t_*)\in (\Gamma^{\interior}_k)^{-1}(K)} \sign (u_*,t_*)\cdot (\gamma^{u_*}_1\dots ,\gamma^{u_*}_{k-1}, \widehat{\gamma^{u_*}_k}^1 , \widehat{\gamma^{u_*}_k}^2,\gamma^{u_*}_{k+1},\dots,\gamma^{u_*}_m) .
\end{align*}
Here
\[\widehat{\gamma_k^{u_*}}^1\coloneqq \rest{\gamma^{u_*}_k}{[0,t_*]}\colon [0,t_*]\to Q ,\ \widehat{\gamma_k^{u_*}}^2\coloneqq \rest{\gamma^{u_*}_k}{[t_*,T^{u_*}_k]}  (\cdot -t_*)\colon [0,T^{u_*}_k-t_*]\to Q ,\]
%Here, $\widehat{\gamma^{u_0}_k}^j$ ($j=1,2$) is the split paths defined as (\ref{concatenation}) for $\gamma_k=\gamma^{u_0}_k$, $(T_k,\tau)=(T_k,t_0)$ and $\sigma^v_j\colon [0,\epsilon_0]\to N_{\epsilon_0}$ is a constant map to $\gamma^{u_0}_k(t_0)\in K$.
 and $\sign (u_*,t_*)\in \{\pm 1\}$ is the orientation sign of an open embedding into $\mathcal{O}_{\epsilon_0} \subset \R^2$
\begin{align*}
\Gamma^{\fib}_k\coloneqq \pr_{\R^2}\circ h^{-1} \circ \Gamma_k
\end{align*}
defined on a small neighborhood  of $(u_*,t_*) \in (\Gamma^{\interior}_k)^{-1}(K) \subset (0,1)\times \R_{>0}$.
For convenience, let us define for $p\notin \{0,1\}$, $C^{\pitchfork}_p(m,a) \coloneqq 0$, $\partial^{\sing}\coloneqq 0 \colon C^{\pitchfork}_p(m,a) \to C^{\pitchfork}_{p-1}(m,a)$ and $f^{\pitchfork}_k\coloneqq 0\colon C^{\pitchfork}_p(m,a) \to C^{\pitchfork}_{p-1}(m+1,a)$.

For $a\in \R_{> 0}\cup\{\infty\}$ and $m\in \Z_{\geq 0}$, we define a quotient vector space
\[C^{\pitchfork,<a}_*(m) \coloneqq   C^{\pitchfork}_*(m,a)/C^{\pitchfork}_*(m,0) . \]
Then $\partial^{\sing}$ and  $f^{\pitchfork}_k$ induce linear maps 
\[\partial^{\sing} \colon C^{\pitchfork,<a}_*(m) \to C^{\pitchfork,<a}_{*-1}(m) ,\ f^{\pitchfork}_k\colon C^{\pitchfork,<a}_*(m) \to C^{\pitchfork,<a}_{*-1}(m+1) .\]
Now we define a graded $\R$-vector space
\[C^{\pitchfork,<a}_*\coloneqq \bigoplus_{m=0}^{\infty} C^{\pitchfork,<a}_*(m) .\]
For each $m\in \Z_{\geq 0}$, $C^{\pitchfork,<a}_*(m)$ is considered to be its linear subspace in a natural way.
Then, we define a degree $(-1)$ map $D^{\pitchfork} \colon C^{\pitchfork,<a}_* \to C^{\pitchfork,<a}_{*-1} $ by
\begin{align}\label{D-delta-transverse}
D^{\pitchfork}(x) \coloneqq  \partial^{\sing}x + \sum_{k=1}^m f^{\pitchfork}_k(x) .
\end{align}
for $x\in C^{\pitchfork,<a}_*(m)$. For $x\in C^{\pitchfork, <a}_*(0)$, the RHS is just equal to $\partial^{\sing}x$.
Then we obtain a chain complex $(C^{\pitchfork,<a}_*,D^{\pitchfork})$.
Let $H^{\pitchfork,<a}_*$ denote its homology group.
In addition, for $a,b\in \R_{\geq 0}\cup \{\infty\}$ with $a\leq b$, we define a linear map $J^{a,b}\colon H^{\pitchfork,<a}_* \to H^{\pitchfork,<b}_*$ induced by the inclusion maps $C^{\pitchfork}_*(m,a) \to C^{\pitchfork}_*(m,b) $ for all $m\in \Z_{\geq 0}$.

In this section, we call $H^{\pitchfork,<\infty}_*$ the \textit{string homology} of $(Q,K)$.
Note that the direct limit $\varinjlim_{a\to \infty} H^{\pitchfork,<a}_*$ defined by $\{J^{a,b}\}_{a\leq b}$ is isomorphic to $H^{\pitchfork,<\infty}_0 $.
Furthermore, $H^{\pitchfork,<\infty}_0 $ has a associative product structure induced by $\Pi \colon \Sigma^{\infty}_m\times \Sigma^{\infty}_{m'} \to \Sigma^{\infty}_{m+m'}$. Thus $H^{\pitchfork,<\infty}_0$ is a unital associative $\R$-algebra, whose unit comes from $* \in C^{\pitchfork,<\infty}_0(0)$.

The next proposition is essentially proved in \cite[Proposition 2.9]{celn}.
\begin{prop}\label{prop-cord-to-string}
As an $\R$-algebra, $\mathrm{Cord}(Q,K;\R)$ is isomorphic to $ H_0^{\pitchfork,<\infty}$.
\end{prop}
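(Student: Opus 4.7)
The plan is to construct mutually inverse algebra homomorphisms $\Phi\colon \mathrm{Cord}(Q,K;\R)\to H^{\pitchfork,<\infty}_0$ and $\Psi\colon H^{\pitchfork,<\infty}_0\to \mathrm{Cord}(Q,K;\R)$ using the radial trivialization $h$ of $N_{\epsilon_0}$ to transform between paths with endpoints on $K'$ and paths with endpoints on $K$. First I would define $\Phi$ on a generator $[\gamma]\in\pi_1(Q\setminus K,K')$ by picking a smooth representative and concatenating it at both ends with the short radial segments $s\mapsto h(s\,w_0,x)$ connecting $h(0,x)\in K$ to $h(w_0,x)\in K'$. After a small angular perturbation near the endpoints, the resulting path lies in $\Sigma^\infty_1$ and satisfies conditions (0a), (0b), defining an element of $C^{\pitchfork,<\infty}_0(1)$. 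I would then extend $\Phi$ multiplicatively using concatenation $\Pi$, which matches the $\R$-algebra structure on $H^{\pitchfork,<\infty}_0$.

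Next I would verify that $\Phi$ descends modulo the ideal $\mathcal{I}$. Independence of the choice of smooth representative follows because any homotopy in $Q\setminus K$ between two representatives, after radial extension, gives a 1-chain in $C^{\pitchfork,<\infty}_1(1)$ on which $f^\pitchfork_1$ vanishes (no crossings with $K$), so its $D^{\pitchfork}$-image is just the difference of the two 0-chains. The relation $[c_x]=0$ is handled by constructing an explicit $1$-chain shrinking the radial out-and-back path to a trivial loop that can be pushed into $K$, using that the meridian direction at $x$ provides the needed jet transversality data. The skein relation is the crux: for a product $\gamma_1\cdot\gamma_2$ whose midpoint lies on $K'$, I would build a one-parameter family $\gamma^u$ (parametrized by $u\in[0,1]$) that sweeps through $K$ once transversely at an interior time $t_*$ for some $u=u_*$, with $\gamma^0=\gamma_1\cdot\gamma_2$ and $\gamma^1=\gamma_1\cdot m_x\cdot\gamma_2$ in $\pi_1(Q\setminus K,K')$. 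Then $D^{\pitchfork}[\gamma^u]=[\gamma^1]-[\gamma^0]+\mathrm{sign}(u_*,t_*)\cdot(\widehat{\gamma^{u_*}_1},\widehat{\gamma^{u_*}_2})$, and after identifying the split pieces with $\gamma_1$ and $\gamma_2$ (up to radial adjustment of endpoints, which is a $D^{\pitchfork}$-boundary as above), the skein relation $[\gamma_1\cdot\gamma_2]-[\gamma_1\cdot m_x\cdot\gamma_2]=[\gamma_1][\gamma_2]$ is obtained; the sign works out because the meridian $m_x$ is chosen with the orientation convention matching $\mathcal{O}_{\epsilon_0}$.

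For the inverse $\Psi$, given a generator $(\gamma_k\colon[0,T_k]\to Q)_{k=1,\dots,m}$ of $C^{\pitchfork,<\infty}_0(m)$, I would use (0a) to smoothly prepend and append short arcs within $N_{\epsilon_0}$ that push each endpoint from $K$ out to $K'$ along the initial/final velocity direction projected to the normal bundle. This gives elements of $\pi_1(Q\setminus K,K')$ well-defined up to the cord relations, and I set $\Psi((\gamma_k))\coloneqq[\tilde\gamma_1]\cdots[\tilde\gamma_m]$. To show $\Psi\circ D^{\pitchfork}=0$, a generic 1-chain $(\gamma^u_k)^{u\in[0,1]}_{k=1,\dots,m}$ has only finitely many crossings of $\Gamma^{\interior}_k$ with $K$, each at distinct $u$-values by (1c). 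Between consecutive crossings the cord classes of the pushed-off paths are constant (they vary by a homotopy in $Q\setminus K$), and at each crossing the jump in $\Psi\circ\partial^{\sing}$ is exactly cancelled by the contribution of $\Psi\circ f^\pitchfork_k$ via the skein relation in $\mathrm{Cord}(Q,K;\R)$.

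The main obstacle will be the sign and orientation bookkeeping: confirming that the sign $\mathrm{sign}(u_*,t_*)$ assigned by the orientation of $\Gamma^\mathrm{fib}_k$ into $\mathcal{O}_{\epsilon_0}$ coincides, in the skein relation, with the sign appearing in $[\gamma_1\cdot\gamma_2]-[\gamma_1\cdot m_x\cdot\gamma_2]$ under our choice of meridian orientation. A secondary technical nuisance is checking that the endpoint-push and radial-extension operations, used in defining $\Phi$ and $\Psi$, do not depend on the auxiliary choices up to $D^{\pitchfork}$-boundaries; this is a straight-line homotopy argument inside each fiber $h(\mathcal{O}_{\epsilon_0}\times\{x\})$, but must be performed carefully so that conditions (0a), (0b), (1a)--(1c) are maintained throughout. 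Once these checks are in place, $\Phi\circ\Psi=\mathrm{id}$ and $\Psi\circ\Phi=\mathrm{id}$ reduce to tautologies.
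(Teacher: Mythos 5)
Your proposal follows essentially the same approach as the paper: define the forward map by radially extending a cord representative to a path with endpoints on $K$ (the paper's $\tilde{\gamma}$), define the inverse by pushing endpoints back out to $K'$ with a small perturbation to avoid $K$ (the paper's $\widehat{\gamma}$), verify the skein relation via the transversality conditions (1b), (1c) and the crossing sign $\sign(u_*,t_*)$, and check the two maps are mutually inverse modulo $\mathcal{I}$. The only cosmetic difference is that you push endpoints out along the projected velocity direction rather than using the fixed frame vector $w_0$ followed by a perturbation, but this is absorbed by the same ``well-defined up to powers of $m_x$'' observation the paper makes.
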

\begin{proof}
%We may exchange the role of $K$ and $K'$ in the definition of cord algebra. Here we suppose that $c_x$ and $m_x$ are based at $\varphi(w_0,x)\in K'$, and $\mathcal{A}$ is generated by $\pi_1(Q\setminus K,K')$. 
For every homotopy class $z\in \pi_1(Q\setminus K,K')$, we choose a $C^{\infty}$ path $\gamma$ which represents $z$. We then define a path $\bar{\gamma}$ as follows: For $x_0,x_1\in K$ such that $\gamma(i)=h (w_0,x_i)$ for $i\in \{0,1\}$,
\[\bar{\gamma}\colon [0,3]\to Q\colon t\mapsto \begin{cases}
h (t \cdot w_0,x_0) & \text{ if } 0\leq t\leq 1,\\
\gamma(t-1) & \text{ if }1\leq t\leq 2, \\
h ((3-t) \cdot w_0,x_1) & \text{ if } 2\leq t\leq 3.
\end{cases}\]
We modify $\bar{\gamma}$ to $\tilde{\gamma}$ by a reparameterization so that $(\tilde{\gamma})_{k=1}\in \Sigma^{\infty}_1$ and satisfies (0a) and (0b).
Then a homomorphism of unital $\R$-algebras
\[ F \colon \mathcal{A} \to  H_0^{\pitchfork,<\infty}\]
is defined so that $z=[\gamma]$ is mapped to $[(\tilde{\gamma})_{k=1}]$. From the definition of $D^{\pitchfork}$, it can be checked that $F$ is a well-defined surjection and maps the ideal $\mathcal{I}$ into 0. Therefore, we obtain a surjective homomorphism of unital $\R$-algebras $\bar{F} \colon \mathcal{A}/\mathcal{I}\to  H_0^{\pitchfork,<\infty}$.

\begin{figure}
\centering
\begin{overpic}[width=15cm]{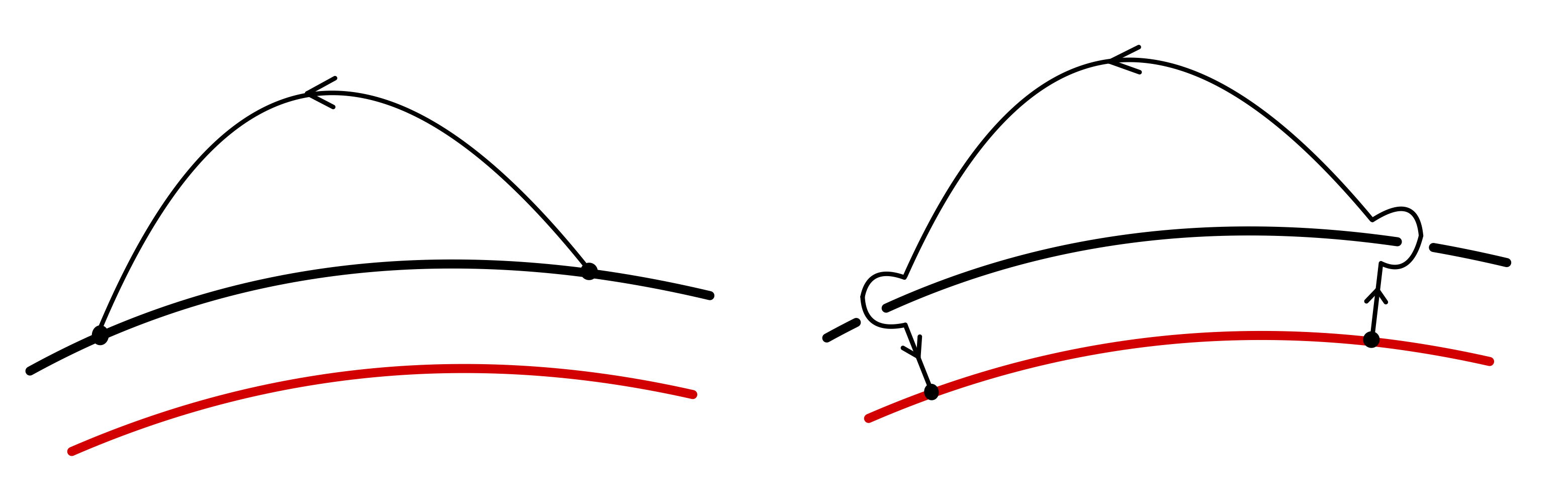}
\put(22,5){$K'$}
\put(22,12){$K$}
\put(13,24){$\gamma$}
\put(70,7){$K'$}
\put(70,14){$K$}
\put(63,26){$\widehat{\gamma}$}
\end{overpic}
\caption{The picture of $\widehat{\gamma}$ defined for $(\gamma)_{k=1}\in \Sigma^{\infty}_1$.}\label{figure-perturb}
\end{figure}

We prove that $\bar{F}$ is injective by describing its inverse map. For $(\gamma\colon [0,T]\to Q)_{k=1} \in \Sigma^{\infty}_1$ satisfying (0a) and (0b), let us define
\[\check{\gamma} \colon [0,1] \to Q\colon t \mapsto \begin{cases}
h ((1-3t)\cdot w_0, \gamma(0)) & \text{ if } 0\leq t\leq \frac{1}{3}, \\
\gamma(3T t-T) & \text{ if }\frac{1}{3} \leq t\leq \frac{2}{3} , \\
h ((3t-2)\cdot w_0,\gamma(T)) & \text{ if } \frac{2}{3}\leq t\leq 1.
\end{cases}\]
As described in Figure \ref{figure-perturb}, we change $\check{\gamma}$ into $\widehat{\gamma}$ by small perturbations inside $N_{\epsilon_0}$ around $t\in \{\frac{1}{3},\frac{2}{3}\}$ so that $\widehat{\gamma}$ does not intersect $K$. We then obtain a homotopy class $[\widehat{\gamma}]\in \pi_1(Q\setminus K,K')$.
Note that for any $\widehat{\gamma}^j$ ($j=1,2$) from two choices of perturbations, there exist $l_0,l_1\in \{0,+1,-1\}$ such that
\[ [\widehat{\gamma}^2]= [ (m_{\gamma(0)})^{l_0} \cdot \widehat{\gamma}^1 \cdot (m_{\gamma(T)})^{l_1} ] \in \pi_1(Q\setminus K, K').\]
Here, $(m_x)^1\coloneqq m_x$, $(m_x)^0\coloneqq c_x$, and $(m_x)^{-1}$ denotes the inverse path of $m_x$. (As a natural extension, $(m_x)^l$ for $l\in \Z$ is defined.)
Thus, $[\widehat{\gamma}^1]=[\widehat{\gamma}^2]$ holds as an element of  $\mathcal{A}/\mathcal{I}$.
If $\gamma\in \Sigma^{0}_1$ (i.e. $\len \gamma <\epsilon_0$), $[\widehat{\gamma}]=[(m_x)^l]\in \pi_1(Q\setminus K,K')$ for some $x\in K$ and $l\in \Z$. In such case, $[\widehat{\gamma}]=0 $ holds as an element of $\mathcal{A}/\mathcal{I}$. Therefore, we have a well-defined linear map
\[ G \colon C^{\pitchfork,<\infty}_0  \to \mathcal{A}/\mathcal{I} \colon \begin{cases} 
C^{\pitchfork,<\infty}(m)\ni (\gamma_k)_{k=1,\dots ,m} &\mapsto  [\widehat{\gamma_1}] \cdots  [\widehat{\gamma_m}]   \ (m\geq 1), \\
C^{\pitchfork,<\infty}(0)\ni 1 &\mapsto \text{the unit} .   \end{cases} \]
\begin{figure}
\centering
\begin{overpic}[width=13cm]{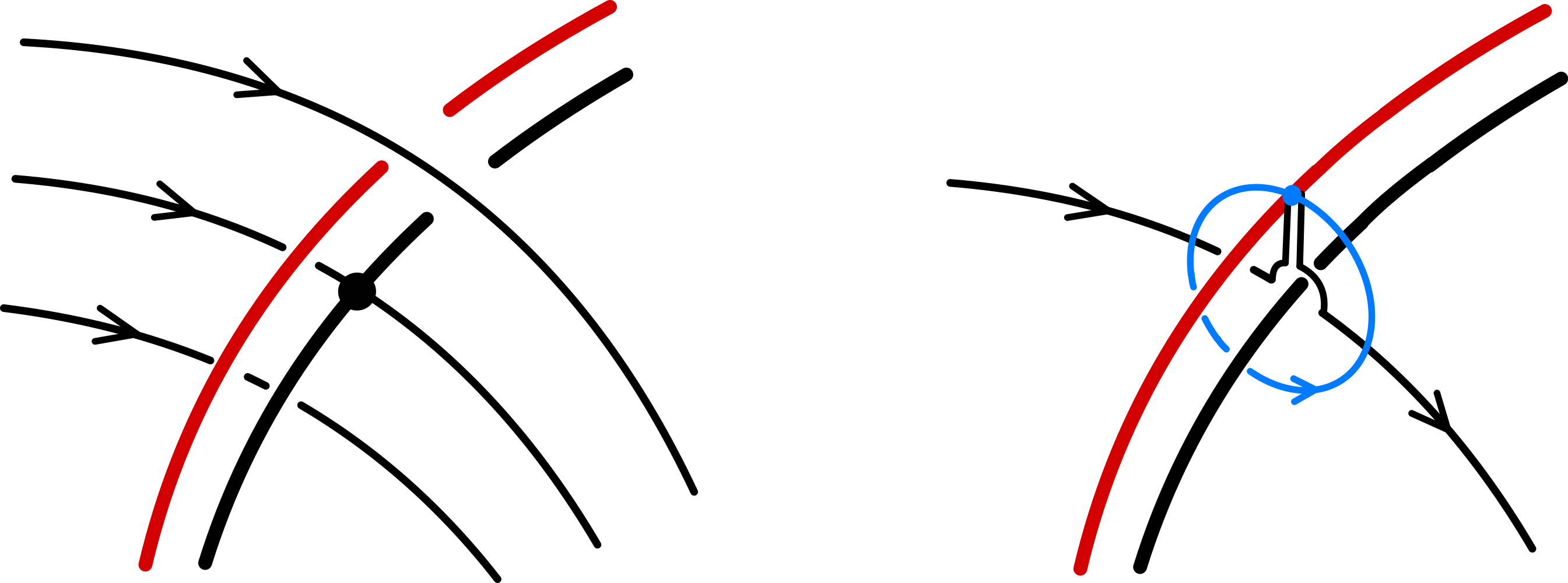}
\put(2,27){$\gamma^{u_*}$}
\put(2,35.5){$\gamma^0$}
\put(2,18.5){$\gamma^1$}
\put(25,18.5){$\gamma^{u_*}(t_*)$}
\put(41,33){$K$}
\put(40,37){$K'$}
\put(61,27){$\widehat{\gamma_1}$}
\put(93.5,10){$\widehat{\gamma_2}$}
\put(81,9.5){$m_x$}
\put(101,32){$K$}
\put(99,36){$K'$}
\end{overpic}
\caption{The LHS describes the $1$-chain $(\gamma^u)^{u\in [0,1]}_{k=1}$ such that $\sign (u_*,t_*)=+1$ at $(u_*,t_*)\in (\Gamma^{\interior}_1)^{-1}(K)$. From the RHS, we can see that $[\widehat{\gamma^0}]= [\widehat{\gamma_1}\cdot \widehat{\gamma_2}]$ and $[\widehat{\gamma^1}]= [\widehat{\gamma_1}\cdot m_x\cdot \widehat{\gamma_2}]$ as elements of $\pi_1(Q\setminus K,K')$.}\label{figure-cord}
\end{figure}

From the transversality condition (1b) together with (1c), it follows that $\Im D^{\pitchfork}$ is mapped into $0$. Indeed, in a simple case, for $(\gamma^u)^{u\in [0,1]}_{k=1}\in C^{\pitchfork}_1(1,\infty)$ such that $(\Gamma^{\interior}_1)^{-1}(K)=\{(u_*,t_*)\}$, we can see that
\[G(D^{\pitchfork}(x)) = \begin{cases} [\widehat{\gamma_1}\cdot m_x \cdot \widehat{\gamma_2}]-[\widehat{\gamma_1}\cdot \widehat{\gamma_2}] + \sign(u_*,t_*) [\widehat{\gamma_1}][\widehat{\gamma_2}] & \text{ if } \sign (u_*,t_*)=+1, \\ 
[\widehat{\gamma_1}\cdot \widehat{\gamma_2}] - [\widehat{\gamma_1}\cdot m_x \cdot \widehat{\gamma_2}] + \sign(u_*,t_*) [\widehat{\gamma_1}][\widehat{\gamma_2}] & \text{ if } \sign (u_*,t_*)=-1 , \end{cases}\]
for $x\coloneqq \gamma^{u_*}(t_*)$, $\gamma_1\coloneqq \rest{\gamma^{u_*}}{[0,t_*]}$ and $\gamma_2\coloneqq \rest{\gamma^{u_*}}{[t_*,T^{u_*}]} (\cdot -t_*)$. Figure \ref{figure-cord} describes the case where $\sign (u_*,t_*)=+1$.
Thus $G(D^{\pitchfork}( (\gamma^u)^{u\in [0,1]}_{k=1} )) =0 \in \mathcal{A}/\mathcal{I}$. The condition (1c) implies that the general case can be reduced to this simple case.
Therefore, we obtain a well-defined linear map $\bar{G}\colon  H_0^{\pitchfork,<\infty} \to  \mathcal{A}/\mathcal{I}$. Finally, for any $[\gamma]\in \pi_1(Q\setminus K,K')$ such that $\gamma(i)=h(w_0,x_i)$ for $i\in \{0,1\}$, there exist $l_0,l_1\in \{0,+1,-1\}$ such that
\[\bar{G}\circ \bar{F} ([\gamma]) =[(m_{x_0})^{l_0}\cdot \gamma \cdot (m_{x_1})^{l_1}],\]
which is equal to $[\gamma]$ in $\mathcal{A}/\mathcal{I}$. This implies that $\bar{F}$ is injective.
\end{proof}

\subsection{Connecting string homology and $H^{\str}_*(Q,K)$}\label{subsec-construction-of-chain-map}

The purpose of this section is to construct a linear map form $H^{\pitchfork,<a}_*$ to $H^{<a}_*(Q,K)$ for all $a\in \R_{> 0}\setminus \mathcal{L}(K)$. On the limit of $a\to \infty$, an $\R$-algebra homomrphism from $ H^{\pitchfork,<\infty}_*$ to $H^{\str}_*(Q,K)$ is defined.
Before constructing this map, we will prepare two things.

\subsubsection{Preliminaries}

First, we define a map $\Psi$ which associates de Rham chains with singular chains.
Let $\kappa,\chi \colon \R\to [0,1]$ be $C^{\infty}$ functions such that:
\begin{itemize}
\item $\kappa (u)=0 $ if $u\leq 0$ and $\kappa(u)= 1$ if $u\geq 1$. In addition, $\kappa'(u)>0$ if $0<u<1$.
\item $\chi\colon \R\to [0,1]$ has a compact support and $\chi (s)=1$ for every $s\in [0,1]$.
\end{itemize}
For $p\in \Z$,  a linear map
\[\Psi\colon C^{\pitchfork}_p(m,a) \to C^{\dr}_p(\Sigma^{a}_m) \]
is defined by
\[\begin{cases}
\Psi((\gamma_k)_{k=1,\dots , m})\coloneqq [\{0\},c_0, 1] & \text{ if }p=0, \\
\Psi( (\gamma^u_k)^{u\in [0,1]}_{k=1,\dots, m} ) \coloneqq [\R, c_1,\chi]& \text{ if }p=1, \\
\Psi=0 & \text{ else, }
\end{cases}\]
where $c_0$ is the constant map to $(\gamma_k)_{k=1,\dots, m} $ and $c_1\colon \R \to \Sigma^{a}_{m} \colon u\mapsto (\gamma^{\kappa(u)}_k)_{k=1,\dots , m} $.
These maps commute with boundary operators, namely,
\[\partial \circ \Psi = \Psi\circ \partial^{\sing} \colon C^{\pitchfork}_p(m,a) \to C^{\dr}_{p-1}(\Sigma^a_m).\]

Next, we define a filtration $\{C^{\pitchfork}_p(m,a,\epsilon)\}_{\epsilon>0}$ of $C^{\pitchfork}_p(m,a)$.
In the cases of $p=0$, $C^{\pitchfork}_0(m,a,\epsilon )$ is an $\R$-subspace generated by $(\gamma_k\colon[0,T_k]\to Q)_{k=1,\dots , m}\in \Sigma^a_m$ satisfying (0a), (0b) and the following condition:
\begin{itemize}
\item[(0c)] There exists $\tau_0 \in (0, \epsilon_0/(5C_0)]$ such that % $\len (\rest{\gamma_k}{[0,\tau_0]\cup [T_k-\tau_0,T_k]} ) <\epsilon_0/2$ and 
$\gamma_k ([\tau_0, T_k-\tau_0])\cap N_{\epsilon}=\varnothing $ for every $k\in \{1,\dots ,m\}$.
\end{itemize}
In the case of $p=1$, $C^{\pitchfork}_1(m,a,\epsilon)$ is an $\R$-subspace generated by $(\gamma^u_k\colon [0,T^u_k]\to Q)^{u\in [0,1]}_{k=1,\dots ,m}$ satisfying (1a), (1b), (1c) and the following conditions:
\begin{itemize}
\item[(1d)] $(\gamma^0_k)_{k=1,\dots ,m}$ and $(\gamma^1_k)_{k=1,\dots , m}$ satisfy (0c).
\item[(1e)] There exist $\tau_0\in (0,\epsilon_0/(5C_0)]$ and an open neighborhood $U_{(u_*,t_*)}$ for each $(u_*,t_*)\in (\Gamma^{\interior}_k)^{-1}(K)$ ($k=1,\dots ,m$) such that
\[(u_*,t_*)\in U_{(u_*,t_*)} \subset \{(u,t)\mid 0<u<1,\ \tau_0<t < T^u_k-\tau_0\} , \]
and the following hold:
\begin{itemize}
\item[(1e-1)] %For every $(u_*,t_*)\in (\Gamma^{\interior}_k)^{-1}(K)$ $(k=1,\dots ,m)$, 
%$\len (\rest{ \gamma^u_k}{[t_*-\tau_0,t_*+\tau_0]}) <\epsilon_0/2$ for any $u\in \pr_{[0,1]} (U_{(u_*,t_*)}) $ and
$U_{(u_*,t_*)} \subset \{(u,t) \mid |t-t_*|< \tau_0 \}$.
%\item[(1e-1)]  $\len (\rest{\gamma^u_k}{[0,\tau_0]\cup [T^u_k-\tau_0,T^u_k]}) <\epsilon_0/2 $ for every $u\in [0,1]$ and $k\in \{ 1,\dots ,m\}$.
\item[(1e-2)] For any two distinct points $(u_*,t_*), (u'_*,t'_*)\in \coprod_{k=1}^m(\Gamma^{\interior}_k)^{-1}(K)$,
the projections of $U_{(u_*,t_*)},U_{(u'_*,t'_*)} \subset [0,1]\times \R_{>0}$ to $[0,1]$
\[\pr_{[0,1]} (U_{(u_*,t_*)}),\  \pr_{[0,1]}(U_{(u'_*,t'_*)}) \subset [0,1] \]
are disjoint.
\item[(1e-3)] For every $k\in \{ 1,\dots ,m\}$,
\[\left( \rest{\Gamma_k}{ \{(u,t)\mid \tau_0 \leq t \leq T^u_k-\tau_0\} } \right)^{-1}(N_{\epsilon})=\bigcup_{(u_*,t_*)\in (\Gamma^{\interior}_k)^{-1}(K)} U_{(u_*,t_*)}. \]
Moreover, for each $(u_*,t_*)\in (\Gamma^{\interior}_k)^{-1}(K)$,
\[\Gamma^{\fib}_k\colon U_{(u_*,t_*)} \to \mathcal{O}_{\epsilon}\]
is a diffeomorphism. (Recall that $\Gamma^{\fib}_k= \pr_{\R^2} \circ h^{-1}\circ \Gamma_k$.)
\end{itemize}
\end{itemize}
In addition, we define $C^{\pitchfork}_p(m,a,\epsilon)\coloneqq C^{\pitchfork}_p(m,a)$ if $p\notin \{0,1\}$ or $m=0$.

%Naively, (0c) means that $\gamma_k$ is does not intersect with $N_{\epsilon}$.
%(1e) means that $\Gamma_k$ intersect with $N_{\epsilon}$ only around $(\Gamma^{\interior}_k)^{-1}(K)$.

Roughly speaking, (0c) means that $\gamma_k(t)$ is far from $K$ by a distance at least $\epsilon/2$, except when $t$ is close to $\{0,T_k\}$. (1e) means that $\gamma^u_k(t)$ is far from $K$ by a distance at least $\epsilon/2$, except when $t$ is close to $\{0,T^u_k\}$ or when $(u,t)$ is close to some point in $(\Gamma^{\interior}_k)^{-1}(K)$.
Note that when $0<\epsilon'\leq \epsilon$, $C^{\pitchfork}_*(m,a,\epsilon)\subset C^{\pitchfork}_*(m,a,\epsilon')$ holds, and
\begin{align}\label{union-pitchfork-epsilon}
\bigcup_{\epsilon>0} C^{\pitchfork}_*(m,a,\epsilon) = C^{\pitchfork}_*(m,a) .
\end{align}
Moreover, $\partial^{\sing}$ and $f^{\pitchfork}_k$ ($k=1,\dots , m$) are restricted to linear maps
\[\partial^{\sing}\colon C^{\pitchfork}_*(m,a,\epsilon) \to C^{\pitchfork}_{*-1}(m,a,\epsilon),\ f^{\pitchfork}_k\colon C^{\pitchfork}_*(m,a,\epsilon) \to C^{\pitchfork}_{*-1}(m+1,a,\epsilon) .
\]

For every $\epsilon>0$, we define $C^{\pitchfork,<a}_*(m,\epsilon) \coloneqq C^{\pitchfork}_*(m,a,\epsilon)/ C^{\pitchfork}_*(m,0,\epsilon)$ and
\[C^{\pitchfork,<a}_*(\epsilon)\coloneqq \bigoplus_{m=0}^{\infty}  C^{\pitchfork,<a}_*(m,\epsilon) .\]
A linear map $D^{\pitchfork}_{\epsilon} \colon C^{\pitchfork,<a}_*(\epsilon) \to C^{\pitchfork,<a}_{*-1}(\epsilon) $ is defined by the same form as (\ref{D-delta-transverse}).
Then we obtain a chain complex $(C^{\pitchfork,<a}_*(\epsilon),D^{\pitchfork}_{\epsilon})$. Let $H^{\pitchfork,<a}_*(\epsilon)$ denote its homology.
When $0<\epsilon'\leq\epsilon $, the inclusion maps $C^{\pitchfork}_*(m,a,\epsilon)\to C^{\pitchfork}_*(m,a,\epsilon')$ for all $m\in \Z_{\geq 0}$ induce a linear map %a chain map from $(C^{\pitchfork,<a}_*(\epsilon),D^{\pitchfork}_{\epsilon})$ to $(C^{\pitchfork,<a}_*(\epsilon'),D^{\pitchfork}_{\epsilon'} )$. Then we get maps between homology groups
\[l_{\epsilon,\epsilon'} \colon H^{\pitchfork,<a}_*(\epsilon) \to H^{\pitchfork,<a}_*(\epsilon'), \]
and we have a direct system $(\{H^{\pitchfork,<a}_*(\epsilon)\}_{\epsilon>0}, \{ l_{\epsilon,\epsilon'} \}_{\epsilon \geq \epsilon'} )$. From the relation (\ref{union-pitchfork-epsilon}), we have
\[\varinjlim_{\epsilon\to 0} H^{\pitchfork,<a}_*(\epsilon) =  H^{\pitchfork,<a}_*. \]

\subsubsection{Construction of chain map}

With the above preparations, we consider the maps
\[ C^{\pitchfork}_p(m,a,\epsilon) \to C^{\dr}_p(\Sigma^{a+m\epsilon}_m) \colon x\mapsto \Psi(x)\ (m\in \Z_{\geq 0})\]
for $\epsilon\in (0,\epsilon_0/(5C_0)]$.
They induce a linear map from $C^{\pitchfork,<a}_*(\epsilon)$ to $C^{<a}_*(\epsilon)$, but this is not a chain map. In order to fill in the gap, we need to prove the following lemma for $a\in \R_{>0}\setminus \mathcal{L}(K)$.

\begin{lem}\label{lem-o-epsilon}
Suppose that $(\epsilon,\delta)\in \mathcal{T}_a$ is standard with respect to $h$. Then, for $m\in \Z_{\geq 1}$ and $k\in \{1,\dots, m\}$, there exists a linear map
\[o_{k,(\epsilon,\delta)} \colon C^{\pitchfork}_1(m,a,\epsilon)\to C^{\dr}_{1}(\Sigma^{a+(m+1)\epsilon}_{m+1}) \]
such that the following hold for any $x\in C^{\pitchfork}_1(m,a,\epsilon)$:
\begin{itemize}
\item[(i)] $\partial( o_{k,(\epsilon,\delta)}(x) ) - (  f_{k,\delta} \circ \Psi (x)-\Psi\circ f^{\pitchfork}_{k} (x ) ) \in C^{\dr}_0(\Sigma^0_{m+1})$.
\item[(ii)] $f_{l,\delta} (o_{k,(\epsilon,\delta)}(x) )\in C^{\dr}_0(\Sigma^0_{m+2})$ for every $l\in \{1,\dots ,m+1\}$.
\end{itemize}
\end{lem}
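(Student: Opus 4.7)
The plan is to construct $o_{k,(\epsilon,\delta)}(x)$ as a localized chain homotopy, with one summand supported near each transverse preimage in $(\Gamma^{\interior}_k)^{-1}(K)$. First, using the explicit description (\ref{explicit-rep}) of $f_{k,\delta}\Psi(x)$ together with the standard form (\ref{delta-standard}) of $\delta$, I observe that this chain is supported on pairs $(u,\tau)$ with $\gamma^{\kappa(u)}_k(\tau)\in N_\epsilon$ and $2\epsilon<\tau<T^{\kappa(u)}_k-2\epsilon$. Lemma \ref{lem-short-length}(ii) discards, modulo $C^{\dr}_0(\Sigma^0_{m+1})$, the contributions with $\tau$ within $\tau_0$ of either endpoint, and condition (1e-3) then confines the remainder to the disjoint neighborhoods $U_{(u_*,t_*)}$ (after $\kappa$-reparameterization in the $u$-variable), on each of which $\Gamma^{\fib}_k$ is a diffeomorphism onto $\mathcal{O}_\epsilon$.

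On a given $(u_*,t_*)$, let $\phi\coloneqq(\Gamma^{\fib}_k|_{U_{(u_*,t_*)}})^{-1}\colon\mathcal{O}_\epsilon\to U_{(u_*,t_*)}$, write $\phi(w)=(u(w),t(w))$, set $v(w)\coloneqq h(w,x(w))$ as in (\ref{delta-standard}), and put $\bar u(w)\coloneqq \kappa^{-1}(u(w))$ (well-defined since $u_*\in(0,1)$ and $\kappa'>0$ there). Define
\[
\tilde\Phi_{(u_*,t_*)}\colon\R\times\mathcal{O}_\epsilon\to\Sigma^{a+(m+1)\epsilon}_{m+1},\qquad (s,w)\mapsto \con_k\!\bigl(c_1(\bar u(\kappa(s)w)),\,(T^{u(\kappa(s)w)}_k,t(\kappa(s)w)),\,\psi_\epsilon(v(\kappa(s)w))\bigr),
\]
and set
\[
o_{k,(\epsilon,\delta)}(x)\coloneqq\sum_{(u_*,t_*)}\!\eta_{(u_*,t_*)}\bigl[\R\times\mathcal{O}_\epsilon,\,\tilde\Phi_{(u_*,t_*)},\,\chi\times h_*(\nu_\epsilon\times 1)\bigr],
\]
with $\chi$ the usual compactly supported cutoff equal to $1$ on $[0,1]$ and $\eta_{(u_*,t_*)}\in\{\pm1\}$ chosen to absorb $\sign(u_*,t_*)$ along with the sign conventions of (\ref{explicit-rep}) and $\Psi$. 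For property (i), taking $\partial$ produces the $s=1$ slice minus the $s=0$ slice: the $s=1$ slice recovers exactly the $(u_*,t_*)$-contribution to $f_{k,\delta}\Psi(x)$, while the $s=0$ slice (where $\kappa(s)w=0$ forces $\tilde\Phi$ to collapse) is constant in $w$ with integral $\int_{\mathcal{O}_\epsilon}\nu_\epsilon=1$, producing $\eta_{(u_*,t_*)}\Psi$ of the concatenation at $(u_*,t_*)$; this matches the $(u_*,t_*)$-summand of $\Psi\circ f^{\pitchfork}_k(x)$ up to a reparameterization homotopy of the glued short pieces (absorbed into $o$). Summing together with the Lemma \ref{lem-short-length} error terms yields (i).

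The main obstacle is property (ii): showing $f_{l,\delta}(o_{k,(\epsilon,\delta)}(x))\in C^{\dr}_0(\Sigma^0_{m+2})$ for every $l$. Each $\tilde\Phi_{(u_*,t_*)}$ uses only $u$-values in $\pr_{[0,1]}(U_{(u_*,t_*)})$, which by (1e-2) and (1c) is disjoint from the $[0,1]$-projections of all other $U_{(\cdot,\cdot)}$'s. For $l\notin\{k,k+1\}$ the $l$-th path in the image is therefore $\gamma^u_l$ for such a $u$, which meets $N_\epsilon$ only within $\tau_0$ of its endpoints, so Lemma \ref{lem-short-length}(ii) lands the fiber product in $\Sigma^0_{m+2}$. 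For $l\in\{k,k+1\}$ the relevant path $\tilde\gamma^1_k$ (or $\tilde\gamma^2_k$) of (\ref{concatenation}) is the concatenation of a truncated piece of $\gamma^u_k$ with a short radial $\sigma$ of length $\le\epsilon_0/2$: an intersection of a further splitting point with $N_\epsilon$ on the $\sigma$-piece yields a new path of length $<\epsilon_0$ and invokes case (i) of Lemma \ref{lem-short-length}, while on the $\gamma^u_k$-portion the inclusion $U_{(u_*,t_*)}\subset\{|t-t_*|<\tau_0\}$ from (1e-1), combined with (1c), forces any such hit to lie within $\tau_0\le\epsilon_0/(5C_0)$ of an endpoint of the new path, so case (ii) applies. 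Handling all combinations rigorously—using the uniform bounds $\tau_0,\epsilon\le\epsilon_0/(5C_0)$, $|\sigma'_i|\le 1$ and $|\gamma'|\le C_0$—is the technical core of the proof and where the bulk of the real work resides.
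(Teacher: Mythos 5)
Your overall strategy mirrors the paper's: you pull the deformation problem back through the diffeomorphism $\Gamma^{\fib}_k\colon U_{(u_*,t_*)}\to\mathcal{O}_\epsilon$ from (1e-3) and use a radial contraction of $\mathcal{O}_\epsilon$ to build a chain supported near each preimage point, which is the paper's $o^1_k$ from Step 1 of its proof. The treatment of condition (ii) via Lemma \ref{lem-short-length} is also correct in spirit.

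There is, however, a genuine gap. You define $o_{k,(\epsilon,\delta)}(x)$ as the sum of these contraction chains alone. As you yourself note, the $s=0$ slice of each $\tilde\Phi_{(u_*,t_*)}$ is the $\con_k$-concatenation at $(u_*,t_*)$ in which the short paths $\sigma_j$ have collapsed to constants at $\gamma^{u_*}_k(t_*)$; in the paper's notation this is $c'_0(u_*,t_*)$, whose $k$-th and $(k+1)$-th paths have domains $[0,t_*+\epsilon]$ and $[0,T^{u_*}_k-t_*+\epsilon]$ (see (\ref{concatenation})). By contrast, the $(u_*,t_*)$-summand of $\Psi\circ f^{\pitchfork}_k(x)$ is $c_0(u_*,t_*) = (\gamma^{u_*}_1,\dots,\widehat{\gamma^{u_*}_k}^1,\widehat{\gamma^{u_*}_k}^2,\dots,\gamma^{u_*}_m)$, whose split paths have domains $[0,t_*]$ and $[0,T^{u_*}_k-t_*]$. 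These are distinct plots, so $[\{0\},c'_0(u_*,t_*),1]\ne[\{0\},c_0(u_*,t_*),1]$ in $C^{\dr}_0(\Sigma^{a+(m+1)\epsilon}_{m+1})$, and their difference is not in $C^{\dr}_0(\Sigma^0_{m+1})$ since the paths are not short. You acknowledge this with "up to a reparameterization homotopy of the glued short pieces (absorbed into $o$)", but you never construct that chain, and your displayed formula for $o_{k,(\epsilon,\delta)}(x)$ does not contain it. Without it, condition (i) fails, because $\partial o$ produces $c'_0$ where $c_0$ is required. The paper constructs exactly this interpolation chain $o^2_{(u_*,t_*)}$ in Step 2, adds $\sum_{(u_*,t_*)}\sign(u_*,t_*)\cdot o^2_{(u_*,t_*)}$ to the definition of $o_{k,(\epsilon,\delta)}(x)$, and verifies, again via Lemma \ref{lem-short-length}(iii), that $f_{l,\delta}$ sends these added chains into $C^{\dr}_0(\Sigma^0_{m+2})$ so that (ii) is preserved.

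A secondary misjudgment: you call (ii) "the technical core of the proof and where the bulk of the real work resides." In the paper, (ii) is dispatched in one short paragraph once $o^1_k$ and $o^2_{(u_*,t_*)}$ are available, because every path appearing in their images either meets $N_\epsilon$ only near its endpoints or is a short concatenation inside $N_\epsilon$, so Lemma \ref{lem-short-length}(iii) applies directly. The substantive work is the explicit construction of $o^1_k$ — including the computation $\int_{U_{(u_*,t_*)}}(\Gamma_k)^*(h_*(\nu_\epsilon\times 1))=\sign(u_*,t_*)$ that produces the correct sign, which you do identify — together with the reparameterization chain $o^2$ that you omit.
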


\begin{proof}

It  suffices to define $o_{k,(\epsilon,\delta)}(x)$ for $x=(\gamma^u_l)^{u\in [0,1]}_{l=1,\dots , m}$ satisfying (1a),\dots ,(1e).
The proof is divided into three steps: We define de Rham chains in the first two steps. In the last step, $o_{k,(\epsilon,\delta)}(x)$ is defined as the sum of these chains and we check the conditions (i) and (ii). 

\textit{Step 1.} From the definitions of $\Psi$ and $f^{\pitchfork}_k$,
\begin{align*}
 & f_{k,\delta}\circ \Psi(x) - \Psi\circ f^{\pitchfork}_k(x) \\
=&   f_{k,\delta}(\Psi (x)) - \sum_{(u_*,t_*)\in (\Gamma^{\interior}_k)^{-1}(K)}  \sign (u_*,t_*)\cdot [\{0\},c_0(u_*,t_*),1]  ,
\end{align*}
where $c_0(u_*,t_*)$ is a constant map to $ (\gamma^{u_*}_1,\dots, \widehat{\gamma^{u_*}_k}^1 , \widehat{\gamma^{u_*}_k}^2 ,\dots ,\gamma^{u_*}_m)$.
Since $(\epsilon,\delta)$ is standard, it has the form (\ref{delta-standard}).
Using the notations of (\ref{explicit-rep}), we can explicitly write
$f_{k,\delta }( \Psi(x) ) = [W_k,\Phi_k,\zeta_k]$, where
\begin{align*}
W_k&=\{(u,\tau,v)\in \R\times \R\times N_{\epsilon} \mid 2\epsilon <\tau <T^{\mu(u)}_k-2\epsilon,\ \gamma^{\mu(u)}_k(\tau)=\sigma^v_1(0) \} , \\
\Phi_k &\colon W_k\to \Sigma^{a+(m+1)\epsilon}_{m+1}\colon (u,\tau,v)\mapsto \con_k( (\gamma^{\mu(u)}_l)_{l=1,\dots , m}, (T^{\mu(u)}_k,\tau),\psi_{\epsilon}(v) ) , \\
\zeta_k&\in \Omega^2_c(W_k):\ (\zeta_k)_{(u,\tau,v)}=\rho_{\epsilon}(T^{\mu(u)}_k,\tau)\cdot \chi(u)\cdot (h_*(\nu_{\epsilon}\times 1))_v .
%\\ r&= (2-n)|\eta_{\epsilon}|  = 2(2-n)
\end{align*}
Recall the condition (1e-3) and note that $\sigma^v_1(0)=v$. We define $\bar{W}_k\coloneqq W_k\cap \{\tau_0 <\tau <T^{\mu(u)}_k-\tau_0\}$. Then% $(\zeta_{k})_{(u,\tau,v)} = \eta_{v}$ for $(u,\tau,v) \in \bar{W}_k$ and $\rest{\zeta_k}{\bar{W}_k}$ has a compact support in $\bar{W}_k$. Moreover,
\[ \bar{W}_k  \to  \bigcup_{(u_*,t_*)\in  (\Gamma^{\interior}_k)^{-1}(K) } U_{(u_*,t_*)} \colon (u, \tau, v) \mapsto (\mu(u),\tau) \]
is an orientation preserving diffeomorphism. Moreover, $\rho_{\epsilon}(T^{\mu(u)}_k,\tau)\cdot \chi(u) =1 $ for $(u,\tau,v) \in \bar{W}_k$. 
On the other hand, it follows from (1e-1) and Lemma \ref{lem-short-length} that $\Phi_k(u,\tau,v)\in \Sigma^0_{m+1}$ for $(u,\tau,v)\in W_k\setminus \bar{W}_k$.
Therefore, we have
\begin{align*}
&f_{k,\delta }( \Psi(x) ) - \sum_{(u_*,t_*)\in  (\Gamma^{\interior}_k)^{-1}(K) }  [U_{(u_*,t_*)}, \Phi'_{(u_*,t_*)}, \zeta'_{(u_*,t_*)} ] \\
=&f_{k,\delta }( \Psi(x) ) - [\bar{W}_k,\rest{\Phi_k}{\bar{W}_k}, \rest{\zeta_k}{\bar{W}_k}] \in C^{\dr}_0(\Sigma^0_{m+1}) ,
\end{align*}
where
\begin{align*}
\Phi'_{(u_*,t_*)} &\colon U_{(u_*,t_*)} \to \Sigma^{a+(m+1)\epsilon}_{m+1}\colon (u,\tau)\mapsto \con_k( (\gamma^u_l)_{l=1,\dots , m}, (T^u_k,\tau),\psi_{\epsilon}( \gamma_k^u(\tau))) ,\\
\zeta'_{(u_*,t_*)}&= (\rest{\Gamma^{\interior}_k}{U_{(u_*,t_*)}})^*(h_*(\nu_{\epsilon}\times 1))\in \Omega^2_c(U_{(u_*,t_*)}) . %:\  (\zeta'_{(u_*,t_*)})_{(u,\tau)}=\rho_{\epsilon}(T^u_k,\tau)\cdot ((\Gamma_k)^* \eta_{\epsilon})_{(u,\tau)}.
\end{align*}
As a result, $f_{k,\delta}\circ \Psi(x) - \Psi\circ f^{\pitchfork}_k(x)  $ is equal to the chain
\begin{align}\label{D-Psi-Psi-D}
  \sum_{(u_*,t_*)\in (\Gamma^{\interior}_k)^{-1}(K)} \left(  [U_{(u_*,t_*)}, \Phi'_{(u_*,t_*)},\zeta'_{(u_*,t_*)}]  - \sign (u_*,t_*)\cdot [\{0\},c_0(u_*,t_*),1] \right)  
\end{align}
modulo $C^{\dr}_0(\Sigma^0_{m+1})$.

For each $(u_*,t_*)\in (\Gamma^{\interior}_k)^{-1}(K)$, consider a diffeomorphism
\[\Gamma_k^{\fib} = \pr_{\R^2}\circ h^{-1}\circ \rest{\Gamma^{\interior}_k}{U_{(u_*,t_*)}}\colon U_{(u_*,t_*)}\to \mathcal{O}_{\epsilon} \]
and a scalar multiplication $m_s\colon \mathcal{O}_{\epsilon}  \to \mathcal{O}_{\epsilon}  \colon w\mapsto \kappa(s)\cdot w$ for $s\in \R$.
We define a deformation retraction to $\{ (u_*,t_*)\}$
\begin{align*}
\R \times U_{(u_*,t_*)} \to U_{(u_*,t_*)}  \colon (s,(u,\tau) ) \mapsto (u_s,\tau_s)\coloneqq  (\Gamma_k^{\fib})^{-1} \circ m_s\circ  (\Gamma_k^{\fib})(u,\tau ) .
\end{align*}
%Note that $(u_s,\tau_s)=(u,\tau)$ if $s\geq 1$ and $(u_s,\tau_s)=(u_*,t_*)$ if $s\leq 0$.
Now we define a map
\begin{align*}
\tilde{\Phi}'_{(u_*,t_*)} &\colon \R \times U_{(u_*,t_*)} \to \Sigma^{a+(m+1)\epsilon}_{m+1}\colon ( s, (u,\tau) ) \mapsto \Phi'_{(u_*,t_*)}(u_s,\tau_s) %\con_k( (\gamma^{u_s}_l)_{l=1,\dots , m}, (T^{u_s}_k,\tau_s), \psi_{\epsilon_0}(\gamma^{u_s}_k(\tau_s))), 
\end{align*}
and a de Rham chain
\[o^1_{k} \coloneqq  \sum_{(u_*,t_*)\in (\Gamma^{\interior}_k)^{-1}(K)} [\R \times U_{(u_*,t_*)}, \tilde{\Phi}'_{(u_*,t_*)},  \chi\times \zeta'_{(u_*,t_*)} ] \in C^{\dr}_1(\Sigma^{a+(m+1)\epsilon}_{m+1}).\]
%Note that $\psi_{\epsilon_0}(\gamma^{u_0}_k(\tau_0))$ is a constant path at $\gamma^{u_0}_k(t_0)\in K$. 
If $s\geq 1$, $\tilde{\Phi}'_{(u_*,t_*)}(s,(u,\tau))= \Phi'_{(u_*,t_*)}(u,\tau)$. If $s\leq 0$, $\tilde{\Phi}'_{(u_*,t_*)}(s,(u,\tau))$ is constant to
\[  (\gamma^{u_*}_1,\dots, \tilde{\gamma^{u_*}_k}^1, \tilde{\gamma^{u_*}_k}^2 ,\dots ,\gamma^{u_*}_m) \eqqcolon c'_0(u_*,t_*),\]
which is defined by (\ref{concatenation}) for $\gamma_k=\gamma^{u_*}_k$, $(T_k,\tau)=(T^{u_*}_k,t_*)$ and $\sigma_i \colon [0,\epsilon/2]\to  \{ \gamma^{u_*}_k(t_*)\} \subset Q$.
Therefore, the boundary chain $\partial o^1_{k}$ is equal to
\begin{align*}
 \sum_{(u_*,t_*)\in (\Gamma^{\interior}_k)^{-1}(K)} \left( [ U_{(u_*,t_*)}, \Phi'_{(u_*,t_*)},\zeta'_{(u_*,t_*)} ]  - \left[ \{0\},c'_0(u_*,t_*),\int_{U_{(u_*,t_*)}} (\Gamma_k)^* (h_*(\nu_{\epsilon}\times 1))\right] \right) .
\end{align*}
%We take $\bar{\eta}_{\epsilon}\in \Omega^2_c(\{w\in \R^2\mid |w|<\epsilon\})$ with $\int_{\{|w|<\epsilon\}} \bar{\eta}_{\epsilon}=1$. Since  $\eta_{\epsilon}$ represents the Thom class of the disk bundle $N_{\epsilon} \cong \{|w|<\epsilon\} \times K$,
Since $\int_{\mathcal{O}_{\epsilon}}\nu_{\epsilon}=1$, we can compute that
\begin{align*}
\int_{U_{(u_*,t_*)}} (\Gamma_k)^*  (h_*(\nu_{\epsilon}\times 1))&= \int_{U_{(u_*,t_*)}} (h^{-1}\circ \Gamma_k)^*(\nu_{\epsilon}\times 1)  \\
&= \int_{U_{(u_*,t_*)}} (\Gamma^{\fib}_k)^*\nu_{\epsilon} \\
&=\sign (u_*,t_*)\in \{\pm 1\}.
\end{align*}
Thus, we have
\begin{align}\label{partial-o-1}
\partial (o^1_{k}) =  \sum_{(u_*,t_*)\in (\Gamma^{\interior}_k)^{-1}(K)} \left( [ U_{(u_*,t_*)}, \Phi'_{(u_*,t_*)},\zeta'_{(u_*,t_*)} ]  - \sign (u_*,t_*) \cdot [ \{0\},c'_0(u_*,t_*),1 ] \right) .
\end{align}

\textit{Step 2.} For each $(u_*,t_*)\in (\Gamma^{\interior}_k)^{-1}(K)$, $c'_0(u_*,t_*)$ coincides with $c_0(u_*,t_*)$ up to reparameterizations of $k$-th and $(k+1)$-th paths.
%More precisely, there exist smooth families of functions
%\[ \{\mu^1_s\colon [0, t_*+2\epsilon^1_s] \to [0,t_*]\}_{s\in [0,1]} ,\ \{\mu^2_s\colon [0,T^{u_*}_k-t_*+2\epsilon^2_s]\to [0, T^{u_*}_k-t_*]\}_{s\in [0,1]}, \]
%with $\epsilon^j_0=0$, $\epsilon^j_1=\epsilon/2$, $\mu^j_0(t)=t$ and $0\leq (\mu^j_s)'(t)\leq 1$, such that
%\[\tilde{\gamma^{u_*}_k}^1=\widehat{\gamma^{u_*}_k}^1\circ \mu^1_1,\ \tilde{\gamma^{u_*}_k}^2=\widehat{\gamma^{u_*}_k}^2\circ \mu^2_1 .\]
%
We define by interpolating parameterizations
\[c_1(u_*,t_*)\colon [0,1] \to \Sigma^{a+(m+1)\epsilon}_{m+1} \colon s\mapsto (\gamma^{u_*}_1,\dots, \gamma^{u_*}_{k-1},  \widehat{\gamma^{u_*}_k}^1\circ \mu^1_s , \widehat{\gamma^{u_*}_k}^2 \circ \mu^2_s ,\gamma^{u_*}_{k+1},\dots ,\gamma^{u_*}_m),\]
so that $c_1(u_*,t_*) (0)=c_0(u_*,t_*)$ and $c_1(u_*,t_*) (1) =c'_0(u_*,t_*)$.
Then, we obtain a chain
\[o^2_{ (u_*,t_*)} \coloneqq  [\R, c_1(u_*,t_*)\circ \kappa ,\chi] \in C^{\dr}_1(\Sigma^{a+(m+1)\epsilon}_{m+1}),\]
which satisfies $\partial (o^2_{(u_*,t_*)} ) =  [\{0\},c'_0(u_*,t_*),1] - [\{0\},c_0(u_*,t_*),1] $.

\textit{Step 3.}  We define a chain
\[o_{k,(\epsilon,\delta)} (x) \coloneqq o^1_{k} +  \sum_{(u_*,t_*)\in (\Gamma^{\interior}_k)^{-1}(K))} \sign (u_*,t_*)\cdot  o^2_{(u_*,t_*)}. \]
From (\ref{partial-o-1}), $\partial (o_{k,(\epsilon,\delta)} (x) )$ is equal to the chain of (\ref{D-Psi-Psi-D}). Therefore, $o_{k,\delta} (x) $ satisfies the condition (i).
The condition (ii) can be checked as follows: From the conditions (1e-2) and (1e-4), those paths in $\Phi'_{(u_*,t_*)} (s,(u,\tau))$ and $c_1(u_*,t_*)(s)$ satisfy the condition (iii) of Lemma \ref{lem-short-length}.
%intersect with $N_{\epsilon}$ only at points (in the domain interval) whose distance to end points is less than $\tau_0 +2\epsilon\ (<4\epsilon_0/5C_0)$.
Therefore, $f_{l,\delta} (o^1_{k})$ and $f_{l,\delta} (o^2_{(u_*,t_*)})$ belongs to $C^{\dr}_0(\Sigma^0_{m+2})$ for $l=1,\dots ,m+1$. This finishes the proof.
\end{proof}
%
%The map $o_{k,\delta}$ in the above lemma induces a linear map
%$o_{k,\epsilon} \colon C^{\pitchfork,<a}_1(m,\epsilon ) \to C^{\dr}_1(\Sigma^{a+(m+1)\epsilon}_{m+1},\Sigma^0_{m+1}) $.
For $(\epsilon,\delta)\in \mathcal{T}_a$ which is standard, we define a linear map $\Phi^{<a}_{(\epsilon,\delta)} \colon C^{\pitchfork,<a}_*(\epsilon ) \to C^{<a}_*(\epsilon) $ so that for $x\in C^{\pitchfork}_p(m,a,\epsilon)$, the equivalence class $[x]\in C^{\pitchfork,<a}_p(m,\epsilon )$ is mapped to
\begin{align*}
\Phi^{<a}_{(\epsilon,\delta)} ( [x] ) = \begin{cases}
[ \Psi(x) ] & \text{ if } p=0 ,\\
[\Psi (x) ] - \sum_{k=1}^m [ o_{k,(\epsilon,\delta)}(x) ] & \text{ if }p=1, \\
0 & \text{ else.}
\end{cases}
\end{align*}
The two properties of $o_{k,(\epsilon,\delta)}$ shows that $\Phi^{<a}_{(\epsilon,\delta)}$ is a chain map from $(C^{\pitchfork,<a}_*(\epsilon), D^{\pitchfork}_{\epsilon})$ to $(C^{<a}_*(\epsilon), D_{\delta})$. 
Therefore, we obtain a linear map on homology
\[ (\Phi^{<a}_{(\epsilon,\delta)})_* \colon H^{\pitchfork,<a}_*(\epsilon) \to H^{<a}_*(\epsilon,\delta). \]

\subsubsection{Commutativity with transition maps}

We need to check the relation of $ \Phi^{<a}_{(\epsilon,\delta)} $ with $\{k_{(\epsilon',\delta),(\epsilon,\delta)}\}_{\epsilon'\leq \epsilon}$ and $\{l_{\epsilon,\epsilon'}\}_{\epsilon \geq \epsilon' }$.
\begin{prop}\label{prop-Phi-commute}
For $(\epsilon,\delta),(\epsilon',\delta) \in \mathcal{T}_{a}$ ($\epsilon' \leq \epsilon$) which are standard with respect to $h$, the following diagram commutes:
\begin{align*}
\xymatrix@C=40pt{ H^{\pitchfork,<a}_*(\epsilon) \ar[r]^-{(\Phi^{<a}_{(\epsilon,\delta)})_*}\ar[d]_{l_{\epsilon,\epsilon'}} & H^{<a}_*(\epsilon,\delta)  \\ H^{\pitchfork,<a}_* (\epsilon') \ar[r]^-{(\Phi_{(\epsilon',\delta')}^{<a})_*} & H^{<a}_* (\epsilon',\delta) \ar[u]_{k_{(\epsilon',\delta'), (\epsilon,\delta)}}  . }
\end{align*}
\end{prop}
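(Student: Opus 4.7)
My plan is to lift the square to the chain level by constructing a $[-1,1]$-modeled chain map that interpolates between the two routes, and then read off commutativity from the very definition of $k_{(\epsilon',\delta'),(\epsilon,\delta)}$. First I would pick $(\bar{\epsilon},\bar{\delta})\in\bar{\mathcal{T}}_a$ standard with respect to $h$, so that $\bar{\epsilon}=\epsilon$ and $\bar{\delta}$ has the explicit form (\ref{standard-delta-bar}) interpolating $e_+\bar{\delta}=\delta$ at $r\geq 1$ and $e_-\bar{\delta}=(i_{\epsilon',\epsilon})_*\delta'$ at $r\leq -1$. By Section \ref{subsec-limit}, this choice represents
\[
k_{(\epsilon',\delta'),(\epsilon,\delta)}=(e_{\epsilon,+})_*\circ\bigl((j_{\epsilon',\epsilon})_*^{-1}\circ(e_{\epsilon,-})_*\bigr)^{-1}.
\]

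The key construction is, for each $x\in C^{\pitchfork}_1(m,a,\epsilon)$ and each $k\in\{1,\dots,m\}$, a $[-1,1]$-modeled chain $\bar{o}_{k,(\bar{\epsilon},\bar{\delta})}(x)\in \bar{C}^{\dr}_1(\Sigma^{a+(m+1)\epsilon}_{m+1})$ obtained by running the three-step construction of Lemma \ref{lem-o-epsilon} with $f_{k,\delta}(\Psi(x))$ replaced throughout by $\bar{f}_{k,\bar{\delta}}(\bar{i}\Psi(x))$. Since $\bar{\delta}$ adds just one extra $\mathbb{R}$-factor $r$ on top of the data for $\delta,\delta'$, both the retraction-type chain $o^1_k$ and the reparameterization-interpolation chain $o^2_{(u_*,t_*)}$ extend fiberwise in $r$, yielding a $[-1,1]$-modeled chain with the following properties:
\begin{align*}
&\text{(i')}\quad \partial\bar{o}_{k,(\bar{\epsilon},\bar{\delta})}(x)-\bigl(\bar{f}_{k,\bar{\delta}}\circ\bar{i}\Psi(x)-\bar{i}\Psi\circ f^{\pitchfork}_k(x)\bigr)\in\bar{C}^{\dr}_0(\Sigma^0_{m+1}),\\
&\text{(ii')}\quad \bar{f}_{l,\bar{\delta}}\bigl(\bar{o}_{k,(\bar{\epsilon},\bar{\delta})}(x)\bigr)\in\bar{C}^{\dr}_0(\Sigma^0_{m+2})\text{ for every }l,\\
&\text{(iii')}\quad e_{\epsilon,+}\,\bar{o}_{k,(\bar{\epsilon},\bar{\delta})}(x)=o_{k,(\epsilon,\delta)}(x),\\
&\text{(iv')}\quad e_{\epsilon,-}\,\bar{o}_{k,(\bar{\epsilon},\bar{\delta})}(x)=(j_{\epsilon',\epsilon})_*\,o_{k,(\epsilon',\delta')}(x),
\end{align*}
where in (iv') the singular chain $x$ is viewed inside $C^{\pitchfork}_1(m,a,\epsilon')$ via the inclusion $\iota_{\epsilon,\epsilon'}$ inducing $l_{\epsilon,\epsilon'}$. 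Properties (iii') and (iv') are built in: $\bar{\delta}$ specializes at $r=\pm 1$ exactly to the plots defining $\delta$ and $(i_{\epsilon',\epsilon})_*\delta'$, and the auxiliary retraction and interpolation data are extended constantly for $|r|\geq 1$.

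With $\bar{o}$ in hand I would define
\[
\bar{\Phi}^{<a}_\epsilon\colon C^{\pitchfork,<a}_*(\epsilon)\to \bar{C}^{<a}_*(\epsilon),\qquad \bar{\Phi}^{<a}_\epsilon([x])\coloneqq [\bar{i}\Psi(x)]-\sum_{k=1}^m\bigl[\bar{o}_{k,(\bar{\epsilon},\bar{\delta})}(x)\bigr]
\]
on $1$-chains (and by $[\bar{i}\Psi(x)]$ alone on $0$-chains). Properties (i')-(ii') make this a chain map to $(\bar{C}^{<a}_*(\epsilon),\bar{D}_{\bar{\delta}})$ by the same algebraic manipulation that showed $\Phi^{<a}_{(\epsilon,\delta)}$ is a chain map. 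Composing with $e_{\epsilon,\pm}$ and using $e_\pm\bar{i}=\id$ together with (iii')-(iv') gives
\[
e_{\epsilon,+}\circ\bar{\Phi}^{<a}_\epsilon=\Phi^{<a}_{(\epsilon,\delta)},\qquad e_{\epsilon,-}\circ\bar{\Phi}^{<a}_\epsilon=(j_{\epsilon',\epsilon})_*\circ\Phi^{<a}_{(\epsilon',\delta')}\circ\iota_{\epsilon,\epsilon'}.
\]
Passing to homology and invoking the formula for $k_{(\epsilon',\delta'),(\epsilon,\delta)}$,
\[
k_{(\epsilon',\delta'),(\epsilon,\delta)}\circ(\Phi^{<a}_{(\epsilon',\delta')})_*\circ l_{\epsilon,\epsilon'}=(e_{\epsilon,+})_*\circ(e_{\epsilon,-})_*^{-1}\circ(e_{\epsilon,-})_*\circ(\bar{\Phi}^{<a}_\epsilon)_*=(\Phi^{<a}_{(\epsilon,\delta)})_*,
\]
which is the desired commutativity. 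The hard part is the construction of $\bar{o}_{k,(\bar{\epsilon},\bar{\delta})}(x)$: most of the geometry is a formal fiberwise repetition of Lemma \ref{lem-o-epsilon}, but one must check that the supports of the cutoffs, the retraction onto the critical fiber, and the reparameterization interpolations all respect the $[-1,1]$-model structure maps $(\tau_+,\tau_-)$, i.e.\ are locally constant in $r$ for $|r|\geq 1$. Once this bookkeeping is settled, (i')-(ii') follow from the cancellations already used in Lemma \ref{lem-o-epsilon} combined with Proposition \ref{prop-f-bar-equations}, and (iii')-(iv') are automatic from how $\bar{\delta}$ extends $\delta$ and $\delta'$.
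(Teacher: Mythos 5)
Your proposal is correct and follows essentially the same route as the paper: construct the $[-1,1]$-modeled analogue $\bar{o}_{k,(\epsilon,\bar{\delta})}$ of $o_{k,(\epsilon,\delta)}$ with the same two compatibility properties plus the evaluation identities $e_\pm\bar{o}=o$, package these into a chain map $\bar{\Phi}^{<a}_{(\epsilon,\bar{\delta})}$ from $C^{\pitchfork,<a}_*(\epsilon)$ to $\bar{C}^{<a}_*(\epsilon)$, and read off commutativity from the defining zig-zag of $k_{(\epsilon',\delta'),(\epsilon,\delta)}$. The paper likewise states the existence of $\bar{o}_{k,(\epsilon,\bar{\delta})}$ as an auxiliary lemma with the proof omitted as a formal repetition, and concludes via the identical two-triangle diagram.
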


To prove this proposition, we return to the definition $k_{(\epsilon',\delta'), (\epsilon,\delta)}= k_{(\bar{\epsilon},\bar{\delta})}$  by $(\bar{\epsilon},\bar{\delta})\in \bar{\mathcal{T}}_{a}$ satisfying (\ref{condition-delta-bar}) for $(\epsilon,\delta),(\epsilon',\delta')$. We require that $(\bar{\epsilon},\bar{\delta})$ is standard with respect to $h$, and thus $\bar{\epsilon}=\epsilon$.
%A chain complex $(\bar{C}^{<a}_*(\epsilon), \bar{D}_{\bar{\delta}_{\epsilon',\epsilon} })$ is defined.
%Note that if $(\epsilon,\delta) \in \mathcal{T}_{a}$, then $(\epsilon, \bar{\delta}_{\epsilon',\epsilon} )\in \bar{\mathcal{T}}_{a}$

We set $\bar{\Psi}\coloneqq \bar{i}\circ \Psi\colon C^{\pitchfork}_*(m,a) \to \bar{C}^{\dr}_*(\Sigma^a_m)$ for all $m\in \Z_{\geq 0}$.
Again, the induced map from $C^{\pitchfork,<a}_*(\epsilon)$ to $\bar{C}^{<a}_*(\epsilon)$ is not a chain map. To fill in the gap, we need the following lemma.
\begin{lem} For $m\in \Z_{\geq 1}$ and $k\in \{1,\dots ,m\}$,
suppose that we have taken maps $o_{k,(\epsilon,\delta)},o_{k,(\epsilon',\delta')}$ of Lemma \ref{lem-o-epsilon} for $(\epsilon,\delta),(\epsilon',\delta')$. Then, there exists a linear map
\[\bar{o}_{k,(\epsilon,\bar{\delta})}\colon C^{\pitchfork}_1(m,a,\epsilon) \to \bar{C}^{\dr}_1(\Sigma^{a+(m+1)\epsilon}_{m+1})\]
such that the following hold for any $x\in C^{\pitchfork}_1(m,a,\epsilon)$
\begin{itemize}
\item $\partial (\bar{o}_{k,(\epsilon,\bar{\delta})}) - (  \bar{f}_{k,\bar{\delta}} \circ \bar{\Psi}(x) - \bar{\Psi} \circ f^{\pitchfork}_{k} (x)  ) \in \bar{C}^{\dr}_0(\Sigma^0_{m+1})$.
\item $\bar{f}_{l,\bar{\delta}} (\bar{o}_{k,(\epsilon,\bar{\delta})} (x) )\in \bar{C}^{\dr}_0(\Sigma^0_{m+2})$ for every $l\in \{1,\dots ,m+1\}$.
\item $e_+(\bar{o}_{k,(\epsilon,\bar{\delta})} (x)) =o_{k,(\epsilon,\delta)}(x) $ and $e_-(\bar{o}_{k,(\epsilon,\bar{\delta})}(x))=(j_{\epsilon',\epsilon})_*(o_{k,(\epsilon',\delta')}(x))$.
\end{itemize}
\end{lem}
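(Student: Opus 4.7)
The plan is to mimic the three-step construction in the proof of Lemma \ref{lem-o-epsilon}, now carried out in a one-parameter family over $r\in\R$ so that the result is a $[-1,1]$-modeled chain whose restriction to $r\geq 1$ recovers $o_{k,(\epsilon,\delta)}(x)$ and whose restriction to $r\leq -1$ recovers $(j_{\epsilon',\epsilon})_*o_{k,(\epsilon',\delta')}(x)$. Since $(\epsilon,\delta),(\epsilon',\delta'),(\epsilon,\bar{\delta})$ are all standard with respect to $h$, the chain $\bar{\delta}$ has the explicit form (\ref{standard-delta-bar}), in which the fiber form $\bar{\eta}_{\epsilon',\epsilon}$ decomposes into three pieces $\kappa\times(\nu_\epsilon\times 1)$, $(1-\kappa)\times(\nu_{\epsilon'}\times 1)$, and $(d\kappa)\times(\theta\times 1)$, and the family of path pairs $(\sigma^{(r,v)}_1,\sigma^{(r,v)}_2)$ interpolates between the $\epsilon'$-model and the $\epsilon$-model as $r$ moves from $-1$ to $1$.

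First I would write out $\bar{f}_{k,\bar{\delta}}\circ\bar{\Psi}(x)-\bar{\Psi}\circ f^{\pitchfork}_k(x)$ for $x=(\gamma^u_l)^{u\in[0,1]}_{l=1,\dots,m}$ using the description (\ref{explicit-rep-bar}). Because $\bar{i}$ uses $\id_\R\times\psi$, the $\R$-factor of the fiber product is free and parameterizes by $r$. The resulting chain splits naturally into a $\kappa$-part, a $(1-\kappa)$-part and a $d\kappa$-part; at the slice $r=1$ (resp. $r=-1$) the first two reduce to $f_{k,\delta}\circ\Psi(x)-\Psi\circ f^{\pitchfork}_k(x)$ (resp. $f_{k,\delta'}\circ\Psi(x)-\Psi\circ f^{\pitchfork}_k(x)$), while the $d\kappa$-part is a new correction supported where $r\in[-1,1]$ and where the image of $\Gamma_k$ meets $N_\epsilon$.

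Next, paralleling Steps 1 and 2 of Lemma \ref{lem-o-epsilon}, I would define $[-1,1]$-modeled analogues $\bar{o}^1_k$ and $\bar{o}^2_{(u_*,t_*)}$ by running the same deformation retraction $(s,(u,\tau))\mapsto(u_s,\tau_s)$ and the same reparametrization homotopy between $c_0$ and $c'_0$, but (a) producted with the $r$-direction, (b) with the fiber forms $\nu_\epsilon$ and $\nu_{\epsilon'}$ glued by $\kappa(r)$, and (c) with the short paths of $\psi_\epsilon$ and $\psi_{\epsilon'}$ glued by the same $\kappa(r)$ so that at $r=\pm 1$ we land on the data used to build $o_{k,(\epsilon,\delta)}$ and $o_{k,(\epsilon',\delta')}$ respectively. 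To absorb the $d\kappa$-part identified in the previous step, I would add to $\bar{o}^1_k$ a further primitive chain built from $\theta\in\Omega^{d-1}_c(\mathcal{O}_\epsilon)$ of (\ref{standard-delta-bar}) (using $d\theta=\nu_\epsilon-\nu_{\epsilon'}$), whose fiber over $r\in[-1,1]$ is a $(d\kappa)\times\theta$-weighted pullback of $\zeta'_{(u_*,t_*)}$ along the family deformation. Finally I set $\bar{o}_{k,(\epsilon,\bar{\delta})}(x)\coloneqq \bar{o}^1_k+\sum_{(u_*,t_*)}\sign(u_*,t_*)\cdot\bar{o}^2_{(u_*,t_*)}$.

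Verification of the three conditions then proceeds as in Lemma \ref{lem-o-epsilon}: the boundary identity follows from Stokes applied in $(s,r)$, using $d(\kappa)+d(1-\kappa)=0$ to cancel the $d\kappa$-pieces against the correction built from $\theta$; the vanishing of $\bar{f}_{l,\bar{\delta}}(\bar{o}_{k,(\epsilon,\bar{\delta})}(x))$ modulo $\bar{C}^{\dr}_0(\Sigma^0_{m+2})$ follows because every path appearing in $\bar{o}^1_k$ and $\bar{o}^2_{(u_*,t_*)}$ satisfies condition (iii) of Lemma \ref{lem-short-length} uniformly in $r$; and the $e_\pm$-conditions are built into the construction by our choice of $\kappa$-interpolation. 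The main obstacle I anticipate is the careful accounting of the $d\kappa$-contribution: it is not visible in the fiberwise picture used in Lemma \ref{lem-o-epsilon}, and matching it cleanly against the $\theta$-primitive correction (while preserving the strict endpoint identities demanded by $e_+,e_-$, rather than merely up to boundaries) is the only step that requires genuinely new bookkeeping beyond the proof of Lemma \ref{lem-o-epsilon}.
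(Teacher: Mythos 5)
Your proposal is correct and follows essentially the same route as the paper: run the construction of Lemma~\ref{lem-o-epsilon} (the deformation retraction defining $o^1_k$, the reparametrization homotopy defining $o^2_{(u_*,t_*)}$) in a one-parameter family over $r\in\R$, using the standard form (\ref{standard-delta-bar}) of $\bar{\delta}$ to make the $r=\pm 1$ slices land on the data for $(\epsilon,\delta)$ and $(\epsilon',\delta')$. That is exactly what the paper does when it computes $\bar{f}_{k,\bar{\delta}}\circ\bar{\Psi}(x)-\bar{\Psi}\circ f^{\pitchfork}_k(x)$ as a sum of chains on $\R\times U_{(u_*,t_*)}$ with form $\bar{\zeta}'_{(u_*,t_*)}=(\id_\R\times\Gamma^{\interior}_k)^*\bar{\eta}_{\epsilon',\epsilon}$ and then says $\bar{o}_{k,(\epsilon,\bar{\delta})}$ is built ``in a similar way''.

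The one place where you diverge from the paper is that you treat the $d\kappa\wedge\theta$ piece of $\bar{\eta}_{\epsilon',\epsilon}$ as a new difficulty requiring a separate ``$\theta$-primitive correction'' whose matching you flag as the main bookkeeping problem. This is a self-inflicted obstacle. If you keep $\bar{\eta}_{\epsilon',\epsilon}$ intact rather than peeling it into the $\kappa$, $1-\kappa$, and $d\kappa$ pieces, there is no new bookkeeping to do: $\bar{\eta}_{\epsilon',\epsilon}$ is a closed $d$-form (precisely because $d\theta=\nu_\epsilon-\nu_{\epsilon'}$), and the integral of $\bar{\zeta}'_{(u_*,t_*)}$ along the $U_{(u_*,t_*)}$-fibers of $\R\times U_{(u_*,t_*)}\to\R$ is the \emph{constant} $\kappa(r)\sign(u_*,t_*)+(1-\kappa(r))\sign(u_*,t_*)=\sign(u_*,t_*)$ (the $d\kappa\wedge\theta$ term has a $dr$-factor and contributes nothing to fiberwise integration). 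So the exact computation of $\partial o^1_k$ from Step~1 of Lemma~\ref{lem-o-epsilon} goes through unchanged with the $r$-direction carried along, and closedness of $\bar{\eta}_{\epsilon',\epsilon}$ guarantees no stray $d$-of-form term in $\partial$. The $e_\pm$-conditions then hold strictly because $\kappa'(\pm 1)=0$ kills the $d\kappa$-piece at the endpoints. If instead you insist on using the non-closed form $\kappa\nu_\epsilon+(1-\kappa)\nu_{\epsilon'}$ in your main chain and adding a separate $\theta$-chain, you will eventually cancel the same two $d\kappa\wedge(\nu_\epsilon-\nu_{\epsilon'})$ terms by hand and arrive at the same chain; it is correct, just needlessly indirect.
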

\begin{proof}
We omit the detailed proof. Note that $\bar{\delta}$ has the form (\ref{standard-delta-bar}). For any $x=(\gamma_k)_{k=1,\dots ,m}$, we can compute explicitly that the chain $ \bar{f}_{k,\bar{\delta}} \circ \bar{\Psi}(x) - \bar{\Psi} \circ f^{\pitchfork}_{k} (x)$ is equal to the sum of chains for all $(u_*,t_*)\in (\Gamma^{\interior}_k)^{-1}(K)$
\begin{align*}
 &[\R \times U_{(u_*,t_*)}, \bar{\Phi}'_{(u_*,t_*)},(\id_{\R_{\geq 1}}\times U_{(u_*,t_*)},\id_{\R_{\leq -1}}\times U_{(u_*,t_*)} ), \bar{\zeta}'_{(u_*,t_*)}] \\
  & - \sign (u_*,t_*)\cdot [\R,c_0(u_*,t_*), (\id_{\R_{\geq 1}}, \id_{\R_{\leq -1}}), 1] 
\end{align*}
modulo $\bar{C}^{\dr}_0(\Sigma^0_{m+1})$. Here $\bar{\Phi}'_{(u_*,t_*)}\colon \R\times U_{(u_*,t_*)}\to \R\times \Sigma^{a+(m+1)\epsilon}_{m+1}$ is determined by
\begin{align*}
\bar{\Phi}'_{(u_*,t_*)} (r,(u,\tau))\coloneqq (r, \con_k((\gamma^u_l)_{l=1,\dots ,m}, (T^u_k,\tau), \bar{\psi}_{\epsilon',\epsilon}(r,\gamma^u_k(\tau))) )
\end{align*}
and
$(\bar{\zeta}'_{(u_*,t_*)})\coloneqq (\id_{\R}\times \rest{\Gamma^{\interior}_k}{U_{(u_*,t_*)}} )^*(1\times \bar{\eta}_{\epsilon',\epsilon}) \in \Omega^2(\R\times U_{(u_*,t_*)})$. 
The $[-1,1]$-modeled chain $\bar{o}_{k, (\epsilon,\bar{\delta})}$ is defined in a similar way as $o_{k,(\epsilon,\delta)}$ in Lemma \ref{lem-o-epsilon}, and we can check that this chain satisfies the required three conditions.
\end{proof}
%The map $\bar{o}_{k,(\epsilon,\bar{\delta})}$ induces a linear map $\bar{o}_{k,\epsilon',\epsilon} \colon C^{\pitchfork,<a}_1(m, \epsilon) \to \bar{C}^{\dr}_1(\Sigma^{a+(m+1)\epsilon}_{m+1},\Sigma^0_{m+1})$.

\begin{proof}[Proof of Proposition \ref{prop-Phi-commute}]
We define a linear map $\bar{\Phi}^{<a}_{(\epsilon,\bar{\delta})} \colon  C^{\pitchfork,<a}_*(\epsilon) \to \bar{C}^{<a}_*(\epsilon) $ so that for $x\in C^{\pitchfork,<a}_p(m,a,\epsilon)$,
\[ \bar{\Phi}^{<a}_{(\epsilon,\bar{\delta})}  ([x]) = \begin{cases}
[\bar{\Psi}(x)] & \text{ if }p=0, \\
[ \bar{\Psi}(x)] - \sum_{k=1}^m [ \bar{o}_{k,(\epsilon,\bar{\delta})} (x)] & \text{ if }p=1 , \\
0 & \text{ else.}
\end{cases}\]
The first two properties of $\bar{o}_{k,(\epsilon,\bar{\delta})}$ shows that this is a chain map from $(C^{\pitchfork,<a}_*(\epsilon), D^{\pitchfork}_{\epsilon})$ to $(\bar{C}^{<a}_*(\epsilon), \bar{D}_{\bar{\delta}})$. Therefore, we get a linear map on homology
\[(  \bar{\Phi}^{<a}_{(\epsilon,\bar{\delta})} )_* \colon H^{\pitchfork,<a}_*(\epsilon) \to \bar{H}^{<a}_*(\epsilon, \bar{\delta}).\]
The third property of $\bar{o}_{k,(\epsilon,\bar{\delta})}$ implies that the following diagram commutes: %(We ommit writing ``$[a,b)$'' in the notation of homology groups.)
\[\xymatrix@C=40pt{ H^{\pitchfork,<a}_*(\epsilon) \ar[r]^-{(\Phi^{<a}_{(\epsilon,\delta)})_*}\ar[dd]_{l_{\epsilon,\epsilon'}} \ar[rd]_{(\bar{\Phi}^{<a}_{(\epsilon,\bar{\delta})})_* }&H^{<a}_* (\epsilon,\delta)\ar[r]^{(j_{\epsilon, \epsilon})_*=\id } &  H^{<a}_* (\epsilon,\delta)  \\
 & \bar{H}^{<a}_* (\epsilon,\bar{\delta})\ar[ur]_{(e_{\epsilon,+})_*}\ar[dr]^{(e_{\epsilon,-})_*} &\\
 H^{\pitchfork,<a}_* (\epsilon') \ar[r]^-{(\Phi_{(\epsilon',\delta')}^{<a})_*} &  H^{<a}_* (\epsilon',\delta')\ar[r]^-{(j_{\epsilon',\epsilon})_*} &  H^{<a}_* (\epsilon, (i_{\epsilon',\epsilon})_*\delta')  . }\]
The proposition is now proved since $k_{(\epsilon,\bar{\delta})} = ((j_{\epsilon,\epsilon})^{-1}_*\circ (e_{\epsilon,+})_*) \circ ((j_{\epsilon',\epsilon})^{-1}_*\circ (e_{\epsilon_-})_*)^{-1}$.
\end{proof}

Let $a\in \R_{> 0} \setminus \mathcal{L}(K)$.
Proposition \ref{prop-Phi-commute} shows that the family of maps $\{(\Phi_{(\epsilon,\delta)}^{<a} )_* \mid (\epsilon,\delta)\in \mathcal{T}_a \text{ is standard with respect to }h\}$ induces a linear map on the limits of $\epsilon \to 0$
\[ \Phi^{<a} \colon H^{\pitchfork,<a}_* =\varinjlim_{\epsilon\to 0} H^{\pitchfork,<a}_*(\epsilon  ) \to H^{<a}_*(Q,K)= \varprojlim_{\epsilon \to 0} H^{<a}_*(\epsilon,\delta) .\]
Naturally, those maps of $\{\Phi^{<a}\}_{a\in \R_{>0}\setminus \mathcal{L}(K)}$ commutes with $\{I^{a,b} \colon H^{<a}_*(Q,K)\to H^{<b}_*(Q,K)\}_{a\leq b}$ and $\{ J^{a,b}\colon H^{\pitchfork,<a}_*\to  H^{\pitchfork,<b}_* \}_{a\leq b}$.
Therefore, on the limit of $a\to \infty$, we have a linear map
\[\Phi \colon H^{\pitchfork,\infty}_* \to H^{\str}_*(Q,K). \]
Moreover, it is straightforward to check that $\Phi$ is a homomorphism of unital $\R$-algebras.
%
%Let us define a quotient complex $C^{\pitchfork,[a,b)}_*(\epsilon)\coloneqq (C^{\pitchfork,<b}_*(\epsilon)/C^{\pitchfork,a}_*(\epsilon),D^{\pitchfork}_{\epsilon})$, whose homology group we denote by $H^{\pitchfork,[a,b)}_*(\epsilon)$. 

\subsection{Proof of isomorphism}\label{subsec-proof-of-isom}
In this section, we prove that for every $a\in \R_{>0}\setminus \mathcal{L}(K)$, $\Phi^{<a}$ is an isomorphism in the $0$-th degree. As an immediate consequence, it is shown that the cord algebra of $(Q,K)$ is isomorphic to $H^{\str}_0(Q,K)$.

For each $m\in \Z_{\geq 0}$, let
$\partial^{\sing}_m \colon C^{\pitchfork,<a}_1(m) \to C^{\pitchfork,<a}_0(m)$ denote the singular boundary operator. 
We also write
%For $i=0,1$, $\Psi \colon C^{\pitchfork,<a}_i(m) \to C^{\dr}_i(\Sigma^a_m,\Sigma^0_m)$ induces
\begin{align}\label{Psi-isom-surj}
\begin{split}
\Psi _{0,m}&\colon \coker \partial^{\sing}_m \to H^{\dr}_0(\Sigma^a_m,\Sigma^0_m) \colon [x]\mapsto [\Psi (x)] , \\
\Psi_{1,m}&\colon \ker \partial^{\sing}_m \to H^{\dr}_1(\Sigma^a_m,\Sigma^0_m) \colon x \mapsto [\Psi(x)] .
\end{split}
\end{align}
\begin{lem}\label{lem-Psi-isom-surj}
$\Psi_{0,m}$ is an isomorphism and $\Psi_{1,m}$ is a surjection. 
\end{lem}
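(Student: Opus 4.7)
The plan is to factor each $\Psi_{p,m}$ through the singular homology of the pair $(\Sigma^a_m, \Sigma^0_m)$: first compare the transverse chain complex with smooth singular chains via transversality, then apply the canonical isomorphism between singular and de Rham homology (Proposition \ref{prop-hmgy-of-mfd}), and finally match $\Psi$ with the composition.

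I would first introduce the natural maps
\begin{align*}
\alpha_{0,m} &\colon \coker \partial^{\sing}_m \longrightarrow H^{\sing}_0(\Sigma^a_m, \Sigma^0_m), \\
\alpha_{1,m} &\colon \ker \partial^{\sing}_m \longrightarrow H^{\sing}_1(\Sigma^a_m, \Sigma^0_m),
\end{align*}
sending the class of a transverse $p$-chain to its class as an ordinary smooth singular chain modulo $\Sigma^0_m$. Working in the finite-dimensional approximation $B^a_m(\nu)$ from Section \ref{subsec-hmgy-grp}, standard transversality theorems for smooth maps into $B^a_m(\nu)$ (applied to the evaluation maps and the submanifold $K$) let me perturb any smooth singular 0-chain to satisfy (0a)--(0b), and any smooth 1-chain to satisfy (1a)--(1c), without changing its class modulo $\Sigma^0_m$. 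This shows $\alpha_{0,m}$ and $\alpha_{1,m}$ are surjective. Injectivity of $\alpha_{0,m}$ follows from the same type of perturbation applied rel boundary to any smooth 1-chain filling a given transverse 0-boundary.

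Next, combining (\ref{cong-sing-path}), (\ref{cong-dr-sing-path}) and Proposition \ref{prop-hmgy-of-mfd} yields canonical isomorphisms
\[
\Phi_{p,m} \colon H^{\sing}_p(\Sigma^a_m, \Sigma^0_m) \xrightarrow{\ \cong\ } H^{\dr}_p(\Sigma^a_m, \Sigma^0_m)
\]
for $p = 0,1$. The final step is to verify $\Psi_{p,m} = \Phi_{p,m} \circ \alpha_{p,m}$. This is a direct unwinding of definitions: the de Rham chain $[\{0\}, c_0, 1]$ is the image under the smooth-singular-to-de-Rham map of the constant singular $0$-simplex at $(\gamma_k)$, and $[\R, c_1, \chi]$ represents the de Rham image of the smooth $1$-simplex $u \mapsto (\gamma^u_k)$; the reparameterization by $\kappa$ and the compactly supported bump $\chi$ amount to a chain-homotopic modification that does not change the de Rham homology class.

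The main obstacle is the transversality argument, in particular arranging (1c) alongside (1a)--(1b). Condition (1c) --- requiring distinct $u$-coordinates for the points of $\coprod_k (\Gamma^{\interior}_k)^{-1}(K)$ --- is generic in any smooth 1-parameter family (it fails only on a codimension-one subset of the parameter space), so after first perturbing the interior to achieve (1b), one further small perturbation in the $u$-direction (localized away from $\{0,1\}$ so as to preserve (1a)) delivers (1c). Care must be taken that these perturbations remain in the fixed length filtration, but this is automatic because they can be carried out inside arbitrarily small $C^0$-neighborhoods while keeping the paths away from $K$ in the interior.
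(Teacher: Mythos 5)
Your overall strategy matches the paper's: factor $\Psi_{p,m}$ through $H^{\sing}_p(\Sigma^a_m,\Sigma^0_m)$ by way of the natural maps $\alpha_{p,m}$, establish surjectivity (and injectivity for $p=0$) of these using density of the transversality conditions, and then invoke the de Rham--singular comparison $(\ref{cong-dr-sing-path})$, $(\ref{cong-sing-path})$, and Proposition~\ref{prop-hmgy-of-mfd} together with a diagram-commutativity check; the paper's own proof does precisely this.

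One step as you have written it would not go through, though: you propose carrying out the transversality perturbation "working in the finite-dimensional approximation $B^a_m(\nu)$." The space $B^a_m(\nu)$ consists of finite sequences of \emph{points} in $Q$, so the conditions (0a)--(0b) and (1a)--(1c) --- which constrain how the paths $\gamma_k$ and the family $\Gamma_k$ meet the submanifold $K$ --- simply do not make sense for its elements, and there is no evaluation map into $B^a_m(\nu)$ whose transversality to $K$ would recover them. The finite-dimensional model enters only on the other side of the diagram, to identify singular and de Rham homology of $(\Sigma^a_m,\Sigma^0_m)$ via $(\ref{cong-sing-path})$ and $(\ref{cong-dr-sing-path})$. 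The density/genericity argument has to be run directly on $\Sigma^a_m$ (for $0$-chains) and on $C^0([0,1],\Sigma^a_m)$ (for $1$-chains), which is what the paper asserts; your subsequent discussion of why (1c) is achievable by a further small perturbation is the right one and should simply be relocated to the path space. With that correction the proof is sound and coincides with the paper's.
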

\begin{proof}
Naturally, there are two maps
\begin{align*}
\coker \partial^{\sing}_m &\to  H^{\sing}_0(\Sigma^a_m,\Sigma^0_m),  \\
\ker \partial^{\sing}_m &\to  H^{\sing}_1(\Sigma^a_m,\Sigma^0_m) .
\end{align*}
induced by the inclusion maps $C^{\pitchfork}_p(m,a) \to  C^{\sing}_p(\Sigma^a_m)$ for $p=0,1$.
The subset of $\Sigma^a_m$ (resp. $C^0([0,1], \Sigma^a_m)$) consisting of elements satisfying the conditions (0a), (0b) (resp. (1a), (1b), (1c)) is open dense.
%Any $p$-simplex in $C^{\sing}_p(\Sigma^a_m)$ is  (by a small perturbation) to a $p$-simplex satisfying (0a), (0b) if $p=0$, and (1a), (1b), (1c) if $p=1$. Moreover, if the boundary of a $1$-chain in $C^{\sing}_1(\Sigma^a_m)$ is a $0$-chain in $C^{\pitchfork}_0(m,a)$, then it is homotopic to a simplex in $C^{\pitchfork}_1(m,a)$ with boundaries fixed. 
This fact implies that the first map is an isomorphism and the second map is a surjection.
Then, we consider the following diagram for $p=0,1$: %by Remark \ref{rem-from-sing-to-de-Rham}:
\[\xymatrix{
K_{p,m} \ar[r]^-{\Psi_{p,m}} \ar[d]& H^{\dr}_p(\Sigma^a_m,\Sigma^0_m)\ar[r]^-{(\ref{cong-dr-sing-path})}& \varinjlim_{j\to \infty}H_p^{\dr}(B^a_m(2^j),B^0_m(2^j)) \\
 H^{\sing}_p(\Sigma^a_m,\Sigma^0_m) \ar[rr]^-{(\ref{cong-sing-path})} && \varinjlim_{j\to\infty } H_p^{\sing}(B^a_m(2^j),B^0_m(2^j))\ar[u]  .
}\]
Here, $K_{0,m} \coloneqq  \coker \partial^{\sing}_m$ and $K_{1,m}\coloneqq \ker \partial^{\sing}_m$.
The left vertical map is defined as above. The right vertical map is an isomorphism from Proposition \ref{prop-hmgy-of-mfd}. The horizontal maps are the isomorphisms of (\ref{cong-dr-sing-path}) and (\ref{cong-sing-path}).
The commutativity follows from the definition of the right vertical map. See \cite[Section 4.7]{Irie-BV}. Now the assertion of the lemma follows from the diagram.
\end{proof}

For the chain complexes $(C^{\pitchfork,<a}_*,D^{\pitchfork})$ and $(C^{\pitchfork,<a}_*(\epsilon),D^{\pitchfork}_{\epsilon})$, we define their filtrations $\{\mathcal{H}^{<a}_p\}_{p\in \Z}$ and $\{\mathcal{H}^{<a}_{\epsilon,p}\}_{p\in \Z}$ by 
\[ \mathcal{H}^{<a}_p \coloneqq \bigoplus_{m\geq -p} C^{\pitchfork,<a}_*(m) ,\  \mathcal{H}^{<a}_{\epsilon,p} \coloneqq \bigoplus_{m\geq -p} C^{\pitchfork,<a}_*(m,\epsilon).\]
Let $E^{\pitchfork,<a}$ and $E^{\pitchfork,<a}_{\epsilon}$ be the spectral sequences determined by $\{\mathcal{H}^{<a}_p\}_{p\in \Z}$ and $\{\mathcal{H}^{<a}_{\epsilon,p}\}_{p\in \Z}$ respectively.
Their $(-m,q)$-terms of the first pages are given by
\begin{align*}
(E^{\pitchfork,<a})^1_{-m,q}&= \begin{cases}
\coker \partial^{\sing}_{m} & \text{ if }q-m=0 \text{ and }m\geq 0, \\
\ker \partial^{\sing}_{m}  & \text{ if }q-m=1 \text{ and }m\geq 0, \\
0 & \text{ else,}
\end{cases}\\
(E^{\pitchfork,<a}_{\epsilon})^1_{-m,q}&= \begin{cases}
\coker \partial^{\sing}_{\epsilon,m} & \text{ if }q-m=0 \text{ and }m\geq 0, \\
\ker \partial^{\sing}_{\epsilon,m}  & \text{ if }q-m=1 \text{ and }m\geq 0, \\
0 & \text{ else.}
\end{cases}
\end{align*}
Here $\partial^{\sing}_{\epsilon,m} \colon C^{\pitchfork,<a}_1(m,\epsilon)\to  C^{\pitchfork,<a}_0(m,\epsilon)$ also denotes the singular boundary operator.
If $0<\epsilon'\leq \epsilon$, there exists a morphism $l_{\epsilon,\epsilon'}\colon E^{\pitchfork,<a}_{\epsilon}\to E^{\pitchfork,<a}_{\epsilon'}$ induced by the inclusion maps $C^{\pitchfork}_*(m,a,\epsilon) \to C^{\pitchfork}_*(m,a,\epsilon') $ for all $m\in \Z_{\geq 0}$. Naturally, $\varinjlim_{\epsilon\to 0} E^{\pitchfork,<a}_{\epsilon} \cong E^{\pitchfork,<a}$ holds.

%and $d^1_{m,i}=\sum_{k=1}^m (f^{\pitchfork}_k)_*\colon E^1_{m,i}\to E^1_{m+1,i-1}$.
For $(\epsilon,\delta)\in \mathcal{T}_a$ which is standard with respect to $h$, the chain map $\Phi^{<a}_{(\epsilon,\delta)}$ preserves the filtrations  $\{\mathcal{H}^{<a}_{\epsilon,p}\}_{p\in \Z}$ and $\{\mathcal{F}^{<a}_{\epsilon,p}\}_{p\in \Z}$, so it induces a morphism of spectral sequences
\[ (\Phi^{<a}_{(\epsilon,\delta)})_* \colon E^{\pitchfork,<a}_{\epsilon} \to E^{<a}_{(\epsilon,\delta)}. \]
Note that on the $(-m,q)$-term ($m\geq 0$) of first pages, this can be written as follows:
\begin{align}\label{first-page-Phi}
\begin{split}
 (\Phi^{<a}_{(\epsilon,\delta)})_* & \colon  \coker \partial^{\sing}_{\epsilon,m} \to  H^{\dr}_0(\Sigma^{a+m\epsilon}_m,\Sigma^0_m) \colon [x]\to [\Psi(x)] \text{ if } q=m , \\
  (\Phi^{<a}_{(\epsilon,\delta)})_* & \colon \ker \partial^{\sing}_{\epsilon,m} \to H^{\dr}_1(\Sigma^{a+m\epsilon}_m,\Sigma^0_m) \colon x\to [\Psi(x)]  \text{ if } q=m+1. 
\end{split}
\end{align}
 
Recall that we have defined $k_{(\epsilon',\delta'),(\epsilon,\delta)} \colon E^{<a}_{(\epsilon',\delta')}\to E^{<a}_{(\epsilon,\delta)}$ by the composition of the maps of (\ref{zig-zag}).
The next result is a variant of Proposition \ref{prop-Phi-commute} for spectral sequences.
\begin{prop}\label{prop-spectral-commute}
The following diagram commutes:
\begin{align*}
\xymatrix@C=40pt{ E^{\pitchfork,<a}_{\epsilon} \ar[r]^-{(\Phi^{<a}_{(\epsilon,\delta)})_*}\ar[d]_{l_{\epsilon,\epsilon'}} & E^{<a}_{(\epsilon,\delta)}  \\ E^{\pitchfork,<a}_{\epsilon'} \ar[r]^-{(\Phi_{(\epsilon',\delta')}^{<a})_*} & E^{<a}_{(\epsilon',\delta')} \ar[u]_{k_{(\epsilon',\delta'), (\epsilon,\delta)}}  . }
\end{align*}
\end{prop}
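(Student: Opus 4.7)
The plan is to lift the proof of Proposition \ref{prop-Phi-commute} from homology groups to the spectral sequence level, by observing that every chain map and chain-level identity produced there is already compatible with the filtrations involved.

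As in the proof of Proposition \ref{prop-Phi-commute}, I will choose $(\bar{\epsilon},\bar{\delta}) \in \bar{\mathcal{T}}_a$ standard with respect to $h$ and satisfying (\ref{condition-delta-bar}), so that $\bar{\epsilon}=\epsilon$. The intermediate chain map $\bar{\Phi}^{<a}_{(\epsilon,\bar{\delta})} : C^{\pitchfork,<a}_*(\epsilon) \to \bar{C}^{<a}_*(\epsilon)$ acts on each $m$-component and lands in the $m$-component; hence it carries $\mathcal{H}^{<a}_{\epsilon,p}$ into $\bar{\mathcal{F}}^{<a}_{\epsilon,p}$ for every $p$ and therefore induces a morphism of spectral sequences $(\bar{\Phi}^{<a}_{(\epsilon,\bar{\delta})})_* : E^{\pitchfork,<a}_{\epsilon} \to \bar{E}^{<a}_{(\epsilon,\bar{\delta})}$. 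In the same way, $e_{\epsilon,+}$, $e_{\epsilon,-}$, $l_{\epsilon,\epsilon'}$, and every map in the zig-zag (\ref{zig-zag}) respects word-length, hence respects the filtrations, and induces a morphism on the corresponding spectral sequence; moreover, Lemmas \ref{lem-isom-j} and \ref{lem-e-quasi-isom}, combined with the inductive comparison in Lemma \ref{lem-spectral}, show that the factors $(j_{\epsilon',\epsilon})_*$, $(e_{\epsilon,+})_*$, and $(e_{\epsilon,-})_*$ are isomorphisms term-by-term on every page, so the inverses required to form $k_{(\epsilon',\delta'),(\epsilon,\delta)}$ make sense at the spectral-sequence level.

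The key input is the pair of chain-level identities
\begin{align*}
e_{\epsilon,+} \circ \bar{\Phi}^{<a}_{(\epsilon,\bar{\delta})} &= \Phi^{<a}_{(\epsilon,\delta)}, \\
e_{\epsilon,-} \circ \bar{\Phi}^{<a}_{(\epsilon,\bar{\delta})} &= (j_{\epsilon',\epsilon})_* \circ \Phi^{<a}_{(\epsilon',\delta')} \circ l_{\epsilon,\epsilon'},
\end{align*}
which were established in the proof of Proposition \ref{prop-Phi-commute} via the third property of $\bar{o}_{k,(\epsilon,\bar{\delta})}$ together with the relations $e_{\pm} \circ \bar{i} = \operatorname{id}$. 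Both identities hold on the nose, not merely up to chain homotopy, between chain maps that respect the relevant filtrations, so they pass verbatim to equalities of induced morphisms of spectral sequences.

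Finally, unraveling $k_{(\epsilon',\delta'),(\epsilon,\delta)} = f_{(\epsilon,\bar{\delta}),+} \circ (f_{(\epsilon,\bar{\delta}),-})^{-1} = (e_{\epsilon,+})_* \circ (e_{\epsilon,-})_*^{-1} \circ (j_{\epsilon',\epsilon})_*$ (using $(j_{\epsilon,\epsilon})_* = \operatorname{id}$), one computes
\begin{align*}
k_{(\epsilon',\delta'),(\epsilon,\delta)} \circ (\Phi^{<a}_{(\epsilon',\delta')})_* \circ l_{\epsilon,\epsilon'}
 &= (e_{\epsilon,+})_* \circ (e_{\epsilon,-})_*^{-1} \circ (j_{\epsilon',\epsilon})_* \circ (\Phi^{<a}_{(\epsilon',\delta')})_* \circ l_{\epsilon,\epsilon'} \\
 &= (e_{\epsilon,+})_* \circ (e_{\epsilon,-})_*^{-1} \circ (e_{\epsilon,-})_* \circ (\bar{\Phi}^{<a}_{(\epsilon,\bar{\delta})})_* \\
 &= (e_{\epsilon,+})_* \circ (\bar{\Phi}^{<a}_{(\epsilon,\bar{\delta})})_* = (\Phi^{<a}_{(\epsilon,\delta)})_*
\end{align*}
as morphisms of spectral sequences. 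The principal obstacle is essentially bookkeeping: confirming that the two intermediate identities really hold strictly at the chain level (so that they restrict to the associated graded pieces), and that each factor of $k_{(\epsilon',\delta'),(\epsilon,\delta)}$ is an isomorphism of spectral sequences rather than merely one on the abutment.
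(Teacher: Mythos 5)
Your argument is correct and is precisely the approach the paper has in mind: the paper's proof consists of a single sentence saying the statement follows as in Proposition \ref{prop-Phi-commute} once one takes the induced morphism $(\bar{\Phi}^{<a}_{(\epsilon,\bar{\delta})})_*\colon E^{\pitchfork,<a}_{\epsilon} \to \bar{E}^{<a}_{(\epsilon,\bar{\delta})}$ into account, and omits all details. You have simply filled in the routine bookkeeping that the paper leaves to the reader — verifying that the chain maps preserve filtrations, that the chain-level identities hold strictly, and that the relevant factors are invertible page-by-page.
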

This can be proved as Proposition \ref{prop-Phi-commute} by taking $(\bar{\Phi}^{<a}_{(\epsilon,\bar{\delta})})_*\colon E^{\pitchfork,<a}_{\epsilon} \to \bar{E}^{<a}_{(\epsilon,\bar{\delta})}$ into consideration. We omit the proof.

We use the spectral sequence $E^{<a}$ of Proposition \ref{prop-spectral-seq}. The above proposition and (\ref{first-page-Phi}) immediately implies the existence of the following morphism of spectral sequences.
\begin{prop}\label{prop-Phi-spectral}
There exists a morphism of spectral sequences $\Phi^{<a}\colon E^{\pitchfork,<a} \to E^{<a}$ such that on the $(-m,q)$-term ($m\geq 0$) of the first page,
\begin{align*}
 \Phi^{<a} =\Psi_{0,m} &\colon (E^{\pitchfork,<a})^1_{-m,q}= \coker \partial^{\sing}_{m} \to (E^{<a})^1_{-m,q} =H^{\dr}_0(\Sigma^{a}_m,\Sigma^0_m) &\text{ if } q=m, \\
 \Phi^{<a} =\Psi_{1,m}  &\colon (E^{\pitchfork,<a})^1_{-m,q}= \ker \partial^{\sing}_{m} \to (E^{<a})^1_{-m,q} =H^{\dr}_1(\Sigma^{a}_m,\Sigma^0_m) & \text{ if } q=m+1.
\end{align*}
\end{prop}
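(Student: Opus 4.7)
The plan is to pass the family $\{(\Phi^{<a}_{(\epsilon,\delta)})_*\}$ of morphisms already produced in Proposition~\ref{prop-spectral-commute} to the limit. First I would fix the structural maps: for each $\epsilon>0$ let $\iota_\epsilon\colon E^{\pitchfork,<a}_\epsilon\to E^{\pitchfork,<a}=\varinjlim_{\epsilon\to 0}E^{\pitchfork,<a}_\epsilon$ denote the canonical map, and for each $(\epsilon,\delta)\in\mathcal{T}_a$ let $\rho_{(\epsilon,\delta)}\colon E^{<a}\to E^{<a}_{(\epsilon,\delta)}$ denote the canonical identification with the limit component. Since every transition $k_{(\epsilon',\delta'),(\epsilon,\delta)}$ is an isomorphism, so is $\rho_{(\epsilon,\delta)}$, and one has $k_{(\epsilon',\delta'),(\epsilon,\delta)}\circ\rho_{(\epsilon',\delta')}=\rho_{(\epsilon,\delta)}$.

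Next I would define, for each $(\epsilon,\delta)\in\mathcal{T}_a$ standard with respect to $h$, a candidate $\tilde\Phi^{<a}_{(\epsilon,\delta)}:=\rho_{(\epsilon,\delta)}^{-1}\circ(\Phi^{<a}_{(\epsilon,\delta)})_*\colon E^{\pitchfork,<a}_\epsilon\to E^{<a}$ and verify two identities: (a) for fixed $\epsilon$ this is independent of $\delta$; (b) for $\epsilon'\leq\epsilon$ one has $\tilde\Phi^{<a}_{(\epsilon',\delta')}\circ l_{\epsilon,\epsilon'}=\tilde\Phi^{<a}_{(\epsilon,\delta)}$. Both follow formally from Proposition~\ref{prop-spectral-commute} — applied with the degenerate case $\epsilon=\epsilon'$ to obtain (a) — combined with the structural identity $\rho_{(\epsilon,\delta)}^{-1}\circ k_{(\epsilon',\delta'),(\epsilon,\delta)}=\rho_{(\epsilon',\delta')}^{-1}$. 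The universal property of the direct limit then produces a unique morphism of spectral sequences $\Phi^{<a}\colon E^{\pitchfork,<a}\to E^{<a}$ characterized by $\Phi^{<a}\circ\iota_\epsilon=\tilde\Phi^{<a}_{(\epsilon,\delta)}$.

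It remains to read off the first page. On the source side, homology commutes with direct limits and $\bigcup_{\epsilon>0}C^{\pitchfork}_*(m,a,\epsilon)=C^{\pitchfork}_*(m,a)$, so $(E^{\pitchfork,<a})^1_{-m,q}$ is $H_{q-m}(C^{\pitchfork,<a}_*(m),\partial^{\sing}_m)$, which is $\coker\partial^{\sing}_m$ for $q=m$ and $\ker\partial^{\sing}_m$ for $q=m+1$. On the target side, Proposition~\ref{prop-spectral-seq} (with $d=2$) identifies $(E^{<a})^1_{-m,q}$ with $H^{\dr}_{q-m}(\Sigma^a_m,\Sigma^0_m)$, and under this identification $\rho_{(\epsilon,\delta)}$ becomes the isomorphism induced by the inclusion $\Sigma^a_m\hookrightarrow\Sigma^{a+m\epsilon}_m$ of Proposition~\ref{prop-hmgy-grp}. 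Reading off formula~(\ref{first-page-Phi}), $\Phi^{<a}$ then sends $[x]\mapsto[\Psi(x)]$, which coincides with $\Psi_{0,m}$ for $q=m$ and with $\Psi_{1,m}$ for $q=m+1$, as defined in~(\ref{Psi-isom-surj}).

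The main obstacle is purely bookkeeping: the maps $(\Phi^{<a}_{(\epsilon,\delta)})_*$ depend on the auxiliary chains $o_{k,(\epsilon,\delta)}$ of Lemma~\ref{lem-o-epsilon}, and invoking Proposition~\ref{prop-spectral-commute} at each transition requires a coherent choice of the $[-1,1]$-modeled interpolants $\bar o_{k,(\epsilon,\bar\delta)}$ across $\mathcal{T}_a$. This can be arranged exactly as in the proof of Proposition~\ref{prop-Phi-commute}; moreover, on the $E^1$ page the resulting morphism is manifestly choice-independent, since its description by $[x]\mapsto[\Psi(x)]$ does not involve the $o_{k,(\epsilon,\delta)}$ at all.
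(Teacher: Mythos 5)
Your proposal is correct and follows essentially the same route as the paper, which presents this proposition as an immediate consequence of Proposition~\ref{prop-spectral-commute} and formula~(\ref{first-page-Phi}) without giving further detail. You have merely spelled out the limit bookkeeping (compatibility of the family $\{(\Phi^{<a}_{(\epsilon,\delta)})_*\}$ with $\{l_{\epsilon,\epsilon'}\}$ and $\{k_{(\epsilon',\delta'),(\epsilon,\delta)}\}$, passage through the universal property of the direct limit, and the identification of the first page via Proposition~\ref{prop-spectral-seq} with $d=2$) that the paper leaves implicit.
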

This property of $\Phi^{<a}\colon E^{\pitchfork,<a} \to E^{<a}$ implies a result on the compatible map $\Phi^{<a}\colon  H^{\pitchfork,<a}_p \to H^{<a}_p(Q,K)$.
\begin{prop}\label{prop-isom-short-interval-string}
%Under the same assumption as Lemma \ref{lem-f-pitchfork},
$\Phi^{<a} \colon H^{\pitchfork,<a}_p \to H^{<a}_p(Q,K)$
is an isomorphism if $p=0$ and a surjection if $p=1$.
\end{prop}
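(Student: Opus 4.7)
The plan is to deduce the statement directly from the comparison lemma for spectral sequences (Lemma \ref{lem-spectral}), applied to the morphism $\Phi^{<a}\colon E^{\pitchfork,<a}\to E^{<a}$ constructed in Proposition \ref{prop-Phi-spectral}. Since we are in the case $d=2$, the first page of $E^{<a}$ is $(E^{<a})^1_{-m,q}=H^{\dr}_{q-m}(\Sigma^a_m,\Sigma^0_m)$ for $m\geq 0$, and the first page of $E^{\pitchfork,<a}$ is concentrated in bidegrees with $q-m\in\{0,1\}$. In total degree $q-m=0$ the map on the first page is precisely $\Psi_{0,m}$, and in total degree $q-m=1$ it is precisely $\Psi_{1,m}$.

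First I would verify that both spectral sequences are bounded, so that Lemma \ref{lem-spectral} applies. Bounded convergence of $E^{<a}$ is given by Proposition \ref{prop-spectral-seq}. For $E^{\pitchfork,<a}$, Remark \ref{rem-large-m} shows that $\Sigma^a_m=\Sigma^0_m$ whenever $m\geq 2a/\epsilon_0$, which forces $C^{\pitchfork,<a}_*(m)=0$ in that range; thus the filtration $\{\mathcal{H}^{<a}_p\}_{p\in\Z}$ is bounded and the associated spectral sequence converges to $H^{\pitchfork,<a}_*$ in the sense of \cite[Bounded Convergence 5.2.5]{weibel}.

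Next I would check the hypotheses of Lemma \ref{lem-spectral} with $r_0=1$ and $n_0=1$. In total degree $<0$ both first pages vanish, so the condition is vacuous. In total degree $0$, only the bidegrees $(-m,m)$ with $m\geq 0$ contribute, and by Proposition \ref{prop-Phi-spectral} the map is $\Psi_{0,m}$, which is an isomorphism by Lemma \ref{lem-Psi-isom-surj}. In total degree $1$, only the bidegrees $(-m,m+1)$ with $m\geq 0$ contribute to $E^{\pitchfork,<a}$; in the remaining bidegrees of total degree $1$ on the $E^{<a}$-side the corresponding $E^{\pitchfork,<a}$-term is zero, but we only need surjectivity there, and by Proposition \ref{prop-Phi-spectral} the map at $(-m,m+1)$ is $\Psi_{1,m}$, which is surjective by Lemma \ref{lem-Psi-isom-surj}. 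Thus the condition of Lemma \ref{lem-spectral} is satisfied with $r_0=1$ and $n_0=1$, except I need to also verify surjectivity onto every summand of total degree $1$ in $(E^{<a})^1$.

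The one subtle point, which I expect to be the only real obstacle, is that on the first page of $E^{<a}$ in total degree $1$ the group $H^{\dr}_{q-m}(\Sigma^a_m,\Sigma^0_m)$ with $q-m=1$ receives contributions from all $m\geq 0$, and the map from $E^{\pitchfork,<a}$ hits each such summand via $\Psi_{1,m}$; since Lemma \ref{lem-Psi-isom-surj} gives surjectivity of $\Psi_{1,m}$ individually, the induced map is surjective bidegree-by-bidegree, which is exactly what Lemma \ref{lem-spectral} requires. Once this is confirmed, Lemma \ref{lem-spectral} yields that $\Phi^{<a}\colon H^{\pitchfork,<a}_n\to H^{<a}_n(Q,K)$ is an isomorphism for $n<1$ and a surjection for $n=1$, which is precisely the claim of the proposition.
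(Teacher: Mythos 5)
Your proof is correct and follows essentially the same route as the paper: apply Lemma \ref{lem-spectral} to the morphism $\Phi^{<a}\colon E^{\pitchfork,<a}\to E^{<a}$ of Proposition \ref{prop-Phi-spectral}, using Lemma \ref{lem-Psi-isom-surj} to get an isomorphism in total degree $\leq 0$ and a surjection in total degree $1$ on first pages. You add some careful detail the paper leaves implicit (boundedness of $E^{\pitchfork,<a}$, the bidegree-by-bidegree check); the only wrinkle is the phrase ``but we only need surjectivity there'' in the discussion of bidegrees where $E^{\pitchfork,<a}$ vanishes—if such a bidegree had a nonzero $E^{<a}$-term, surjectivity would \emph{fail}, not be automatic—but this case never occurs (for $p+q=1$ both first pages are supported exactly at $(-m,m+1)$, $m\geq 0$, since de Rham chains vanish in negative degree), and your closing paragraph correctly resolves the concern.
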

\begin{proof}
By Lemma \ref{lem-Psi-isom-surj} and Proposition \ref{prop-Phi-spectral}, $\Phi^{<a}\colon  (E^{\pitchfork,<a})^1_{p,q} \to  (E^{<a})^1_{p,q}$ is an isomorphism if $p+q\leq 0$ and a surjection if $p+q=1$. Since $E^{\pitchfork,<a}$ converges to $H^{\pitchfork,<a}_*$ and $E^{<a}$ converges to $H^{<a}_*(Q,K)$, we can apply Lemma \ref{lem-spectral} to prove the above assertion for $\Phi^{<a} \colon H^{\pitchfork,<a}_*\to H^{<a}_*(Q,K)$.
\end{proof}
On their limits of $a\to \infty$, $\{\Phi^{<a}\}_{a\in \R_{>0}\setminus \mathcal{L}(K)}$ induces an isomorphism
\[\Phi\colon H^{\pitchfork,<\infty}_0 \to H^{\str}_0(Q,K).\]
Combining with Proposition \ref{prop-cord-to-string}, we finally obtain the following result.
\begin{thm}\label{thm-isom-cord-string}
As a unital $\R$-algebra, $\mathrm{Cord}(Q,K;\R)$ is isomorphic to $H^{\str}_0(Q,K)$.
\end{thm}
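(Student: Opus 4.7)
The proof is a direct synthesis of results already established in the paper. The plan is to compose two unital $\R$-algebra isomorphisms: first, the isomorphism $\mathrm{Cord}(Q,K;\R) \cong H^{\pitchfork,<\infty}_0$ provided by Proposition \ref{prop-cord-to-string}, and second, an isomorphism $H^{\pitchfork,<\infty}_0 \cong H^{\str}_0(Q,K)$ obtained by specializing $\Phi$ to degree zero.

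For the second isomorphism, I would invoke Proposition \ref{prop-isom-short-interval-string}, which asserts that for every $a \in \R_{>0}\setminus \mathcal{L}(K)$ the map $\Phi^{<a}\colon H^{\pitchfork,<a}_0 \to H^{<a}_0(Q,K)$ is an isomorphism. Since the family $\{\Phi^{<a}\}$ intertwines the inclusion-induced maps $J^{a,b}\colon H^{\pitchfork,<a}_* \to H^{\pitchfork,<b}_*$ and $I^{a,b}\colon H^{<a}_*(Q,K) \to H^{<b}_*(Q,K)$, passing to the direct limit as $a\to \infty$ preserves the isomorphism property and yields the desired $\Phi\colon H^{\pitchfork,<\infty}_0 \to H^{\str}_0(Q,K)$.

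Finally, the fact that $\Phi$ is a homomorphism of unital $\R$-algebras was recorded at the end of Section \ref{subsec-construction-of-chain-map}: the unit on both sides comes from the generator of the $m=0$ component, and the product on $H^{\pitchfork,<\infty}_0$ (induced by the concatenation maps $\Sigma^{\infty}_m \times \Sigma^{\infty}_{m'} \to \Sigma^{\infty}_{m+m'}$) is intertwined with the $\star$-product on $H^{\str}_0(Q,K)$ by $\Psi$, while the correction chains $o_{k,(\epsilon,\delta)}$ are trivial in degree $0$. The Proposition \ref{prop-cord-to-string} isomorphism is likewise unital and multiplicative by construction.

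Composing these two isomorphisms gives the theorem. In this sense, the statement is a formal corollary: there is no substantive remaining obstacle. All of the genuine work has been done earlier, first in the algebraic skein-relation verification for Proposition \ref{prop-cord-to-string}, and second in the spectral-sequence comparison underlying Proposition \ref{prop-isom-short-interval-string}, which itself rests on Lemma \ref{lem-Psi-isom-surj} identifying $\Psi_{0,m}$ and $\Psi_{1,m}$ with the respective de Rham homology classes via the singular-to-de Rham comparison of Proposition \ref{prop-hmgy-of-mfd}.
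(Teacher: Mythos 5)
Your proposal is correct and follows exactly the paper's own route: the theorem is deduced by composing Proposition \ref{prop-cord-to-string} with the degree-zero isomorphism $\Phi$ obtained from Proposition \ref{prop-isom-short-interval-string} after passage to the direct limit, with the $\R$-algebra structure handled as stated at the end of Section \ref{subsec-construction-of-chain-map}. The only quibble is cosmetic: $\Pi$ juxtaposes sequences of paths rather than concatenating paths, but this does not affect the argument.
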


%\nocite{*}

\bibliography{reference.bib}

%\printbibliography

%\lipsum[1]

\end{document}